\DeclareFontFamily{U}{wasy}{}
\DeclareFontShape{U}{wasy}{m}{n}{<5> <6> <7> <8> <9> gen * wasy
      <10> <10.95> <12> <14.4> <17.28> <20.74> <24.88>wasy10  }{}
\DeclareFontShape{U}{wasy}{b}{n}{ <-10> ssub * wasy/m/n
 <10> <10.95> <12> <14.4> <17.28> <20.74> <24.88>wasyb10 }{}
\DeclareFontShape{U}{wasy}{bx}{n}{<5> <6> <7> <8> <9> gen * wasy
 <10> <10.95> <12> <14.4> <17.28> <20.74> <24.88>wasyb10}{}
\DeclareSymbolFont{wasy}{U}{wasy}{m}{n}
\DeclareFontFamily{U}{lasy}{}
\DeclareFontShape{U}{lasy}{m}{n}{ <5> <6> <7> <8> <9> gen * lasy
      <10> <10.95> <12> <14.4> <17.28> <20.74> <24.88>lasy10  }{}
\DeclareFontShape{U}{lasy}{b}{n}{ <5> <6> <7> <8> <9> gen * lasy
      <10> <10.95> <12> <14.4> <17.28> <20.74> <24.88>lasyb10  }{}
\DeclareFontFamily{U}{stmry}{}
\DeclareFontShape{U}{stmry}{m}{n}
   {  <5> <6> <7> <8> <9> <10> gen * stmary
      <10.95><12><14.4><17.28><20.74><24.88>stmary10%
   }{}
\DeclareFontShape{U}{stmry}{b}{n}
   {  <5> <6> <7> <8> <9> <10> gen * stmary
      <10.95><12><14.4><17.28><20.74><24.88>stmary10%
   }{}
\def\toclevel@chapternb{0}
\title{R\'esolutions libres finies\\  M\'ethodes constructives 
}
\author{
Thierry Coquand, Henri Lombardi, Claude Quitté \& Claire Tête
}
\DeclareMathAlphabet{\mathpzc}{OT1}{pzc}{m}{it}
\begin{document}

\overfullrule=0cm
\hfuzz14pt

\let\oldref\ref
\renewcommand{\ref}[1]{\hbox{\oldref{#1}}}

\makeatletter
\def\U@@@ref#1\relax#2\relax{\let\showsection\relax#2}
\newcommand{\U@@ref}[5]{\U@@@ref#1}
\newcommand{\U@ref}[1]{\NR@setref{#1}\U@@ref{#1}}
\newcommand{\@iref}{\@ifstar\@refstar\U@ref}
\newcommand{\iref}[1]{\hbox{\@iref{#1}}}
\def\l@chapter#1#2{\ifnum\c@tocdepth>\m@ne\addpenalty{-\@highpenalty}\vskip1.0em\@plus\p@
\setlength\@tempdima{2.8em}\begingroup\parindent\z@\rightskip\@pnumwidth\parfillskip-\@pnumwidth\penalty-2000\leavevmode\bfseries
\advance\leftskip\@tempdima\hskip-\leftskip{\boldmath#1}\nobreak\hfil\nobreak\hb@xt@\@pnumwidth{\hss}\par\penalty\@highpenalty\endgroup\fi}
\def\l@chapterbis#1#2{\ifnum\c@tocdepth>\m@ne\addpenalty{-\@highpenalty}\vskip1.0em\@plus\p@
\setlength\@tempdima{2em}\begingroup\parindent\z@\rightskip\@pnumwidth\parfillskip-\@pnumwidth\penalty-2000\leavevmode\bfseries
\advance\leftskip\@tempdima\hskip-\leftskip{\boldmath#1}\nobreak\hfil\nobreak\hb@xt@\@pnumwidth{\hss#2}\par\penalty\@highpenalty\endgroup\fi}
\def\toclevel@chapterbis{0}
\def\l@section{\@dottedtocline{1}{1em}{0em}}
\def\l@subsection{\@dottedtocline{2}{1.75em}{0.25em}}
\def\l@subsubsection{\@dottedtocline{3}{2.5em}{0.25em}}
\def\numberline#1{\hb@xt@\@tempdima{\hss#1\ \hfil}}
\def\contentsline#1#2#3#4{\ifx\\#4\\\csname
l@#1\endcsname{#2}{#3}\else\ifHy@linktocpage\csname
l@#1\endcsname{{#2}}{\hyper@linkstart{link}{#4}{#3}\hyper@linkend }
\else\csname l@#1\endcsname{\hyper@linkstart{link}{#4}{#2}\hyper@linkend
}{#3}\fi\fi}
\makeatother


\setcounter{minitocdepth}{3}
\dominitoc

\ifx\mesMacrosDejaChargees\undefined\let\chargeMesMacros\relax
\else\let\chargeMesMacros \fi
\chargeMesMacros

\gdef\mesMacrosDejaChargees{}

\newcommand \dotdiv {\; {{\,.\,} \over {} } \;}



\FrenchFootnotes

\renewcommand{\labelenumii}{\textit{\theenumii.}}
\renewcommand{\labelenumi}{\textit{\theenumi.}}

\newcommand \entrenous[1] {\sibrouillon{\begin{flushleft}

\textsf{\textbf{Entre nous\,:}}
{\small\sf  #1\par}
\vspace{-.8mm}
\textsf{\textbf{Fin d'entre nous}}\end{flushleft}}}

\newcommand \hum[1]{\sibrouillon{\rdb\begin{flushleft}\tt\small hum:  #1\eoe

\end{flushleft}}}

\newcommand \ttt[1]{{\sibrouillon{\tt\small  #1
}}}

\newcommand\perso[1]{\sibrouillon{\marginpar{\hspace*{1em}
\begin{minipage}{10em}{\begin{flushleft}\footnotesize #1

\end{flushleft}}
\end{minipage}}}}

\newcounter{bidon}
\newcommand{\rdb}{\refstepcounter{bidon}}

\newcommand \ix[1] {\index{#1}\emph{#1}}
\newcommand \ixc[2] {\index{#1!#2}\emph{#1}}
\newcommand \ixx[2] {\index{#1!#2}\emph{#1 #2}}
\newcommand \ixy[2] {\index{#2!#1}\emph{#1 #2}}
\newcommand \ixd[2] {\index{#2!#1}\emph{#1}}
\newcommand \ixe[2] {\index{#2@#1}\emph{#1}}
\newcommand \ixf[3] {\index{#3@#2!#1}\emph{#1}}
\newcommand \ixg[3] {\index{#3@#1!#2}\emph{#1}}

\newcommand \CAdre[2]{%
\begin{center}
\begin{tabular}%
{|p{#1\textwidth}|}
\hline
\vspace{-1.5mm}
#2 
\vspace{1mm}\\ %
\hline
\end{tabular}
\end{center} 
}

\newcommand \Grandcadre[1]{%
\begin{center}
\begin{tabular}{|c|}
\hline
~\\[-3mm]
#1\\[-3mm]
~\\ 
\hline
\end{tabular}
\end{center}

}


\CMnewtheorem{theorem}{Théorème}{\itshape}   
\CMnewtheorem{thdef}{Théorème et définition}{\itshape}
\CMnewtheorem{plcc}{Principe local-global concret}{\itshape}
\CMnewtheorem{prcf}{Principe de recouvrement fermé}{\itshape}
\CMnewtheorem{prmf}{Principe de recollement fermé}{\itshape}
\CMnewtheorem{prvq}{Principe de recouvrement par quotients}{\itshape}
\CMnewtheorem{prvqp}{Principe de recouvrement par quotients principaux}{\itshape}
\CMnewtheorem{prmq}{Principe de recollement de quotients}{\itshape}
\CMnewtheorem{prmqp}{Principe de recollement de quotients principaux}{\itshape}
\CMnewtheorem{plca}{Principe local-global abstrait$\mathbf{^*}$}{\itshape}
\CMnewtheorem{plcd}{Principe local-global dynamique}{\itshape}
\CMnewtheorem{proposition}{Proposition}{\itshape}
\CMnewtheorem{propdef}{Proposition et définition}{\itshape}
\CMnewtheorem{propnot}{Proposition et notation}{\itshape}
\CMnewtheorem{lemma}{Lemme}{\itshape}
\CMnewtheorem{corollary}{Corolaire}{\itshape}
\CMnewtheorem{fact}{Fait}{\itshape}
\CMnewtheorem{theoremc}{Théorème\eto}{\itshape}
\CMnewtheorem{lemmac}{Lemme\etoz}{\itshape}
\CMnewtheorem{corollaryc}{Corolaire\eto}{\itshape}
\CMnewtheorem{proprietec}{Propriété\eto}{\itshape}
\CMnewtheorem{propositionc}{Proposition\eto}{\itshape}
\CMnewtheorem{factc}{Fait\eto}{\itshape}

\CMnewtheorem{remark}{Remarque}{}
\CMnewtheorem{remarks}{Remarques}{}
\CMnewtheorem{comment}{Commentaire}{}
\CMnewtheorem{comments}{Commentaires}{}
\CMnewtheorem{example}{Exemple}{}
\CMnewtheorem{exaf}{Exemple fondamental}{}
\CMnewtheorem{examples}{Exemples}{}
\CMnewtheorem{defic}{Définition\eto}{}
\CMnewtheorem{definition}{Définition}{}
\CMnewtheorem{definitions}{Définitions}{}
\CMnewtheorem{definota}{Définition et notation}{}
\CMnewtheorem{definotas}{Définitions et notations}{}
\CMnewtheorem{convention}{Convention}{}
\CMnewtheorem{notation}{Notation}{} 
\CMnewtheorem{context}{Contexte}{} 
\CMnewtheorem{notations}{Notations}{} 
\CMnewtheorem{question}{Question}{}
\CMnewtheorem{questions}{Questions}{}
\CMnewtheorem{algorithm}{Algorithme}{}

\newcounter{exercise}[chapter]
\newenvironment{exercise}{\ifhmode\par\fi
\vskip-\lastskip\vskip1.5ex\mou\penalty-300 \relax
\everypar{}\noindent
\refstepcounter{exercise}{\bfseries Exercice \theexercise.}\relax
\itshape\ignorespaces}{\par\vskip-\lastskip\vskip1em}

\newcounter{problem}[chapter]
\newenvironment{problem}{\ifhmode\par\fi
\vskip-\lastskip\vskip1em\mou\penalty-300 \relax
\everypar{}\noindent
\refstepcounter{problem}{\bfseries Probl\`eme \theproblem.}\relax
\itshape\ignorespaces}{\par\vskip-\lastskip\vskip1em}

\newcommand
\CHAP[1]{ 
\goodbreak\vskip4mm \mou \noindent  {\bf #1}\par\nobreak
\vskip1mm \mou \nobreak
}

\newcommand {\junk}[1]{}
\newcommand {\eop}{\hbox{$\square$}}

\newcommand \DebP{\raisebox{1pt}{\large\bf \Rightcircle}\,}
\newcommand \FinP{\raisebox{1pt}{\large\bf \Leftcircle$\!\!$}}

\def\thefootnote{\arabic{footnote}}

\newcommand \dsp{\displaystyle}
\newcommand \ndsp{\textstyle}

\newcommand \mapright[1]{\smash{\mathop{\longrightarrow}\limits^{#1}}} 
\newcommand \maprightto[1]{\smash{\mathop{\longmapsto}\limits^{#1}}} 
\def\mapdown#1{\downarrow\rlap{$\vcenter{\hbox{$\scriptstyle 
#1$}}$}}

\newcommand \bonbreak {\penalty-2500}

\newcommand \intro{\addcontentsline{toc}{section}{Introduction}
\subsection*{Introduction} }

\newcommand \Intro{\addcontentsline{toc}{section}{Introduction}
\markright{Introduction}%
\pagestyle{CMExercicesheadings}\subsection*{Introduction} }



\newcommand\Sclm[5] {#1=\big((#2_i)_{i\in #3},(#2_{ij}),(#2_{ijk}); (#4_{ij}),(#4_{ijk}); (#5_{ij})\big)}

\newcommand\sclm[5] {$\Sclm{#1}{#2}{#3}{#4}{#5}$}

\newcommand\Scsp[5] {#1=\big((#2_i)_{i\in #4},(#3_i),(#3_{ij}),(#3_{ijk}); (#5_{ij}),(#5_{ijk})\big)}

\newcommand\scsp[5] {$\Scsp{#1}{#2}{#3}{#4}{#5}$}

\renewcommand\matrix[1]{{\begin{array}{ccccccccccccccccccccccccc} #1 \end{array}}}  

\newcommand\Sady[1]{\mbox{\large$\mathpzc{#1}$}}
\newcommand\sa[1] {\Sady{#1}}

\newcommand \MA[1]{\mathop{#1}\nolimits}

\newcommand \tsbf[1]{\textbf{\textsf{#1}}}
\newcommand \lab[1]{\item[\tsbf{#1}]}

\newcommand{\vect}[1]{\mathchoice{\overrightarrow{\strut#1}}%
{\overrightarrow{\textstyle\strut#1}}{\overrightarrow{\scriptstyle#1}}{\overrightarrow{\scriptscriptstyle#1}}}

\newcommand \vab[2]{[\,#1\;#2\,]}

\newcommand \abs[1]{\left|{#1}\right|}
\newcommand \abS[1]{\big|{#1}\big|}
\newcommand \aqo[2]{#1\sur{\gen{#2}}\!}
\newcommand \Aqo[2]{#1\sur{\big\langle{#2}\big\rangle}\!}
\newcommand \bloc[4]{\left[\matrix{#1 & #2 \cr #3 & #4}\right]}

\newcommand \carray[2]{{\left[\begin{array}{#1} #2 \end{array}\right]}}
\newcommand \cmatrix[1]{\left[\matrix{#1}\right]}
\newcommand \clmatrix[1]{{\left[\begin{array}{lllllll} #1 \end{array}\right]}}
\newcommand \crmatrix[1]{{\left[\begin{array}{rrrrrrr} #1 \end{array}\right]}}
\newcommand \dmatrix[1]{\abs{\matrix{#1}}}
\newcommand\Cmatrix[2]{\setlength{\arraycolsep}{#1}\left[\matrix{#2}\right]}
\newcommand\Dmatrix[2]{\setlength{\arraycolsep}{#1}\left|\matrix{#2}\right|}

\newcommand{\Dpp}[2]{{{\partial #1}\over{\partial #2}}}

\newcommand \dt[1] {\bm{[}#1\bm{]}}
\newcommand \Hd{\tsbf{H}^{\llcorner}}
\newcommand \Hg{{\,^\lrcorner\tsbf{H}}}
\newcommand \DT[1] {\big{\bm{[}}#1\big{\bm{]}}}
\newcommand \Exl[1] {\mni {\it Exemple #1.} }
\newcommand \pex[1] {\left\llbracket #1 \right\rrbracket}
\newcommand \pexmat[1] {\pex{ \matrix{#1}} }

\newcommand \Dlu[2]{{\rm Dl}_{#1}(#2)}
\newcommand \dessus[2]{{\textstyle {#1} \atop \textstyle {#2}}}
\newcommand \eqdf[1]{\buildrel{#1}\over =}
\newcommand \formule[1]{{\left\{ {\arraycolsep2pt\begin{array}{lll} #1 \end{array}}\right.}}
\newcommand \formul[2]{{\left\{ \begin{array}{#1} #2 \end{array}\right.}}
\newcommand \gen[1]{\left\langle{#1}\right\rangle}
\newcommand \geN[1]{\big\langle{#1}\big\rangle}
\newcommand \impdef[1]{\buildrel{#1}\over \Longrightarrow}

\newcommand \eqdefi{\eqdf{\rm def}}
\newcommand \eqdef{\buildrel{{\rm def}}\over \Longleftrightarrow }

\newcommand\boite[2]{\begin{minipage}[c]{#1\cm}
     \centering {#2} \end{minipage}}    
\newcommand\Boite[3]{\parbox[t][#1\cm][c]{#2\cm}{\boite{#2}{#3}}}

\newcommand\emdf[1]{\textbf{\textit{#1}}}
\newcommand{\Kr}[2]{#1\lrb{#2}}

\newcommand \lra[1]{\langle{#1}\rangle}
\newcommand \lfb[1] {\llfloor #1 \rrfloor}
\newcommand \lrb[1] {\llbracket #1 \rrbracket}
\newcommand \lrbn {\lrb{1..n}}
\newcommand \lrbN {\lrb{1..N}}
\newcommand \lrbzn {\lrb{0..n}}
\newcommand \lrbl {\lrb{1..\ell}}
\newcommand \lrbm {\lrb{1..m}}
\newcommand \lrbk {\lrb{1..k}}
\newcommand \lrbh {\lrb{1..h}}
\newcommand \lrbp {\lrb{1..p}}
\newcommand \lrbq {\lrb{1..q}}
\newcommand \lrbr {\lrb{1..r}}
\newcommand \lrbs {\lrb{1..s}}

\newcommand \fraC[2] {{{#1}\over {#2}}}
\newcommand \meck[2] {\{#1, #2\}}
\newcommand \sat[1] {#1^{\rm sat}}
\newcommand \satu[2] {#1^{\rm sat_{#2}}}
\newcommand \scp[2] {\gen{#1\,|\,#2}\!}
\newcommand \scP[2] {\geN{#1\,|\,#2}\!}
\newcommand \sur[1]{\!\left/#1\right.}
\newcommand \so[1]{\left\{{#1}\right\}}
\newcommand \sO[1]{\big\{{#1}\big\}}
\newcommand \sotq[2]{\so{\,#1\,\vert\,#2\,}}
\newcommand \sotQ[2]{\sO{\,#1\;\big\vert\;#2\,}}
\newcommand \frt[1]{\!\left|_{#1}\right.\!}
\newcommand \Frt[2]{\left.#1\right|_{#2}\!}
\newcommand \sims[1]{\buildrel{#1}\over \sim}
\newcommand \tra[1]{{\,^{\rm t}\!#1}}
\newcommand \Al[1]{\Vi^{\!#1}}
\newcommand \Ae[1]{\gA^{\!#1}}

\newcommand \idg[1] {\hbox{$|\,#1\,|$}}
\newcommand \idG[1] {\hbox{$\big|\,#1\,\big|$}}

\newcommand \dex[1] {\hbox{$[\,#1\,]$}}
\newcommand \deX[1] {\hbox{$\big[\,#1\,\big]$}}

\newcommand \lst[1] {\hbox{$[\,#1\,]$}}
\newcommand \lsT[1] {\hbox{$\big[\,#1\,\big]$}}
\newcommand \brk[1] {[#1]}

\newcommand \Snic[1]{\sni{\centering$#1$\par}}
\newcommand \Snac[1]{\sni{\small{\centering$#1$\par}}}
\newcommand \Snuc[1]{\sni{\footnotesize{\centering$#1$\par}}}
\newcommand \snic[1]{

{\centering$#1$\par}

}
\newcommand \fnic[1]{

{\centering\fbox{$#1$}\par}

}

\newcommand \snac[1]{

{\small\centering$#1$\par}

}
\newcommand \snuc[1]{

{\footnotesize\centering$#1$\par}
}

\newcommand \snucc[1]{

{\footnotesize$$\preskip0pt\postskip0pt\textstyle#1$$}}

\newcommand \snicc[1]{
{$$\preskip0pt\postskip0pt\textstyle#1$$}}

\newcommand \snif[3]{\vspace{#1}\noindent\centerline{$#3$}
\vspace{#2}}

\newcommand \env[2]{{{#2}_{#1}^{\mathrm{e}}}} 
\newcommand \Om[2]{\Omega_{{#2}/{#1}}}
\newcommand \Der[3]{{\rm Der}_{{#1}}({#2},{#3})}

\newcommand \isA[1] {_{#1/\!\gA}}
\newcommand \OmA[1]{\Omega\isA{#1}}

\newcommand \bra[1] {\left[{#1}\right]}
\newcommand \bu[1] {{{#1}\bul}}
\newcommand \ci[1] {{{#1}^\circ}}
\newcommand \wi[1] {\widetilde{#1}}
\newcommand \wh[1]{{\widehat{#1}}}
\newcommand \ov[1] {\overline{#1}}
\newcommand \und[1] {\underline{#1}}

\newcommand \Bref[1] {\siBookdeux{\ref{#1}}}
\newcommand \Fref[1] {\siFFR{\ref{#1}}}
\newcommand \Dref[1] {\siDiviseurs{\ref{#1}}}
\newcommand \Cref[1] {[CACM, #1]}
\newcommand \cref[1] {\Cref{#1}}
\newcommand \thCref[1] {\Cref{\tho~#1}}
\newcommand \thcref[1] {\thCref{#1}}
\newcommand \aref[1] {\cref{#1}}
\newcommand \eref[1] {\emph{\ref{#1}}}
\newcommand \vref[1] {\ref{#1}}
\newcommand \vpageref[1] {\paref{#1}}
\newcommand{\pref}[1]{\textup{\hbox{\normalfont(\iref{#1})}}}
\newcommand \egrf[1] {\egt~\pref{#1}}
\newcommand \egref[1] {\egt~\pref{#1}  \paref{#1}}

\newcommand \VRT[1]{\rotatebox{90}{\hbox{$#1$}}}
\newcommand \VRTsubseteq{\VRT{\subseteq}}
\newcommand \VRTsupseteq{\VRT{\supseteq}}
\newcommand \VRTlongrightarrow{\VRT{\longrightarrow}}
\newcommand \VRTlongleftarrow{\VRT{\longleftarrow}}
\newcommand \VRTllar{\VRT{\longleftarrow}}
\newcommand \VRTlar{\VRT{\leftarrow}}
\newcommand \VRTlora{\VRT{\lora}}

\newcommand \rC[1]{\MA{{\rm C}_{#1}}}
\newcommand \rF[1]{\MA{{\rm F}_{\!#1}}}
\newcommand \rR[1]{\MA{{\rm R}_{#1}}}
\newcommand \rRs[1]{\MA{\Rs_{#1}}}
\newcommand \ep[1]{^{(#1)}}

\newcommand \bal[1] {^\rK_{#1}}
\newcommand \ul[1] {_\rK^{#1}}
\newcommand \SNw[1] {P_{#1}}
\newcommand \gBtst {\gB[[t]]^{\!\times}}


\newcommand{\dodo}{~~\cdots\cdots\cdots \!\!\!\!\!\!\!\!\!\!\!\!}

\newcommand{\llongrightarrow}{\relbar\joinrel\mkern-1mu\longrightarrow}
\newcommand{\lllongrightarrow}{\relbar\joinrel\mkern-1mu\llongrightarrow}
\newcommand{\llllongrightarrow}{\relbar\joinrel\mkern-1mu\lllongrightarrow}
\newcommand{\lllllongrightarrow}{\relbar\joinrel\mkern-1mu\llllongrightarrow}
\newcommand \lora {\longrightarrow}
\newcommand \llra {\llongrightarrow}
\newcommand \lllra {\lllongrightarrow}
\newcommand \llllra {\llllongrightarrow}
\newcommand \lllllra {\lllllongrightarrow}
\newcommand \simarrow{\vers{_\sim}}
\newcommand \isosim {\simarrow}
\newcommand \vers[1]{\buildrel{#1}\over \lora }
\newcommand \vvers[1]{\buildrel{#1}\over \llra }
\newcommand \vvvers[1]{\buildrel{#1}\over \lllra }
\newcommand \vvvvers[1]{\buildrel{#1}\over \llllra }
\newcommand \vvvvvers[1]{\buildrel{#1}\over \lllllra }
\newcommand{\llongleftarrow}{\longleftarrow\mkern-3mu\relbar\joinrel}

\newcommand \mt{\mapsto}
\newcommand \lmt{\longmapsto}

\renewcommand \le{\leqslant}
\renewcommand \leq{\leqslant}
\renewcommand \preceq{\preccurlyeq}
\renewcommand \ge{\geqslant}
\renewcommand \geq{\geqslant}
\renewcommand \succeq{\succurlyeq}

\newcommand \DeuxCol[2]{%
\sni\mbox{\parbox[t]{.475\textwidth}{#1}%
\hspace{.05\textwidth}%
\parbox[t]{.475\textwidth}{#2}}}

\newcommand \Deuxcol[4]{%
\sni\mbox{\parbox[t]{#1\textwidth}{#3}%
\hspace{.05\textwidth}%
\parbox[t]{#2\textwidth}{#4}}}

\newcommand \DeuxCols[2]{%
\sni\mbox{\hspace{.04\textwidth}%
\parbox[t]{.475\textwidth}{#1}%
\hspace{.03\textwidth}%
\parbox[t]{.475\textwidth}{#2}}}

\newcommand \UneCol[1]{%
\sni\mbox{\hspace{.04\textwidth}%
\parbox[t]{.96\textwidth}{#1}%
}}

\newcommand \UneRegle[2]{%
\sni\mbox{\hspace{.04\textwidth}%
\parbox[t]{.96\textwidth}{
\begin{enumerate}
\lab{#1}{#2} 
\end{enumerate}
}%
}}


\floatstyle{boxed}
\floatname{agc}{Algorithme}
\newfloat{agc}{ht}{lag}[section]

\floatstyle{boxed}
\floatname{agC}{Algorithme}
\newfloat{agC}{H}{lag}[section]

\newenvironment{algor}[1][]%
{\par\smallskip\begin{agc}
\vskip 1mm
\begin{algorithm}{\bfseries#1}
\upshape\sffamily
}
{\end{algorithm}
\end{agc}
}

\newenvironment{algoR}[1][]%
{\par\smallskip\begin{agC}
\vskip 1mm
\begin{algorithm}{\bfseries#1}
\upshape\sffamily
}
{\end{algorithm}
\end{agC}
}

\newcommand \Vrai{\mathsf{Vrai}}
\newcommand \Faux{\mathsf{Faux}}
\newcommand \ET{\mathsf{ et }}
\newcommand \OU{\mathsf{ ou }}
\newcommand \pour[3]{\textbf{pour } $#1$ \textbf{ de } $#2$
           \textbf{ \`a } $#3$ \textbf{ faire }}
\newcommand \pur[2]{\textbf{pour } $#1$ \textbf{ dans } $#2$
            \textbf{ faire }}
\newcommand \por[3]{\textbf{pour } $#1$ \textbf{ de } $#2$
           \textbf{ \`a } $#3$  }
\def\sialors#1{\textbf{si } $#1$ \textbf{ alors }}
\def\tantque#1{\textbf{tant que } $#1$ \textbf{ faire }}
\newcommand \finpour{\textbf{fin pour}}
\newcommand \sinon{\textbf{sinon }}
\newcommand \finsi{\textbf{fin si }}
\newcommand \fintantque{\textbf{fin tant que }}
\newcommand \aff{\leftarrow }
\newcommand \Debut{\\[1mm] \textbf{D\'ebut }}
\newcommand \Fin{\textbf{\\ Fin.}}
\newcommand \Entree{\\[1mm] \textbf{Entr\'ee : }}
\newcommand \Sortie{\\ \textbf{Sortie : }}
\newcommand \Varloc{\\ \textbf{Variables locales : }}
\newcommand \Repeter{\textbf{R\'ep\'eter }}
\newcommand \jusqua{\textbf{jusqu\`a ce que }}
\newcommand \hsz{\\ }
\newcommand \hsu{\\ \hspace*{4mm}}
\newcommand \hsd{\\ \hspace*{8mm}}
\newcommand \hst{\\ \hspace*{1,2cm}}
\newcommand \hsq{\\ \hspace*{1,6cm}}
\newcommand \hsc{\\ \hspace*{2cm}}
\newcommand \hsix{\\ \hspace*{2,4cm}}
\newcommand \hsept{\\ \hspace*{2,8cm}}

\newcommand \legendre {\overwithdelims()}
\newcommand \legendr[2] {\Big(\frac {#1}{#2} \Big)}
\newcommand \som {\sum\nolimits}
\newcommand \ds {\displaystyle}

\newcommand \et{\;\;\hbox{ et }\;\;}


\newcommand \divi {\mid}
\def \nedivi {\not\kern 2.5pt\mid}

\newcommand \vii {\hbox{\,{\footnotesize $\bm{\wedge}$}\,}}
\newcommand \vuu {\hbox{\,{\footnotesize $\bm{\vee}$}\,}}

\newcommand \vu {\vee} 
\newcommand \vi {\wedge} 
\newcommand \cvu {\curlyvee} 
\newcommand \cvi {\curlywedge} 
\newcommand \Vu {\bigvee\nolimits}
\newcommand \Vi {\bigwedge\nolimits}

\newcommand \vd[1] {\,\vdash_{#1}\,}
\newcommand \vda {\,\vdash\,}
\newcommand \vdg {\mathrel{\vrule height6.94444pt width1pt\vrule height3.97222pt width5.11111pt depth-2.97222pt}}  
\newcommand \Vd {\,\,\vdg\,\,}

\newcommand \Exists {\rotatebox[origin=c]{180}{\tsbf{E}}\hspace{.1em}} 
\newcommand \Bot {\rotatebox[origin=c]{180}{\tsbf{T}}\hspace{.1em}} 
\newcommand \Top {{\tsbf{T}}\hspace{.1em}}

\newcommand \im {\rightarrow} 
\newcommand \dar[1] {\MA{\downarrow \!#1}}
\newcommand \uar[1] {\MA{\uparrow \!#1}}

\newcommand \Un {\mathbf{1}}
\newcommand \Deux {\mathbf{2}}
\newcommand \Trois {\mathbf{3}}
\newcommand \Quatre {\mathbf{4}}
\newcommand \Cinq {\mathbf{5}}


\newcommand \Pf {{\rm P}_{{\rm f}}}
\newcommand \Pfe {{\rm P}_{{\rm fe}}}
\newcommand \Pfs {{\rm P}_{{\rm fe}}^*}

\newcommand \Ex {{\exists}}
\newcommand \Tt {{\forall}}

\newcommand \Lst{\mathsf{Lst}}
\newcommand \Irr{\mathsf{Irr}}
\newcommand \Prim{\mathsf{Prim}}
\newcommand \Rec{\mathsf{Rec}}

\newcommand \FAN{\tsbf{FAN}\xspace}
\newcommand \KL{\FAN }
\newcommand \KLp{\tsbf{KL}$_2^+$\xspace}
\newcommand \KLz{\tsbf{KL}$_2$}
\newcommand \kl{\tsbf{KL}$_1$\xspace}
\newcommand \HC{\tsbf{HC}\xspace}
\newcommand \LLPO{\tsbf{LLPO}\xspace}
\newcommand \LPO{\tsbf{LPO}\xspace}
\newcommand \MP{\tsbf{MP}\xspace}
\newcommand \TEM{\tsbf{PTE}\xspace}
\newcommand \UC{\tsbf{UC}\xspace}
\newcommand \UCp{\tsbf{UC}$^+$\xspace}
\newcommand \Mini{\tsbf{Min}\xspace}
\newcommand \Minip{\tsbf{Min}$^+$\xspace}
\newcommand \Minim{\tsbf{Min}$^-$\xspace}

\newcommand \bul{^\bullet}
\newcommand \eci{^\circ}
\newcommand \esh{^\sharp}
\newcommand \efl{^\flat}
\newcommand \epr{^\perp}
\newcommand \eti{^\times}
\newcommand \etl{^* }
\newcommand \eto{$^*\!$\xspace}
\newcommand \sta{^\star}
\newcommand \ist{_\star}
\newcommand \eo {^{\mathrm{op}}}

\newcommand \Abul {\gA\!\bul}
\newcommand \kbul {\gk\bul}
\newcommand \Ati {\gA^{\!\times}}
\newcommand \Asta {\gA^{\!\star}}
\newcommand \Atl {\gA^{\!*}}
\newcommand \Btl {\gB^{*}}
\newcommand \Ktl {\gK^{*}}
\newcommand \Bti {\gB^{\times}}
\newcommand \Bst {\Bti}
\newcommand{\KAt}{\Ktl\!\sur{\Ati}}
\newcommand{\AAt}{\Atl\!\sur{\Ati}}
\newcommand \te  {\otimes}

\newcommand \Arg {\gA^{\!\mathrm{rg}}}
\newcommand \Brg {\gB^{\mathrm{rg}}}
\newcommand \Krg {\gK^{\mathrm{rg}}}

\newcommand \iBA {_{\gB/\!\gA}}
\newcommand \iBk {_{\gB/\gk}}
\newcommand \iAk {_{\gA/\gk}}
\newcommand \iCk {_{\gC/\gk}}

\newcommand \tgaBG {\gB\{G\}}  
\newcommand \zcoho {Z^1(G, \Bti)}
\newcommand \bcoho {B^1(G, \Bti)}
\newcommand \hcoho {H^1(G, \Bti)}

\newcommand \vep{{\varepsilon}}

\newcommand \equidef{\buildrel{{\rm def}}\over{\;\Longleftrightarrow\;}}



\newcommand \noi {\noindent}
\newcommand \sni {\smallskip\noi}
\newcommand \snii {}
\newcommand \ms {\medskip}
\newcommand \mni {\ms\noi}
\newcommand \bs {\bigskip}
\newcommand \bni {\bs\noi}
\newcommand \ce{\centering}
\newcommand \alb{\allowbreak}

\newcommand \ua  {{\underline{a}}}
\newcommand \uap {{\underline{a'}}}
\newcommand \ual {{\underline{\alpha}}}
\newcommand \ub  {{\underline{b}}}
\newcommand \ube {{\underline{\beta}}}
\newcommand \uc  {{\underline{c}}}
\newcommand \ucx  {{\uc,\ux}}
\newcommand \uci[1]{{\buildrel{\circ}\over{#1}}}
\newcommand \ud  {{\underline{d}}}
\newcommand \udel{{\underline{\delta}}}
\newcommand \ue  {{\underline{e}}}
\newcommand \uf  {{\underline{f}}}
\newcommand \uF  {{\underline{F}}}
\newcommand \ug  {{\underline{g}}}
\newcommand \uh  {{\underline{h}}}
\newcommand \uH  {{\underline{H}}}
\newcommand \uga {{\underline{\gamma}}}
\newcommand \um  {{\underline{m}}}
\newcommand \ur  {{\underline{r}}}
\newcommand \us  {{\underline{s}}}
\newcommand \ut  {{\underline{t}}}
\newcommand \uu  {{\underline{u}}}
\newcommand \ux  {{\underline{x}}}
\newcommand \uxy  {{\ux,\uy}}
\newcommand \uyx  {{\uy,\ux}}
\newcommand \uxi {{\underline{\xi}}}
\newcommand \uy  {{\underline{y}}}
\newcommand \uP  {{\underline{P}}}
\newcommand \uS  {{\underline{S}}}
\newcommand \uT  {{\underline{T}}}
\newcommand \uU  {{\underline{U}}}
\newcommand \uv  {{\underline{v}}}
\newcommand \uw  {{\underline{w}}}
\newcommand \uX  {{\underline{X}}}
\newcommand \uY  {{\underline{Y}}}
\newcommand \uZ  {{\underline{Z}}}
\newcommand \uz  {{\underline{z}}}
\newcommand \uzeta  {{\underline{\zeta}}}
\newcommand \uze {{\underline{0}}}

\newcommand \ak {a_1,\ldots,a_k}
\newcommand \an {a_1,\ldots,a_n}
\newcommand \am {a_1,\ldots,a_m}
\newcommand \aq {a_1,\ldots,a_q}
\newcommand \bk {b_1,\ldots,b_k}
\newcommand \bn {b_1,\ldots,b_n}
\newcommand \br {b_1,\ldots,b_r}
\newcommand \bbm {b_1,\ldots,b_m}
\newcommand \azn {a_0,\ldots,a_n}
\newcommand \bzn {b_0,\ldots,b_n}
\newcommand \czn {c_0,\ldots,c_n}
\newcommand \ck {c_1,\ldots,c_k}
\newcommand \cn {c_1,\ldots,c_n}
\newcommand \cq {c_1,\ldots,c_q}
\newcommand \gq {g_1,\ldots,g_q}
\newcommand \hr {h_1,\ldots,h_r}
\newcommand \hn {h_1,\ldots,h_n}
\newcommand \sn {s_1,\ldots,s_n} 
\newcommand \uk {u_1,\ldots,u_k} 
\newcommand \vk {v_1,\ldots,v_k} 
\newcommand \un {u_1,\ldots,u_n} 
\newcommand \vn {v_1,\ldots,v_n} 
\newcommand \vp {v_1,\ldots,v_p} 
\newcommand \xk {x_1,\ldots,x_k}
\newcommand \Xk {X_1,\ldots,X_k}
\newcommand \xl {x_1,\ldots,x_\ell}
\newcommand \xm {x_1,\ldots,x_m}
\newcommand \xn {x_1,\ldots,x_n}
\newcommand \xp {x_1,\ldots,x_p}
\newcommand \xq {x_1,\ldots,x_q}
\newcommand \xzk {x_0,\ldots,x_k}
\newcommand \xzn {x_0,\ldots,x_n}
\newcommand \xhn {x_0:\ldots:x_n}
\newcommand \Xn {X_1,\ldots,X_n}
\newcommand \Xzn {X_0,\ldots,X_n}
\newcommand \Xm {X_1,\ldots,X_m}
\newcommand \Xq {X_1,\ldots,X_q}
\newcommand \Xr {X_1,\ldots,X_r}
\newcommand \xr {x_1,\ldots,x_r}

\newcommand \Yd {Y_1,\ldots,Y_d}
\newcommand \yk {y_1,\ldots,y_k}
\newcommand \ym {y_1,\ldots,y_m}
\newcommand \Ym {Y_1,\ldots,Y_m}
\newcommand \Yn {Y_1,\ldots,Y_n}
\newcommand \yn {y_1,\ldots,y_n}
\newcommand \yp {y_1,\ldots,y_p}
\newcommand \Yr {Y_1,\ldots,Y_r}
\newcommand \yq {y_1,\ldots,y_q}
\newcommand \yr {y_1,\ldots,y_r}
\newcommand \Ys {Y_1,\ldots,Y_s}

\newcommand \zn {z_1,\ldots,z_n}
\newcommand \Zn {Z_1,\ldots,Z_n}

\newcommand \Sun {$S_1$, $\dots$, $S_n$ }
\newcommand \Sunz {$S_1$, $\dots$, $S_n$}

\newcommand \xpn {x'_1,\ldots,x'_n}
\newcommand \uxp  {{\underline{x'}}}
\newcommand \ypm {y'_1,\ldots,y'_m}
\newcommand \uyp  {{\underline{y'}}}

\newcommand \aln {\alpha_1,\ldots,\alpha_n}
\newcommand \gan {\gamma_1,\ldots,\gamma_n}
\newcommand \xin {\xi_1,\ldots,\xi_n}
\newcommand \xihn {\xi_0:\ldots:\xi_n}

\newcommand \lar {a_1,\ldots,a_r}
\newcommand \lfc {f_1,\ldots,f_c}
\newcommand \lfk {f_1,\ldots,f_k}
\newcommand \lfm {f_1,\ldots,f_m}
\newcommand \lfn {f_1,\ldots,f_n}
\newcommand \lfp {f_1,\ldots,f_p}
\newcommand \lfq {f_1,\ldots,f_q}
\newcommand \lfs {f_1,\ldots,f_s}
\newcommand \lfr {f_1,\ldots,f_r}
\newcommand \lgm {g_1,\ldots,g_m}
\newcommand \lgn {g_1,\ldots,g_n}
\newcommand \lgp {g_1,\ldots,g_p}
\newcommand \lgq {g_1,\ldots,g_q}

\newcommand \Cin{C^{\infty}}
\newcommand \Ared {\gA\red}
\newcommand \Asep {\gA_{\rm sep}}
\newcommand \Aqim {\gA\qim}
\newcommand \Amin {\Aqim}
\newcommand \Aqi {\gA_\mathrm{qi}}

\newcommand \AT {{\gA[T]}}
\newcommand \AX {{\gA[X]}}
\newcommand \CT {{\gC[T]}}
\newcommand \CX {{\gC[X]}}
\newcommand \AY {{\gA[Y]}}
\newcommand \ArX {{\gA\lra X}}
\newcommand \ArY {{\gA\lra Y}}
\newcommand \BX {{\gB[X]}}
\newcommand \BY {{\gB[Y]}}
\newcommand \BT {{\gB[T]}}
\newcommand \kX {{\gk[X]}}
\newcommand \kT {{\gk[T]}}
\newcommand \KT {{\gK[T]}}
\newcommand \KX {{\gK[X]}}
\newcommand \KY {{\gK[Y]}}
\newcommand \QQX {{\QQ[X]}}
\newcommand \VX {{\gV[X]}}
\newcommand \ZZX {{\ZZ[X]}}
\newcommand \ZZx {{\ZZ[x]}}

\newcommand \AXY {\AuX \lra Y}

\newcommand \kYd {{\gk[\Yd]}}

\newcommand \kGa {{\gk[\Gamma]}}

\newcommand \kXn {{\gk[\Xn]}}
\newcommand \kYn {{\gk[\Yn]}}
\newcommand \lXn {{\gl[\Xn]}}
\newcommand \AXn {{\gA[\Xn]}}
\newcommand \BXn {{\gB[\Xn]}}
\newcommand \CXn {{\gC[\Xn]}}
\newcommand \KXn {{\gK[\Xn]}}
\newcommand \LXn {{\gL[\Xn]}}
\newcommand \RXn {{\gR[\Xn]}}
\newcommand \QQXn{{\QQ[\Xn]}}
\newcommand \VXn {{\gV[\Xn]}}
\newcommand \ZZXn{{\ZZ[\Xn]}}

\newcommand \AXq {{\gA[\Xq]}}

\newcommand \AXk  {{\gA[\Xk]}}
\newcommand \QQXk {{\QQ[\Xk]}}
\newcommand \ZZXk {{\ZZ[\Xk]}}

\newcommand \kXr {{\gk[\Xr]}}
\newcommand \lXr {{\gl[\Xr]}}
\newcommand \AXr {{\gA[\Xr]}}
\newcommand \KXr {{\gK[\Xr]}}
\newcommand \KYr {{\gK[\Yr]}}
\newcommand \LXq {{\gL[\Xq]}}
\newcommand \LXr {{\gL[\Xr]}}
\newcommand \RXr {{\gR[\Xr]}}
\newcommand \VXr {{\gV[\Xr]}}

\newcommand \AXm {{\gA[\Xm]}}
\newcommand \KKXm {{\KK[\Xm]}}
\newcommand \kkXm {{\kk[\Xm]}}
\newcommand \kXm {{\gk[\Xm]}}
\newcommand \KXm {{\gK[\Xm]}}
\newcommand \KYm {{\gK[\Ym]}}
\newcommand \kYm {{\gk[\Ym]}}
\newcommand \BYm {{\gB[\Ym]}}

\newcommand \kYr {{\gk[\Yr]}}
\newcommand \AYr {{\gA[\Yr]}}
\newcommand \lYr {{\gl[\Yr]}}
\newcommand \LYr {{\gL[\Yr]}}
\newcommand \RYr {{\gR[\Yr]}}

\newcommand \kyr {{\gk[\yr]}}
\newcommand \Ayr {{\gA[\yr]}}
\newcommand \Kyr {{\gK[\yr]}}

\newcommand \kym {{\gk[\ym]}}
\newcommand \kyn {{\gk[\yn]}}
\newcommand \Ayn {{\gA[\yn]}}
\newcommand \Ryn {{\gR[\yn]}}

\newcommand \kYs {{\gk[\Ys]}}

\newcommand \AYn {{\gA[\Yn]}}

\newcommand \Axr {{\gA[\xr]}}
\newcommand \Kxr {{\gK[\xr]}}
\newcommand \kxr {{\gk[\xr]}}

\newcommand \Kxzn {{\gK[\xzn]}}
\newcommand \RXzn {{\gR[\Xzn]}}
\newcommand \Rxzn {{\gR[\xzn]}}

\newcommand \kxm {{\gk[\xm]}}

\newcommand \kxn {{\gk[\xn]}}
\newcommand \lxn {{\gl[\xn]}}
\newcommand \Axn {{\gA[\xn]}}
\newcommand \Bxn {{\gB[\xn]}}
\newcommand \Cxn {{\gC[\xn]}}
\newcommand \Kxn {{\gK[\xn]}}
\newcommand \Kyn {{\gK[\yn]}}
\newcommand \Lxn {{\gL[\xn]}}
\newcommand \Rxn {{\gR[\xn]}}

\newcommand \Aux {{\gA[\ux]}}
\newcommand \Auy {{\gA[\uy]}}
\newcommand \Bux {{\gB[\ux]}}
\newcommand \Kuy {{\gK[\uy]}}
\newcommand \Kuu {{\gK[\uu]}}
\newcommand \Kux {{\gK[\ux]}}
\newcommand \Lux {{\gL[\ux]}}
\newcommand \kux {{\gk[\ux]}}
\newcommand \kuy {{\gk[\uy]}}
\newcommand \Rux {{\gR[\ux]}}
\newcommand \Ruy {{\gR[\uy]}}

\newcommand \ZZuS {{\ZZ[\uS]}}

\newcommand \AuX {{\gA[\uX]}}
\newcommand \BuX {{\gB[\uX]}}
\newcommand \KuX {{\gK[\uX]}}
\newcommand \kuX {{\gk[\uX]}}
\newcommand \LuX {{\gL[\uX]}}
\newcommand \RuX {{\gR[\uX]}}
\newcommand \KKuX {{\KK[\uX]}}
\newcommand \kkuX {{\kk[\uX]}}
\newcommand \ZZuX {{\ZZ[\uX]}}
\newcommand \QQuX {{\QQ[\uX]}}
\newcommand \AuY {{\gA[\uY]}}
\newcommand \BuY {{\gB[\uY]}}
\newcommand \KuY {{\gK[\uY]}}
\newcommand \kuY {{\gk[\uY]}}

\newcommand \Gn  {\gG_n}
\newcommand \Gnk {\gG_{n,k}}
\newcommand \Gnr {\gG_{n,r}}
\newcommand \cGn {{\cG_n}}
\newcommand \cGnk{{\cG_{n,k}}}

\newcommand \GGn {\GG_{n}}
\newcommand \GGnk{{\GG_{n,k}}} 
\newcommand \GGnr{{\GG_{n,r}}}
\newcommand \GA  {\mathbb{GA}}
\newcommand \GAn {\GA_{n}}  
\newcommand \GAq {\GA_{q}}
\newcommand \GAnk{\GA_{n,k}}
\newcommand \GAnr{\GA_{n,r}}

\newcommand \Mm {{\MM_{m}}}
\newcommand \Mn {{\MM_{n}}}
\newcommand \Mk {{\MM_{k}}}
\newcommand \Mq {{\MM_{q}}}
\newcommand \Mr {{\MM_{r}}}
\newcommand \MMn {{\MM_{n}}}

\newcommand \Bo{\BB\mathrm{o}}

\newcommand \GL {\mathbb{GL}}
\newcommand \GLn {{\GL_n}}
\newcommand \Gl {\mathbf{GL}}
\newcommand \Gln {{\Gl_n}}
\newcommand \PGL {\mathbb{PGL}}
\newcommand \SL {\mathbb{SL}}
\newcommand \SLn {{\SL_n}}
\newcommand \EE {\mathbb{E}}
\newcommand \En {\EE_n}
\newcommand \Pn {\PP^n}
\newcommand \Pm {\PP^m}
\newcommand \An {\AA^{\!n}}
\newcommand \Am {\AA^{\!m}}
\newcommand \Sl {\mathbf{SL}}
\newcommand \Sln {{\Sl_n}}

\newcommand \PnK {\Pn(\gK)}
\newcommand \PnA {\Pn(\gA)}
\newcommand \PnL {\Pn(\gL)}
\newcommand \PmL {\Pm(\gL)}

\newcommand \I  {\mathrm{I}}
\newcommand \G  {\mathrm{G}}

\newcommand \rA {\mathrm{A}}
\newcommand \rD {\mathrm{D}}
\newcommand \rE {\mathrm{E}}
\newcommand \rG {\mathrm{G}}
\newcommand \rH {\mathrm{H}}
\newcommand \rI {\mathrm{I}}
\newcommand \rJ {\mathrm{J}}
\newcommand \rK {\mathrm{K}}
\newcommand \rL {\mathrm{L}}
\newcommand \rM {\mathrm{M}}
\newcommand \rN {\mathrm{N}}
\newcommand \rO {\mathrm{O}}
\newcommand \rP {\mathrm{P}}
\newcommand \rQ {\mathrm{Q}}
\newcommand \rS {\mathrm{S}}
\newcommand \rT {\mathrm{T}}
\newcommand \rX {\mathrm{X}}
\newcommand \rY {\mathrm{Y}}
\newcommand \rZ {\mathrm{Z}}

\newcommand \rc {\mathrm{c}}
\newcommand \rd {\mathrm{d}}
\newcommand \rv {\mathrm{v}}

\renewcommand \AA{\mathbb{A}}
\newcommand \BB{\mathbb{B}}
\newcommand \CC{\mathbb{C}}
\newcommand \FF{\mathbb{F}}
\newcommand \FFp{\FF_p}
\newcommand \FFq{\FF_q}
\newcommand \GG{\mathbb{G}}
\newcommand \II{\mathbb{I}}
\newcommand \KK{\mathbb{K}}
\newcommand \kk{\mathbbmtt{k}} 
\newcommand \MM{\mathbb{M}}
\newcommand \NN{\mathbb{N}}
\newcommand \OO{\mathbb{O}}
\newcommand \PP{\mathbb{P}}
\newcommand \QQ{\mathbb{Q}}
\newcommand \UU{\mathbb{U}}
\newcommand \VV{\mathbb{V}}
\newcommand \ZZ{\mathbb{Z}}
\newcommand \ZB{\mathbb{ZB}}
\newcommand \RR{\mathbb{R}}
\newcommand \ASL {\mathbb{ASL}}
\newcommand \AGL {\mathbb{AGL}}

\newcommand \Z{\mathbb{Z}} 

\newcommand \cA {\mathcal{A}}
\newcommand \cB {\mathcal{B}}
\newcommand \cC {\mathcal{C}}
\newcommand \cD {\mathcal{D}}
\newcommand \cE {\mathcal{E}}
\newcommand \Diff {\mathcal{D}}
\newcommand \cG {\mathcal{G}}
\newcommand \cI {\mathcal{I}}
\newcommand \cJ {\mathcal{J}}
\newcommand \cF {\mathcal{F}}
\newcommand \cK {\mathcal{K}}
\newcommand \cL {\mathcal{L}}
\newcommand \cO {\mathcal{O}}
\newcommand \cP {\mathcal{P}}
\newcommand \cR {\mathcal{R}}
\newcommand \cM {\mathcal{M}}
\newcommand \cN {\mathcal{N}}
\newcommand \cS {\mathcal{S}}
\newcommand \cT {\mathcal{T}}
\newcommand \cV {\mathcal{V}}
\newcommand \cZ {\mathcal{Z}}

\newcommand \SK {\cS^\rK}
\newcommand \IK {\cI^\rK}
\newcommand \JK {\cJ^\rK}
\newcommand \IH {\cI^\rH}
\newcommand \JH {\cJ^\rH}

\newcommand \ga{\mathbf{a}}
\newcommand \gb{\mathbf{b}}
\newcommand \gc{\mathbf{c}}
\newcommand \gh{\mathbf{h}}
\newcommand \gk{\mathbf{k}}
\newcommand \gl{\mathbf{l}}
\newcommand \gs{\mathbf{s}}
\newcommand \gv{\mathbf{v}}
\newcommand \gw{\mathbf{w}}
\newcommand \gA{\mathbf{A}}
\newcommand \gB{\mathbf{B}}
\newcommand \gC{\mathbf{C}}
\newcommand \gD{\mathbf{D}}
\newcommand \gE{\mathbf{E}}
\newcommand \gF{\mathbf{F}}
\newcommand \gG{\mathbf{G}}
\newcommand \gK{\mathbf{K}}
\newcommand \gL{\mathbf{L}}
\newcommand \gM{\mathbf{M}}
\newcommand \gQ{\mathbf{Q}}
\newcommand \gR{\mathbf{R}}
\newcommand \gS{\mathbf{S}}
\newcommand \gT{\mathbf{T}}
\newcommand \gU{\mathbf{U}}
\newcommand \gu{\mathbf{u}} 
\newcommand \gV{\mathbf{V}}
\newcommand \gx{\mathbf{x}}
\newcommand \gy{\mathbf{y}}
\newcommand \gz{\mathbf{z}} 
\newcommand \gX{\mathbf{X}}
\newcommand \gW{\mathbf{W}}
\newcommand \gZ{\mathbf{Z}}

\newcommand \fa{\mathfrak{a}}
\newcommand \fb{\mathfrak{b}}
\newcommand \fc{\mathfrak{c}}
\newcommand \fd{\mathfrak{d}}
\newcommand \fA{\mathfrak{A}}
\newcommand \fB{\mathfrak{B}}
\newcommand \fC{\mathfrak{C}}
\newcommand \fD{\mathfrak{D}}
\newcommand \fG{\mathfrak{G}}
\newcommand \fI{\mathfrak{i}}
\newcommand \fII{\mathfrak{I}}
\newcommand \fj{\mathfrak{j}}
\newcommand \fJ{\mathfrak{J}}
\newcommand \ff{\mathfrak{f}}
\newcommand \ffg{\mathfrak{g}}
\newcommand \fF{\mathfrak{F}}
\newcommand \fh{\mathfrak{h}}
\newcommand \fl{\mathfrak{l}}
\newcommand \fm{\mathfrak{m}}
\newcommand \fn{\mathfrak{n}} 
\newcommand \fp{\mathfrak{p}}
\newcommand \fq{\mathfrak{q}}
\newcommand \fM{\mathfrak{M}}
\newcommand \fN{\mathfrak{N}}
\newcommand \fP{\mathfrak{P}}
\newcommand \fQ{\mathfrak{Q}}
\newcommand \fR{\mathfrak{R}}
\newcommand \fx{\mathfrak{x}}
\newcommand \fy{\mathfrak{y}}
\newcommand \fV{\mathfrak{V}}
\newcommand \fZ{\mathfrak{Z}}

\newcommand \scC{\mathscr{C}}
\newcommand \scH{\mathscr{H}}
\newcommand \scR{\mathscr{R}}

\newcommand{\bma}{\bm{a}}
\newcommand{\bmb}{\bm{b}}
\newcommand{\bmc}{\bm{c}}
\newcommand{\bmd}{\bm{d}}
\newcommand{\bme}{\bm{e}}
\newcommand{\bmf}{\bm{f}}
\newcommand{\bmu}{\bm{u}}
\newcommand{\bmv}{\bm{v}}
\newcommand{\bmw}{\bm{w}}
\newcommand{\bmy}{\bm{y}}
\newcommand{\bmx}{\bm{x}}
\newcommand{\bmz}{\bm{z}}

\newcommand \Kbu {{\rK\bul}}
\newcommand \ibu {{_\bullet}}
\newcommand \itbu {\item [$\bullet$]}
\newcommand \intd {\MA{\,\llcorner\,}}
\newcommand \intg {\MA{\,\lrcorner\,}}


\newcommand \Ig {\mathrm{Ig}}
\newcommand \Id {\mathrm{Id}}
\newcommand \In {{\rI_n}}
\newcommand \J {\MA{\mathrm{Jac}}}
\newcommand \Jf {\MA{\mathfrak{Jac}}}
\newcommand \JJ {\MA{\mathrm{JAC}}}
\newcommand \Sn {{\mathrm{S}_n}}

\newcommand \DA {\rD_{\!\gA}}
\newcommand \DB {\rD_\gB}
\newcommand \DV {\rD_\gV}
\newcommand \JA {\rJ_\gA}

\newcommand \red {_{\mathrm{red}}}
\newcommand \qim {_{\mathrm{min}}}
\newcommand \rja {\mathrm{Ja}}

\newcommand \ide {\mathrm{e}}

\newcommand \Adj {\MA{\mathrm{Adj}}}
\newcommand \adj {\MA{\mathrm{adj}}}
\newcommand \Adu {\MA{\mathrm{Adu}}}
\newcommand \Ann {\mathrm{Ann}}
\newcommand \Aut {\MA{\mathrm{Aut}}}
\newcommand \BZ {\mathrm{BZ}}
\newcommand \car {\MA{\mathrm{car}}}
\newcommand \Cl {\MA{\mathrm{Cl}}}
\newcommand \ClW {\MA{\mathrm{Cl}_{\mathrm{W}}}}
\newcommand \Coker {\MA{\mathrm{Coker}}}
\newcommand \Cont{\mathrm{Co}}
\newcommand \DDiv {\MA{\mathrm{Dv}}}
\renewcommand \det {\MA{\mathrm{det}}}
\renewcommand \deg {\MA{\mathrm{deg}}}
\newcommand \Diag {\MA{\mathrm{Diag}}}
\newcommand \di {\MA{\mathrm{di}}}
\newcommand \disc {\MA{\mathrm{disc}}}
\newcommand \Disc {\MA{\mathrm{Disc}}}
\newcommand \Div {\MA{\mathrm{Div}}}
\newcommand \DivA {\Div\gA }
\newcommand \DivAp {(\Div\gA)^{+} }
\newcommand \DivB {\Div\gB }
\newcommand \DivBp {(\Div\gB)^{+} }
\newcommand \DkM {\MA{\mathrm{DkM}}}
\newcommand \dv {\MA{\mathrm{div}}}
\newcommand \dvA {\dv_\gA }
\newcommand \dvB {\dv_\gB }
\newcommand \ev {{\mathrm{ev}}}
\newcommand \End {\MA{\mathrm{End}}}
\newcommand \Fix {\MA{\mathrm{Fix}}}
\newcommand \Frac {\MA{\mathrm{Frac}}}
\newcommand \Gal {\MA{\mathrm{Gal}}}
\newcommand \Gfr {\MA{\mathrm{Gfr}}}
\newcommand \gr {\MA{\mathrm{gr}}}
\newcommand \Gram {\MA{\mathrm{Gram}}}
\newcommand \gram {\MA{\mathrm{gram}}}
\newcommand \Grl {\MA{\mathrm{Grl}}}
\newcommand \GRL {\MA{\mathrm{GRL}}}
\newcommand \Grlg {\MA{\mathrm{Grlg}}}
\newcommand \GRLG {\MA{\mathrm{GRLG}}}
\newcommand \Grlgo {\MA{\mathrm{Grlgo}}}
\newcommand \GRLGO {\MA{\mathrm{GRLGO}}}
\newcommand \hauteur {\mathrm{hauteur}}
\newcommand \Ker {\MA{\mathrm{Ker}}}
\newcommand \Hom {\MA{\mathrm{Hom}}}
\newcommand \Idif {\MA{\mathrm{Idif}}}
\newcommand \Idv {\MA{\mathrm{Idv}}}
\newcommand \Ifr {\MA{\mathrm{Ifr}}}
\newcommand \Icl {\MA{\mathrm{Icl}}}
\renewcommand \Im {\MA{\mathrm{Im}}}
\newcommand \Lin {\mathrm{L}}
\newcommand \LIN {\mathrm{Lin}}
\newcommand \Lsf {\MA{\mathrm{Lsf}}}
\newcommand \Mat {\MA{\mathrm{Mat}}}
\newcommand \Mip {\mathrm{Min}}
\newcommand \md {\mathrm{md}}
\newcommand \Mgcd {\MA{\mathrm{Mgcd}}}
\newcommand \mod {\;\mathrm{mod}\;}
\newcommand \Mor {\MA{\mathrm{Mor}}}
\newcommand \poids {\mathrm{poids}}
\newcommand \poles {\hbox {\rm p\^oles}}
\newcommand \pgcd {\MA{\mathrm{pgcd}}}
\newcommand \ppcm {\MA{\mathrm{ppcm}}}
\newcommand \Rad {\MA{\mathrm{Rad}}}
\newcommand \Reg {\MA{\mathrm{Reg}}}
\newcommand \rg{\MA{\mathrm{rg}}}
\newcommand \rgst {\mathrm{rgst}}
\newcommand \Res {\mathrm{Res}}
\newcommand \Rs {\MA{\mathrm{Rs}}}
\newcommand \rPr{\MA{\mathrm{Pr}}}
\newcommand \Rv {\mathrm{Rv}}
\newcommand \Stp {\MA{\mathrm{Stp}}}
\newcommand \Set {\mathrm{Set}}
\newcommand \St {\mathrm{St}}
\newcommand \Tri {\MA{\mathrm{Tri}}}
\newcommand \Tor {\MA{\mathrm{Tor}}}
\newcommand \tr {\MA{\mathrm{tr}}}
\newcommand \Tr {\MA{\mathrm{Tr}}}
\newcommand \Tsc {\MA{\mathrm{Tsch}}}
\newcommand \Um {\MA{\mathrm{Um}}}
\newcommand \val {\MA{\mathrm{val}}}
\newcommand \Suslin{{\rm Suslin}}
\newcommand{\DBxk}{{\Der \gk\gB\xi}}%
\newcommand{\DAxk}{{\Der \gk\gA\xi}}%
\newcommand{\DkXxk}{{\Der \gk\kuX\xi}}%
\newcommand \DAbul {\rD_{\!\Abul}}

\newcommand \Syl {\mathrm{Syl}}
\newcommand \Syuf {\Syl_{(\uf)}}
\newcommand \Syfs {\Syl_{(\lfs)}}
\newcommand \Sylv{\mathrm{Sylv}} 
\newcommand \Esylv{\mathrm{Esylv}} 
\newcommand \Sylh[2]{\Sylv^{{\rm ho},#1}_{#2}} 
\newcommand \Esylh[2]{\Esylv^{{\rm ho},#1}_{#2}} 

\newcommand \rgl {\rg^\lambda}
\newcommand \rgg {\rg^\gamma}

\newcommand \HS {\MA{\mathit{HS}}}
\newcommand \HSA {\HS_{\!\gA}}


\newcommand \sfP {\mathsf{P}}
\newcommand \sfC {\mathsf{C}}

\renewcommand \Div {\MA{\mathsf{Div}}}
\newcommand \AnCo {\MA{\mathsf{AnCo}}}
\newcommand \Ens {\MA{\mathsf{Ens}}}
\newcommand \AMod {\MA{\mathsf{AMod}}}

\newcommand \GK {\MA{\mathsf{GK}}}
\newcommand \GKO {\MA{\mathsf{GK}_0}}
\newcommand \HO {\MA{\mathsf{H}_0}}
\newcommand \HOp {\MA{\mathsf{H}_0^+}}
\newcommand \HeA {{\Heit\gA}}
\newcommand \Heit {\MA{\mathsf{Heit}}}
\newcommand \Hspec {\MA{\mathsf{Hspec}}}
\newcommand \Jspec {\MA{\mathsf{Jspec}}}
\newcommand \jspec {\MA{\mathsf{jspec}}}
\newcommand \KO {\MA{\mathsf{K}_0}}
\newcommand \KOp {\MA{\mathsf{K}_0^+}}
\newcommand \KTO {\MA{\wi{\mathsf{K}}_0}}
\newcommand \Max {\MA{\mathsf{Max}}}
\newcommand \Min {\MA{\mathsf{Min}}}
\newcommand \OQC {\MA{\mathsf{Oqc}}}
\newcommand \Pic {\MA{\mathsf{Pic}}}
\newcommand \Spec {\MA{\mathsf{Spec}}}
\newcommand \SpecA {\Spec\gA}
\newcommand \SpecT {\Spec\gT}
\newcommand \Zar {\MA{\mathsf{Zar}}}
\newcommand \Proj {\MA{\mathsf{Proj}}}
\newcommand \Vpr {\MA{\mathsf{Vpr}}}
\newcommand \ZF {\MA{\mathsf{ZF}}}
\newcommand \ZarA {{\Zar\gA}}
\newcommand \STR {\MA{\rm SR}} 
\newcommand \Str {\MA{\mathsf{Sr}}}
\newcommand \Adim {\MA{\mathsf{Adim}}}
\newcommand \Bdim {\MA{\mathsf{Bdim}}}
\newcommand \Cdim {\MA{\mathsf{Cdim}}}
\newcommand \Edim {\MA{\mathsf{Edim}}}
\newcommand \Fdim {\MA{\mathsf{Fdim}}}
\newcommand \Fd {\MA{\mathsf{Fd}}}
\newcommand \Gdim {\MA{\mathsf{Gdim}}}
\newcommand \geodim {\MA{\mathsf{geodim}}}
\newcommand \Hdim {\MA{\mathsf{Hdim}}}
\newcommand \Jdim {\MA{\mathsf{Jdim}}}
\newcommand \jdim {\MA{\mathsf{jdim}}}
\newcommand \Kdim {\MA{\mathsf{Kdim}}}
\newcommand \Divdim {\MA{\mathsf{Divdim}}}
\newcommand \Ld {\MA{\mathsf{Ld}}}
\newcommand \pdim {\MA{\mathsf{pdim}}}
\newcommand \pd {\MA{\mathsf{pd}}}
\newcommand \Pdim {\MA{\mathsf{Pdim}}}
\newcommand \Pd {\MA{\mathsf{Pd}}}
\newcommand \Sdim {\MA{\mathsf{Sdim}}}
\newcommand \Vdim {\MA{\mathsf{Vdim}}}

\newcommand \Prf {\mathsf{Pr}}
\newcommand \Gr {\mathsf{Gr}}

\newcommand{\zar}{\hbox{{\large$\mathpzc{Zar}$}}}


\newcommand \fRes {\MA{\mathfrak{Res}}}

\newcommand \SPEC {\MA{\mathfrak{Spec}}}
\newcommand \SPECK {\SPEC_\gK}
\newcommand \SPECk {\SPEC_\gk}

\newcommand \Ap {{\gA_\fp}}
\newcommand \zg {\ZZ[G]}

\newcommand \eoe {\hbox{}\nobreak\hfill
\vrule width 1.4mm height 1.4mm depth 0mm \par \smallskip}

\newcommand \eoq{\hbox{}\nobreak
\vrule width 1.4mm height 1.4mm depth 0mm}

\newcommand \cm{cm}

\makeatletter
\DeclareRobustCommand\\{%
  \let \reserved@e \relax
  \let \reserved@f \relax
  \@ifstar{\let \reserved@e \vadjust \let \reserved@f \nobreak
             \@xnewline}%
          \@xnewline}
\makeatother

\newcommand{\blocs}[8]{%
{\setlength{\unitlength}{.0833\textwidth}
\tabcolsep0pt\renewcommand{\arraystretch}{0}%
\begin{tabular}{|c|c|}
\hline
\parbox[t][#3\cm][c]{#1\cm}{\begin{minipage}[c]{#1\cm}
\centering#5
\end{minipage}}&
\parbox[t][#3\cm][c]{#2\cm}{\begin{minipage}[c]{#2\cm}
\centering#6
\end{minipage}}\\
\hline
\parbox[t][#4\cm][c]{#1\cm}{\begin{minipage}[c]{#1\cm}
\centering#7
\end{minipage}}&
\parbox[t][#4\cm][c]{#2\cm}{\begin{minipage}[c]{#2\cm}
\centering#8
\end{minipage}}\\
\hline
\end{tabular}
}}


\newcommand \tri[7]{
$$\quad\quad\quad\quad
\vcenter{\xymatrix@C=1.5cm
{
#1 \ar[d]_{#2} \ar[dr]^{#3} \\
{#4} \ar[r]_{{#5}}   & {#6} \\
}}
\quad\quad \vcenter{\hbox{\small {#7}}\hbox{~\\[1mm] ~ }}
$$
}


\newcommand \carre[8]{
$$
\xymatrix @C=1.2cm{
#1\,\ar[d]^{#4}\ar[r]^{#2}   & \,#3\ar[d]^{#5}   \\
#6\,\ar[r]    ^{#7}    & \,#8  \\
}
$$
}

\newcommand\dcan[8]{
\xymatrix @C=1.2cm{
#1\,\ar[d]_{#4}\ar[r]^{#2}   & \,#3   \\
#6\,\ar[r]_{#7}    & #8\ar[u]_{#5}  
}
}

\newcommand \pun[7]{
$$\quad\quad\quad\quad
\vcenter{\xymatrix@C=1.5cm
{
#1 \ar[d]_{#2} \ar[dr]^{#3} \\
{#4} \ar@{-->}[r]_{{#5}\,!}   & {#6} \\
}}
\quad\quad \vcenter{\hbox{\small {#7}}\hbox{~\\[1mm] ~ }}
$$
}

\newcommand \puN[8]{
$$\hspace{#8}
\vcenter{\xymatrix@C=1.5cm
{
#1 \ar[d]_{#2} \ar[dr]^{#3} \\
{#4} \ar@{-->}[r]_{{#5}\,!}   & {#6} \\
}}
\quad\quad \vcenter{\hbox{\small {#7}}\hbox{~\\[1mm] ~ }}
$$
}

\newcommand \Pun[8]{
$$\quad\quad\quad\quad
\vcenter{\xymatrix@C=1.5cm
{
#1 \ar[d]_{#2} \ar[dr]^{#3} \\
{#4} \ar@{-->}[r]_{{#5}\,!}   & {#6} \\
}}
\quad\quad
\vcenter{\hbox{\small {#7}}
\hbox{~\\[3.5mm] ~ }
\hbox{\small {#8}}
\hbox{~\\[-3.5mm] ~ }}
$$
}


\newcommand \PUN[9]{
$$\quad\quad
\vcenter{\xymatrix@C=1.5cm
{
#1 \ar[d]_{#2} \ar[dr]^{#3} \\
{#4} \ar@{-->}[r]_{{#5}\,!}   & {#6} \\
}}
\quad\quad
\vcenter{
\hbox{\small {#7}}
\hbox{~\\[-3mm] ~}
\hbox{\small {#8}}
\hbox{~\\[-3mm] ~}
\hbox{\small {#9}}
\hbox{~\\[-3.5mm] ~ }}
$$
}

\newcommand \Pnv[9]{
$$\quad\quad\quad\quad
\vcenter{\xymatrix@C=1.5cm
{
#1 \ar[d]_{#2} \ar[dr]^{#3} \\
{#4} \ar@{-->}[r]_{{#5}\,!}   & {#6} \\
}}
\quad\quad
\vcenter{
\hbox{\small {#7}}
\hbox{~\\[1mm] ~}
\hbox{\small {#8}}
\hbox{~\\[-1mm] ~}
\hbox{\small {#9}}
\hbox{~\\[0mm] ~ }}
$$
}

\newcommand \pnv[9]{
$$
\vcenter{\xymatrix@C=1.5cm
{
#1 \ar[d]_{#2} \ar[dr]^{#3} \\
{#4} \ar@{-->}[r]_{{#5}\,!}   & {#6} \\
}}
\quad\quad
\vcenter{
\hbox{\small {#7}}
\hbox{~\\[1mm] ~}
\hbox{\small {#8}}
\hbox{~\\[-1mm] ~}
\hbox{\small {#9}}
\hbox{~\\[0mm] ~ }}
$$
}

\newcommand \PNV[9]{
$$\quad\quad\quad
\vcenter{\xymatrix@C=1.5cm
{
#1 \ar[d]_{#2} \ar[dr]^{#3} \\
{#4} \ar@{-->}[r]_{{#5}\,!}   & {#6} \\
}}
\quad\quad
\vcenter{
\vspace{4mm}
\hbox{\small {#7}}
\hbox{~\\[-1.7mm] ~}
\hbox{\small {#8}}
\hbox{~\\[-1.7mm] ~}
\hbox{\small {#9}}
\hbox{~\\[2mm] ~ }
}
$$
}


\newdimen\xyrowsp
\xyrowsp=3pt
\newcommand{\SCO}[6]{
\xymatrix @R = \xyrowsp {
                                  &1 \ar@{-}[dl] \ar@{-}[dr] \\
#3 \ar@{-}[ddr]                   &   & #6 \ar@{-}[ddl] \\
                                  &\bullet\ar@{-}[d] \\
                                  &\bullet   \\
#2 \ar@{-}[ddr] \ar@{-}[uur]      &   & #5 \ar@{-}[ddl] \ar@{-}[uul] \\
                                  &\bullet \ar@{-}[d] \\
                                  &\bullet  \\
#1 \ar@{-}[uur]                   &   & #4 \ar@{-}[uul] \\
                                  & 0 \ar@{-}[ul] \ar@{-}[ur] \\
}
}


\makeatletter
\newif\if@borderstar
\def\bordercmatrix{\@ifnextchar*{%
  \@borderstartrue\@bordercmatrix@i}{\@borderstarfalse\@bordercmatrix@i*}%
}
\def\@bordercmatrix@i*{\@ifnextchar[{%
  \@bordercmatrix@ii}{\@bordercmatrix@ii[()]}
}
\def\@bordercmatrix@ii[#1]#2{%
  \begingroup
    \m@th\@tempdima.875em\setbox\z@\vbox{%
      \def\cr{\crcr\noalign{\kern 2\p@\global\let\cr\endline}}%
      \ialign {$##$\hfil\kern.2em\kern\@tempdima&\thinspace%
      \hfil$##$\hfil&&\quad\hfil$##$\hfil\crcr\omit\strut%
      \hfil\crcr\noalign{\kern-\baselineskip}#2\crcr\omit%
      \strut\cr}}%
    \setbox\tw@\vbox{\unvcopy\z@\global\setbox\@ne\lastbox}%
    \setbox\tw@\hbox{\unhbox\@ne\unskip\global\setbox\@ne\lastbox}%
    \setbox\tw@\hbox{%
      $\kern\wd\@ne\kern-\@tempdima\left\@firstoftwo#1%
        \if@borderstar\kern.2em\else\kern -\wd\@ne\fi%
      \global\setbox\@ne\vbox{\box\@ne\if@borderstar\else\kern.2em\fi}%
      \vcenter{\if@borderstar\else\kern-\ht\@ne\fi%
        \unvbox\z@\kern-\if@borderstar2\fi\baselineskip}%
\if@borderstar\kern-2\@tempdima\kern.4em\else\,\fi\right\@secondoftwo#1 $%
    }\null\;\vbox{\kern\ht\@ne\box\tw@}%
  \endgroup
}
\makeatother



\renewcommand\paragraph[1]{

\rdb\addcontentsline{toc}{subsubsection}{#1} \medskip \noindent $\bullet$ \textbf{#1}}

\newcommand{\vou}{\MA{\tsbf{ ou }}}
\newcommand{\Vou}{\MA{\tsbf{OU}}}
\newcommand \EXists[1] {\tsbf{Introduire }{#1}\tsbf{ tel que }\,}
\newcommand \vet {\tsbf{,}\;}


\newcounter{MF}
\newcommand\stMF{\stepcounter{MF}}

\newcommand{\lec}{\stMF\ifodd\value{MF}lecteur\xspace \else 
lectrice\xspace \fi}

\newcommand{\lecs}{\stMF\ifodd\value{MF}lecteurs\xspace \else 
lectrices\xspace \fi}

\newcommand{\alec}{\stMF\ifodd\value{MF}au lecteur\xspace \else%
à la lectrice\xspace \fi}

\newcommand{\dlec}{\stMF\ifodd\value{MF}du lecteur\xspace \else%
de la lectrice\xspace \fi}

\newcommand{\llec}{\stMF\ifodd\value{MF}le lecteur\xspace \else la lectrice\xspace \fi}

\newcommand{\Llec}{\stMF\ifodd\value{MF}Le lecteur\xspace \else La lectrice\xspace \fi}

\newcommand{\lui}{\ifodd\value{MF}lui\xspace \else
elle\xspace \fi}

\newcommand{\celui}{\ifodd\value{MF}celui\xspace \else
celle\xspace \fi}

\newcommand{\ceux}{\ifodd\value{MF}ceux\xspace \else
celles\xspace \fi}

\newcommand{\er}{\ifodd\value{MF}er\xspace \else
ère\xspace \fi}

\newcommand{\eux}{\ifodd\value{MF}eux\xspace \else
elles\xspace \fi}

\newcommand{\eUx}{\ifodd\value{MF}eux\xspace \else
euse\xspace \fi}

\newcommand{\leux}{\ifodd\value{MF}leux \else
leuse \fi}

\newcommand{\il}{\ifodd\value{MF}il\xspace \else
elle\xspace \fi}

\newcommand{\ien}{\ifodd\value{MF}ien\xspace \else
ienne\xspace \fi}

\newcommand{\e}{\ifodd\value{MF}\xspace \else e\xspace \fi}

\newcommand{\n}{\ifodd\value{MF}n \else nne \fi}
\newcommand{\nz}{\ifodd\value{MF}n\else nne\fi}

\makeatletter
\newcommand{\la}{\@ifstar{\ifodd\value{MF}le\xspace\else
la\xspace\fi}{\stMF\ifodd\value{MF}le\xspace \else la\xspace \fi}}
\makeatother

\newcommand \rem{\rdb
\noi{\it Remarque. }}

\newcommand \REM[1]{\rdb
\noi{\it Remarque#1. }}

\newcommand \rems{\rdb
\noi{\it Remarques. }}

\newcommand \exl{\rdb
\noi{\bf Exemple. }}

\newcommand \EXL[1]{\rdb
\noi{\bf Exemple: #1. }}

\newcommand \exls{\rdb
\noi{\bf Exemples. }}

\newcommand \thref[1] {théorème~\ref{#1}}
\newcommand \thrf[1] {\thref{#1} \paref{#1}}
\newcommand \thrfs[2] {théorèmes~\ref{#1} et~\ref{#2}}
\renewcommand \rref[1] {\ref{#1} \paref{#1}}
\newcommand \paref[1] {page~\pageref{#1}}
\newcommand \plgrf[1] {\plg~\ref{#1} \paref{#1}}
\newcommand \plgref[1] {\plg~\ref{#1}}
\newcommand \plgrfs[2] {\plgs~\ref{#1} et~\ref{#2}}
\newcommand \pstfref[1] {Positivstel\-lensatz formel~\ref{#1}}
\newcommand \pstref[1] {Positivstel\-lensatz~\ref{#1}}
\newcommand \eqrf[1] {équation~\pref{#1}}
\newcommand \eqvrf[1] {équation~\pref{#1} \paref{#1}}
\newcommand \prirf[1] {principe~\ref{#1}}

\newcommand\oge{\leavevmode\raise.3ex\hbox{$\scriptscriptstyle\langle\!\langle\,$}}
\newcommand\feg{\leavevmode\raise.3ex\hbox{$\scriptscriptstyle\,\rangle\!\rangle$}}

\newcommand\gui[1]{\oge{#1}\feg}

\newcommand \facile{\begin{proof}
La démonstration est laissée \alec.
\end{proof}
}

\newenvironment{proof}{\ifhmode\par\fi\vskip-\lastskip\vskip0.5ex\global\insidedemotrue
\everypar{}\noindent{\setbox0=\hbox{$\!\!$\DebP }\global\wdTitreEnvir\wd0\box0}
\ignorespaces}{\enddemobox\par\vskip.5em}
\def\enddemobox{\ifinsidedemo
\ifmmode\hbox{$\square$}\else
\ifhmode\unskip\else\noindent\fi\nobreak\null\nobreak\hfill
\nobreak$\square$\fi\fi\global\insidedemofalse}

\newenvironment{Proof}[1]{\ifhmode\par\fi\vskip-\lastskip\vskip0.5ex\global\insidedemotrue
\everypar{}\noindent{\setbox0=\hbox{\it #1}\global\wdTitreEnvir\wd0\box0}\ignorespaces}{\enddemobox\par\vskip.5em}
\def\enddemobox{\ifinsidedemo
\ifmmode\hbox{$\square$}\else
\ifhmode\unskip\else\noindent\fi\nobreak\null\nobreak\hfill
\nobreak$\square$\fi\fi\global\insidedemofalse}

\newcommand \ihi {\index{Hilbert} }
\newcommand \imlg {\index{machinerie locale-globale \elr}\xspace}
\newcommand \imlb {\index{machinerie locale-globale de base (à \ideps)}\xspace}
\newcommand \imla {\index{machinerie locale-globale des \anarsz}\xspace}
\newcommand \imlma {\index{machinerie locale-globale à \idemasz}\xspace}
\newcommand \iplg {\index{principe local-global de base}\xspace}

\newcommand \num {{n$^{\mathrm{ o}}$}}

\newcommand\comm{\rdb
\noi{\it Commentaire. }}

\newcommand\COM[1]{\rdb
\noi{\it Commentaire #1. }}

\newcommand\comms{\rdb
\noi{\it Commentaires. }}

\newcommand\Pb{\rdb
\noi{\bf Problème. }}

\newcommand \Cad {C'est-à-dire\xspace}
\newcommand \recu {récur\-rence\xspace}
\newcommand \hdr {hypo\-thèse de \recu}
\newcommand \cad {c'est-à-dire\xspace}
\newcommand \cade {c'est-à-dire en\-co\-re\xspace}
\newcommand \ssi {si, et seu\-lement si, }
\newcommand \ssiz {si, et seu\-lement si,~}
\newcommand \cnes {con\-di\-tion néces\-sai\-re et suf\-fi\-san\-te\xspace}
\newcommand \spdg {sans per\-te de géné\-ra\-lité\xspace}
\newcommand \Spdg {Sans per\-te de géné\-ra\-lité\xspace}

\newcommand \Propeq {Les proprié\-tés suivan\-tes sont 
équiva\-lentes.}
\newcommand \propeq {les proprié\-tés suivan\-tes sont 
équiva\-lentes.}

\newcommand \THo[2]{\rdb
\mni{\bf Théorème {#1}~} {\it #2

}}

\newcommand \PLCC[2]{\rdb
\mni{\bf Principe \lgb concret \ref{#1} bis~} {\it #2

}}

\newcommand \THO[2]{\rdb
\mni{\bf Théor\`eme \ref{#1} bis~} {\it #2

}}

\newcommand \defa[2]{\mni\rdb\textbf{Définition alternative \ref{#1}
\it #2 
}}

\newcommand \KRA {\index{Kronecker!astuce de ---}Kronecker\xspace}
\newcommand \KRO {\index{Kronecker!\tho de ---  (1)}Kronecker\xspace}
\newcommand \KRN {\index{Kronecker!\tho de ---  (2)}Kronecker\xspace}

\newcommand \Kev {$\gK$-\evc}
\newcommand \Kevs {$\gK$-\evcs}

\newcommand \Lev {$\gL$-\evc}
\newcommand \Levs {$\gL$-\evcs}

\newcommand \Qev {$\QQ$-\evc}
\newcommand \Qevs {$\QQ$-\evcs}

\newcommand \kev {$\gk$-\evc}
\newcommand \kevs {$\gk$-\evcs}

\newcommand \lev {$\gl$-\evc}
\newcommand \levs {$\gl$-\evcs}

\newcommand \Alg {$\gA$-\alg}
\newcommand \Algs {$\gA$-\algs}

\newcommand \Blg {$\gB$-\alg}
\newcommand \Blgs {$\gB$-\algs}

\newcommand \Clg {$\gC$-\alg}
\newcommand \Clgs {$\gC$-\algs}

\newcommand \klg {$\gk$-\alg}
\newcommand \klgs {$\gk$-\algs}

\newcommand \llg {$\gl$-\alg}
\newcommand \llgs {$\gl$-\algs}

\newcommand \Klg {$\gK$-\alg}
\newcommand \Klgs {$\gK$-\algs}

\newcommand \Llg {$\gL$-\alg}
\newcommand \Llgs {$\gL$-\algs}

\newcommand \QQlg {$\QQ$-\alg}
\newcommand \QQlgs {$\QQ$-\algs}

\newcommand \Rlg {$\gR$-\alg}
\newcommand \Rlgs {$\gR$-\algs}

\newcommand \RRlg {$\RR$-\alg}
\newcommand \RRlgs {$\RR$-\algs}

\newcommand \ZZlg {$\ZZ$-\alg}
\newcommand \ZZlgs {$\ZZ$-\algs}

\newcommand \Amo {$\gA$-mo\-du\-le\xspace}
\newcommand \Amos {$\gA$-mo\-du\-les\xspace}

\newcommand \Bmo {$\gB$-mo\-du\-le\xspace}
\newcommand \Bmos {$\gB$-mo\-du\-les\xspace}

\newcommand \Cmo {$\gC$-mo\-du\-le\xspace}
\newcommand \Cmos {$\gC$-mo\-du\-les\xspace}

\newcommand \kmo {$\gk$-mo\-du\-le\xspace}
\newcommand \kmos {$\gk$-mo\-du\-les\xspace}

\newcommand \Kmo {$\gK$-mo\-du\-le\xspace}
\newcommand \Kmos {$\gK$-mo\-du\-les\xspace}

\newcommand \Lmo {$\gL$-mo\-du\-le\xspace}
\newcommand \Lmos {$\gL$-mo\-du\-les\xspace}

\newcommand \Ali {appli\-ca\-tion $\gA$-\lin}
\newcommand \Alis {appli\-ca\-tions $\gA$-\lins}

\newcommand \Kli {appli\-ca\-tion $\gK$-\lin}
\newcommand \Klis {appli\-ca\-tions $\gK$-\lins}

\newcommand \Bli {appli\-ca\-tion $\gB$-\lin}
\newcommand \Blis {appli\-ca\-tions $\gB$-\lins}

\newcommand \Cli {appli\-ca\-tion $\gC$-\lin}
\newcommand \Clis {appli\-ca\-tions $\gC$-\lins}

\newcommand \ABH {Auslander-Buchsbaum-Hochster\xspace}

\newcommand \ac{algé\-bri\-quement clos\xspace}  

\newcommand \acl {an\-neau \icl}
\newcommand \acls {an\-neaux \icl}

\newcommand \adp {an\-neau de Pr\"u\-fer\xspace}
\newcommand \adps {an\-neaux de Pr\"u\-fer\xspace}

\newcommand \adpc {\adp \coh}
\newcommand \adpcs {\adps \cohs}

\newcommand \adu {\alg de décom\-po\-sition univer\-selle\xspace}
\newcommand \adus {\algs de décom\-po\-sition univer\-selle\xspace}

\newcommand \adv {anneau de valuation\xspace}
\newcommand \advs {anneaux de valuation\xspace}

\newcommand \advl {anneau \dvla} 
\newcommand \advls {anneaux \dvlas} 

\newcommand \Afr {Anneau \frl}
\newcommand \Afrs {Anneaux \frls}
\newcommand \afr {anneau \frl}
\newcommand \aFr {\hyperref[theorieAfr]{anneau \frl}\xspace}
\newcommand \afrs {anneaux \frls}

\newcommand \afrr {\afr réduit\xspace}
\newcommand \afrrs {\afrs réduits\xspace}
\newcommand \Afrrs {\Afrs réduits\xspace}

\newcommand \afrvr {\afr avec \ravs}
\newcommand \aFrvr {\hyperref[theorieAfrrv]{\afrvr}\xspace}
\newcommand \afrvrs {\afrs avec \ravs}

\newcommand \aftr {anneau réticulé \ftm réel\xspace}
\newcommand \aftrs {anneaux réticulés \ftm réels\xspace}

\newcommand \aG {\alg galoisienne\xspace}
\newcommand \aGs {\algs galoisiennes\xspace}

\newcommand \agB {\alg de Boole\xspace}
\newcommand \agBs {\algs de Boole\xspace}

\newcommand \agH {\alg de Heyting\xspace}
\newcommand \agHs {\algs de Heyting\xspace}

\newcommand \agq{algé\-bri\-que\xspace}
\newcommand \agqs{algé\-bri\-ques\xspace}

\newcommand \agqt{algé\-bri\-que\-ment\xspace}

\newcommand \aKr {anneau de Krull\xspace}
\newcommand \aKrs {anneaux de Krull\xspace}

\newcommand \alg {algè\-bre\xspace}
\newcommand \algs {algè\-bres\xspace}

\newcommand \algo{algo\-rithme\xspace}
\newcommand \algos{algo\-rithmes\xspace}

\newcommand \algq{al\-go\-rith\-mi\-que\xspace}
\newcommand \algqs{al\-go\-rith\-mi\-ques\xspace}

\newcommand \ali {appli\-ca\-tion \lin}
\newcommand \alis {appli\-ca\-tions \lins}

\newcommand \alo {an\-neau lo\-cal\xspace}
\newcommand \alos {an\-neaux lo\-caux\xspace}

\newcommand \algb {an\-neau \lgb}
\newcommand \algbs {an\-neaux \lgbs}

\newcommand \alrd {\alo \dcd}
\newcommand \alrds {\alos \dcds}

\newcommand \anar {anneau \ari}
\newcommand \anars {anneaux \aris}

\newcommand \anor {an\-neau nor\-mal\xspace}
\newcommand \anors {an\-neaux nor\-maux\xspace}

\newcommand \apf {\alg \pf}
\newcommand \apfs {\algs \pf}

\newcommand \apG {\alg pré\-galoisienne\xspace}
\newcommand \apGs {\algs pré\-galoisiennes\xspace}

\newcommand \arc {anneau réel clos\xspace}
\newcommand \aRc {\hyperref[theorieArc]{\arc}\xspace}
\newcommand \arcs {anneaux réels clos\xspace}

\newcommand \ari{arith\-mé\-tique\xspace}  
\newcommand \aris{arith\-mé\-tiques\xspace}  

\newcommand \Asr {Anneau \str}
\newcommand \Asrs {Anneaux \strs}
\newcommand \asr {anneau \str}
\newcommand \asrs {anneaux \strs}

\newcommand \asrvr {\asr avec \ravs}
\newcommand \asrvrs {\asrs avec \ravs}

\newcommand \atf {\alg \tf}
\newcommand \atfs {\algs \tf}

\newcommand \auto {auto\-mor\-phisme\xspace}
\newcommand \autos {auto\-mor\-phismes\xspace}

\newcommand \azd {anneau \zed}
\newcommand \azds {anneaux \zeds}

\newcommand \azrd {anneau \zedr}
\newcommand \azrds {anneaux \zedrs}


\newcommand \bdg {base de Gr\"obner\xspace}
\newcommand \bdgs {bases de Gr\"obner\xspace}

\newcommand \bdp {base de \dcn partielle\xspace}
\newcommand \bdps {bases de \dcn partielle\xspace}

\newcommand \bdf {base de \fap\xspace}

\newcommand \Bif {Borne infé\-rieure\xspace} %
\newcommand \bif {borne infé\-rieure\xspace} %
\newcommand \bifs {bornes infé\-rieures\xspace} %

\newcommand \bsp {borne supé\-rieure\xspace} %
\newcommand \bsps {borne supé\-rieures\xspace} %


\newcommand \cac{corps \ac}  

\newcommand \calf{calcul formel\xspace}  

\newcommand \cara{carac\-té\-ris\-tique\xspace}  
\newcommand \caras{carac\-té\-ris\-tiques\xspace}  

\newcommand \care{carac\-té\-risé\xspace}
\newcommand \caree{carac\-té\-risée\xspace}
\newcommand \cares{carac\-té\-risés\xspace}
\newcommand \carees{carac\-té\-risées\xspace}

\newcommand \carn{carac\-té\-ri\-sation\xspace}  
\newcommand \carns{carac\-té\-ri\-sations\xspace}  
\newcommand \Carn{Carac\-té\-ri\-sation\xspace}  
\newcommand \Carns{Carac\-té\-ri\-sations\xspace}  

\newcommand \carar{carac\-té\-riser\xspace}

\newcommand \carf{de carac\-tère fini\xspace}  

\newcommand \cat{caté\-gorie\xspace}
\newcommand \cats{caté\-gories\xspace}
\newcommand \catb{\cat abé\-lienne\xspace}
\newcommand \catbs{\cats abé\-liennes\xspace}
\newcommand \catd{\cat addi\-tive\xspace}
\newcommand \catds{\cats addi\-tives\xspace}

\newcommand \cdi{corps discret\xspace}
\newcommand \cdis{corps discrets\xspace}
  
\newcommand \cdv{changement de variables\xspace}  
\newcommand \cdvs{changements de variables\xspace}  

\newcommand \Cech {\v Cech\xspace}

\newcommand \cEP {\cara d'Euler-Poincaré\xspace}

\newcommand \cEse {complè\-tement~\hbox{$E$-sécante}\xspace}
\newcommand \cEses {complè\-tement~\hbox{$E$-sécantes}\xspace}

\newcommand \cli {clô\-ture inté\-grale\xspace}

\newcommand \codi {corps ordonné discret\xspace}
\newcommand \codis {corps ordonnés discrets\xspace}

\newcommand \coe {coef\-fi\-cient\xspace}
\newcommand \coes {coef\-fi\-cients\xspace}

\newcommand \coh {co\-hé\-rent\xspace}
\newcommand \cohs {co\-hé\-rents\xspace}

\newcommand \cohc {cohé\-rence\xspace}

\newcommand \coli {combi\-naison \lin}
\newcommand \colis {combi\-naisons \lins}

\newcommand \com {coma\-ximaux\xspace}
\newcommand \come {coma\-xi\-males\xspace}

\newcommand \coo {coor\-donnée\xspace}
\newcommand \coos {coor\-données\xspace}

\newcommand \cop {complé\-men\-taire\xspace}
\newcommand \cops {complé\-men\-taires\xspace}

\newcommand \cor {coré\-guliers\xspace}
\newcommand \core {coré\-gulières\xspace}

\newcommand \corg{\coh régulier\xspace}
\newcommand \corgs{\cohs réguliers\xspace}

\newcommand \cosc {complè\-tement sécant\xspace}
\newcommand \csce {complè\-tement sécante\xspace}
\newcommand \cscs {complè\-tement sécants\xspace}
\newcommand \csces {complè\-tement sécantes\xspace}

\newcommand \cosv {conser\-vative\xspace}
\newcommand \cosvs {conser\-vatives\xspace}

\newcommand \cOsv {\hyperref[defithconserv]{conser\-vative}\xspace}
\newcommand \cOsvs {\hyperref[defithconserv]{conser\-vatives}\xspace}

\newcommand \covr {corps ordonné avec \ravs}
\newcommand \covrs {corps ordonnés avec \ravs}

\newcommand \cpb {compa\-tible\xspace} 
\newcommand \cpbs {compa\-tibles\xspace} 

\newcommand \cpbt {compa\-tibi\-lité\xspace} 
\newcommand \cpbtz {compa\-tibi\-lité} 

\newcommand \crc {corps réel clos\xspace}
\newcommand \crcs {corps réels clos\xspace}

\newcommand \crcd {corps réel clos discret\xspace}
\newcommand \crcds {corps réels clos discrets\xspace}


\newcommand \dcd {rési\-duel\-lement dis\-cret\xspace}
\newcommand \dcds {rési\-duel\-lement dis\-crets\xspace}

\newcommand \dcn {décom\-po\-sition\xspace}
\newcommand \dcns {décom\-po\-sitions\xspace}

\newcommand \dcnb {\dcn bornée\xspace}

\newcommand \dcnc {\dcn complète\xspace}

\newcommand \dcnp {\dcn partielle\xspace}

\newcommand \dcp {décom\-posa\-ble\xspace}
\newcommand \dcps {décom\-posa\-bles\xspace}

\newcommand \ddk {dimension de~Krull\xspace}
\newcommand \ddi {de dimension infé\-rieure ou égale à~}

\newcommand \ddp {domaine de Pr\"u\-fer\xspace}
\newcommand \ddps {domaines de Pr\"u\-fer\xspace}

\newcommand \Demo{Démon\-stra\-tion\xspace}     

\newcommand \dem{démon\-stra\-tion\xspace}     
\newcommand \dems{démons\-tra\-tions\xspace}

\newcommand \denb{dénom\-brable\xspace}
\newcommand \denbs{dénom\-brables\xspace}

\newcommand \deno{déno\-mi\-nateur\xspace}     
\newcommand \denos{déno\-mi\-nateurs\xspace}   

\newcommand \deter {déter\-mi\-nant\xspace}  
\newcommand \deters {déter\-mi\-nants\xspace}  
 
\newcommand{\dfd}{de \prof~$\geq 2$\xspace}

\newcommand \Dfn{Défi\-nition\xspace}  
\newcommand \Dfns{Défi\-nitions\xspace}  
\newcommand \dfn{défi\-nition\xspace}  
\newcommand \dfns{défi\-nitions\xspace}  

\newcommand \dftr {droite réticulée \ftm réelle\xspace}
\newcommand \dftrs {droites réticulées \ftm réelles\xspace}
  
\newcommand \dil{diffé\-rentiel\xspace}  
\newcommand \dils{diffé\-rentiels\xspace}  
\newcommand \dile{diffé\-ren\-tielle\xspace}  
\newcommand \diles{diffé\-ren\-tielles\xspace}  

\newcommand \dip{diviseur principal\xspace}
\newcommand \dips{diviseurs principaux\xspace}

\newcommand \discri{discri\-minant\xspace}  
\newcommand \discris{discri\-minants\xspace}  

\newcommand \divle {dimension divisorielle\xspace} 

\newcommand \iDKM {\index{Dedekind-Mertens}}
\newcommand \DKM {\iDKM Dedekind-Mertens\xspace}

\newcommand \dit{distri\-bu\-ti\-vité\xspace}

\newcommand \dlg{d'élar\-gis\-sement\xspace}  

\newcommand \dok {domaine de Dedekind\xspace}
\newcommand \doks {domaines de Dedekind\xspace}

\newcommand \dve {divi\-si\-bi\-lité\xspace}

\newcommand \dvee {à \dve explicite\xspace}

\newcommand \dvla {à diviseurs\xspace}
\newcommand \dvlas {à diviseurs\xspace}

\newcommand \dvld {\dvlt décom\-posé\xspace} %
\newcommand \dvlds {\dvlt décom\-posés\xspace} %

\newcommand \dvlg {diviso\-riel\xspace} 
\newcommand \dvlgs {diviso\-riels\xspace} 

\newcommand \dvli {\dvlt inver\-sible\xspace} 
\newcommand \dvlis {\dvlt inver\-sibles\xspace} 

\newcommand \dvlt {diviso\-riel\-lement\xspace} %

\newcommand \dvn {déri\-vation\xspace}
\newcommand \dvns {déri\-vations\xspace}

\newcommand \dvr {diviseur\xspace}
\newcommand \dvrs {diviseurs\xspace}

\newcommand \dvz {di\-viseur de zéro\xspace}
\newcommand \dvzs {di\-viseurs de zéro\xspace}


\newcommand \eco {\elts \com}
\newcommand \ecr {\elts \cor}

\newcommand \Eds {Exten\-sion des sca\-laires\xspace}
\newcommand \edss {exten\-sions des sca\-laires\xspace}
\newcommand \eds {exten\-sion des sca\-laires\xspace}

\newcommand \egmt {éga\-lement\xspace}

\newcommand \egt {éga\-li\-té\xspace}
\newcommand \egts {éga\-li\-tés\xspace}

\newcommand \eli{élimi\-nation\xspace}  

\newcommand \elr{élé\-men\-taire\xspace}  
\newcommand \elrs{élé\-men\-taires\xspace}  

\newcommand \elrt{élé\-men\-tai\-rement\xspace}  

\newcommand \elt{élé\-ment\xspace}  
\newcommand \elts{élé\-ments\xspace}  

\def \endo {endo\-mor\-phisme\xspace}
\def \endos {endo\-mor\-phismes\xspace}

\newcommand \entrel {rela\-tion impli\-ca\-tive\xspace}
\newcommand \entrels {rela\-tions impli\-ca\-tives\xspace}

\newcommand \eqn{équa\-tion\xspace}
\newcommand \eqns{équa\-tions\xspace}

\newcommand \eqv {équi\-valent\xspace}  
\newcommand \eqve {équi\-va\-lente\xspace}  
\newcommand \eqvs {équi\-valents\xspace}  
\newcommand \eqves {équi\-val\-entes\xspace}  

\newcommand \eqvc {équi\-va\-lence\xspace}  
\newcommand \eqvcs {équi\-va\-lences\xspace}  

\newcommand \Erg {$E$-\ndz}
\newcommand \Erge {$E$-\ndze}
\newcommand \Ergs {$E$-\ndzs}
\newcommand \Erges {$E$-\ndzes}

\newcommand \esid {essen\-tiel\-lement iden\-tique\xspace}  
\newcommand \esids {essen\-tiel\-lement iden\-tiques\xspace}  

\newcommand \Esid {\hyperref[defitdyesidentiques]{\esid}\xspace}  
\newcommand \Esids {\hyperref[defitdyesidentiques]{\esids}\xspace}  

\newcommand \eseq {essen\-tiel\-lement \eqve}  
\newcommand \eseqs {essen\-tiel\-lement \eqves}  

\newcommand \Eseq {\hyperref[defitheseq]{\eseq}\xspace}  
\newcommand \Eseqs {\hyperref[defitheseq]{\eseqs}\xspace}

\newcommand\evc{espace vec\-toriel\xspace} 
\newcommand\evcs{espaces vec\-toriels\xspace} 

\newcommand \evn{éva\-lua\-tion\xspace}
\newcommand \evns{éva\-lua\-tions\xspace}

\newcommand \fab {\fcn bornée\xspace}
\newcommand \fabs {\fcns bornées\xspace}

\newcommand \fat {\fcn totale\xspace}
\newcommand \fats {\fcn totales\xspace}

\newcommand \fap {\fcn partielle\xspace}
\newcommand \faps {\fcns partielles\xspace}

\newcommand \fcn {factorisation\xspace}
\newcommand \fcns {factorisations\xspace}

\newcommand \fcs {\ftm \cosc}
\newcommand \fcse {\ftm \csce}

\newcommand \fdi {for\-te\-ment dis\-cret\xspace}
\newcommand \fdis {for\-te\-ment dis\-crets\xspace}

\newcommand \fip {filtre pre\-mier\xspace}
\newcommand \fips {filtres pre\-miers\xspace}

\newcommand \fipma {\fip maxi\-mal\xspace}
\newcommand \fipmas {\fips maxi\-maux\xspace}

\newcommand \fit {fidè\-lement\xspace}

\newcommand \fpt {\fit plat\xspace}
\newcommand \fpte {\fit plate\xspace}
\newcommand \fpts {\fit plats\xspace}
\newcommand \fptes {\fit plates\xspace}

\newcommand \Frg {$F$-\ndz}
\newcommand \Frge {$F$-\ndze}
\newcommand \Frgs {$F$-\ndzs}
\newcommand \Frges {$F$-\ndzes}

\newcommand \fsa {fermé \sagq}
\newcommand \fsas {fermés \sagqs}

\newcommand \fsagc {fonction \sagc}
\newcommand \fsagcs {fonctions \sagcs}

\newcommand \fsagce {\fsagc entière\xspace}
\newcommand \fsagces {\fsagcs entières\xspace}

\newcommand \fmt {formel\-lement\xspace}

\newcommand \frl {for\-tement réticulé\xspace}
\newcommand \frle {for\-tement réticulée\xspace}
\newcommand \frls {for\-tement réticulés\xspace}

\newcommand \ftm {fortement\xspace}

\newcommand\gmt{géométrie\xspace}  
\newcommand\gmts{géométries\xspace}  

\newcommand\gaq{\gmt \agq}  

\newcommand\gmq{géomé\-trique\xspace}  
\newcommand\gmqs{géomé\-triques\xspace}  

\newcommand\gmqt{géomé\-tri\-quement\xspace}  

\newcommand\gne{géné\-ra\-lisé\xspace}  
\newcommand\gnee{géné\-ra\-lisée\xspace}  
\newcommand\gnes{géné\-ra\-lisés\xspace}  
\newcommand\gnees{géné\-ra\-lisées\xspace}  

\newcommand\gnl{géné\-ral\xspace}  
\newcommand\gnle{géné\-rale\xspace}  
\newcommand\gnls{géné\-raux\xspace}  
\newcommand\gnles{géné\-rales\xspace}  

\newcommand\gnlt{géné\-ra\-lement\xspace}  

\newcommand\gnn{géné\-ra\-li\-sa\-tion\xspace}  
\newcommand\gnns{géné\-ra\-li\-sa\-tions\xspace}  

\newcommand\gnq {géné\-rique\xspace}  
\newcommand\gnqs {géné\-riques\xspace}  

\newcommand \gnqt{géné\-ri\-quement\xspace}

\newcommand\gnr{géné\-ra\-liser\xspace}  

\newcommand \gns{géné\-ra\-lise\xspace}

\newcommand \gnt{géné\-ra\-lité\xspace}
\newcommand \gnts{géné\-ra\-lités\xspace}

\newcommand \grl{groupe \rtl}
\newcommand \grls{groupes \rtls}

\newcommand \Grg {$G$-\ndz}
\newcommand \Grge {$G$-\ndze}
\newcommand \Grgs {$G$-\ndzs}
\newcommand \Grges {$G$-\ndzes}

\newcommand \gRl {\hyperref[theorieGrl]{\grl}\xspace}
\newcommand \gRls {\hyperref[theorieGrl]{\grls}\xspace}

\newcommand\gtr{géné\-ra\-teur\xspace}  
\newcommand\gtrs{géné\-ra\-teurs\xspace}  


\newcommand \hml {homologie\xspace}
\newcommand \hmls {homologiem\xspace}

\newcommand \hmg {homo\-gène\xspace}
\newcommand \hmgs {homo\-gènes\xspace}

\newcommand \hmgn {homo\-gé\-né\-isation\xspace}

\newcommand \homo {ho\-mo\-mor\-phisme\xspace}
\newcommand \homos {ho\-mo\-mor\-phismes\xspace}

\newcommand \hmq {homologique\xspace}
\newcommand \hmqs {homologiques\xspace}

\newcommand \Hrg {$H$-\ndz}
\newcommand \Hrge {$H$-\ndze}
\newcommand \Hrgs {$H$-\ndzs}
\newcommand \Hrges {$H$-\ndzes}

\newcommand \icftr {intervalle compact réticulé \ftm réel\xspace}
\newcommand \icftrs {intervalles compacts réticulés \ftm réels\xspace}

\newcommand \icl {inté\-gra\-lement clos\xspace}
\newcommand \icle {inté\-gra\-lement close\xspace}

\newcommand \ico {inter\-section compl\`ete\xspace}
\newcommand \icos {inter\-sections compl\`etes\xspace}

\newcommand \icog {\ico globale\xspace}
\newcommand \icogs {\icos globales\xspace}

\newcommand \icol {\ico locale\xspace}
\newcommand \icols {\icos locales\xspace}

\newcommand \icsr {intervalle compact \stm réticulé\xspace}
\newcommand \icsrs {intervalles compacts \stm réticulés\xspace}

\newcommand \icrc {intervalle compact réel clos\xspace}
\newcommand \icrcs {intervalles compact réels clos\xspace}

\newcommand \id {idéal\xspace}
\newcommand \ids {idéaux\xspace}

\newcommand \ida {\idt \agq}
\newcommand \idas {\idts \agqs}

\newcommand \idc  {\idt de Cramer\xspace}
\newcommand \idcs {\idts de Cramer\xspace}

\newcommand \Idca  {Idéal \cara}
\newcommand \Idcas  {Idéaux \caras}
\newcommand \idca  {\id \cara}
\newcommand \idcas  {\ids \caras}

\newcommand \idd {idéal déter\-minan\-tiel\xspace}
\newcommand \idds {idéaux déter\-minan\-tiels\xspace}

\newcommand \ideli  {\id d'\eli}

\newcommand \idema {idéal maxi\-mal\xspace}
\newcommand \idemas {idéaux maxi\-maux\xspace}

\newcommand \idep {idéal pre\-mier\xspace}
\newcommand \ideps {idéaux pre\-miers\xspace}

\newcommand \idemi {\idep minimal\xspace}
\newcommand \idemis {\ideps minimaux\xspace}

\newcommand \idf {idéal de Fitting\xspace}
\newcommand \idfs {idéaux de Fitting\xspace}

\newcommand \idif {idéal \dvlg fini\xspace}
\newcommand \idifs {idéaux \dvlgs finis\xspace}

\newcommand \idli {idéal \dvli\xspace} 
\newcommand \idlis {idéaux \dvlis\xspace} 

\newcommand \idm {idem\-po\-tent\xspace}
\newcommand \idms {idem\-po\-tents\xspace}
\newcommand \idme {idem\-po\-tente\xspace}
\newcommand \idmes {idem\-po\-tentes\xspace}

\newcommand \idp {idéal prin\-ci\-pal\xspace}
\newcommand \idps {idé\-aux prin\-ci\-paux\xspace}

\newcommand \idt {iden\-ti\-té\xspace}
\newcommand \idts {iden\-ti\-tés\xspace}

\newcommand \idtr {in\-dé\-ter\-mi\-née\xspace}
\newcommand \idtrs {in\-dé\-ter\-mi\-nées\xspace}

\newcommand \ifr {idéal frac\-tion\-nai\-re\xspace}
\newcommand \ifrs {idéaux frac\-tion\-nai\-res\xspace}

\newcommand \imd {immé\-diat\xspace}
\newcommand \imde {immé\-diate\xspace}
\newcommand \imds {immé\-diats\xspace}
\newcommand \imdes {immé\-diates\xspace}

\newcommand \imdt {immé\-dia\-te\-ment\xspace}

\newcommand \indtr {inf-demi-treillis\xspace} 

\newcommand \inteq {intui\-ti\-vement \eqve}
\newcommand \inteqs {intui\-ti\-vement \eqves}

\newcommand \Inteq {\hyperref[defextintequiv]{\inteq}\xspace}
\newcommand \Inteqs {\hyperref[defextintequiv]{\inteqs}\xspace}

\newcommand \ing {in\-ver\-se \gne}
\newcommand \ings {in\-ver\-ses \gnes}

\newcommand \iMP {in\-ver\-se de Moo\-re-Pen\-ro\-se\xspace}
\newcommand \iMPs {in\-ver\-ses de Moo\-re-Pen\-ro\-se\xspace}

\newcommand \iMR {\id de MacRae\xspace}
\newcommand \iMRs {\ids de MacRae\xspace}

\newcommand \ipp {\idep poten\-tiel\xspace}
\newcommand \ipps {\ideps poten\-tiels\xspace}

\newcommand \ird {irré\-duc\-tible\xspace}
\newcommand \irds {irré\-duc\-tibles\xspace}

\newcommand \iso {iso\-mor\-phisme\xspace}
\newcommand \isos {iso\-mor\-phismes\xspace}

\newcommand \itf {idéal \tf}
\newcommand \itfs {idé\-aux \tf}

\newcommand \itid {intui\-ti\-vement iden\-tique\xspace}
\newcommand \itids {intui\-ti\-vement iden\-tiques\xspace}

\newcommand \iv {inversible\xspace}
\newcommand \ivs {inversibles\xspace}

\newcommand \ivdg {inverse divisoriel\xspace} 
\newcommand \ivdgs {inverses divisoriels\xspace} 

\newcommand \ivde {inverse divisorielle\xspace} 
\newcommand \ivdes {inverses divisorielles\xspace} 

\newcommand \ivda {inverse divisoriel\xspace} 
\newcommand \ivdas {inverses divisoriels\xspace} 

\newcommand \lcs {\lot \cosc}
\newcommand \lcse {\lot \csce}
\newcommand \lcss {\lot \cscs}
\newcommand \lcses {\lot \csces}

\newcommand \lgb {local-global\xspace}
\newcommand \lgbe {locale-globale\xspace}
\newcommand \lgbes {locale-globales\xspace}
\newcommand \lgbs {local-globals\xspace}

\newcommand \lin {liné\-aire\xspace}
\newcommand \lins {liné\-aires\xspace}

\newcommand \lint {liné\-ai\-rement\xspace}

\newcommand \lmo {\lot mono\-gène\xspace}
\newcommand \lmos {\lot mono\-gènes\xspace}

\newcommand \lMR {\lot \MR}

\newcommand \lnl {\lot \nl}
\newcommand \lnls {\lot \nls}

\newcommand \lot {loca\-lement\xspace}

\newcommand \lon {loca\-li\-sation\xspace}
\newcommand \lons {loca\-li\-sations\xspace}

\newcommand \lop {\lot prin\-cipal\xspace}
\newcommand \lops {\lot prin\-cipaux\xspace}

\newcommand \lrf {libre de rang fini\xspace}
\newcommand \lrfs {libres de rang fini\xspace}

\newcommand \lrsb {libre\-ment \rsb}
\newcommand \lrsbs {libre\-ment \rsbs}

\newcommand \lnrsb {libre\-ment $n$-\rsb}
\newcommand \lnrsbs {libre\-ment $n$-\rsbs}

\newcommand \lonrsb {\lot $n$-\rsb}
\newcommand \lonrsbs {\lot $n$-\rsbs}

\newcommand \lorsb {\lot \rsb}
\newcommand \lorsbs {\lot \rsbs}

\newcommand \lsdz {\lot \sdz}

\newcommand \mach {machinerie\xspace}
\newcommand \machs {machineries\xspace}

\newcommand \MHL {lemme de Hensel multivarié\xspace}

\newcommand \mdi {module des \diles}

\newcommand \mgpf {module gradué \pf}
\newcommand \mgpfs {modules gradué \pf}

\newcommand \mhml {module d'\hml}
\newcommand \mhmls {modules d'\hml}

\newcommand \mlm {mo\-dule \lmo}
\newcommand \mlms {mo\-dules \lmos}

\newcommand \mlmo {ma\-tri\-ce de \lon mono\-gène\xspace}
\newcommand \mlmos {ma\-tri\-ces de \lon mono\-gène\xspace}

\newcommand \mlMR {module \lMR}
\newcommand \mlMRs {modules \lMR}

\newcommand \mlr {mani\-pu\-lation \elr}
\newcommand \mlrs {mani\-pu\-lations \elrs}

\newcommand \mlrsb {module \lrsb} 
\newcommand \mlrsbs {modules \lrsbs}

\newcommand \mlnrsb {module \lnrsb} 
\newcommand \mlnrsbs {modules \lnrsbs}

\newcommand \mlonrsb {module \lonrsb}
\newcommand \mlonrsbs {modules \lonrsbs}

\newcommand \mlorsb {module \lorsb} 
\newcommand \mlorsbs {modules \lorsbs}

\newcommand \mlp {ma\-tri\-ce de \lon prin\-ci\-pale\xspace}
\newcommand \mlps {ma\-tri\-ces de \lon prin\-ci\-pale\xspace}

\newcommand \mlrf {module \lrf} 
\newcommand \mlrfs {modules \lrfs}
\newcommand \mlf {\mlrf} 
\newcommand \mlfs {\mlrfs} 

\newcommand \mMR {module de MacRae\xspace}
\newcommand \mMRs {modules de MacRae\xspace}

\newcommand \Mo {Mo\-noïde\xspace}
\newcommand \Mos {Mo\-noïdes\xspace}
\newcommand \mo {mo\-noïde\xspace}
\newcommand \mos {mo\-noïdes\xspace}

\newcommand \moco {\mos \com}

\newcommand \molo {morphisme de \lon\xspace}
\newcommand \molos {morphismes de \lon\xspace}

\newcommand \mom {mo\-nô\-me\xspace}
\newcommand \moms {mo\-nô\-mes\xspace}

\newcommand \moquo {morphisme de passage au quotient\xspace}
\newcommand \moquos {morphismes de passage au quotient\xspace}

\newcommand \mpf {mo\-dule \pf}
\newcommand \mpfs {mo\-dules \pf}

\newcommand \mpl {mo\-dule plat\xspace}
\newcommand \mpls {mo\-dules plats\xspace}

\newcommand \mpn {ma\-trice de \pn}
\newcommand \mpns {ma\-trices de \pn}

\newcommand \mprn {ma\-trice de \prn}
\newcommand \mprns {ma\-trices de \prn}

\newcommand \mptf {mo\-dule \ptf}
\newcommand \mptfs {mo\-dules \ptfs}

\newcommand \MR {de MacRae\xspace}

\newcommand \mrc {mo\-dule \prc}
\newcommand \mrcs {mo\-dules \prcs}

\newcommand \mtf {mo\-du\-le \tf}
\newcommand \mtfs {mo\-du\-les \tf}


\newcommand \ncr{néces\-saire\xspace}       
\newcommand \ncrs{néces\-saires\xspace}       

\newcommand \ncrt{néces\-sai\-rement\xspace}       

\newcommand \ndz {régu\-lier\xspace}
\newcommand \ndze {régu\-lière\xspace}
\newcommand \ndzs {régu\-liers\xspace}
\newcommand \ndzes {régu\-lières\xspace}

\newcommand \nl {simple\xspace}
\newcommand \nls {simples\xspace}

\newcommand \noco {\noe \coh}
\newcommand \nocos {\noes \cohs}

\newcommand \Noe {Noether\xspace}

\newcommand \noe {noethé\-rien\xspace}
\newcommand \noes {noethé\-riens\xspace}
\newcommand \noee {noethé\-rienne\xspace}
\newcommand \noees {noethé\-riennes\xspace}

\newcommand \noet {noethé\-ria\-nité\xspace}

\newcommand \nst {Null\-stellen\-satz\xspace}
\newcommand \nsts {Null\-stellen\-s\"atze\xspace}

\newcommand \op{opé\-ra\-tion\xspace}  
\newcommand \ops{opé\-ra\-tions\xspace}
\newcommand \opari{\op\ari}  
\newcommand \oparis{\ops\aris}  
\newcommand \oparisv{\ops\arisv}  

\newcommand \oqc {ouvert \qc}
\newcommand \oqcs {ouverts \qcs}

\newcommand \ort{or\-tho\-go\-nal\xspace}  
\newcommand \orte{or\-tho\-go\-na\-le\xspace}  
\newcommand \orts{or\-tho\-go\-naux\xspace}  
\newcommand \ortes{or\-tho\-go\-na\-les\xspace}  


\newcommand \pa {couple saturé\xspace}
\newcommand \pas {couples saturés\xspace}
 
\newcommand \paral{paral\-lèle\xspace}  
\newcommand \parals{paal\-lèles\xspace}  

\newcommand \paralm{paral\-lè\-lement\xspace}   

\newcommand \pb{pro\-blè\-me\xspace}  
\newcommand \pbs{pro\-blè\-mes\xspace}  

\newcommand \pdi {dimension \prov}
\newcommand \pdis {dimensions \provs}

\newcommand \peq {purement équa\-tion\-nelle\xspace}
\newcommand \peqs {purement équa\-tion\-nelles\xspace}

\newcommand \pf {de \pn finie\xspace}

\newcommand \pfb {présen\-table\xspace}
\newcommand \pfbs {présen\-tables\xspace}

\newcommand \pKr {\pol de Kronecker\xspace} 
\newcommand \pKrs {\pols de Kronecker\xspace} 

\newcommand \plc {rési\-duel\-lement \zed}
\newcommand \plcs {rési\-duel\-lement \zeds}

\newcommand \pHi {\pol de Hilbert\xspace} 
\newcommand \pHis {\pols de Hilbert\xspace} 

\newcommand \pit {\tho de l'\idp de Krull\xspace} 
\newcommand \Pit {\Tho de l'\idp de Krull\xspace} 

\newcommand \Plg {Prin\-cipe \lgb}
\newcommand \plg {prin\-cipe \lgb}
\newcommand \plgs {prin\-cipes \lgbs}

\newcommand \plga {\plg abstrait\xspace}
\newcommand \plgas {\plgs abstraits\xspace}

\newcommand \Plgc {\Plg con\-cret\xspace}
\newcommand \plgc {\plg con\-cret\xspace}
\newcommand \plgcs {\plgs con\-crets\xspace}

\newcommand \pn {présen\-ta\-tion\xspace}
\newcommand \pns {présen\-ta\-tions\xspace}

\newcommand \pog {\pol \hmg\xspace}
\newcommand \pogs {\pols \hmgs\xspace}

\newcommand \Pol {Poly\-nôme\xspace}
\newcommand \Pols {Poly\-nômes\xspace}

\newcommand \pol {poly\-nôme\xspace}
\newcommand \pols {poly\-nômes\xspace}

\newcommand \poll{poly\-nomial\xspace}  
\newcommand \polls{poly\-nomiaux\xspace}  
\newcommand \polle{poly\-no\-miale\xspace}  
\newcommand \polles{poly\-no\-miales\xspace}  

\newcommand \pollt{poly\-no\-mia\-lement\xspace}  

\newcommand \polfon {\pol fon\-da\-men\-tal\xspace}
\newcommand \polmu {\pol rang\xspace}
\newcommand \polmus {\pols rang\xspace}
\newcommand \polcar {\pol carac\-té\-ris\-tique\xspace}
\newcommand \polmin {\pol mini\-mal\xspace}

\newcommand \prc {\pro de rang constant\xspace}
\newcommand \prcs {\pros de rang constant\xspace}

\newcommand \prcc {prin\-cipe de \rcc}
\newcommand \prca {prin\-cipe de \rca}
\newcommand \prce {prin\-cipe de \rce}

\newcommand \prf {prin\-cipe de recou\-vrement fermé\xspace}

\newcommand \prmt {préci\-sé\-ment\xspace}
\newcommand \Prmt {Préci\-sé\-ment\xspace}

\newcommand \prn {pro\-jec\-tion\xspace}
\newcommand \prns {pro\-jec\-tions\xspace}

\newcommand \pro {pro\-jec\-tif\xspace}
\newcommand \pros {pro\-jec\-tifs\xspace}
\newcommand \prov {pro\-jec\-tive\xspace}
\newcommand \provs {pro\-jec\-tives\xspace}

\newcommand \Prof {Profondeur\xspace}
\newcommand \prof {profondeur\xspace}
\newcommand \profs {profondeurs\xspace}

\newcommand \Proh {\Prof \hmq}
\newcommand \proh {\prof \hmq}
\newcommand \prohs {\profs \hmqs}
\newcommand \prohz {\prof \hmqz}
\newcommand \prohsz {\profs \hmqsz}

\newcommand \prr {pro\-jec\-teur\xspace}
\newcommand \prrs {pro\-jec\-teurs\xspace}

\newcommand \Prt {Pro\-pri\-été\xspace}
\newcommand \Prts {Pro\-pri\-étés\xspace}
\newcommand \prt {pro\-pri\-été\xspace}
\newcommand \prts {pro\-pri\-étés\xspace}

\newcommand \ptf {\pro \tf}
\newcommand \ptfs {\pros \tf}

\newcommand \ptrd {prétreillis distributif\xspace}
\newcommand \ptrds {prétreillis distributifs\xspace}

\newcommand \qc {quasi-compact\xspace}
\newcommand \qcs {quasi-compacts\xspace}

\newcommand \qi {quasi in\-tè\-gre\xspace}
\newcommand \qis {quasi in\-tè\-gres\xspace}

\newcommand \qli {quasi libre\xspace}
\newcommand \qlis {quasi libres\xspace}

\newcommand \qiv {quasi inverse\xspace}
\newcommand \qivs {quasi inverses\xspace}

\newcommand \qnl {quasi-\nl}
\newcommand \qnls {quasi-\nls}

\newcommand \qreg {quasi régu\-lière\xspace}
\newcommand \qregs {quasi régu\-lières\xspace}

\newcommand \qsi {quasi singu\-li\`ere\xspace}
\newcommand \qsis {quasi singu\-li\`eres\xspace}

\newcommand \qtf {quanti\-fi\-cateur\xspace}
\newcommand \qtfs {quanti\-fi\-cateurs\xspace}

\newcommand \ralg {règle \agq}
\newcommand \ralgs {règles \agqs}

\newcommand \rav {racine virtuelle\xspace}
\newcommand \ravs {racines virtuelles\xspace}

\newcommand \rcc {\rcm con\-cret\xspace}
\newcommand \rca {\rcm abs\-trait\xspace}
\newcommand \rce {\rcc des é\-ga\-li\-tés\xspace}

\newcommand \rcm {recol\-lement\xspace}
\newcommand \rcms {recol\-lements\xspace}

\newcommand \rcv {recou\-vrement\xspace} 
\newcommand \rcvs {recou\-vrements\xspace} 

\newcommand \rcvq {recou\-vrement par quotients\xspace} 
\newcommand \rcvqs {recou\-vrements par quotients\xspace} 

\newcommand \rde {rela\-tion de dépen\-dance\xspace}
\newcommand \rdes {rela\-tions de dépen\-dance\xspace}

\newcommand \rdi {\rde inté\-grale\xspace}
\newcommand \rdis {\rdes inté\-grales\xspace}

\newcommand \rdl {\syzy}
\newcommand \rdls {\syzys}

\newcommand \rdt {rési\-duel\-lement\xspace}

\newcommand \rdy {règle dyna\-mique\xspace}
\newcommand \rdys {règles dyna\-miques\xspace}

\newcommand \rlf {\rsn libre finie\xspace}
\newcommand \rlfs {\rsns libres finies\xspace}

\newcommand \rmq {\rcm de quotients\xspace} 
\newcommand \rvq {\rcv par quotients\xspace} 

\newcommand \rmqs {\rcms de quotients\xspace} %
\newcommand \rvqs {\rcvs par quotients\xspace} %

\newcommand \rpf {réduite-de-présen\-tation-finie\xspace}
\newcommand \rpfs {réduites-de-présen\-tation-finie\xspace}

\newcommand \rsb {réso\-luble\xspace}
\newcommand \rsbs {réso\-lubles\xspace}

\newcommand \rsf {\rsp finie\xspace}
\newcommand \rsfs {\rsps  finies\xspace}

\newcommand \rsim {règle de simplification\xspace}
\newcommand \rsims {règles de simplification\xspace}

\newcommand \Rsn {Réso\-lution\xspace}
\newcommand \Rsns {Réso\-lutions\xspace}
\newcommand \rsn {réso\-lution\xspace}
\newcommand \rsns {réso\-lutions\xspace}

\newcommand \rsp {\rsn \prov}
\newcommand \rsps {\rsns \provs}

\newcommand \rtf {radi\-ca\-lement \tf}
\newcommand \rtfz {radi\-ca\-lement \tfz}

\newcommand \rtl {réti\-culé\xspace}
\newcommand \rtls {réti\-culés\xspace}


\newcommand \sad {\salg dynamique\xspace}
\newcommand \sads {\salgs dynamiques\xspace}

\newcommand \sagq {semi\agq}
\newcommand \sagqs {semi\agqs}

\newcommand \sagc {\sagq continue\xspace}
\newcommand \sagcs {\sagqs continues\xspace}

\newcommand \salg {structure \agq}
\newcommand \salgs {structures \agqs}

\newcommand \scentrel {relation semi-implicative\xspace}
\newcommand \scentrels {relations semi-implicatives\xspace}

\newcommand \scf {schéma spectral\xspace}
\newcommand \scfs {schémas spectraux\xspace}

\newcommand \scl {schéma \elr}
\newcommand \scls {schémas \elrs}

\newcommand \scm {schéma\xspace}
\newcommand \scms {schémas\xspace}

\newcommand \scEs {suite \cEse}
\newcommand \scEss {suites \cEses}

\newcommand \scs {suite \csce}
\newcommand \scss {suites \csces}

\newcommand \sdo {\sdr \orte}
\newcommand \sdos {\sdrs \ortes}

\newcommand \sdr {somme directe\xspace}
\newcommand \sdrs {sommes directes\xspace}

\newcommand \sdz {sans \dvz}

\newcommand \seco {\sex courte\xspace}
\newcommand \secos {\sexs courtes\xspace}

\newcommand \seqreg {\srg} 
\newcommand \seqregs {\srgs}

\newcommand \seqqreg {suite \qreg} 
\newcommand \seqqregs {suites \qregs}

\newcommand \sErg {suite \Erge} 
\newcommand \sErgs {suites \Erges}

\newcommand \sex {suite exacte\xspace}
\newcommand \sexs {suites exactes\xspace}

\newcommand \sFrg {suite \Frge }
\newcommand \sFrgs {suites \Frges }

\newcommand \sfio {sys\-tème fondamental d'\idms ortho\-gonaux\xspace}
\newcommand \sfios {sys\-tèmes fondamentaux d'\idms ortho\-gonaux\xspace}

\newcommand \sgr {\sys \gtr}
\newcommand \sgrs {\syss \gtrs}

\newcommand \sing {singu\-lière\xspace}
\newcommand \sings {singu\-lières\xspace}

\newcommand \sKr {suite de Kronecker\xspace}
\newcommand \sKrs {suites de Kronecker\xspace}

\newcommand \slc {suite \lcse\xspace} 
\newcommand \slcs {suites \lcses\xspace}

\newcommand \slgb {stricte\-ment \lgb}
\newcommand \slgbs {stricte\-ment \lgbs}

\newcommand \sli {\sys \lin}
\newcommand \slis {\syss \lins}

\newcommand \smq {symé\-trique\xspace}
\newcommand \smqs {symé\-triques\xspace}

\newcommand \spb {sépa\-rable\xspace}  
\newcommand \spbs {sépa\-rables\xspace}

\newcommand \spe {spéci\-fi\-cation\xspace}
\newcommand \spes {spéci\-fi\-cations\xspace}

\newcommand \spi {\spe incomplète\xspace}
\newcommand \spis {\spes incomplètes\xspace}

\newcommand \spl {sépa\-rable\xspace}  
\newcommand \spls {sépa\-rables\xspace}

\newcommand \spo {semipolynôme\xspace}
\newcommand \spos {semipolynômes\xspace}

\newcommand \spt{sépa\-ra\-bi\-lité\xspace}

\newcommand \srg {suite régu\-lière\xspace}
\newcommand \srgs {suites régu\-lières\xspace}

\newcommand \stf {strictement fini\xspace}
\newcommand \stfs {strictement finis\xspace}
\newcommand \stfe {strictement finie\xspace}
\newcommand \stfes {strictement finies\xspace}

\newcommand \stl {stablement libre\xspace}
\newcommand \stls {stablement libres\xspace}

\newcommand \stm {strictement\xspace}

\newcommand \str {\stm réticulé\xspace}
\newcommand \stre {\stm réticulée\xspace}
\newcommand \strs {\stm réticulés\xspace}
\newcommand \stres {\stm réticulées\xspace}

\newcommand \suc {suite $1$-sécante\xspace}
\newcommand \sucs {suites $1$-sécantes\xspace}

\newcommand \sul {supplé\-men\-taire\xspace}
\newcommand \suls {supplé\-men\-taires\xspace}

\newcommand \Sut {Support\xspace}
\newcommand \Suts {Supports\xspace}
\newcommand \sut {support\xspace}

\newcommand \syc {\sys de coordon\-nées\xspace}
\newcommand \sycs {\syss de coordon\-nées\xspace}

\newcommand \syp {\sys \poll}
\newcommand \syps {\syss \polls}

\newcommand \sys {sys\-tème\xspace}
\newcommand \syss {sys\-tèmes\xspace}

\newcommand \syzy {syzygie\xspace}
\newcommand \syzys {syzygies\xspace}

\newcommand \talg {théorie \agq}
\newcommand \talgs {théories \agqs}

\newcommand \tco {théorie cohé\-rente\xspace}
\newcommand \tcos {théories cohé\-rentes\xspace}

\newcommand \tdy {théorie dyna\-mique\xspace}
\newcommand \tdys {théories dyna\-miques\xspace}

\newcommand \tel {théorie exis\-ten\-tielle\xspace}
\newcommand \tels {théories exis\-ten\-tielles\xspace}

\newcommand \telri {théorie exis\-ten\-tielle rigide\xspace}
\newcommand \telris {théories exis\-ten\-tielles rigides\xspace}

\newcommand \tf {de type fini\xspace}

\newcommand \tfo {théorie formelle\xspace}
\newcommand \tfos {théorie formelles\xspace}

\newcommand \tgm {théorie \gmq}
\newcommand \tgms {théories \gmqs}
\newcommand \tgmi {théorie \gmq infinitaire\xspace}
\newcommand \tgmis {théories \gmqs infinitaires\xspace}

\newcommand \Tho {Théo\-rème\xspace}
\newcommand \Thos {Théo\-rèmes\xspace}
\newcommand \tho {théo\-rème\xspace}
\newcommand \thos {théo\-rèmes\xspace}

\newcommand \thoc {théo\-rème$\mathbf{^*}$~}

\newcommand \tpe {théorie \peq}
\newcommand \tpes {théories \peqs}

\newcommand \trdi {treil\-lis dis\-tri\-bu\-tif\xspace}
\newcommand \trdis {treil\-lis dis\-tri\-bu\-tifs\xspace}

\newcommand \trel {trans\-for\-mation \elr}
\newcommand \trels {trans\-for\-mations \elrs}

\newcommand \umd {unimo\-du\-laire\xspace}
\newcommand \umds {unimo\-du\-laires\xspace}

\newcommand \unt {uni\-taire\xspace}
\newcommand \unts {uni\-taires\xspace}

\newcommand \usc {$1$-sécante\xspace}
\newcommand \uscs {$1$-sécantes\xspace}

\newcommand \uvl {uni\-versel\xspace}
\newcommand \uvle {uni\-ver\-selle\xspace}
\newcommand \uvls {uni\-versels\xspace}
\newcommand \uvles {uni\-ver\-selles\xspace}


\newcommand \vfn {véri\-fi\-cation\xspace}
\newcommand \vfns {véri\-fi\-cations\xspace}

\newcommand \vmd {vec\-teur \umd}
\newcommand \vmds {vec\-teurs \umds}

\newcommand \vgq {\vrt \agq} 
\newcommand \vgqs {\vrts \agqs}

\newcommand \vrp {\vrt \prov} 
\newcommand \vrps {\vrts \provs} 

\newcommand \vrt {variété\xspace}
\newcommand \vrts {variétés\xspace}

\newcommand \zed {z\'{e}ro-di\-men\-sionnel\xspace}
\newcommand \zede {z\'{e}ro-di\-men\-sion\-nelle\xspace}
\newcommand \zeds {z\'{e}ro-di\-men\-sion\-nels\xspace}
\newcommand \zedes {z\'{e}ro-di\-men\-sion\-nelles\xspace}

\newcommand \zedr {\zed réduit\xspace}
\newcommand \zedrs {\zeds réduits\xspace}

\newcommand \zmt {\tho de Zariski-Grothen\-dieck\xspace}


\newcommand \cof {cons\-truc\-tif\xspace}
\newcommand \cofs {cons\-truc\-tifs\xspace}

\newcommand \cov {cons\-truc\-tive\xspace}
\newcommand \covs {cons\-truc\-tives\xspace}

\newcommand \coma {\maths\covs}
\newcommand \clama {\maths clas\-siques\xspace}

\renewcommand \cot {cons\-truc\-ti\-vement\xspace}

\newcommand \matn {mathé\-ma\-ticien\xspace}
\newcommand \matne {mathé\-ma\-ti\-cienne\xspace}
\newcommand \matns {mathé\-ma\-ticiens\xspace}
\newcommand \matnes {mathé\-ma\-ti\-ciennes\xspace}

\newcommand \maths {mathé\-ma\-tiques\xspace}
\newcommand \mathe {mathé\-ma\-tique\xspace}

\newcommand \prco {démons\-tration \cov}
\newcommand \prcos {démons\-trations \covs}

\renewcommand\paragraph[1]{

\rdb\addcontentsline{toc}{subsubsection}{#1} \medskip \noindent $\bullet$ \textbf{#1}}

\newcommand \subsec[1]{\goodbreak\rdb\addcontentsline{toc}{subsection}{#1}\subsection*{#1}}

\newcommand \subsect[2]{\goodbreak\rdb\addcontentsline{toc}{subsection}{#2}
\subsection*{#1}}

\newcommand \subsubs[1]{

\goodbreak\rdb\medskip 

{\bf #1}

\smallskip }

\newcommand \subsubsec[1]{\goodbreak\rdb\addcontentsline{toc}{subsubsection}{#1}\subsubsection*{#1}}

\newcommand \subsubsect[2]{\goodbreak\rdb\addcontentsline{toc}{subsubsection}{#2}\subsubsection*{#1}}

\newcommand \Subsubsec[1]{\goodbreak\rdb\addcontentsline{toc}{subsection}{#1}\subsubsection*{#1}}

\newcommand \Subsubsect[2]{\rdb\addcontentsline{toc}{subsection}{#2}\subsubsection*{#1}}

\newcommand \Subsec[1]{\goodbreak\rdb\addcontentsline{toc}{section}{#1}\subsection*{#1}}

\newcommand\siBookdeux[1]{}
\newcommand\siFFR[1]{#1}
\newcommand\siDiviseurs[1]{}
\newcommand\Discussion[1]{}
\newcommand\sibrouillon[1]{}
\renewcommand \perso[1]{}
\renewcommand \entrenous[1]{}
\renewcommand \hum[1]{}
\renewcommand \ttt[1]{}
\newcommand\Today{}
\newcommand\siarticle[1]{#1}
\newcommand\sibook[1]{}

\newcommand \Exercices{\endinput}

\romanpagenumbers

\let\oldshowchapter\showchapter
\let\oldshowsection\showsection
\def\showchapter#1{\edef\temp{\thechapter}\def\ttemp{#1}\ifx\ttemp\temp\relax\def\showsection##1{##1}\else\let\showsection\oldshowsection\oldshowchapter{#1}\fi}


\setpagenumber1
~

\vskip2cm
\begin{center}\Huge\bf
\CMtitle 
\end{center}

\vskip2cm
\begin{center}{\large\bf
Thierry Coquand$^*$, Henri Lombardi$^\dag$, Claude Quitté$^\ddag$\\[.1em] \& Claire Tête$^\P$ 
}\vskip1cm
$*$. Chalmers, University of Göteborg, Sweden, email:~{\tt coquand@chalmers.se}
\\[.3em]
$\dag$. \'Equipe de Math\'ematiques, UMR CNRS 6623,
UFR des Sciences et Techniques, Universit\'e de Franche-Comt\'e,
25030 Besan{\c c}on cedex, France, email:~{\tt henri.lombardi@univ-fcomte.fr}
\\[.3em]
$\ddag$. Laboratoire de Math\'ematiques, SP2MI, Boulevard 3, T\'el\'eport 2, 
\\ BP~179, 86960 Futuroscope Cedex, France, email:~{\tt claude.quitte@math.univ-poitiers.fr}
\\[.3em]
$\P$. Laboratoire de Math\'ematiques, SP2MI, Boulevard 3, T\'el\'eport 2, BP~179, 86960 Futuroscope Cedex, France, \\email:~{\tt claire.tete@math.univ-poitiers.fr}

\end{center}

\romanpagenumbers
\setpagenumber0
\setcounter{tocdepth}{1}

\tableofcontents

\vskip1cm
\rdb
{\LARGE \bf Avant-propos}
\pagestyle{CMExercicesheadings}
\thispagestyle{CMchapitre}
\addcontentsline{toc}{chapterbis}{Avant-propos}
\markboth{Avant-propos}{Avant-propos}

\vskip.5cm

\begin{flushright}
\begin{minipage}{9cm}
\begin{flushright}
{\small
 Quant  à moi, je proposerais de s'en tenir
 aux règles
suivantes:


\item 1. Ne jamais envisager que des objets susceptibles
d'être définis
en un nombre fini de mots;
\\
\item 2. Ne jamais perdre de vue que toute proposition
 sur l'infini doit
être la traduction, l'énoncé abrégé
de propositions sur le
fini;
\\
\item 3. \'Eviter les classifications et les définitions
non-prédicatives.


\smallskip   Henri Poincaré,

dans  {\it La logique de l'infini }\\
(Revue de Métaphysique et de Morale, 1909).
\\
Réédité dans  {\it
Dernières pensées}, Flammarion.}
\end{flushright}
\end{minipage}

\vspace{1em}

\begin{minipage}{9cm}
\begin{flushright}
{\small
Le cadre historique naturel de la \gmt \agq est celui des \pols. Le développement de l'Algèbre moderne, commencé il y a près d'un siècle,
a renvoyé les anneaux de \pols au statut de cas particulier et les méthodes propres aux \pols, comme la \emph{Théorie de l'élimination}, au conservatoire. 
Mais \gui{les objets sont têtus} et les méthodes explicites ne cessent de ressurgir. Un calcul est toujours plus \gnl que le cadre théorique dans lequel 
on l'enferme à une période donnée.

\smallskip  Michel Demazure

dans {\it Résultant, discriminant.}\\
L'Enseignement Mathématique (2) {\bf 58} (2012), 333-373

}
\end{flushright}
\end{minipage}
\end{flushright}

\bigskip 
Ce mémoire puise sa source dans le livre  \cite[Northcott, Finite Free Resolutions]{NorFFR} (cité dans la suite comme [FFR]) et dans les articles \cite{CQ2012} et \cite{CT2018}. Il est écrit dans la continuation des ouvrages \cite[A course in constructive algebra]{MRR} et \cite[Commutative Algebra, Constructive Methods]{CACM} (cités dans la suite comme [MRR] et [CACM]).

L'ouvrage en anglais [CACM] est disponible à l'adresse \url{https://arxiv.org/abs/1605.04832}. Une version française légèrement plus étendue se trouve à l'adresse \url{https://arxiv.org/abs/1611.02942}. Si nécessaire nous la citerons comme [ACMC].

En écrivant son livre sur les résolutions libres finies et la théorie de la profondeur, Northcott semblait poursuivre plusieurs buts bien précis. Le premier est de libérer la théorie de toute référence à la \noet. Le deuxième est de se libérer des outils de l'algèbre homologique, notamment en adoptant la définition de Hochster pour la profondeur d'un module relativement à un \itf. Le troisième est de donner un traitement aussi algorithmique que possible des énoncés de la théorie.

De manière étrange ce livre, salué par la critique et régulièrement cité dans les commentaires bibliographiques comme référence dernière pour la théorie, n'est jamais vraiment utilisé dans les livres d'algèbre commutative ou de géométrie algébrique. 

L'exposé n'est pourtant pas plus compliqué que les exposés habituels de ces mêmes théories. Les rares raccourcis qu'offre l'usage intensif de la \noet restreignent la portée des \thos et surtout semblent interdire le traitement algorithmique du sujet.

Notre mémoire a pour but de réparer cet état des lieux en offrant un traitement complètement constructif de la théorie de Northcott, et, nous l'espérons, suffisamment clair et élégant. Quelquefois en effet, Northcott ne réussit pas à se débarrasser de l'usage des idéaux maximaux ou des idéaux premiers minimaux pour venir à bout de certains résultats. En outre il utilise quelquefois des arguments d'algèbre  homologique déguisés. Enfin il ne dispose pas à l'époque d'une définition constructive de la dimension de Krull.

Aujourd'hui les progrès de l'algèbre constructive, résumés dans [MRR], [CACM],  et \cite[Constructive Commutative Algebra, 2015]{Yen2015} nous permettent  de surmonter tous les obstacles qui ont empêché Northcott de remplir de manière complètement satisfaisante les buts qu'il s'était fixés.

Nous espérons, dans un autre mémoire à venir, rendre également justice à l'algèbre homologique, à sa fameuse suite exacte longue, et notamment aux complexes de \Cech et de Koszul. 
L'attention payée aux méthodes explicites devrait permettre dans ce cas de simplifier l'exposé de nombreux résultats et nous débarrasser de la plupart des suites spectrales qui encombrent souvent le paysage. Mais ne vendons pas la peau de l'ours avant de l'avoir tué.

\begin{flushright} \label{flushright}
Les auteurs

\today
\end{flushright}

\cleardoublepage
\pagestyle{CMheadings}
  
\arabicpagenumbers
\setpagenumber1


\setcounter{chapter}{-1}

\chapter{Quelques rappels}\label{chapRappelsFFR}

\minitoc

\intro

Ce chapitre est un rappel de \dfns et résultats
donnés dans \cite[CACM]{CACM} et qui seront souvent invoqués
dans cet ouvrage.

\smallskip La section \ref{secRappelDefs}  rappelle des \dfns
données en \coma pour des notions bien connues, ou inexistantes, 
en \clama. 

\smallskip La section \ref{secRappelThs}  donne quelques résultats,
qui sont souvent des versions \covs de résultats, parfois bien connus,
établis en \clama. 

Dans ce chapitre, comme dans tout l'ouvrage sauf mention expresse du
contraire, 

 \centerline{\fbox{les anneaux sont commutatifs et \unts},}

\smallskip\noindent   et les \homos entre anneaux respectent les $1$. En particulier, un sous-anneau a le même $1$ que l'anneau.


\section{Quelques \dfns et notations} \label{secRappelDefs}

Certaines de ces \dfns donnent des \prts \eqves, qui font l'objet de \thos dans [CACM].

 \fbox{$\gA$ désigne toujours un anneau}. 

\paragraph{Éléments et \moco}~
\begin{itemize}
\item On appelle \emph{\mo} de $\gA$ un \mo pour la multiplication (il peut contenir $0$).  On note $\gA_S$ ou $S^{-1}\gA$ le localisé de $\gA$ en $S$. 
\item Un \mo~$S$ dans  $\gA$ est dit
\emph{saturé} lorsque l'implication%
\index{monoide satur@monoïde!saturé}%
\index{sature@saturé!monoide@monoïde ---}
$$\preskip.4em \postskip.4em
\forall s, t \in \gA \;\;( st\in S \;\Rightarrow\; s\in S)
$$
est satisfaite. Un \mo saturé est \egmt appelé un \emph{filtre}.
On  note $\sat{S}$ le saturé du \mo $S$.
\item 
Nous appellerons \emph{filtre principal} un filtre engendré par un \elt:
il est constitué de l'ensemble des diviseurs d'une puissance de cet \elt.%
\index{principal!filtre --- d'un anneau commutatif}%
\index{filtre!d'un anneau commutatif}%
\index{filtre!principal d'un anneau commutatif}
\item   Des \elts $s_1$, $\dots$, $s_n\in\gA$ sont dits
\ixc{comaximaux}{elements@\elts~---} si
$\gen{1} = \gen{s_1,\dots,s_n}$. Deux \elts \com sont aussi appelés
\emph{étrangers}. \index{etrangers@étrangers!elements@\elts~---}
\item   Des \mos $S_1$, $\dots$, $S_n$ sont dits \ixc{comaximaux}{monoides@\mos~---}
si chaque fois que $s_1\in S_1$, \dots, $s_n\in S_n$, les $s_i$  sont \com.
\end{itemize}

\paragraph{Idéaux déterminantiels et \idfs}~ 
\begin{itemize}
\item Soient $m,n\in\NN\etl$,  $k\in\lrb{1..\min(m,n)}$ et $G\in\gA^{n\times m}$.
 L'{\em  \idd d'ordre $k$ de la matrice $G$}  est l'\id,
noté $\cD_{\gA,k}(G)$ ou $\cD_k(G)$,  engendré  par  les   mineurs d'ordre  $k$
de   $G$.
\index{ideaux determ@idéaux déterminantiels!d'une matrice}
Pour $k\leq 0$ on pose par convention~\hbox{$\cD_k(G)=\gen{1}$}, et pour
$k> \min(m,n)$,  $\cD_k(G)=\gen{0}$.
\item Une matrice $A$ est dite \emph{de rang $\leq k$} si $\cD_{k+1}(A)=\so0$,
elle est dite \emph{de rang $\geq k$}\hbox{ si $\cD_{k}(A)=\so1$}, de rang $k$ lorsqu'elle est à la fois de rang $\leq k$ et $\geq k$.
\item Si $G\in\gA^{q\times m}$ est une \mpn d'un \Amo $M$ donné par~$q$ \gtrs, les \emph{\idfs de} $M$ sont les \ids
$$\preskip.4em \postskip.4em 
\cF_{\gA,n}(M)=\cF_{n}(M):= \cD_{\gA,q-n}(G) 
$$
où $n$ est un entier arbitraire.\index{ideal@idéal!de Fitting}%
\index{Fitting!idéal de ---}
\end{itemize}

\paragraph{Matrices}~ 
\begin{itemize}
\item Deux matrices  sont dites
\emph{\eqves}
lorsque l'on  passe de l'une à l'autre en multipliant à droite et à
gauche par des matrices inversibles.
\item Une \emph{manipulation \elr de lignes} sur une matrice de $n$ lignes
consiste en le remplacement d'une ligne $L_i$ par la ligne $L_i+\lambda L_j$
avec~\hbox{$ i\neq j$}. On la note  aussi $L_i\aff L_i+\lambda L_j$. Cela correspond à la multiplication à gauche
par une matrice, dite \emph{\elr},  notée $\rE^{(n)}_{i,j}(\lambda)$
(ou, si le contexte le permet, $\rE_{i,j}(\lambda)$). Cette matrice est
obtenue à partir de $\In$ par la même manipulation \elr
de lignes.
\\
La multiplication à droite par la
même matrice $\rE_{i,j}(\lambda)$ correspond, elle,
à la \emph{manipulation \elr de colonnes} (pour une
matrice qui possède $n$ colonnes) qui
transforme la matrice $\In$ en $\rE_{i,j}(\lambda)$: $C_j\aff C_j+\lambda C_i$.
\item Le sous-groupe de $\SLn(\gA)$
engendré par les matrices \elrs est appelé le \emph{groupe \elr} et il est noté
$\En(\gA)$. Deux matrices sont dites \emph{\elrt \eqves}
lorsque l'on peut passer de l'une à l'autre
par des manipulations \elrs de lignes et de colonnes.%
\index{matrices!equiva@équivalentes}%
\index{equivalentes@équivalentes!matrices ---}%
\index{elementairement@élémentairement équi\-valentes!matrices ---}%
\index{matrices!elem@élémentairement équi\-valentes}%
\index{matrice!elem@élémentaire}%
\index{groupe elem@groupe élémentaire}%
\index{manipulation!elem@élémentaire}%
\index{elementaire@élémentaire!matrice ---}%
\index{elementaire@élémentaire!groupe ---}%
\index{elementaire@élémentaire!manipulation --- de lignes}%
\label{NOTAEn}%
\item La matrice $\I_{k,q,m}=\cmatrix{
    \I_{k}   &0_{k,m-k}      \cr
    0_{q-k,k}&     0_{q-k,m-k}      }$ est appelée une \emph{matrice simple standard}.
On note $\I_{k,n}$ pour $\I_{k,n,n}$ et on l'appelle une
\emph{\mprn standard}. Une matrice
est dite \emph{\nl} lorsqu'elle est \eqve à une
matrice $\I_{k,q,m}$.
\item Soient $E$ et $F$ deux \Amos,  et une \ali $\varphi:E\rightarrow F$.
Une \ali $\psi :F\rightarrow E$ est appelée un \ix{inverse
généralisé} de $\varphi$ si l'on a
\begin{equation}\label{eqdefIng}\preskip-.2em \postskip.4em
\varphi \circ\psi \circ\varphi =\varphi \quad \mathrm{et}  \quad \psi
\circ\varphi \circ\psi =\psi.
\end{equation}
%
\item  Une \ali est dite \emph{\lnl} lorsqu'elle possède un
\ing.  Une matrice est dite \emph{\lnl} lorsque c'est la matrice d'une \ali \lnl.%
\index{{localement!application linéaire --- simple}}
\end{itemize}

\paragraph{Anneaux}~  
\begin{itemize}
\item $\gA$ est dit \emph{discret} (resp. \emph{\fdi}) s'il est discret (resp. \fdi) en tant que \Amo.
\item $\gA$ est \emph{\coh}%
\index{anneau!cohérent}
 s'il est \coh comme \Amo. Autrement dit si tout \itf est \pf.
\item $\gA$ est un \emph{anneau de Bézout}%
\index{anneau!de Bézout}
 si tout \itf est principal.
\item $\gA$ est  \emph{intègre} si tout \elt  est nul ou \ndz,  \emph{\qi}%
\index{anneau!quasi intègre}
 (ou \emph{de Baer}) si l'annulateur de tout \elt est un \idm.
\item  $\gA$ est dit {\em \sdz}%
\index{anneau!sans diviseur de zéro}
 si lorsque $xy=0$, on a $x=0$ ou $y=0$.
\item  $\gA$ est dit {\em \lsdz}%
\index{anneau!localement sans diviseur de zéro} si pour tous $x$, $y$ tels que $xy=0$, il existe $s,t\in\gA$ avec $sx=0$,  $ty=0$ et $s+t=1$.
\item $\gA$ est appelé un \emph{\anar} si tout \itf 
est localement principal. Il revient au même de dire: pour tous $a$, $b\in\gA$, il existe $s$, $t$, $v$, $w$ tels que
\begin{equation}
\label{eqanar}
\preskip .1em \postskip -1em
sa=vb,\quad tb=wa\quad \hbox{et}\quad s+t=1.
\end{equation} 
\index{anneau!arithm@\ari}
\item $\gA$ est appelé un \emph{\adp}%
\index{anneau!de Prüfer}
 si tout \id est plat. Il revient au même de dire que $\gA$ est \ari et réduit. 
\item $\gA$ est  un \emph{\adpc} \ssi tout \itf est \pro. On dit aussi
que $\gA$ \emph{semihérédiaire}. Il revient au même de dire que $\gA$ est \ari et \qi. 
\item $\gA$ est dit \emph{\noe}%
\index{anneau!noethérien} s'il est \noe en tant que \Amo. 
\item  Un \ixx{anneau}{local} est un anneau $\gA$ où est vérifié
l'axiome suivant:\index{local!anneau ---}
$\forall x,\,y\in \gA \; x+y \in\Ati\;\Longrightarrow \; ( x\in\Ati\;\mathrm{ou}\;
y\in\Ati )\,.$
Il revient au même de demander:
$
\forall x\in \gA \; x \in\Ati\;\; {\rm  ou} \;\;  1-x \in\Ati\,.
$
Notez que selon cette \dfn l'anneau trivial est local.
Par ailleurs, les \gui{ou} doivent être compris dans leur sens \cof.

\item Un  \ixx{anneau}{local \dcd}
est  un \alo dont le corps
résiduel est un \cdi.
Un tel anneau $\gA$ peut être \care par l'axiome suivant:
$\forall x\in \gA \; x\in \Ati  \;\; {\rm  ou} \;\;
    1+x\gA\,\subseteq\,  \Ati.$

\item Un anneau est \emph{\zed} lorsqu'il vérifie l'axiome suivant:%
\index{zero-dimensionnel@\zed!anneau ---}\index{anneau!zéro-dimensionnel}
$\forall x\in \gA~\exists a\in\gA~\exists k\in
\NN\; x^{k}=ax^{k+1}.$
Un anneau est dit \emph{artinien}\index{artinien!anneau ---} s'il est \zed, \coh et \noe.%
\index{anneau!artinien}

%
\end{itemize}

\paragraph{Idéaux.}~ $\fa$ désigne un \id de l'anneau $\gA$ 
\begin{itemize}
\item Les \elts nilpotents dans un anneau $\gA$ forment un \id appelé
\emph{nilradical}, ou encore \ixx{radical}{nilpotent} de l'anneau.
Un anneau est \emph{réduit} si son nilradical est égal à $0$.
Plus \gnlt le nilradical d'un \id $\fa$ de $\gA$ est l'\id formé par les
$x\in\gA$ dont une puissance est dans $\fa$. Nous le noterons $\sqrt{\fa}$ ou
$\DA(\fa)$. Nous notons aussi $\DA(x)$ pour $\DA(\gen{x})$.
Un \id $\fa$ est appelé \emph{un idéal radical} lorsqu'il
est égal à son nilradical.
L'anneau $\gA/\DA(0)=\gA\red$ est \emph{l'anneau réduit associé à $\gA$}.%
\index{anneau!réduit}\label{NOTADA}%
\index{reduit@réduit!anneau ---}%
\index{nilradical!d'un anneau}%
\index{nilradical!d'un idéal}%
\index{radical!idéal ---}%
\index{ideal@idéal!radical}
\item L'ensemble des \elts  $a$ de $\gA$  qui
vérifient\label{eqDefRadJac}
$\forall x\in \gA~ 1+ax\in\Ati$
est un \id appelé le \ixx{radical}{de Jacobson} de $\gA$. Il sera noté $\Rad(\gA)$.

\item On dit que $\fa$ est \emph{localement principal} s'il existe
 $s_1$, \dots, $s_m$ \com  dans $\gA$ et des \elts $a_1$, \dots, $a_m$ de $\fa$ tels que $s_i\fa\subseteq \gen{a_i}$ (pour chaque~$i$).
 Dans ce cas, on a $\fa=\gen{a_1,\dots,a_m}$.%
\index{ideal@\id!localement principal}
\item \emph{L'idéal contenu d'un \pol $h\in \AXn$}, noté $\rc_\gA(h)$ ou $\rc(h)$, est l'\id de $\gA$
engendré par les \coes de $h$. Le \pol $h$ est dit \emph{primitif}   si 
$\rc_\gA(h)=\gA$.

\end{itemize}

\paragraph{Localisations, bords de Krull.}~ 
\begin{itemize}
\item Soient $U$ et $I$ des parties de l'anneau $\gA$. Nous notons $\cM(U)$ le \mo engendré par  $U$,
 et  $\,\cS(I,U)$  le \mo $\gen {I}_\gA + \cM(U)$.
De la même manière on note $\cS(a_1,\ldots,a_k;u_1,\ldots,u_\ell) = \gen{a_1,\ldots ,a_k}_\gA + \cM(u_1,\ldots,u_\ell)$. 
Nous disons qu'un tel \mo\ {\em admet une description finie}.
Le couple
$(\so{a_1,\ldots,a_k},\so{u_1,\ldots,u_\ell})$
est appelé
un \emph{\ipp fini}. L'\ipp  $\fq=(I,U)$ est fabriqué dans le but suivant: \emph{lorsqu'on localise
en $\cS(I,U)$, on obtient $U\subseteq \gA_\fq\eti$  et $I\subseteq \Rad(\gA_\fq)$.} 
De même, pour tout \idep $\fp$ tel que $I\subseteq\fp$ et $U\subseteq \gA\setminus \fp$, on a $U\subseteq \gA_\fp\eti$  et $I\subseteq \Rad(\gA_\fp)$. Le couple $\fq=(I,U)$ représente donc une information
partielle sur un tel \idep. Il peut être considéré comme une approximation de $\fp$. Ceci explique la terminologie d'\ipp.
\item (Bords de Krull)
\label{defZar2} Soient  $\gA$  un anneau commutatif, $x\in\gA$ et $\fa$ un \itf.
\begin{enumerate}
\item [$(1)$] Le \ix{bord supérieur de Krull} de $\fa$
dans $\gA$ est l'anneau quotient
\begin{equation}\label{eqBKAC}
\gA_\rK^{\fa}:=\gA/\JK_\gA(\fa)  \quad \hbox{où} \quad
 \JK_\gA(\fa):=\fa+(\sqrt{0}:\fa).
\end{equation}
On note  $\JK_\gA(x)$  pour $\JK_\gA(x\gA)$ et $\gA_\rK^{x}$
pour $\gA_\rK^{x\gA}$. Cet anneau est appelé le \emph{bord
supérieur de $x$ dans $\gA$}. \\
On dira  que $\JK_\gA(\fa)$ est
\emph{l'\id bord de Krull de $\fa$ dans $\gA$.}%
\index{ideal@idéal!bord de Krull}
\item [$(2)$] Le \ix{bord inférieur de Krull} de $x$ dans $\gA$ est l'anneau
localisé
\begin{equation}\label{eqBKAS}
\gA^\rK_{x}:=\SK_\gA(x)^{-1}\!
\gA \quad\hbox{où}\quad \SK_\gA(x)=x^\NN(1+x\gA).
\end{equation}
On dira que  $\SK_\gA(x)$ est le
\emph{monoïde bord de Krull de $x$} dans~$\gA$.%
\index{bord de Krull!idéal ---}
\end{enumerate}

\item 
Une version \gui{itérée} du \mo
$\SK_\gA(x)$ est le \mo \gui{bord de Krull itéré}
\begin{equation}\label{eqMonBordKrullItere}
\SK_\gA(\xzk):=
x_0^\NN(x_1^\NN\cdots(x_k^\NN (1+x_k\gA) +\cdots)+x_1\gA) + x_0\gA)
\end{equation}
Pour une suite vide, on définit $\SK_\gA()=\so{1}$.%
\index{monoide@monoïde!bord de Krull itéré}%
\index{bord de Krull!monoide --- it@monoïde --- itéré}%
 
%
\end{itemize}

\paragraph{Modules.}~ $M$, $N$ et $P$ désignent des \Amos. 
\begin{itemize}
\item Un ensemble $E$ est dit \emph{discret} s'il possède un test d'\egt, i.e. si l'on a explicitement: $\forall x,y\in E,\; x=y \hbox{ ou } x\neq y$.
\item  Un groupe (ou un module) $G$ est discret \ssi
il possède un test d'\egt à $0$.  
\item $M$ est dit \emph{\fdi}%
\index{module!fortement discret} si, pout tout sous-\mtf~$N$, le module quotient $M/N$ est discret.
\item  Pour $V=(v_1,\ldots ,v_n)\in M^n$,  on
appelle \ix{module des \syzys} \emph{entre les~$v_i$}, ou encore \emph{module des \syzys pour $V$}, le sous-\Amo
de $\Ae{n}$ noyau de l'\ali
$
\breve{V}:\Ae{n}\to M,\;   (\xn)\mapsto\som_ix_iv_i.
$
\item  Le module $M$ est  dit \ixc{cohérent}{module ---}
\index{module!cohérent} si tout sous-\Amo \tf est \pf. Autrement dit si pour tous $n\in\NN$ et $V\in M^n$,
le module des \syzys pour $V$ est \tf, i.e. si l'on~a:
\begin{equation}\label{eqMoCoh}\preskip.4em \postskip.4em
\formule{
\forall n\in\NN,\,\forall V\in M^{n{\times}1},\, \exists m\in\NN
,\, \exists G\in \Ae{m{\times}n},\,\forall X\in \Ae{1{\times}n}\,, \\[.3em]
\quad \quad
XV=0\quad \Longleftrightarrow\quad
\exists Y\in \Ae{1{\times}m},\; X=YG\;.}
\end{equation}
%
\item Un \Amo $P$ est dit \ixc{projectif}{module ---}
s'il vérifie la \prt suivante.
 Pour tous \Amos $M,\,N$, pour toute \ali surjective
\hbox{$\psi : M\rightarrow N$} et  toute \ali $\Phi:P\rightarrow N$, il existe une \ali $\varphi : P\rightarrow M$   telle que
$\psi\circ \varphi= \Phi$.
$$\preskip.4em \postskip.4em 
\xymatrix {
                                       & M\ar@{>>}[d]^{\psi} \\
P\ar@{-->}[ur]^{\varphi} \ar[r]_{\Phi} & N \\
} 
$$
\item $M$ est dit \emph{\noe}%
\index{module!noethérien} si toute  suite infinie croissante (au sens large) de sous-modules \tf de $M$  contient deux 
termes consécutifs égaux.
\item \emph{Modules plats}.
\begin{itemize}
\itbu Une \emph{\syzy dans $M$} est donnée par $L\in \Ae {1\times n}$ et $X\in M^{n\times 1}$ qui vérifient $LX=0$.
\itbu On dit que \emph{la \syzy $LX=0$ s'explique dans~$M$} si l'on trouve un vecteur $Y\in M^{m\times 1}$ et une matrice $G\in \Ae {n\times m}$ qui vérifient:
$$\label{eqdef.plat}
LG=0\quad {\rm et } \quad GY = X\,.
$$
\itbu Le \Amo $M$ est appelé un {\em  \mpl}\index{module!plat}\index{plat!module ---} si  toute \syzy  
dans~$M$ s'explique dans $M$.
(En langage intuitif: s'il y a une \syzy entre \elts de $M$ ce n'est pas la faute 
au module.) 
\end{itemize}

%
%
\end{itemize}
\paragraph{Modules projectifs de type fini, groupe de Grothendieck.}~ 
\begin{itemize}
\item Un \Amo \ptf à $n$ générateurs est un module $M$ isomorphe à l'image d'une matrice idempotente $F\in\Mn(\gA)$ ($F^2=F$). On dit encore une matrice de projection. On a l'\iso $\Ae n= \Im F \oplus \Ker F\simeq M\oplus K$. 
\item On note $\GKO \gA$ l'ensemble des classes d'\iso de \mptfs sur $\gA$.
C'est un semi-anneau\index{semi-anneau}\footnote{Ceci signifie que la structure est donnée par une addition, commutative et associative, une multiplication, commutative, associative et distributive par rapport à l'addition, avec un neutre $0$ pour l'addition et un neutre $1$ pour la multiplication.
Par exemple $\NN$ est un semi-anneau.} 
pour les lois héritées de $\oplus$ et $\otimes$.
Le  \textsf{G}  de $\GKO$ est en hommage à Grothendieck.%
\label{NOTAGKO}
Tout \elt de $\GKO \gA$ peut être représenté par une matrice \idme à
\coes dans~$\gA$. Le produit défini dans $\GKO \gA$ donne par passage au quotient un
produit dans $\KO \gA$, qui a donc une structure d'anneau
commutatif.
\item Le \mo additif (commutatif) de $\GKO \gA$ n'est pas toujours régulier.
Pour obtenir un groupe, on symétrise le \mo additif $\GKO\gA$ et l'on obtient
le \ix{groupe de Grothendieck} que l'on note $\KO \gA$.\label{NOTAK0}
La classe du \mptf $P$ dans $\KO \gA$ se note $[P]_{\KO (\gA)}$, ou~$[P]_{\gA}$, ou même $[P]$ si le contexte le permet.  Pour un \idm $e$ on notera aussi $[e]$ à la place de~$[e\gA]$, lorsque le contexte est clair.
Avec cette notation l'\elt neutre pour la multiplication est~$[1]$. 
\item
Tout \elt de $\KO \gA$ s'écrit sous forme $[P]-[Q]$.
Les classes de deux \mptfs $P$ et $P'$ sont égales dans~$\KO \gA$ \ssi
il existe un entier $k$ tel que $P\oplus\Ae k\simeq P'\oplus\Ae k$.
On dit dans ce cas que $P$ et $P'$ sont \ixc{stablement isomorphes}{modules ---}.
\end{itemize}

\paragraph{Déterminants.}~ 
\begin{itemize}

\item Un endomorphisme $\varphi$ de $M$ est caractérisé par l'endomorphisme~$\psi$ de $\Ae n= \Im F \oplus \Ker F\simeq M\oplus K$ obtenu en prolongeant $\varphi$ par $\psi(K)=0$. Si $H$ est la matrice de $\psi$ on a $H=HF=FH$. Le déterminant de $\varphi$ est par définition celui de l'endomorphisme $\theta$ obtenu en prolongeant~$\varphi$ par $\theta\frt K=\Id_K$. On a donc $\det \varphi= \det (H+\In-F)$.
\item Le \pol $\rR{M}(X)=\det(X\,\Id_{M})$ est appelé le {\em \polmu} du module $M$. Cette terminologie est justifiée par le fait que pour un
module libre de rang $k$ le \polmu est égal à~$X^k$.%
\index{rang!polynome@\pol --- d'un \mptf}%
\label{NOTAPolmu}%
\index{polynome@\pol!rang}
 Dans le cas où l'anneau possède des idempotents $\neq 0,1$ un polynôme rang a  la forme générale $\sum_{k=0}^n r_k X^k$, pour un système fondamental d'idempotents orthogonaux $(r_0,\dots,r_n)$.

\item Un \Amo $M$ est dit \emph{quasi libre}%
\index{quasi libre!module ---}\index{module!quasi libre} 
s'il est isomorphe à une somme directe finie d'idéaux $\gen{e_i}$ avec les $e_i$ \idms. C'est un cas particulier de \mptf.
   Pour un système fondamental d'idempotents orthogonaux $(r_0,\dots,r_n)$ le module quasi libre $\bigoplus_{k=1}^n(r_k\gA)^k$ a pour déterminant le \polmu $\sum_{k=0}^n r_k X^k$.
   
\end{itemize}

\paragraph{L'anneau des rangs généralisés $\HO(\gA)$.}~  
\begin{itemize}
\item On note $\HOp  \gA $ l'ensemble des classes d'\iso des modules
quasi libres sur $\gA$. Deux modules quasi libres stablement isomorphes sont isomorphes, de sorte que $\HOp \gA $  
s'identifie à un sous-semi-anneau de $\KO\gA$.
\item Un élément général de
$\HOp (\gA)$ s'écrit $\sum_{k=0}^n k [r_k]$ pour un système fondamental d'idempotents orthogonaux $(r_0,\dots,r_n)$. Si l'anneau n'est pas trivial,  le sous-semi-anneau de $\HOp (\gA)$ engendré par 1 est isomorphe à $\NN.$
\item Le \mo additif de $\HOp (\gA)$ est régulier, et le groupe symétrisé est muni d'une structure naturelle d'anneau qui étend la structure de $\HOp (\gA)$. Cet anneau s'appelle l'\ixx{anneau des rangs}{(\gnes) de \mptfs} sur $\gA$, et l'on le note  $\HO(\gA)$. C'est aussi le sous-anneau de
$\KO\gA$ engendré par les classes $[e\gA]$ pour $e$ idempotent.  
\item 
Si $M$ est un \Amo \ptf on appelle \emph{rang (\gne)} de $M$ et l'on note
$\rg_\gA(M) $ ou $\rg(M)$ l'unique
\elt de $\HOp(\gA)$, classe d'un module quasi libre qui a le même \polmu que 
$M$.\index{rang!(généralisé) d'un \mptf}
Deux \mptfs stablement isomorphes $P$ et $P'$ ont même rang
puisque $\rg(P\oplus\Ae k)=k+\rg(P)$. 
En conséquence, le rang (\gne) des \mptfs définit un \homo
surjectif d'anneaux
$\rg_\gA:\KO \gA\to\HO \gA$. 
   
%
\end{itemize}

\section{Quelques résultats préliminaires}\label{secRappelThs}

\paragraph{Lemmes de base}

\CMnewtheorem{plgbase}{Principe \lgb de base}{\itshape}
\begin{plgbase}\label{FFRplcc.basic}\index{principe local-global de base} 
\emph{(recollement concret de solutions d'un \sli, CACM~II-2.3)}
Soient $S_1$, $\dots$, $S_n$ des \moco de $\gA$,  $A$
une matrice \hbox{de $\MM_{m,p}(\gA)$} et $C$ un vecteur colonne de~$
\Ae{m}$.
Alors \propeq
\begin{enumerate}
\item  {Le \sli $AX=C$ admet une solution dans $\gA^{p}$}.
\item  {Pour $ i\in\lrbn$
le \sli $AX=C$ admet une solution dans~$\gA_{S_i}^{p}$}.
\end{enumerate}
Ce principe vaut \egmt pour les \slis à \coes dans un \Amo $M$.
\end{plgbase}

\CMnewtheorem{lemNak}{Lemme de Nakayama}{\itshape}
\begin{lemNak}\label{FFRlemNaka}  
\emph{(Le truc du \deter, CACM~IX-2.1)}%
\index{Nakayama!Lemme de ---} \\
Soient $M$ un \Amo \tf et $\fa$ un idéal de $\gA$.
\begin{enumerate}
\item  Si $\fa\,M=M$, il existe $x\in\fa$ tel que $(1-x)\,M=0$.
\item  Si en outre $\fa\subseteq\Rad(\gA)$, alors $M=0$.
\item  Si $N\subseteq M$,  $\fa\,M+N=M$ et $\fa\subseteq\Rad(\gA)$, alors $M=N$.
\item  Si $\fa\subseteq\Rad(\gA)$ et si  $X\subseteq M$ engendre $M/\fa M$ comme
$\gA/\fa$-module, alors~$X$ engendre $M$ comme \Amo.
\end{enumerate}
\end{lemNak}

\CMnewtheorem{lemDKME}{Lemme de \DKM}{\itshape}
\begin{lemDKME}\label{FFRlemdArtin}%
\index{Lemme de Dedekind-Mertens} \emph{(CACM~III-21)}\\
Pour $f,g\in\AT$ avec $m\geq\deg g$ on a
\fbox{$\rc(f)^{m+1}\rc(g)=\rc(f)^m\rc(fg)$}.
\end{lemDKME}

\paragraph{Matrices}~ 

\CMnewtheorem{lemmininv}{Lemme du mineur inversible}{\itshape}
\begin{lemmininv}\label{FFRlem.min.inv}\index{Lemme du mineur inversible}
\emph{(Pivot généralisé, CACM~II-5.9)}\\
Si une matrice $A\in\MM_{q,m}(\gA)$ possède un mineur d'ordre $k\leq \min(m,q)$
inversible
, elle est \eqve à une matrice

\snic{
\cmatrix{
    \I_{k}   &0_{k,m-k}      \cr
    0_{q-k,k}&  A_1      },}

avec $\cD_r(A_1)=\cD_{k+r}(A)$ pour tout $r\in\ZZ$. 
\end{lemmininv}

\CMnewtheorem{lemli}{Lemme de la liberté}{\itshape}
\begin{lemli}\label{NOTAIkqm} \label{FFRlem pf libre}\index{Lemme de la liberté}\emph{(CACM~II-5-10)}\\
Considérons une matrice $A\in\MM_{q,m}(\gA)$ de \hbox{rang $\leq k$} avec $1\leq k\leq \min(m,q)$. Si la
matrice~$A$ possède un mineur d'ordre~$k$ inversible, alors elle est
\eqve à la matrice
$$\preskip.0em \postskip.4em 
\I_{k,q,m}\;=\;
\cmatrix{
    \I_{k}   &0_{k,m-k}      \cr
    0_{q-k,k}&     0_{q-k,m-k}      }. 
$$
Dans ce cas, l'image, le noyau et le conoyau de $A$ sont libres,
respectivement de rangs $k$, $m-k$ et $q-k$. En outre l'image et le noyau
possèdent des supplémen\-taires libres.
\\ 
Si $i_1$, $\ldots$, $i_k$ (resp. $j_1$, $\ldots$, $j_k$) sont
les numéros de lignes (resp. de colonnes) du mineur inversible, alors
les colonnes $j_1$, $\ldots$, $j_k$ forment une base du module $\Im A$, et
$\Ker A$ est le sous-module défini par l'annulation des formes \lins
correspondant aux lignes $i_1$, $\ldots$, $i_k$.
\end{lemli} 

\begin{theorem}\label{FFRpropFactDirRangk}
\emph{(Matrices \lnls, CACM~II-5.14, II-5.20 \hbox{et II-5.26})}
Soit $A\in \MM_{n,m}(\gA)$. Notons $q=\min(m,n)$.
\begin{enumerate}
\item \Propeq
\begin{enumerate}
\item La matrice $A$ est de rang $k$.
\item La matrice $A$ est \lnl de rang $k$.
\item La matrice $A$ a même image qu'une \mprn de rang~$k$ de $\Mn(\gA)$.
\end{enumerate}
\item \Propeq
\begin{enumerate}
\item La matrice $A$ est \lnl.
\item Il existe une matrice $B\in\MM_{m,n}(\gA)$ telle que  $ABA=A$.
\item Le module $\Im A$ est  facteur direct dans $\Ae{n}$  (i.e., c'est l'image d'une \mprn $F\in\Mn(\gA)$)
\item Chaque \idd $\cD_k(A)$ est engendré par un \idm.
\item La matrice $A$ devient \nl après \lon en des
  \eco convenables.
\item \label{IFDi}
Chaque \idd $\cD_k(A)$ est engendré par un \idm~$e_k$ et,  après \lon en des  \eco convenables,  la
matrice  $A$ devient \eqve à 
$\Diag(e_1,e_2, \ldots ,e_{q})$  (complétée par des
lignes ou colonnes nulles si $n>m$ ou $m>n$).
\end{enumerate}
\end{enumerate}
\end{theorem}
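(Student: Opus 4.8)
Le plan est de faire tourner deux moteurs: le \emph{Lemme de la liberté}, qui rend une matrice de rang $\leq k$ munie d'un mineur d'ordre $k$ inversible \eqve à $\I_{k,q,m}$, et le \emph{Principe \lgb de base} (\ref{FFRplcc.basic}); comme l'\egt $ABA=A$ est \lin en $B$, sa résolubilité se recolle, ce qui fera passer du local au global. Je commencerais par les équivalences faciles du point~2. Pour $(a)\Leftrightarrow(b)$, il suffit de noter que si $ABA=A$, alors $B':=BAB$ vérifie $AB'A=A$ et $B'AB'=B'$: c'est un \ing de $A$. Pour $(b)\Rightarrow(c)$, la matrice $F:=AB$ est \idme et d'image $\Im A$, donc $\Im A$ est facteur direct. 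Pour $(c)\Rightarrow(b)$, si $F$ est une matrice de projection telle que $\Im F=\Im A$, alors $FA=A$ (car $F$ fixe son image) et $F=AC$ pour un certain $C$ (les colonnes de $F$ étant dans $\Im A$), d'où $ACA=A$.

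L'implication $(c)\Rightarrow(d)$ repose sur deux faits. D'abord, deux matrices de même image ont les mêmes idéaux déterminantiels: de $A=FD$ et $F=AC$ on tire $\cD_k(A)\subseteq\cD_k(F)$ et $\cD_k(F)\subseteq\cD_k(A)$, puisque les mineurs d'un produit appartiennent à l'\idd de chaque facteur; donc $\cD_k(A)=\cD_k(F)$. Ensuite, les idéaux déterminantiels d'une matrice de projection sont engendrés par des \idms, ce que fournit le \polmu du module $\Im F$ et le système fondamental d'idempotents orthogonaux associé.

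Le cœur est $(d)\Rightarrow(e)$. Écrivons $\cD_k(A)=\gen{e_k}$ avec $e_k$ \idme (et $e_0=1$, $e_{q+1}=0$). La chaîne $\gen{e_{k+1}}\subseteq\gen{e_k}$ donne $e_{k+1}=e_ke_{k+1}$, de sorte que les $r_k:=e_k-e_{k+1}$ forment un système fondamental d'idempotents orthogonaux de somme $1$. Sur le localisé $\gA_{r_k}$ on a $e_k=1$ et $e_{k+1}=0$, donc $A$ y est de rang exactement $k$; comme $\cD_k=\gen{1}$, les mineurs d'ordre $k$ y sont \com, et après \lon en l'un d'eux le Lemme de la liberté rend $A$ \eqve à $\I_{k,q,m}$. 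En regroupant ces localisations, on voit que $A$ devient simple après \lon en des \eco convenables, c'est-à-dire $(e)$; le point $(f)$ s'en déduit car sur $\gA_{r_k}$ la matrice $\Diag(e_1,\dots,e_q)$ vaut exactement $\I_{k,q,m}$. Enfin $(e)\Rightarrow(b)$: une matrice simple admet un \ing, donc $A$ en admet un sur chaque localisé, et le Principe \lgb de base recolle ces solutions de $ABA=A$ en une solution globale, ce qui referme le cycle.

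Le point~1 est le cas du rang constant: \gui{$A$ de rang $k$} signifie exactement $\cD_{k+1}(A)=\gen{0}$ et $\cD_k(A)=\gen{1}$, et l'argument précédent se spécialise (le système d'idempotents se réduit à $r_k=1$). Si $\cD_k=\gen{1}$, les mineurs d'ordre $k$ sont \com et le Lemme de la liberté rend $A$ localement \eqve à $\I_{k,q,m}$, d'où $(b)$ et $(c)$ par recollement; réciproquement, l'\egt $\cD_k(A)=\cD_k(F)$ et le fait qu'une projection de rang $k$ vérifie $\cD_k=\gen{1}$, $\cD_{k+1}=\gen{0}$ redonnent $(a)$. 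L'obstacle principal se situe dans les implications vers $(d)$ et $(e)$: il faut garantir que les idéaux déterminantiels sont engendrés par des \idms, puis agencer avec soin la famille d'\eco afin d'appliquer le Lemme de la liberté sur chaque morceau et de recoller.
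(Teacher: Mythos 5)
Le mémoire ne démontre pas cet énoncé: il figure au chapitre de rappels et est renvoyé à [CACM] (II-5.14, II-5.20 et II-5.26), c'est donc à cette source que votre texte se compare. Votre démonstration est correcte et suit essentiellement la même route que [CACM], avec exactement les outils que le chapitre rappelle à cet effet: le lemme de la liberté \ref{FFRlem pf libre} pour le cas d'un mineur inversible, la localisation en la famille comaximale des $r_k\,\mu$ ($\mu$ mineur d'ordre $k$) — celle-là même qui appara\^{\i}t dans le théorème \ref{FFRprop Fitt ptf 2} —, puis le recollement des inverses généralisés par le principe local-global de base \ref{FFRplcc.basic}, licite parce que l'équation $ABA=A$ est linéaire en $B$. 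Seule remarque: dans l'implication (c) $\Rightarrow$ (d) du point 2, le fait que les \idds d'une \mprn $F$ sont engendrés par des \idms gagnerait à être référé explicitement au théorème \ref{FFRprop Fitt ptf 2}, appliqué au module $\Coker F$ (projectif, car isomorphe à $\Ker F$, et présenté par la matrice $F$, d'où $\cF_h(\Coker F)=\cD_{n-h}(F)$ engendré par un \idm pour tout $h$); votre appel au \polmu de $\Im F$ revient au même, mais tel quel il reste un peu elliptique.
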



\paragraph{Modules}~ 

\begin{lemma} \label{FFRfactchangesgrmpf} \emph{(Changement de \sgr, \cref{section IV-1})}\\
Lorsque l'on  change de \sgr fini pour un \mpf, les \syzys entre
les nouveaux \gtrs forment de nouveau un module \tf.
\end{lemma}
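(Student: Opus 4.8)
The plan is to reformulate the claim in terms of the syzygy maps $\breve V$ defined above, and then to write down an \emph{explicit} finite generating family for the new syzygy module. By definition of a \mpf, we are handed one finite \sgr $U=(u_1,\dots,u_q)$ of $M$ together with a matrix $G\in\Ae{q\times p}$ whose columns generate $\Ker\breve U$, the module of \syzys for $U$. Given an arbitrary new finite \sgr $V=(v_1,\dots,v_n)$, the goal is to show that $\Ker\breve V$ is \tf.

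First I would record two transition matrices. Since $U$ generates $M$, each $v_j$ is an explicit $\gA$-linear combination of the $u_i$; packaging these coefficients as columns yields a matrix $B\in\Ae{q\times n}$ and a map $\beta\colon\Ae n\to\Ae q$ with $\breve U\circ\beta=\breve V$. Symmetrically, since $V$ generates $M$, each $u_i$ is an explicit combination of the $v_j$, giving $A\in\Ae{n\times q}$ and $\alpha\colon\Ae q\to\Ae n$ with $\breve V\circ\alpha=\breve U$. These maps are available constructively precisely because a finite \sgr lets one write down each generator as a combination of the others.

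The heart of the argument is the identity
\[ \Ker\breve V \;=\; \Im(\alpha\circ G)\;+\;\Im\bigl(\Id_{\Ae n}-\alpha\circ\beta\bigr). \]
The right-hand side is \tf, being a sum of images of the matrices $AG\in\Ae{n\times p}$ and $I_n-AB\in\Ae{n\times n}$. For the inclusion $\supseteq$ one checks that both families consist of \syzys for $V$: indeed $\breve V\circ\alpha\circ G=\breve U\circ G=0$ because $\Im G=\Ker\breve U$, and $\breve V\circ(\Id_{\Ae n}-\alpha\circ\beta)=\breve V-\breve U\circ\beta=\breve V-\breve V=0$. For $\subseteq$, take $x\in\Ker\breve V$; then $\breve U(\beta x)=\breve V x=0$, so $\beta x\in\Ker\breve U=\Im G$, say $\beta x=Gy$, and the decomposition $x=(\Id_{\Ae n}-\alpha\beta)(x)+\alpha(Gy)$ exhibits $x$ as a member of the right-hand side. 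Hence $\Ker\breve V$ is generated by the columns of $AG$ together with those of $I_n-AB$.

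I expect no serious obstacle: the only real content is this decomposition, which is the constructive analogue of Schanuel's lemma. The points requiring care are purely bookkeeping --- fixing the row/column conventions for $\alpha$ and $\beta$ so that $\breve U\circ\beta=\breve V$ and $\breve V\circ\alpha=\breve U$ hold on the nose, and making sure every object produced (the two transition matrices, and the explicit generating columns of $AG$ and $I_n-AB$) is given concretely, so that no noetherian hypothesis or choice principle sneaks in.
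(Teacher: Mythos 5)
Your proof is correct, and it is essentially the argument behind the result the paper recalls from [CACM, IV-1]: the same transition matrices $A$, $B$ appear there, and the cited proof (organized via the combined generating system $(u_1,\dots,u_q,v_1,\dots,v_n)$ and equivalence of presentation matrices) produces exactly the same finite generating family for $\Ker\breve V$, namely the columns of $AG$ and of $\I_n-AB$. Your version merely verifies the key identity $\Ker\breve V=\Im(AG)+\Im(\I_n-AB)$ directly instead of passing through the union of the two generating sets, which is a purely organizational difference.
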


\begin{lemma}
\label{FFRlem pres equiv} \emph{(CACM~lemme IV-1.1. Matrices qui présentent un même module)} 
Pour deux matrices $G\in \MM_{q,m}(\gA)$ et $H\in \MM_{r,n}(\gA)$,
 \propeq
\begin{enumerate}\itemsep0pt
\item  Les matrices $G$ et $H$ présentent \gui{le même} module, \cad leurs conoyaux sont
isomorphes.
\item  Les deux matrices  de la figure ci-dessous sont
\elrt \eqves.
\item  Les deux matrices  de la figure ci-dessous sont
\eqves.
\end{enumerate}
\vspace{-10pt}
\begin{figure}[htbp]
{\small
\begin{center}
\[
\begin{array}{c|p{35pt}|p{15pt}|p{25pt}|p{20pt}|}
\multicolumn{1}{c}{} & \multicolumn{1}{c}{m} & \multicolumn{1}{c}{r} &
\multicolumn{1}{c}{q} & \multicolumn{1}{c}{n} \\
\cline{2-5}
\vrule height20pt depth13pt width0pt q\; & \hfil G \hfil &\hfil
0\hfil &\hfil 0\hfil &\hfil 0\hfil \\
\cline{2-5}
\vrule height15pt depth8pt width0pt r\; & \hfil 0  &\hfil
${\rm I}_{r}$\hfil & \hfil 0\hfil &\hfil0\hfil \\
\cline{2-5}
\end{array}
\]
\vspace{6pt}
\[
\begin{array}{c|p{35pt}|p{15pt}|p{25pt}|p{20pt}|}
\cline{2-5}
\vrule height20pt depth13pt width0pt q\; & \hfil 0 \hfil&\hfil
0\hfil &\hfil $\I_{q}$\hfil &\hfil 0\hfil \\
\cline{2-5}
\vrule height15pt depth8pt width0pt r\; &\hfil 0 \hfil&\hfil
0\hfil & \hfil 0\hfil & \hfil H\hfil \\
\cline{2-5}
\end{array}\]

\end{center}}
\caption{\emph{Les deux matrices}}
\label{fig}
\end{figure}
\end{lemma}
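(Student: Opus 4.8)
The implications $(2)\Rightarrow(3)$ and $(3)\Rightarrow(1)$ are the easy ones. Elementary matrices are invertible, so elementary equivalence is a special case of equivalence, which gives $(2)\Rightarrow(3)$. For $(3)\Rightarrow(1)$ I would first record that bordering a presentation matrix by an identity block does not change the presented module: reading off generators and relations shows that the first matrix of the figure, call it $P_1$, has cokernel $\gA^{q}/\Im G\simeq M$ (the $\I_r$ block kills the $r$ extra generators and the zero blocks impose nothing), while the second matrix $P_2$ has cokernel $\gA^{r}/\Im H\simeq M'$. Since equivalent matrices have isomorphic cokernels, $(3)$ forces $M\simeq M'$, which is $(1)$.

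The real content is $(1)\Rightarrow(2)$. Starting from an isomorphism $\varphi\colon \gA^q/\Im G \to \gA^r/\Im H$, I would lift it and its inverse through the free (hence projective) modules $\gA^q$, $\gA^r$ to matrices $A\in\MM_{r,q}(\gA)$ and $B\in\MM_{q,r}(\gA)$. Because $A$ carries $\Im G$ into $\Im H$ and $B$ carries $\Im H$ into $\Im G$, and because $BA$ and $AB$ induce $\Id$ on the respective cokernels, one obtains matrices $A',B',C,D$ with
\[
AG=HA',\quad BH=GB',\quad BA=\I_q+GC,\quad AB=\I_r+HD .
\]
From these four relations I would derive the key auxiliary identity $HA'B'-HDH=H$ (use $HA'=AG$, then $GB'=BH$, then $AB=\I_r+HD$). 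This identity is exactly what will let me manufacture the block $H$ in a place where $P_1$ only has zeros.

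The plan is then to transform $P_1$ into $P_2$ by a scripted sequence of elementary row and column operations, treating the four column groups (widths $m,r,q,n$) and the two row groups (heights $q,r$) as blocks; every move below adds a left or right multiple of one block to a \emph{different} block, hence is a product of transvections $\rE_{i,j}(\lambda)$. In a first stage I inject $A$ into the $(\mathrm{II},3)$ slot using the $\I_r$ pivot, add $B$ times the second row-block to the first, and then use $BA=\I_q+GC$ together with the $G$ block to turn the $(\mathrm{I},3)$ slot into a clean $\I_q$. In a second stage I use this new $\I_q$ pivot to clear the $G$ and $B$ entries of the first two column-blocks (the relation $\I_r-AB=-HD$ surfaces here) and to clear the residual $A$ from the $(\mathrm{II},3)$ slot, reaching a matrix whose first row-block is exactly $[\,0\mid 0\mid \I_q\mid 0\,]$ and whose second row-block is $[\,-HA'\mid -HD\mid 0\mid 0\,]$. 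In the final stage I invoke $HA'B'-HDH=H$ to fill the hitherto-empty fourth column-block with $H$ by a single block column operation, then use that $H$ to cancel the $-HA'$ and $-HD$ entries, landing exactly on $P_2$.

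The step I expect to be delicate is not any single computation but the bookkeeping needed to stay strictly inside elementary equivalence, i.e.\ to use only transvections and neither permutations nor sign matrices $\Diag(\pm1)$, which in general are not products of transvections over an arbitrary commutative ring. Concretely, one must choose the signs in the early operations so that the pivot block comes out as $+\I_q$ rather than $-\I_q$ and so that the fourth block is filled with $H$ exactly (not $-H$ or a permuted copy); the derivation above is arranged to produce precisely these signs. Once the script is fixed with the right signs, each move is visibly a composition of operations $C_j\aff C_j+\lambda C_i$ or $L_i\aff L_i+\lambda L_j$ between distinct blocks, so the resulting transition matrices lie in the elementary groups and $P_1$ and $P_2$ are elementarily equivalent, completing $(1)\Rightarrow(2)$.
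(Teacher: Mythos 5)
The paper states this lemma without proof---it is recalled from [CACM] (lemme IV-1.1)---so the only possible comparison is with the classical argument given there. Your proof is correct and follows essentially that argument: the easy implications are handled identically (elementary matrices are invertible; the bordered identity blocks do not change the cokernel; equivalent matrices have isomorphic cokernels), and for (1)$\Rightarrow$(2) you lift $\varphi$ and $\varphi^{-1}$ to matrices $A\in\MM_{r,q}(\gA)$, $B\in\MM_{q,r}(\gA)$, extract the four standard relations $AG=HA'$, $BH=GB'$, $BA=\I_q+GC$, $AB=\I_r+HD$, and run a chain of block transvections. I have checked your script step by step: it does terminate on the second matrix of the figure, every move is of the form (block of columns) $\aff$ itself plus (another block of columns) times a matrix, or the analogous row operation, hence a product of transvections $\rE_{i,j}(\lambda)$; no permutation and no sign matrix ever enters, so the delicate point you flag resolves itself once the script is written out.

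The only real deviation from the proof in [CACM] is the order of operations. There one first creates, while the blocks $G$ and $\I_r$ are still intact, the new column blocks with entries $(\I_q,\,-A)$ and $(0,\,H)$---each creation uses exactly one of the four basic relations---and only afterwards clears $G$, $B$ and $-A$; thus only the four basic relations are ever needed. In your script the blocks $G$ and $\I_r$ are consumed before $H$ has been produced, so $H$ must be rebuilt from $-HA'$ and $-HD$, and this is what forces your derived identity $HA'B'-HDH=H$ (correctly proved: $HA'B'=AGB'=ABH=(\I_r+HD)H$). This costs one extra identity but changes nothing essential; both routes are of comparable length and both stay strictly inside elementary equivalence.
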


\begin{proposition}
\label{FFRpropPfSex} \emph{(CACM~IV-4.2, quotient d'un \mpf)}
Soit $N$ un sous-\Amo de $M$ et $P=M/N$.
\begin{enumerate}
\item Si $M$ est \pf et $N$ \tf, $P$ est \pf.
\item Si $M$ est \tf et $P$ \pf, $N$ est \tf.
\item Si $P$ et $N$ sont \pf, $M$ est \pf. Plus \prmt, si $A$ et
$B$ sont des \mpns pour $N$ et $P$, on a une \mpn 
$D=\blocs{1}{.8}{.7}{.6}{$A$}{$C$}{$0$}{$B$}$
pour $M$.
\end{enumerate}
\end{proposition}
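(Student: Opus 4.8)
Le plan est d'exploiter la suite exacte $0\to N\to M\to P\to 0$ en fabriquant à chaque point des \mpns explicites; je note $\pi\colon M\to P$ la projection canonique. Pour le point 1, je pars d'une \mpn $G\in\MM_{n,m}(\gA)$ de $M$ relative à un \sgr $(g_1,\dots,g_n)$ et d'un \sgr fini $(h_1,\dots,h_s)$ de $N$. Comme $N\subseteq M$, j'écris $h_j=\sum_i c_{ij}g_i$ et pose $C=(c_{ij})$. Les $\pi(g_i)$ engendrent $P$, et une \syzy $x=(x_i)_i$ entre eux, \cad $\sum_i x_i g_i\in N$, équivaut à l'existence de $\lambda$ tel que $x-C\lambda\in\Im G$; autrement dit le module des \syzys de $P$ pour ce \sgr est exactement $\Im[G\mid C]$, ce qui fait de $[G\mid C]$ une \mpn de $P$.

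Le point 2 demande davantage d'attention, car c'est le seul où l'on doit produire une finitude sans en disposer au départ. Je choisis un \sgr fini $(g_1,\dots,g_n)$ de $M$; les $\pi(g_i)$ engendrent $P$. Puisque $P$ est \pf, le lemme \ref{FFRfactchangesgrmpf} assure que le module des \syzys entre ces générateurs de $P$ est \tf; soit $(r_1,\dots,r_t)$ un \sgr fini, avec $r_k=(r_{ik})_i$ et $y_k:=\sum_i r_{ik}g_i\in N$. J'affirme que les $y_k$ engendrent $N$: tout $x\in N$ s'écrit $x=\sum_i a_i g_i$, et alors $(a_i)_i$ est une \syzy entre les $\pi(g_i)$, donc une combinaison linéaire des $r_k$, ce qui exprime $x$ comme combinaison des $y_k$.

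Pour le point 3, je construis directement la matrice annoncée. Soit $(n_1,\dots,n_q)$ le \sgr de $N$ associé à la \mpn $A$, et $(p_1,\dots,p_r)$ des relèvements dans $M$ du \sgr de $P$ associé à $B$; ensemble ils engendrent $M$, d'où une surjection $\varphi\colon\Ae{q+r}\to M$. Chaque colonne de $B$ fournit une \syzy $\sum_j B_{jk}\pi(p_j)=0$, donc $\sum_j B_{jk}p_j\in N$ s'écrit $-\sum_i c_{ik}n_i$; en posant $C=(c_{ik})$, les colonnes de $D=\cmatrix{A&C\cr 0&B}$ s'annulent par $\varphi$, soit $\Im D\subseteq\Ker\varphi$.

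Le point délicat est l'inclusion réciproque $\Ker\varphi\subseteq\Im D$. Soit $(a,b)\in\Ker\varphi$, \cad $\sum_i a_i n_i+\sum_j b_j p_j=0$ dans $M$. En appliquant $\pi$, $b$ devient une \syzy entre les $\pi(p_j)$, donc $b=B\mu$ pour un certain $\mu$. La combinaison $\sum_k\mu_k$ des relations mixtes donne $\sum_j b_j p_j=-\sum_i(C\mu)_i n_i$, d'où $\sum_i\big(a_i-(C\mu)_i\big)n_i=0$ dans $M$, donc dans $N$: c'est une \syzy entre les $n_i$, si bien que $a-C\mu=A\nu$. Alors $(a,b)=D\,(\nu,\mu)$, ce qui donne $\Ker\varphi=\Im D$ et $M\simeq\Ae{q+r}/\Im D$, dont $D$ est une \mpn. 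Je m'attends à ce que l'obstacle principal soit double: au point 2, tout le poids repose sur le lemme \ref{FFRfactchangesgrmpf}, et au point 3 sur la vérification soigneuse de cette inclusion réciproque par chasse aux éléments.
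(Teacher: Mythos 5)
The paper gives no proof of this proposition — it is stated as a recall of [CACM, IV-4.2] — so your argument can only be measured against the standard one from that reference, which it matches step for step: the augmented matrix $[G\mid C]$ for point \emph{1}, the appeal to the lemma \ref{FFRfactchangesgrmpf} for point \emph{2}, and the block-triangular matrix $D$ with the element-chasing verification $\Ker\varphi=\Im D$ for point \emph{3} are exactly the classical constructions. All three steps are correct as written, so nothing is missing.
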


\begin{theorem}
\label{FFRprop quot non iso} \emph{(Surjectif implique bijectif, CACM~IV-5.2)}
\\ Soit $M$ un \Amo \tf.
\begin{enumerate}
\item  Soit $\varphi\,:\,M\rightarrow M$ une \ali surjective.
Alors, $\varphi$ est un \iso et son inverse est un \pol en $\varphi$.%
\item  Si un
quotient $M/N$ de $M$ est isomorphe à $M$,  alors~$N=0$.
\item  Tout \elt $\varphi$ \iv à droite
dans $\End_\gA(M)$ est \iv, et son inverse est un \pol en $\varphi$.
\end{enumerate}
\end{theorem}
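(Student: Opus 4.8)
The plan is to reduce items~(2) and~(3) to item~(1), and to prove item~(1) by the determinant-trick form of Nakayama's lemma (\ref{FFRlemNaka}, item~1). The key device for item~(1) is to turn $M$ into a module over the polynomial ring $\gA[X]$ by letting $X$ act through $\varphi$; this is well defined because $\varphi$ is $\gA$-linear, and since $M$ is \tf over $\gA$ it is a fortiori \tf as an $\gA[X]$-module. With this structure, the hypothesis that $\varphi$ is surjective says precisely that $\fa M = M$ for the ideal $\fa = \gen{X}$ of $\gA[X]$.

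First I would apply item~1 of \ref{FFRlemNaka} to the \tf $\gA[X]$-module $M$ and the ideal $\fa = \gen{X}$: there exists $p \in \gen{X}$ with $(1-p)M = 0$. Writing $p = X\,q(X)$ with $q \in \gA[X]$ and reading the action of $X$ back as $\varphi$, the relation $(1 - X q(X))\cdot m = 0$ valid for every $m \in M$ becomes $m = \varphi\big(q(\varphi)(m)\big) = q(\varphi)\big(\varphi(m)\big)$, the two readings agreeing because $\varphi$ and $q(\varphi)$ commute. Hence $\psi := q(\varphi)$ is a two-sided inverse, $\varphi \circ \psi = \psi \circ \varphi = \Id_M$, so $\varphi$ is an \iso whose inverse is a \pol in $\varphi$. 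This settles item~(1).

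For item~(2), given an \iso $\theta : M/N \to M$, I would compose it with the canonical projection $\pi : M \to M/N$ to obtain a surjective endomorphism $\varphi = \theta \circ \pi$ of $M$. Item~(1) makes $\varphi$ injective; since $\theta$ is injective one has $\Ker \varphi = \Ker \pi = N$, whence $N = 0$. For item~(3), a right inverse $\psi$ of $\varphi$ in $\End_\gA(M)$ forces $\varphi$ to be surjective, so item~(1) applies: $\varphi$ is invertible with inverse a \pol in $\varphi$, and the given right inverse then coincides with this genuine inverse.

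There is no real obstacle beyond keeping the determinant-trick bookkeeping straight; the one point to watch is that Nakayama is invoked over $\gA[X]$, not over $\gA$. One must therefore check that $M$ is genuinely \tf as an $\gA[X]$-module (it is, being already \tf over the subring $\gA$) and, crucially, that the element $p$ produced lies in $\gen{X}$, since it is the factorization $p = X\,q(X)$ that exhibits $q(\varphi)$ as the inverse of $\varphi$. Constructivity is automatic, because \ref{FFRlemNaka}(1) is itself the constructive \emph{truc du \deter} and no appeal to maximal or prime ideals intervenes.
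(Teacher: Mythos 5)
Your proof is correct and is in substance the paper's own: the theorem appears here only as a recalled result (cited as CACM~IV-5.2, with no proof reproduced), and the proof in that source is exactly the one you give — equip $M$ with the $\gA[X]$-module structure where $X$ acts as $\varphi$, apply the determinant-trick form of Nakayama (lemme~\ref{FFRlemNaka}, point~\emph{1}) to the ideal $\gen{X}$, and factor $p = X\,q(X)$ to exhibit $q(\varphi)$ as the two-sided inverse. Your reductions of points \emph{2} and \emph{3} to point \emph{1} are also the standard ones and are carried out correctly.
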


\CMnewtheorem{lemSchanuel}{Lemme de Schanuel}{\itshape}
\begin{lemSchanuel}\label{FFRlemSchanuel}  
\emph{[CACM~V-2.1 et V-2.8]}
\begin{enumerate}
\item On considère deux \Alis surjectives de même image
$$\preskip.2em \postskip.1em 
P_1  \vers{\varphi_1}   M  \rightarrow  0  \;\;\;\hbox{et} \;\;\;
   P_2  \vers{\varphi_2}   M  \rightarrow  0
$$
avec $P_1$ et $P_2$ \pros.
\begin{enumerate}
\item  Il existe des \isos réciproques
$$\preskip.2em \postskip-.4em
\alpha ,\,  \beta:P_1\oplus P_2\to P_1\oplus P_2
$$ 
tels que
$$\preskip.0em \postskip-.2em
(\varphi_1\oplus 0_{P_2})\circ \alpha=0_{P_1} \oplus \varphi_2\;\hbox{
et}\; \varphi_1\oplus 0_{P_2}=(0_{P_1} \oplus \varphi_2)\circ \beta.
$$
\item Notons $K_1=\Ker\varphi_1$ et $K_2=\Ker\varphi_2$. 
Par restriction de~$\alpha$ et~$\beta$ on obtient des \isos réciproques entre
$K_1\oplus P_2$ et $P_1\oplus K_2$.
\end{enumerate}
\item \emph{(Lemme de Schanuel)}
On considère deux  suites exactes:
\vspace{-1mm}
\[\preskip.2em \postskip.2em
\begin{array}{ccccccccc}
0 &\rightarrow& K_1& \vers{j_1} & P_1& \vers{\varphi_1} & M& \rightarrow& 0   \\
0 &\rightarrow& K_2& \vers{j_2} & P_2& \vers{\varphi_2} & M& \rightarrow& 0
 \end{array}
\]
avec les modules $P_1$ et $P_2$ \pros. Alors, $K_1\oplus P_2\simeq K_2\oplus P_1.$
\end{enumerate}
\end{lemSchanuel}

\begin{theorem} 
\label{FFRprop Fitt ptf 2}\relax \label{prop Fitt ptf 1} 
\emph{(Structure locale et \idfs d'un \mptf, CACM~V-6.1)}
\begin{enumerate}
\item Un \Amo $P$ \pf est \ptf \ssi ses \idfs sont (engendrés par des)
idempotents.
\item Plus \prmt pour la réciproque, supposons qu'un \Amo~$P$ \pf ait
ses \idfs \idms, et que la matrice $G\in\gA^{q\times n}$ soit une matrice de \pn de $P$,
correspondant à un \sys de~$q$ \gtrs. 
\\
Notons $f_h$ l'\idm qui engendre $\cF_h(P)$, \hbox{et~$r_h:=f_h-f_{h-1}$}. 
\begin{enumerate}
\item $(r_0,\ldots,r_q)$ est un \sfio.
\item Soient $t_{h,j}$ un mineur d'ordre $q-h$ de $G$, et $s_{h,j}:=t_{h,j}r_h$. Alors, le~$\gA[1/{s_{h,j}}]$-module $P[1/{s_{h,j}}]$ est libre de rang~$h$.
\item Les \elts  $s_{h,j}$ sont \com.
\item On a $r_k=1$ \ssi la matrice $G$ est de rang $q-k$.
\item Le module $P$ est \ptf.
\end{enumerate}
\item  En particulier, un \mptf devient libre après \lon en un nombre
fini d'\eco.
\end{enumerate}
\end{theorem}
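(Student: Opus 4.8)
The plan is to dispatch the easy implication of part~(1) quickly and to concentrate on part~(2), from which the converse of~(1) and part~(3) both fall out. For the forward implication of~(1): if $P$ is \ptf then $P\simeq\Im F$ for some idempotent $F\in\MM_{q,q}(\gA)$, and $\I_{q}-F$ is a presentation matrix of $P$ on $q$ \gtrs, since $\gA^q/\Im(\I_{q}-F)=\gA^q/\Ker F\simeq\Im F=P$. As $\I_{q}-F$ is idempotent, its image $\Ker F$ is a direct summand of $\gA^q$, so by Theorem~\ref{FFRpropFactDirRangk} (item~2, implication (c)$\Rightarrow$(d)) every $\cD_k(\I_{q}-F)$ is generated by an idempotent; since $\cF_h(P)=\cD_{q-h}(\I_{q}-F)$, the \idfs of $P$ are idempotent.

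For part~(2), write $\cF_h(P)=\cD_{q-h}(G)=\gen{f_h}$ with $f_h$ idempotent. The determinantal ideals decrease, $\cD_0(G)\supseteq\cD_1(G)\supseteq\cdots$, so the \idfs increase: $\gen{f_0}\subseteq\cdots\subseteq\gen{f_q}=\gen1$, with boundary values $f_{-1}=0$ (as $\cD_{q+1}(G)=\gen0$) and $f_q=1$ (as $\cD_0(G)=\gen1$). Inclusion of idempotent ideals means $f_{h-1}f_h=f_{h-1}$, i.e.\ $f_{h-1}\leq f_h$. Part~(a) is then idempotent bookkeeping: from $f_{h-1}\leq f_h$ one gets $r_h^2=r_h$ and $r_hr_{h'}=0$ for $h\neq h'$, while $\sum_{h=0}^q r_h=f_q-f_{-1}=1$ telescopes, so $(r_0,\dots,r_q)$ is a \sfio. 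Part~(d) is read off the same chain: if $r_k=1$ then $r_h=r_hr_k=0$ for every $h\neq k$ by orthogonality, hence $f_k=1$ and $f_{k-1}=0$, i.e.\ $\cD_{q-k}(G)=\gen1$ and $\cD_{q-k+1}(G)=\gen0$, which is exactly the statement that $G$ has rank $q-k$; reading the equalities in the other direction gives the converse.

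For part~(b), localize at $s_{h,j}=t_{h,j}r_h$. The idempotent $r_h$ becomes invertible, hence equal to $1$, and then $r_{h'}=r_{h'}r_h=0$ for every $h'\neq h$; therefore over $\gA[1/s_{h,j}]$ one has $f_{h'}=1$ for $h'\geq h$ and $f_{h'}=0$ for $h'<h$. In particular $\cD_{q-h}(G)=\gen1$ and $\cD_{q-h+1}(G)=\gen0$ there, so $G$ has rank exactly $q-h$, and its order-$(q-h)$ minor $t_{h,j}$ is invertible (it equals $s_{h,j}$ up to the unit $r_h=1$). The Lemme de la liberté~\ref{FFRlem pf libre}, with $k=q-h$, then makes $G$ \eqve to $\I_{q-h,q,n}$, so $P[1/s_{h,j}]$ is free of rank $q-(q-h)=h$. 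For part~(c), the order-$(q-h)$ minors $t_{h,j}$ generate $\cD_{q-h}(G)=\gen{f_h}$, so for fixed $h$ the elements $s_{h,j}=t_{h,j}r_h$ generate $r_h\gen{f_h}=\gen{r_h}$ (using $r_hf_h=r_h$); summing over $h$ gives $\gen{r_0,\dots,r_q}=\gen1$ by part~(a), so the $s_{h,j}$ are \com.

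Finally, part~(e) is immediate from the hypothesis: ``all $\cD_k(G)$ idempotent'' is condition (d) of Theorem~\ref{FFRpropFactDirRangk} (item~2, the ideals $\cD_k(G)$ with $k>q$ being $\gen0$ automatically), so (c) holds, $\Im G$ is a direct summand of $\gA^q$, and $P=\gA^q/\Im G$ is \ptf; this also gives the converse of~(1). Part~(3) follows at once from (b) and (c). I expect part~(b) to be the only delicate point: one must verify \emph{simultaneously} that the localized $G$ has the correct rank (from the \idfs collapsing to $\gen0$ and $\gen1$) and that it carries an invertible minor of exactly that order (namely $t_{h,j}$), these being precisely the two hypotheses of the Lemme de la liberté; the remainder is idempotent algebra and a telescoping sum.
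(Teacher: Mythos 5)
Your proof is correct. There is, however, no internal proof to compare it against: this theorem sits in the preliminary chapter of the paper as a result recalled from [CACM] (cited there as CACM~V-6.1), stated without demonstration. Your argument is the natural one and uses precisely the tools the paper recalls alongside the statement, namely le théorème \ref{FFRpropFactDirRangk} (équivalence entre image facteur direct et idéaux déterminantiels idempotents) pour le sens direct de (1) et pour (e), et le lemme de la liberté \ref{FFRlem pf libre} pour la liberté après localisation en (b); de fait, le squelette (a)--(e) du point \emph{2} de l'énoncé dicte essentiellement cette route, et votre gestion des idempotents ($r_h^2=r_h$, orthogonalité, télescopage, $r_hf_h=r_h$) est exacte. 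Le seul point méritant une remarque est le cas extrême $h=q$ dans (b): là $k=q-h=0$ sort des hypothèses du lemme de la liberté ($1\leq k$), mais la localisation en $s_{q,j}=r_q$ annule $\cD_1(G)$, donc $G=0$ et $P$ devient $\gA^q$, libre de rang $q$, trivialement; de même, pour les $h$ tels que $q-h>n$ il n'y a pas de mineurs $t_{h,j}$, mais alors $f_h=0$ donc $r_h=0$, et ces indices ne perturbent ni (b) ni la comaximalité en (c).
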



\CMnewtheorem{lemnbgl}{Lemme du nombre de \gtrs local}{\itshape}
\begin{lemnbgl} 
\label{FFRlemnbgtrlo} \emph{(CACM~IX-2.4)}\\
Soit $\gA$ un anneau et  $M$ un \Amo \tf.
\begin{enumerate}
\item Supposons $\gA$  local. 
\begin{enumerate}
\item [a.\phantom{*}] Le module $M$  est engendré par $k$
\elts \ssi
son \idf $\cF_k(M)$ est égal à $\gA$.
\item [b.\phantom{*}]  
Si en outre $\gA$ est \dcd et $M$ \pf, le module admet une matrice de \pn dont tous les \coes sont
dans l'idéal maximal $\Rad\gA$.
\end{enumerate}
\item En \gnl, pour $k\in\NN$, \propeq
\begin{enumerate}
\item [a.\phantom{*}] L'\idd $\cF_k(M)$ est égal à $\gA$.
\item [b.\phantom{*}] Il existe des \eco $s_j$ tels que après extension des
scalaires
à chacun des $\gA[1/s_j]$, $M$ est engendré par $k$ \elts.
\item [c.\phantom{*}] Il existe des \moco $S_j$ tels que
chacun des  $M_{S_j}$ est engendré par $k$ \elts.
\item [d*.] Après \lon en n'importe quel \idep,  $M$ est engendré par
$k$ \elts.
\end{enumerate}
\end{enumerate} 
\end{lemnbgl}

\paragraph{Principe \lgb}~ 

\begin{plcc}
\label{plcc.ptf}
{\em  (Recollement concret de \prts de finitude pour les modules, CACM XV-2.2)}
Soient $S_1$, $\dots$, $S_n$  des \moco de $\gA$  et 
$M$ un \Amo.   Alors on a les \eqvcs suivantes.
\begin{enumerate}\itemsep0pt\parsep0pt
\item  $M$ est  \tf \ssi chacun des $M_{S_i}$
est un $\gA_{S_i}$-\mtf.
\item $M$ est \pf \ssi chacun des $M_{S_i}$ est un~$\gA_{S_i}$-\mpf.
\item  $M$ est plat \ssi chacun des $M_{S_i}$ est un~$\gA_{S_i}$-\mpl.
\item \label{iptfplcc.ptf}\relax $M$ est \ptf \ssi chacun des $M_{S_i}$ 
est un~$\gA_{S_i}$-\mptf.
\item $M$ est projectif de rang $k$ \ssi chacun des $M_{S_i}$ 
est un~$\gA_{S_i}$-module projectif de rang~$k$.
\item $M$ est cohérent \ssi chacun des $M_{S_i}$ est un
$\gA_{S_i}$-\coh.
\item  $M$ est \noe \ssi chacun des $M_{S_i}$ est un
$\gA_{S_i}$-module \noe.
\end{enumerate}
\end{plcc}

\begin{plcc}
\label{plcc.propaco}\relax
{\em  (Recollement concret de \prts des anneaux commutatifs, CACM XV-2.3)} 
Soient $S_1$, $\dots$, $S_n$  des \moco et~$\fa$ un \id de~$\gA$.
Alors on a les \eqvcs suivantes.
\begin{enumerate}\itemsep0pt\parsep0pt
\item  $\gA$ est  \coh \ssi chaque  $\gA_{S_i}$ est  \coh.
\item  $\gA$ est  \lsdz \ssi chaque  $\gA_{S_i}$ est  \lsdz.
\item  $\gA$ est  \qi \ssi chaque  $\gA_{S_i}$ est  \qi.
\item  $\gA$ est réduit \ssi chaque  $\gA_{S_i}$ est réduit.
\item  L'\id $\fa$ est \lop \ssi chaque  $\fa_{S_i}$ est \lop.
\item  $\gA$ est  \ari \ssi chaque  $\gA_{S_i}$ est  \ari.
\item  $\gA$ est  de Pr\"ufer \ssi chaque  $\gA_{S_i}$ est  de Pr\"ufer.
\item  L'\id $\fa$ est \icl \ssi chaque  $\fa_{S_i}$ est \icl.
\item  $\gA$ est normal \ssi chaque  $\gA_{S_i}$ est normal.
\item  $\gA$ est de \ddk $\leq k$ \ssi 
chaque  $\gA_{S_i}$ est de \ddk~$\leq k$.
\item  $\gA$ est  \noe \ssi chaque  $\gA_{S_i}$ est  \noe.
\end{enumerate}
\end{plcc}

\mni {\bf Machinerie locale-globale à \ideps.}\label{MethodeIdeps} (CACM section XV-5)\\
{\it  Lorsque l'on relit une \prco, donnée pour le
cas d'un \alo \dcd, avec un anneau $\gA$ arbitraire,
que l'on
considère au départ comme $\gA=\gA_{\cS(0;1)}$ et qu'à chaque
disjonction (pour un \elt $a$ qui se présente au cours du calcul
dans le cas local)

\sni \centerline{$ a\in\Ati\; \hbox{ou }\; a\in \Rad(\gA)$,}

\sni on remplace l'anneau \gui{en cours} $\gA_{\cS(I,U)}$ par les deux anneaux
 $\gA_{\cS(I;U,a)}$ et~$\gA_{\cS(I,a;U)}$ (dans chacun desquels le calcul
peut se poursuivre), on obtient, à la fin de la relecture, une
famille finie d'anneaux
$\gA_{\cS(I_j,U_j)}$ avec les \mos~\hbox{${\cS(I_j,U_j)}$} \com et $I_j,\;U_j$
finis. Dans chacun de ces anneaux, le calcul a été poursuivi avec succès
et a donné le résultat souhaité.
}

\paragraph{Dimension de Krull}~ 

Rappelons qu'en \clama la \ddk
d'un anneau est $-1$ \ssi l'anneau n'admet
pas d'\idep, ce qui signifie qu'il est trivial.

Le \tho suivant donne alors en \clama une \carn inductive \elr
de la \ddk d'un anneau commutatif.

\begin{theoremc} \emph{(CACM, XIII-2.2)}
\label{thDKA} Pour un anneau commutatif  $\gA$
et un entier $k\geq 0$ \propeq
\begin{enumerate}
\item  La \ddk de $\gA$ est $\leq k$.
\item  Pour tout $x\in \gA$ la \ddk de $\gA_\rK^{x}$ est
$\leq k-1$ (voir \paref{defZar2} pour $\gA_\rK^{x}$).
\item  Pour tout $x\in \gA$ la \ddk de $\gA^\rK_{x}$ est
$\leq k-1$.
\end{enumerate}
\end{theoremc}
NB. Ceci est un \tho de \clama qui ne peut pas admettre de \prco. \eoe

La \dfn \cov de la \ddk s'ensuit, une \dfn \eqve est la suivante.

\begin{definition} \label{defiddk} \emph{(\Dfn \cov de la \ddk)}
\begin{itemize}

\item Une suite $(x_0,\ldots ,x_k)$ dans  $\gA$ est dite
\ixd{singulière}{suite} si $0\in \SK_\gA(\xzk)$.
Autrement dit s'il existe $a_0$, \ldots, $a_k\in \gA$ et
$m_0$, \ldots, $m_k\in \NN$ tels que:
\hsd$x_0^{m_0}(x_1^{m_1}(\cdots(x_k^{m_k} (1+a_k x_k) +
\cdots)+a_1x_1) + a_0x_0) =  0.$

\item La \ddk de $\gA$ est dite $\leq k$
\ssi toute suite $(\xzk)$ dans $\gA$  est \sing (cette définition est équivalente à la définition usuelle en mathématiques classiques). On  note $\Kdim\gA\leq k$ pour dire que la \ddk de $\gA$ est $\leq k$.

\end{itemize}

\end{definition}
%

\begin{plcc}\label{FFRthDdkLoc}
\emph{(CACM~XIII-3.2, pour la \ddk)}\\
Soient $S_1$, $\ldots$, $S_n$ des \moco d'un anneau $\gA$ et $k\in\NN$.
\begin{enumerate}
\item Une suite est singulière dans $\gA$ \ssi elle est singulière dans
chacun des $\gA_{S_i}$.
\item L'anneau $\gA$ est de \ddk $\leq k$ \ssi les $\gA_{S_i}$ sont de \ddk $\leq k$.
\end{enumerate}
\end{plcc}

\begin{theorem}
\label{FFRthKroH} \emph{(\Tho de \KRN-Heitmann,
avec la dimension de Krull, non \noe, CACM~XIV-1.3)}
\begin{enumerate}
\item Soit $n\geq 0$.
Si $\Kdim \gA <n$ et $b_1$, \dots, $b_n\in\gA$,  il existe
$x_1$, \dots, $x_n$ tels que pour tout $a\in\gA$,
$\DA(a,b_1,\dots,b_n) = \DA(b_1+ax_1,\dots,b_n+ax_n)$.
\item En conséquence, dans un anneau de dimension de Krull $\leq n$, tout
\itf a m\^{e}me nilradical qu'un \id engendré par au plus $n+1$ \elts.
\end{enumerate}
\end{theorem}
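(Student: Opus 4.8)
The plan is to prove (1) by induction on $n$ and to deduce (2) by iteration. First I would dispose of the trivial half of (1): since each $b_i+ax_i\in\gen{a,b_1,\dots,b_n}$, one always has $\DA(b_1+ax_1,\dots,b_n+ax_n)\subseteq\DA(a,b_1,\dots,b_n)$, so the entire content is to produce $x_1,\dots,x_n$ (independent of $a$\,!) forcing $a$ and each $b_i$ into $\DA(b_1+ax_1,\dots,b_n+ax_n)$ for every $a$. I would carry the whole argument in the Zariski lattice, using the two rules $\DA(x)\vee\DA(y)=\DA(x,y)$ and $\DA(x)\wedge\DA(y)=\DA(xy)$ (the latter because $z^k\in\gen{x}$ and $z^\ell\in\gen{y}$ give $z^{k+\ell}\in\gen{xy}$), together with the elementary distributive absorption rule: if $p\le q\vee r$ and $p\wedge r\le q$, then $p=(p\wedge q)\vee(p\wedge r)\le q$. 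The base case $n=0$ is immediate: $\Kdim\gA<0$ forces $\gA$ trivial, and both radicals equal $\gA$.

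\textbf{Passage au bord et hypothèse de récurrence.} For the inductive step I would peel off the last element through its upper Krull boundary. Put $\gA_\rK^{b_n}=\gA/\JK_\gA(b_n)$ with $\JK_\gA(b_n)=b_n\gA+(\sqrt0:b_n)$. By the constructively valid half of the boundary recursion for Krull dimension (read off the iterated boundary monoid $\SK_\gA$ of Definition~\ref{defiddk}; the classical equivalence~\ref{thDKA} itself is \emph{not} what we use), $\Kdim\gA<n$ yields $\Kdim\gA_\rK^{b_n}<n-1$. Applying the induction hypothesis in $\gA_\rK^{b_n}$ to the images of $b_1,\dots,b_{n-1}$ produces $x_1,\dots,x_{n-1}\in\gA$ (uniform in $a$) such that, pulling back to $\gA$, for every $a$ both $a$ and the $b_i$ ($i<n$) lie in $\DA\big(\mathfrak{c}_a+b_n\gA+(\sqrt0:b_n)\big)$, where $\mathfrak{c}_a=\gen{b_1+ax_1,\dots,b_{n-1}+ax_{n-1}}$.

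\textbf{Choix de $x_n$ et verrouillage par absorption.} For a \emph{fixed} $a$, unfolding the boundary equality yields an element $y_0\in(\sqrt0:b_n)$ (so $y_0b_n\in\sqrt0$) with $\DA(a,b_1,\dots,b_{n-1})\vee\DA(b_n)\vee\DA(y_0)=\DA(\mathfrak{c}_a)\vee\DA(b_n)\vee\DA(y_0)$. Taking $x_n:=y_0$ I would then verify, via the Zariski rules, the three facts $\DA(b_n)\le\DA(b_n+ay_0)$ (because $(b_n+ay_0)b_n\equiv b_n^2\bmod\sqrt0$), $\DA(y_0)\wedge\DA(b_n+ay_0)=\DA(ay_0)$, and $\DA(\mathfrak{c}_a)\wedge\DA(y_0)\le\DA(a,b_1,\dots,b_{n-1})$ (because $(b_i+ax_i)y_0\in\gen{b_i,a}$). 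Feeding these into the absorption rule with $r=\DA(y_0)$ removes the spurious $\DA(y_0)$ on both sides and gives $\DA(a,b_1,\dots,b_n)=\DA(b_1+ax_1,\dots,b_n+ax_n)$. This settles the \emph{pointwise} (Kronecker) form of the statement.

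\textbf{L'obstacle principal: l'uniformité en $a$.} The genuine difficulty is that the witness $y_0$ just produced depends on $a$, whereas the theorem (the Heitmann strengthening) demands a single $x_n$ chosen \emph{before} $a$. Overcoming this is the heart of the matter: I would choose $x_n$ once and for all out of $b_n$ together with the ideal $(\sqrt0:b_n)$, exploiting uniformly the two stable features that did the work above, namely $(\sqrt0:b_n)\cdot b_n\subseteq\sqrt0$ and the domination $\DA(b_n)\le\DA(b_n+ax_n)$, so that the nilpotent and complementary directions of the boundary are absorbed independently of $a$; the induction hypothesis must itself be carried in this uniform form. This uniform bookkeeping, rather than the lattice computation (which is routine), is where the real work lies. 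Finally, (2) follows by iterating (1): with $\Kdim\gA\le n$, i.e. $\Kdim\gA<n+1$, each application of (1) with $n+1$ in place of $n$ trades an ideal $\gen{a,b_1,\dots,b_{n+1}}$ for one with $n+1$ generators having the same nilradical, and iterating brings any finitely generated ideal down to $n+1$ generators up to radical.
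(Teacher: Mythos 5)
A preliminary remark on the comparison itself: this paper contains no proof of Theorem \ref{FFRthKroH}. It sits in the reminder chapter and is quoted from [CACM, XIV-1.3], so the only reference proof available is the one in that book.

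Your proposal has a genuine gap, and it is exactly the one you flag yourself: the quantifier order. The theorem asserts $\exists x_1,\dots,x_n\;\forall a$, while your recursion through the boundary ring $\gA_\rK^{b_n}$ delivers only $\forall a\;\exists x_n$: the witness $y_0\in(\sqrt{0}:b_n)$ is extracted from a membership $a^N\in\mathfrak{c}_a+b_n\gA+(\sqrt{0}:b_n)$, which depends on $a$. Your fixed-$a$ lattice computation is essentially sound, but it only proves the pointwise (Kronecker-type) statement; the sentence ``I would choose $x_n$ once and for all out of $b_n$ together with the ideal $(\sqrt{0}:b_n)$'' is a declaration of intent, not a construction. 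The obstacle is not bookkeeping: $(\sqrt{0}:b_n)$ is in general not finitely generated, distinct values of $a$ produce unrelated witnesses in it, and nothing in your induction hypothesis (an identity of radicals modulo the whole ideal $\JK_\gA(b_n)$) singles out one element. Worse, all three of your key facts rest on $x_nb_n$ being nilpotent, i.e. you force $x_n\in(\sqrt{0}:b_n)$; in the reference proof the top witness satisfies instead the comaximality relation $\DA(x_n,b_n)=\DA(1)$ and is typically of the form $1+a_nb_n$, which does \emph{not} lie in $(\sqrt{0}:b_n)$. Nilpotence of $x_ib_i$ is the relation at the bottom of the chain ($i=1$), not at the top, so your plan is not only incomplete but structurally pointed in the wrong direction.

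For the record, here is how the reference proof obtains uniformity at no cost, and why it never meets your obstacle. One applies the constructive definition \ref{defiddk} a single time, to the sequence $(b_1,\dots,b_n)$ itself: there exist $a_i$ and $m_i$ with $b_1^{m_1}\big(b_2^{m_2}(\cdots(b_n^{m_n}(1+a_nb_n)+\cdots)+a_2b_2)+a_1b_1\big)=0$. Setting $x_n=1+a_nb_n$ and, descending, $x_i=b_{i+1}^{m_{i+1}}x_{i+1}+a_ib_i$, this single identity yields $\DA(b_1x_1)=\DA(0)$, $\DA(b_{i+1}x_{i+1})\subseteq\DA(x_i,b_i)$ and $\DA(x_n,b_n)=\DA(1)$, with the $x_i$ depending only on the $b_i$. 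The equality $\DA(a,b_1,\dots,b_n)=\DA(b_1+ax_1,\dots,b_n+ax_n)$ for \emph{every} $a$ is then a lattice computation by the very absorption and distributivity rules you use. If you insist on inducting through $\gA_\rK^{b_n}$, the statement you would have to carry through the induction is this complementary-sequence statement, not the theorem itself; at that point you will have reproduced the standard proof.
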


\newpage \thispagestyle{empty}  

\chapter
{Théorie de la profondeur}\label{chapProfondeur}

\minitoc

\sibook{
\Intro

Ce chapitre  est une première introduction à 
la théorie de la profondeur d'un module relativement à un \itf.
Il est basé sur l'exposé qui en est donné dans le livre  remarquable \emph{Finite Free Resolutions} de Northcott. Nous avons  simplifié l'exposition \gnle, et comme dans l'ouvrage [CACM], toutes nos \dems sont \covs. 

Dans ce chapitre, les \srgs occupent une place prépondérante,
mais il s'avère qu'elles sont à considérer éventuellement dans des extensions \polles de l'anneau de base.
Cette remarque (qui est due à Hochster) est la clé qui permet de rendre 
la théorie vraiment élégante et \gnle (sans hypothèse \noee notamment).

\smallskip La  section \ref{seccor} introduit les \syss d'\elts \cor
et quelques unes de leurs \prts. Il s'agit d'un \gnn fort utile de la
notion de \sys d'\eco. Le principal intérêt est que la \prt d'être \cor est plus faible, et ses conséquences s'appliquent donc plus souvent.

\smallskip La  section \ref{secPrelimReg} donne 
des lemmes clés concernant les \srgs qui vont être
utilisés intensivement par la suite.

\smallskip La section \ref{secProfondeur}   introduit
 la notion de profondeur d'un module relativement à un \id
 et développe un certain nombre de \prts fondamentales de cette notion.

\smallskip La section \ref{secRecolle2} traite des \plgcs directement reliés à la théorie de la profondeur.

\smallskip 
\dots

\fbox{\`A COMPL\'ETER} 

}


\Grandcadre{Dans ce chapitre, $E$ et $F$ désignent des \Amos.}

\section
{Systèmes d'\elts \cor}
\label{seccor}
\subsec{Un \plg relatif à la régularité}

Dans l'ouvrage [CACM] les principales variantes du \plg étaient basées
sur les familles d'\eco, \cad les familles finies qui engendrent l'\id $\gen{1}$.
Une notion un peu plus faible est suffisante pour les questions de 
régularité: il s'agit des familles finies qui engendrent un \id \Erg. 

\begin{definition} \label{defiCoreg}~
\begin{enumerate}
\item Une famille finie $(\an)$ d'un anneau $\gA$ est appelée un \ix{système d'\elts coréguliers} si l'\id $\gen{\an}$ est fidèle\footnote{\`A ne pas confondre avec la notion de suite corégulière introduite par Bourbaki, comme notion duale de celle de \srg.}.%
\index{coreguliers@\cor!elem@\elts~---}
\item On dit qu'un \elt $a\in\gA$ est  \emph{\Erg} (ou  \emph{\ndz pour~$E$}) si:  

\snic{\forall x\in E,\; 
(ax=0\;\Longrightarrow\; x=0).}

\item Un \id  $\fa\subseteq \gA$ est dit \emph{\Erg} si: 

\snic{\forall x\in E,\;
(\fa\,x=0\;\Longrightarrow\; x=0).}
\end{enumerate}%
\index{E-reg@\Erg!ideal@\id ---}\index{E-reg@\Erg!elem@\elt ---}
\end{definition}

On utilisera souvent de manière implicite le lemme facile suivant, qui est une variante du lemme~[CACM, V-7.2], lequel était énoncé pour les \syss d'\eco.

\begin{fact} \emph{(Lemme des \lons \core successives)}
\label{factLocCasreg} \\
Si $s_1$, \ldots, $s_n$ sont des \elts \cor de $\gA$ et si pour chaque $i$,
on a des \elts
$$
\frac{s_{i,1}}{{s_{i}}^{n_{i,1}}},\; \ldots ,\;
\frac{s_{i,k_i}}{{s_{i}}^{n_{i,k_i}}},
$$
 \cor dans $\gA[1/s_i]$,
alors les $s_{i}s_{i,j}$ sont \cor dans~$\gA$.
\end{fact}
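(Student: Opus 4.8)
The plan is to unfold the definition: a family is coregular exactly when the ideal it generates is faithful, so I must show that any $x\in\gA$ annihilating all the products $s_is_{i,j}$ is $0$. So fix $x$ with $x\,s_is_{i,j}=0$ for all $i,j$, and fix an index $i$ to work in $\gA[1/s_i]$. From $x\,s_is_{i,j}=0$ in $\gA$, the image of $xs_i$ in $\gA[1/s_i]$ is annihilated by each $s_{i,j}/1$. Since $s_i$ is a unit there, the fractions $s_{i,j}/s_i^{n_{i,j}}$ and the elements $s_{i,j}/1$ generate the same ideal of $\gA[1/s_i]$, and that ideal is faithful by hypothesis; being annihilated by all its generators, the image of $xs_i$ is therefore $0$ in $\gA[1/s_i]$. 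Unwinding the localization, this means there is an exponent $p_i$ with $s_i^{p_i}x=0$ in $\gA$.

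The crux is then to deduce $x=0$ from the relations $s_i^{p_i}x=0$ ($1\le i\le n$) together with the faithfulness of $\fa=\gen{s_1,\dots,s_n}$; this is where faithfulness, rather than a mere radical condition, is exactly what is needed, since localization only ever returns a \emph{power} of $s_i$. I would first record the auxiliary lemma that a faithful ideal $\fa$ has faithful powers $\fa^N$, proved by induction on $N$: if $x\fa^N=0$ then for each $i$ one has $x s_i\,\fa^{N-1}\subseteq x\fa^N=0$, so $xs_i\in(0:\fa^{N-1})$, which is $0$ by the induction hypothesis; hence $x\fa=0$ and $x=0$ by faithfulness of $\fa$ (the base case $N=1$ being the hypothesis itself).

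To finish, I take $N=\big(\sum_i p_i\big)-n+1$. Every degree-$N$ monomial $s_1^{a_1}\cdots s_n^{a_n}$ in the generators has some $a_i\ge p_i$ (otherwise $\sum_i a_i\le\sum_i(p_i-1)<N$), hence is divisible by $s_i^{p_i}$ and is annihilated by $x$; as these monomials generate $\fa^N$, we get $x\fa^N=0$, and faithfulness of $\fa^N$ yields $x=0$. Equivalently, one may note $\gen{s_1^{p_1},\dots,s_n^{p_n}}\supseteq\fa^N$, so its annihilator lies in $(0:\fa^N)=0$, i.e. $(s_1^{p_1},\dots,s_n^{p_n})$ is itself coregular, and $s_i^{p_i}x=0$ for all $i$ forces $x=0$. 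The whole argument is constructive and parallels the proof of the comaximal analogue cited above, the only genuinely new ingredient being the passage from a faithful ideal to its faithful powers.
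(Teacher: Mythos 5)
Your proof is correct and fully constructive. The paper itself states this fact without proof (presenting it as an easy variant of the comaximal-elements lemma [CACM, V-7.2]), and your argument is exactly the intended one: localizing yields $s_i^{p_i}x=0$ for each $i$, and the auxiliary step you establish — that powers of a faithful finitely generated ideal are faithful, equivalently that $(s_1^{p_1},\dots,s_n^{p_n})$ is again coregular — is precisely the content of the paper's Lemma \ref{lemCoreg2} together with its footnote, which is how the paper handles the identical step in the proof of the local-global principle \ref{plcc.regularite}.
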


\begin{lemma} \label{lemCoreg1} \emph{(Une première astuce $(a,b,ab)$)}\\
On suppose que l'\id $\gen{a,c_2, \dots, c_n}$ et l'\elt $b$ 
sont \Ergs. Alors
l'\id~$\gen{ab,c_2, \dots, c_n}$ est \Erg. 
\end{lemma}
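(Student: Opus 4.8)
The plan is to unwind the two definitions of regularity and reduce everything to the hypotheses by introducing a single auxiliary element. Let $x\in E$ be annihilated by the ideal $\gen{ab,c_2,\dots,c_n}$, that is, suppose $abx=0$ and $c_ix=0$ for $i=2,\dots,n$; the goal is to prove $x=0$. The key device is to consider the element $y:=bx\in E$ and to show that it is annihilated by the \emph{first} ideal $\gen{a,c_2,\dots,c_n}$.

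First I would check this annihilation directly. On the one hand $ay=a(bx)=(ab)x=0$ by assumption. On the other hand, for each $i\in\{2,\dots,n\}$ we have $c_iy=c_i(bx)=b(c_ix)=0$, using commutativity and $c_ix=0$. Hence every generator of $\gen{a,c_2,\dots,c_n}$ kills $y$, so $\gen{a,c_2,\dots,c_n}\,y=0$.

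Next, since $\gen{a,c_2,\dots,c_n}$ is assumed to be $E$-regular and $y\in E$, this annihilation forces $y=0$, that is $bx=0$. Finally, since $b$ is $E$-regular, the equality $bx=0$ yields $x=0$, which is exactly what we want. This shows $\gen{ab,c_2,\dots,c_n}$ is $E$-regular.

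There is no genuine obstacle here: the argument is a short two-step annihilator chase, applying $E$-regularity of the ideal to $bx$ and then $E$-regularity of $b$ to $x$. The only point worth noting is that the two applications of regularity must be made in this order (first on $y=bx$, then on $x$), and that the whole argument is explicit and uniform in $x$, requiring no case distinction or passage to primes --- which is precisely what makes this ``$(a,b,ab)$ trick'' a convenient and fully constructive building block for the inductive constructions to come.
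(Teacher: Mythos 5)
Your proof is correct and is essentially identical to the paper's own argument: both consider $y=bx$, note that $ay=abx=0$ and $c_iy=b(c_ix)=0$, conclude $bx=0$ from the $E$-regularity of $\gen{a,c_2,\dots,c_n}$, and then $x=0$ from the $E$-regularity of $b$. No difference in substance.
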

%
\begin{proof}
Soit $x\in E$ tel que $abx= c_1x=\cdots=c_nx=0$.\\
Alors~$abx=c_1bx=\cdots=c_nbx=0$,
donc~$bx=0$, donc~$x=0$.
\end{proof}

On a le corolaire  \imd suivant\footnote{On aurait pu aussi remarquer que pour $q$ assez grand, l'\id $\gen{\an}^{q}$, qui \hbox{est \Erg}, est contenu dans l'\id $\gen{a_1^p,\dots,a_n^p}$.}. 
%
\begin{lemma} \label{lemCoreg2}
Soient $\gen{\an}$ un \id \Erg et $p\in\NN$. \\
Alors l'\id $\geN{a_1^p,\dots,a_n^p}$
est \Erg. 
\end{lemma}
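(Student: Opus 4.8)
Le plan est de suivre la voie indiquée en note, qui fait de ce lemme une conséquence directe de l'astuce $(a,b,ab)$ du lemme~\ref{lemCoreg1}. Posons $\fa = \gen{\an}$, supposons $p \geq 1$ (pour $p = 0$ l'\id considéré est $\gen{1}$, trivialement \Erg) et fixons $q = n(p-1) + 1$. On s'appuiera sur deux ingrédients: d'une part une inclusion purement combinatoire $\fa^q \subseteq \geN{a_1^p,\ldots,a_n^p}$, d'autre part le fait que toute puissance d'un \id \Erg est encore \Erg. Ces deux faits donnent aussitôt la conclusion, car un \id qui contient un \id \Erg est lui-même \Erg. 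En effet, si $\geN{a_1^p,\ldots,a_n^p}\,x = 0$, alors \emph{a fortiori} $\fa^q x = 0$, d'où $x = 0$.

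Je prouverais d'abord l'inclusion par le principe des tiroirs. L'\id $\fa^q$ est engendré par les monômes $a_1^{e_1}\cdots a_n^{e_n}$ avec $e_1 + \cdots + e_n = q$. Si l'on avait $e_i \leq p-1$ pour tout $i$, il viendrait $e_1 + \cdots + e_n \leq n(p-1) = q - 1 < q$, ce qui est absurde; donc l'un des exposants $e_j$ est $\geq p$, le monôme est divisible par $a_j^p$ et appartient à $\geN{a_1^p,\ldots,a_n^p}$. On obtient ainsi $\fa^q \subseteq \geN{a_1^p,\ldots,a_n^p}$.

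Le point essentiel est le second ingrédient: si $\fa$ est \Erg, alors $\fa^k$ l'est pour tout $k$. Je le démontrerais par récurrence descendante sur le degré, ce qui reprend exactement le mécanisme du lemme~\ref{lemCoreg1}. Soit $x \in E$ tel que $\fa^k x = 0$, c'est-à-dire tel que tout monôme de degré $k$ en les $a_i$ annule $x$. Pour un monôme $m$ de degré $k-1$ fixé, chaque produit $a_i m$ est un monôme de degré $k$, donc $a_i\,(mx) = 0$ pour tout $i$, c'est-à-dire $\fa\,(mx) = 0$; comme $\fa$ est \Erg et $mx \in E$, il vient $mx = 0$. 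Le monôme $m$ étant arbitraire, on obtient $\fa^{k-1} x = 0$, et en descendant jusqu'au cas $\fa x = 0$ (qui donne directement $x = 0$) on conclut. Il suffit alors d'appliquer ceci avec $k = q$ à un $x$ annulé par $\geN{a_1^p,\ldots,a_n^p}$.

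Le seul point un peu délicat me semble relever du soin de rédaction et non d'une vraie difficulté: il faut bien tester \gui{$\fa^k x = 0$} sur les seuls générateurs monomiaux de $\fa^k$ (ce qui autorise à diminuer d'une unité le degré du multiplicateur à chaque pas) et choisir la constante $q = n(p-1)+1$ pour qu'aucun monôme de degré $q$ n'échappe à tous les $a_i^p$. Aucune des deux étapes ne réclame d'hypothèse de finitude ni d'hypothèse \noee, conformément à l'esprit \cof du chapitre.
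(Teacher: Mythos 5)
Your proof is correct, and every step checks out: the pigeonhole inclusion $\fa^q \subseteq \geN{a_1^p,\dots,a_n^p}$ with $q=n(p-1)+1$, the descending induction showing that $\fa^k$ is $E$-regular when $\fa$ is (testing annihilation on the monomial generators is indeed legitimate), and the final containment argument. However, this is not the route the paper designates as its proof: the paper presents Lemma~\ref{lemCoreg2} as an immediate corollary of the $(a,b,ab)$ trick of Lemma~\ref{lemCoreg1}, raising one generator at a time to the power $p$ (from $\gen{\an}$ one passes to $\geN{a_1^p,a_2,\dots,a_n}$, then to $\geN{a_1^p,a_2^p,a_3,\dots,a_n}$, and so on); what you wrote out in full is precisely the \emph{alternative} argument that the paper relegates to a footnote (for $q$ large enough, $\gen{\an}^q$ is $E$-regular and contained in $\geN{a_1^p,\dots,a_n^p}$). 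Note that your opening sentence misdescribes your own proof: this route is \emph{not} a consequence of the $(a,b,ab)$ trick, and indeed your argument never invokes Lemma~\ref{lemCoreg1} — your descending induction is a self-contained re-proof of (an iterated form of) the same mechanism. Comparing the two: the paper's iteration is shorter on the page, but, taken literally, it is not quite immediate from the statement of Lemma~\ref{lemCoreg1}, whose hypothesis requires $b$ itself to be $E$-regular — which the individual $a_i$ are not assumed to be; one must either use the variant where this hypothesis is weakened to \gui{$\gen{b,c_2,\dots,c_n}$ is $E$-regular}, or iterate only the first half of its proof. Your route sidesteps this subtlety entirely, is fully self-contained, makes the quantitative bound $q=n(p-1)+1$ explicit, and in passing proves the slightly more general fact that any ideal containing a power of a finitely generated $E$-regular ideal is itself $E$-regular.
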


Notons que l'affirmation \gui{$\fb$ est \Erg} est stable par \lon 
lorsque~$\fb$ est \tf. Ceci donne l'implication dans le sens direct pour le point \emph{\ref{i3plcc.regularite}} dans le \plg qui suit.

\begin{plcc}\label{plcc.regularite} \emph{(Pour la régularité)}\\
Soient $E$ un \Amo, $b$, $a_1$, \dots, $a_n\in\gA$, $x\in E$,  et~$\fb$ un
\itf. On suppose que l'\id $\gen{\an}$ est \Erg.
\begin{enumerate}
\item \label{i1plcc.regularite} On a $x=0$   \ssi $x=0$ dans chaque $E[1/a_i]$.
\item \label{i2plcc.regularite} L'\elt $b$ est \Erg \ssi il est  $E[1/a_i]$-\ndz pour chaque~$i$.
\item  \label{i3plcc.regularite} L'\id $\fb$ est \Erg \ssi il est  $E[1/a_i]$-\ndz pour chaque~$i$.
\end{enumerate}
\end{plcc}
%
\begin{proof}
\emph{\ref{i1plcc.regularite}.} Si $x=0$ dans  $E[1/a_i]$ il y a un exposant $k_i$ tel que $a_i^{k_i}x=0$ dans~$E$. On conclut par le lemme \ref{lemCoreg2}  (avec le module~$\gA x$) que~$x=0$.

\noindent \emph{\ref{i3plcc.regularite}.} Supposons que $\fb$ est $E[1/a_i]$-\ndz pour chaque~$i$, et~$\fb\,x=0$.
Alors~$x=0$ dans chaque~$E[1/a_i]$, donc~$x=0$ par le point \emph{\ref{i1plcc.regularite}.}
\end{proof}
%

\subsec{\Tho de McCoy et variantes}

Comme application du \plgref{plcc.regularite}, nous donnons une nouvelle \dem d'un \tho de McCoy (\Cref{II-5.22~point~\emph{2.}}).

\CMnewtheorem{thoMccoy}{\Tho de McCoy}{\itshape}
\begin{thoMccoy} \label{propInjIdd} 
Une matrice $M\in\Ae{m\times n}$ 
est injective \ssi
l'\idd $\cD_n(M)$ est fidèle.  
\end{thoMccoy}
%
\begin{proof}
L'implication \gui{si} est simple. Montrons que si la matrice~$M$ est injective,
l'\id~$\cD_n(M)$ est fidèle. On raisonne par \recu sur le nombre de colonnes.
Puisque~$M$ est injective, les \coes de la première colonne
(qui représente l'image du premier vecteur de base), engendrent un \id fidèle.
Par le \plgrf{plcc.regularite}, il suffit donc de démontrer que~$\cD_n(M)$ est fidèle sur l'anneau~$\gA_a=\gA[1/a]$, où~$a$ est un \coe  de la première colonne. 
\\
Sur cet anneau il est clair que la matrice $M$ est \eqve à une matrice de la forme 
$\blocs{.4}{.6}{.4}{.9}{$1$}{$0$}{$0$}{$N$}$. 
En outre~$N$ est injective
donc par \hdr l'\id $\cD_{n-1}(N)$ est fidèle sur~$\gA_a$. 
Enfin~$\cD_{\gA_a,n-1}(N)=\cD_{\gA_a,n}(M)$.    
\end{proof}

\rems ~\\ 
1) La \dem donne aussi que si~$m<n$ et~$M$ injective, alors l'anneau
est trivial. En effet à chaque étape de \recu, quand on remplace~$M$
par~$N$ la différence~$m-n$ reste constante. Donc si~$m<n$ on obtient \gui{à l'initialisation} une application injective de~$\Ae0$ dans~$\Ae{n-m}$
ce qui implique~$1=0$ dans~$\gA$. Ceci est conforme à l'énoncé \gnl
de la proposition~\ref{propInjIdd}, car pour $m<n$, $\cD_n(M)=0$, et si $0$
est un \elt \ndz, l'anneau est trivial.
 
2) On trouve souvent dans la littérature le \tho de McCoy énoncée comme suit, sous forme contraposée (en apparence).
\\
 \emph{Si l'idéal n'est pas fidèle, l'application n'est pas injective}. 
 \\
 Ou encore de manière plus précise. 
\\
\emph{Si un \elt $x\in\gA$ non nul
annule $\cD_n(M)$, il existe un vecteur colonne non nul $C\in\Ae{m\times 1}$
tel que $MC=0$}.
\\
Malheureusement, cet énoncé ne peut être démontré qu'avec la logique classique, et l'existence du vecteur $C$ ne peut pas résulter d'un \algo \gnl. Voici un contre-exemple, bien connu des numériciens. Si $M$ est une matrice à \coes réels avec $m<n$, on ne sait pas produire un vecteur non nul dans son noyau tant que l'on ne conna\^{\i}t pas le rang de la matrice. Par exemple pour $m=1$ et $n=2$, on donne deux réels $(a,b)$, et l'on cherche un couple $(c,d)\neq (0,0)$ tels que $ac+bd=0$. Si le couple~$(a,b)$ est a priori indiscernable du couple $(0,0)$, il est impossible de fournir un couple~$(c,d)$ convenable tant que l'on n'a pas élucidé si
$\abs a+\abs b$ est nul ou non.
\sibook{\\
Des variantes \covs de la contraposée sont proposées dans les exercices \ref{exoMcCoyContr1} et \ref{exoMcCoyContr2}.}
\eoe
%


\subsec{Un (autre) lemme de McCoy}

\hum{il semble qu'il y a une preuve bien plus simple basée sur le \plgref{plcc.regularite}
\\
auquel cas il faudra renvoyer la preuve actuelle et la remarque qui suit en exercice
}

La proposition suivante reprend pour l'essentiel le 
corolaire \Cref{III-2.3}, qui était énoncé pour $E=\gA$.

\begin{proposition} \label{lemMcCoy} \emph{(Lemme de McCoy)}\\
Soient $E$ un \Amo et $f$  un \pol de $\AuX$. \Propeq
\begin{enumerate}
\item L'\elt $f$ est $E[\uX]$-\ndz, i.e. $(0_{E[\uX]}:f)_{E[\uX]}=0_{E[\uX]}$.
\item L'\id $\rc(f)$ est \Erg, i.e. $\big(0_{E}:\rc(f)\big)_{E}=0_{E}$.
\item Pour tout $x\in E$, $\,x\, f =0\;\Rightarrow\; x=0$, i.e. $(0_{E[\uX]}:f)_{E}=0_{E}$. 
\end{enumerate}
\end{proposition}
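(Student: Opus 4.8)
The plan is to establish the cycle of implications $(1)\Rightarrow(3)\Leftrightarrow(2)\Rightarrow(1)$, the only substantial link being $(2)\Rightarrow(1)$; the other two are essentially formal.

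First the easy links. The implication $(1)\Rightarrow(3)$ is immediate, since a constant $x\in E$ is in particular an element of $E[\uX]$: if $f$ is $E[\uX]$-\ndz and $xf=0$, then $x=0$. For $(2)\Leftrightarrow(3)$ I would simply check that the two conditions describe the \emph{same} subset of $E$. Writing $f=\sum_i a_i M_i$ with $a_i\in\gA$ and $M_i$ the distinct monomials in $\uX$, a constant $x\in E$ satisfies $xf=\sum_i (a_ix)\,M_i$; since $E[\uX]=\bigoplus_i E\,M_i$, this vanishes iff $a_ix=0$ for every $i$, that is iff $\rc(f)\,x=0$. Hence $(0_{E[\uX]}:f)_E=(0_E:\rc(f))_E$, which is exactly $(2)\Leftrightarrow(3)$.

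The heart of the proof is $(2)\Rightarrow(1)$: assuming $\rc(f)$ is $\Erg$, I must show that $fg=0$ with $g\in E[\uX]$ forces $g=0$. I would first reduce to one variable by a Kronecker substitution $X_i\mapsto T^{N^{i-1}}$ with $N$ larger than every partial degree occurring in $f$ and $g$, so that distinct monomials go to distinct powers of $T$. This is a ring homomorphism $\gA[\uX]\to\gA[T]$ extending to $E[\uX]\to E[T]$; it merely reindexes the coefficients, so the coefficients of $f$ (hence $\rc(f)$, still $\Erg$) and of $g$ are preserved, and $g=0$ iff its image $\tilde g$ is $0$. As the substitution is multiplicative, $fg=0$ becomes $\tilde f\tilde g=0$ in $E[T]$, and it suffices to treat the univariate case.

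In the univariate case I would invoke the Dedekind–Mertens lemma \ref{FFRlemdArtin}. It is stated there for $\gA[T]$, but the same statement holds with the second factor taken in $E[T]$; concretely one reduces to the ring case through the idealization $\gA\ltimes E$, in which $E$ is a square-zero ideal, $f$ keeps its content $\rc(f)$, and the coefficients of $g$ sit inside $E$. Applying the lemma with $m\geq\deg\tilde g$ and using $\rc(\tilde f\tilde g)=0$ yields $\rc(f)^{m+1}e_j=0$ in $E$ for every coefficient $e_j$ of $g$. In particular $a_i^{m+1}e_j=0$ for each coefficient $a_i$ of $f$, so the ideal $\gen{a_0^{m+1},\dots,a_n^{m+1}}$ annihilates $e_j$; by Lemma \ref{lemCoreg2} this ideal is again $\Erg$ (as $\rc(f)=\gen{a_0,\dots,a_n}$ is), whence $e_j=0$ for all $j$ and $g=0$, as wanted. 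The step I expect to be the main obstacle is precisely the passage of Dedekind–Mertens from $\gA$ to the module $E$ together with the bookkeeping of the multivariable-to-univariable reduction; once these are in place, the regularity of the power ideal via \ref{lemCoreg2} closes the argument at once. As an alternative I would keep in reserve the route suggested in the marginal note, deducing regularity directly from the local–global principle \ref{plcc.regularite} by inverting the coefficients of $f$ one at a time in the manner of the proof of McCoy's theorem \ref{propInjIdd}; I favour the Dedekind–Mertens line since it produces the quantitative annihilation $\rc(f)^{m+1}e_j=0$ outright.
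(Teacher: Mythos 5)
Your proof is correct and follows essentially the same route as the paper: the equivalence of \emph{2} and \emph{3} plus the triviality of \emph{1} $\Rightarrow$ \emph{3}, reduction to one variable by Kronecker's trick, and the module form of Dedekind--Mertens (lemme \ref{FFRlemdArtin}) combined with lemme \ref{lemCoreg2} to conclude the core implication. The only (interchangeable) difference is how the module variant of Dedekind--Mertens is justified: you pass through the idealization $\gA\ltimes E$, whereas the paper's remark obtains it by specializing the underlying algebraic identities, which are linear in the coefficients of $g$, into the module $E$.
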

%
\begin{proof}
Les points \emph{2} et \emph{3} ont la même signification, et
il est clair que le point \emph{3} est un cas particulier du point \emph{1.}
Ce qui est vraiment l'objet du lemme est l'implication  \emph{3} $\Rightarrow$
\emph{1.} 

\noindent Il suffit de voir le cas des \pols en une seule variable
(le cas \gnl peut s'en déduire en utilisant l'astuce de \KRA).

\noindent Comme indiqué dans le corolaire \Cref{III-2.3}, 
c'est une conséquence facile du lemme de \DKM \ref{FFRlemdArtin}. 
\sibook{\Llec peut aussi consulter l'exercice~\ref{exoDDMcCoy}
et sa solution pour une preuve directe.}
\end{proof}

\rem En fait nous utilisons la variante \gui{pour les modules} du lemme de Dedekind-Mertens: \emph{Pour $f\in\AT$ et $g\in E[T]$ avec $m\geq\deg g$ on a}

\snic{\rc(f)^{m+1}\rc(g)=\rc(f)^m\rc(fg).}

\snii Ici le contenu $\rc(g)$ du \pol $g\in E[T]$ est le sous-\Amo de $E$
engendré par les \coes de $g$.
La variante pour les modules est une conséquence du lemme lui-même.
En effet ce lemme peut être vu comme une famille 
d'\idas reliant les \coes de $f$
et de $g$. Ces \idas sont toutes \lins par rapport aux \coes de~$g$.
Or toute \ida \lin par rapport à certaines des \idtrs peut être évaluée en spécialisant les \idtrs dans un anneau arbitraire $\gA$, sauf 
certaines des \idtrs \gui{\lins} qui peuvent être spécialisées en des \elts d'un \Amo arbitraire $E$.   
\eoe

\section
{Suites $E$-régulières, suites de Kronecker}
\label{secPrelimReg}


\subsec{Suites $E$-régulières}

On a défini la notion \srg dans un anneau en \cref{IV-2.3}. On la \gns comme suit.
\begin{definition} \label{defSERG}
Une suite finie $(a_1,\dots,a_n)$ dans $\gA$ est dite \emph{\Erge} si les conditions suivantes sont satisfaites:\index{E-regu@\Erge!suite ---}%
\index{suite!E-regu@\Erge}
\begin{itemize}
\item $a_1$ est \Erg, 
\item $a_2$ est \Erg modulo $a_1E$, i.e.,  $a_2$  est ($E\sur{a_1E}$)-\ndz,
\item  $a_3$ est \Erg modulo $a_1E+a_2E$,
\item  $\dots$
\item  $a_n$ est \Erg modulo $a_1E+\cdots+a_{n-1}E$.
\end{itemize}
 \end{definition}

 Nous n'imposons pas de condition négative du style $\gen{\an}E\neq E$.

La \dfn d'une \sErg pourrait aussi être dite comme suit: 
\begin{itemize}
\item $a_1$ est \Erg, 
\item $a_2$ est \Erg sur l'anneau $\aqo\gA{a_1}$,
\item  $a_3$ est \Erg sur l'anneau $\aqo\gA{a_1,a_2}$,
\item  $\dots$
\end{itemize}
    
\rdb
Dans le cas où $E=\gA$, on retrouve les notions 
usuelles d'\elt \ndz, d'\id fidèle
et de suite \ndze (\cref{IV-2.3}).%
\index{E-reguliere@\Erg!suite ---}\index{suite!E-reguliere@\Erg}%
\index{reguliere@régulière!suite ---}\index{suite!reguliere@régulière}%

\begin{examples} \label{exaregnoreg} \label{regnoreg}
{\rm  
 
 1) Dans $\aqo{\gk[X,Y,Z]}{X(Y - 1)} = \gk[x,y,z]$, la suite $\big(y, z(y-1)\big)$ est 
régulière, mais pas celle obtenue en changeant l'ordre des deux termes
(on a en effet $y(z-x)=0$ avec $z-x\neq 0$ dans le quotient considéré, si $\gk\neq 0$).
\\
 Ce phénomène désagréable sera contourné au moment de
définir la profondeur.

2) Si la suite $(a_1,a_2)$ est $\gA$-\ndze, alors d'une part $a_1$ est \ndz, et d'autre part, $a_1a_2$ est le ppcm de $a_1$ et $a_2$, \cad:
 $\gen{a_1}\cap\gen{a_2}=\gen{a_1a_2}$. Inversement~si $a_1$ et $a_2$ sont \ndzs, et si $a_1a_2$ est le ppcm de $a_1$ et $a_2$, alors les suites $(a_1,a_2)$ et $(a_2,a_1)$  sont $\gA$-\ndzes.
 \\
 Notons qu'il ne suffit pas que $a_1$ et $a_2$ aient pour pgcd $1$, comme le montre l'exemple des suites $(x^{3},x^{2})$ et $(x^{2},x^{3})$ qui ne sont pas \ndzes dans l'anneau $\ZZ[x^{2},x^{3}]$ (la première parce que $x^4 x^2 \equiv 0 \bmod x^3$ alors \hbox{que $x^{4}\not\equiv0 \bmod x^3$} car $\aqo{\ZZ[x^{2},x^{3}]}{x^3}=\aqo{\ZZ[x^{2}]}{x^6}$ , la seconde parce que $x^3 x^3 \equiv 0 \bmod x^2$ alors \hbox{que $x^{3}\not\equiv0 \bmod x^2$}
 car $\aqo{\ZZ[x^{2},x^{3}]}{x^2}=\aqo{\ZZ[x^{3}]}{x^6}$).

3) Un exemple particulièrement significatif, qui relie l'existence de \srgs  et la \ddk, est le suivant. \rdb\label{exempleProfKdim}
Soit $\gk$ un \cdi non trivial, $\fa$ un \itf de $\gA=\kXn$ et $\gB=\gA/\fa$. \hbox{Soit $d\in\lrb{-1..n}$} la \ddk de $\gB$ (on sait que c'est un entier bien défini). 
Alors, on va voir que si $d<n$, l'\id $\fa$ contient une \srg de longueur~\hbox{$n-d$}.
\\
Tout d'abord, si $d=-1$, l'\id $\fa$ contient $1$, il  contient des \srgs (formées de $1$) arbitrairement longues.\\
Si $d\in\lrb{0..n-1}$, en mettant l'\id en position de \Noe on obtient après un \cdv adéquat

\snic{\gA=\kYn$ et $\gB=\kYd[y_{d+1},\dots,y_n],}

avec les $y_i$ entiers sur $\gC=\kYd$.
Pour chaque $i>d$ soit $f_i(Y_i)\in\fa$ un \pol \unt de $\gC[Y_i]$ ($f_i$ annule 
$y_i$). 
\\
Montrons que la suite $(f_n,\dots,f_{d+1})$ est une \srg de $\gA$.
Le \pol $f_{n}$ est \unt en $Y_n$ donc \ndz. On considère le quotient 

\snic{\gB_1=\aqo\gA{f_{n}}=\gC[Y_{d+1},\dots,Y_{n-1},y_n]=\gA_1[Y_{n-1}].}

Le \pol $f_{n-1}$ est \unt en $Y_{n-1}$  donc \ndz dans $\gA_1[Y_{n-1}]$.\\
Et ainsi de suite.

4) Une \gnn partielle est la suivante. \rdb\label{exempleGenProfKdim}
Soit $\gk$ un anneau arbitraire,~$\fa$ un \itf de $\gA=\kXn$ et $\gB=\gA/\fa$. On suppose que~$\gB$ est finie sur $\gk$. 
Alors $\fa$ contient une \srg de longueur $n$.
La \dem est la même que pour l'exemple précédent, mais ici
la \gui{position de \Noe} est inutile puisqu'on peut prendre $\gC=\gk$. 
\eoe
}\end{examples}

\begin{lemma} \label{lemRegProdMod}
Soient $E$ et $F$ des \Amos et $(\ua)=(\an)$ une suite dans $\gA$. Alors
$(\ua)$ est $E\times F$-\ndze \ssi elle est \Erge et \Frge. 
\end{lemma}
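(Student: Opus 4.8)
Le plan est de ramener l'énoncé, étape par étape le long de la suite, à une propriété élémentaire du produit direct. Je commencerais par reformuler la \dfn~\ref{defSERG}: dire qu'une suite $(\an)$ est $G$-\ndze (pour $G$ valant $E$, $F$ ou $E\times F$) signifie exactement que pour chaque $i\in\lrbn$, l'\elt $a_i$ est \ndz sur le module quotient $G_{i-1}:=G/(a_1G+\cdots+a_{i-1}G)$ (pour $i=1$ ce quotient est $G$ lui-même). L'énoncé se découpe ainsi en $n$ conditions, une par valeur de $i$, et il suffit de comparer ces conditions pour $E\times F$ d'un côté, pour $E$ et $F$ de l'autre.

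Le point technique central est l'identification des quotients successifs. Comme la multiplication par un scalaire agit composante par composante, on a $a_j(E\times F)=a_jE\times a_jF$, d'où
$$\sum_{j=1}^{i}a_j(E\times F)=\Big(\sum_{j=1}^{i}a_jE\Big)\times\Big(\sum_{j=1}^{i}a_jF\Big),$$
et donc, via l'\iso canonique $(M\times N)/(M'\times N')\simeq (M/M')\times(N/N')$,
$$(E\times F)\,/\sum_{j<i}a_j(E\times F)\;\simeq\;E_{i-1}\times F_{i-1}.$$
Ces vérifications se font directement sur les \elts, sans aucune disjonction, et sont donc parfaitement \covs.

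Reste alors le seul point substantiel, une affirmation à une seule étape: pour deux \Amos $M,N$ et $a\in\gA$, l'\elt $a$ est $(M\times N)$-\ndz \ssi il est $M$-\ndz et $N$-\ndz. Dans le sens direct, si $a(x,y)=(ax,ay)=0$ avec $a$ régulier sur chaque facteur, alors $ax=0$ et $ay=0$ entraînent $x=0$ et $y=0$. Réciproquement, si $a$ est $(M\times N)$-\ndz et $ax=0$ dans $M$, alors $a(x,0)=0$, d'où $(x,0)=0$ et $x=0$; de même pour $N$. En appliquant cette équivalence à $M=E_{i-1}$ et $N=F_{i-1}$ pour chaque $i$, la condition de rang $i$ pour $E\times F$ équivaut à la conjonction des conditions de rang $i$ pour $E$ et pour $F$; la conjonction sur tous les $i$ fournit l'énoncé.

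Je ne m'attends à aucun obstacle réel: tout est formel et constructif, sans recours à la logique classique ni à une quelconque hypothèse de finitude. Le seul endroit demandant un peu de soin est l'identification des modules quotients, \cad vérifier que la somme des $a_j(E\times F)$ se scinde bien en un produit de sommes; une fois ce lemme de commutation entre produit direct et passage au quotient établi, l'équivalence devient automatique.
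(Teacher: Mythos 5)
Votre démonstration est correcte : le texte lui-même ne donne aucun argument pour ce lemme (la preuve y est simplement marquée comme facile et laissée au lecteur), et votre rédaction — identification des quotients successifs de $E\times F$ avec les produits des quotients correspondants, puis vérification composante par composante de la régularité à chaque étape — est exactement la vérification standard sous-entendue. Il n'y a aucune lacune ; l'argument est entièrement constructif et n'utilise que la commutation du produit direct avec le passage au quotient.
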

%
\facile
%

\begin{lemma} \label{lem1sqlindep}
 Si  $(\an)$ est une suite  \Erge,  chaque $a_i$ est \Erg modulo les $a_j\neq a_i$.
 \end{lemma}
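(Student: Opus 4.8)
Le plan est de me ramener d'abord au cas $i=1$, puis de traiter ce cas par \recu sur la longueur de la suite.

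\emph{Réduction au cas $i=1$.} Fixons $i$ et posons $\overline E:=E/(a_1E+\cdots+a_{i-1}E)$. D'après la \dfn~\ref{defSERG} d'une suite \Erge, la suite tronquée $(a_i,a_{i+1},\dots,a_n)$ est $\overline E$-régulière. J'admets provisoirement l'affirmation suivante, valable sur un \Amo arbitraire $F$: \emph{si $(b_1,\dots,b_m)$ est une suite $F$-régulière, alors $b_1$ est $F$-régulier modulo $b_2F+\cdots+b_mF$.} Appliquée à la suite $(a_i,\dots,a_n)$ et au module $\overline E$, elle donne que $a_i$ est $\overline E$-régulier modulo $a_{i+1}\overline E+\cdots+a_n\overline E$. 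Il en résulte la conclusion voulue: si $a_ix\in\sum_{j\neq i}a_jE$, on réduit modulo $a_1E+\cdots+a_{i-1}E$ pour obtenir $a_i\overline x\in a_{i+1}\overline E+\cdots+a_n\overline E$, d'où $\overline x\in a_{i+1}\overline E+\cdots+a_n\overline E$, c'est-à-dire $x\in\sum_{j\neq i}a_jE$, qui est bien l'énoncé \gui{$a_i$ est \Erg modulo les $a_j\neq a_i$}.

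\emph{Preuve de l'affirmation, par \recu sur $m$.} Pour $m=1$ il n'y a rien à démontrer ($b_1$ est $F$-régulier, donc régulier modulo $0$). Pour l'hérédité, supposons $b_1x=\sum_{j=2}^m b_je_j$ avec $x,e_j\in F$. La suite $(b_2,\dots,b_m)$ est $(F/b_1F)$-régulière, donc en particulier $b_m$ y est régulier modulo $b_2(F/b_1F)+\cdots+b_{m-1}(F/b_1F)$. En réduisant la relation modulo $b_1F$ le membre de gauche s'annule, et l'on obtient $b_m\overline e_m\in b_2(F/b_1F)+\cdots+b_{m-1}(F/b_1F)$; la régularité de $b_m$ fournit alors $\overline e_m$ dans ce sous-module, soit $e_m=\sum_{j=2}^{m-1}b_jt_j+b_1s$ pour certains $t_j,s\in F$. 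En reportant cette expression dans la relation initiale, il vient $b_1(x-b_ms)=\sum_{j=2}^{m-1}b_j(e_j+b_mt_j)\in b_2F+\cdots+b_{m-1}F$. Or $(b_1,\dots,b_{m-1})$ est $F$-régulière (segment initial), donc par \hdr $b_1$ est $F$-régulier modulo $b_2F+\cdots+b_{m-1}F$; on conclut que $x-b_ms\in b_2F+\cdots+b_{m-1}F$, puis $x\in b_2F+\cdots+b_mF$.

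\emph{Point délicat.} Comme le souligne l'exemple~\ref{regnoreg}, on ne peut pas réordonner librement une \srg; la difficulté est donc de contrôler $a_i$ modulo les termes \emph{postérieurs} $a_{i+1},\dots,a_n$, alors que la \dfn ne fournit la régularité que modulo les termes antérieurs. Le ressort de la \recu est de faire rentrer, en passant modulo $b_1$, le dernier coefficient $e_m$ dans le sous-module engendré par $b_2,\dots,b_{m-1}$ grâce à la régularité de la suite tronquée, ce qui abaisse la longueur et autorise l'\hdr. Toutes les étapes étant des manipulations explicites (chaque appartenance produisant ses témoins), la \dem reste \cov.
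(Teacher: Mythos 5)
Votre démonstration est correcte et suit pour l'essentiel la même voie que celle du texte~: même réduction au cas $i=1$ en passant au quotient $E/(a_1E+\cdots+a_{i-1}E)$, puis même mécanisme consistant à éliminer le dernier terme de la relation grâce à sa régularité modulo les termes qui le précèdent, avant de conclure par l'hypothèse de récurrence sur la suite raccourcie. Vous avez simplement rédigé la récurrence \gui{en bonne et due forme} que le texte signale comme possible et plus courte, là où il se contente de dérouler explicitement le calcul pour $n=4$.
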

%
\begin{proof}
Comme $(a_i,\dots,a_n)$ est \Erge modulo  $\gen{(a_j)_{j<i}}$, il suffit de démontrer le cas $a_1$. Nous donnons la démonstration pour  $n=4$. Une démonstration par récurrence en bonne et due forme est possible, et plus courte.
Considérons une \syzy $\sum_{i=1}^4a_ix_i=0$. 
Nous devons montrer que $x_1\in \gen{a_2,a_3,a_4}E$. 

Comme  $a_4$ est \Erg modulo $\gen{a_1,a_2,a_3}$ on peut écrire $x_4=a_1y_1+a_2y_2+a_3y_3$, de sorte que
$$a_1(x_1+a_4y_1)+a_2(x_2+a_4y_2)+a_3(x_3+a_4y_3)=0.$$ Comme  $a_3$ est \Erg modulo $\gen{a_1,a_2}$ on peut écrire $x_3+a_4y_3=a_1z_1+a_2z_2$, de sorte que
$a_1(x_1+a_4y_1+a_3z_1)+a_2(x_2+a_4y_2+a_3z_2)=0$. Comme  $a_2$ est \Erg modulo $a_1$ on peut écrire $x_2+a_4y_2+a_3z_2=a_1t_1$. Donc
$a_1(x_1+a_4y_1+a_3z_1+a_2t_1)=0$. Comme  $a_1$ est \Erg, $x_1+a_4y_1+a_3z_1+a_2t_1=0$ et $x_1\in \gen{a_2,a_3,a_4}E$. 
\end{proof}

\subsec{Suite qui reste exacte modulo un \elt \ndz}

\begin{proposition} \label{propRegSex}~

\begin{enumerate}
\item 
On considère une \sex de \Amos

\snic {
 E \vvers {\varphi} F \vvers {\psi} G \vvers {\eta} H,
}

\snii  et $a\in \gA$ un \elt \Hrg.
 Alors la suite de \Amos

\snic {
 E/aE \vvers {\ov\varphi} F/aF \vvers {\ov\psi} G/aG 
}

\snii est exacte.

\item 
En conséquence, si l'on a une \sex

\snic {
 E_n \vvers {\varphi_n} E_{n-1} \vvers { } \;\cdots \cdots \;
 \vvers {\varphi_{m+1}} E_{m},
}

\snii et si $a\in\gA$ est $E_j$-\ndz pour $j\in\lrb{m..n-3}$,
alors on a aussi la \sex

\snic {
 E_n/aE_n \vvers {\ov{\varphi_n}} E_{n-1}/aE_{n-1} \vvers{ }  \; \cdots\cdots \;
 \vvers {\ov{\varphi_{m+2}}} E_{m+1}/aE_{m+1}.}
\end{enumerate}

\end{proposition}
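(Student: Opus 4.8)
The plan is to treat statement \emph{1} as the real content and to derive statement \emph{2} from it by applying it to the successive overlapping four-term segments of the long exact sequence. So I would first establish the exactness of
\[
E/aE \vvers {\ov\varphi} F/aF \vvers {\ov\psi} G/aG ,
\]
and then iterate the result node by node.

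For statement \emph{1}, the inclusion $\Im \ov\varphi \subseteq \Ker \ov\psi$ is immediate, since $\psi\circ\varphi=0$ forces $\ov\psi\circ\ov\varphi=0$. The work lies in the reverse inclusion. I would start from $f\in F$ with $\ov\psi(\ov f)=0$, i.e. $\psi(f)=ag$ for some $g\in G$, and apply $\eta$: because $\eta\circ\psi=0$ we get $a\,\eta(g)=0$. This is the only place the hypothesis intervenes — as $a$ is \Hrg and $\eta(g)\in H$, we conclude $\eta(g)=0$. Hence $g\in\Ker \eta=\Im \psi$, say $g=\psi(f')$, so that $\psi(f-af')=\psi(f)-ag=0$ and therefore $f-af'\in\Ker \psi=\Im \varphi$. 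Reducing modulo $aF$ gives $\ov f=\ov{f-af'}\in\Im \ov\varphi$, which is what we want. Note that this argument uses exactness of the original sequence at \emph{both} $F$ and $G$, together with $\eta\circ\psi=0$.

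For statement \emph{2}, to get exactness of the reduced sequence at an interior node $E_j/aE_j$ (the interior indices being $m+2\le j\le n-1$), I would apply statement \emph{1} to the four-term exact segment
\[
E_{j+1} \vvers {\varphi_{j+1}} E_j \vvers {\varphi_j} E_{j-1} \vvers {\varphi_{j-1}} E_{j-2},
\]
reading $(E,F,G,H)=(E_{j+1},E_j,E_{j-1},E_{j-2})$. This segment is exact at $E_j$ and at $E_{j-1}$ because those are interior nodes of the given long exact sequence, and the hypothesis needed is that $a$ be $E_{j-2}$-régulier. As $j$ runs through $m+2,\dots,n-1$, the index $j-2$ runs through exactly $m,\dots,n-3$, which is precisely the assumed range; so all the required regularity conditions are available.

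I expect the main (and only mild) obstacle to be the index bookkeeping: one must check that the regularity indices $\lrb{m..n-3}$ match the interior nodes $\{m+2,\dots,n-1\}$ of the truncated complex, and handle the degenerate case $n\le m+2$, where both index sets are empty and there is nothing to prove. The homological core is entirely contained in statement \emph{1}.
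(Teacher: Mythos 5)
Your proof of point \emph{1} is exactly the paper's argument: lift $\ov\psi(\ov f)=0$ to $\psi(f)=ag$, apply $\eta$ and use the $H$-régularité of $a$ to get $\eta(g)=0$, then use exactness at $G$ and at $F$ to conclude. The iteration over four-term segments for point \emph{2}, with the index check that $j-2$ ranges over $\lrb{m..n-3}$, is precisely what the paper's « en conséquence » leaves implicit, so the proposal is correct and follows the same route.
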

%
\begin{proof}
Soit $y\in F$ tel que $\psi(y)\in aG$. On doit trouver un $x\in E$ tel
que 

\snic{\varphi(x)\equiv y\mod aF.}

\snii
On écrit $\psi(y)-az=0$ avec $z\in G$.
On applique $\eta$ ce qui donne $a\eta(z)=0$, donc $\eta(z)=0$, et il existe
$u\in F$ avec $\psi(u)=z$. Donc $\psi(y-au)=0$ et il existe $x\in E$ tel que
$\varphi(x)=y-au$.
\end{proof}
%

\begin{corollary} \label{lemHilBur2}
 Soit $H$ un \Amo et  $a\in \gA$ un \elt \Hrg. 
\begin{enumerate}
\item On suppose que l'on a une \sex
$0 \to F \vvers {\psi} G \vvers {\eta} H$.
Alors on a une nouvelle \sex 
$0 \to F/aF \vvers {\ov\psi} G/aG$.
%
%
\item Si la suite $0 \to F \vvers {\psi} G \vvers {\eta} H\to 0$ est exacte,
alors la suite

\snic{0 \to F/aF \vvers {\ov\psi} G/aG\vvers {\ov\eta} H/aH\to 0}

\snii
est aussi une \sex.
\end{enumerate}
\end{corollary}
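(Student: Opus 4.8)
Both assertions are corollaries of Proposition~\ref{propRegSex}: the whole substance lies in the injectivity of $\ov\psi$, while everything else is the elementary right-exactness of reduction modulo $a$, which needs no regularity hypothesis at all. So the plan is to extract injectivity from the Proposition and to check the remaining exactness properties by a short direct argument.

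\textbf{Part 1.} The plan is to read the given exact sequence $0 \to F \vvers{\psi} G \vvers{\eta} H$ as a four-term exact sequence $E \vvers{\varphi} F \vvers{\psi} G \vvers{\eta} H$ by taking $E = 0$ and $\varphi = 0$. Since $a$ is $H$-regular, the first item of Proposition~\ref{propRegSex} applies and gives the exactness of $0 \to F/aF \vvers{\ov\psi} G/aG$ at the node $F/aF$. But exactness there says precisely that $\Ker\ov\psi = 0$, i.e. that $\ov\psi$ is injective, which is the desired conclusion.

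\textbf{Part 2.} Here I would treat the three nodes separately. Injectivity of $\ov\psi$ (exactness at $F/aF$) follows from Part 1, applied to the truncation $0 \to F \vvers{\psi} G \vvers{\eta} H$ of the given sequence; this is the only step that uses the $H$-regularity of $a$. Surjectivity of $\ov\eta$ (exactness at $H/aH$) is immediate from surjectivity of $\eta$, and $\ov\eta\circ\ov\psi = 0$ is inherited from $\eta\circ\psi = 0$. For exactness at $G/aG$, the plan is a one-line diagram chase: given $g \in G$ with $\eta(g) \in aH$, use surjectivity of $\eta$ to write $\eta(g) = a\,\eta(g')$, observe that $g - a g' \in \Ker\eta = \Im\psi$, and conclude that the class of $g$ in $G/aG$ lies in $\Im\ov\psi$; together with the inclusion just noted this yields $\Ker\ov\eta = \Im\ov\psi$.

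\textbf{Main point.} There is no genuine obstacle. The single nontrivial ingredient is the injectivity of $\ov\psi$, which is exactly what Proposition~\ref{propRegSex} supplies; the exactness at $G/aG$ and $H/aH$ is just the (constructive) right-exactness of $-\otimes_\gA \gA/a\gA$ and uses nothing about $a$. The only thing deserving care is the small bookkeeping of Part 1, namely padding the three-term sequence with a zero module on the left so that the four-term Proposition can be invoked.
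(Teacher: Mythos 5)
Your proposal is correct and takes essentially the same route as the paper: the key step in both parts is Proposition~\ref{propRegSex} applied to the four-term exact sequence $0 \to F \to G \to H$ (i.e., with $E=0$ and $\varphi=0$), which is precisely how the paper obtains the injectivity of $\overline{\psi}$. The only difference is cosmetic: for the exactness at $G/aG$ in part~2, the paper invokes Proposition~\ref{propRegSex} a second time, applied to the exact sequence $F \to G \to H \to 0$ with the remark that $a$ is trivially $\{0\}$-regular, whereas you carry out the equivalent diagram chase by hand — unwinding that second application of the proposition yields word for word your computation.
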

%
\begin{proof}
\emph{2.} On remarque que $a$ est $\so{0}$-\ndz, on applique
deux fois le premier point de la proposition \ref{propRegSex}, et la surjectivité de $\ov\eta$ est claire.
\end{proof}
%

\subsec{Un lemme de simplification}

Le lemme suivant est une simple variante du lemme de Nakayama.
\begin{lemma} \label{lemSimplifidele}
Soit $\fb\subseteq \gA$ un \id et $E$ un \Amo \tf tel \hbox{que $E=\fb E$}.
\begin{enumerate}
\item Il existe $b\in\fb$ tel que $(1-b)\,E=0$.
\item \emph{(Lemme de simplification)} Si $E$ est fidèle, $1\in \fb$.
\item  \emph{(Nakayama)} Si $\fb\subseteq \Rad(\gA)$, $E=0$.
\end{enumerate} 
\end{lemma}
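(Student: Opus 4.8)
Le plan est de tout déduire du premier point, qui n'est autre que le point 1 du lemme de Nakayama \ref{FFRlemNaka} (le \gui{truc du \deter}). Pour le point 1, j'appliquerais ce point 1 du lemme \ref{FFRlemNaka} au \Amo \tf $E$ et à l'\id $\fb$: l'hypothèse $E=\fb E$ fournit immédiatement un \elt $b\in\fb$ tel que $(1-b)\,E=0$. Les points 2 et 3 s'en déduisent alors en exploitant la nature de $\fb$.

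Pour le point 2, avec cet \elt $b$, la relation $(1-b)\,E=0$ exprime que $1-b$ annule $E$; comme $E$ est fidèle, son annulateur est nul, donc $1-b=0$, \cad $b=1$, et l'on conclut $1\in\fb$. Pour le point 3, toujours avec ce $b$: comme $b\in\fb\subseteq\Rad(\gA)$, l'\elt $1-b$ est \iv (par \dfn du radical de Jacobson, $1+ax\in\Ati$ pour tous $a\in\Rad(\gA)$ et $x\in\gA$; on prend ici $a=b$ et $x=-1$). En multipliant $(1-b)\,E=0$ par l'inverse de $1-b$, on obtient $E=0$; c'est aussi, directement, le point 2 du lemme \ref{FFRlemNaka}.

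Il n'y a ici aucun véritable obstacle: tout repose sur le point 1, \cad le truc du \deter déjà établi, et les deux autres points ne sont que des lectures de la relation $(1-b)\,E=0$ selon l'information dont on dispose sur $b$. Le seul endroit réclamant un peu de soin est la lecture \cov des hypothèses \gui{$E$ fidèle} (annulateur réduit à $0$, d'où $1-b=0$) et \gui{$\fb\subseteq\Rad(\gA)$} (qui rend $1-b$ inversible), aucune de ces deux implications ne nécessitant de raisonnement non effectif.
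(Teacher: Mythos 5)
Votre preuve est correcte et suit exactement la même démarche que celle du texte: le point \emph{1} est le \gui{truc du \deter} du lemme de Nakayama \ref{FFRlemNaka}, et les points \emph{2} et \emph{3} s'en déduisent immédiatement (le texte se contente d'écrire \gui{le reste suit}, que vous explicitez correctement). Rien à redire.
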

%
\begin{proof} Le point \emph{1} est un \gui{truc du \deter} donné dans le lemme
de Nakayama~\ref{FFRlemNaka}.
Le reste suit.
\end{proof}
%
\subsec{Changement d'anneau de base et \srg}

Il est clair qu'une suite dans un produit fini d'anneaux est \Erge \ssi
son \eds à chaque facteur est \Erge. 

\begin{plcc} \label{propLocSrg} \emph{(Pour les \srgs)}\\
Les \sErgs restent \Erges par \lon, et une suite qui est \Erge après \lon en des \moco est \Erge.
\end{plcc}

Plus \gnlt, on a le résultat suivant.

\begin{proposition} \label{propChgBasSrg}
Soit $\rho:\gA\to\gB$ une \Alg, $(\ua)=(\an)$ dans~$\gA$ et $E$ un \Amo.
Notons $(\uap)=\big(\rho(a_1),\dots,\rho(a_n)\big)$ \hbox{et $E'=\rho\ist(E)$}.
\begin{enumerate}
\item Si $\gB$ est plate sur $\gA$ et $(\ua)$ est une \sErg, alors $(\uap)$ est une suite $E'$-\ndze. 
\item Si $\gB$ est \fpte sur $\gA$, alors $(\ua)$ est une \sErg \ssi $(\uap)$ est une suite $E'$-\ndze.
\end{enumerate} 
\end{proposition}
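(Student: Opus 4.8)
The plan is to reduce both assertions, uniformly over the index $i$, to a single base-change property of one injective multiplication map. For $i\in\lrb{1..n}$ set $M_i=E/(a_1E+\cdots+a_{i-1}E)$ (with $M_1=E$), and on the $\gB$-side $M'_i=E'/(\rho(a_1)E'+\cdots+\rho(a_{i-1})E')$, where $E'=\rho\ist(E)=\gB\otimes_\gA E$. By \ref{defSERG}, $(\ua)$ is $E$-regular exactly when, for each $i$, multiplication by $a_i$ is injective on $M_i$; and $(\uap)$ is $E'$-regular exactly when, for each $i$, multiplication by $\rho(a_i)$ is injective on $M'_i$. So it suffices to compare these two injectivity conditions index by index.

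The first step I would carry out is the $\gB$-linear identification
$$M'_i\;\simeq\;\gB\otimes_\gA M_i,$$
under which multiplication by $\rho(a_i)$ becomes $\Id_\gB\otimes(a_i\cdot)$. This uses only right-exactness of the tensor product, with no flatness: tensoring by $\gB$ the presentation $E^{i-1}\to E\to M_i\to 0$ whose first arrow is $(x_j)\mapsto\sum_j a_jx_j$ gives $(E')^{i-1}\to E'\to\gB\otimes_\gA M_i\to 0$, and the image of the first arrow is exactly $\rho(a_1)E'+\cdots+\rho(a_{i-1})E'$. The matching of the multiplication maps is the identity $\rho(a_i)\,b\otimes m=b\otimes a_im$ in $\gB\otimes_\gA M_i$.

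Granting this, both points follow at once. For point 1, $E$-regularity of $(\ua)$ makes each $a_i\cdot:M_i\to M_i$ injective; as $\gB$ is flat over $\gA$, base change preserves injections, so $\Id_\gB\otimes(a_i\cdot)$, that is multiplication by $\rho(a_i)$ on $M'_i$, is injective for every $i$, whence $(\uap)$ is $E'$-regular. For point 2, the direct implication is point 1 (faithfully flat is in particular flat); for the converse I would use that a faithfully flat base change \emph{reflects} injectivity, so injectivity of $\Id_\gB\otimes(a_i\cdot)$ forces injectivity of $a_i\cdot$ on $M_i$, which yields the equivalence index by index.

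The only genuinely delicate point is setting up the identification $M'_i\simeq\gB\otimes_\gA M_i$ together with the correspondence of the two multiplication maps; once that is in place, everything reduces to the standard constructive behavior of flat and faithfully flat modules under base change from [CACM] — flat modules preserve injections, faithfully flat modules reflect them — the reflection statement being precisely what powers the nontrivial direction of point 2.
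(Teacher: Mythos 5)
Your proof is correct and takes essentially the same route as the paper's: the paper likewise reads $E$-regularity as injectivity of multiplication by $a_i$ on the successive quotients $E/(a_1E+\cdots+a_{i-1}E)$ and then invokes the preservation (resp. reflection) of exact sequences under flat (resp. faithfully flat) base change. The only difference is one of detail: you make explicit the identification $\gB\otimes_\gA\big(E/(a_1E+\cdots+a_{i-1}E)\big)\simeq E'/\big(\rho(a_1)E'+\cdots+\rho(a_{i-1})E'\big)$ via right-exactness of the tensor product, a point the paper's terse proof leaves implicit.
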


%
\begin{proof}
En effet le fait que la suite est \ndze signifie que l'on a des suites exactes 

\snic{\!\! 
\begin{array}{cccccccccccc} 
 0&  \to & E   & \vvers{\cdot\times a_ 1}   & E     \\[.1mm] 
 0&  \to &  E/ a_ 1E  & \vvers{\cdot\times {a_ 2}}   &  E/ a_ 1E  \\[.1mm] 
 0&  \to &  E  / (a_ 1E+a_ 2E) & \vvers{\cdot\times {a_ 3}}   &E  /(a_ 1E+a_ 2E)    
\\[.1mm] 
&&\vdots&&\vdots
 \end{array}
}

\snii
On applique donc les résultats concernant la platitude (ou fidèle platitude) et les suites exactes.
\end{proof}

\hum{Si $\fa=\Ann_\gA(E)$, on peut considérer les anneaux $\gA'=\gA/\fa$
et $\gB'=\gB/\fa\gB$, de sorte que $E$ est un module sur $\gA'$.
La proposition précédente peut être énoncée avec $\gA'$ et $\gB'$
en place de $\gA$ et $\gB$. Elle est sans doute un peu plus \gnle sous cette forme. \`A vérifier. Exo?}

Dans la suite nous utilisons souvent des morphismes  
$\rho:\gA\to \gB$ comme changements d'anneau de base, et
nous commettons l'abus de langage suivant: étant donné un \Amo $E$,
\emph{nous disons qu'une suite
dans $\gB$ est \Erge pour signifier qu'elle est $\rho\ist(E)$-\ndze}.

\subsec{Lemmes d'échange, \sKr \Erge}

Dans la suite, sauf mention contraire, $E$ désigne un \Amo arbitraire.

\begin{lemma} \label{lemEchSeqReg} \emph{(Premier lemme d'échange)}
\begin{enumerate}
\item Soit $(b_1,b_2)$ une \sErg dans $\gA$. Si $b_2$ est \Erg, la suite~$(b_2,b_1)$ est \egmt~\Erge.
\item Plus \gnlt si $(b_1,\dots,b_m)$ est une \sErg  et $k\in\lrb{2..m}$, 
la suite obtenue en permutant $b_{k-1}$ et $b_k$ est aussi \Erge \ssi
l'\elt $b_k$ est \ndz modulo  $\sum_{j: j\leq  k-2}b_jE$.
\end{enumerate}
\end{lemma}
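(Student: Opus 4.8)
The plan is to prove part~1 by a short direct computation in $E$, and then to obtain part~2 by reducing the transposition to the two-element case of part~1, after quotienting out the first $k-2$ terms.

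For part~1, the hypothesis that $(b_1,b_2)$ is $E$-regular unpacks as: $b_1$ is a non-zerodivisor on $E$, and $b_2$ is a non-zerodivisor on $E/b_1E$. We are also given that $b_2$ is a non-zerodivisor on $E$, which is already the first condition for the sequence $(b_2,b_1)$. So the only thing left to check is that $b_1$ is a non-zerodivisor on $E/b_2E$. I would start from $x\in E$ with $b_1x\in b_2E$, write $b_1x=b_2y$, and aim to show $x\in b_2E$. The decisive step is to read $b_2y=b_1x$ as $b_2y\in b_1E$: since $b_2$ is a non-zerodivisor modulo $b_1E$, this forces $y\in b_1E$, say $y=b_1w$. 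Substituting back gives $b_1x=b_1(b_2w)$, and as $b_1$ is a non-zerodivisor on $E$ we conclude $x=b_2w\in b_2E$. There is essentially no obstacle here.

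For part~2, the observation driving the argument is that transposing $b_{k-1}$ and $b_k$ changes only the two conditions attached to positions $k-1$ and $k$ in the definition. The conditions at positions $1,\dots,k-2$ involve only $b_1,\dots,b_{k-2}$ and are untouched. The conditions at positions $k+1,\dots,m$ each assert that $b_j$ is a non-zerodivisor modulo the submodule generated by the preceding terms, and this submodule equals $b_1E+\cdots+b_{j-1}E$ whether or not $b_{k-1},b_k$ have been swapped, since swapping merely reorders the generators; hence these conditions are unchanged as well. I would make this submodule-invariance explicit, as it is the one point that genuinely needs checking. Since the original sequence is $E$-regular, all these unchanged conditions hold automatically for the permuted sequence.

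Having isolated the two relevant conditions, I would pass to $\ov E:=E/(b_1E+\cdots+b_{k-2}E)$. The original sequence tells us precisely that $(b_{k-1},b_k)$ is an $\ov E$-regular sequence, while, by the previous paragraph, the permuted sequence is $E$-regular exactly when $(b_k,b_{k-1})$ is an $\ov E$-regular sequence. Now part~1, applied over $\ov E$ with $b_{k-1},b_k$ in the roles of $b_1,b_2$, yields both implications. If $b_k$ is a non-zerodivisor on $\ov E$ (which is exactly the stated condition, that $b_k$ is a non-zerodivisor modulo $\sum_{j\le k-2}b_jE$), then part~1 converts the $\ov E$-regular sequence $(b_{k-1},b_k)$ into the $\ov E$-regular sequence $(b_k,b_{k-1})$, so the permuted sequence is $E$-regular. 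Conversely, if the permuted sequence is $E$-regular, then its condition at position $k-1$ is literally that $b_k$ is a non-zerodivisor on $\ov E$. This closes the equivalence, and part~1 is the only real ingredient.
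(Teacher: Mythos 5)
Your proof is correct and follows essentially the same route as the paper: part~1 by the direct computation the paper alludes to (``par calcul direct''), and part~2 by reduction to the two-element case of part~1 after passing to $\ov E=E/(b_1E+\cdots+b_{k-2}E)$, having checked that the conditions at the other positions are insensitive to the transposition. The only cosmetic difference is that you obtain the forward implication by reading it directly off the defining condition at position $k-1$ of the permuted sequence, where the paper instead cites the lemma~\ref{lem1sqlindep}.
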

%
\begin{proof} 
\emph{1.} L'\elt  $b_2$ est \Erg par hypothèse, et $b_{1}$ est \Erg modulo~$b_2$ par calcul direct (ou d'après le lemme \ref{lem1sqlindep}).
\\
\emph{2.} L'implication directe est donnée par le lemme \ref{lem1sqlindep}.\\ 
Pour l'implication réciproque, il suffit de traiter le cas $m=k=2$,
ce qui est le point \emph{1.}  
\end{proof}
%

\begin{corollary} \label{cor0lemEchSeqReg} 
\emph{(Deuxième lemme d'échange)}\\
Soit $\gA$ un anneau \coh, $E$ un \mpf et $(b_1,\dots,b_m)$ une \sErg 
contenue dans $\Rad(\gA)$.
Alors toute suite obtenue par permutation des $b_i$ est \egmt \Erge.
\end{corollary}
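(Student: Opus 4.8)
Le plan est de ramener l'énoncé aux transpositions de termes consécutifs, puis à un échange de deux \elts contrôlé par le premier lemme d'échange (lemme \ref{lemEchSeqReg}). Comme les transpositions $(k-1\;k)$ engendrent le groupe symétrique et que l'appartenance à $\Rad(\gA)$ ne dépend que de l'ensemble $\{b_1,\dots,b_m\}$, il suffit de montrer que l'échange de deux termes consécutifs $b_{k-1}$ et $b_k$ d'une \sErg contenue dans $\Rad(\gA)$ redonne une \sErg (elle-même contenue dans $\Rad(\gA)$). On itère alors le long d'un mot en transpositions consécutives représentant la permutation voulue et l'on conclut par \recu sur la longueur du mot, chaque suite intermédiaire vérifiant encore les hypothèses ($\gA$ \coh, $E$ \pf, suite \Erge incluse dans $\Rad(\gA)$).

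Par le lemme \ref{lemEchSeqReg} point~\emph{2}, l'échange de $b_{k-1}$ et $b_k$ préserve l'\Erge \ssi $b_k$ est \ndz modulo $b_1E+\cdots+b_{k-2}E$. Tout se ramène donc à l'affirmation suivante sur deux \elts. Posons $F=E/(b_1E+\cdots+b_{k-2}E)$: comme $\gA$ est \coh et $E$ est \pf, $F$ est encore \pf (proposition \ref{FFRpropPfSex} point~\emph{1}), et l'\Erge de la suite complète dit exactement que $b_{k-1}$ est \ndz pour $F$ et que $b_k$ est \ndz pour $F/b_{k-1}F$. Je prétends que ces deux faits, joints à $b_{k-1}\in\Rad(\gA)$, forcent $b_k$ à être \ndz pour $F$, ce qui est précisément la condition requise.

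Pour ce point central, on introduit le noyau $N=\{x\in F\mid b_kx=0\}$ de la multiplication par $b_k$ et l'on montre que $N=b_{k-1}N$. L'inclusion $b_{k-1}N\subseteq N$ est immédiate car $N$ est un sous-module. Pour l'inclusion réciproque, soit $x\in N$: alors $b_kx=0\in b_{k-1}F$, donc le fait que $b_k$ soit \ndz pour $F/b_{k-1}F$ donne $x\in b_{k-1}F$, disons $x=b_{k-1}x_1$; de $b_{k-1}(b_kx_1)=b_kx=0$ et du fait que $b_{k-1}$ est \ndz pour $F$ on tire $b_kx_1=0$, \cad $x_1\in N$, d'où $x\in b_{k-1}N$. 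Ainsi $N=b_{k-1}N$.

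Il reste à conclure $N=0$ par Nakayama. C'est ici que la cohérence est réellement utilisée: comme $\gA$ est \coh et $F$ est \pf, $F$ est un module cohérent, donc le noyau $N$ de l'endomorphisme de multiplication par $b_k$ (entre modules \pf) est \tf. Avec $b_{k-1}\in\Rad(\gA)$ et $N=b_{k-1}N$, le lemme de Nakayama (lemme \ref{FFRlemNaka} point~\emph{2}, ou lemme \ref{lemSimplifidele} point~\emph{3}) donne $N=0$, \cad que $b_k$ est \ndz pour $F$, ce qui achève la \recu. L'obstacle principal est exactement la finitude de $N$: sans elle l'étape de Nakayama échoue, et c'est pourquoi les deux hypothèses sont indispensables — l'hypothèse radicale alimente Nakayama, tandis que la cohérence et la présentation finie garantissent que $N$ est \tf. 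Le reste n'est que gestion du premier lemme d'échange.
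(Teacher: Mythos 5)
Votre démonstration est correcte et suit pour l'essentiel la même voie que celle du texte: réduction par le premier lemme d'échange à la régularité de $b_k$ modulo $b_1E+\cdots+b_{k-2}E$, puis l'argument central identique, à savoir l'égalité $(0:b_k)=b_{k-1}(0:b_k)$ obtenue à partir des deux régularités disponibles, la finitude de ce noyau fournie par la cohérence, et la conclusion par Nakayama grâce à $b_{k-1}\in\Rad(\gA)$. La seule différence est cosmétique: le texte passe au quotient $\gA/\fa_{k-2}$ pour se ramener au cas $k=2$, alors que vous travaillez directement avec le module quotient $F$ sur $\gA$, ce qui revient au même.
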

%
\begin{proof}
D'après le premier lemme d'échange il suffit de montrer que \hbox{pour $k>1$,
$b_k$}  est \ndz
modulo le module 
 $$
   E_{k-2}= \som_{j: j\leq  k-2}b_jE.
 $$ 
Or en passant au quotient
par $\fa_{k-2}=\gen{(b_i)_{i\leq  k-2}}$ on a de nouveau un anneau  \coh avec les $b_j$
dans $\Rad(\gA\sur{\fa_{k-2}})$. Il suffit donc de faire la \dem pour $k=2$, \cad de voir que $b_2$
est \Erg. 

\noindent Supposons $b_2x=0$. Puisque $b_2$ est \Erg modulo $b_1$ on a $x=b_1y$
pour un certain $y$. On a $b_2b_1y=0$, donc $b_2y=0$. On vient de montrer
\hbox{que $(0:b_2)_{E}=b_1\,(0:b_2)_{E}$}. Comme $b_1\in\Rad(\gA)$ et $(0:b_2)_{E}$ est \tf,
on conclut par le lemme de Nakayama \ref{lemSimplifidele} que $(0:b_2)_{E}=\gen{0}$.
\end{proof}

\hum{La \dem fonctionne à l'identique dans un anneau gradué, en remplaçant le Nakayama pour les \mtfs par le Nakayama homogène, pour une suite d'\elts de degrés $\geq 1$. Il faudra incorporer cela dans le texte, après avoir brièvement introduit les anneaux et modules gradués.}

\rems 1) Considérons une \srg $(\an)$ dans un \alo \coh \dcd. Alors si $1\notin\gen{\an}$ le lemme d'échange s'applique. Mais naturellement  la \srg $(1,0)$ ne satisfait pas le lemme d'échange, à moins que l'anneau soit trivial.

\smallskip \noindent 2)  Le corolaire \ref{cor0lemEchSeqReg} est le plus souvent énoncé dans la littérature classique en suppossant que l'anneau $\gA$ est \noe. En \clama, tout anneau \noe est \coh, mais
ce n'est pas vrai en \gnl d'un point de vue \cof. La version \cov du \tho classique serait donc de demander que l'anneau soit \coh \noe. 
En fait, on voit que la \noet n'a rien à voir dans l'affaire. 
Dans l'exemple qui suit par contre,
on voit que l'hypothèse de \cohc est absolument \ncr.  
\eoe

\begin{example} \label{exacor0lemEchSeqReg} 
{\rm  Cet exemple a été construit avec l'aide d'un logiciel de calcul formel\footnote{Voir l'article \cite{CT2018}. En partant de l'anneau $\aqo{\gk[a,b,z_0]}{b_0}$ dans lequel $b$ divise $0$, on a cherché à forcer la suite $(a,b)$ à être régulière. Le logiciel nous indique d'abord que $b_0\in\gen{a}$ sans que $z_0\in\gen{a}$. Nous devons donc ajouter $z_1$ avec $az_1=z_0$ pour que $b$ soit régulier modulo $a$. Puis le logiciel nous indique  que $ab_1=0$ sans que $b_1=0$. D'où la nécessité d'ajouter $b_1=0$ pour que $a$ soit régulier. Et ainsi de suite. En suivant ce que nous dit de faire le logiciel pour construire le contre-exemple, nous pouvons aussi construire la \dem du corolaire sans avoir à faire aucun effort d'imagination. Inversement, on aurait pu construire le contre-exemple en regardant de près la \dem du corolaire.}. On considère un \cdi $\gk$ et l'anneau 
$$\preskip.4em \postskip.4em 
\gA=\aqo{\gk[a,b,z_0,z_1,\dots]}{b_0,z_0-az_1,b_1,z_1-az_2,b_2,\dots} 
$$ 
avec des \idtrs $a,b,(z_n)_{n\in\NN}$. On vérifie que l'\id $\fm=\gen{a,b}$ est maximal (le quotient est isomorphe à $\gk$). En posant $\gB=\gA_{1+\fm}$ on constate que $(a,b)$ une \srg dans $\Rad\gB$. Cependant $b$ n'est pas \ndz car $z_0$ est non nul dans $\gB$. Ainsi $\gB$ est \ncrt non \coh, et l'on  constate d'ailleurs que $\Ann_\gB(b)=\gen{(z_n)_{n\in\NN}}$ n'est pas \tf. 
A fortiori, $\gB$ n'est pas \noe: la suite des \ids $\fa_k=\gen{(z_k)_{k\leq n}}$ est strictement croissante.
\\
Si l'on note $\gC=\aqo{\gk[a,b,z]}{b}$ et $\varphi:\gC\to\gC$ le morphisme d'anneaux qui envoie $(a,b,z)$ sur $(a,b,az)$ on voit que $\gA$ est la limite inductive du diagramme $\big((\gC_n)_{n\in\NN},(\varphi:\gC_n\to\gC_{n+1})_{n\in\NN}\big)$ avec tous les $\gC_n$ égaux à $\gC$. 
\\
Notons que cet exemple \gui{purement \agq} est plus simple que celui habituellement cité  \cite[Dieudonné, 1966]{Die1966}. \eoe
}\end{example}

\begin{corollary} \label{corlemEchSeqReg} 
\emph{(Troisième lemme d'échange)}\\
Soit $\fa$ un \itf et $(b_1,\dots,b_m)$ une \sErg contenue
dans~$\fa$. Soit $f\in\AuX$ un \pol (en une ou plusieurs variables) de contenu  égal à $\fa$. \\
Alors on obtient une \sErg dans l'\id~$\fa[\uX]$ de~$\AuX$  en insérant~$f$ à n'importe quelle position dans la suite $(b_1,\dots,b_{m-1})$. 
\end{corollary}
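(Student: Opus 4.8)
Le plan est de tout ramener, via le changement de base plat $\gA\to\AuX$ et le lemme de McCoy (proposition \ref{lemMcCoy}), au simple constat que $f$ se comporte comme un \elt de $\fa$. Plus précisément, je commencerais par établir le \emph{moteur} de la \dem: pour chaque $p\in\lrb{1..m}$, le \pol $f$ est \ndz dans $E[\uX]$ modulo le sous-module $\gen{b_1,\dots,b_{p-1}}E[\uX]$. En effet, en posant $E_{p-1}=E/\gen{b_1,\dots,b_{p-1}}E$, l'hypothèse $\rc(f)=\fa$ jointe à $b_p\in\fa$ et au fait que $b_p$ est \ndz modulo $\gen{b_1,\dots,b_{p-1}}$ (puisque $(b_1,\dots,b_m)$ est \Erge) montre que $\fa$ est $E_{p-1}$-régulier; le lemme de McCoy appliqué au module $E_{p-1}$ fournit alors la régularité voulue de $f$.

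Ensuite je traiterais le \textbf{cas de base}, à savoir l'insertion de $f$ en dernière position: la suite $(b_1,\dots,b_{m-1},f)$ est \Erge dans $\AuX$. Son segment initial $(b_1,\dots,b_{m-1})$ est \Erge sur $E$, donc \Erge sur $E[\uX]$ par la proposition \ref{propChgBasSrg} appliquée à l'extension plate $\gA\to\AuX$; et le moteur avec $p=m$ donne que $f$ est \ndz modulo $\gen{b_1,\dots,b_{m-1}}$. Ces deux faits constituent exactement la \dfn de \gui{$(b_1,\dots,b_{m-1},f)$ est \Erge}.

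Enfin, je déplacerais $f$ vers la gauche jusqu'à une position arbitraire par applications répétées du premier lemme d'échange (lemme \ref{lemEchSeqReg}, appliqué dans $\AuX$ au module $E[\uX]$). Pour placer $f$ juste avant $b_k$ (avec $1\le k\le m-1$), je pars de $(b_1,\dots,b_{m-1},f)$ et je fais passer $f$ successivement devant $b_{m-1},b_{m-2},\dots,b_k$. Au moment où $f$ franchit $b_p$, la suite est de la forme $(b_1,\dots,b_{p-1},b_p,f,\dots)$ et le lemme d'échange ne réclame que la régularité de $f$ modulo $\gen{b_1,\dots,b_{p-1}}$, c'est-à-dire précisément le moteur pour cet indice $p$. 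Chaque échange préserve donc la régularité, et après le dernier on obtient la suite souhaitée $(b_1,\dots,b_{k-1},f,b_k,\dots,b_{m-1})$. L'insertion en dernière position étant le cas de base, toutes les positions sont couvertes.

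La difficulté principale — et le point que je soulignerais — est d'éviter toute vérification directe que les $b_i$ restent \ndz modulo un \id contenant désormais $f$: de telles conditions paraissent inextricables. C'est le lemme d'échange qui contourne l'obstacle, en ramenant chaque étape à une condition portant sur le seul $f$, que le lemme de McCoy traduit en une \prt de l'\id $\fa=\rc(f)$, où la régularité déjà acquise des $b_p$ fait tout le travail. Les seuls points à vérifier soigneusement sont la gestion des indices dans la suite d'échanges et la légitimité de l'application du lemme de McCoy aux modules quotients $E_{p-1}$ (légitime, car $\rc(f)=\fa$ se calcule sur $\gA$ et ne change pas par passage à un module quotient).
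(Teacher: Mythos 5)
Votre démonstration est correcte et suit pour l'essentiel la même voie que celle du texte: on établit d'abord que $(b_1,\dots,b_{m-1},f)$ est une suite $E$-régulière grâce au lemme de McCoy (l'idéal $\fa$ étant régulier sur $E/\gen{b_1,\dots,b_{m-1}}E$ puisqu'il contient $b_m$, lui-même régulier modulo les termes précédents), puis on déplace $f$ progressivement vers la gauche par le premier lemme d'échange. Votre \emph{moteur} (la régularité de $f$ modulo $\gen{b_1,\dots,b_{p-1}}$ pour chaque $p$) est exactement la remarque finale de la démonstration du texte, qui observe que le même argument de McCoy, appliqué avec $b_p$ au lieu de $b_m$, fournit l'hypothèse du lemme d'échange à chaque étape.
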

%
\begin{proof}
On commence par montrer que la suite $(b_1,\dots,b_{m-1},f)$ est \ndze.
\\ Posons $E'=E/\gen{b_1,\dots,b_{m-1}}E$.
L'\id $\fa$ contient l'\elt $b_m$ qui est~\hbox{$E'$-\ndz}, donc $\fa$ est  $E'$-\ndz. Donc, par le lemme de McCoy~\ref{lemMcCoy}, le \pol $f$ est un \elt $E'$-\ndz de $\AuX$.

\noindent Ensuite on peut déplacer $f$ progressivement de droite à gauche dans la suite précédente
pour l'amener à la position voulue, ceci grâce au lemme d'échange \ref{lemEchSeqReg}. En effet l'argument que l'on vient de donner pour montrer que~$f$ est \Erg 
modulo $\gen{b_1,\dots,b_{m-1}}$ montre aussi que $f$ est \Erg 
modulo $\gen{b_1,\dots,b_{k}}$ pour tout $k<m$, et donc l'hypothèse du 
lemme~\ref{lemEchSeqReg} est satisfaite. 
\end{proof}

De là on déduit le \tho donnant une \sErg générique.

\begin{definition} \label{defiSGNQ} \emph{(Suite de Kronecker)}
\\
Soit $\fa$ un \itf de $\gA$.
\begin{itemize}
\itbu Un  \pol  dont l'idéal contenu est $\fa=\rc_\gA(f)$ est appelé un \emph{\pKr attaché à~$\fa$}.%
\index{polynome de Kro@\pKr!attaché à l'\id $\fa$}
\itbu Si des \pols $f_1$, \dots, $f_m\in\AuX$, sont tous de contenu~$\fa$, et si chacun porte sur un jeu de variables distinct des autres, on dit que la suite $(f_1,\dots,f_m)$ est  une \emph{\sKr de longueur $m$ associée à l'\itf $\fa$}.
%
%
\end{itemize}
\end{definition}

\begin{theorem} \label{thSRGNQ} \emph{(Suite de Kronecker \Erge)}
\\
Soit $\fa$ un \itf de $\gA$. 
\begin{enumerate}
\item Supposons que $\fa$  contient une \sErg de longueur $m$, éventuellement après une extension \fpte.\\
Alors toute {\sKr de longueur $m$ associée à $\fa$} est \Erge (après extension des scalaires à~$\AuX$).
\item Pour deux \sKrs de longueur $m$ associées à l'idéal $\fa$, l'une est \Erge \ssi l'autre est \Erge. 
\end{enumerate}
\end{theorem}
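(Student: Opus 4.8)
Le plan est de démontrer le point~1 par \recu en s'appuyant sur le troisième lemme d'échange (corolaire~\ref{corlemEchSeqReg}), puis de déduire le point~2 du point~1 en agrandissant l'anneau de \pols et en descendant le long d'une extension \fpte. Je commencerais par le point~1 dans le cas où $\fa$ contient déjà, sur $\gA$, une \sErg $(b_1,\dots,b_m)$ de longueur $m$, en remettant à plus tard l'extension \fpte. Fixons une \sKr $(f_1,\dots,f_m)$ associée à $\fa$, où $f_i$ porte sur le bloc de variables $\uX_i$, les blocs étant deux à deux disjoints. Je montrerais alors par \recu sur $k\in\lrb{0..m}$ que la suite
\[
(f_1,\dots,f_k,b_1,\dots,b_{m-k})
\]
est \Erge dans l'\id $\fa[\uX_1,\dots,\uX_k]$ de l'anneau $\gA[\uX_1,\dots,\uX_k]$. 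Le cas $k=0$ est l'hypothèse. Pour l'hérédité, j'appliquerais le corolaire~\ref{corlemEchSeqReg} avec l'anneau de base $\gA[\uX_1,\dots,\uX_k]$ et l'\itf $\fa[\uX_1,\dots,\uX_k]$: le \pol $f_{k+1}$, de contenu $\fa$ sur $\gA$, est de contenu $\fa[\uX_1,\dots,\uX_k]$ sur ce plus grand anneau, c'est donc un \pKr attaché à l'\id étendu; en supprimant le dernier terme $b_{m-k}$ et en insérant $f_{k+1}$ en position $k+1$ de la suite courante, on obtient la suite voulue au rang $k+1$. Au rang $k=m$ cela donne que $(f_1,\dots,f_m)$ est \Erge sur $\gA[\uX]$, qui est la conclusion cherchée.

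Pour lever la restriction, je traiterais l'extension \fpte $\rho:\gA\to\gB$ par changement de base. Les images $\rho(f_i)$ sont des \pols de contenu $\fa\gB$ portant sur des blocs de variables disjoints, elles forment donc une \sKr de longueur $m$ associée à $\fa\gB$; comme $\fa\gB$ contient la suite $\rho\ist(E)$-\ndze $(b_1,\dots,b_m)$, le cas déjà établi (appliqué sur $\gB$) montre que $(\rho(f_1),\dots,\rho(f_m))$ est $\rho\ist(E)[\uX]$-\ndze sur $\gB[\uX]$. Comme $\gB[\uX]$ est \fpte sur $\gA[\uX]$ et que $\rho\ist(E)[\uX]$ est le changement de base de $E[\uX]$, le point~2 de la proposition~\ref{propChgBasSrg} permet de redescendre et de conclure que $(f_1,\dots,f_m)$ est \Erge sur $\gA[\uX]$.

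Enfin, le point~2 résulte du point~1 par un argument symétrique. Soient $(f_1,\dots,f_m)$ et $(g_1,\dots,g_m)$ deux \sKrs associées à $\fa$, portant sur des blocs de variables disjoints $\uX$ et $\uY$. Si la première est \Erge, \cad $E[\uX]$-\ndze, alors $\fa[\uX]$ contient une suite $E[\uX]$-\ndze de longueur $m$; le point~1 appliqué sur l'anneau $\gA[\uX]$ (avec le module $E[\uX]$ et l'\itf $\fa[\uX]$, dont $(g_1,\dots,g_m)$ est une \sKr) montre alors que $(g_1,\dots,g_m)$ est $E[\uX,\uY]$-\ndze. Puisque $\gA[\uX,\uY]$ est \fpte sur $\gA[\uY]$, le point~2 de la proposition~\ref{propChgBasSrg} donne que $(g_1,\dots,g_m)$ est $E[\uY]$-\ndze, \cad \Erge. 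En échangeant les rôles de $f$ et de $g$, on obtient l'équivalence voulue.

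Le principal obstacle me semble relever du contrôle des indices plutôt que d'une difficulté conceptuelle: il faut vérifier à chaque étape de la \recu que le \pKr inséré a bien pour contenu l'\id \emph{étendu} et que la suite qui s'allonge reste dans cet \id, de sorte que les hypothèses du corolaire~\ref{corlemEchSeqReg} soient effectivement satisfaites dans le plus grand anneau de \pols; il faut aussi veiller à ce que \gui{\Erge} soit toujours compris relativement au bon anneau de base et au bon module lorsqu'on passe entre $\gA$, $\gB$ et les diverses extensions \polles.
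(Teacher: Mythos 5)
Votre preuve est correcte et suit essentiellement la même voie que celle du texte : le point 1 s'obtient par $m$ applications du corolaire \ref{corlemEchSeqReg} (votre récurrence explicite, avec la suite hybride $(f_1,\dots,f_k,b_1,\dots,b_{m-k})$, n'est que la mise en forme de ces applications successives), le cas d'une extension fidèlement plate se traite par descente via le point 2 de la proposition \ref{propChgBasSrg}, et le point 2 se déduit du point 1 exactement comme vous le faites. Votre rédaction est simplement plus détaillée que celle du texte, qui se borne à indiquer ces ingrédients.
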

%
\begin{proof}
\emph{1.}
Il suffit d'appliquer $m$ fois le corolaire \ref{corlemEchSeqReg},
en tenant éventuellement compte du  point~\emph{2} 
de la proposition~\ref{propChgBasSrg}, concernant les extensions \fptes.

\snii
\emph{2.} Cela résulte du point \emph{1.}
\end{proof}
%


\goodbreak
\section[Profondeur, \prts fondamentales
]{La profondeur d'un module relativement à un \itf}
\label{secProfondeur}

Dans cette section nous introduisons la notion de profondeur d'un \Amo~$E$
relativement à un \itf $\fa$ et nous démontrons quelques \prts fondamentales reliées à cette notion.

\subsec{Ce que l'on veut réaliser}

Une idée de base est que si l'\id $\fa$ contient une \sErg de longueur $k$
alors la profondeur de $E$ relativement à $\fa$ est supérieure 
ou égale à $k$, ce que l'on 
notera $\Gr_\gA(\fa,E)\geq k$.
Par exemple si un \itf~$\fa$ contient un \elt \Erg, on a $\Gr_\gA(\fa,E)\geq 1$.

La notation $\Gr_\gA(\fa,E)$ est celle de Northcott, qui dit \gui{True Grade} à la place de profondeur (depth). 
\sibook {Plus loin, nous utiliserons
la notation~\hbox{$\Prf_\gA(\fa,E)$} pour la profondeur définie de manière homologique, avant de démontrer que les deux notions coïncident.} 

Une autre idée de base est que la notion de profondeur doit être
invariante par \eds \fpte, et en particulier par extension
à un anneau de \pols, ce qui donne l'\eqvc:

\snic{\Gr_\gA(\fa,E)\geq k \quad \hbox{ \ssi } \quad \Gr_\AuX(\fa[\uX],E[\uX])\geq k}

\snii
Comme un \id \Erg $\fa=\gen{\an}$ contient, après
\eds à l'anneau de \pols $\AT$, l'\elt \Erg $f=\sum_{k=0}^{n-1}a_{k+1}T^k$ (lemme de McCoy), on adopte la \dfn que 

\snic{
\Gr_\gA(\fa,E)\geq 1 \quad \hbox{ \ssi } \quad \fa  \hbox{ est \Erg,}
}

\snii
car cela équivaut au fait qu'après une \eds à un anneau de \pols,
l'\id $\fa$ contient un \elt \Erg.

Enfin une troisième idée de base est d'avoir une \dfn récursive de la profondeur,
comme pour les \srgs. Plus \prmt si $b$ est un \elt \Erg de $\fa$, alors
pour tout entier $k$, on doit avoir l'\eqvc

\snic{\Gr_\gA(\fa,E)\geq k+1 \;\Longleftrightarrow\; \Gr_\gA(\fa,E/bE)\geq k
}

\hum{rajouter ? $k\geq 1$ et: $\exists b$ $\Leftrightarrow$ $\forall b$}

\smallskip On remarque ici que si une \dfn de $\Gr_\gA(\fa,E)\geq k$ est possible
selon les exigences précédentes (pour les entiers $k\geq 1$), alors elle est 
sans ambiguité. Autrement dit les exigences en question sont suffisamment contraignantes. 
En effet, vu le \thref{thSRGNQ} et les contraintes posées, la profondeur 
de $E$ relativement à $\fa$ doit être $\geq k$ \ssi une \sKr de longueur $k$ attachée à l'\id est \Erge.

Le \pb est donc de savoir si une \dfn obéissant aux contraintes
précédentes est possible.
\sibook{Pour qu'elle soit possible il faut 
notamment que pour deux \elts \Ergs $b$ et $b'$
de $\fa$, l'\id $\fa$ soit $E/bE$-\ndz \ssi il est $E/b'E$-\ndz.
C'est l'objet de l'exercice \ref{exoregreg}.} 

\subsec{Définition de la profondeur à la Hochster-Northcott}

La \dfn suivante, motivée par la discussion précédente,  est rendue possible par le \thref{thSRGNQ}.

\begin{definota} \label{defiProfNor}\label{defiProfMon} Soit  $E$ un \Amo.
\begin{enumerate}
\item \emph{(Profondeur d'un module relativement à un \id)}\\ 
Soit  $\fa$ un \itf de $\gA$, $E$ un \Amo et $k\geq 1$.
\begin{itemize}
\itbu  On dit que l'\id \emph{$\fa$ est $k$ fois \Erg} si une \sKr de longueur $k$
attachée à $\fa$ est \Erge (cela ne dépend pas de la \sKr choisie).
\itbu  On dit aussi\footnote{Eisenbud parle de la \prof de $\fa$ sur $E$, et Matsumura de la $\fa$-\prof de~$E$. La terminologie adoptée ici est celle de Bourbaki, mais on la transgresse un peu plus loin.} que \emph{la profondeur de~$E$
relativement à $\fa$ est supérieure ou égale à $k$}.
\itbu  On écrit alors \fbox{$\Gr_\gA(\fa,E)\geq k$}.
\itbu  On dit (abusivement) que \emph{l'\id $\fa$ est de \prof $\geq k$}, pour signifier \emph{que $\Gr_\gA(\fa,\gA)\geq k$}.
\itbu  Pour $k=0$ on dit que $\Gr_\gA(\fa,E)\geq 0$ est toujours vrai.
\itbu  On note $\Gr_\gA(\fa)$ \hbox{pour $\Gr_\gA(\fa,\gA)$}.
\end{itemize}

\item \emph{(\'Eléments $k$ fois \cor)}
\begin{itemize}
\itbu  On note $\Gr_\gA(\an,E)$ pour  $\Gr_\gA\big(\!\gen\an\!,E\big)$.
\itbu On dit que des \elts $s_1$, \dots, $s_n$ sont   \emph{$k$ fois \cor}
(ou, forment un \sys $k$ fois \ndz) lorsque $\Gr_\gA(\sn)\geq k$.%
\index{coreguliers@\cor!elem@\elts $k$ fois ---}
\hum{pas très joli, mais comment éviter les phrases à rallonge?}
\end{itemize}

\item \emph{(Système de \mos $k$ fois~\hbox{\Erg})} 
\begin{itemize}
\itbu Un \sys~$(S_1, \dots, S_n)=(\uS)$ de \mos est dit  \emph{$k$ fois \Erg} si pour tous~$s_i\in S_i$ $i\in \lrbn$, on a~$\Gr_\gA(s_1,\dots,s_n,E)\geq k$.
\itbu  Si $E=\gA$, on dit simplement que le \sys~$(\uS)$ est \emph{$k$ fois \ndz}.
\itbu  Pour $k=1$, on dira que \sys $(\uS)$ est \Erg ou encore 
que les~$S_i$ sont des \mos  $E$-\cor
\itbu  Pour $k=\infty$ (voir le \thref{lemProfInfinie}),
on retrouve les \moco si $E=\gA$, et l'on dit que les \mos sont  $E$-\com lorsque \hbox{$\gen{s_1,\dots,s_n}E=E$} pour tous $s_i\in S_i$ $i\in \lrbn$.
\end{itemize}
\end{enumerate}
\index{E-reg@\Erg!ideal k@\id $k$ fois ---}%
\index{profondeur!d'un module relativement à un \itf}%
\index{profondeur!d'un \itf}%
\end{definota}

Concernant la notation $\Gr_\gA(\fa,E)=\infty$ voir le \thref{lemProfInfinie}. 

Notons que si $E\neq 0$, on a $\Gr_\gA(0,E)=0$.

\smallskip Du point de vue \cof, la phrase \gui{$\Gr_\gA(\fa,E)\geq k$} est bien définie, mais la profondeur n'est pas
toujours un entier bien défini\footnote{Une discussion analogue a eu lieu  dans \Cref{section XIII-2} concernant la dimension de Krull.}. En conséquence, une in\egt \fbox{$\Gr_\gB(\fb,F)\geq \Gr_\gA(\fa,E)$} est en fait une abréviation pour l'implication suivante:
$$\fbox{$\forall m\in\NN\quad \Gr_\gA(\fa,E)\geq m\;\Longrightarrow\; 
\Gr_\gB(\fb,F)\geq m.$}
$$


\begin{fact} \label{lemSdirEtProf} 
Soient $E$ et $F$ deux \Amos et $\fa$ un \itf. \\
Alors $\Gr_\gA(\fa,E)\geq k$ et 
 $\Gr_\gA(\fb,F)\geq k$ \ssi  $\Gr_\gA(\fa,E\times F)\geq k$. 
 En abrégé on  écrit 
 
 \snic{\fbox{$\Gr_\gA(\fa,E\times F)= \inf(\Gr_\gA(\fa,E),\Gr_\gA(\fa,F))$}.}
 
En particulier, \hbox{on a $\Gr_\gA(\fa)=\Gr_\gA(\fa,\Ae{n})$} si $n\geq 1$. 
\end{fact}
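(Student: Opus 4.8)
The plan is to reduce everything to the product lemma \ref{lemRegProdMod} via the definition of depth. First I would dispose of the case $k=0$: since $\Gr_\gA(\fa,-)\geq 0$ holds for every module by convention, both sides of the equivalence are trivially true there, so I may assume $k\geq 1$.

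Fix once and for all a \sKr $(f_1,\dots,f_k)$ of length $k$ attached to $\fa$, living in $\AuX$ for a suitable finite set of variables $\uX$. By the very \dfn \ref{defiProfNor} of the predicate $\Gr_\gA(\fa,-)\geq k$ (which is legitimate precisely because, by \thref{thSRGNQ}, it does not depend on the chosen \sKr), each of the three inequalities appearing in the statement is literally a regularity assertion about this single sequence:
\begin{itemize}
\item $\Gr_\gA(\fa,E)\geq k$ means that $(f_1,\dots,f_k)$ is $E[\uX]$-\ndze;
\item $\Gr_\gA(\fa,F)\geq k$ means that $(f_1,\dots,f_k)$ is $F[\uX]$-\ndze;
\item $\Gr_\gA(\fa,E\times F)\geq k$ means that $(f_1,\dots,f_k)$ is $(E\times F)[\uX]$-\ndze;
\end{itemize}
where in each case regularity is taken over the ring $\AuX$, according to the abuse of language fixed just before the exchange lemmas.

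The key remaining point is the canonical \iso of $\AuX$-modules $(E\times F)[\uX]\simeq E[\uX]\times F[\uX]$, i.e.\ the fact that extension of scalars commutes with finite products. Applying lemma \ref{lemRegProdMod} over the base ring $\AuX$, to the modules $E[\uX]$ and $F[\uX]$ and the sequence $(f_1,\dots,f_k)$, yields that this sequence is $(E[\uX]\times F[\uX])$-\ndze \ssi it is both $E[\uX]$-\ndze and $F[\uX]$-\ndze. Combined with the three translations above, this is exactly the claimed equivalence. The abbreviated formula with the infimum is then nothing but this equivalence read for every integer $m$ in place of $k$, which is the correct interpretation since, from the \cof point of view, $\Gr_\gA(\fa,-)$ need not be a well-defined integer.

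For the particular case $\Gr_\gA(\fa)=\Gr_\gA(\fa,\Ae{n})$ with $n\geq 1$, I would write $\Ae{n}=\gA\times\Ae{n-1}$ and induct on $n$: the main equivalence gives that $\Gr_\gA(\fa,\Ae{n})\geq k$ \ssi both $\Gr_\gA(\fa,\gA)\geq k$ and $\Gr_\gA(\fa,\Ae{n-1})\geq k$, and the inductive hypothesis collapses the right-hand side to $\Gr_\gA(\fa,\gA)\geq k$. Since this holds for every $k$, the two generalized grades coincide. I do not expect a genuine obstacle here; the only thing to watch is the bookkeeping, namely that the \emph{same} \sKr be used for all three modules — which is exactly what \thref{thSRGNQ} guarantees — and that the product isomorphism survive the polynomial extension.
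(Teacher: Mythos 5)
Votre démonstration est correcte et suit exactement la même voie que celle du texte, qui se réduit à la phrase \og Résulte du lemme \ref{lemRegProdMod} \fg: vous ne faites qu'expliciter les détails implicites (traduction via une \sKr fixée grâce au \thref{thSRGNQ}, commutation de l'extension $\bullet[\uX]$ avec les produits finis, et récurrence sur $n$ pour le cas $\Ae{n}$). Rien à redire.
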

%
%
\begin{proof}
Résulte du lemme \ref{lemRegProdMod}.
\end{proof}

Pour un \id arbitraire $\fb$, on pourrait définir: $\Gr_\gA(\fb,E)\geq k$ \ssiz$\fb$ contient un \itf $\fa$ tel que $\Gr_\gA(\fa,E)\geq k$. Mais il y aura alors un \pb avec le cas de la profondeur infinie évoquée dans le \thref{lemProfInfinie}.
Cette \gnn, que nous n'utiliserons pas, serait justifiée par le
point \emph{2} dans le \tho suivant.

\begin{theorem} \label{factdefProf}
Soient  $\fa$, $\fa'$ deux \itfs et un module $E$.
\begin{enumerate}
\item On a $\Gr_\gA(\fa,E)\geq m$ \ssi après éventuellement une extension \fpte,
l'\id $\fa$ contient une \sErg de longueur $m$.
\item Si $\fa\subseteq \fa'$, alors $\Gr_\gA(\fa,E)\leq \Gr_\gA(\fa',E)$.
\end{enumerate}    
\end{theorem}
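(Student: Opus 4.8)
The plan is to read Part 1 as essentially a restatement of the definition of $\Gr_\gA(\fa,E)\geq m$ filtered through Theorem \ref{thSRGNQ}, and then to obtain Part 2 formally from Part 1 together with the convention (recalled with Definition/Notation \ref{defiProfNor}) that an inequality between values of $\Gr$ abbreviates an implication over all $m$.

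For the direct sense of Part 1, suppose $\Gr_\gA(\fa,E)\geq m$. By definition this says that a Kronecker sequence $(f_1,\dots,f_m)$ of length $m$ attached to $\fa$ is $E$-regular, understood as $E[\uX]$-regular inside $\AuX$. Each $f_i$ has content exactly $\fa$, so its coefficients lie in $\fa$ and hence $f_i\in\fa\cdot\AuX=\fa[\uX]$. Since $\AuX$ is a polynomial extension of $\gA$ it is faithfully flat, so we have produced a faithfully flat extension in which the extended ideal $\fa[\uX]$ contains an $E$-regular sequence of length $m$; this is exactly the right-hand side. For the converse sense, assume that after some faithfully flat extension $\rho: \gA\to\gB$ the ideal $\fa\gB$ contains a $\rho\ist(E)$-regular sequence of length $m$. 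This is precisely the hypothesis of part 1 of Theorem \ref{thSRGNQ}, whose conclusion is that every Kronecker sequence of length $m$ associated to $\fa$ is $E$-regular, i.e. $\Gr_\gA(\fa,E)\geq m$ by definition. So this direction is immediate from Theorem \ref{thSRGNQ}.

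For Part 2 I would unfold the convention: $\Gr_\gA(\fa,E)\leq\Gr_\gA(\fa',E)$ means that for every $m\in\NN$ one has $\Gr_\gA(\fa,E)\geq m\Rightarrow\Gr_\gA(\fa',E)\geq m$. Fix $m$ and assume $\Gr_\gA(\fa,E)\geq m$. By Part 1 there is a faithfully flat extension $\gB$ in which $\fa\gB$ contains an $E$-regular sequence of length $m$. Since $\fa\subseteq\fa'$ we have $\fa\gB\subseteq\fa'\gB$, so the same sequence lies in $\fa'\gB$; applying Part 1 to the finitely generated ideal $\fa'$ gives $\Gr_\gA(\fa',E)\geq m$, as wanted.

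The whole argument is short because Theorem \ref{thSRGNQ} already carries the analytic weight. The one place I would be careful — rather than a genuine obstacle — is the bookkeeping around the flat extensions: in the direct sense of Part 1 one must check that the Kronecker polynomials genuinely lie in $\fa[\uX]$, which is where one uses that the content of each $f_i$ equals $\fa$ (and not merely that it is contained in $\fa$); and in Part 2 one must confirm that passing to the larger ideal preserves containment of the regular sequence after base change, namely $\fa\gB\subseteq\fa'\gB$.
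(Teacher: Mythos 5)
Your proof is correct and follows exactly the route the paper takes: Part 1 is the content of Theorem \ref{thSRGNQ} (point \emph{1}) combined with the definition of $\Gr$ via Kronecker sequences — your check that the Kronecker polynomials lie in $\fa\cdot\AuX$ because their content equals $\fa$, together with the faithful flatness of $\gA\to\AuX$, is precisely what makes the "only if" direction work — and Part 2 is deduced from Part 1 by the containment $\fa\gB\subseteq\fa'\gB$, which is how the paper's "conséquence du point \emph{1}" is meant to be read. The paper's proof is a two-line pointer to the same argument; yours just supplies the bookkeeping it leaves implicit.
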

%
\begin{proof}
\emph{1.} On applique le point \emph{1} du \thref{thSRGNQ}.\\
\emph{2.} Conséquence du point \emph{1.}
\end{proof}

On verra par contre que lorsque $E\subseteq F$ les relations entre $\Gr_\gA(\fa,E)$, $\Gr_\gA(\fa,F)$ et $\Gr_\gA(\fa,F/E)$ sont assez subtiles 
(\thref{thSESPrf}).

\subsec{Profondeur $\geq 2$ et pgcd fort}

Soit $\fa=\gen{\an}$ un \itf. On a $\Gr(\fa,E)\geq 1$ \ssi $\fa$
est \Erg, \ssi $f(\uX)=\sum_ia_iX_i$ est \Erg. Supposant que $\Gr(\fa,E)\geq 1$,
on s'intéresse ensuite à la signification de l'in\egt $\Gr(\fa,E)\geq 2$: on demande  que~$\fa$ soit $E/fE$-\ndz (plus \prmt, que~$\fa$ soit 
$E[\uX]/fE[\uX]$-\ndz), ou ce qui revient au même,
que~$\fa$ soit \Erg après \eds de $\gA$ \hbox{à $\aqo\AuX f$}. 
En voici une \carn \gui{homologique}
simple très utile.

Soient $\bma=(\an)$ et $\bmv=(\vn)$  deux suites de même longueur 
respectivement dans 
$\gA$ et $E$.
Elles sont dites \emph{proportionnelles} si $a_iv_j=a_jv_i$ pour
tous~$(i,j)$. Lorsque $E=\gA$, cela signifie que $\bma\vi \bmv=0$ dans $\Vi^{2}\Ae n$.\index{suites proportionnelles}\index{proportionnelles!suites ---}

Le théorème suivant montre que la notion de profondeur $\geq 2$ ne dépend que de la structure multiplicative de l'anneau (et de celle du module relativement à l'anneau). 

\begin{theorem} \label{lemGRa>1} \emph{(\Carn de la profondeur $\geq 2$)}\\ 
Pour $\bma=(\an)$,
 \propeq
\begin{enumerate}
\item $\Gr(\an,E)\geq 2$. 
%
\item L'\id $\gen{\an}$ est \Erg, et si $\bmv$ est une suite dans $E$ proportionnelle à $\bma$, alors il existe  $v\in E$ tel que $\bmv=
\bma\,v$.
\item Si $\bmv$ est une suite dans $E$ proportionnelle à $\bma$, alors il existe un unique~$v\in E$ tel que $\bmv= \bma\,v$.
\end{enumerate}
\end{theorem}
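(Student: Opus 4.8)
The plan is to read the definition of $\Gr(\an,E)\geq 2$ through a single multivariable Kronecker polynomial and McCoy's lemma, turning it into a nonzerodivisor statement modulo that polynomial, and then to translate the ``proportional sequence'' condition into the same language. Set $\fa=\gen{\an}$, let $f=\sum_i a_iX_i$ be the Kronecker polynomial attached to $\fa$ on fresh variables $\uX$, and put $\bar E=E[\uX]/fE[\uX]$. By Definition \ref{defiProfNor} and Theorem \ref{thSRGNQ}, $\Gr(\fa,E)\geq 2$ says that a length-$2$ Kronecker sequence $(f,\tilde f)$, with $\tilde f$ a second Kronecker polynomial in disjoint variables, is $E$-regular; unwinding this and applying McCoy's lemma \ref{lemMcCoy} twice, it is equivalent to the conjunction ``$\fa$ is $E$-regular'' and ``$\fa$ is $\bar E$-regular''. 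The bridge to proportionality is the identity obtained by setting $g=\sum_i v_iX_i$: if $\bmv$ is proportional to $\bma$ then $a_ig=v_if$ for every $i$, hence $\fa g\subseteq fE[\uX]$, while $g=fv$ with $v\in E$ is exactly the relation $\bmv=\bma\,v$. Throughout, $f$ is $E[\uX]$-regular precisely because $\rc(f)=\fa$ is $E$-regular, again by McCoy.

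For \emph{2}$\Rightarrow$\emph{1}, I would assume $\fa$ is $E$-regular and that the existence property $(\ast)$ — every $\bmv$ proportional to $\bma$ has the form $\bma\,v$ — holds in $E$. First I would lift $(\ast)$ to $E[\uX]$: a sequence proportional to $\bma$ in $E[\uX]$ is, coefficient by coefficient, a family of sequences proportional to $\bma$ in $E$, so $(\ast)$ applied coordinatewise reassembles a solution in $E[\uX]$. Then, to prove $\fa$ is $\bar E$-regular, take $h\in E[\uX]$ with $\fa h\subseteq fE[\uX]$, say $a_ih=fk_i$; multiplying the $i$-th relation by $a_l$, the $l$-th by $a_i$, and cancelling the $E[\uX]$-regular element $f$ gives $a_ik_l=a_lk_i$, so $(k_i)$ is proportional to $\bma$ in $E[\uX]$. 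The lifted $(\ast)$ yields $v'\in E[\uX]$ with $k_i=a_iv'$, whence $\fa(h-fv')=0$ and $h=fv'$ by regularity; thus $h\in fE[\uX]$, the ideal $\fa$ is $\bar E$-regular, and $\Gr(\fa,E)\geq 2$.

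For \emph{1}$\Rightarrow$\emph{2}, $\Gr(\fa,E)\geq 2$ gives $\fa$ $E$-regular and $\fa$ $\bar E$-regular. Given $\bmv$ proportional to $\bma$, I form $g=\sum_i v_iX_i$; the bridge identity gives $\fa g\subseteq fE[\uX]$, so $\bar E$-regularity forces $g\in fE[\uX]$, i.e. $g=fw$ with $w\in E[\uX]$. Comparing degrees and using that $f$ is $E[\uX]$-regular kills the higher-degree part of $w$, leaving $w=v\in E$ and $\bmv=\bma\,v$; this is the existence half of \emph{2}, the regularity half being immediate. Finally \emph{2}$\Leftrightarrow$\emph{3} is the soft part: given existence, uniqueness of $v$ is equivalent to $E$-regularity of $\fa$, since $\bma\,v=\bma\,v'$ means $\fa(v-v')=0$, and conversely applying existence together with uniqueness to the zero sequence (which is proportional to $\bma$) shows that any $x$ with $\fa x=0$ must be $0$.

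The step I expect to be the crux is proving that $\fa$ is $\bar E$-regular, because an element of $\bar E$ is represented by a polynomial of \emph{unbounded} degree rather than by a proportional $n$-tuple in $E$; the device that dissolves this difficulty is exactly the promotion of the existence property $(\ast)$ from $E$ to $E[\uX]$, which lets the proportional tuple $(k_i)$ living in $E[\uX]$ be solved inside $E[\uX]$. Every cancellation used above rests on $f$ being $E[\uX]$-regular, that is, on McCoy's lemma applied to $\rc(f)=\fa$, so that lemma is the real engine of the argument.
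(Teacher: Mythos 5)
Your proof is correct and follows essentially the same route as the paper's: the same reduction of condition \emph{1}, via the linear Kronecker polynomial $f=\sum_i a_iX_i$ and McCoy's lemma, to ``$\fa$ is $E$-regular and $E[\uX]/fE[\uX]$-regular''; the same bridge $a_ig=v_if$ for $g=\sum_i v_iX_i$ in both directions; and the same coefficientwise lifting of the existence property to $E[\uX]$ for \emph{2}$\Rightarrow$\emph{1}. Your added details (the degree argument extracting $v\in E$ from $g=fw$, and the soft equivalence \emph{2}$\Leftrightarrow$\emph{3} via the zero sequence) only make explicit what the paper leaves implicit.
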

%
\begin{proof} Soit $f(\uX)=\sum_ia_iX_i$. La condition \emph{1} signifie
que l'\id $\fa$ est \Erg, et \hbox{que $\fa$} est $E[\uX]/fE[\uX]$-\ndz.

\emph{1} $\Rightarrow$ \emph{2.}
Soient $v_1$, \dots, $v_n\in E$ qui vérifient $a_iv_j=a_jv_i$ pour $i\neq j$. \\
On pose $g=\sum_iv_iX_i$
et on veut montrer que $g\equiv 0 \mod f$. Or $a_ig=fv_i $, donc $a_ig= 0 \mod f$ pour tout $i$, donc
$\fa\, g=0 \mod f$. 
\\
Or $\fa$ est $E[\uX]/fE[\uX]$-\ndz, donc $g$ est multiple de $f$.

\snii \emph{2} $\Rightarrow$ \emph{1.} Montrons que $\fa$ est $E[\uX]\sur{f E[\uX]}$-\ndz. Pour cela soit $g\in E[\uX]$ tel que les $a_ig$ soient nuls dans $E[\uX]\sur{f E[\uX]}$. On peut donc écrire $a_ig=fg_i$ pour des $g_i\in E[\uX]$. On a alors $a_ia_jg=fa_ig_j =fa_jg_i $ dans $E[\uX]$ et puisque
$f$ est \Erg, on obtient $a_ig_j=a_jg_i$. Or la \prt \emph{2} reste manifestement vraie
par passage de $E$ à $E[\uX]$, il existe donc $h\in E[\uX]$ tel que $g_i=a_ih$.
Ceci donne $a_i(g-fh)=0$ \hbox{dans $E[\uX]$}, et comme $\fa$ \hbox{est \Erg},
chaque \coe de $g-fh$ est nul, \cad $g=fh$  et \hbox{donc $g=0$} \hbox{dans $E[\uX]\sur{f E[\uX]}$}. 
\end{proof}
\exl Lorsque $\gA$ est un anneau à pgcd intègre, $\Gr(\fa)\geq 2$
signifie que le pgcd de $\fa$ est égal à $1$. En effet, dans un tel anneau tout pgcd d'\elts non nuls est un pgcd fort, on utilise alors le point \emph{5} du 
lemme~\ref{lemGCDFORT}.
\eoe

\hum{le \tho précédent est une bonne motivation pour le début du complexe de Koszul, il faudra peut-être y songer, par exemple en analysant
à la main la signification de $\Gr(\an,E)\geq 3$}

\begin{propdef} \label{defiGCDFORT}
Soient  $a_1$, \dots, $a_n\in\gA$ \cor, et soit~\hbox{$a\in\gA$}. 
On dit que~$a$ est un \ix{pgcd fort} de $(\an)$, si 
\begin{itemize}
\item $a$ divise les $a_i$, 
\item pour tous $y\in\Reg(\gA)$, $x\in \gA$, si $y$ divise les $xa_i$, alors
$y$ divise $xa$.
\end{itemize}

\smallskip 
Dans ce cas l'\elt $a$
est dans $\Reg(\gA)$ et  $a$ est le pgcd fort de tout \sgr fini de l'\id $\fa=\gen{\an}$. On dit aussi
que \emph{$a$ est un pgcd fort de l'\id $\fa$}.
\end{propdef}

\begin{lemma} \label{lemGCDFORT}
Soit $n\geq 1$, $a_1$, \dots, $a_n\in\gA$ \cor, et $u$, $v$, $x\in\Reg(\gA)$. 
\begin{enumerate}
\item Si $u$ et $v$ sont pgcds forts de $(\an)$, ils sont associés
\hbox{(i.e., $v\in u \Ati\!$)}.
\item Si $u$ est pgcd fort de $(\ua)=(\an)$, $xu$ est pgcd fort de $x(\ua)$.
\item \Propeq
\begin{enumerate}
\item $(u,v)$ admet un pgcd fort.
\item $(u,v)$ admet un ppcm $m$ (i.e. $\gen{u}\cap\gen{v}=\gen{m}$).
\end{enumerate}
Dans ce
cas, l'\elt $g=uv/m$ est un pgcd fort de $(u,v)$.
\item Si $\Gr_\gA(\ua)\geq 2$, alors $1$ est un pgcd fort de $(\an)$.
\item Si l'\id $\gen{\ua}$ contient un \elt \ndz, les deux \prts sont \eqves.
\end{enumerate}
\end{lemma}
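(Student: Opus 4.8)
The plan is to reduce everything to the definition of strong gcd in \ref{defiGCDFORT}: an element $a$ dividing each $a_i$ such that, for every $y\in\Reg(\gA)$ and $x\in\gA$, the hypothesis $y\mid xa_i$ for all $i$ forces $y\mid xa$. I shall freely use that a strong gcd is automatically regular (part of \ref{defiGCDFORT}) and that $\gen{\an}$ faithful with $x$ regular makes $x\gen{\an}=\gen{xa_1,\dots,xa_n}$ faithful again. For item 1, I would apply the strong gcd clause of $u$ with $y=v$ (regular, being a strong gcd) and $x=1$: since $v$ divides each $a_i$, it divides $u$; by symmetry $u\mid v$; and two regular elements dividing one another differ by a unit, giving $v\in u\Ati$. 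For item 2, I would first note $x(\ua)$ is coregular, then check the two clauses for $xu$: $xu\mid xa_i$ is immediate from $u\mid a_i$, and if $y\mid w(xa_i)=(wx)a_i$ for all $i$ then the strong gcd clause of $u$ applied to the scalar $wx$ yields $y\mid(wx)u=w(xu)$.

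The heart of the lemma is item 3; note $(u,v)$ is coregular since $u\in\Reg(\gA)$. For (a)$\Rightarrow$(b), given a strong gcd $g$ (regular, dividing $u$ and $v$), I factor $u=u'g$, $v=v'g$, set $m:=uv/g=u'v=uv'$, and show $\gen{u}\cap\gen{v}=\gen{m}$: the inclusion $\gen{m}\subseteq\gen{u}\cap\gen{v}$ is clear, and given $w=up=vq$ I apply the strong gcd clause with $y=v$, $x=p$ (using $v\mid pu=up=vq$ and $v\mid pv$) to get $v\mid pg$, which after cancelling the regular $u$ in $upg=uvk$ is exactly $m\mid w$. For (b)$\Rightarrow$(a) I set $g:=uv/m$ and factor $u=tg$, $v=sg$, $m=stg$, all factors regular since they divide regular elements; then $g\mid u,v$ is clear.

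The step I expect to be the main obstacle is verifying the strong gcd clause in (b)$\Rightarrow$(a). From $xu=y\alpha$ and $xv=y\beta$ I would cancel the regular $y$ in $y\alpha v=xuv=y\beta u$ to get $\alpha v=\beta u$, recognize this as a common multiple of $u$ and $v$, hence $\alpha v=m\gamma$ for some $\gamma$, and then cancel the regular factors $s,g$ in $\alpha sg=\alpha v=stg\gamma$ to obtain $\alpha=t\gamma$; substituting into $xtg=xu=y\alpha=yt\gamma$ and cancelling the regular $t$ yields $xg=y\gamma$, i.e. $y\mid xg$. The delicate point throughout is that $x$ is arbitrary (not regular), so every cancellation must be routed through the genuinely regular factors $g,s,t,u,v,y$ rather than through $x$.

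Items 4 and 5 I would handle via the homological characterization of depth $\geq2$ (\ref{lemGRa>1}). For item 4, assuming $\Gr_\gA(\ua)\geq 2$ and $y\mid xa_i$ for all $i$, I write $xa_i=yw_i$; the sequence $(w_i)$ is proportional to $\bma$ (from $y a_j w_i=a_j x a_i=y a_i w_j$ and $y$ regular), so \ref{lemGRa>1} produces $w$ with $w_i=a_iw$, whence $a_i(x-yw)=0$ for all $i$ and faithfulness gives $x=yw$. Interpreting item 5 as the converse of item 4 under the hypothesis that $\gen{\an}$ contains a regular $c=\sum_i\lambda_ia_i$: given $\bmv$ proportional to $\bma$ I set $w:=\sum_j\lambda_jv_j$, check $cv_i=a_iw$, deduce $c\mid wa_i$, apply the strong gcd clause of $1$ with $y=c$, $x=w$ to get $w=cv$, and cancel the regular $c$ in $cv_i=a_icv$ to reach $v_i=a_iv$; by \ref{lemGRa>1} this gives $\Gr_\gA(\ua)\geq 2$. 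The regularity of $c$ is exactly what drives this argument, which is why item 5 requires a non-zero-divisor in the ideal.
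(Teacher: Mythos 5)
Your proposal is correct and, for the two items the paper actually proves (items \emph{4} and \emph{5}), it follows essentially the paper's own route: item \emph{4} is the same proportionality argument via le \thref{lemGRa>1}, and your item \emph{5} is the paper's argument merely repackaged (the paper adjoins the regular element $c$ to the list and assumes $a_1$ regular, while you work directly with $c=\sum_i\lambda_ia_i$ and the coordinate $w=\sum_j\lambda_jv_j$ --- the same computation). Items \emph{1}--\emph{3} are left to the reader in the paper, and your verifications of them are sound, with one slip of wording only: in (a)$\Rightarrow$(b) of item \emph{3}, the element to cancel in $upg=uvk$ (i.e.\ $wg=mgk$) is the regular $g$, not $u$.
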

%
\begin{proof}
\emph{1}, \emph{2} et \emph{3.} Laissés \alec.

\emph{4.} Soient $y\in\Reg\,\gA$ et $x\in \gA$ avec $y$ qui divise les $xa_i$, autrement dit 

\snic{y\,(\ub)=y\,(\bn)=x\,(\an)=x\,(\ua)}

\snii
pour des $b_i\in\gA$. On a $yb_ia_j=xa_ia_j$ donc $yb_ia_j=yb_ja_i$, et puisque $y$ est \ndz, 
les vecteurs $(\ub)$ et $(\ua)$
sont proportionnels. \\
Donc il existe $z$ tel que $(\ub)=z\,(\ua)$, d'où 
 $y\,(\ub)=yz\,(\ua)=x\,(\ua)$. Puisque les~$a_i$ sont \cor, cela implique $yz=x$,
 autrement dit $y$ divise $x.1$. En conclusion $1$ est bien  pgcd fort des $a_i$.
 
\emph{5.} 
On suppose que 1 est pgcd fort de $(\ua)$ et que l'\id $\gen{\ua}$ contient un
\elt \ndz. Quitte à ajouter cet \elt \ndz à la liste $(\ua)$, on peut
supposer que $a_1$ est \ndz. Soient donc des $b_i$ tels que $a_ib_j =
a_jb_i$. 
\\
En particulier $a_1b_j = a_jb_1$, donc $a_1$ divise $b_1(\ua)$ et par
suite $a_1$ divise $b_1$, disons $b_1 = qa_1$. On reporte cela dans $a_ib_1 =
a_1b_i$ ce qui donne, après simplification par $a_1$, $b_i = qa_i$. En
définitive, $(\ub) = q\,(\ua)$, ce qu'il fallait démontrer.
\end{proof}
%

\subsec{Quelques \prts fondamentales de la profondeur}

On considère une \alg $\rho:\gA\to\gB$, un \itf $\fa$ de $\gA$ et un \Amo~$E$.
On parle de la \emph{profondeur de $E$  relativement à $\fa$ et $\gB$},
pour signifier la profondeur du \Bmo $\rho\ist(E)$  relativement à l'\id $\rho(\fa)\gB$, et l'on note~\hbox{$\Gr_\gB(\fa,E)$} au lieu de $\Gr_\gB\big(\rho(\fa)\gB,\rho\ist (E)\big)$.

\begin{proposition} \label{propProfchgbase} \emph{(Profondeur et \eds)}
Soit $k\geq 1$.
\begin{enumerate}
\item Si $\gB$ est une \Alg plate et $\Gr_\gA(\fa,E)\geq k$, on a 
$\Gr_\gB(\fa,E)\geq k$. \\
En abrégé on écrit $\Gr_\gB(\fa,E)\geq\Gr_\gA(\fa,E)$.
\item Si $\gB$ est une \Alg  \fpte  on a l'\eqvc

\snic{\Gr_\gA(\fa,E)\geq k \;\iff\; \Gr_\gB(\fa,E)\geq k.}

\snii
En abrégé on écrit $\Gr_\gB(\fa,E)=\Gr_\gA(\fa,E)$.
\end{enumerate}
\end{proposition}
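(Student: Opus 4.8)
Le plan est de ramener entièrement la question aux deux résultats antérieurs qui font tout le travail : la proposition~\ref{propChgBasSrg} (transfert d'une \sErg par une \Alg plate ou \fpte) et le \thref{thSRGNQ} (la régularité d'une \sKr de longueur $k$ attachée à $\fa$ ne dépend pas de la suite choisie, et caractérise $\Gr\geq k$). L'idée directrice est que, de chaque côté, la profondeur $\geq k$ se teste sur \emph{une seule} \sKr concrète, et que l'image par $\rho$ d'une telle suite reste une \sKr attachée à l'\id image.

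Je commencerais par fixer une \sKr $(f_1,\dots,f_k)$ de longueur $k$ attachée à $\fa$, portant sur des jeux de variables disjoints dont $\uX$ désigne la réunion. Comme le contenu d'un \pol est préservé par un \homo d'anneaux, on a $\rc_\gB\big(\rho(f_i)\big)=\rho\big(\rc_\gA(f_i)\big)\gB=\rho(\fa)\gB$, de sorte que $\big(\rho(f_1),\dots,\rho(f_k)\big)$ est une \sKr de longueur $k$ attachée à $\rho(\fa)\gB$ dans $\gB[\uX]$. Par le point~\emph{2} du \thref{thSRGNQ}, tester $\Gr_\gA(\fa,E)\geq k$ (resp. $\Gr_\gB(\fa,E)\geq k$) revient exactement à tester la régularité de cette \sKr particulière sur $E[\uX]$ (resp. sur $E'[\uX]$, où $E'=\rho\ist(E)$).

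J'observerais ensuite que l'extension $\AuX\to\gB[\uX]$ est plate (resp. \fpte) dès que $\gA\to\gB$ l'est, car $\gB[\uX]\simeq\gB\otimes_\gA\AuX$ et la platitude (resp. la fidèle platitude) est stable par \eds; de plus les changements de base commutent, $\gB[\uX]\otimes_{\AuX}E[\uX]\simeq E'[\uX]$. Pour le point~\emph{1}, il suffit alors d'appliquer le point~\emph{1} de la proposition~\ref{propChgBasSrg} à l'extension plate $\AuX\to\gB[\uX]$, au module $E[\uX]$ et à la suite $(f_i)$ : la suite image $\big(\rho(f_i)\big)$ est $E'[\uX]$-\ndze, d'où $\Gr_\gB(\fa,E)\geq k$. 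Pour le point~\emph{2}, le sens direct résulte du point~\emph{1} (\fpte entraîne plate); pour la réciproque, si $\big(\rho(f_i)\big)$ est $E'[\uX]$-\ndze, le point~\emph{2} de la proposition~\ref{propChgBasSrg} appliqué à l'extension \fpte $\AuX\to\gB[\uX]$ redonne que $(f_i)$ est $E[\uX]$-\ndze, c'est-à-dire $\Gr_\gA(\fa,E)\geq k$.

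La seule difficulté sera de nature comptable plutôt que conceptuelle : il faudra vérifier soigneusement que la \sKr image est effectivement attachée à $\rho(\fa)\gB$ et que l'identification $\gB[\uX]\otimes_{\AuX}E[\uX]\simeq E'[\uX]$ est correcte, afin que la proposition~\ref{propChgBasSrg} s'applique au bon module et au bon anneau. Tout le contenu substantiel — le transfert d'une \sErg par platitude ou fidèle platitude, ainsi que l'indépendance vis-à-vis de la \sKr choisie — est déjà acquis dans ces deux énoncés, si bien que la preuve se réduit à leur juxtaposition.
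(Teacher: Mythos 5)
Votre démonstration est correcte et suit essentiellement la même voie que celle du mémoire : on considère une \sKr de longueur $k$ attachée à $\fa$, on note que son image par $\rho$ est une \sKr de longueur $k$ attachée à $\rho(\fa)\gB$, et l'on conclut par la proposition~\ref{propChgBasSrg}. Vous explicitez simplement les détails que le texte laisse implicites — la platitude (resp. fidèle platitude) de $\AuX\to\gB[\uX]$ et l'identification $\gB[\uX]\otimes_{\AuX}E[\uX]\simeq E'[\uX]$ — ce qui ne change pas l'argument.
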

%
\begin{proof} On considère une \sKr de longueur~$k$ attachée à l'\id $\fa$ de~$\gA$. Son image par $\rho$ est une \sKr de longueur~$k$ attachée à l'\id~$\rho(\fa)\gB$ de~$\gB$. 
On conclut par la proposition~\ref{propChgBasSrg}. 
\end{proof}

Les \plgcs relatifs à la \prof sont étudiés plus en détail
dans la section \ref{secRecolle2}.

\begin{lemma} \label{lemFondprof}
Soit $b\in\gA$ un \elt \Erg, et $\fa$ un \itf avec $\Gr_\gA(\fa,E)\geq 2$.
Alors $\fa$ est $E/bE$-\ndz. 
\end{lemma}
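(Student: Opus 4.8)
Le plan est d'exploiter la \carn \gui{homologique} de la profondeur $\geq 2$ fournie par le \thref{lemGRa>1}. Écrivons $\fa=\gen{\an}$ et notons $\bma=(\an)$. L'hypothèse $\Gr_\gA(\fa,E)\geq 2$ nous donne, via ce \tho, deux informations que nous utiliserons toutes les deux: d'une part l'\id $\fa$ est \Erg (c'est déjà le cas dès que la profondeur est $\geq 1$), et d'autre part toute suite $\bmv$ de $E$ proportionnelle à $\bma$ est de la forme $\bmv=\bma\,v$ pour un certain $v\in E$.

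Pour établir que $\fa$ est $E/bE$-\ndz, je partirais d'un \elt $x\in E$ vérifiant $\fa\,x\subseteq bE$, \cad $a_ix\in bE$ pour chaque $i$, le but étant de montrer $x\in bE$. On écrit $a_ix=bw_i$ avec $w_i\in E$. L'étape-clé, et sans doute le seul point réellement à surveiller, consiste à vérifier que la suite $(w_1,\dots,w_n)$ est proportionnelle à $\bma$. En effet, pour tous $i,j$ on a $b\,a_jw_i=a_j(a_ix)=a_i(a_jx)=b\,a_iw_j$, donc $b\,(a_jw_i-a_iw_j)=0$; comme $b$ est \Erg, on en déduit $a_jw_i=a_iw_j$. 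C'est ici, et seulement ici, qu'intervient la régularité de $b$ sur $E$, pour rendre licite la \gui{simplification} par $b$.

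Il reste alors à conclure de manière routinière. La proportionnalité de $(w_1,\dots,w_n)$ avec $\bma$ permet d'appliquer la \prt issue du \thref{lemGRa>1}: il existe $w\in E$ tel que $w_i=a_iw$ pour tout $i$. On reporte dans $a_ix=bw_i$ pour obtenir $a_ix=a_i(bw)$, soit $a_i(x-bw)=0$ pour tout $i$, \cad $\fa\,(x-bw)=0$. Puisque $\fa$ est \Erg, cela force $x-bw=0$, donc $x=bw\in bE$ et finalement $x\equiv 0$ dans $E/bE$. La difficulté principale n'est donc pas dans le calcul mais dans la reformulation: reconnaître que l'hypothèse $a_ix\in bE$ fabrique, après division par $b$ rendue possible par sa régularité, une suite proportionnelle à $\bma$ à laquelle s'applique exactement la \carn de la profondeur $\geq 2$.
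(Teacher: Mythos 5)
Votre démonstration est correcte et suit essentiellement la même voie que celle du texte : même simplification par $b$ (licite parce que $b$ est \Erg) pour obtenir une suite proportionnelle à $(\an)$, même appel au \thref{lemGRa>1} pour produire $w$, même conclusion par la $E$-régularité de $\fa$. La seule différence est d'habillage : le texte commence par reformuler l'énoncé via un \pKr puis invoque le lemme de McCoy~\ref{lemMcCoy} pour se ramener au cas d'un élément constant de $E$, réduction après laquelle son calcul coïncide mot pour mot avec le vôtre, qui part directement de la définition.
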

%
\begin{proof}
Soit $f$ un \pKr attaché à $\fa$,  par exemple $f\in\AX$. 
Par hypothèse, $f$ est un \elt $E[X]$-\ndz. \\
On doit montrer que $f$ est $(E/bE)[X]$-\ndz,
i.e. que si~$g$ \hbox{et $u\in E[X]$} vérifient une \egt $fu=bg$, alors $u\in b E[X]$.
En fait, d'après le lemme de McCoy \ref{lemMcCoy} appliqué avec $E/bE$,
il suffit de traiter le cas où $u\in E$. Puisque~$b$ est \Erg, si $f(X)=\sum_{k=0}^{m}a_kX^{k}$ on peut écrire $g=\sum_{k=0}^{m}u_kX^{k}$ avec les $u_k\in E$. On \hbox{a $a_iu=bu_i$} pour tout $i$. Donc $b\,(a_iu_j-a_ju_i)=0$, \hbox{puis $a_iu_j-a_ju_i=0$}
parce que~$b$ est \Erg. Le \thref{lemGRa>1} donne alors un $v\in E$ tel \hbox{que $u_i=a_i\,v$}
pour tout~$i$. \hbox{Donc $a_i\,u=a_i\,b\,v$}, et puisque~$\fa$ est \Erg, $u=b\,v$.
\end{proof}
%

\begin{example} \label{exalemFondprof} 
{\rm L'hypothèse \gui{$b$ est un \elt \Erg} est indispensable dans le lemme précédent. Considérons en effet une \klg \pf 
$$\preskip-.1em \postskip.4em 
\gA=\gk[a_1,a_2,u_1,u_2,x,b] $$
avec les seules relations
$$\preskip.4em \postskip.4em a_1x=u_1b,\,a_2x=u_2b,\,x^2=0,\,xu_1=0,\,xu_2=0. 
$$
Avec $\gk=\ZZ,\,\QQ$ ou le corps fini $\FF_2$, \llec peut vérifier avec l'aide d'un logiciel de calcul formel que la suite $(a_1,a_2)$ est régulière dans~$\gA$ et que l'\id $\gen{a_1,a_2}$ n'est pas régulier modulo $b$.   
}\end{example}

\begin{theorem} \label{thfondprof1} ~\\
Pour  $k\geq 1$ et $b$  un \elt \Erg de $\gA$, on a  l'implication

\snic{\Gr_\gA(\fa,E)\geq k+1 \;\Longrightarrow\; \Gr_\gA(\fa,E/bE)\geq k.}

\snii En fait, si $(f_1,f_2,\dots,f_{k+1})$ est une \sKr attachée à $\fa$,  on obtient une \sErg en rempla{ç}ant par $b$ n'importe quel terme de la suite $(f_1,f_2,\dots,f_{k+1})$.
\end{theorem}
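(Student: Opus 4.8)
The plan is to prove the more precise assertion at the end (replacing an arbitrary term of the Kronecker sequence by $b$ yields an $E$-regular sequence) by induction on $k$, and to read off the displayed implication as the special case where the first term is replaced. The base case $k=1$ is essentially Lemma \ref{lemFondprof}: from $\Gr_\gA(\fa,E)\geq 2$ and $b$ being $E$-regular it gives that $\fa$ is $(E/bE)$-regular, hence by McCoy (Lemma \ref{lemMcCoy}) any Kronecker polynomial $f$ of content $\fa$ is $(E/bE)$-regular, so $(b,f)$ is $E$-regular; since $f$ is itself $E$-regular (McCoy again, as $\fa$ is $E$-regular), the first exchange lemma \ref{lemEchSeqReg} also makes $(f,b)$ an $E$-regular sequence.

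I would isolate this last point as a \emph{commutation lemma} to be used throughout: if $\Gr_\gA(\fa,E)\geq 2$, $b$ is $E$-regular and $f$ has content $\fa$, then $(b,f)$ and $(f,b)$ are both $E$-regular; in particular $b$ is still $E$-regular modulo $fE$. This is exactly what makes the recursion possible even though $b$ need not lie in $\fa$, so that one cannot simply permute $b$ against the $f_i$ by the exchange lemmas alone.

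For the inductive step ($k\geq 2$) I would first prove the displayed implication, i.e. that $(f_2,\dots,f_{k+1})$ is $(E/bE)$-regular. Its first term $f_2$ is $(E/bE)$-regular by the commutation lemma. For the tail I use two inputs: by the reordering invariance of Kronecker sequences (Theorem \ref{thSRGNQ}) the sequence $(f_2,f_1,f_3,\dots,f_{k+1})$ is again $E$-regular, whence $\Gr_\gA(\fa,E/f_2E)\geq k$; and the commutation lemma gives that $b$ is $(E/f_2E)$-regular. Applying the induction hypothesis to the module $E/f_2E$ then yields $\Gr_\gA(\fa,E/(f_2E+bE))\geq k-1$, which is precisely the $(E/bE)$-regularity of $(f_3,\dots,f_{k+1})$ modulo $f_2$. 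Hence $(f_2,\dots,f_{k+1})$ is $(E/bE)$-regular and $\Gr_\gA(\fa,E/bE)\geq k$; this is the replacement of $f_1$ by $b$.

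Finally, to replace an arbitrary $f_i$ I would argue: since $\Gr_\gA(\fa,E/bE)\geq k$ and $b$ is $E$-regular, the sequence $(b,f_1,\dots,\widehat{f_i},\dots,f_{k+1})$ is $E$-regular, and I then slide $b$ rightward into the $i$-th slot by successive transpositions, each admissible by the exchange lemma \ref{lemEchSeqReg} because $f_p$ is $E$-regular modulo $\langle f_1,\dots,f_{p-1}\rangle E$ (a consequence of the $E$-regularity of the original Kronecker sequence). The main obstacle is the commutation lemma: it is the only place where the hypothesis $\Gr_\gA(\fa,E)\geq 2$, rather than mere $E$-regularity of $b$, is genuinely used, and it is what allows crossing $b$ past a Kronecker polynomial. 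A minor point needing care is the bookkeeping of variables when passing to the quotients $E/f_iE$ and regarding the remaining polynomials as a Kronecker sequence over the enlarged base ring, but this is harmless since profondeur is unchanged by adjoining indeterminates.
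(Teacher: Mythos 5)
Your proof is correct and is essentially the paper's own argument: the same base case (Lemma \ref{lemFondprof} combined with the first exchange lemma \ref{lemEchSeqReg}, which is exactly your \emph{commutation lemma}), followed by an induction on $k$ that passes to the quotient of $E$ by one of the Kronecker polynomials. The only difference is organizational --- the paper inducts on the ``replace any term'' refinement, quotienting by $f_1$, whereas you induct on the displayed implication, quotienting by $f_2$, and then recover the refinement by sliding $b$ into place with the exchange lemma; this makes explicit (via Theorem \ref{thSRGNQ} and the sliding step) details that the paper's terse inductive step leaves implicit.
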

\entrenous{Sans doute si $(b_1,\dots,b_r)$ est une \sErg, avec $r\leq k+1$
on obtient une \sErg en rempla{ç}ant par $b_1$, \dots, $b_r$
n'importe quels termes en ordre croissant dans la \sKr \ndze  $(f_1,\dots,f_{k+1})$

si c'est bien vrai il faudrait en faire un exo.}
\begin{proof} Il suffit de montrer le deuxième point, qui est un peu plus précis. 
\\
On commence avec $k=1$. 
L'hypothèse
est alors que $(f_1,f_2)$ est \Erge et que $b$ est \Erg, et l'on doit montrer 
que les suites $(b,f_2)$ et~$(f_1,b)$ sont \Erges. 
La première est donnée par le lemme \ref{lemFondprof}, qui donne aussi la régularité de $(b,f_1)$. 
Le premier lemme d'échange \ref{lemEchSeqReg}  
(point \emph{1}) dit alors que $(f_1,b)$ est \Erge. 
\\
Enfin on termine par \recu sur $k$, car d'après le cas $k=1$, 
on voit que~$b$ est \ndz sur $E_1=E/f_1E$, et cela permet d'appliquer l'\hdr
en utilisant le module $E_1$ pour lequel la suite $(f_2,\dots,f_{k+1})$ 
est \ndze. 
\end{proof}

\begin{theorem} \label{thfondprof} \emph{(\Tho fondamental de la profondeur)} \\
Pour  $k\geq 1$ et $b$  un \elt \Erg de $\fa$, on a  l'\eqvc

\snic{\Gr_\gA(\fa,E)\geq k+1 \;\iff\; \Gr_\gA(\fa,E/bE)\geq k.}

\end{theorem}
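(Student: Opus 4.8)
Le sens direct ($\Rightarrow$) est déjà établi: c'est le \thref{thfondprof1}, valable dès que $b$ est \Erg, ce qui est ici garanti par hypothèse. Tout le contenu du \tho réside donc dans la réciproque, et c'est précisément là qu'intervient l'hypothèse supplémentaire $b\in\fa$ (le \thref{thfondprof1}, lui, n'exige que la régularité de $b$ dans $\gA$).

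Pour la réciproque, je partirais d'une \sKr $(f_1,\dots,f_k)$ de longueur $k$ attachée à $\fa$, portant sur un jeu de variables $\uX$. Dire que $\Gr_\gA(\fa,E/bE)\geq k$ revient exactement à dire que cette suite est $(E/bE)[\uX]$-\ndze. L'idée est alors de préfixer cette suite par $b$ et de montrer que $(b,f_1,\dots,f_k)$ est une suite $E[\uX]$-\ndze de longueur $k+1$. Le premier terme $b$ est $E[\uX]$-\ndz (c'est \imd, ou encore une conséquence de la proposition~\ref{propChgBasSrg} appliquée à l'extension \fpte $\AuX$). Ensuite, grâce à l'identification canonique $E[\uX]/bE[\uX]=(E/bE)[\uX]$, l'hypothèse que $(f_1,\dots,f_k)$ est $(E/bE)[\uX]$-\ndze dit précisément que $(f_1,\dots,f_k)$ reste \ndze modulo $b$ dans $E[\uX]$; en concaténant, $(b,f_1,\dots,f_k)$ est bien $E[\uX]$-\ndze.

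Il reste à traduire ceci en une minoration de la profondeur. Comme $b\in\fa$ et comme chaque $f_i$ est de contenu $\fa$, la suite $(b,f_1,\dots,f_k)$ est contenue dans $\fa[\uX]$, et l'\eds $\gA\to\AuX$ est \fpte. Le point~\emph{1} du \thref{factdefProf} permet alors de conclure directement $\Gr_\gA(\fa,E)\geq k+1$.

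Le point à surveiller n'est pas un calcul mais la cohérence des conventions: il faut s'assurer que \gui{\sKr \Erge} désigne bien la régularité après \eds à l'anneau de \pols (convention posée juste après la proposition~\ref{propChgBasSrg}), et que l'identification $E[\uX]/bE[\uX]=(E/bE)[\uX]$ transporte fidèlement la régularité de $(f_1,\dots,f_k)$ modulo $b$. Une fois ces identifications clarifiées, l'argument est purement formel; c'est l'inclusion $(b,f_1,\dots,f_k)\subseteq\fa[\uX]$, garantie par $b\in\fa$, qui fait toute la différence avec le sens direct et explique pourquoi la réciproque requiert cette hypothèse renforcée.
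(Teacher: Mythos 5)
Votre démonstration est correcte et suit essentiellement la même voie que celle du texte~: le sens direct par le théorème \ref{thfondprof1}, et la réciproque en concaténant $b$ avec une suite de Kronecker $(f_1,\dots,f_k)$ attachée à $\fa$ puis en appliquant le point \emph{1} du théorème \ref{factdefProf}. Vous ne faites qu'expliciter les détails que la preuve du texte laisse implicites (régularité de $b$ sur $E[\uX]$, identification $E[\uX]/bE[\uX]=(E/bE)[\uX]$, inclusion de la suite dans $\fa[\uX]$), ce qui est une rédaction plus complète du même argument.
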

%
\begin{proof} Le \tho précédent donne l'implication directe
(sans même supposer que $b\in\fa$).
\\
L'implication réciproque est facile: avec les notations du \tho précédent, la suite $(b,f_1,\dots,f_k)$ est supposée \Erge, donc par le
 \thref{factdefProf}, on obtient $\Gr_\gA(\fa,E)\geq k+1$.
\end{proof}

Les \thos \ref{thfondprof1} et \ref{thfondprof} seront \gnes en \ref{thfondprof2}.

Il n'y a pas de souci à se faire avec la profondeur lorsque l'on raisonne modulo un \id, en raison du lemme suivant.
\begin{lemma} \label{lemprofmodulo} \emph{(Profondeur modulo un \id)}
Soient $\fb\subseteq \fc$ deux \ids de l'anneau $\gA$, $E$ un \Amo, $(\an)$ dans $\gA$
et $k$ un entier $\geq 1$. \Propeq
\begin{enumerate}
\item $\Gr_{\gA}(\an;E/\fc E)\geq k$.
\item $\Gr_{\gA/\fb}(\an;E/\fc E)\geq k$.
\item $\Gr_{\gA/\fc}(\an;E)\geq k$.
\end{enumerate}
\end{lemma}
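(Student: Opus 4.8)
All three statements concern the depth of the ideal $\gen{\an}$ relative to one and the same abelian group, namely $M:=E/\fc E$, computed over the three rings $\gA$, $\gA/\fb$ and $\gA/\fc$. Indeed $\fc M=0$ by construction, and since $\fb\subseteq\fc$ we also have $\fb M=0$; hence $M$ is naturally a module over each of the three rings, the action of each $a_i$ being in every case the multiplication by $a_i$ read on $M$. The plan is therefore to prove a single general fact and apply it twice: if $\rho:\gA\to\gA'$ is a surjective ring morphism with kernel $\fd$, and $M$ is an \Amo\ with $\fd M=0$ (so that $M$ is also an $\gA'$-module), then for every \itf\ $\fa=\gen{\an}$ and every $k\geq 0$,
\[
\Gr_\gA(\fa,M)\geq k \iff \Gr_{\gA'}\big(\rho(\fa)\gA',M\big)\geq k .
\]
This is not an instance of Proposition~\ref{propProfchgbase}, since a quotient map need not be flat; the point is precisely that annihilation of $M$ by the kernel $\fd$ renders flatness unnecessary.

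First I would prove the one-sequence version: a finite sequence $(b_1,\dots,b_m)$ of $\gA$ is $M$-régulière (Definition~\ref{defSERG}) if and only if $(\rho(b_1),\dots,\rho(b_m))$ is $M$-régulière over $\gA'$. This needs no real computation. For each $j$ the successive quotient $M/(b_1M+\cdots+b_jM)$ is literally the same object over $\gA$ and over $\gA'$, and its endomorphism ``multiplication by $b_{j+1}$'' coincides with ``multiplication by $\rho(b_{j+1})$'' because $\fd$ annihilates $M$. As $M$-régularité is defined solely through the injectivity of these multiplications, the two notions agree term by term.

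Next I would pass from sequences to depth through \sKrs. Choose the canonical \sKr\ $(f_1,\dots,f_k)$ attached to $\fa$ built from the given generators, say $f_i=\sum_{j}a_jX_{i,j}$ on disjoint blocks of indeterminates, whose content over $\gA$ is exactly $\fa$. Its reduction $(\ov{f_1},\dots,\ov{f_k})$, with $\ov{f_i}=\sum_j\rho(a_j)X_{i,j}$, is a \sKr\ attached to $\rho(\fa)\gA'$ over $\gA'$, since a surjection carries the content of a polynomial onto the content of its image. Now $M[\uX]$ is an $\gA[\uX]$-module annihilated by $\fd[\uX]=\ker\big(\gA[\uX]\to\gA'[\uX]\big)$, so the one-sequence version applied over these polynomial rings shows that $(f_1,\dots,f_k)$ is $M[\uX]$-régulière over $\gA[\uX]$ exactly when $(\ov{f_1},\dots,\ov{f_k})$ is $M[\uX]$-régulière over $\gA'[\uX]$. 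By the definition of the depth (see~\ref{defiProfNor}) this is exactly the general fact above; the independence of the chosen \sKr\ invoked here is Theorem~\ref{thSRGNQ}(2).

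Finally I would conclude. Applying the general fact to $\rho:\gA\to\gA/\fb$ (so $\fd=\fb$, $\fb M=0$) yields the equivalence of statements~1 and~2, while applying it to $\rho:\gA\to\gA/\fc$ (so $\fd=\fc$, $\fc M=0$, and noting that the module $\rho\ist(E)$ attached to statement~3 is precisely $M=E/\fc E$) yields the equivalence of statements~1 and~3. The point deserving the most care, and the only real obstacle, is the bookkeeping under the polynomial extension: one must check that annihilation by $\fd$ survives the passage to $\gA[\uX]$ and that reduction sends a \sKr\ for $\fa$ to a \sKr\ for $\rho(\fa)\gA'$, i.e. that contents are preserved by the surjective reduction. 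Both are routine once made explicit.
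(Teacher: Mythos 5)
Your proof is correct, and it reaches the result by a genuinely different route from the paper's. The paper makes the same first move as you --- statement \emph{3} is, by the notational convention for extension of scalars, the case $\fb=\fc$ of statement \emph{2}, so only \emph{1} $\Leftrightarrow$ \emph{2} needs proof --- but it then argues by induction on $k$: the case $k=1$ is the direct observation that both statements mean que ``$\gen{\an}\,x\subseteq\fc E$ implique $x\in\fc E$'', and the step from $k$ to $k+1$ invokes the fundamental theorem of depth (\thref{thfondprof}) applied to the single Kronecker polynomial $f=\sum_i a_iT^{i-1}$ after extension of scalars to $\gA[T]$. You avoid both the induction and \thref{thfondprof}: your key point is that $E$-regularity of a sequence (definition \ref{defSERG}) is a property of the underlying module together with its multiplication operators only, hence is the same condition whether read over $\gA$ or over a quotient ring whose kernel annihilates the module; feeding this into the definition of depth via Kronecker sequences (definition \ref{defiProfNor}, with \thref{thSRGNQ} guaranteeing independence of the chosen sequence), applied to the surjections $\gA[\uX]\to(\gA/\fb)[\uX]$ and $\gA[\uX]\to(\gA/\fc)[\uX]$, gives your transfer principle in one stroke. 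Your route buys elementarity (no appeal to the substantive \thref{thfondprof}) and a cleanly isolated, reusable statement: depth is invariant under any surjective change of base ring whose kernel kills the module --- a useful complement to proposition \ref{propProfchgbase}, which requires flatness. The paper's route buys brevity given the machinery already in place, and illustrates the standard mechanism by which \thref{thfondprof} drives inductions on depth. The one point that must stay explicit in your write-up (you do flag it) is that $\fb\subseteq\fc$ is exactly what makes the extension-of-scalars module appearing in statement \emph{2} coincide with $E/\fc E$ equipped with its natural structure.
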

\begin{proof}
La notation $\Gr_{\gA/\fc}(\an;E)\geq k$ est juste une abréviation pour 
$\Gr_{\gA/\fc}(\an;E/\fc E)\geq k$. Il faut donc prouver l'\eqvc de \emph{1}
et~\emph{2}. On procède par \recu sur $k$. Le passage de $k\geq 1$ à $k+1$
est assuré par le \tho fondamental \ref{thfondprof}, en utilisant l'\elt $f=\sum_{i=1^n}a_iT^{i-1}$ dans $\AT$ et en étendant les scalaires.
\\
Dans le cas $k=1$, les deux \prts signifient que si un \elt $x$ de $E$
satisfait $\gen{\an}x\in\fc E$ alors $x\in\fc E$.
\end{proof}

Le point \emph{3} du \tho qui suit est subtil, et très utile. 

\begin{theorem} \label{lemProfInfinie} 
Soit $\fa=\gen{a_1,\dots,a_k}\subseteq \gA$, $k\geq 1$.
\begin{enumerate}
\item \emph{(Profondeur infinie)} On a $\Gr_\gA(\fa,E)\geq k+1$ \ssi
$\fa\,E=E$. Dans ce cas, la profondeur est supérieure à tout entier.
On adopte la notation~\hbox{$\Gr_\gA(\fa,E)= \infty$} pour exprimer ce fait.
\item Si $\Gr_\gA(\fa,E)\geq k$, alors dans un anneau $\AuX$, $\fa$
est engendré par une \sErg de longueur $k$. 
\item  Si $\Gr_\gA(\fa,E)\geq \ell$ avec $\ell\leq k$, alors  dans un anneau $\AuX=\gA[X_1,\ldots,X_\ell]$, il existe une \sErg $(b_1,\dots,b_\ell)$ telle que 

\snic{\gen{a_1,\dots,a_k}\AuX=\gen{b_1,\dots,b_\ell, (a_j)_{\ell<j\leq k}}\AuX.}

Plus \prmt,  on peut prendre $\tra{\lst{b_1, \ldots, b_k}} = U
\tra{\lst{a_1, \ldots, a_k}}$ avec la matrice unitriangulaire supérieure $U \in \SL_k(\AuX)$:
{\small
$$
U \,=\, \cmatrix {
1 &X_1    &X_1^2 &\cdots &\cdots &\cdots &\cdots &X_1^{k-1} \\[.4em]
0 & 1     &X_2   &  &  &  &  &X_2^{k-2} \cr
\vdots &\ddots &1&\ddots&&&&\vdots\cr
\vdots &  &\ddots     &\ddots  &X_\ell &\cdots &\cdots &X_\ell^{k-\ell} \cr
\vdots &  &      &\ddots     &1  &0    &\cdots &0 \cr
\vdots & &      &     &\ddots      &\ddots &\ddots &\vdots \cr
\vdots & &      &&     &\ddots      &\ddots&0  \\[.4em]
0 &\cdots &\cdots      &\cdots     &\cdots&\cdots&   0  &1 \cr
}.
$$ 
}%
\end{enumerate}
\end{theorem}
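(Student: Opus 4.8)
The plan is to prove the three assertions in the order $3\Rightarrow 2\Rightarrow 1$: assertion~2 is exactly the case $\ell=k$ of assertion~3 (there $U$ is genuinely $k\times k$ and $\gen{b_1,\dots,b_k}\AuX=\fa\AuX$), while assertion~1 will follow from assertion~2 and the fundamental theorem~\ref{thfondprof}. So the heart of the matter is assertion~3, which I would prove by induction on $\ell$, the case $\ell=0$ (empty sequence, $\AuX=\gA$) being trivial.

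For the inductive step I introduce one fresh variable $X_1$ and set $b_1=\sum_{j=1}^{k}X_1^{\,j-1}a_j$. This is a \pKr attaché à $\fa$, so since $\Gr_\gA(\fa,E)\geq\ell\geq 1$ the ideal $\fa$ is \Erg and, by the lemme de McCoy~\ref{lemMcCoy}, $b_1$ is $E[X_1]$-\ndz. The polynomial extension $\gA\to\gA[X_1]$ being \fpte, hence plate, it preserves the profondeur (Proposition~\ref{propProfchgbase}), whence $\Gr_{\gA[X_1]}(\fa[X_1],E[X_1])\geq\ell$; applying the fundamental theorem~\ref{thfondprof} to the \Erg element $b_1\in\fa[X_1]$ gives $\Gr_{\gA[X_1]}\!\big(\fa[X_1],E[X_1]/b_1E[X_1]\big)\geq\ell-1$. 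The key observation is now that modulo $b_1$ one has $a_1\equiv-(X_1a_2+\dots+X_1^{\,k-1}a_k)$, so in $\gB:=\gA[X_1]/\gen{b_1}$ the ideals $\fa\gB$ and $\gen{a_2,\dots,a_k}\gB$ coincide. As $E':=E[X_1]/b_1E[X_1]$ is a $\gB$-module, Lemma~\ref{lemprofmodulo} lets me compute the profondeur over $\gB$, so $\Gr_{\gA[X_1]}\!\big(\gen{a_2,\dots,a_k}[X_1],E'\big)\geq\ell-1$. The induction hypothesis, applied over $\gA[X_1]$ to the module $E'$ and the ideal $\gen{a_2,\dots,a_k}$ (which has $k-1$ generators, and $\ell-1\leq k-1$), produces variables $X_2,\dots,X_\ell$ and an $E'$-\ndze sequence $b_i=\sum_{j=i}^{k}X_i^{\,j-i}a_j$ for $2\leq i\leq\ell$, with $b_i=a_i$ for $i>\ell$. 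Since $b_1$ is $E[\uX]$-\ndz and $(b_2,\dots,b_\ell)$ is $E[\uX]/b_1E[\uX]$-\ndze, the full sequence $(b_1,\dots,b_\ell)$ is \Erge; the matrix assembles as the block $\cmatrix{1 & \ast\cr 0 & U'}$ from the $(k-1)\times(k-1)$ matrix $U'$ of the induction, giving exactly the announced upper-unitriangular $U$, and $U\in\SL_k(\AuX)$ yields $\gen{b_1,\dots,b_\ell,a_{\ell+1},\dots,a_k}\AuX=\fa\AuX$.

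For assertion~1, the implication $\Leftarrow$ assumes $\fa E=E$: since $E$ is \tf, the lemme de simplification~\ref{lemSimplifidele} gives $b\in\fa$ with $(1-b)E=0$, i.e. $b$ acts as the identity on $E$. Then $b$ is \Erg, $E/bE=0$, and $\Gr_\gA(\fa,0)\geq m$ holds for every $m$, so the fundamental theorem~\ref{thfondprof} yields $\Gr_\gA(\fa,E)\geq m+1$ for all $m\geq 1$, i.e. profondeur infinie. Conversely, if $\Gr_\gA(\fa,E)\geq k+1$, then by \fpte base change $\Gr_{\AuX}(\fa[\uX],E[\uX])\geq k+1$, and assertion~2 provides an \Erge sequence $(b_1,\dots,b_k)$ generating $\fa\AuX$; feeding $b_1,\dots,b_k$ successively into the fundamental theorem~\ref{thfondprof} lowers the profondeur to $\Gr_{\AuX}\!\big(\fa[\uX],E[\uX]/\fa E[\uX]\big)\geq 1$. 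But $\fa$ annihilates $E[\uX]/\fa E[\uX]=(E/\fa E)[\uX]$, so the regularity expressed by profondeur $\geq 1$ forces $(E/\fa E)[\uX]=0$, that is $\fa E=E$.

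The step I expect to be most delicate is the passage modulo $b_1$ in assertion~3: one must ensure that collapsing $a_1$ into $\gen{a_2,\dots,a_k}$ preserves the profondeur \emph{of the ideal}, not merely its fidelity — which is precisely what Lemma~\ref{lemprofmodulo} (profondeur calculée sur l'anneau quotient) provides — and one must verify that the triangular matrix assembles coherently through the recursion. A second point worth flagging is that the implication $\Leftarrow$ of assertion~1 genuinely uses that $E$ is \tf: without finite generation $\fa E=E$ need not even make $\fa$ \Erg, so the lemme de simplification is doing real work there.
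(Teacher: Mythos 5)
Your treatment of points 2 and 3 is essentially the paper's own proof: the same induction on $\ell$ with the element $b_1=a_1+a_2X_1+\cdots+a_kX_1^{k-1}$, the same appeals to the lemme de McCoy, to flat base change and to the \thref{thfondprof}, and the same observation that $\gen{b_1,a_2,\dots,a_k}=\fa$ dans $\gA[X_1]$; your explicit use of Lemma~\ref{lemprofmodulo} merely spells out the ring-change step that the paper passes over in one sentence, and point 2 is deduced from point 3 exactly as in the paper. The differences are in point 1. For the implication $\Gr_\gA(\fa,E)\geq k+1\Rightarrow\fa E=E$, the paper uses a slicker trick: apply point 3 to the padded generating sequence $(a_1,\dots,a_k,0)$ of length $k+1$, so that $b_{k+1}=0$ comes out \Erg modulo $\gen{b_1,\dots,b_k}=\fa\AuX$, which says at once that multiplication by $0$ is injective on $(E/\fa E)[\uX]$, hence $E/\fa E=0$; your version (point 2 followed by $k$ iterations of the \thref{thfondprof} to reach $\Gr\big(\fa,(E/\fa E)[\uX]\big)\geq 1$) is the same argument unrolled, and it is correct. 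For the converse implication the paper offers no proof at all, while you supply one via the simplification lemma~\ref{lemSimplifidele}; moreover your closing caveat is exactly right and worth emphasizing: that lemma needs $E$ \tf, and the implication is genuinely false for a general module --- for $\gA=\ZZ$, $\fa=\gen{2}$ and $E=\ZZ[1/2]/\ZZ$ one has $\fa E=E$ although $\fa$ is not even \Erg, so that $\Gr_\gA(\fa,E)\geq 1$ fails. Since the chapter's convention allows $E$ to be an arbitrary module, this is a gap in the statement of the theorem rather than in your proof: under the (necessary) additional hypothesis that $E$ is \tf, your argument completes the equivalence.
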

%
\begin{proof} Le point \emph{2} est un cas particulier du point \emph{3.}

\snii
\emph{3.} On procède par \recu sur $\ell$ (pour n'importe quel $k\geq \ell$), le cas $\ell=0$ étant trivial.
\\
Passons de $\ell-1\geq 0$ à $\ell$.
On se place sur l'anneau $\gA[X_1]$. 
\\
L'\elt $b_1=a_1+a_2X_1+\cdots+a_kX_1^{k-1}$ est \Erg (lemme de McCoy). Par le \thref{thfondprof}, on a $\Gr_\gA(a_2,\dots,a_k;E/b_1E)\geq \ell-1$. Et l'on constate que $\gen{b_1,a_2,\dots,a_k}=\gen{a_1,a_2,\dots,a_k}$.
Par \hdr, sur l'anneau $\AXk$, la suite $(b_2,\dots,b_k)$ donnée dans l'énoncé est $E/b_1E$-\ndze et $\gen{a_2,\dots,a_k}=\gen{b_2,\dots,b_k}$.
En conséquence, la suite $(b_1,\dots,b_k)$ est \Erge et $\gen{a_1,a_2,\dots,a_k}=\gen{b_1,a_2,\dots,a_k}=\gen{b_1,b_2,\dots,b_k}$.

\smallskip 
\emph{1.} On applique le point \emph{3} avec la suite  $(\ak,0)$, on voit \hbox{que $b_{k+1}=0$}. On obtient donc
(après une \eds \fpte) que~$0$ \hbox{est \Erg} modulo $\gen{b_1,\dots,b_k}=\fa$,
i.e. la multiplication par $0$ est injective \hbox{sur $E/\fa E$} ce qui donne $E/\fa E=0$. 
\end{proof}
\comm
Pour le point \emph{2}, la \dfn de la profondeur donne le fait qu'une \sKr attachée 
à $(\ua)$ est \Erge, mais cette suite n'engendre pas \ncrt l'\id $\fa$
dans l'anneau de \pols considéré. 
Comme conséquence du point~\emph{3}, on obtient tout autre chose que la simple \dfn.
Par exemple on verra que l'anneau~\hbox{$\gA[X_1,\dots,X_{k-1}]$} est suffisant.\\
Le point~\emph{3} est nettement plus subtil qu'une simple explicitation de la \dfn. En effet, vu la forme de la matrice $U$, la suite $(b_1,\dots,b_\ell)$
\emph{n'est pas} en \gnl une \sKr attachée à $\fa$ \hbox{dans $\gA[X_1,\dots,X_\ell]$}, par exemple le contenu de $b_2$ n'a aucune raison d'être égal à $\fa$, et $b_2$ n'est peut-être pas un \elt \ndz.
\eoe

\medskip 
\rem
Voici une \dem alternative du point \emph{1} et d'une variante du point \emph{2}. 
On définit des \pols $g_j=b_1(X_j)$ (une \sKr). 
\\ Supposons d'abord $\Gr_\gA(\ak)\geq k$.
\\
Le passage de $(a_1,\dots,a_k)$ \hbox{à $(g_1,\dots,g_k)$} est une matrice de \hbox{VanderMonde} de \deter $\Delta=\prod_{1\leq i<j\leq k}(X_i-X_j)$. Sur l'anneau $\AXk[1/\Delta]$ on a donc l'\egt $\fa=\gen{\ak}=\gen{g_1,\dots,g_k}$.
Ceci montre que l'\id~$\fa$ est engendré par une 
\srg dans une extension \fpte de~$\gA$.
\\ Supposons maintenant que $\Gr_\gA(\ak)\geq k+1$.
La suite $(g_1,\dots,g_{k+1})$ est \Erge sur $\gA[X_1,\dots,X_{k+1}]$ et reste \Erge sur le localisé de Nagata $\gB=\gA(X_1,\dots,X_{k+1})$.  
\\
En outre $g_{k+1}\in \fa=\gen{g_1,\dots,g_k}$ sur $\gB$. Ainsi la multiplication par $g_{k+1}$ est à la fois nulle et injective sur $E/\fa E$.
Ce qui donne l'\egt  $E/\fa E=0$ sur~$\gB$.
Par \hbox{suite $E/\fa E=0$} sur~$\gA$, car $\gB$ est  \fpt sur $\gA$.
\eoe

\section{Suites \csces}
\begin{definition} \label{defiCseqFFR}
Soit $(\ua)=(\an)$ dans $\gA$ et $\fa=\gen{\ua} $. 
\begin{itemize}
\item On dit que $(\ua)$ est \emph{\csce} si $\Gr_\gA(\fa)\geq n$.
\item On dit que $(\ua)$ est \emph{complètement $E$-sécante}, ou \emph{\csce pour~$E$ (dans $\gA$)}, si $\Gr_\gA(\fa,E)\geq n$. 
\end{itemize}%
\index{completement sec@complètement sécante!suite --- dans $\gA$}%
\index{suite!complètement sécante dans $\gA$}%
\index{suite!complètement sécante pour $E$ dans $\gA$}%
\index{suite!complètement $E$-sécante}%
\end{definition}

La notion de \scs est meilleure que la notion de \srg car elle est indépendante de l'ordre des \elts dans la suite, tout en bénéficiant 
de la plupart des \prts intéressantes des \srgs.

Un exemple est donné par la suite $\big((y-1)x,  y,  (y-1)z\big)$
dans $\gk[x,y,z]$. Cette \srg engendre l'idéal $\gen{x,y,z}$
mais si l'on échange les deux derniers termes, elle n'est plus \ndze
(déjà envisagé dans l'exemple \ref{exaregnoreg}).

\medskip 
\rem Si $(\an)=(\ua)$ est \csce (resp.  \cEse),
tout suite de longueur $n$ qui engendre $\fa=\gen{\ua}$ est \csce (resp.  \cEse):
c'est clair d'après la \dfn \ref{defiCseqFFR}.
\hum{Il faudra signaler les énoncés similaires non complètement évidents. Par exemple pour la $1$-sécance.
\vspace{-3em}} 
\eoe

Une conséquence \imde du corolaire \ref{cor0lemEchSeqReg} est la proposition suivante, qui généralise un résultat classique pour les anneaux \cohs \noes locaux.
\begin{proposition} \label{prop}
Soit $\gA$ un anneau \coh et $E$ un \mpf.  Une suite $(b_1,\dots,b_m)$ contenue dans $\Rad(\gA)$ est \cEse \ssi elle est \Erge.
\end{proposition}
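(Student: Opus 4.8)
The statement is an equivalence, of which one implication is formal and the other carries all the content.

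The implication \Erge~$\Rightarrow$~\cEse\ requires none of the hypotheses: if $(b_1,\dots,b_m)$ is \Erge, then $\fa=\gen{b_1,\dots,b_m}$ contains an \sErg of length $m$, namely the sequence itself, so $\Gr_\gA(\fa,E)\geq m$ by \ref{factdefProf}, which is exactly the definition of \cEse. I would dispatch this in a single sentence.

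For the converse, assume $\Gr_\gA(\fa,E)\geq m$. The plan is to realize the profondeur by a genuine \sErg and then transport its regularity back to the prescribed generators. First I would invoke \ref{lemProfInfinie} (point~\emph{3}, with $\ell=k=m$): over $\gA[X_1,\dots,X_m]$ there is an \sErg $(g_1,\dots,g_m)$ generating $\fa[\uX]$, given explicitly by $\tra{\lst{g_1,\dots,g_m}}=U\,\tra{\lst{b_1,\dots,b_m}}$ with $U\in\SL_m(\gA[X_1,\dots,X_m])$ the displayed unitriangular matrix. I would then pass to the Nagata localization $\gB=\gA(X_1,\dots,X_m)$, over which three things hold at once: the sequence $(g_i)$ remains \Erge (a localization is flat, \ref{propLocSrg} and \ref{propChgBasSrg}); every $g_i$ and every $b_i$ lies in $\fa\gB\subseteq\Rad(\gA)\gB\subseteq\Rad(\gB)$ (the radical of a Nagata ring is generated by that of the base); and $\gB$ is \coh with the extension $E_\gB$ of $E$ still \pf (a polynomial extension and a localization of a \coh ring are \coh, and finite presentation is stable under base change). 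This is exactly the setting required by the second exchange lemma \ref{cor0lemEchSeqReg}.

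The heart of the proof is to deduce from this that $(b_1,\dots,b_m)=U^{-1}(g_1,\dots,g_m)$ is \Erge\ over $\gB$. I would combine two stability properties of $E_\gB$-regularity for sequences contained in $\Rad(\gB)$: \emph{(i)} adding a multiple of an \emph{earlier} term to a \emph{later} term preserves regularity --- this is elementary, since on the relevant quotient the added term acts as $0$ and the ideal generated by any initial segment is unchanged; and \emph{(ii)} permuting the terms preserves regularity --- this is precisely \ref{cor0lemEchSeqReg}. From \emph{(i)} and \emph{(ii)} every elementary transvection preserves regularity: to add a multiple of a later term to an earlier one, one permutes to bring the later term to the front, applies \emph{(i)}, and permutes back. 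As $U^{-1}$ is unitriangular, hence a product of transvections, regularity survives the whole change of generators, so $(b_1,\dots,b_m)$ is \Erge\ over $\gB$. Since $\gB$ is \fpte\ over $\gA$, \ref{propChgBasSrg} (point~\emph{2}) descends this to $\gA$, giving that $(b_1,\dots,b_m)$ is \Erge.

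The main obstacle is this last, transvection-theoretic step. The transformation provided by \ref{lemProfInfinie} runs in the ``wrong'' direction --- each $b_i$ equals $g_i$ plus multiples of \emph{later} $g_j$'s --- so the naive elementary move \emph{(i)} does not apply directly, and it is the permutation invariance of the exchange lemma, and with it the full force of the coherence and radical hypotheses, that makes the argument go through. A secondary point to verify carefully is that at every intermediate step the terms remain in $\Rad(\gB)$ and generate the fixed ideal $\fa\gB$, so that \ref{cor0lemEchSeqReg} may legitimately be reapplied.
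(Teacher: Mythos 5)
Your opening paragraph is fine: \Erge $\Rightarrow$ \cEse is immediate from \thref{factdefProf} and needs none of the hypotheses, exactly as in the paper. The converse, however, contains a genuine gap at its pivotal step.

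You invoke the second exchange lemma \ref{cor0lemEchSeqReg} over $\gB=\gA(X_1,\dots,X_m)$, which requires $\gB$ to be \coh et $E_\gB$ \pf. Finite presentation does extend, and your claim $\Rad(\gA)\gB\subseteq\Rad(\gB)$ is indeed correct for the Nagata localization; but your justification of coherence --- \gui{a polynomial extension and a localization of a \coh ring are \coh} --- is false. Coherence is \emph{not} preserved by polynomial extension: there exist \coh rings $\gA$ with $\gA[X]$ not \coh (Soublin's example, $\gA=\QQ[[s,t]]^{\NN}$); this failure is precisely why \gui{stably coherent} is a separate notion. Nothing in the paper, nor any standard result, yields coherence of $\gA[\uX]$ or of its Nagata localization $\gA(\uX)$ from coherence of $\gA$. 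Since the whole converse rests on applying \ref{cor0lemEchSeqReg} over $\gB$, the proof collapses here.

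Nor can the gap be closed by flatness. The engine of \ref{cor0lemEchSeqReg} is Nakayama applied to colon modules $(0:g_k)$ computed in $E_\gB/\gen{g_1,\dots,g_{k-2}}E_\gB$; their finite generation over $\gB$ is exactly what coherence of $\gB$ would supply. Flatness of $\gB$ over $\gA$ gives finite generation only for objects extended from $\gA$ --- for instance $(0:b)_{E_\gB}=(0:b)_E\otimes_\gA\gB$ is \tf because $\gA$ is \coh --- whereas the quotients $E_\gB/\gen{g_1,\dots,g_{k-2}}E_\gB$ are taken modulo ideals generated by genuinely polynomial elements and are not extended from $\gA$, so coherence of $\gA$ gives no handle on them. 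The structural point is that complete secance is symmetric in the $b_i$ while regularity is not, so the coherence--radical--Nakayama input cannot be avoided, and it is available only over the base ring. The paper's derivation (\gui{conséquence immédiate du corolaire \ref{cor0lemEchSeqReg}}) keeps that input over $\gA$: from complete secance one first extracts, by the specialization $X_j\mapsto 0$ which costs nothing in hypotheses (cf.\ the proof of the lemma \ref{lemscsclindep}), that each $b_i$ is \Erg modulo the others, and only then runs the Nakayama mechanism over $\gA$, where $\gA$ \coh et $E$ \pf really make the relevant colon modules \tf. Your proof escapes to $\gB$ \emph{before} invoking the exchange lemma, i.e.\ it places the decisive step exactly where its hypotheses can no longer be verified.
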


\begin{lemma} \label{lemscsclindep}
 Si  $(\ak)$ est une suite  \cEse,  chaque \elt $a_i$ est \Erg modulo les $a_j\neq a_i$.
 \end{lemma}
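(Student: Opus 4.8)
Le plan est de ramener l'\'enonc\'e \`a une in\'egalit\'e de monotonie de la profondeur pour un module quotient. Comme la propri\'et\'e d'\^etre \cEse ne d\'epend que de l'\id $\fa=\gen{a_1,\dots,a_n}$ et de la longueur $n$ de la suite, elle est invariante par permutation des $a_i$. Il suffit donc de montrer que \emph{le dernier terme} d'une suite \cEse est \Erg modulo les pr\'ec\'edents~: en appliquant ce r\'esultat \`a chaque r\'eordonnancement qui place $a_i$ en derni\`ere position, on obtient l'\'enonc\'e pour tous les~$i$. On cherche donc \`a prouver que $a_n$ est \Erg modulo $\fb=\gen{a_1,\dots,a_{n-1}}$.

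Posons $M=E/\fb E$. Les \elts $a_1,\dots,a_{n-1}$ annulent $M$, donc $\fa M=a_nM$~; ainsi $\fa$ est $M$-\ndz \ssi $a_n$ est $M$-\ndz, et la conclusion voulue \'equivaut \`a $\Gr_\gA(\fa,M)\geq 1$ (ce que le lemme \ref{lemprofmodulo} reformule aussi comme $\Gr_{\gA/\fb}(a_n,E/\fb E)\geq 1$, puisque seuls $a_n$ survit dans $\gA/\fb$). Il suffit par cons\'equent d'\'etablir l'in\'egalit\'e
$$\Gr_\gA(\fa,E/\fb E)\;\geq\;\Gr_\gA(\fa,E)-(n-1)\;\geq\;n-(n-1)=1,$$
o\`u $\fb$ est engendr\'e par $n-1$ \elts.

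Pour cette in\'egalit\'e, je proc\'ederais par \recu sur le nombre de g\'en\'erateurs en retranchant un seul \elt \`a la fois, en \'ecrivant $E/\fb E$ comme quotient successif $\bigl(E/\gen{a_1,\dots,a_{n-2}}E\bigr)/a_{n-1}(\cdots)$. Tout revient alors \`a l'\'etape \'el\'ementaire
$$\Gr_\gA(\fa,E/aE)\;\geq\;\Gr_\gA(\fa,E)-1\qquad(a\in\gA).$$
Lorsque l'\elt $a$ est \Erg, c'est pr\'ecis\'ement l'implication directe du \tho fondamental \ref{thfondprof} (voir aussi le \tho \ref{thfondprof1}). \emph{Le point d\'elicat}, qui est le c\oe ur de la preuve, est le cas o\`u $a$ (ici l'un des $a_j$) \emph{n'est pas} \Erg~: un terme d'une suite \cEse peut \^etre diviseur de z\'ero sur $E$, et aucune \eds \fpte ne le rend r\'egulier, de sorte que ni le lemme \ref{lem1sqlindep} ni la mise sous forme r\'eguli\`ere du \tho \ref{lemProfInfinie} ne s'appliquent directement \`a la suite de d\'epart.

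Pour lever cet obstacle, je remplacerais la multiplication par $a$ par les suites exactes courtes $0\to(0:_Ea)\to E\to aE\to 0$ et $0\to aE\to E\to E/aE\to 0$~: la chute de profondeur d'au plus~$1$ r\'esulte alors du comportement de $\Gr_\gA(\fa,\cdot)$ le long des suites exactes courtes, \cad du \tho \ref{thSESPrf} (la relation subtile d\'ej\`a annonc\'ee). Une variante plus \'el\'ementaire consiste \`a \'etendre \`a $\gA[T]$~: l'\elt $f=\sum_j a_jT^{j-1}$ est \Erg par le lemme de McCoy, le \tho \ref{thfondprof} donne $\Gr_{\gA[T]}(\fa,E[T]/fE[T])\geq n-1$, et comme $a_1\equiv-\sum_{j\geq 2}a_jT^{j-1}$ modulo $f$, le lemme \ref{lemprofmodulo} identifie cette profondeur \`a $\Gr_{\gA[T]}(\gen{a_2,\dots,a_n},E[T]/fE[T])$~; la suite $(a_2,\dots,a_n)$ y devient \cEse de longueur $n-1$, amor\c{c}ant une \recu sur $n$. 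La difficult\'e r\'esiduelle, sur laquelle se concentre tout le travail, est de recoller correctement les sous-\ids $\gen{a_j:j\neq i}$ \`a travers ces changements d'anneau et de module, les g\'en\'erateurs initiaux n'\'etant pas conserv\'es par la r\'eduction \`a une suite r\'eguli\`ere.
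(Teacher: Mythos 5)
Votre mise en place initiale est correcte et coïncide avec celle du texte~: l'invariance par permutation ramène au cas $i=n$, et la conclusion cherchée équivaut bien à $\Gr_\gA(\fa,E/\fb E)\geq 1$ avec $\fb=\gen{a_1,\dots,a_{n-1}}$. Le problème est l'\gui{étape élémentaire} sur laquelle vous faites tout reposer. Telle que vous l'énoncez, pour $a\in\gA$ quelconque, l'inégalité $\Gr_\gA(\fa,E/aE)\geq\Gr_\gA(\fa,E)-1$ est \emph{fausse}~: l'exemple \ref{exalemFondprof} du texte fournit $\Gr_\gA(\gen{a_1,a_2},\gA)\geq 2$ avec $\Gr_\gA(\gen{a_1,a_2},\gA/b\gA)=0$, soit une chute de $2$ pour un seul élément $b$. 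L'hypothèse $a\in\fa$ est donc indispensable, or votre première justification (les deux suites exactes courtes $0\to(0:a)_E\to E\to aE\to 0$ et $0\to aE\to E\to E/aE\to 0$ combinées au théorème \ref{thSESPrf}) ne l'utilise nulle part~; elle ne peut donc pas être correcte. Concrètement, le théorème \ref{thSESPrf} donne seulement $\Gr_\gA(\fa,aE)\geq\inf\big(\Gr_\gA(\fa,(0:a)_E)-1,\,\Gr_\gA(\fa,E)\big)$, puis $\Gr_\gA(\fa,E/aE)\geq\inf\big(\Gr_\gA(\fa,(0:a)_E)-2,\,\Gr_\gA(\fa,E)-1\big)$, et le terme $\Gr_\gA\big(\fa,(0:a)_E\big)$, profondeur de l'annulateur de $a$ dans $E$, n'est pas contrôlé (on sait seulement qu'il est $\geq 1$)~: la \gui{chute d'au plus un} ne suit pas. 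Notez enfin que, pour $a=a_1\in\fa$ et $n=2$, cette étape élémentaire \emph{est} l'énoncé du lemme (dire que $\fa$ est $E/a_1E$-régulier, c'est dire que $a_2$ est $E$-régulier modulo $a_1$)~: votre réduction ne diminue pas la difficulté, elle la reformule.

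Votre \gui{variante plus élémentaire} est, elle, essentiellement la preuve du texte (extension polynomiale, élément $f$ régulier par McCoy, théorème \ref{thfondprof}, lemme \ref{lemprofmodulo}, récurrence sur $n$), mais vous vous arrêtez précisément là où se trouve le travail, en déclarant non résolu le \gui{recollement} final. Or ce recollement se fait par spécialisation en $T=0$ après correction de la syzygie, et c'est le pas décisif. Comme $f\equiv a_1+a_nT^{n-1}$ modulo $\gen{a_2,\dots,a_{n-1}}$, votre hypothèse de récurrence donne~: $a_n$ est régulier sur $E[T]\big/\gen{a_1+a_nT^{n-1},a_2,\dots,a_{n-1}}E[T]$. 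Si maintenant $a_ny=a_1z_1+\dots+a_{n-1}z_{n-1}$ avec $y$, $z_j\in E$, on réécrit ceci en $a_n(y+T^{n-1}z_1)\in\gen{a_1+a_nT^{n-1},a_2,\dots,a_{n-1}}E[T]$, la régularité ci-dessus donne $y+T^{n-1}z_1\in\gen{a_1+a_nT^{n-1},a_2,\dots,a_{n-1}}E[T]$, et la spécialisation $T=0$ conclut~: $y\in\gen{a_1,\dots,a_{n-1}}E$. C'est exactement le mécanisme de la preuve du texte, sous forme condensée~: le point \emph{3} du théorème \ref{lemProfInfinie} fournit une suite $E[\uX]$-régulière $(b_1,\dots,b_n)$ engendrant $\fa\,\AuX$ avec $b_n=a_n$, via une matrice unitriangulaire qui se spécialise en l'identité en $X_j=0$, puis on applique la définition d'une suite régulière et on spécialise $X_j=0$. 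Tant que ce pas n'est pas rédigé, votre preuve est incomplète, et la pièce manquante en est le c\oe{}ur.
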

%
\begin{proof} Comme l'ordre des éléments n'importe pas dans une suite complètement sécante, il suffit de traiter le cas $i=k$.
Considérons une \syzy $\sum_{i=1}^ka_iy_i=0$ sur $E$. 
Nous devons montrer que $y_k\in \gen{a_1,\dots,a_{k-1}}E$.
On considère la suite $E[\uX]$-\ndze $(\bk)$ donnée dans le point \emph{3} du \thref{lemProfInfinie}, avec $k=\ell$. Par définition d'une suite régulière, l'élément  $b_k=a_k$ est $E[\uX]$-régulier modulo $\gen{b_1,\dots,b_{k-1}}$.
Donc $y_k\in \gen{b_1,\dots,b_{k-1}}E[\uX]$. En spécialisant $X_j=0$ ($j\in\lrb{1..k-1}$), on obtient  $y_k\in \gen{a_1,\dots,a_{k-1}}E$.
\end{proof}
%

\begin{theorem} \label{thfondprof2} \emph{(Généralisation des \thos~\ref{thfondprof1} et \ref{thfondprof})} \\
Soient $n$, $k\geq 1$, $E$ un \Amo, $\fb$ un \itf, $(\an)$  une  \scEs de~$\gA$ et $\fa=\gen{\an}$. 
\begin{enumerate}
\item 
On a  l'implication
$$\preskip.2em \postskip.4em 
\Gr_\gA(\fb,E)\geq n+k \;\Longrightarrow\; \Gr_\gA(\fb,E/\fa E)\geq k. 
$$
\item 
Si $\fa\subseteq \fb$ on a les \eqvcs 
$$\preskip.4em \postskip.4em 
\Gr_\gA(\fb,E)\geq n+k \;\iff\; \Gr_\gA(\fb,E/\fa E)\geq k\;\iff\; \Gr_{\gA/\fa}(\fb,E)\geq k. 
$$
\end{enumerate}
Comme cas particulier si $(\an)$ est \csce dans $\gA$, pour toute suite $(\bbm)$ on a  les \eqvcs
\[ \preskip.4em \postskip.2em 
\begin{array}{rl} 
&\Gr_\gA(\an,\bbm)\geq n+k \\[.3em] 
\iff  & \Gr_{\aqo\gA\ua}(\bbm)\geq k
\end{array}
\]
et
\[ \preskip-.2em \postskip.4em
\begin{array}{rl} 
  &  
(\an,\bbm)\hbox{ est \csce dans }\gA  \\[.3em] 
\iff  &  (\bbm)\hbox{ est \csce dans }\aqo\gA\ua 
 \end{array}
\]   
\end{theorem}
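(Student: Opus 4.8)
The plan is to reduce the statement to the case of a genuine $E$-regular sequence, where the single-element Theorems~\ref{thfondprof1} and~\ref{thfondprof} can be iterated. The difficulty is that the terms $a_i$ of a completely $E$-secant sequence need not individually be $E$-regular, and regularity is sensitive to their order, so one cannot simply remove the $a_i$ one at a time. The remedy is the polynomial-ring construction of Theorem~\ref{lemProfInfinie}: since $(\an)$ is completely $E$-secant we have $\Gr_\gA(\fa,E)\geq n$, and its point~\emph{3} applied with $\ell=k=n$ produces, over $\AuX=\gA[X_1,\dots,X_n]$, an $E[\uX]$-regular sequence $(b_1,\dots,b_n)$ with $\gen{b_1,\dots,b_n}\AuX=\fa\AuX$; the change of generators being unitriangular, each $b_i$ lies in $\fa\AuX$ and $E[\uX]/\fa E[\uX]=E[\uX]/\gen{b_1,\dots,b_n}E[\uX]$. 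Since $\AuX$ is faithfully flat over $\gA$, Proposition~\ref{propProfchgbase} lets me transport every depth inequality over $\gA$, for both $E$ and $E/\fa E$, to the corresponding one over $\AuX$ and back.

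For point~\emph{1} I would argue over $\AuX$ and strip off the $b_i$ one by one. Each $b_i$ is regular on $E_{i-1}:=E[\uX]/\gen{b_1,\dots,b_{i-1}}E[\uX]$ because $(b_1,\dots,b_n)$ is $E[\uX]$-regular, and Theorem~\ref{thfondprof1}, which needs only the regularity of $b_i$ and not $b_i\in\fb$, lowers the depth bound by exactly one at each step; starting from $\Gr_{\AuX}(\fb,E[\uX])\geq n+k$ one reaches $\Gr_{\AuX}(\fb,E[\uX]/\gen{b_1,\dots,b_n}E[\uX])\geq k$ after $n$ steps, and descending by faithful flatness gives $\Gr_\gA(\fb,E/\fa E)\geq k$. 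For point~\emph{2} the hypothesis $\fa\subseteq\fb$ forces $b_i\in\fa\AuX\subseteq\fb\AuX$, so the stronger Theorem~\ref{thfondprof} applies and every stripping step becomes an equivalence; chaining these equivalences from the innermost quotient outward yields $\Gr_\gA(\fb,E)\geq n+k\iff\Gr_\gA(\fb,E/\fa E)\geq k$. The second equivalence, $\Gr_\gA(\fb,E/\fa E)\geq k\iff\Gr_{\gA/\fa}(\fb,E)\geq k$, is precisely the equivalence of points~\emph{1} and~\emph{3} of Lemma~\ref{lemprofmodulo} with its $\fc$ taken to be $\fa$.

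The stated particular case is the specialization $E=\gA$, $\fb=\fa+\gen{\bbm}$. Here $\fa\subseteq\fb$, so point~\emph{2} gives $\Gr_\gA(\an,\bbm)\geq n+k\iff\Gr_{\gA/\ua}(\bbm)\geq k$, using that $\fb/\fa$ is generated by the images of the $\bbm$; taking $k=m$ turns this into the equivalence between complete secance of $(\an,\bbm)$ in $\gA$ and complete secance of $(\bbm)$ in $\gA/\ua$. I expect the only genuinely delicate ingredient to be the passage to a true regular sequence, and this is entirely handled by Theorem~\ref{lemProfInfinie}: once $\fa$ is generated by an $E[\uX]$-regular sequence after a faithfully flat polynomial extension, the $n$-fold iteration of the fundamental theorems is routine.
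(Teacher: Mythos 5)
Your proof is correct. For point \emph{1} it coincides with the paper's argument: reduce to an $E[\uX]$-regular sequence generating $\fa\AuX$ via Theorem \ref{lemProfInfinie}, iterate Theorem \ref{thfondprof1} (which, as you note, does not require the regular element to lie in $\fb$), and descend by faithful flatness (Proposition \ref{propProfchgbase}); the second equivalence of point \emph{2} is Lemma \ref{lemprofmodulo} in both treatments. Where you genuinely differ is the direction $\Gr_\gA(\fb,E/\fa E)\geq k\Rightarrow\Gr_\gA(\fb,E)\geq n+k$ of point \emph{2}. The paper obtains it by concatenation: over $\AuX[\uY]$ it places a Kronecker sequence of length $k$ attached to $\fb$, which is $(E/\fa E)$-regular, after the regular sequence $(\hn)$ generating $\fa\AuX\subseteq\fb\AuX$; the two sequences end to end form an $E$-regular sequence of length $n+k$ contained in $\fb$ after a faithfully flat extension, and Theorem \ref{factdefProf} concludes. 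You instead exploit $b_i\in\fa\AuX\subseteq\fb\AuX$ to upgrade each one-way stripping step to the equivalence of Theorem \ref{thfondprof}, so the entire chain is reversible and both directions of point \emph{2} come out at once. Both routes are sound and rest on the same toolbox; yours is slightly more economical (no second packet of indeterminates $\uY$, no appeal to Theorem \ref{factdefProf}), while the paper's concatenation makes explicit the length-$(n+k)$ regular sequence inside $\fb$ that witnesses the conclusion.
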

%
\begin{proof} \emph{1.} Supposons tout d'abord que $\an$ est une \sErg.
On applique plusieurs fois le \thref{thfondprof1}. 
\begin{itemize}
\item On a $\Gr_\gA(\fb,E/a_1E)\geq k+r-1$ car $a_1$ est \Erg.
\item On a $\Gr_\gA\big(\fb,E/(a_1E+a_2E)\big)\geq k+n-2$ car $a_2$ est 
$E/a_1E$-\ndz. 
\item \dots \, jusqu'à  $\Gr_\gA\big(\fb,E/(a_1E+\cdots+a_nE)\big)\geq k$.
\end{itemize}
Voyons le cas où $(\an)$ est \cEse. On considère une \sErg $(\hn)$
dans $\AuX$ 
avec $\gen{\hn}\AuX=\fa\,\AuX$
(point \emph{2} du \thref{lemProfInfinie}). 
Puisque la suite est \Erge on a 

\snic{\Gr_\AuX\big(\fb,E[\uX]/\!\gen{\hn}\!E[\uX]\big)\geq k,}

i.e.  $\Gr_\AuX\big(\fb,E[\uX]/\fa\,E[\uX]\big)\geq k$,
ce qui implique $\Gr_\gA(\fb,E/\fa\,E)\geq k$ par la proposition \ref{propProfchgbase}.

\emph{2.} On suppose $\Gr_\gA(\fb,E/\fa E)\geq k$. On remplace la suite $(\an)$ par une  \sErg  $(\hn)$ en étendant les scalaires à $\AuX$. 
\\
On étend les scalaires à $\AuX[\uY]$. Par hypothèse on a dans $\fb\AuX[\uY]$ une suite $(E/\fa E)$-\ndze  de longueur $k$. Les deux suites mises bout à bout font une \sErg dans 
$\gA[\uX,\uY]$.
\\
La dernière \eqvc est dans le lemme \ref{lemprofmodulo}.
\end{proof}
\rem L'implication du point \emph{1} n'est qu'un cas particulier
de l'implication correspondante du point \emph{2} car $\Gr_\gA(\fa,E)\geq r+k$ implique $\Gr_\gA(\fa+\fb,E)\geq r+k$ et $\Gr_\gA(\fa+\fb,E/\fb E)\geq k$
peut être vu sous la forme $\Gr_{\gA/\fb}(\fa,E/\fb E)\geq k$, \cad aussi bien $\Gr_\gA(\fa,E/\fb E)\geq k$.
\eoe

Voici, pour les \scEss, les analogues des résultats \ref{propLocSrg} et \ref{propChgBasSrg} pour les \sErgs. Cela ce déduit des résultats précédents en considérant une \sKr attachée à la suite considérée. 

\begin{plcc} \label{propLocScs} \emph{(Pour les \scss)} 
Les \scEss restent \cEses par \lon, et une suite qui est \cEse après \lon en des \moco est \cEse.
\end{plcc}

\begin{proposition} \label{propChgBasScs}
Soit $\rho:\gA\to\gB$ une \Alg, $(\ua)=(\an)$ dans~$\gA$ et $E$ un \Amo.
Notons $(\uap)=\big(\rho(a_1),\dots,\rho(a_n)\big)$ \hbox{et $E'=\rho\ist(E)$}.
\begin{enumerate}
\item Si $\gB$ est plate sur $\gA$ et $(\ua)$ est une \scEs, alors $(\uap)$ est une suite complètement $E'$-sécante. 
\item Si $\gB$ est \fpte sur $\gA$, alors $(\ua)$ est une \sErg \ssi $(\uap)$ est une suite complètement $E'$-sécante.
\end{enumerate} 
\end{proposition}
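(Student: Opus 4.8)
The plan is to reduce the statement to the already-proved version for $E$-regular sequences, Proposition \ref{propChgBasSrg}, exactly as the remark preceding the statement announces: one replaces the ideal $\fa=\gen{\ua}$ by a Kronecker sequence attached to it and works over polynomial rings. Recall that by Definition \ref{defiCseqFFR} the length-$n$ sequence $(\ua)$ is completely $E$-secant precisely when $\Gr_\gA(\fa,E)\geq n$, and that by \ref{defiProfNor} (justified by Theorem \ref{thSRGNQ}) this holds iff some — equivalently every — Kronecker sequence $(f_1,\dots,f_n)$ of length $n$ attached to $\fa$ in a polynomial ring $\AuX$ is $E[\uX]$-regular. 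The same description holds over $\gB$: writing $\fa'=\gen{\uap}=\rho(\fa)\gB$, the sequence $(\uap)$ is completely $E'$-secant iff a length-$n$ Kronecker sequence attached to $\fa'$ in $\gB[\uX]$ is $E'[\uX]$-regular.

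Two preliminary observations make the reduction work. First, the image under $\rho$ of a Kronecker sequence attached to $\fa$ is a Kronecker sequence attached to $\fa'$: if $f_i\in\AuX$ has content $\fa$, then $\rho(f_i)\in\gB[\uX]$ has content $\rc_\gB(\rho(f_i))=\rho(\fa)\gB=\fa'$, and the blocks of variables remain pairwise disjoint. Second, flatness (resp.\ faithful flatness) of $\rho$ is inherited by the polynomial extension $\AuX\to\gB[\uX]$, since $\gB[\uX]=\gB\otimes_\gA\AuX$ is the base change of $\gA\to\gB$ along $\gA\to\AuX$; a routine identification also shows that the extension of $E[\uX]$ from $\AuX$ to $\gB[\uX]$ is canonically $E'[\uX]$.

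With these in hand, point \emph{1} is immediate: assuming $\gB$ flat and $\Gr_\gA(\fa,E)\geq n$, pick a length-$n$ Kronecker sequence $(f_i)$ attached to $\fa$; it is $E[\uX]$-regular, its image $(\rho(f_i))$ is a length-$n$ Kronecker sequence attached to $\fa'$, and Proposition \ref{propChgBasSrg}.\emph{1} applied to the flat extension $\AuX\to\gB[\uX]$ shows this image is $E'[\uX]$-regular, i.e.\ $\Gr_\gB(\fa',E')\geq n$. For point \emph{2}, the direct implication is just point \emph{1}, faithful flatness implying flatness. For the converse, suppose $(\uap)$ is completely $E'$-secant; then the image $(\rho(f_i))$ of the chosen Kronecker sequence, being one of the length-$n$ Kronecker sequences attached to $\fa'$, is $E'[\uX]$-regular by independence of the choice (Theorem \ref{thSRGNQ}.\emph{2}). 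Since $\AuX\to\gB[\uX]$ is now faithfully flat, Proposition \ref{propChgBasSrg}.\emph{2} descends regularity, so $(f_i)$ is $E[\uX]$-regular and $\Gr_\gA(\fa,E)\geq n$.

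Every step is routine once the translation into Kronecker sequences is in place; the only point requiring a little care — the main obstacle, such as it is — is the bookkeeping that $\AuX\to\gB[\uX]$ inherits (faithful) flatness and that the module extensions match up, so that Proposition \ref{propChgBasSrg} may legitimately be invoked over the polynomial rings rather than over $\gA$ and $\gB$. I would also flag that point \emph{2} should be read with \emph{completely $E$-secant} on the $\gA$-side: reading it with \emph{$E$-regular} there makes the converse false, since complete secance over $\gB$ corresponds to complete secance over $\gA$, which does not force any prescribed order of the $a_i$ to be regular; the equivalence of complete secance on both sides is exactly what the argument above establishes.
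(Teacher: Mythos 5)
Your proof is correct and takes essentially the same route as the paper: the paper's entire justification is the one-line remark preceding the statement — that the result is deduced from the preceding ones by considering a \sKr attached to the sequence in question — and your argument merely supplies the bookkeeping (images of Kronecker sequences have the right content, $\AuX\to\gB[\uX]$ inherits flatness and faithful flatness, and the extension of $E[\uX]$ identifies with $E'[\uX]$) that this remark leaves implicit. Your closing observation is also accurate: the occurrence of \gui{\sErg} in point \emph{2} is a copy-and-paste slip from Proposition \ref{propChgBasSrg}, and the equivalence must be read with \gui{complètement $E$-sécante} on the $\gA$-side, which is exactly what your argument establishes.
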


\sibook{
\begin{lemma} \label{lem a,b,ab csc} \emph{(Astuce $(a,b,ab)$ pour les \scss)}
Soient $(a, a_2, \ldots, a_n)$ et $(b, a_2, \ldots, a_n)$ deux suites \cEses.
Alors la suite $(ab, a_2, \ldots, a_n)$ est \cEse.  
\end{lemma}

%
\begin{proof}
Cela se déduit du résultat analogue pour les suites \Erges (exercice~\ref{lem a,b,ab regseq})
en utilisant le \thref{lemProfInfinie} (point \emph{3} avec $k=\ell$).
\end{proof}
}

\section{Le \tho de Wiebe}\label{wiebe}


\begin{theorem} \label{thWiebe} \emph{(\Tho de Wiebe)}
\\
Soient $(\ua) = (a_1, \dots, a_n)$ et $(\uc) = (c_1, \dots, c_n)$ deux suites de même longueur 
d'un anneau $\gA$ et soit $E$ un \Amo. 
On suppose que $\fc=\gen{\uc} \subseteq \fa=\gen{\ua}$, inclusion certifiée par une matrice carrée 
de \deter $\Delta$.\\
On suppose que la suite $(\uc)$ est \cEse. Alors on a l'égalité
:
$$
(\fc E: \Delta)_E \ = \ \fa E 
\qquad \hbox{et} \qquad 
(\fc E: \fa)_E \ = \   (\gen{\Delta}+ \fc) E. 
$$
En d'autres termes, la suite ci-dessous est exacte
$$\preskip.4em
\xymatrix @R=1em @C=4em{
0 \ar[r] & 
E/\fa E  \ar[r]^-{\times \Delta} & 
E/\fc E \ar[r]^-
{\Cmatrix{.4em}{a_1 \cr \vdots \cr a_n}} 
&
\big(E/\fc E\big)^n.
}
$$
En notant $\gC=\gA/\fc$ et $\ov E$ cela veut \egmt dire: 
$$\preskip.4em  
(0: \ov\Delta)_{\ov E} \ = \ \fa\ \ov E 
\qquad \hbox{et} \qquad 
(0: \ov \fa)_{\ov E} \ = \   \Delta\ \ov E.
$$
\vspace{-1.2em}
\end{theorem}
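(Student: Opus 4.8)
Le point de départ est la matrice des cofacteurs. Si $M$ est la matrice carrée certifiant $\fc\subseteq\fa$, \cad $\tra{(c_1,\ldots,c_n)}=M\,\tra{(a_1,\ldots,a_n)}$, et si $\widetilde M$ désigne la transposée de sa comatrice, alors $\widetilde M\,M=M\,\widetilde M=\Delta\,\I_n$. En appliquant $\widetilde M$ à la relation $\underline c=M\,\underline a$ on obtient $\Delta\,a_i=\sum_k\widetilde m_{ik}\,c_k\in\fc$ pour chaque $i$, donc $\Delta\,\fa\subseteq\fc$. Ceci fournit d'un coup le fait que l'application $\times\Delta$ est bien définie de $E/\fa E$ dans $E/\fc E$, que les deux flèches de la suite affichée se composent en $0$, et que l'on a les inclusions faciles $\fa E\subseteq(\fc E:\Delta)_E$ et $(\langle\Delta\rangle+\fc)E\subseteq(\fc E:\fa)_E$. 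Les deux égalités à démontrer sont exactement l'injectivité de $\times\Delta$ et l'exactitude au terme médian, \cad l'exactitude de la suite; tout le contenu réside dans les inclusions réciproques.

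Je me ramène ensuite au cas où $\underline c$ est une \sErg. Comme $(\underline c)$ est \cEse, on a $\Gr_\gA(\fc,E)\geq n$, et le \thref{lemProfInfinie} (point \emph{3} avec $\ell=k=n$) fournit, sur l'anneau $\gA[\underline X]=\gA[X_1,\ldots,X_n]$, une \sErg $(b_1,\ldots,b_n)$ engendrant $\fc\,\gA[\underline X]$ et reliée à $\underline c$ par une matrice unitriangulaire $U\in\SL_n(\gA[\underline X])$, soit $\tra{(b_1,\ldots,b_n)}=U\,\tra{(c_1,\ldots,c_n)}$. Alors $\underline b=(UM)\,\underline a$ certifie encore $\fc\subseteq\fa$, avec $\det(UM)=\det U\cdot\Delta=\Delta$ puisque $U$ est unitriangulaire. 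Comme $\gA[\underline X]$ est \fpt sur $\gA$ et que les termes de la suite à trois membres se calculent par \eds plate (les noyaux et conoyaux commutent à l'\eds plate, et l'exactitude se redescend par fidèle platitude), il suffit de prouver l'énoncé après \eds à $\gA[\underline X]$. Or sur $\gA[\underline X]$ cette suite n'est autre que la suite de Wiebe associée au triplet $(\underline a,\underline b,UM)$, de \deter $\Delta$ et dont la suite $\underline b$ est \Erge. On est donc ramené au cas où $\underline c$ est \Erge.

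Il reste à traiter le cas où $\underline c$ est \Erge, par \recu sur $n$. Pour $n=1$ on a $c_1=m_{11}a_1$ \Erg, donc chacun de $m_{11}$ et $a_1$ est \Erg, et un calcul direct donne $(c_1E:m_{11})_E=a_1E$ ainsi que $(c_1E:a_1)_E=m_{11}E=(\langle\Delta\rangle+\fc)E$. Pour l'hérédité, l'idée est de quotienter par l'\elt \Erg $c_1$ afin de faire apparaître une configuration de Wiebe en taille $n-1$: le \thref{thfondprof} assure que la profondeur chute de $1$ en passant à $E/c_1E$, tandis que la proposition~\ref{propRegSex} (suite qui reste exacte modulo un \elt \ndz) permet de transporter l'exactitude d'un cran à l'autre. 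Le point délicat, et c'est là le c\oe ur calculatoire de la \dem, est d'extraire une matrice carrée $(n-1)\times(n-1)$ dont le \deter soit le cofacteur approprié de $M$ et de recoller les relations de Cramer $\widetilde M\,M=\Delta\,\I_n$ avec l'\hdr; c'est là que se concentre la difficulté, car la combinaison des $a_j$ apparaissant dans $c_1$ couple toutes les lignes de $M$ et doit être gérée soigneusement.

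Enfin, une fois l'exactitude de la suite établie, les deux égalités annoncées en sont la lecture directe aux deux termes, et les reformulations avec $\gC=\gA/\fc$ et $\ov E=E/\fc E$ ne sont que des traductions. Le principal obstacle est donc l'hérédité du cas \Erge: les inclusions faciles, la réduction \fpte et le cas $n=1$ sont formels, tandis que la mise en place de la matrice $(n-1)\times(n-1)$ et la vérification que son \deter est bien le cofacteur voulu constituent la partie substantielle.
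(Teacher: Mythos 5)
Votre mise en place est correcte et recoupe celle du mémoire sur les parties faciles: les inclusions de Cramer ($\Delta\,\fa\subseteq\fc$), l'équivalence entre les deux égalités et l'exactitude de la suite affichée, la réduction par extension fidèlement plate au cas où $(\uc)$ est $E$-régulière (via le point \emph{3} du théorème \ref{lemProfInfinie}), et le cas $n=1$. Mais la démonstration s'arrête précisément là où commence le contenu: l'étape d'hérédité n'est pas rédigée, elle est seulement annoncée comme étant «~la partie substantielle~». Ce n'est pas une lacune de rédaction mais une lacune mathématique, car le plan que vous esquissez pour cette étape se heurte à un obstacle structurel que vous nommez vous-même sans le résoudre: en quotientant par $c_1$, on ne fait pas apparaître de configuration de Wiebe de taille $n-1$. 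En effet $c_1=\sum_j m_{1j}a_j$ fait intervenir \emph{tous} les $a_j$, et modulo $c_1$ il n'existe aucune façon naturelle de supprimer l'un des $a_j$ ni d'extraire une matrice carrée $(n-1)\times(n-1)$ certifiant une inclusion entre deux suites tronquées.

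La preuve du mémoire contourne exactement cet obstacle par un choix différent: on quotiente par $a_n$, et non par $c_1$. Modulo $a_n$, les deux suites se tronquent naturellement en $(\ov {a_1},\dots,\ov {a_{n-1}})$ et $(\ov {c_1},\dots,\ov {c_{n-1}})$, l'inclusion étant certifiée par la sous-matrice nord-ouest de la matrice certifiante, dont le déterminant $\delta$ est le cofacteur en position $(n,n)$, avec la relation clé $\Delta a_n\equiv\delta c_n \bmod \gen{c_1,\dots,c_{n-1}}$ (obtenue en multipliant $C=MA$ par la transposée de la comatrice). Mais pour que ce quotient soit licite, il faut que $a_n$ soit $E$-régulier et régulier pour $F=E/\gen{c_1,\dots,c_{n-1}}E$, ce que votre réduction ne fournit pas: rendre $(\uc)$ $E$-régulière ne suffit pas. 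Le mémoire obtient ces conditions supplémentaires (les points (i)--(iv) de sa démonstration) en remplaçant \emph{aussi} $a_n$ par un polynôme de Kronecker $a'_n=a_n+a_{n-1}Y+\cdots+a_1Y^{n-1}$ de contenu $\fa$, dans une extension polynomiale supplémentaire (ce qui ne change pas $\Delta$, les matrices de passage étant unitriangulaires); puis il mène les deux calculs de colon (les points \emph{1} et \emph{2}) en combinant l'hypothèse de récurrence sur $(\gA/\gen{a_n},\,E/a_nE)$, la relation $\Delta a_n\equiv\delta c_n$, la régularité de $c_n$ pour $F$ et celle de $a_n$, via le théorème \ref{thfondprof} et la proposition \ref{propRegSex}. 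Ce sont ces deux ingrédients — le bon élément par lequel quotienter, rendu régulier par le truc de Kronecker, et les calculs de Cramer modulo $\gen{c_1,\dots,c_{n-1}}$ — qui manquent à votre proposition; en leur absence, ce qui est écrit ne constitue pas une preuve.
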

\begin{proof}
Comme la suite $(\uc)$ est \cEse,  il en est de même de~$(\ua)$ 
en raison de l'inclusion $\fc\subseteq \fa$.

Les inclusions $\supseteq$ sont évidentes. En effet, d'après l'égalité de Cramer, \hbox{on a 
$\Delta \fa \subseteq \fc$} donc $\Delta \fa E\subseteq \fc E$.
Il reste  à montrer les inclusions $\subseteq$, i.e., pour $x \in E$ :
$$
\hbox{\emph{1.} }\; \Delta x \in \fc E \ \Rightarrow \ x \in \fa E
\qquad \hbox{et} \qquad 
\hbox{\emph{2.} }\; \fa x \subseteq \fc E\ \Rightarrow \ x \in  \gen{\Delta, \, \uc} E
$$
On fait une \dem par \recu sur $n$, le cas $n=1$ étant clair.

On pose \fbox{$\gA'=\aqo\gA{ a_n }$,  $E'=E/a_nE$ et $F=E/\gen{ c_1, \dots, c_{n-1} }E$}. On note $x\mt \ov x$ (resp. $x\mt\wh x$) la surjection canonique $E\to E'$ (resp. $E\to F$). 

\emph{On traite d'abord le cas où les deux suites vérifient les points suivants.}
\begin{enumerate}
\item [i)] $c_n$ est \Frg. 

\item [ii)] $(c_1, \dots, c_{n-1}, a_n)$ est \cEse.

\item [iii)] $a_n$ est \Frg.

\item [iv)] $a_n$ est \Erg.
\end{enumerate}

On va appliquer l'\hdr aux suites 
$(\ov {a_1}, \dots, \ov {a_{n-1}})$ et $(\ov {c_1}, \dots, \ov {c_{n-1}})$
de l'anneau $\gA'$ et au $\gA'$-module $E'$. Pour pouvoir le faire, il faut s'assurer 
que l'hypothèse \gui {suite \cEse}  se propage. Et c'est bien le cas car d'après les points  (ii) et (iv) , on a:
$$
\Gr_{\gA}\big(c_1, \dots, c_{n-1} \, ;E/a_nE\big) \geq n-1,
\; \hbox{i.e.} \ \Gr_{\gA'}(\ov {c_1}, \dots, \ov {c_{n-1}}\, ;E') \geq n-1.
$$
Pour l'inclusion 
$(*)\,\gen{ \ov {c_1}, \dots, \ov {c_{n-1}} } \subseteq 
\gen{\ov {a_1},\dots,\ov {a_{n-1}}}$   
on  a un déterminant~$\delta$ vérifiant $\Delta a_n \equiv \delta c_n \bmod{\gen{ c_1, \dots, c_{n-1} }}$. En effet, notons $C$ (resp. $A$) 
le vecteur colonne des $c_i$ (resp. des $a_i$), si $U A = C$, on a $\wi U C=\Delta A $ et la dernière ligne donne $\Delta a_n \equiv \delta c_n \bmod{\gen{ c_1, \dots, c_{n-1} }}$. Or $\delta$ est le cofacteur de~$U$ en position $(n,n)$, i.e. le \deter de la matrice $V$ extraite de $U$ qui certifie l'inclusion $(*).$   

\emph{Le point 1.}
\\
Soit $x \in E$ tel que $ \Delta x\in \fc E$. Si  l'on a $ \delta \ov x \in \gen{ c_1, \dots, c_{n-1}} E'$,
d'après l'\hdr, on aura
$\ov x \in \gen{ a_1, \dots, a_{n-1}}E'$, i.e. $x \in \fa E$.
Montrons donc que l'on a $\delta x \in \gen{ c_1, \dots, c_{n-1}, a_n} E$, \cade $\delta \wh x\in  a_n F$.
Raisonnons modulo $\gen{ c_1, \dots, c_{n-1}}$.
On a $\Delta a_n \equiv \delta c_n$, d'où $ \Delta  a_n x \equiv  \delta c_n x$.
\hbox{Mais $\Delta x \equiv c_n y$} pour un certain $y \in E$, parce que $\Delta x \in \fc E$.
Ainsi $c_n a_n y \equiv  \delta c_n x$. 
Or $c_n$ est \Frg (cf. (i)), donc $ a_n y \equiv  \delta x$, 
\hbox{d'où $ \delta \wh x\in  a_n F$}, 
ce que l'on voulait.

\emph{Le point 2.}
\\
Soit $x \in E$ tel que $\fa x  \subseteq \fc E$. On veut $x \in \gen{ \Delta,\, \uc } E$.
\\
On va montrer qu'il existe $y \in E$ 
vérifiant $ c_n y\equiv a_n x \bmod {\gen{ c_1, \dots, c_{n-1} }}$ et tel que 
$ \gen{ a_1, \dots, a_{n-1} }\ov y\subseteq \gen{ c_1, \dots, c_{n-1} }E'$.
Donc par \hdr, 
$\ov y \in \gen{ \delta, c_1,\dots, c_{n-1} }E'$, 
c'est-à-dire $y \in \gen{ \delta, c_1,\dots, c_{n-1},a_n }E$.
On multiplie alors cette \syzy par $c_n$ et l'on raisonne modulo $\gen{ c_1, \dots, c_{n-1} }$. 
Comme $c_n y \equiv a_nx$ et $\delta c_n \equiv \Delta a_n$, on obtient 
$ a_n x\in a_n\gen{\Delta,c_n}F$. Mais $a_n$ est \Frg (cf. (iii)), donc
$x \in \gen{ \Delta, c_n }F$. 
Autrement dit $x \in \gen{ \Delta, \uc }E$.

Tout revient donc à montrer qu'il existe un $y \in E$ tel que 
$c_n \wh y = a_n\wh x$ \hbox{et  
$\gen{ a_1, \dots, a_{n-1}, a_n }y \subseteq \gen{ c_1, \dots, c_{n-1},a_n }E$}.
Pour cela, on traduit l'hypothèse $ \fa x \subseteq \fc E$ en disant qu'il existe 
 $y_1,\dots,y_n \in E$ tels que 
$(\star)\,a_i\wh x = c_n\wh {y_i}$ 
et l'on pose $\wh y = \wh {y_n}$ de sorte que $ a_n\wh x \equiv  c_n\wh y$.
En multipliant $(\star)$ par $a_n$, on obtient $a_n  a_i \wh x = a_n  c_n\wh {y_i}$,
c'est-à-dire $c_n  a_i \wh y = a_n  c_n\wh {y_i}$. 
D'après (i), $c_n$ \hbox{est \Frg} 
donc $ a_i\wh y = a_n \wh {y_i}$, d'où $\gen{ a_1, \dots, a_n } y\subseteq \gen{ c_1, \dots, c_{n-1},a_n }E$.

\emph{Il reste à expliquer pourquoi on peut se ramener au cas de 
deux suites $(\ua)$ et $(\uc)$ vérifiant les points (i), (ii), (iii) et (iv).} 
\\
On utilise des extensions \polles. 
Tout d'abord, en application du point \emph{3} du \thref{lemProfInfinie}, après extension à $\gA[X_1,\dots,X_{n-1}]=\AuX$, l'\id $\fc$ est engendré par la \sErg $(c'_1,\dots,c'_n)$ où 
$$\preskip.4em \postskip.4em 
c'_i=c_i+c_{i+1}X_i+\cdots+c_nX_i^{n-i}. 
$$
Comme $\fc\subseteq \fa$ et $(c'_1,\dots,c'_n)$ est 
\Erge, on a 
$$\Gr_\AuX(\fa;E/\gen{c'_1,\dots,c'_{n-1}}E)\geq\Gr_\AuX(\fc;E/\gen{c'_1,\dots,c'_{n-1}}E)\geq 1.$$  
On ajoute encore une \idtr $Y$ et l'on considère le \pol
$$\preskip.4em \postskip.4em 
a'_n=a_n+a_{n-1}Y+\cdots+a_1Y^{n-1}, 
$$
dont le contenu est égal à $\fa$. On en déduit que $a'_n$ est à la fois \Erg \hbox{et \Frg} (ici $F=E/\gen{c'_1,\dots,c'_{n-1}}E$). Ce dernier point implique que la suite $(c'_1,\dots,c'_{n-1},a'_n)$ est \cEse (en fait elle est même \Erge). 
On définit la suite $(a'_1,\dots,a'_n)$ en posant $a'_i=a_i$ pour $i<n$.
On obtient donc un peu mieux que (i), (ii), (iii) et (iv), car $(c'_1,\dots,c'_n)$ \hbox{et $(c'_1,\dots,c'_{n-1},a'_n)$} sont \Erges.
On remarque pour terminer que les matrices de passage entre les anciennes et nouvelles suites étant unitriangulaires, le \deter $\Delta$ ne change pas. 
\end{proof}
\rem La \dem précédente montre un cas typique où l'on supprime une hypothèse trop forte \gui{anneau local \noe} que l'on trouve dans les preuves usuelles données en \clama, en utilisant la bonne \dfn de la profondeur, à la Hochster-Northcott.
\eoe

%

\section{Profondeur et \seco}\label{secprofetsexc}

\begin{theorem} \label{thSESPrf} \emph{(Profondeur et \seco)}\\
Soit $k\in\NN$, si l'on a une \seco de \Amos 

\snic{0\to E\vvers{\iota} F\vvers{\pi} G\to 0}

\snii on obtient les implications suivantes pour un \itf $\fa=\gen{\an}$
\begin{enumerate}
\item
\label{i2thSESPrf}   $(\Gr_\gA(\fa,F)\geq k+1$ et $\Gr_\gA(\fa,G)\geq k)$ 
$\Longrightarrow$ $\Gr_\gA(\fa,E)\geq k+1$.
\item
\label{i3thSESPrf}   $(\Gr_\gA(\fa,E)\geq k$ et $\Gr_\gA(\fa,G)\geq k)$ 
$\Longrightarrow$ $\Gr_\gA(\fa,F)\geq k$.
\item
\label{i1thSESPrf}  $(\Gr_\gA(\fa,E)\geq k+1$ et $\Gr_\gA(\fa,F)\geq k)$ 
$\Longrightarrow$ $\Gr_\gA(\fa,G)\geq k$.
\end{enumerate}
En abrégé, avec 
${g_E=\Gr_\gA(\fa,E),\; g_F=\Gr_\gA(\fa,F) \;\hbox{et}\; g_G=\Gr_\gA(\fa,G)}$:

\fnic{g_E\geq \inf(g_G+1,g_F),\phantom{\big)}
g_F\geq \inf(g_E,g_G),\;
g_G\geq \inf(g_E-1,g_F)\,.}
\end{theorem}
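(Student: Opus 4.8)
The plan is to prove all three implications by induction on $k$, in each case reducing modulo a Kronecker polynomial and transferring the depth information through the fundamental theorem \ref{thfondprof}. Throughout I write $g_E,g_F,g_G$ for the three depths and $\bma=(\an)$. The reduction step is only legitimate when a Kronecker polynomial is regular on the modules involved, so first I would settle the three \emph{rank-one} facts that guarantee exactly this: (i) $g_F\geq 1\Rightarrow g_E\geq 1$, because $\iota$ is injective and a submodule of an $\fa$-regular module is $\fa$-regular; (ii) $(g_E\geq 1$ and $g_G\geq 1)\Rightarrow g_F\geq 1$, by chasing $x\in F$ with $\fa x=0$: then $\pi(x)$ is killed by $\fa$ hence $0$, so $x=\iota(e)$, and $\fa e=0$ forces $e=0$; (iii) $(g_E\geq 2$ and $g_F\geq 1)\Rightarrow g_G\geq 1$.

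Fact (iii) is the delicate one. Given $z\in G$ with $\fa z=0$, I would lift it to $y\in F$; then each $a_iy$ lies in $\iota(E)$, say $a_iy=\iota(e_i)$, and the identities $a_i(a_jy)=a_j(a_iy)$ together with the injectivity of $\iota$ give $a_ie_j=a_je_i$. Thus $(e_1,\dots,e_n)$ is a sequence of $E$ proportional to $\bma$, and since $g_E\geq 2$, Theorem \ref{lemGRa>1} produces $e\in E$ with $e_i=a_ie$ for all $i$. Then $\fa\bigl(y-\iota(e)\bigr)=0$, and the $\fa$-regularity of $F$ gives $y=\iota(e)$, whence $z=\pi(y)=0$.

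For the inductive step I would argue uniformly. By Proposition \ref{propProfchgbase} depth is unchanged under the faithfully flat extension $\gA\to\gA[X]$, so I may replace $E,F,G$ by $E[X],F[X],G[X]$. Choose a Kronecker polynomial $f$ attached to $\fa$ (Definition \ref{defiSGNQ}); its coefficients generate $\fa$, so $f\in\fa[X]$, and by McCoy's lemma \ref{lemMcCoy} the element $f$ is regular on $M[X]$ whenever $g_M\geq 1$. In each of the three implications the hypotheses, combined with the rank-one facts (i)--(iii), force all three modules to have depth $\geq 1$, so $f$ is regular on $E[X],F[X],G[X]$. Corollary \ref{lemHilBur2} then yields the short exact sequence
$$0\to E[X]/fE[X]\to F[X]/fF[X]\to G[X]/fG[X]\to 0,$$
and the fundamental theorem \ref{thfondprof} converts each hypothesis $g_M\geq j+1$ into $\Gr_{\gA[X]}(\fa,\,M[X]/fM[X])\geq j$, and conversely lifts each conclusion about a quotient back up by one. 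Applying the induction hypothesis to this reduced sequence gives the bound on the third quotient, and a final application of \ref{thfondprof} yields the claimed inequality. Concretely, for implication \ref{i3thSESPrf} one descends $g_E,g_G\geq k+1$ to depth $\geq k$ on the two outer quotients, the induction hypothesis gives $\geq k$ on the middle quotient, and \ref{thfondprof} returns $g_F\geq k+1$; implications \ref{i2thSESPrf} and \ref{i1thSESPrf} run identically with the roles of $E,F,G$ permuted. The base cases are $k=0$ (trivial, since every depth is $\geq 0$), except that the genuine base of implication \ref{i1thSESPrf} is precisely fact (iii).

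The main obstacle is exactly case (iii): one cannot start the induction for $G$ by reducing modulo $f$ without already knowing $g_G\geq 1$, so this case must be handled by hand, and it is here that the second-order hypothesis $g_E\geq 2$ is essential — it is used only through the proportionality criterion of Theorem \ref{lemGRa>1}, and it cannot be weakened to $g_E\geq 1$. Everything else is a routine bookkeeping of the two equivalences packaged in \ref{thfondprof}.
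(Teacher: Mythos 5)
Your proof is correct and takes essentially the same route as the paper's: your rank-one facts (i)--(iii) are exactly the paper's initializations (submodule argument for \ref{i2thSESPrf}, diagram chase for \ref{i3thSESPrf}, and the proportionality argument via Theorem \ref{lemGRa>1} for the delicate case \ref{i1thSESPrf}), and your inductive step — reduction modulo a Kronecker polynomial through Corollary \ref{lemHilBur2} and the two directions of Theorem \ref{thfondprof} — is the paper's induction in only slightly different bookkeeping. The one cosmetic slip is your claim that the $k=0$ base cases are trivial: for implication \ref{i2thSESPrf} the base case is precisely fact (i), but since you prove and invoke (i) anyway this does not affect correctness.
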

%
\begin{proof} Rappelons que si un \elt $f$ est \Grg, la \sex dans l'énoncé donne lieu à la \sex

\snic{0\to E/fE\lora F/fF\lora G/fG\to 0,}

\snii
d'après le corolaire  \ref{lemHilBur2}. Ceci nous permet de raisonner par \recu.

\emph{3.} On considère une \sKr $(f_1,\dots,f_{k+1})$ attachée à l'\id $\fa$. Par hypothèse $(f_1,\dots,f_{k+1})$ est \Erge
 et $(f_1,\dots,f_{k})$ est \Frge. Pour $k=0$, il n'y a rien à démontrer. On prend $k\geq 1$ et on montre d'abord que $\fa$ est $G$-\ndz. Soit $z\in G$ avec $a_i z = 0$ pour tout $i$. On doit montrer~\hbox{$z=0$}. On écrit $z=\pi(y)$. Puisque $\pi(a_i y)=0$ on a un $x_i\in E$ tel que~\hbox{$a_iy=u(x_i)$}.
 \hbox{Donc $\iota(a_jx_i-a_ix_j)=0$} pour \hbox{tous $(i,j)$}. Puisque $\iota$ est injective,
$a_jx_i-a_ix_j=0$ pour tous $(i,j)$. Puisque $\Gr_\gA(\fa,E)\geq 2$, on peut écrire $x_i=a_ix$ pour un $x\in E$. Enfin $a_iy=a_i\iota(x)$ pour tout $i$, donc 
$y=\iota(x)$, et $z=\pi(\iota(x))=0$.\\
Ainsi $f_1$ est $G$-\ndz. On a donc la \sex

\snic{0\to E_1=E/f_1E\lora F_1=F/f_1F\lora G_1=G/f_1G\to 0,}

\snii et l'on peut appliquer l'\hdr avec $k-1$.
\\ En effet  $(f_2,\dots,f_{k+1})$
est $E_1$-\ndze et  $(f_2,\dots,f_{k})$  $F_1$-\ndze.

\emph{1.} On initialise avec $k=0$. Si $\fa$ est \Frg, 
puisque $E$ peut être considéré comme un sous-module de $F$, a fortiori
$\fa$ est \Erg.\\
 Si $k\geq 1$, par hypothèse la suite  $(f_1,\dots,f_{k+1})$ est \Frge
 et $(f_1,\dots,f_{k})$ \hbox{est \Grge}. Puisque $f_1$ est \Grg, la \recu fonctionne. 

\emph{2.} Ici aussi, il suffit de vérifier l'initialisation avec $k=1$.
On suppose que $\fa$ est \Erg et \Grg, on doit montrer qu'il est \Frg.
\\
Soit $y\in F$ avec $\fa y=0$. L'\elt $z=\pi(y)$ est annulé par $\fa$, donc $z=0$ et~$y$  s'écrit~\hbox{$y=\iota(x)$}. Mais puisque $\iota$ est injective,  $\fa x=0$,
donc $x=0$, \hbox{et $y=0$}. 
\end{proof}

\rem 
On peut aussi présenter les in\egts concernant les profondeurs par trois descriptions analogues de type \gui{ultramétrique} 
lorsque les entiers~$g_i$ sont bien définis. Chacune se suffit à elle-même:

\begin{itemize}
\item [\emph{1'.}] \quad $g_E\geq \inf(g_G+1,g_F)$, \,\,avec \egt si $g_F\neq g_G$.
\item [\emph{2'.}] \quad $g_F\geq \inf(g_E,g_G)$, \,\,\hspace{1.4em} avec \egt si $g_E\neq g_G+1$.
\item [\emph{3'.}] \quad $g_G\geq \inf(g_E-1,g_F)$, \,\,avec \egt si $g_E\neq g_F$.
\end{itemize}

On peut visualiser ceci sous forme d'un tableau, en fixant $g_E$ (ici  $g_E=4$),  en se basant sur \emph{3'}, et en faisant cro\^{\i}tre $g_G$.
{\small\[ \!\!\!
\begin{array}{cccccccccccccccccccccccccccccccccccccccc} 
g_E & | &  4 & 4 & 4   & \! |&  4  &  4 &  4  &  4 & \dots &\! |&  4  & 4  & 4   & 4& \!\dots  \\[1mm] 
g_F & | &  0 & 1 & 2   & \! |&  3  & 4  & 5   & 6  &\dots &\! |&  4  &  4 &  4  & 4&  \!\dots    \\[1mm] 
g_G & | &  0 & 1   & 2  & \!|& 3   & 3  & 3   & 3 &\dots &\! |&  4  & 5  & 6   & 7&  \!\dots    \\[1mm] 
& | &  g_G & <    & g_E-1  &\! |& g_G & =    & g_E   &-\,1 &    &\! |&  g_G & >    & g_E   & -\,1  &   \\[1mm] 
 \end{array}
\] } 
\vspace{-3em}
\eoe

\vspace{3em}
\exl En particulier, on voit que si $G$ est quotient d'un module libre par un sous-module
libre, on obtient $\Gr_\gA(\fa,G)\geq \Gr_\gA(\fa)-1$.
L'in\egt dans l'autre sens est donnée pour un cas particulier dans le
lemme \ref{lem1SESPrf}.
\eoe


\section{Profondeur et \ddk}\label{secprofetddk}

Cette section n'utilise pas les résultats des sections \ref{wiebe} et \ref{secprofetsexc}.

\sibook{
Dans cette section \dots}

\subsec{Profondeur d'un produit d'\ids}

\begin{lemma} \label{lemProdEtProf}
Soient $\fa$ et $\fb$ deux \itfs et $k\geq 1$. \\
Si $\Gr_\gA(\fa,E)\geq k$ et 
 $\Gr_\gA(\fb,E)\geq k$, alors  $\Gr_\gA(\fa\fb,E)\geq k$.
\\
 En particulier, si $\sqrt\fa=\sqrt\fb$ alors $\Gr_\gA(\fa,E)= \Gr_\gA(\fb,E) $, et si $\fa\subseteq \sqrt\fb$, alors $\Gr_\gA(\fa,E)\leq \Gr_\gA(\fb,E) $. 
\end{lemma}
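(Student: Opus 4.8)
The plan is to establish the product inequality $\Gr_\gA(\fa\fb,E)\geq k$ by \recu on $k$, and then to derive the two \gui{in particular} assertions from it by a sandwiching-by-powers argument. For the base case $k=1$ the hypotheses say that $\fa$ and $\fb$ are both \Erg and I want $\fa\fb$ to be \Erg: given $x\in E$ with $\fa\fb\,x=0$, for each generator $b$ of $\fb$ one has $\fa\,(bx)=0$, whence $bx=0$ because $\fa$ is \Erg; as this holds for every generator, $\fb\,x=0$, so $x=0$ because $\fb$ is \Erg. (This is exactly the first astuce $(a,b,ab)$ of Lemma \ref{lemCoreg1}, read at the level of ideals.)

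For the inductive step $k\to k+1$ (with $k\geq 1$) I would pass to a polynomial extension in order to manufacture a regular element, and then invoke the fundamental theorem of depth. From $\Gr_\gA(\fa,E)\geq 1$ and $\Gr_\gA(\fb,E)\geq 1$ the base case already gives that $\fa\fb$ is \Erg; choosing a Kronecker polynomial $h$ attached to $\fa\fb$ (so $\rc(h)=\fa\fb$) and setting $\gA'=\gA[\uX]$, $E'=E[\uX]$, the lemma of McCoy \ref{lemMcCoy} makes $h$ an $E'$-\ndz element of $\gA'$, and $h\in(\fa\fb)\gA'\subseteq \fa\gA'\cap\fb\gA'$. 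By faithful flatness (Proposition \ref{propProfchgbase}) the hypotheses lift to $\Gr_{\gA'}(\fa,E')\geq k+1$ and $\Gr_{\gA'}(\fb,E')\geq k+1$, so the fundamental theorem \ref{thfondprof}, applied with the regular element $h\in\fa\gA'$ (resp. $h\in\fb\gA'$), yields $\Gr_{\gA'}(\fa,E'/hE')\geq k$ and $\Gr_{\gA'}(\fb,E'/hE')\geq k$. The induction hypothesis, applied over $\gA'$ to the module $E'/hE'$, then gives $\Gr_{\gA'}(\fa\fb,E'/hE')\geq k$; a last application of \ref{thfondprof} with $h\in(\fa\fb)\gA'$ raises this to $\Gr_{\gA'}(\fa\fb,E')\geq k+1$, and faithful flatness descends it to $\Gr_\gA(\fa\fb,E)\geq k+1$.

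For the complementary statements the crucial point is that the depth of a \tf ideal depends only on its radical. I would first prove the inclusion version: if $\fa=\gen{a_1,\dots,a_m}\subseteq\sqrt\fb$, pick $p$ with $a_i^{\,p}\in\fb$ for all $i$ and put $\fa'=\gen{a_1^{\,p},\dots,a_m^{\,p}}\subseteq\fb$. A pigeonhole argument gives $\fa^N\subseteq\fa'\subseteq\fa$ with $N=m(p-1)+1$ (any monomial of degree $N$ has some exponent $\geq p$). Iterating the product inequality just proved shows $\Gr_\gA(\fa^N,E)\geq k$ whenever $\Gr_\gA(\fa,E)\geq k$, while $\fa^N\subseteq\fa$ gives the reverse by monotonicity (Theorem \ref{factdefProf}); hence $\Gr_\gA(\fa^N,E)=\Gr_\gA(\fa,E)$, and squeezing yields $\Gr_\gA(\fa',E)=\Gr_\gA(\fa,E)$. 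Then $\fa'\subseteq\fb$ gives $\Gr_\gA(\fa,E)\leq\Gr_\gA(\fb,E)$. The case $\sqrt\fa=\sqrt\fb$ follows by applying this inclusion both ways.

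The main obstacle is the inductive step: one needs a single element that is simultaneously \Erg and contained in $\fa$, $\fb$ and $\fa\fb$, which forces the passage to $\gA[\uX]$ through a Kronecker polynomial and the use of the McCoy lemma, and the bookkeeping of the three applications of the fundamental theorem (two to go down to $E'/hE'$, one to come back up) together with faithful flatness is where care is required. By contrast, the radical statement is routine once the power-sandwiching $\fa^N\subseteq\fa'\subseteq\fa$ is in place.
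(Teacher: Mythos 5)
Votre démonstration est correcte et suit essentiellement la même voie que celle du mémoire : cas $k=1$ immédiat, puis récurrence au moyen d'un polynôme de Kronecker attaché à $\fa\fb$ (le texte prend $h=f(X)g(Y)$ en variables disjointes, qui est précisément un tel polynôme), le lemme de McCoy~\ref{lemMcCoy} pour rendre $h$ un élément $E[\uX]$-régulier, et les théorèmes~\ref{thfondprof1}/\ref{thfondprof} pour descendre à $E'/hE'$ puis remonter, la platitude fidèle assurant le transfert entre $\gA$ et $\gA[\uX]$. Votre argument d'encadrement par des puissances pour les affirmations \gui{en particulier}, que le texte laisse implicites, est également celui attendu : c'est exactement la technique utilisée dans la preuve du corolaire~\ref{corlemProdEtProf}.
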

%
\begin{proof} La proposition est \imde pour $k=1$. \\
On procède par récurrence pour $k\geq 2$. Soit $f(X)$
un \pol de Kronecker pour $\fa$ et $g(Y)$ un \pol de Kronecker pour $\fb$.
\\
Alors $h=fg$ est un  $E[X,Y]$-\ndz et appartient à l'\id $\fa\,\fb \,E[X,Y]
$. 
\\
Donc par le \thref{thfondprof1}, on obtient sur $\gA[X,Y]$

\snic{\Gr_\gA(\fa,E/hE)\geq k-1$ et $\Gr_\gA(\fb,E/hE)\geq k-1.}

Par hypothèse de récurrence on a $\Gr_\gA(\fa\fb,E/hE)\geq k-1$. Par le \thref{thfondprof} on obtient $\Gr_\gA(\fa\fb,E)\geq k$.  
\end{proof}
\begin{corollary} \label{corlemProdEtProf}
Soit  $a=(\an)$ une suite dans $\gA$. Si $\Gr_\gA(a,E)\geq k$  (resp. $a$  est une suite \cEse) et si $e=(e_1,\dots,e_n)$ est une suite d'entiers $>0$, alors en notant $a^e=(a_1^{e_1},\dots,a_n^{e_n})$, on a \egmt $\Gr_\gA(a^e,E)\geq k$  (resp. $a^e$ est également \cEse).   
\end{corollary}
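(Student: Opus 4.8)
The plan is to reduce everything to Lemma~\ref{lemProdEtProf} by comparing the two ideals at play. Write $\fa=\gen{a_1,\dots,a_n}$ and $\fb=\gen{a_1^{e_1},\dots,a_n^{e_n}}$, so that by definition $\Gr_\gA(a,E)=\Gr_\gA(\fa,E)$ and $\Gr_\gA(a^e,E)=\Gr_\gA(\fb,E)$. The whole corollary thus amounts to an assertion about these two ideals.

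First I would establish that $\sqrt\fa=\sqrt\fb$. The inclusion $\fb\subseteq\fa$ is clear since each $a_i^{e_i}\in\fa$; conversely, each generator $a_i$ of $\fa$ satisfies $a_i^{e_i}\in\fb$, hence $a_i\in\sqrt\fb$ and so $\fa\subseteq\sqrt\fb$. Taking radicals gives $\sqrt\fb\subseteq\sqrt\fa\subseteq\sqrt\fb$, that is $\sqrt\fa=\sqrt\fb$.

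The conclusion is then immediate from the final assertion of Lemma~\ref{lemProdEtProf}, according to which $\sqrt\fa=\sqrt\fb$ forces $\Gr_\gA(\fa,E)=\Gr_\gA(\fb,E)$. Thus $\Gr_\gA(a^e,E)=\Gr_\gA(a,E)$, which settles the depth statement at once: if $\Gr_\gA(a,E)\geq k$ then $\Gr_\gA(a^e,E)\geq k$. For the complete-secancy statement, observe that $a$ and $a^e$ have the same length $n$, and that the threshold in Definition~\ref{defiCseqFFR} is precisely this common $n$; so ``$a$ completely $E$-secant'', meaning $\Gr_\gA(\fa,E)\geq n$, is equivalent through the equality of depths to $\Gr_\gA(\fb,E)\geq n$, i.e. to $a^e$ being completely $E$-secant.

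I do not anticipate a genuine obstacle here: the entire content lies in recognizing that raising the generators to positive powers alters the ideal only up to radical, after which Lemma~\ref{lemProdEtProf} does all the work and no fresh induction on $k$ is needed. If one preferred a self-contained route, the case $k=1$ follows directly from Lemma~\ref{lemCoreg2} (faithfulness of $\gen{a_1^{e_1},\dots,a_n^{e_n}}$ for $E$), and one could then bootstrap with the fundamental theorem~\ref{thfondprof}; but channelling the argument through the radical-invariance of depth packaged in Lemma~\ref{lemProdEtProf} is cleaner and avoids that bookkeeping.
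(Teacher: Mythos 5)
Your proposal is correct and takes essentially the same approach as the paper: the paper's proof observes that $\fa^N\subseteq\fb$ for $N$ large enough and concludes by Lemma~\ref{lemProdEtProf}, while you make the equivalent observation $\sqrt\fa=\sqrt\fb$ and invoke the same clause of the same lemma. Your handling of the completely $E$-secant case (both sequences have the same length $n$, hence the same threshold in Definition~\ref{defiCseqFFR}) is likewise exactly what the paper's one-line conclusion implicitly uses.
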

%
\begin{proof}
Soit $\fa=\gen{\an}$ et $\fb=\gen{a_1^{e_1},\dots,a_n^{e_n}}$. Pour un entier $N$ assez grand on a l'inclusion $\fa^N\subseteq \fb$. On conclut par le lemme \ref{lemProdEtProf} que $\Gr(\fb,E)\geq k$ (resp. $\Gr(\fb,E)\geq n$).
\end{proof}

Le lemme \ref{lemProdEtProf} justifie la \dfn suivante.

\begin{definition} \label{defiProfNor2}  
\emph{(Profondeur d'un module relativement à un \id, 2)}\\
Soit $k\geq 1$ et $\fa$ un \id \emph{radicalement \tf}. On dit que  \emph{$\fa$ 
est~$k$ fois \Erg}, ou encore que que \emph{la profondeur de~$E$
relativement à~$\fa$ est supérieure ou égale à $k$}, 
et l'on écrit $\Gr_{\gA}(\fa,E)\geq k$,
si l'on a cette \prt pour un (donc pour tout) \itf $\fb$ tel que $\sqrt\fa=\sqrt\fb$. 
\end{definition}

Notons aussi qu'un \id radicalement \tf est \Erg \ssi il est une fois \Erg
au sens de la \dfn précédente.

\subsec{La profondeur est majorée par la \ddk}

\begin{lemma} \label{lemmasupertrick0}
On considère une \scEs $(\ua)=(\an)$, $x\in\gA$, \hbox{et  $(e_1,\dots,e_n)$} dans $\NN$.\\
Si l'on a \fbox{$x\, a_1^{ e_1} \dots a_n^{ e_n} \in 
 \geN{a_1^{ e_1+1},\dots, a_n^{ e_n+1}} $},
 alors $x\in\gen{\an}$.
\end{lemma}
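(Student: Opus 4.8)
The statement is the classical colon-ideal property of a (permutable) regular sequence, here phrased for a completely secant sequence: it asserts that $\big(\gen{a_1^{e_1+1},\dots,a_n^{e_n+1}}:a_1^{e_1}\cdots a_n^{e_n}\big)=\gen{a_1,\dots,a_n}$. My plan is to prove it by induction on the total degree $e=e_1+\cdots+e_n$. When $e=0$ every exponent is zero, the hypothesis reads $x\in\gen{a_1^{1},\dots,a_n^{1}}=\gen{a_1,\dots,a_n}$, and this is already the conclusion; so the base case is immediate.

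For the inductive step I first use that a completely secant sequence does not depend on the order of its terms (the remark following \dfn~\ref{defiCseqFFR}), and that both hypothesis and conclusion are symmetric under a simultaneous permutation of the pairs $(a_i,e_i)$. Hence I may assume $e_n\geq 1$. Setting $\mu'=a_1^{e_1}\cdots a_{n-1}^{e_{n-1}}a_n^{e_n-1}$, the hypothesis becomes $x\mu' a_n\in\gen{a_1^{e_1+1},\dots,a_{n-1}^{e_{n-1}+1},a_n^{e_n+1}}$. Expanding a membership witness and isolating the last generator, I obtain $a_n\big(x\mu'-\lambda a_n^{e_n}\big)\in J$ for some $\lambda\in\gA$, where $J=\gen{a_1^{e_1+1},\dots,a_{n-1}^{e_{n-1}+1}}$. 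The crux is that $a_n$ is $\gA/J$-regular; granting this, I deduce $x\mu'\in J+\gen{a_n^{e_n}}=\gen{a_1^{e_1+1},\dots,a_{n-1}^{e_{n-1}+1},a_n^{e_n}}$, which is \emph{exactly} the hypothesis of the lemma for the same sequence $(a_1,\dots,a_n)$ but with the tuple $(e_1,\dots,e_{n-1},e_n-1)$ of strictly smaller total degree. The induction hypothesis then yields $x\in\gen{a_1,\dots,a_n}$.

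It remains to justify that $a_n$ is regular modulo $J$, and this is where the completely secant hypothesis does the real work, while one carefully avoids the false expectation that the individual $a_i$ be non-zerodivisors. Since $(a_1,\dots,a_n)$ is completely secant, Corollary~\ref{corlemProdEtProf} shows that the sequence of powers $(a_1^{e_1+1},\dots,a_n^{e_n+1})$ is completely secant as well. As $\gen{a_1^{e_1+1},\dots,a_{n-1}^{e_{n-1}+1},a_n}\supseteq\gen{a_1^{e_1+1},\dots,a_n^{e_n+1}}$, the monotonicity of the depth (Theorem~\ref{factdefProf}, point~2) gives $\Gr_\gA\big(\gen{a_1^{e_1+1},\dots,a_{n-1}^{e_{n-1}+1},a_n}\big)\geq n$; thus $(a_1^{e_1+1},\dots,a_{n-1}^{e_{n-1}+1},a_n)$ is itself a completely secant sequence of length $n$. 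By Lemma~\ref{lemscsclindep} each of its terms is regular modulo the others, and in particular $a_n$ is regular modulo $J=\gen{a_1^{e_1+1},\dots,a_{n-1}^{e_{n-1}+1}}$, as required. I expect the main obstacle to be precisely this organisation: one must induct on the total degree (rather than on $n$ by passing to $\gA/\gen{a_n}$, which would demand that $a_n$ be a non-zerodivisor of $\gA$, a statement not available here) so that the only regularity ever needed is that of $a_n$ modulo the ideal $J$ of the neighbouring powers, a fact that \ref{corlemProdEtProf}, \ref{factdefProf} and \ref{lemscsclindep} supply directly.
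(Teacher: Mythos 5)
Your proof is correct and follows essentially the same route as the paper's: isolate the last generator in the membership, use Corollary \ref{corlemProdEtProf} together with Lemma \ref{lemscsclindep} to see that (a power of) $a_n$ is regular modulo $\gen{a_1^{e_1+1},\dots,a_{n-1}^{e_{n-1}+1}}$, and thereby lower the exponent vector until it vanishes. The only differences are cosmetic: the paper factors out $a_n^{e_n}$ and zeroes the whole exponent $e_n$ in one step, iterating over the indices, whereas you decrement one exponent at a time and induct on the total degree; also, your detour through monotonicity of depth (Theorem \ref{factdefProf}) can be shortcut by applying Corollary \ref{corlemProdEtProf} directly with the exponent vector $(e_1+1,\dots,e_{n-1}+1,1)$.
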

\begin{proof}
Si $e_n>0$ on réécrit l'appartenance encadrée sous la forme 

\snic{(x\, a_1^{ e_1} \dots a_{n-1}^{ e_{n-1}} + \bullet\, a_n)\, a_n^{ e_n}
\ \in \ 
 \geN{a_1^{ e_1+1},\dots, a_{n-1}^{ e_{n-1}+1}} .}

Comme la suite $(a_1^{ e_1 + 1}, \dots, a_{n-1}^{ e_{n-1}+1}, a_n^{ e_n})$ 
est \cEse (par le corolaire \ref{corlemProdEtProf}),
l'\elt $a_n^{ e_n}$ est \Erg modulo $\geN{a_1^{ e_1 + 1}, \dots, a_{n-1}^{ e_{n-1}+1}}$ (lemme \ref{lemscsclindep}) 
et donc 

\snic{x\, a_1^{ e_1} \dots a_{n-1}^{ e_{n-1}}  +\bullet\, a_n
\ \in \  
\geN{a_1^{ e_1 + 1}, \dots, a_{n-1}^{ e_{n-1}+1}} 
}

d'où 

\centerline{\fbox{$x\, a_1^{ e_1} \dots a_{n-1}^{ e_{n-1}}
\ \in \ 
\geN{ 
a_1^{ e_1 + 1}, \dots, a_{n-1}^{ e_{n-1}+1}, a_n}$}.
}

On se retrouve dans la même situation que la situation initiale
encadrée, mais
avec le \mom $a^{ e'}$ où $ e' = ( e_1, \dots,  e_{n-1}, 0)$.
\\
On vient de remplacer $ e_n$ par $0$ (en supposant qu'il ne l'était pas déjà) dans l'appartenance encadrée. Le même processus
peut être appliqué successivement à chacun des $ e_j>0$ et l'on obtient à la fin $x\in\gen{\an}$ comme souhaité.
\end{proof}

\begin{lemma} \label{lem-secsing}
Une suite qui est à la fois \csce et singulière est \umd.
\end{lemma}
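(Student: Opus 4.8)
L'idée est de ramener l'énoncé à une application directe du lemme \ref{lemmasupertrick0}. Notons $(\ua)=(\an)$ la suite et $\fa=\gen{\ua}$. Par hypothèse $(\ua)$ est \csce, donc $\Gr_\gA(\fa)\geq n$, et $(\ua)$ est singulière, donc $0\in\SK_\gA(\ua)$. Le but est de montrer que $1\in\fa$, ce qui est exactement l'unimodularité de $(\ua)$.

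Je commencerais par expliciter l'hypothèse de singularité. D'après la définition \ref{defiddk} (avec la convention $(x_0,\dots,x_k)$ appliquée à $(a_1,\dots,a_n)$, donc $k=n-1$), il existe $b_1,\dots,b_n\in\gA$ et $m_1,\dots,m_n\in\NN$ tels que
$$a_1^{m_1}\big(a_2^{m_2}(\cdots(a_n^{m_n}(1+b_na_n)+\cdots)+b_2a_2)+b_1a_1\big)=0.$$
Le calcul clé consiste à développer ce produit \gui{bord de Krull itéré} de l'intérieur vers l'extérieur. Une récurrence immédiate sur le niveau d'imbrication montre que cette expression est égale à
$$a_1^{m_1}a_2^{m_2}\cdots a_n^{m_n}+r,\qquad r\in\geN{a_1^{m_1+1},a_2^{m_2+1},\dots,a_n^{m_n+1}}.$$
En effet, en passant d'un niveau au suivant on multiplie par $a_j^{m_j}$ et l'on ajoute un terme $b_ja_j^{m_j+1}$: le monôme \gui{principal} accumule le facteur $a_j^{m_j}$, tandis que chaque terme correctif tombe dans l'idéal engendré par les $a_i^{m_i+1}$.

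Comme l'expression est nulle, j'obtiens donc
$$a_1^{m_1}a_2^{m_2}\cdots a_n^{m_n}\in\geN{a_1^{m_1+1},\dots,a_n^{m_n+1}},$$
qui est précisément l'hypothèse encadrée du lemme \ref{lemmasupertrick0} avec $x=1$ et les exposants $e_i=m_i$. Puisque $(\ua)$ est \csce, ce lemme fournit alors $1\in\gen{\ua}=\fa$, c'est-à-dire que la suite est \umd.

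Je n'anticipe pas de véritable obstacle: tout le contenu réside dans le développement élémentaire du produit imbriqué, l'essentiel du travail ayant déjà été accompli dans le lemme \ref{lemmasupertrick0}. Les seuls points demandant un peu de soin sont la gestion des indices — faire correspondre la convention $(x_0,\dots,x_k)$ de la définition de singularité à la convention $(a_1,\dots,a_n)$ utilisée ici — et la vérification que tous les termes correctifs appartiennent bien à $\geN{a_1^{m_1+1},\dots,a_n^{m_n+1}}$, ce que la récurrence règle proprement.
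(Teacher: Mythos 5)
Votre démonstration est correcte et suit exactement l'approche du papier : la preuve du papier tient en une ligne (appliquer le lemme \ref{lemmasupertrick0} avec $E=\gA$ et $x=1$), et votre développement du produit imbriqué du bord de Krull itéré ne fait qu'expliciter la réduction que le papier laisse implicite.
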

\begin{proof}
On applique le lemme \ref{lemmasupertrick0} avec $E=\gA$ et $x=1$.
\end{proof}

\begin{theorem} \label{thDimKrullGr}
Soit $\gA$ un anneau de \ddk $\leq r$. 
 Tout \itf de profondeur $>r$ contient $1$. 
\end{theorem}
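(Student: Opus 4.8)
The plan is to replace $\fa$ by an \id with the same nilradical but with few generators, and then to combine the dimension hypothesis with the two lemmas just proved; throughout I read \emph{profondeur $>r$} as $\Gr_\gA(\fa)\geq r+1$ (the depth need not be a well-defined integer, so this is the meaningful statement).

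First I apply the \tho de \KRN-Heitmann \ref{FFRthKroH} (point \emph{2}): since $\Kdim\gA\leq r$, there is an \id $\fb$ engendré par au plus $r+1$ \elts with $\sqrt\fb=\sqrt\fa$. As the profondeur of a \itf depends only on its nilradical (lemme \ref{lemProdEtProf}), this yields $\Gr_\gA(\fb)=\Gr_\gA(\fa)\geq r+1$.

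Next I write $\fb=\gen{b_1,\dots,b_{r+1}}$, completing the generating family by zeros if it is shorter so that the suite $(b_1,\dots,b_{r+1})$ has length exactly $r+1$. Since $\Gr_\gA(\gen{b_1,\dots,b_{r+1}})=\Gr_\gA(\fb)\geq r+1$ equals its length, this suite is \csce by \dfn \ref{defiCseqFFR}. On the other hand, $\Kdim\gA\leq r$ means precisely that every suite of length $r+1$ is singular (\dfn \ref{defiddk}). Thus $(b_1,\dots,b_{r+1})$ is both \csce and singular, so lemme \ref{lem-secsing} forces it to be \umd: $\fb=\gen{b_1,\dots,b_{r+1}}=\gen{1}$. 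Consequently $\sqrt\fa=\sqrt\fb=\gA$, whence $1\in\sqrt\fa$ and finally $1\in\fa$.

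The conceptual heart, and the only genuine difficulty, is already packaged in lemme \ref{lem-secsing} and the lemme \ref{lemmasupertrick0} behind it: these are what turn the singularity relation $0\in\SK_\gA(b_1,\dots,b_{r+1})$ provided by the dimension bound into the membership $1\in\gen{b_1,\dots,b_{r+1}}$, using complete secance to absorb, one after another, the terms carrying a higher power of some $b_i$. The only remaining point to watch is the arithmetic of generators: \KRN-Heitmann produces $r+1$ generators, not $r$, so one cannot simply invoke the infinite-depth criterion \ref{lemProfInfinie} (point \emph{1}), which would require the depth to exceed strictly the number of generators; it is exactly the \csce-and-singular route through \ref{lem-secsing} that bridges this off-by-one.
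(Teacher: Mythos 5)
Your proof is correct and follows essentially the same route as the paper's: Kronecker--Heitmann \ref{FFRthKroH} to reduce to $r+1$ generators, the radical-invariance of depth (lemme \ref{lemProdEtProf}) to see the sequence is \csce, singularity from the dimension bound, and lemme \ref{lem-secsing} to conclude. The only cosmetic difference is that you deduce $1\in\fa$ via $\sqrt\fa=\sqrt\fb=\gA$, whereas the paper uses the containment $\fb\subseteq\fa$ furnished by Kronecker's theorem; both steps are immediate.
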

%
\begin{proof}
L'\itf $\fa$ en question est radicalement engendré par $r+1$ de ses \elts d'après le \tho de Kronecker \ref{FFRthKroH}: $\sqrt{\fa}=\sqrt{\fb}$ avec $\fb=\gen{b_1,\dots,b_{r+1}\subseteq \fa}$. 
La suite $(b_1,\dots,b_{r+1})$ est une \csce de longueur $r+1$
puisque $\Gr(\fb)=\Gr(\fa)>r$ (lemme \ref{lemProdEtProf}). D'autre part cette suite est singulière puisque la \ddk de $\gA$ est $\leq r$.
On conclut par le lemme \ref{lem-secsing}.
\end{proof}

%

\begin{corollary} \label{cor0thDimKrullGr}
Soient $n\in\NN$ et $k\in\lrb{1..n+1}$. \\Si $\Kdim\gA\leq n$ et  $\Gr_\gA(\fa)\geq k$, alors~\hbox{$\Kdim(\gA/\fa)\leq n-k$}.
\\On peut écrire ceci sous la forme \fbox{$\Kdim(\gA/\fa)\leq \Kdim\gA-\Gr(\fa)$}, en notant que $\Gr_\gA(\fa)\geq k$ peut se relire $-\Gr_\gA(\fa)\leq -k$.  
\end{corollary}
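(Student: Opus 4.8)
The plan is to verify the constructive criterion for $\Kdim(\gA/\fa)\leq n-k$ of Definition \ref{defiddk}: every sequence $(x_0,\dots,x_{n-k})$ of length $n-k+1$ in $\gA/\fa$ must be singular. The obstruction is that $\fa$ need not contain a \csce of length $k$ inside $\gA$ itself (this is precisely Hochster's point that motivates the whole chapter). So the first move is to pass to a faithfully flat extension that simultaneously produces such a sequence \emph{and} leaves the Krull dimension unchanged. By \ref{factdefProf} (equivalently \ref{lemProfInfinie}), after extension to a polynomial ring $\AuX$ the ideal $\fa$ contains a \sErg of length $k$. Localizing $\AuX$ at the monoid of primitive polynomials gives the Nagata ring $\gB=\gA(\uX)$, over which $\fa\gB$ contains a \csce $(a_1,\dots,a_k)$, complete secance being preserved by localization (\ref{propLocScs}). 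The key gain is that $\Kdim\gB=\Kdim\gA\leq n$ while $\gB/\fa\gB\simeq(\gA/\fa)(\uX)$, so $\Kdim(\gA/\fa)=\Kdim(\gB/\fa\gB)$; it therefore suffices to bound the dimension of $\gB/\fa\gB$.

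Working in $\gB$, I would take an arbitrary sequence $(x_0,\dots,x_{n-k})$ and concatenate it after the \csce to form the length-$(n+1)$ sequence $(a_1,\dots,a_k,x_0,\dots,x_{n-k})$. Since $\Kdim\gB\leq n$, this sequence is singular, i.e. $0\in\SK_\gB(a_1,\dots,a_k,x_0,\dots,x_{n-k})$. The crux is to unwind this relation along the recurrence defining the iterated Krull boundary: writing the boundary expression of $(y_0,\dots,y_n)$ as $E_j=y_j^{m_j}(E_{j+1}+c_jy_j)=y_j^{m_j}E_{j+1}+c_j\,y_j^{m_j+1}$, the sub-expression $T=E_k$ attached to the tail $(x_0,\dots,x_{n-k})$ is exactly a boundary expression for that tail, and peeling off the first $k$ factors $a_1,\dots,a_k$ gives the relation
\[
a_1^{m_0}\cdots a_k^{m_{k-1}}\,T \;=\; -\sum_{j=0}^{k-1} c_j\,\big(a_1^{m_0}\cdots a_j^{m_{j-1}}\big)\,a_{j+1}^{m_j+1}.
\]
The whole point of the parenthesization in the definition of $\SK$ is that each correction term carries the exponent $m_j+1$, so the right-hand side lies in $\gen{a_1^{m_0+1},\dots,a_k^{m_{k-1}+1}}$.

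This is exactly the shape required by the supertrick Lemma \ref{lemmasupertrick0}: applied to the \csce $(a_1,\dots,a_k)$ with $x=T$ and exponents $e_i=m_{i-1}$, it yields $T\in\gen{a_1,\dots,a_k}\subseteq\fa\gB$. Hence the tail boundary expression $T$ vanishes modulo $\fa\gB$, and reading off the same exponents and coefficients modulo $\fa\gB$ shows that $(x_0,\dots,x_{n-k})$ is singular in $\gB/\fa\gB$. As the sequence was arbitrary, $\Kdim(\gB/\fa\gB)\leq n-k$, whence $\Kdim(\gA/\fa)\leq n-k$. I expect the two delicate points to be, first, the dimension-preserving reduction: one really must invoke $\Kdim\gA(\uX)=\Kdim\gA$ (a result of [CACM]) rather than a bare polynomial ring, whose dimension would be too large to force the length-$(n+1)$ sequence to be singular; and second, the exponent bookkeeping in the boundary expansion, which is precisely what aligns the singularity relation with Lemma \ref{lemmasupertrick0}. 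The extremal case $k=n+1$ requires no separate argument: there the tail is empty, $T=1$, and the computation reproduces $1\in\fa\gB$, recovering \ref{thDimKrullGr}.
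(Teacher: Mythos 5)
Your combinatorial core is sound: concatenating a completely secant sequence of length $k$ with the lifted sequence, unwinding the iterated boundary expression, and applying Lemma \ref{lemmasupertrick0} to get $T\in\gen{a_1,\dots,a_k}$ is correct, and is in effect a relative version of Lemma \ref{lem-secsing}. The fatal flaw is the reduction that is supposed to make it applicable: the equality $\Kdim\gA(\uX)=\Kdim\gA$ is not available — it cannot be a theorem of [CACM] or of anything else, because it is classically false for arbitrary commutative rings. Take $\gA=\QQ+t\RR[[t]]$, a one-dimensional local domain with maximal ideal $\fm=t\RR[[t]]$, and $\alpha\in\RR$ transcendental over $\QQ$. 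Any $f\in\gA[X]$ with $f(\alpha)=0$ has all coefficients in $\fm$ (separate each coefficient into its rational constant term and its part in $t\RR[[t]]$: the relation $f(\alpha)=0$ forces the rational parts to satisfy a polynomial relation for $\alpha$ over $\QQ$, hence to vanish). So the prime $Q=(X-\alpha)\RR((t))[X]\cap\gA[X]$ is nonzero (it contains $t(X-\alpha)$) and contained in $\fm[X]$; in particular neither $Q$ nor $\fm[X]$ contains a primitive polynomial, so both survive in the Nagata ring, and the chain $0\subset Q\gA(X)\subset\fm\gA(X)$ gives $\Kdim\gA(X)\geq 2>1=\Kdim\gA$. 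This is exactly the non-noetherian pathology ($\Kdim\gA[X]>1+\Kdim\gA$ for non-Jaffard rings) that the present theory refuses to exclude. Without $\Kdim\gB\leq n$, your length-$(n+1)$ sequence in $\gB$ has no reason to be singular and the argument collapses; note also that your final descent from $(\gA/\fa)(\uX)$ to $\gA/\fa$ silently leans on the same equality.

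The paper avoids the problem by reversing the order of operations: localize first, look for a sequence afterwards. Given lifts $a_0,\dots,a_{n-k}$ in $\gA$ of an arbitrary sequence of $\gA/\fa$, one sets $S=\SK_\gA(a_0,\dots,a_{n-k})$ and passes to $S^{-1}\gA$. This localization is flat, so $\Gr_{S^{-1}\gA}(S^{-1}\fa)\geq k$ by Proposition \ref{propProfchgbase}; and — this is precisely what your Nagata ring cannot deliver — $\Kdim(S^{-1}\gA)\leq k-1$ holds automatically, because $S$ is the iterated boundary monoid of a sequence of length $n-k+1$ in a ring of Krull dimension $\leq n$. Theorem \ref{thDimKrullGr} with $r=k-1$ then yields $1\in S^{-1}\fa$, i.e. $S^{-1}(\gA/\fa)$ is trivial, i.e. the given sequence is singular in $\gA/\fa$. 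The regular-sequence difficulty you identified (Hochster's point) is dealt with inside Theorem \ref{thDimKrullGr} itself, not by a dimension-preserving extension: there the completely secant sequence comes from Kronecker's theorem \ref{FFRthKroH} together with the radical-invariance of depth (Lemma \ref{lemProdEtProf}), and your supertrick computation is exactly what is packaged in Lemma \ref{lem-secsing}. If you want to keep the spirit of your argument, this is the repair: let the arbitrary sequence consume the dimension via the boundary localization, and then apply the already-proved absolute statement.
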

\rem Ce serait plus joli  d'écrire 
  \fbox{$\Kdim(\gA/\fa)+\Gr(\fa)\leq \Kdim\gA$}.  À condition de dire ce que cela signifie lorsque l'on n'a pas affaire à des entiers bien définis, i.e. ce qui est dans l'énoncé.\eoe

\begin{proof} On note $\gB=\gA/\fa$. Pour des \elts arbitraire $a_0$, \dots, $a_{n-k}$ de $\gA$
on doit démontrer  $0\in S=\SK_\gB(a_0,\dots,a_{n-k})$.  Comme \lon et passage au quotient commutent, on doit montrer que l'anneau $(S^{-1}\gA)/(S^{-1}\fa)$
est trivial, \cad que $S^{-1}\fa$ contient $1$ (dans $S^{-1}\gA$).
\\
Or $\Gr_{S^{-1}\gA}(S^{-1}\fa)\geq k$ (proposition~\ref{propProfchgbase}) et $\Kdim(S^{-1}\gA)< k$, car $S$ est le \mo bord itéré.
On conclut par le \thref{thDimKrullGr} avec $r=k-1$. 
\end{proof}
%

\subsubsec{Cas des anneaux \gmqs}

Comme $\gK$ est un \cdi et $\fa$ un \itf, 
si $\gK$ est non trivial, les trois entiers dans l'encadré du \tho 
suivant sont bien définis.

\begin{theorem} \label{thDimKrullGr-KXn}
Soit $\gK$ un \cdi non trivial et $\gA=\KXn$. Soit $\fa$ un \itf de $\gA$ et $\gB=\gA/\fa$. Soit $r\in\lrb{-1..n}$ la \ddk de $\gB$  et soit $q=n-r$. 
\\
Alors $\Gr_\gA(\fa)\geq q$ avec \egt si $r\geq 0$ (i.e. si $\gB$ est non nulle). Plus \prmt:
\begin{enumerate}
\item Si $r=-1$, alors $\fa=\gen{1}$ et $\Gr_\gA(\fa)=\infty$.
\item Si $r=n$, alors $\fa=\gen{0}$ et $\Gr_\gA(\fa)=0$.
\item Si $r\in\lrb{0..n-1}$,  alors  $\Gr_\gA(\fa)=q$ et  l'\id $\fa$ contient une \srg de longueur $q$
\end{enumerate}
En abrégé:
$
\fbox{$1\in\fa$ ou $\Kdim\big(\gA/\fa\big)+\Gr_\gA(\fa)=\Kdim\gA$}\ . 
$
\end{theorem}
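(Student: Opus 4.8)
The plan is to read off the lower bound $\Gr_\gA(\fa)\ge q$ from the regular sequence already produced in Example~\ref{exempleProfKdim}, to obtain the matching upper bound from the dimension-drop inequality of Corollary~\ref{cor0thDimKrullGr}, and to dispose of the two extreme values $r=-1$ and $r=n$ separately. Throughout I use that $\gA=\KXn$ is a discrete domain satisfying the standard bound $\Kdim\gA\le n$.

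\emph{Boundary cases.} If $r=-1$ then $\gB=\gA/\fa$ is trivial, i.e. $1\in\fa$; then $\fa E=E$ for every \Amo~$E$, and Theorem~\ref{lemProfInfinie} (point \emph{1}) gives $\Gr_\gA(\fa)=\infty$, which is indeed $\ge q=n+1$. If $r=n$, I claim $\fa=\gen{0}$. Since $\gK$ is a discrete field, $\gA$ is discrete, so whether each generator of $\fa$ vanishes is decidable; if some generator $a$ were nonzero it would be \ndz\ (as $\gA$ is integral), so $\gen{a}$ would be faithful and $\Gr_\gA(\gen{a})\ge1$, whence $\Gr_\gA(\fa)\ge1$ by monotonicity in the ideal (Theorem~\ref{factdefProf}, point \emph{2}). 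Corollary~\ref{cor0thDimKrullGr}, applicable because $\Kdim\gA\le n$ and $1\in\lrb{1..n+1}$, would then force $\Kdim\gB\le n-1<n$, contradicting $r=n$. Hence every generator vanishes, $\fa=\gen{0}$, $\gB=\gA$, and $\Gr_\gA(\fa)=\Gr_\gA(\gen{0},\gA)=0=q$.

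\emph{Generic case $r\in\lrb{0..n-1}$.} Example~\ref{exempleProfKdim}, with $d=r<n$, furnishes a \srg\ of length $n-r=q$ contained in $\fa$; by the definition of the depth (Theorem~\ref{factdefProf}, point \emph{1}) this gives $\Gr_\gA(\fa)\ge q$ and already settles the existence statement of point \emph{3}. For the reverse inequality, suppose $\Gr_\gA(\fa)\ge q+1$. Since $r\ge0$ one has $k:=q+1=n-r+1\in\lrb{1..n+1}$, so Corollary~\ref{cor0thDimKrullGr} applies and yields $\Kdim\gB\le n-(q+1)=r-1$, contradicting $\Kdim\gB=r$. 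Therefore $\Gr_\gA(\fa)=q$.

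Assembling the pieces, for $r\ge0$ (cases \emph{2} and \emph{3}) we obtain $\Kdim(\gA/\fa)+\Gr_\gA(\fa)=r+q=n=\Kdim\gA$, while case \emph{1} is exactly the alternative $1\in\fa$; this is the boxed summary. The only genuinely substantial ingredient is the lower bound, and the hard work there has been discharged in Example~\ref{exempleProfKdim} through a Noether normalisation producing monic integral relations $f_i(Y_i)\in\fa$ whose associated sequence is regular; everything else here is bookkeeping with Corollary~\ref{cor0thDimKrullGr}. The single point to watch constructively is that $r=\Kdim\gB$ is a well-defined integer — guaranteed because $\gK$ is a nontrivial discrete field and $\fa$ is \tf — so that the hypothesis $\Kdim\gB=r$ legitimately excludes $\Kdim\gB\le r-1$.
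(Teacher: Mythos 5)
Your proof is correct and follows essentially the same route as the paper: the lower bound $\Gr_\gA(\fa)\geq q$ is read off from the regular sequence constructed in Example \ref{exaregnoreg}~(3), and the upper bound comes from Corollary \ref{cor0thDimKrullGr} applied with $k=q+1$, exactly as in the paper's argument, with the same constructive caveat that $r=\Kdim\gB$ is a well-defined integer. The only difference is that for the case $r=n$ you supply an explicit justification of $\fa=\gen{0}$ (decidability of vanishing in the discrete domain $\gA$ plus Corollary \ref{cor0thDimKrullGr} with $k=1$), a point the paper asserts without detail.
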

%
\begin{proof} \emph{1.} Clair.
\emph{2.}
Si $r=n$, l'\id est nul et par convention, il est de profondeur nulle (en tant qu'\itf non fidèle). 

\emph{3.}
D'après le corolaire \iref{cor0thDimKrullGr}, si la profondeur $\Gr_\gA(\fa)$
était $> q$, cela impli\-querait que la \ddk de $\gB$ soit $<r$.\\
Il nous reste à voir que $\fa$ contient une \srg de longueur $q$.
C'est ce qui a été fait dans l'exemple \ref{exaregnoreg}~(3). 
\end{proof}

\hum{1. Notons que comme $\gK$ est un \cdi et $\fa$ un \itf, 
si $\gK$ est non trivial, les trois entiers dans l'encadré du \tho sont bien définis. Ainsi la phrase avec négation dans la \dem pour assurer que $\Gr_\gA(\fa)\leq q$ ne pose pas \pb. 

Utiliser une négation pour $\Gr_\gA(\fa)\leq q$ dans le cas \gnl d'un \itf arbitraire sur un anneau arbitraire est sans  doute inévitable car on n'a pas défini de manière positive ce que doit signifier $\Gr(\fa)\leq k$.
On pourrait cependant dire ce qui suit (un peu comme quand on a défini $\rg(A)\geq k$ pour une matrice $A$). Pour un \itf $\fa$, on dira que $\Gr_\gA(\fa,E)\leq k$ (en un sens fort)
si toute \scEs   $(b_1,\dots,b_{k+1})$ contenue dans $\fa$
est $E$-\umd, cad $\gen{\ub} E=E$.

Peut-être les choses ne deviennent vraiment limpides qu'avec la \carn 
homologique de la régularité, du moins lorsque les modules d'homologie de Koszul sont calculables explicitement. C'est le cas pour les anneaux \cohs \fdis.

2. Les \thos précédents suggéreraient peut-être de s'intéresser de manière plus \gnle
à la profondeur des \ids bords itérés, \dots, lorsqu'ils veulent bien être radicalement \tf, par exemple pour les anneaux \noes \cohs (les \ids bords itérés à la Richman sont alors \tf).}

Nous laissons en exercice le \tho analogue lorsque $\gk$ est un \azrd.

\begin{corollary} \label{corthDimKrullGr-KXn}
Soit $\gK$ un \cdi non trivial, $\gA=\KXn$, $\lfq\in\gA$ $(q\in\lrbn)$ et $\gB=\gA/\!\gen{\uf}$. On pose  $r=n-q$. \Propeq
\begin{enumerate}
\item La suite $(\lfq)$ est \csce dans $\gA$.
\item $\gB$ est nulle ou de dimension $r$.
\end{enumerate}
\end{corollary}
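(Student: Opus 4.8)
Le plan est de tout ramener à un énoncé sur la profondeur de $\fa=\gen{\uf}$, puis de lire le résultat sur les deux \thos qui précèdent. D'après la \dfn \ref{defiCseqFFR}, comme la suite $(\lfq)$ est de longueur $q$, dire qu'elle est \csce dans $\gA$ signifie exactement $\Gr_\gA(\fa)\geq q$. Je voudrais donc établir que $\Gr_\gA(\fa)\geq q$ \ssi $\gB$ est nul ou de \ddk $r=n-q$.

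Je commencerais par noter que, $\gK$ étant un \cdi et $\fa$ un \itf, on sait décider si $1\in\fa$, donc distinguer $\gB=0$ de $\gB\neq 0$. Si $\gB=0$, alors $1\in\fa$ et $\Gr_\gA(\fa)=\infty\geq q$: la suite est \csce et la branche \gui{$\gB$ nul} de la condition \emph{2} est satisfaite, de sorte que \emph{1} et \emph{2} sont toutes deux vraies. Je peux donc me placer désormais sous l'hypothèse $\gB\neq 0$, \cad $s:=\Kdim\gB\geq 0$.

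Le point central sera alors l'encadrement de $s$. D'un côté, le \thref{thDimKrullGr-KXn} garantit que $\Gr_\gA(\fa)$ est l'entier bien défini $n-s$. De l'autre, et \emph{indépendamment de toute hypothèse}, le point \emph{1} (profondeur infinie) du \thref{lemProfInfinie} --- appliqué à l'\id $\fa$ \emph{vu comme engendré par ses $q$ \gtrs} $f_1,\dots,f_q$ et au module $\gA$ --- affirme que $\Gr_\gA(\fa)\geq q+1$ équivaut à $\fa\gA=\gA$, \cad à $\gB=0$. Puisque $\gB\neq 0$, cela force $\Gr_\gA(\fa)\leq q$, autrement dit $s\geq r$.

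Le reste est purement arithmétique. La condition \emph{1} signifie $\Gr_\gA(\fa)\geq q$; jointe à $\Gr_\gA(\fa)\leq q$ elle donne $\Gr_\gA(\fa)=q$, donc $n-s=q$, i.e. $s=r$, qui est la branche \gui{$\dim\gB=r$} de la condition \emph{2}. Réciproquement, si $s=r=n-q$, alors $s\in\lrb{0..n-1}$ car $q\in\lrbn$, et le cas \emph{3} du \thref{thDimKrullGr-KXn} donne $\Gr_\gA(\fa)=n-s=q\geq q$, d'où \emph{1}. L'obstacle que j'anticipe n'est donc pas dans l'assemblage mais dans la minoration $\Kdim\gB\geq r$: en \clama on la tirerait du \tho de Krull sur la hauteur, alors qu'ici elle se lit gratuitement sur le critère de profondeur infinie, le nombre \emph{exact} de \gtrs de $\fa$ jouant le rôle de majorant de $\Gr_\gA(\fa)$. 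Toutes ces comparaisons entre entiers sont licites parce que $\gK$ non trivial rend $\Kdim\gB$ et $\Gr_\gA(\fa)$ bien définis.
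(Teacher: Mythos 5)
Votre démonstration est correcte et correspond exactement à la déduction que le papier laisse implicite (le corolaire y est énoncé sans démonstration, comme conséquence du \thref{thDimKrullGr-KXn}): réduction de la condition \emph{1} à $\Gr_\gA(\fa)\geq q$, traitement séparé du cas $\gB=0$, puis lecture de l'égalité $\Kdim\gB+\Gr_\gA(\fa)=n$ quand $\gB\neq 0$. Vous identifiez de plus, à juste titre, le seul ingrédient extérieur à ce théorème, à savoir la majoration $\Gr_\gA(\fa)\leq q$ lorsque $\gB\neq 0$, tirée du point \emph{1} du \thref{lemProfInfinie}, et vous justifiez correctement que ces comparaisons d'entiers sont licites parce que $\gK$ est un corps discret non trivial.
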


\begin{definition} \label{defthDimKrullGr-KXn} \emph{(Intersection complète dans le cas \gmq)}
\begin{enumerate}
\item Dans le contexte du \thref{thDimKrullGr-KXn}, si $\gB$ est de dimension $r$
et si~$\fa$ possède un \sgr de $q$ \elts,  on dit que $\gB$ est \emph{intersection complète globale sur $\gK$}.
\item On dit que $\gB$ est \emph{intersection complète locale sur $\gK$} s'il existe des \eco $s_1$, \dots, $s_\ell$ de $\gB$ tels que chaque \Klg $\gB[1/s_j]$
est intersection complète globale sur $\gK$.
\end{enumerate}
\end{definition}

Notez que le point \emph{2} de la \dfn est légitimé par les deux faits suivants. Si $\gB=0$, alors $\fa=\gen{1}$ est engendré par une \srg de longueur $q$, donc~$\gB$ est intersection complète globale.  Si $\gB$ est intersection complète globale, alors pour tout $s\in\gB$, $\gB[1/s]$ est intersection complète globale.

\hum{Un exemple du genre suivant serait bienvenu. \\
Si $\gB_1=\aqo{\KXn}{f_1,\dots,f_q}$ et $\gB_2=\aqo{\KXm}{g_1,\dots,g_p}$ sont deux intersections complètes globales non nulles sur $\gK$ avec $m-p\neq n-q$, $\gB_1\times \gB_2$ est intersection complète locale mais pas globale.}

\subsec{Profondeur d'un anneau}

Les \dems du paragraphe précédent suggèrent d'introduire une
nouvelle dimension.

\begin{definition} \label{defProfAnn} \emph{(Profondeur d'un anneau)}\\
Pour $d\geq -1$ on dit qu'un anneau $\gA$ est \emph{de profondeur $\leq d$} si pour tout \elt $s$ de $\gA$, toute \scs de longueur $>d$ dans $\gA[1/s]$ est \umd. On note ceci sous la forme~$\Gr(\gA)\leq d$.%
\index{profondeur!d'un anneau}
\end{definition}

En particulier  $\Gr(\gA)=-1$ \ssi l'anneau est trivial.

Notons que la \dfn a été formulée de façon à ce que la \prt \gui{$\Gr(\gA)\leq d$} satisfasse le \plgc \gui{en haut} pour la \lon en des \eco.

\hum {Dans le cas d'un \alo $(\gA,\fm)$ avec $\fm$ \rtf, la profondeur de $\gA$
est la même que celle de $\fm$ lorsqu'elles sont connues de manière exacte. En effet, si l'on a $\Gr(\gA)\leq d$ et $\Gr_\gA(\fm)> d$ pour un $d\geq -1$, alors l'\alo est trival. ??}

\section{Principes \lgbs concrets et profondeur}\label{secRecolle2}

\subsec{Nouveau regard sur les \plgcs}

\sibook{
\rems 1) Si $\fa=\gen{\an}$ est $k$ fois \Erg, les \mos
$a_i^{\NN}$ forment un \sys $k$ fois \Erg (d'après le corolaire      \ref{corlemProdEtProf}).

2) Si $(S_1, \dots, S_n)$ est $(n+1)$-fois \Erg, les
\mos $S_i$ sont \emph{$E$-\com}.
\eoe
}

On a déjà vu avec   le \plg \ref{plcc.regularite}
que certains \plgs s'appliquent avec des \ecr,
et pas seulement avec des \eco. Nous allons voir maintenant le cas
des \syss de \mos~$k$ fois~\hbox{\Ergs}.\\ 
Les cas $k=1$ (\mos \cor), $k=2$ et $k=+\infty$ (\moco) semblent les plus importants.

\begin{proposition} \label{propProf2div} \emph{(Un \plg pour la \dve)}\\
On considère des \mos $(S_1,\dots,S_n)$ qui forment un \sys deux fois \ndz,  $y\in\Reg(\gA)$, $x$, $b\in\gA$ \hbox{et $b_1$, \dots, $b_m$} \cor dans $\gA$.
\begin{enumerate}
\item Si $y$ divise $x$ après \lon en chaque $S_i$, alors $y$ divise $x$.
\item Si $b$ est pgcd fort de $(\bbm)$ après \lon en chaque $S_i$, alors~$b$
est pgcd fort de $(\bbm)$.
\end{enumerate}
\end{proposition}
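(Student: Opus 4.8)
The plan is to prove point \emph{1} by hand and then read off point \emph{2} from it, from the definition of strong gcd, and from the stability of faithfulness under \lon (the remark immediately preceding the statement).

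For point \emph{1}, assume $y$ divides $x$ in each $\gA_{S_i}$. Unwinding the localization, for every $i$ there are $s_i\in S_i$ and $a_i\in\gA$ with $s_ix=ya_i$. Since the system $(S_1,\dots,S_n)$ is twice \ndz, these particular elements satisfy $\Gr_\gA(s_1,\dots,s_n)\geq 2$, and I intend to apply the characterization of profondeur $\geq 2$ of \thref{lemGRa>1}. The key computation is that $(a_1,\dots,a_n)$ is proportional to $(s_1,\dots,s_n)$: multiplying $s_ix=ya_i$ by $s_j$ and using $s_jx=ya_j$ gives $ya_is_j=s_is_jx=ya_js_i$, so $y(a_is_j-a_js_i)=0$, and cancelling the regular element $y$ yields $a_is_j=a_js_i$ for all $i,j$. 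By \thref{lemGRa>1} there is then a single $v\in\gA$ with $a_i=s_iv$ for every $i$, whence $s_i(x-yv)=s_ix-ya_i=0$. As $\gen{s_1,\dots,s_n}$ is faithful (this is already $\Gr_\gA(s_1,\dots,s_n)\geq 1$), we conclude $x=yv$, i.e. $y$ divides $x$ in $\gA$.

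For point \emph{2} I would first check $b\in\Reg(\gA)$. In each $\gA_{S_i}$ the element $b$ is a strong gcd of coregular elements, hence regular there, since a strong gcd of coregular elements is always regular (\ref{defiGCDFORT}); so if $bz=0$ in $\gA$ then $z=0$ in each $\gA_{S_i}$, giving $s_i\in S_i$ with $s_iz=0$, and faithfulness of $\gen{s_1,\dots,s_n}$ (once more the once \ndz property) forces $z=0$. With $b$ regular, the first clause defining a strong gcd, that $b$ divides each $b_j$, is exactly point \emph{1} with $y=b$ and $x=b_j$. For the second clause, take $y\in\Reg(\gA)$ and $x\in\gA$ with $y\mid xb_j$ for all $j$. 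Because a principal faithful ideal remains faithful under \lon, we have $y\in\Reg(\gA_{S_i})$ and $y$ still divides each $xb_j$ in $\gA_{S_i}$; the strong-gcd property available in $\gA_{S_i}$ then yields $y\mid xb$ there. Since this holds for every $i$, point \emph{1} applied to the element $xb$ gives $y\mid xb$ in $\gA$, so $b$ is a strong gcd of $(b_1,\dots,b_m)$ in $\gA$.

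The heart of the argument, and the only genuinely delicate step, is the proportionality move in point \emph{1}: regularity of $y$ is precisely what lets one pass from the purely local data $s_ix=ya_i$ to an honest proportionality relation, after which the hypothesis $\Gr\geq 2$ (through \thref{lemGRa>1}) manufactures the global quotient $v$. Everything in point \emph{2} is then bookkeeping, and the only care required is to track which finitely generated ideals ($\gen{y}$, $\gen{s_1,\dots,s_n}$, $\gen{b_1,\dots,b_m}$) are faithful, so that their faithfulness survives the localizations $\gA\to\gA_{S_i}$.
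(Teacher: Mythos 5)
Votre preuve est correcte et suit essentiellement la même route que celle du texte : là où le texte invoque directement le lemme \ref{lemGCDFORT} (une suite de profondeur $\geq 2$ admet $1$ pour pgcd fort), vous en recopiez la démonstration — proportionnalité obtenue en simplifiant par $y$ régulier, puis \thref{lemGRa>1} pour produire le quotient $v$, puis fidélité de $\gen{s_1,\dots,s_n}$ — et le point \emph{2} est traité identiquement (localiser, utiliser le pgcd fort local, conclure par le point \emph{1}). Votre vérification préalable que $b\in\Reg(\gA)$, nécessaire pour appliquer le point \emph{1} avec $y=b$, est un détail que la preuve du texte laisse implicite; elle est bienvenue.
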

%
\begin{proof}
\emph{1.} Par hypothèse $y$ divise  $xs_i$ pour des $s_i\in S_i$. Les $s_i$ forment une suite de profondeur $\geq 2$, donc de pgcd fort $1$. 
Donc $y$ divise $x$.

\emph{2.} Tout d'abord $b$ divise les $b_j$ par le point \emph{1}. Supposons que $y$  divise les~$xb_j$. 
Après \lon en chaque $S_i$, $y$ divise $xb$. Donc $y$ divise $xb$ par le point~\emph{1.}
\end{proof}
%
 
\begin{plcc} \label{plccProfondeur} \emph{(Pour la profondeur d'une suite)}\\
On considère une suite $(s_1,\dots,s_n)$  $k$ fois \Erge
(où $k\geq 1$), et un \itf $\fa$.
Alors $\fa$ est $k$ fois  \Erg  \ssi il est~$k$ fois  \Erg après \lon en les $s_i$.\\
Comme cas particulier, si $s_1$, \dots, $s_n$ vérifient $\gen{s_1,\dots,s_n}E=E$, et \hbox{si 
$\Gr_{\gA[1/s_i]}(\fa,E)\geq k$} pour $i\in\lrbn$, alors $\Gr_\gA(\fa,E)\geq k$. 
\end{plcc}
%

%
\begin{proof}
Pour $k=1$ c'est le \plgc \ref{plcc.regularite}. Pour $k\geq 2$
on suppose que  $\fa$ est $k$-fois  $E[1/s_i]$-\ndz. Soit $\fb=\gen{s_1,\dots,s_n}$.
On considère comme dans la \dem du lemme \ref{lemProdEtProf} un \pKr~$h$ attaché à $\fa\,\fb$ qui est  \Erg. 
Alors,  par le \thref{thfondprof1}, on obtient sur~$\AuX$:
$\Gr_\gA(\fb,E/hE)\geq k-1$ et $\Gr_{\gA[1/b_i]}(\fa,E/hE)\geq k-1.$
\\
On peut donc terminer par \recu, en utilisant le \thref{thfondprof}.
\end{proof}
\hum{Je ne sais pas si (et je ne crois pas que) lorsqu'un \itf est
$k$-fois \Erg après \lon en $S$ alors il est $k$-fois \Erg après \lon en un $s\in S$. 
Cela faciliterait la vie, mais je suis un peu perdu.
Donc il faut faire très gaffe dans les énoncés.
L'avantage de considérer des \mos est surtout la similitude
qui s'opère avec le cas des \moco.
Cela rend aussi certaines preuves plus lisibles, comme dans le \plgc
qui va suivre.}

 Voici une variante des lemmes des \lons successives \Cref{V-7.2},
 \Cref{XV-1.3} et~\ref{factLocCasreg}.

\begin{fact} \emph{(Lemme des \lons successives, avec la profondeur $k$)}
\label{lelosuccprof} \\
Si $(s_1, \ldots, s_n)$ est un \sys dans $\gA$ qui est  $k$ fois \Erg
 et si pour chaque $i$,
on a un \sys
$
(s_{i,1},\dots,s_{i,k_i})
$
dans $\gA$ qui est $k$ fois $E[1/s_i]$-\ndz,
alors les $s_{i}s_{i,j}$ forment un \sys  $k$ fois-\Erg.
\end{fact}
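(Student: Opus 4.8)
The plan is to deduce the statement from the local--global principle \ref{plccProfondeur}, exactly as the case $k=1$ (\ref{factLocCasreg}) is deduced from \ref{plcc.regularite}. Write $\fa=\gen{s_i s_{i,j}\,;\,i\in\lrbn,\ j\in\lrb{1..k_i}}$, which is an \itf of $\gA$. Since the outer system $(s_1,\ldots,s_n)$ is $k$ fois \Erg, the principle \ref{plccProfondeur} applies to $\fa$ and reduces the goal $\Gr_\gA(\fa,E)\geq k$ to proving that $\fa$ is $k$ fois \Erg after \lon at each $s_i$, that is, $\Gr_{\gA[1/s_i]}(\fa,E)\geq k$ for every $i\in\lrbn$.

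To obtain each of these local statements I would exploit that $s_i$ becomes invertible in $\gA[1/s_i]$: every generator $s_i s_{i,j}$ then generates the same ideal as $s_{i,j}$, so that $\gen{s_{i,1},\ldots,s_{i,k_i}}\gA[1/s_i]\subseteq \fa\,\gA[1/s_i]$. By hypothesis the system $(s_{i,1},\ldots,s_{i,k_i})$ is $k$ fois $E[1/s_i]$-\ndz, i.e. $\Gr_{\gA[1/s_i]}(\gen{s_{i,1},\ldots,s_{i,k_i}},E)\geq k$. Applying the monotonicity of the profondeur with respect to inclusion of \itfs (\thref{factdefProf}, point~\emph{2}) in the ring $\gA[1/s_i]$ then yields $\Gr_{\gA[1/s_i]}(\fa,E)\geq k$, as wanted.

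Feeding these localizations back into \ref{plccProfondeur} gives $\Gr_\gA(\fa,E)\geq k$, i.e. the family $(s_i s_{i,j})$ is a $k$ fois \Erg system. I do not expect a genuine obstacle here: the argument is a short two-step localization, and the only points to watch are that $\fa$ is indeed \tf (so that both \ref{plccProfondeur} and \thref{factdefProf} apply) and that the ideal inclusion obtained after \lon points the right way, namely from $\gen{s_{i,1},\ldots,s_{i,k_i}}$ into $\fa$, so that the monotonicity of the profondeur can legitimately be invoked.
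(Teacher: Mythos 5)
Your proof is correct and follows exactly the paper's own route: the paper also reduces the statement to the \plgc \ref{plccProfondeur} applied with the \elts $s_i$ (its proof reads \gui{il suffit de vérifier que les $s_is_{ij}$ sont $k$ fois \Ergs après \lon en des \elts $k$ fois \Ergs; cela fonctionne avec les \elts $s_i$}). You merely make explicit the two details the paper leaves implicit, namely the inclusion $\gen{s_{i,1},\ldots,s_{i,k_i}}\gA[1/s_i]\subseteq \fa\,\gA[1/s_i]$ and the appeal to the monotonicity of the profondeur (\thref{factdefProf}, point \emph{2}).
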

\begin{proof}
D'après le \plgrf{plccProfondeur}, il suffit de vérifier que les~$s_is_{ij}$ sont~$k$ fois \Ergs
après \lon en des \elts   $k$ fois \Ergs. Cela fonctionne avec les \elts $s_i$. 
\end{proof}
%

\subsec{Recollements en profondeur 2}

Cette sous-section reprend le contenu des sections \Cref{XV-8, XV-9}, qui ont été ajoutées par rapport à l'édition française originale. 

Nous reprenons maintenant le \plg \Cref{XV-4.2} en remplaçant l'hypothèse selon laquelle les \mos sont \com par une hypothèse plus faible 
(\sys de \mos deux fois \ndz).

Le contexte est le suivant.
On considère  $(\uS)=(S_i)_{i\in\lrbn}$ un \sys de \mos.
\\
Nous notons $ \gA_i:= \gA_{S_i}$ et  $ \gA_{ij}:= \gA_{S_iS_j}$ ($i\neq j$)
de sorte que $ \gA_{ij}= \gA_{ji}$. 
\\Nous notons~\hbox{$\varphi_i: \gA\to  \gA_i$} et
$\varphi_{ij}: \gA_i\to  \gA_{ij}$ les
\homos naturels. 
\\
Dans ce qui suit des notations comme $(E_{ij})_{i<j\in\lrbn}$ et $(\varphi_{ij})_{i\neq j\in\lrbn})$ signifient que l'on a $E_{ij}=E_{ji}$ mais pas (a priori)
$\varphi_{ij}=\varphi_{ji}$.

\begin{plcc}
\label{plcc.modules1bis}  {\em (Recouvrement d'un module par des localisés) }  
On considère le contexte décrit ci-dessus.
\begin{enumerate}
\item \label{i1plcc.modules1bis} 
On suppose $(\uS)$  deux fois \ndz. On considère un \elt $(x_i)_{i\in\lrbn}$ de  $\prod_{i\in\lrbn}  \gA_i$.
Pour qu'il existe un~\hbox{$x\in  \gA$} vérifiant $\varphi_i(x)=x_i$ dans chaque $ \gA_i$, il faut et
suffit que pour chaque~\hbox{$i<j$} on ait $\varphi_{ij}(x_i)=\varphi_{ji}(x_j)$ dans $ \gA_{ij}$. En outre, cet $x$
est alors déterminé de manière unique.
\\
En d'autres termes l'anneau $ \gA$ (avec les \homos $\varphi_{i}$) est la limite projective du diagramme:

\snic{\big(( \gA_i)_{i\in\lrbn},( \gA_{ij})_{i<j\in\lrbn};(\varphi_{ij})_{i\neq j\in\lrbn}\big)}

\item \label{i2plcc.modules1bis} 
Soit $E$ un \Amo. On suppose $(\uS)$ deux fois \Erg. 
\\Notons $E_i:=E_{S_i}$ et  $E_{ij}:=E_{S_iS_j}$ ($i\neq j$)
de sorte que $E_{ij}=E_{ji}$. 
\\Notons $\varphi_i:E\to E_i$ et
$\varphi_{ij}:E_i\to E_{ij}$ les
\alis naturelles.
Alors le couple $\big(E,(\varphi_{i})_{i\in\lrbn}\big)$ donne la limite projective du diagramme
suivant dans la catégorie des \Amos:

\snic{\big((E_i)_{i\in\lrbn},(E_{ij})_{i<j\in\lrbn};(\varphi_{ij})_{i\neq j\in\lrbn}\big)}

\vspace{-.5em}
$$
\xymatrix @C=3.5em @R=1.5em 
          {
                &&& E_i \ar[r]^{\varphi_{ij}}\ar[ddr]_(.3){\varphi_{ik}}
                & E_{ij}\\
F\ar[urrr]^{\psi_i} \ar[drrr]^{\psi_j}\ar[ddrrr]_{\psi_k}
\ar@{-->}[rr]^(.6){\psi!} &&E \ar[ur]_{\varphi_i}\ar[dr]^{\varphi_j} \ar[ddr]_(.5){\varphi_k} &&\\
  &&& E_j \ar[uur]_(.7){\varphi_{ji}}\ar[dr]
          &E_{ik}\\
&&& E_k \ar[ur]\ar[r]_{\varphi_{kj}} 
   & E_{jk}\\
}
$$
\end{enumerate}
\end{plcc}

\begin{proof}
 \emph{\ref{i1plcc.modules1bis}.} Cas particulier de \emph{\ref{i2plcc.modules1bis}.}

\emph{\ref{i2plcc.modules1bis}.} Soit un \elt $(x_i)_{i\in\lrbn}$ de  $\prod_{i\in\lrbn}  E_i$. On doit montrer que pour qu'il existe un $x\in  E$ 
vérifiant $\varphi_i(x)=x_i$ dans chaque $ E_i$ il faut et
suffit que pour chaque $i<j$ on ait $\varphi_{ij}(x_i)=\varphi_{ji}(x_j)$ dans $E_{ij}$. En outre, cet $x$ doit être unique.
\\
 La condition est clairement \ncr. Voyons qu'elle est suffisante.
\\
Montrons l'existence de $x$. Notons tout d'abord qu'il existe des $s_i\in S_i$ et des $y_i$ dans $E$ tels
que l'on ait $x_i=y_i/s_i$ dans chaque $E_i$.
\\
Si $\gA$ est intègre, 
$E$ sans torsion et \hbox{les $s_i\neq 0$},
on a dans l'\evc obtenu par \eds au corps
des fractions les \egts
$$\preskip.4em \postskip.4em 
\frac{y_1}{s_1}=\frac{y_2}{s_2}=\cdots
=\frac{y_n}{s_n}, 
$$
et vue l'hypothèse concernant les $s_i$ il existe un  $x\in E$
tel que $xs_i=y_i$ pour chaque $i$. 
\\
Dans le cas \gnl on fait à peu près la même
chose.
\\
Pour chaque couple $(i,j)$  avec $i\neq j$, le fait que $x_i/1=x_j/1$ dans $E_{ij}$
signifie que pour certains
$u_{ij}\in S_i$ et  $u_{ji}\in S_j$ on a
 $s_j u_{ij} u_{ji} y_i = s_i u_{ij} u_{ji} y_j $.
Pour chaque $i$, soit  $u_i\in S_i$ un multiple commun des $u_{ik}$ (pour $k\neq i$). 
\\
On a
alors $(s_j u_{j}) (u_{i}  y_i) = (s_i u_{i}) (u_{j} y_j) $.
Ainsi le vecteur des $u_{i}  y_i$ est proportionnel au vecteur des $s_i u_{i}$.
Puisque le \sys $(\uS)$ est deux fois \Erg, il existe \hbox{un $x\in E$} tel que 
$u_{i}  y_i= s_i u_{i} x$ pour tout $i$, ce qui donne les \egts~\hbox{$\varphi_i(x)=\fraC{u_{i}  y_i}{s_i u_{i}}=\fraC{  y_i}{s_i} =x_i$}.
\\
Enfin cet $x$ est unique parce que les $S_i$ sont $E$-\cor.
\end{proof}

Voici maintenant une variante du \plg \Cref{XV-4.4}.
Cette variante appara\^{\i}t cette fois-ci comme une réciproque du \plg précédent.

\begin{plcc}
\label{plcc.modules2bis}\relax {\em (Recollement concret de modules) }
\\
 Soit $(\uS)=(S_1, \dots, S_n)$ un \sys de \mos de $\gA$. \\
 On note $\gA_i=\gA_{S_i}$,
$\gA_{ij}=\gA_{S_iS_j}$ et $\gA_{ijk}=\gA_{S_iS_jS_k}$.
Supposons donné dans la catégorie des \Amos un diagramme commutatif

\snic{\big((E_i)_{i\in \lrbn}),(E_{ij})_{i<j\in \lrbn},(E_{ijk})_{i<j<k\in \lrbn};(\varphi_{ij})_{i\neq j},(\varphi_{ijk})_{i< j,i\neq k,j\neq k}\big)}

(comme dans la figure ci-après)
avec les \prts suivantes. 
\begin{itemize}
\item Pour tous $i$, $j$, $k$ (avec $i<j<k$), $E_i$ est un $\gA_i$-module,  $E_{ij}$ est un~$\gA_{ij}$-module et $E_{ijk}$ est un~$\gA_{ijk}$-module.
Rappelons que selon nos conventions de notation on pose $E_{ji}=E_{ij}$, $E_{ijk}=E_{ikj}=\dots$

\item Pour $i\neq j$,  $\varphi_{ij}:E_i\to E_{ij}$ est un  \molo en $S_j$ (vu dans $\gA_i$).
\item Pour $i\neq k$, $j\neq k$ et $i<j$, $\varphi_{ijk}:E_{ij}\to E_{ijk}$ est un  \molo en $S_k$ (vu dans $\gA_{ij}$).
\end{itemize}

\smallskip {\small\hspace*{6em}{
$
\xymatrix @R=2em @C=7em{
 E_i\ar[d]_{\varphi _{ij}}\ar@/-0.75cm/[dr]^{\varphi _{ik}} &
     E_j\ar@/-1cm/[dl]^{\varphi _{ji}}\ar@/-1cm/[dr]_{\varphi _{jk}} &
        E_k\ar@/-0.75cm/[dl]_{\varphi _{ki}}\ar[d]^{\varphi _{kj}} &
\\
 E_{ij} \ar[rd]_{\varphi _{ijk}} & 
    E_{ik}  \ar[d]^{\varphi _{ikj}} & 
      E_{jk}  \ar[ld]^{\varphi _{jki}} 
\\
   &  E_{ijk} 
}
$
}}
 Alors, si $\big(E,(\varphi_i)_{i\in\lrbn}\big)$ est la limite projective du diagramme, on a  les résultats suivants.
\begin{enumerate}
\item 
Chaque morphisme $\varphi_i:E\to E_i$ est
un  \molo en~$S_i$.  
\item 
Le \sys $(\uS)$ est deux fois \Erg.  
\item  Le \sys $\big(E,(\varphi_{i})_{i\in\lrbn}\big)$ est,
à \iso unique près, l'unique
\sys  $\big(F,(\psi_{i})_{i\in\lrbn}\big)$ avec les $\psi_i\in\Lin_\gA(F,E_i)$
vérifiant les points suivants:
\begin{itemize}
\item le diagramme est commutatif,
\item  chaque $\psi_i$
un  \molo en $S_i$,
\item  le \sys $(\uS)$ est deux fois \Frg.
\end{itemize}
\end{enumerate}
\end{plcc}
%
\begin{proof} \emph{1.} Cette \prt est valable sans aucune hypothèse sur le \sys de \mos considéré (voir la \dem du \plg \cref{XV-4.4}\siBookdeux{ rappelée en \ref{propRcmModLoc}}).

\emph{2.}  On considère des $s_i\in S_i$ et une suite $(\bmx_i)_{i\in\lrbn}$ dans $E$ proportionnelle à $(s_i)_{i\in\lrbn}$. Notons $\bmx_i=(x_{i1},\dots,x_{in})$. La proportionnalité des deux suites signifie que $s_ix_{jk}=s_jx_{ik}$
dans $E_k$ pour tous $i$, $j$, $k$. On pose $\bmx=(\fraC{x_{ii}}{s_i})_{i\in\lrbn}$. On vérifie ensuite que $s_i\bmx=\bmx_{i}$: i.e., que
$s_i\fraC{x_{jj}}{s_j}=x_{ij}$ dans chaque $E_j$. En effet,
cela résulte de l'\egt de proportionnalité  $s_ix_{jk}=s_jx_{ik}$ pour $k=j$.

\emph{3.} 
Puisque~$E$ est la limite projective du diagramme, il y a une unique \Ali  $\psi:F\to E$ telle que
$\psi_i=\varphi_i\circ \psi$ pour tout~$i$.
\\ 
En fait on a $\psi(y)=\big(\psi_1(y),\dots,\psi_n(y)\big)$.
\\ Montrons d'abord que $\psi$ est injective. \hbox{Si $\psi(y)=0$}
tous les $\psi_i(y)$ sont nuls, et puisque $\psi_i$ est un  \molo en~$S_i$, il existe des  $s_i\in S_i$ tels que $s_iy=0$. 
Puisque  $(\uS)$ est un \sys \Frg, on a $y=0$. 
\\
Comme $\psi$ est injective on peut supposer $F\subseteq E$ et 
$\psi_i=\varphi_i\frt F$. \\
Dans ce cas montrer que $\psi$ est bijective revient à montrer que $F=E$. 
\\
Soit $\bmx\in E$. Comme~$\psi_i$ et~$\varphi_i$
sont deux  \molos en $S_i$, il y a des  
$u_i\in S_i$ tels que $u_i\bmx\in F$. Puisque  $(\uS)$ est deux fois \Frg, 
et que la suite des $u_i\bmx$ est proportionnelle à la suite des
$u_i$, il existe un $y\in F$ tel \hbox{que $u_i\bmx=u_iy$} pour tout $i$, donc $y=\bmx\in F$.
\end{proof}

\hum{Il semblerait intéressant de faire une exploration systématique
des \plgs divers et variés et voir s'il y en a d'autres qui
fonctionnent avec une hypothèse affaiblie concernant le \sys des \mos.}

\Exercices

\begin{exercise}
\label{exoDDMcCoy}
{\rm Donner une \dem directe du lemme de McCoy \ref{lemMcCoy}.
 
}
\end{exercise}

\begin{exercise}
\label{exoMcCoyContr1} {(\Tho de McCoy contraposé, version pénible)}\\
{\rm 
Soient $\gA$ un anneau discret non trivial et
une matrice $M\in\Ae{m\times n}$.
\begin{enumerate}
\item Si $\cD_n(M)$ est fidèle, $M$ est injective.
\item Si l'on conna\^{\i}t un entier $k<n$ et un $x\in\gA$ non nul, tels
que 

\snic{x\cD_{k+1}(M)=0\hbox{  et  }\cD_k(M)\hbox{  est fidèle},}

\snii alors on peut construire 
un vecteur non nul dans le noyau de $M$.  
\end{enumerate}
 
}
\end{exercise}

\begin{exercise}
\label{exoMcCoyContr2} (\Tho de McCoy contraposé, version digeste)\\
{\rm Soient $\gA$ un anneau \coh discret non trivial et
une matrice $M\in\Ae{m\times n}$.
\begin{enumerate}
\item Ou bien $\cD_n(M)$ est fidèle, et $M$ est injective.
\item Ou bien on peut construire  dans le noyau de $M$
un vecteur avec au moins une \coo dans $\Atl$.  
\end{enumerate} 
}
\end{exercise}



\begin{exercise} \label{exoregreg}
{\rm
Soient deux \elts \Ergs $b$ et $b'$,
avec $b$ dans l'\id $\fa$ de $\gA$. 
\begin{itemize}
\item [\emph{1}.] Si l'\id $\fa$ est $E/bE$-\ndz,
 il est $E/b'E$-\ndz.
\item [\emph{2}.]  En outre $(bE:\fa)/bE=0$ \ssi $\fa$ est \Erg.
\item [\emph{3}.]  Si $b'$ est aussi dans $\fa$, on a un \iso $(bE:\fa)/bE\simarrow (b'E:\fa)/b'E$.
\end{itemize}
}
\end{exercise}

\begin{exercise} \label{exoWarningReduction}
{(Réduction d'une \sex modulo un \elt \ndz)}\\
{\rm  
Soit une \sex de \Amos

\snic {
0 \to E \to F \to G \to 0
}

\snii  et $a\in \gA$ un \elt \Grg. Le
corolaire \ref{lemHilBur2} dit que la réduction modulo $a$ donne une suite $0 \to E/aE \to F/aF \to G/aG \to 0$ qui est encore exacte.
\\
On considère $\gA = \gk[x,y]$, $a = y$ et la \sex

\snic {
0 \to \gk[x,y] \vvvvers {\tra{\vab {-y}{x}}} \gk[x,y]^2 
\vvers {\vab x y} \gen {x,y} \to 0.
}

\snii
Modulo $y$, on obtient la suite

\snic {
0 \to \gk[x] \vvvvers {\tra{\vab {0}{x}}} \gk[x]^2 
\vvers {\vab x 0} \gen {x} \to 0
}

\snii
qui n'est pas exacte car $\tra{\vab 0 1}$ est dans
le noyau de la forme \lin $\vab x 0$ sans être dans
l'image de la flèche de gauche. Où est l'erreur?

}

\end{exercise}


\begin{exercise}\label{lem a,b,ab regseq}
{(Astuce $(a,b,ab)$ pour les \srgs)}
{\rm
\vspace{.2em}
\begin{itemize}
\item [\emph{1}.]
Soient $(a, a_2, \ldots, a_n)$ et $(b, a_2, \ldots, a_n)$ deux \sErgs.
Montrer qu'il en de même de la suite $(ab, a_2, \ldots, a_n)$.  

\item [\emph{2}.]
Soit $(a_1, \ldots, a_n)$ une \sErg.  \\
Montrer que pour $e_1$, \ldots, $e_n\ge 1$, la suite $(a_1^{e_1}, \ldots, a_n^{e_n})$
est $E$-\ndze.
\end{itemize}
}

\end{exercise}


\begin{exercise} \label{exoprof2gcd} (Profondeur $\geq 2$ pour un anneau à pgcd intègre)\\
{\rm
Soit $\gA$ un anneau à pgcd intègre non trivial. On rappelle que
pour tout \itf $\fa$, on a l'\eqvc:

\snic{\Gr_\gA(\fa)\geq 2\iff \fa$ admet $1$ pour pgcd$.}
\begin{enumerate}
\item [\emph{1.}] Toute matrice $A$ de rang $\leq 1$ peut s'écrire $\tra u\, v$,
où $u$ et $v$ sont des vecteurs lignes et $\Gr_\gA(\gen{u})\geq 2$.
On a alors $\Gr_\gA\big(\cD_1(A)\big)\geq 2\iff \Gr_\gA(\gen{v})\geq 2$. 
\item [\emph{2.}] Donner les énoncés sans négation lorsqu'on ne suppose pas $\gA\neq 0$.
\end{enumerate}

}
\end{exercise}

\begin{exercise}\label{exoMonomialSyzygies} {(Syzygies entre \moms)}
\\
{\rm  
Soit $\kuX = \gk[\Xn]$ un anneau de \pols (où $\gk$ est un anneau quelconque)
et $s$ \moms $m_1, \ldots, m_s$ de $\kuX$. On note $m_i \vi m_j$ le pgcd de
$(m_i, m_j)$ et:
$$\preskip.0em \postskip.4em 
{m_{ij} = {m_j \over m_i \vi m_j}}
\quad \hbox {de sorte que} \quad
\framebox [1.1\width][c]{$m_{ij}\cdot m_i = m_{ji}\cdot m_j$} 
$$
En notant $(\vep_1, \ldots, \vep_s)$ la base canonique de $\kuX^s$, on voit que l'encadré
fournit les \syzys $m_{ij}\vep_i - m_{ji}\vep_j$ pour $(m_1, \ldots, m_s)$.
\\
Montrer que ces \syzys engendrent le module des \syzys pour
$(m_1, \ldots, m_s)$. 

}

\end{exercise}

\begin{exercise} \label{exoGenericPolsRegSequence}
{(Une suite de \pols \gnqs est une suite \ndze)}\\
{\rm  
On considère $s$ \pols \gnqs $P_1$, \ldots, $P_s \in \gA[\uX]=\AXn$, \hmgs de
degrés respectifs $d_1$, \ldots, $d_s \ge 1$; \gui{\gnqs} signifie
que les \coes de chaque $P_i$ sont des \idtrs sur un sous-anneau $\gk$ de
$\gA$ et que le jeu des \idtrs pour $P_i$ est disjoint de celui pour $P_j$ si
$i \ne j$. On va montrer que la suite $(P_1, \ldots, P_s)$ est \ndze lorsque
$s \le n$.

\snii\emph {1.}
Soit le cas particulier $n=s=2$, $d_1=2$, $d_2=3$:

\snic {
P_1 = aX^2 + bXY + cY^2, \qquad
P_2 = a'X^3 + b'X^2Y + c'XY^2 + d'Y^3
}

Montrer que la suite:

\snic {
(\ b, \ c,\ a',\ b',\ c',\ X-a,\  Y-d',\ P_1,\  P_2\ )
}

est \ndze. En déduire que la suite $(P_1, P_2)$ est \ndze.

\snii\emph {2.}
Traiter le cas \gnl.

\snii\emph {3.}
Si $s > n$, la suite $(P_1, \ldots, P_s)$ est-elle
encore \ndze?

}

\end{exercise}




\bonbreak
\sol

\exer{exoDDMcCoy} \emph{(Démonstration directe du lemme de McCoy \ref{lemMcCoy}).}\\
On traite le cas des \pols en une seule variable
(le cas \gnl peut s'en déduire en utilisant l'astuce de \KRA). 
On a $f\in\AT$ et $g\in E[T]$. On suppose $fg=0$ et on veut montrer $g=0$.
On fait une \recu  sur le degré formel $m$ de $g$. Pour  
$m=0$ le résultat est clair. Passons de $m-1$ à $m$. Appelons~$f_i$ les \coes de $f$ et $g_j$ ceux de $g$.  
On va montrer que $f_ig$ est nul pour chaque~$i$. Cela implique alors 
que tous les $f_ig_j$ sont nuls, et puisque $\Ann_E(\rc(f))=0$ que tous les 
$g_j$ sont nuls.
Pour montrer que~$f_ig$ est nul on fait une \recu descendante sur~$i$, 
qui s'initialise avec~$i=n+1$ sans \pb ($n$ est le degré formel de~$f$). Supposons donc avoir montré 
que~$f_ig=0$ pour tous les~$i>i_0$ et montrons le pour~$i_0$.
On a 
  $$ 
      fg = \left(\som_{i\leq i_0}f_iX^i\right) \,g 
  $$
Le \coe de degré~$m+i_0$ de ce \pol est égal à~$g_mf_{i_0}$ et
donc~$g_mf_{i_0}=0$. Donc le \pol $\tilde{g}=f_{i_0}g$ est de degré 
$\leq m-1$ et, évidemment,~$f\tilde{g}=f_{i_0}fg=0$. On peut donc 
appliquer l'\hdr avec~$\tilde{g}$, on conclut~$f_{i_0}g=0$, et on a gagné.  


\exer{exoMcCoyContr1} \emph{(\Tho de McCoy contraposé, version pénible)}\\
\emph{1.} Déjà vu.\\
\emph{2.}  On a $x\neq 0$,  $\cD_k(M)$ est fidèle et l'anneau est discret, 
donc il existe un mineur~$\mu$ d'ordre $k$ de $M$ tel que $x\mu\neq 0$.
Supposons par exemple que $\mu$ soit le mineur nord-ouest et notons $C_1$, \dots, $C_{k+1}$ les premières colonnes de $M$, notons~$\mu_i$ ($i\in\lrbk$) les \deters convenablement signés des matrices extraites sur les lignes $\lrbk$
et les colonnes précédentes, sauf la colonne d'indice $i+1$. Alors
les formules de Cramer donnent l'\egt $\sum_{i=1}^{k}x \mu_iC_i+x\mu C_{k+1}=0$. Comme $x\mu\neq 0$, ceci donne un vecteur non nul dans le noyau de $M$.

\comm En \clama, si $\cD_n(M)$ n'est pas fidèle, comme $\cD_0(M)=\gen{1}$
est fidèle, il existe un $k<n$ tel que $\cD_k(M)$ est fidèle 
et~$\cD_{k+1}(M)$ n'est pas fidèle. Toujours en \clama, si $\cD_{k+1}(M)$ n'est pas fidèle, il existe un $x\neq 0$ tel que $x\cD_{k+1}(M)=0$.
Pour que ces choses deviennent explicites, il faut par exemple disposer d'un test pour la fidélité des \itfs, en un sens assez fort. 
\eoe


\exer{exoMcCoyContr2}\emph{(\Tho de McCoy contraposé, version digeste)}\\
Puisque les \idds sont des \itfs, et l'anneau est \coh, leurs annulateurs
sont \egmt des \itfs Et l'on peut tester la nullité d'un \itf parce que l'anneau est discret. Les hypothèses de l'exercice \ref{exoMcCoyContr1} sont donc satisfaites.\\
NB: l'alternative \gui{\emph{1} ou \emph{2}} est exclusive car l'anneau est non nul, ceci justifie le \gui{ou bien, \dots, ou bien} de l'énoncé. 




\exer{exoregreg}
\\
  Soit $x \in E$ tel que $\fa x
\subseteq b'E$. Il faut prouver que $x \in b'E$. \hbox{Puisque $b \in\fa$}, on \hbox{a $bx\in b'E$}, disons $bx = b'y$ avec $y \in E$. Montrons d'abord \hbox{que $\fa y
\subseteq bE$}. \hbox{Soit $a \in \fa$}, en utilisant $\fa x \subseteq b'E$, on a
$ax = b'z$ avec $z \in E$. 
On \hbox{a
$abx=ab'y = bb'z$}, et comme~$b'$ \hbox{est \Erg}, $ay = b \in bE$.  \\
Puisque $\fa y \subseteq bE$, et que $\fa$ est
$E/bE$-\ndz, on obtient~\hbox{$y \in bE$}, donc $b'y \in bb'E$, i.e. $bx \in bb'E$.
Enfin,  $x \in b'E$ car $b$ est \Erg.


\exer{exoWarningReduction} \emph{(Réduction d'une \sex modulo un \elt \ndz)}
\\
Le $\gk[x,y]$-module $\gen {x,y}/y\gen{x,y}$ est certes un
$\gk[x]$-module mais ce n'est pas l'\id~$\gen {x}$! Réduire 
modulo $y$, ce n'est pas faire $y = 0$ n'importe où!
Voici la \dcn du $\gk[x]$-module $M = \gen {x,y}/y\gen{x,y}$ en
composantes \hmgs:

\snic {
M = 0 \oplus (\gk\ov x \oplus \gk\ov y) \oplus\gk \ov x^2 
\oplus\gk \ov x^3 \oplus \cdots
\qquad \hbox {avec $x \cdot \ov y =0$}
}

\snii
Si on note $N = \aqo{\gk[x]}{x}$, alors $\gk \ov y \simeq N$ via $\ov 1
\leftrightarrow \ov y$ donc $M \simeq N \oplus \gen {x}$. Si on tient compte
de la graduation, il faut décaler $N$ pour obtenir 
$M \simeq N(-1) \oplus \gen {x}$.



\exer{lem a,b,ab regseq} \emph{(Astuce $(a,b,ab)$ pour les \srgs)}
\\
\emph{1.}
Le produit de deux \elts \Ergs de $\gA$ est clairement \Erg.  
Pour chaque $i\in\lrb{2..n}$ on doit vérifier que $a_i$ est $E/(abE+\sum_{k:2\leq k<i}a_kE)$-\ndz.
Autrement dit, il faut
montrer qu'une \egt $a_i x = ab x_1 + a_2 x_2 + \cdots + a_{i-1} x_{i-1}$,
avec~\hbox{$x$, $x_1$, \ldots, $x_{i-1} \in E$}, entra\^ine $x \in ab E + \cdots +
a_{i-1} E$. La suite $(a, a_2, \ldots, a_n)$ est \Erge,
donc:
$$
x = a y_1 + a_2 y_2 + \cdots + a_{i-1} y_{i-1}, \qquad  y_j \in E,
\leqno (\star)
$$
de sorte que:

\snic {
a_i(a y_1 + a_2 y_2 + \cdots + a_{i-1} y_{i-1}) = a_ix
 =
ab x_1 + a_2 x_2 + \cdots + a_{i-1} x_{i-1}.
}

\snii
D'où:

\snic {
a (a_i y_1 - bx_1) + a_2(a_iy_1 - x_2)  + \cdots + a_{i-1}(a_iy_{i-1} - x_{i-1}) = 0.
}

\snii
Comme la suite $(a, a_2, \ldots, a_i)$ est \Erge,  
le lemme~\ref{lem1sqlindep} donne:

\snic {
a_i y_1 - bx_1 \in a_2E + \cdots + a_{i-1}E, \quad \hbox {donc} \quad
a_i y_1 \in bE + a_2E + \cdots + a_{i-1}E,
}

\snii
puis, comme la suite $(b, a_2, \ldots, a_i)$ est
\Erge, $y_1 \in bE + \cdots + a_{i-1}E$.  En reportant ceci dans
$(\star)$, on obtient enfin

\snic {
x \in ab E + a_2 E + \cdots + a_{i-1} E.
}

\snii\emph{2.} Du point \emph{1} on déduit que  la suite $(a_1^e, a_2, \ldots, a_n)$
est \Erge pour $e\geq 1$.
Le résultat voulu se montre  alors par \recu sur $n$.

\exer{exoprof2gcd} \emph{(Profondeur $\geq 2$ pour un anneau à pgcd intègre)} 
\\
\emph{1.}
On suppose \spdg la matrice non nulle. Les colonnes sont proportionnelles
par hypothèse. Supposons la première colonne non nulle, et écrivons 
là sous forme $a\tra u$, où $a$ est un pgcd de ses \coos.
Les \coos de $\!\tra u$ ont pour pgcd $1$ donc forment une suite de profondeur $\geq 2$.
Toute colonne non nulle est proportionnelle à $\!\tra u$, donc
 multiple de $\!\tra u$ (point \emph{3} du \thref{lemGRa>1}), ce qui donne le résultat souhaité.
\\
La dernière affirmation résulte du \thref{factdefProf} et du  lemme \ref{lemProdEtProf}.

\emph{2.}
Les énoncés sont inchangés. 
Les preuves \egmt, à condition de remplacer \gui{\elt non nul}
par \gui{\elt \ndz}. Par ailleurs on peut noter que dans le cas d'un anneau nul, toutes
les \prts envisagées sont vraies, donc \eqves.


\exer{exoMonomialSyzygies} \emph{(Syzygies entre \moms)}
\\
Soit~$r \in \kuX^s$; dire que~$r \in \sum_{i,j} \kuX (m_{ij}\vep_i - m_{ji}\vep_j)$
c'est dire qu'il existe des~$r_{ij} \in \kuX$ vérifiant~$r_{ii} = 0$,
$r_{ij} + r_{ji} = 0$ et:

\snic {
r = \sum_{i,j} r_{ij}m_{ij}\vep_i = \sum_i \left( \sum_j r_{ij}m_{ij}\right) \vep_i
}

\snii
Soit une \syzy~$\sum_i u_im_i = 0$ avec~$u_i \in \kuX$. En considérant la
composante sur un \mom quelconque fixé~$m$, on obtient un terme~$a_im'_i$ de
$u_i$ avec~\hbox{$a_i \in \gk$,~$m'_i$} \mom tel que~$m'_im_i = m$ et~$\sum_i a_im'_i
m_i = 0$, i.e.~$\sum_i a_i = 0$. Il est entendu \hbox{que~$a_i = 0$} si~$m_i \nedivi
m$ et l'on peut se limiter dans la suite aux~$m_i$ tels que~\hbox{$m_i \divi m$}. Il
suffit donc de montrer que~$\sum_i a_im'_i \vep_i$ est dans le module engendré
par les~\hbox{$m_{ij}\vep_i - m_{ji}\vep_j$}.

\snii
Puisque~$m'_im_i = m = m'_jm_j$, le \mom $m$ est divisible par~$m_i$, par
$m_j$ donc par leur ppcm~$m_i \vee m_j$; par conséquent, on peut écrire:

\snic {
m = q_{ij} (m_i \vee m_j),  \quad\hbox {et l'on a donc}\quad q_{ij} = q_{ji}.
}

\snii
Il vient:

\snic {
m'_im_i = q_{ij} (m_i \vee m_j), \quad\hbox {donc}\quad
m'_i = \displaystyle {q_{ij} {m_i \vee m_j \over m_i} = 
q_{ij} {m_j \over m_i \vi m_j} = q_{ij} m_{ij}}.
}

\snii
D'autre part, puisque~$\sum_i a_i = 0$, il existe une matrice
antisymétrique~\hbox{$(a_{ij}) \in \MM_s(\gk)$} telle que 
$a_i = \sum_j a_{ij}$ (la somme sur la ligne~$i$ vaut~$a_i$).
Par exemple, pour~$s = 4$:

\snic {
\cmatrix {
0 & -a_2 & -a_3 &-a_4 \cr
a_2 & 0 & 0 & 0 \cr
a_3 & 0 & 0 & 0 \cr
a_4 & 0 & 0 & 0 \cr} \cmatrix {1\cr 1\cr 1\cr 1\cr} =
\cmatrix {a_1\cr a_2\cr a_3\cr a_4\cr}
}

\snii
On écrit alors (comme par magie en ayant gratté un peu sur son brouillon):

\snic {
a_im'_i = \sum_j a_{ij} m'_i = \sum_j a_{ij} q_{ij} m_{ij}. 
}

\snii Et il ne reste plus qu'à poser~$r_{ij} = a_{ij}q_{ij}$:
on a bien~$r_{ii} = 0$,~$r_{ij} + r_{ji} = 0$ \hbox{et
$\sum_i a_i m'_i \vep_i = \sum_{i,j} r_{ij} m_{ij} \vep_i$}.


\exer{exoGenericPolsRegSequence} \emph{(Une suite de \pols \gnqs est une suite \ndze)}
\\
\emph {1.}
Il est clair que la suite $(\uw)$ des 7 premiers termes est \ndze.  En ce qui
concerne $(\uw, P_1, P_2)$ on peut, par \dfn d'une \seqreg, remplacer $P_i$
par~$Q_i$ avec $Q_i \equiv P_i \bmod \gen {\uw}$; on a $P_1 = X^3 \bmod \gen
{\uw}$ et $P_2 = Y^4 \bmod \gen {\uw}$. La suite~$(\uw, X^3, Y^4)$ est \ndze
parce que l'anneau peut être vu comme un anneau de \pols en les variables
$( b, \ c,\ a',\ b',\ c',\ X-a,\  Y-d',\ X,\  Y )$.
Donc la suite~\hbox{$(\uw, P_1, P_2)$} est \ndze. Or cette suite est constituée de
\pols \hmgs et reste donc \ndze par toute permutation de ses termes.

\hum{1. Dans le dernier argument se fait sentir la nécessité d'une section du cours consacrée au cas gradué. On pourrait \egmt dire que la régularité d'une suite ne change pas lorsque l'on remplace un terme par une puissance?

2. Dans ce raisonnement on a surtout besoin que $a$ et $d'$ soient deux
\idtrs distinctes, distinctes des autres \idtrs. Par contre les autres
\coes peuvent être des \pogs arbitraires en n'importe quel jeu d'\idtrs
distinctes de $a$ et $d'$? }

\snii\emph {2.}
Analogue au cas particulier. Puisque $s \le n$, on peut isoler dans chaque
$P_i$ le terme $a_{i,\varphi_i} X^{\varphi_i}= a_{i,\varphi_i} X_i^{d_i}$  (i.e., $\varphi_i = (0, \ldots,
0,d_i,0, \ldots,0)$). On considère alors la suite $(\uw)$ constituée d'une
part des \coes des $P_i$ autres que les $a_{i,\varphi_i}$ et d'autre part des
binômes $X_i - a_{i,\varphi_i}$. Il est clair que $(\uw)$ est une \seqreg et
\hbox{que $P_i \equiv X_i^{d_i+1} \bmod \gen {\uw}$}.  Puisque la suite $(\uw,
X_1^{d_1+1}, \ldots, X_s^{d_s+1})$ est \ndze, il en est de même de $(\uw, P_1,
\ldots, P_s)$ et, par permutation, de la suite $(P_1, \ldots, P_s, \uw)$, a
fortiori de $(P_1, \ldots, P_s)$.

\snii\emph {3.}
Non: prendre $n=1$, $s=2$, $P_1 = aX$, $P_2 = bX$.


\Biblio

\newpage \thispagestyle{empty}
\incrementeexosetprob


\chapter
{Résolutions libres finies}
\label{chapRLF}
\perso{compilé le \today}

\minitoc

\sibook{
\Intro

Ce chapitre  est une première introduction à l'\alg homologique.
Il est basé sur le livre  remarquable \emph{Finite Free Resolutions} de Northcott. Nous avons  simplifié l'exposition \gnle, et comme d'habitude dans cet ouvrage, toutes nos \dems sont \covs.

\smallskip La section  \ref{secIdesCars} introduit les \idcas 
de certains complexes de modules libres (notamment les \rlfs). 

\smallskip La section   \ref{secRSNLF}  donne quelques premiers résultats
clés concernant les \rlfs.

\smallskip La section \ref{secRangStable}  traite le rang stable des matrices dans une \rlf,
les \rsns libres minimales et les nombres de Betti
dans le cas des \alos.

\smallskip La section \ref{secAusBu} donne le développement \cof qui 
aboutit à une version forte du \tho d'\ABH.

\smallskip La section \ref{secCQRCMLE} donne le \tho d'Eisenbud-Evans
qui caractérise l'exactitude d'un complexe exact de \mlrfs en termes de la profondeur de ses \idcas.

\smallskip La section \ref{secRLFGRAD}  \dots

\smallskip 
\dots

\fbox{\`A COMPL\'ETER} 

}

\section{Complexes}\label{secComplexes}


\subsec{Modules d'homologie d'un complexe}
\begin{definition} \label{defiCompHom}~
\begin{enumerate}
\item Un \ix{complexe} \emph{(descendant)} $C_{\bullet}$ est donné par une suite d'\alis indexée par $\ZZ$,\index{complexe!descendant}%
\begin{equation}
\label{eqnComplexe}
C_{\bullet}:\quad  \cdots\; C_{n+2}\vvvers{u_{n+2}}C_{n+1}\vvvers{u_{n+1}}C_n\vvvers {u_{n}} C_{n-1} \;\cdots
\end{equation} 
dans laquelle $u_n\circ u_{n+1}=0$ pour tout~$n$.
Le complexe est dit \emph{borné} si les~$C_k$ sont nuls pour~$k\leq k_0$
et~$k\geq k_1$ (pour deux entiers~$k_0$,~$k_1$).
 L'\ali $u_n$ est souvent appelée la \emph{\dile} en~$n$ du complexe, et notée $\delta_n$, ou $d_n$, ou $\partial_n$.%
\index{complexe!borné} 
\item Le \emph{module d'homologie en $n$} d'un complexe tel que \pref{eqnComplexe}  est le module quotient~$\Ker u_{n}/\Im u_{n+1}$, noté~$\rH_n(C_{\bullet})$.
\hum{Définir $\rH(C_{\bullet})$ comme un module gradué (sur l'anneau de base)
ou simplement comme la famille $(\rH_n(C_{\bullet}))_{n\in\ZZ}$, et lui donner quel nom?} 
\item 
Le complexe est dit \emph{exact}, ou \emph{acyclique} si tous les groupes
d'homologie sont nuls.%
\index{groupes d'homologie!d'un complexe descendant}%
\index{exact!complexe ---}\index{acyclique!complexe ---}%
\index{complexe!exact}\index{complexe!acyclique}%
\index{decalage@décalage!d'un complexe} 
\item  On peut \emph{décaler d'un rang $k$} un complexe descendant $C_{\bullet}$, on obtient le complexe
noté $C_{\bullet}[k]$ défini par $C_{\bullet}[k]_n=C_{k+n}$, avec une  modification éventuelle du signe pour la \dile: $u'_n=(-1)^{k}u_{n+k}$. 
\item On définit de manière analogue un \emph{complexe montant}, 
que l'on note~$C\bul$, en demandant que
$u_n$ soit une \ali de~$C_n$ vers~$C_{n+1}$.
\begin{equation}
\label{eqnComplexeM}
C\bul:\quad  \cdots\; C_{n-2}\vvvers{u_{n-2}}C_{n-1}\vvvers{u_{n-1}}C_n\vvvers {u_{n}} C_{n+1} \;\cdots
\end{equation} 
Le \emph{module de cohomologie en $n$} d'un tel complexe  est le module quotient~$\Ker u_{n}/\Im u_{n-1}$, noté~$\rH^{n}(C\bul)$.
\\
Le \emph{décalage de rang $k$ d'un complexe montant} $C\bul$ donne le complexe noté $C\bul[k]$
défini \hbox{par $C\bul[k]_n=C_{-k+n}$}.\\
Tout complexe montant donne lieu à un complexe descendant\footnote{Et vice versa.} obtenu
en changeant les indices par l'involution\footnote{Mais il faut faire attention à la numérotation des \diles: la convention universellement acceptée est qu'une \dile a toujours pour indice celui de sa source (que le complexe soit montant ou descendant).} $n\mapsto -n$. 
Cette remarque permet de ne pas répéter systématiquement les \dfns ou \thos concernant les complexes d'une sorte, pour les complexes de l'autre sorte.%
\index{complexe!montant}\index{groupes de cohomologie!d'un complexe montant}%
\end{enumerate}
\end{definition}

\rems ~
\\
1) On dit souvent \emph{groupe d'homologie} (ou de cohomologie) plutôt que \emph{module d'homologie} (ou de cohomologie).

\snii 
2) Si l'on a un complexe défini seulement pour des  indices consécutifs
formant une partie stricte de~$\ZZ$, on peut toujours le compléter avec des modules nuls et des applications nulles pour les indices où il n'est pas défini.

3) On décrit parfois un complexe $C_{\bullet}$ tel que \pref{eqnComplexe} à travers le module somme directe 
$C=\bigoplus_{n\in\ZZ} C_n$ et l'application $u:C\to C$
somme directe des \Alis $u_n$. La condition $u\circ u=0$ résume alors
les conditions~\hbox{$u_n \circ u_{n+1}=0$}, et le module~\hbox{$\rH_{\bullet}(C)=\Ker u/\Im u$} est la somme directe \hbox{des $\rH_n(C_{\bullet})$}.

4) On  peut définir les complexes et leurs \gui{groupes d'\hml} sur n'importe quelle catégorie abélienne. Il en va de même pour les autres notions qui seront définies par la suite, avec les modifications adéquates évidentes.
\eoe

\subsec{Morphismes de complexes}

Un \emph{morphisme} (de degré $0$)
\index{morphisme!de complexes} du complexe (descendant) $C_{\bullet}$  vers le complexe $C'_{\bullet}$ est donné par une famille $\varphi=(\varphi_n)$ d'\Alis $\varphi_n:C_n\to C'_n$ qui fait commuter tous les diagrammes convenables: on en déduit des \Alis 
$\rH_n(\varphi):\rH_n(C_{\bullet})\to\rH_n(C'_{\bullet})$ entre les \mhmls.

Un \emph{morphisme de degré $k$}  du complexe (descendant) $C_{\bullet}$  vers le complexe $C'_{\bullet}$ est un morphisme de $C_{\bullet}$  vers  $C'_{\bullet}[k]$.

\sibook{
\subsubsection*{Somme directe de complexes}

\fbox{écrire le minimum}
}
\subsec{Changement d'anneau de base}

 Rappelons qu'une extension plate préserve les \sexs.
Voici un énoncé du même style. 

\begin{theorem} \label{propEdsHomologie} 
\emph{(Changement d'anneau de base)}\\
Considérons un complexe $C_{\bullet}$ de \Amos et un morphisme $\rho:\gA\to\gB$.
Par \eds  on obtient un complexe $\rho\ist(C_{\bullet})$.
\begin{enumerate}
\item Si $\gB$ est plat sur $\gA$ les modules d'homologie de $\rho\ist(C_{\bullet})$ sont obtenus par \eds à partir des modules d'homologie de~$C_{\bullet}$. 
Autrement dit pour chaque $n$, l'\Ali naturelle 
$$
\rH_n(\rho):\rH_n(C_{\bullet})\to \rH_n(\rho\ist(C_{\bullet}))
$$ 
est un morphisme d'\eds. 
En particulier on obtient les résultats suivants. 
\begin{enumerate}
\item Si $C_{\bullet}$ est exact, il en va de même pour $\rho\ist(C_{\bullet})$.
\item Si $S$ est un \mo de $\gA$, $\rH_n((C_{\bullet})_S)$ s'identifie à $\rH_n(C_{\bullet})_S$. 
\end{enumerate}

\item Si $\gB$ est \fpte sur $\gA$ et si $\rho\ist(C_{\bullet})$ est exact, alors 
 $C_{\bullet}$ est exact. 
%
\end{enumerate} 
\end{theorem}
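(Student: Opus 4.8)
The plan is to reduce the whole statement to the single fact recalled just above the theorem — that a flat extension preserves short exact sequences (\sexs) — together with its faithful strengthening, and to do nothing more than track kernels, images and homology through the functor $\rho\ist=\gB\otimes_\gA-$. Write $u_n\colon C_n\to C_{n-1}$ for the differentials, so that $\rH_n(C_{\bullet})=\Ker u_n/\Im u_{n+1}$. The structural data I would isolate are the short exact sequence of \Amos
\[
0\to \Im u_{n+1}\to \Ker u_n\to \rH_n(C_{\bullet})\to 0,
\]
together with the observation that $\Ker u_n$ is a kernel and $\Im u_{n+1}$ an image of differentials, hence each sits in an evident \sex whose outer terms are the $C_k$.

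First I would prove part~1. Applying $\rho\ist$ and using that flatness preserves \sexs (hence injections, hence kernels), extension of scalars commutes with the formation of the kernels and images of the differentials: the canonical maps give $\gB\otimes\Ker u_n\simeq\Ker(\Id_\gB\otimes u_n)$ and $\gB\otimes\Im u_{n+1}\simeq\Im(\Id_\gB\otimes u_{n+1})$ inside $\gB\otimes C_n$. Then applying $\rho\ist$ to the displayed short exact sequence and invoking preservation of \sexs once more yields the exact sequence
\[
0\to \gB\otimes\Im u_{n+1}\to \gB\otimes\Ker u_n\to \gB\otimes\rH_n(C_{\bullet})\to 0,
\]
which, after the two identifications just made, says exactly that the natural arrow $\rH_n(\rho)$ is a morphisme d'\eds, i.e.\ that the induced map $\gB\otimes\rH_n(C_{\bullet})\to\rH_n(\rho\ist(C_{\bullet}))$ is an isomorphism. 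Consequences (a) and (b) are then immediate: if $C_{\bullet}$ is exact every $\rH_n(C_{\bullet})$ vanishes, so does its extension $\rH_n(\rho\ist(C_{\bullet}))$, and $\rho\ist(C_{\bullet})$ is exact; and for a \mo $S$ one takes $\gB=\gA_S$, localization being flat.

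For part~2 I would argue by faithful flatness. Exactness of $\rho\ist(C_{\bullet})$ means $\rH_n(\rho\ist(C_{\bullet}))=0$ for every $n$, and by part~1 this reads $\gB\otimes\rH_n(C_{\bullet})=0$. Since $\gB$ is \fpte, the vanishing of $\gB\otimes M$ forces $M=0$; applied to $M=\rH_n(C_{\bullet})$ this gives $\rH_n(C_{\bullet})=0$ for all $n$, so $C_{\bullet}$ is exact.

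The one place calling for genuine care — and the step I expect to be the main obstacle — is the constructive bookkeeping in part~1: one must verify that the isomorphisms $\gB\otimes\Ker u_n\simeq\Ker(\Id_\gB\otimes u_n)$ and $\gB\otimes\Im u_{n+1}\simeq\Im(\Id_\gB\otimes u_{n+1})$ are induced by the \emph{canonical} maps, so that the three identifications glue into the one canonical arrow $\rH_n(\rho)$ rather than an abstract isomorphism. Tracking this naturality is what legitimizes the conclusion \gui{morphisme d'\eds} (and not merely \gui{isomorphe après \eds}); once it is checked, no computation beyond the exactness of $\gB\otimes_\gA-$ is required.
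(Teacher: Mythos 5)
Your proof is correct and is exactly the argument the authors intend: the paper gives no written proof of this theorem (it is simply declared easy), and the remark immediately preceding it — that a flat extension preserves short exact sequences — is precisely the fact your argument bootstraps from, via the canonical identifications $\gB\otimes\Ker u_n\simeq\Ker(\Id_\gB\otimes u_n)$, $\gB\otimes\Im u_{n+1}\simeq\Im(\Id_\gB\otimes u_{n+1})$ and the short exact sequence $0\to \Im u_{n+1}\to \Ker u_n\to \rH_n(C_{\bullet})\to 0$. Your treatment of part~2 (faithful flatness applied to the modules $\rH_n(C_{\bullet})$, which vanish after extension by part~1) is likewise the intended one, and your closing remark about checking that the isomorphisms are induced by the canonical maps is indeed the only point requiring care.
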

%
\facile

Comme corolaire on a \tho de recollement suivant.

\begin{theorem} \label{propRecolHomologie} 
\emph{(Recollement de groupes d'homologie)}\\   
Soient  $C_{\bullet}$ un complexe de \Amos et  $S_1$, \dots, $S_n$ des  \moco de $\gA$. 
\\
Notons $C_{\bullet}^{(i)}$ le complexe obtenu par \lon en $S_i$ et $C_{\bullet}^{(i,j)}=C_{\bullet}^{(j,i)}$ le complexe obtenu par \lon en $S_iS_j$. 
Dans un tel cas, chaque module d'homologie~$\rH_n(C_{\bullet})$ est obtenu \gui{par recollement
des modules $\rH_n(C_{\bullet}^{(i)})$ le long des~$\rH_n(C_{\bullet}^{(i,j)})$}. 
De manière précise:
\begin{enumerate}
\item chaque morphisme naturel $\rH_n(C_{\bullet})\to\rH_n(C_{\bullet}^{(i)})$ est un \molo en $S_i$,
\item chaque morphisme naturel $\rH_n(C_{\bullet}^{(i)})\to\rH_n(C_{\bullet}^{(i,j)})$ est un \molo en $S_j$ (vu dans $\gA_{S_i}$), pour $i\neq j$,
\item le module $\rH_n(C_{\bullet})$ et les morphismes naturels $\rH_n(C_{\bullet})\to\rH_n(C_{\bullet}^{(i)})$
donnent la limite projective (dans la catégorie des \Amos) du diagramme formé par les $\rH_n(C_{\bullet}^{(i)})$, les
 $\rH_n(C_{\bullet}^{(i,j)})$ et les \molos $\rH_n(C_{\bullet}^{(i)})\to\rH_n(C_{\bullet}^{(i,j)})$.
\end{enumerate}
\end{theorem}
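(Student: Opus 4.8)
Le plan est de tout déduire du \tho de changement de base \thref{propEdsHomologie} et du principe de recollement concret des modules sur des \moco. Fixons un degré et posons $M=\rH_n(C_{\bullet})$ pour le module d'homologie correspondant. Comme chaque \homo de \lon $\gA\to\gA_{S_i}$ est plat, les points \emph{1} et \emph{2} seront de simples identifications, et le point \emph{3} se ramènera au recollement d'un \emph{unique} module $M$ dont on connaît déjà les localisés en les $S_i$ et en les $S_iS_j$, ainsi que les morphismes de \lon qui les relient.

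D'abord, j'établirais le point \emph{1}. Par construction, $C_{\bullet}^{(i)}=(C_{\bullet})_{S_i}$ est obtenu par \eds le long du morphisme plat $\gA\to\gA_{S_i}$. Le \tho \thref{propEdsHomologie} (point \emph{1}, sous-point (b)) identifie alors canoniquement le morphisme naturel $\rH_n(C_{\bullet})\to\rH_n(C_{\bullet}^{(i)})$ au morphisme de \lon $M\to M_{S_i}$; c'est donc un \molo en $S_i$. Le point \emph{2} s'obtient en descendant d'un cran: $C_{\bullet}^{(i,j)}$ est le localisé du complexe de $\gA_{S_i}$-modules $C_{\bullet}^{(i)}$ en l'image de $S_j$, via la transitivité $\gA_{S_iS_j}=(\gA_{S_i})_{S_j}$. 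La platitude de $\gA_{S_i}\to\gA_{S_iS_j}$ et une nouvelle application de \thref{propEdsHomologie} identifient $\rH_n(C_{\bullet}^{(i)})\to\rH_n(C_{\bullet}^{(i,j)})$ au morphisme de \lon $M_{S_i}\to M_{S_iS_j}$, qui est un \molo en $S_j$ (vu dans $\gA_{S_i}$). À ce stade tout le diagramme de l'énoncé a été identifié au diagramme des localisés du seul module $M$.

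Enfin, le point \emph{3} devient le recollement concret de $M$ sur les \moco $S_1,\dots,S_n$: il faut voir que $M$, muni des morphismes $M\to M_{S_i}$, est la limite projective du diagramme, \cad que $(x_i)\in\prod_i M_{S_i}$ provient d'un élément de $M$ \ssi les deux images de $(x_i)$ dans $\prod_{i<j}M_{S_iS_j}$ coïncident. La partie \emph{séparation} (unicité) est immédiate et utilise la comaximalité: si $x\in M$ s'annule dans chaque $M_{S_i}$, on choisit $s_i\in S_i$ avec $s_ix=0$, et comme $1\in\gen{s_1,\dots,s_n}$ on conclut $x=0$. La partie \emph{recollement} (existence) est l'essentiel: à partir d'une famille compatible $(x_i)$, on doit fabriquer un antécédent $x\in M$; c'est exactement le recollement concret des modules sur \moco, qui repose sur le \plg de base \ref{FFRplcc.basic} relatif aux \slis.

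La seule véritable difficulté consiste à vérifier que les identifications des points \emph{1} et \emph{2} sont naturelles et compatibles avec les morphismes de transition (et avec la double \lon $C_{\bullet}\to(C_{\bullet})_{S_i}\to(C_{\bullet})_{S_iS_j}$), afin que la limite projective abstraite du module $M$ coïncide terme à terme avec le diagramme d'homologie de l'énoncé. Une fois le recollement des modules invoqué, le \tho en résulte formellement, sans aucun calcul d'homologie supplémentaire.
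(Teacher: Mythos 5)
Votre démonstration est correcte et suit essentiellement la voie du texte : le théorème y est présenté comme corolaire immédiat du théorème \ref{propEdsHomologie}, l'identification par platitude des modules d'homologie des complexes localisés avec les localisés de $\rH_n(C_{\bullet})$ se combinant avec le recollement concret d'un module sur des monoïdes comaximaux (CACM XV-4.2). Votre réduction du point 3 à ce recollement, via le principe local-global de base \ref{FFRplcc.basic} pour l'existence et la comaximalité pour l'unicité, est précisément l'argument attendu.
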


\subsec{Homotopies} \label{homotopie}

Soient $(C_{\bullet},d)$, $(C'_{\bullet},d')$ deux $\gA$-complexes montants et $f$, $g : C_{\bullet} \to C'_{\bullet}$
deux morphismes de complexes de degré 0.  \\
Une \emph{homotopie de $C_{\bullet}$ dans $C'_{\bullet}$ reliant $f$ à $g$} est une application  $h : C_{\bullet} \to C'_{\bullet}$ de degré
$-1$, i.e. telle que $h(C_i) \subseteq C'_{i-1}$ pour tout $i$, vérifiant:%
\index{homotopie!reliant deux morphismes de complexes}

\smallskip \centerline {
\begin{minipage}[c]{4cm}
 \fbox{
$d' \circ h + h \circ d = g-f$}
\end{minipage}
\qquad
\begin{minipage}[c]{6cm}
$\xymatrix {
\ar[r] & C_{i-1}\ar@/_6pt/[d]_g\ar@/^6pt/[d]^f\ar[r]^{d} & C_i\ar[dl]_(.4)h\ar@/_6pt/[d]_g\ar@/^6pt/[d]^f\ar[r]^{d}
     & C_{i+1}\ar@/_6pt/[d]_g\ar@/^6pt/[d]^f\ar[dl]_(.4){h} \ar[r] &
\\
\ar[r] & C'_{i-1} \ar[r]_{d'} & C'_{i} \ar[r]_{d'}
     & C'_{i+1} \ar[r] &
\\
}
$
\end{minipage}
}

\begin{fact} \label{factHomotopHomolg}
Dans la situation décrite ci-dessus, on a 

\centerline{\fbox{$\rH(f) = \rH(g)$}.}

\end{fact}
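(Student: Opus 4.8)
The plan is to prove the equality of induced maps degree by degree on cohomology, reducing everything to the defining relation of the homotopy evaluated on a cocycle. Since $\rH(f)$ and $\rH(g)$ are by definition the families of induced morphisms $\rH^n(f),\rH^n(g):\rH^n(C_\bullet)\to\rH^n(C'_\bullet)$ (these are well defined because $f$ and $g$, being morphisms of complexes, commute with the \diles and hence send cocycles to cocycles and coboundaries to coboundaries), it suffices to fix an integer $n$ and show that $\rH^n(f)$ and $\rH^n(g)$ agree on every class of $\rH^n(C_\bullet)=\Ker d_n/\Im d_{n-1}$.

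First I would take a cocycle $z\in\Ker d_n$ representing an arbitrary class $[z]\in\rH^n(C_\bullet)$, so that $d_n(z)=0$. The goal is then to show $g_n(z)-f_n(z)\in\Im d'_{n-1}$, i.e.\ that $f_n(z)$ and $g_n(z)$ define the same class in $\rH^n(C'_\bullet)=\Ker d'_n/\Im d'_{n-1}$.

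The decisive step is to read off the degree-$n$ component of the homotopy identity $d'\circ h+h\circ d=g-f$. With the conventions $h_n:C_n\to C'_{n-1}$ (degree $-1$), $d_n:C_n\to C_{n+1}$ and $d'_{n-1}:C'_{n-1}\to C'_n$, this component is the map $C_n\to C'_n$ given by $g_n-f_n=d'_{n-1}\circ h_n+h_{n+1}\circ d_n$. Applying it to $z$ and using $d_n(z)=0$ kills the second term and yields $g_n(z)-f_n(z)=d'_{n-1}\bigl(h_n(z)\bigr)$, which lies in $\Im d'_{n-1}$ by construction. Hence $[f_n(z)]=[g_n(z)]$ in $\rH^n(C'_\bullet)$, that is $\rH^n(f)([z])=\rH^n(g)([z])$; as $n$ and $[z]$ were arbitrary, $\rH(f)=\rH(g)$.

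There is no genuine obstacle here: the only point requiring care is the index/degree bookkeeping, namely checking that the two composites $d'\circ h$ and $h\circ d$ land in the correct component $C'_n$ and that a cocycle annihilates the $h\circ d$ term. I would also note that the argument is insensitive to whether one works with ascending or descending complexes, thanks to the involution $n\mapsto -n$ recalled in Definition~\ref{defiCompHom}, so the same computation covers both settings.
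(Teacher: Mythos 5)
Your proof is correct and is essentially the same argument as the paper's: the paper also takes a cocycle $x\in\Ker d$, applies the homotopy identity to get $(g-f)(x)=d'(h(x))$, and concludes that $f$ and $g$ induce the same map in (co)homology. Your version merely makes the degree bookkeeping explicit, which the paper leaves implicit.
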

 En effet, soit $x \in \Ker d$;
alors 

\snic{(g-f)(x) = (d' \circ h + h \circ d)(x) = d'(h(x)),}

\snii
et
donc $(g-f)(x)$ est un $d'$-bord. Il s'ensuit que $\rH(g-f) = 0$,
\hbox{i.e. $\rH(f) = \rH(g)$}.
\\
Notez que l'application $h$, purement ensembliste, n'est soumise à aucune condition de type \gui{\lin}.

\snii
On a une notion analogue pour deux complexes descendants
$(C_{\bullet},\partial)$, $(C_{\bullet}',\partial')$ et des morphismes de complexes $f$, $g : C_{\bullet} \to
C_{\bullet}'$.  \\
Une \emph{homotopie de $C_{\bullet}$ dans $C_{\bullet}'$ reliant $f$ à $g$} est une application
 $h : C_{\bullet} \to C_{\bullet}'$ de degré $1$, i.e. $h(C_i) \subseteq C'_{i+1}$ pour tout $i$,
vérifiant:

\smallskip \centerline {
\begin{minipage}[c]{4cm}
 \fbox{
$\partial' \circ h + h \circ \partial = g-f$}
\end{minipage}
\qquad
\begin{minipage}[c]{6cm}
\xymatrix {
\ar[r] & C_{i+1}\ar@/_6pt/[d]_g\ar@/^6pt/[d]^f\ar[r]^{\partial} & C_i\ar[dl]_(.4)h\ar@/_6pt/[d]_g\ar@/^6pt/[d]^f\ar[r]^{\partial} 
     & C_{i-1}\ar@/_6pt/[d]_g\ar@/^6pt/[d]^f\ar[dl]_(.4){h} \ar[r] &
\\
\ar[r] & C'_{i+1} \ar[r]_{\partial'} & C'_{i} \ar[r]_{\partial'} 
     & C'_{i-1} \ar[r] &
\\
}
\end{minipage}
}

On déduit des considérations précédentes le résultat très utile suivant.

\begin{propdef} \label{propEquivHomotop}
Une \emph{équivalence d'homotopie} du complexe~$C_{\bullet}$ vers le complexe~$C'_\bullet$ (tous deux montants, ou tous deux descendants)
est donnée par deux couples $(f,g)$ et $(h,h')$ où%
\index{equivalence d'ho@\eqvc d'homotopie!entre deux complexes} 
\begin{itemize}
\item $f:C_{\bullet}\to C'_\bullet$ et $g:C'_\bullet\to C_{\bullet}$ sont des morphismes de complexes, 
\item $h$ est une homotopie reliant $g\circ f$
à $\Id_{C_{\bullet}}$, et 
\item  $h'$ est une homotopie reliant $f\circ g$
à $\Id_{C'_\bullet}$.
\end{itemize}
On dit alors que les deux complexes sont \emph{homotopiquement \eqvs},
ou encore qu'ils ont \emph{le même type d'homotopie}.%
\index{homotopiquement \eqvs!complexes ---}%
\index{type d'homotopie!d'un complexe}
\\
Dans ce cas $\rH(f)$ et $\rH(g)$ sont des \isos réciproques.
\end{propdef}

\subsec{Caractéristique d'Euler-Poincaré}

\begin{definition} \label{defiCarEuPoComp}
La  \emph{\cEP} d'un complexe
borné de modules libres\index{caracteristique@caractéristique d'Euler-Poincaré!d'un complexe borné de modules libres} 
$$
C_{\bullet}:\quad \quad  \gA^{p_n}\vvers{u_n} \cdots\cdots\cdots\;\gA^{p_1} \vvers{u_1} \gA^{p_0}
$$ 
est l'entier
$\sum_{k=0}^{n}(-1)^{k}p_k$, noté $\chi_\gA(C_{\bullet})$ ou $\chi(C_{\bullet})$.
C'est un entier bien défini si l'anneau est non trivial.
\end{definition}

Supposons que l'anneau $\gA$ soit intègre non trivial.
\'Etendons les scalaires au corps des fractions $\gK$, 
et supposons que le complexe
$$ 0\to \gK^{p_n}\vvers{u_n} \cdots\cdots\cdots\;\gK^{p_1} \vvers{u_1} \gK^{p_0} 
$$
soit exact\footnote{On dit alors que $C_{\bullet}$ est \gnqt exact.}. 
On voit que
$u_n$ est de rang $p_n$ sur $\gK$ et que $p_{n-1}=p_n+r_{n-1}$ pour un certain
entier $r_{n-1}\geq 0$. D'où ensuite $u_{n-1}$ est de rang $r_{n-1}$, \hbox{et  
 $p_{n-2}=r_{n-1}+r_{n-2}$}  pour un certain
entier $r_{n-2}\geq 0$. On a ainsi une suite d'entiers $r_k\geq 0$  avec
$p_k=r_k+r_{k-1}$ pour tout $k$, et $\chi_\gA(C_{\bullet})=r_0\geq 0$.
Si en outre $u_1$ est surjective (pour $\gK$), on a $\chi_\gA(C_{\bullet})=r_0= 0$. 

Ces résultats  seront \gnes pour un anneau commutatif arbitraire (ce sera l'objet du \thref{thStrLocResFin}).

Nous nous contenterons pour le moment des faits suivants.

\begin{fact} \label{factCompExPro} ~
On considère un complexe exact de modules \pros 
$$
P_{\bullet}:\quad \quad \cdots\cdots\lora  P_n\vvers{u_n} \cdots\cdots\cdots\;P_1 \vvers{u_1} P_0\lora 0.
$$ 
Alors le complexe est \emph{entièrement décomposé} (chaque noyau est facteur direct),
et il est isomorphe à une somme directe de complexes \gui{triviaux} 
\[ 
\begin{array}{rcccccccccccccccc} 
 \cdots\cdots  \lora  & 0  & \lora  & Q_{k}  & \vvers{\Id_{Q_k}}  & Q_k  &   \lora & 0 & \lora \cdots\cdots \\[1mm]
 &k+1 & & k & & k-1 && k-2
 \end{array}
\]
pour $k\geq 1$, où les
$Q_k$ sont des modules \pros. Enfin si les $P_i$ sont \tf, \stls ou de rang constant, il en va de même des $Q_i$.    
\index{complexe!trivial}\index{trivial!complexe (exact) ---}%
\index{complexe!entièrement décomposé}\index{entierement d@entièrement décomposé!complexe (exact) ---}
 \end{fact}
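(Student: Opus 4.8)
Le plan est de découper le complexe en suites exactes courtes de modules de cycles et de les scinder une à une, en exploitant qu'une \seco de conoyau \pro se scinde.

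Je commencerais par introduire les modules de cycles $Z_n=\Ker u_n$, avec la convention $Z_n=0$ pour $n<0$; comme le complexe se termine par $P_0\to 0$, on a $Z_0=P_0$. L'exactitude en $P_{n-1}$ dit que $\Ker u_{n-1}=\Im u_n$, donc $u_n$ se factorise en une surjection de $P_n$ sur $Z_{n-1}=\Im u_n$ de noyau $Z_n$, d'où la \seco
\[
0\to Z_n\to P_n\vvers{u_n} Z_{n-1}\to 0 .
\]

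Ensuite, une \recu ascendante sur $n\geq 0$ établit simultanément que chaque $Z_n$ est \pro et que chacune de ces suites se scinde. Le cas de base $Z_0=P_0$ est \pro par hypothèse. Si $Z_{n-1}$ est \pro, la suite ci-dessus se scinde car son terme de droite est \pro; on a donc $P_n\simeq Z_n\oplus Z_{n-1}$, et $Z_n$, facteur direct du module \pro $P_n$, est \pro. C'est exactement la première affirmation: chaque noyau est facteur direct, le complexe est entièrement décomposé. Pour l'écrire comme somme directe de complexes triviaux, je pose $Q_k:=Z_{k-1}$ pour $k\geq 1$ et je note $T^{(k)}$ le complexe trivial ayant $Q_k$ en degrés $k$ et $k-1$. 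Par degré les scindages donnent $P_n\simeq Z_{n-1}\oplus Z_n$, ce qui coïncide avec $\bigl(\bigoplus_{k\geq1}T^{(k)}\bigr)_n=Z_{n-1}\oplus Z_n$; et à travers ces scindages $u_n$ annule la copie de $Z_n$ et envoie identiquement la copie de $Z_{n-1}=\Im u_n$ sur le facteur noyau $Z_{n-1}\subseteq P_{n-1}$, ce qui est précisément la \dile de $\bigoplus T^{(k)}$. En assemblant les \isos obtenus en chaque degré, on obtient $P_\bullet\simeq\bigoplus_{k\geq1}T^{(k)}$.

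Enfin, les trois \prts de finitude se transportent aux $Q_k=Z_{k-1}$ par la même \recu via $P_n\simeq Z_n\oplus Z_{n-1}$. Pour \tf il suffit que $Z_{n-1}=\Im u_n$ soit un quotient de $P_n$; pour le rang constant on utilise l'additivité de $\rg$ sur les sommes directes pour obtenir $\rg(Z_n)=\rg(P_n)-\rg(Z_{n-1})$, différence de constantes dans $\HO(\gA)$, donc constante; pour \stls on part de $Z_n\oplus Z_{n-1}\simeq P_n$ avec $P_n$ et $Z_{n-1}$ stablement libres, et l'on ajoute des modules libres convenables des deux côtés pour conclure que $Z_n$ est stablement libre. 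Les seuls points vraiment délicats sont cette dernière manipulation pour le cas stablement libre et la vérification que les scindages par degré se recollent en un \iso de complexes; tous deux se ramènent à la forme normale explicite $(z_{n-1},z_n)\mapsto(0,z_{n-1})$ de la \dile, le reste étant routinier.
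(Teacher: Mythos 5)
Votre démonstration est correcte et suit essentiellement la même démarche que celle du texte : on scinde de proche en proche, en partant du bas, les suites exactes courtes $0\to Z_n\to P_n\to Z_{n-1}\to 0$ grâce à la projectivité des $Z_{n-1}$ (établie par récurrence), ce qui donne $P_n\simeq Q_{n+1}\oplus Q_n$ avec $Q_k=Z_{k-1}$, puis l'isomorphisme avec la somme directe de complexes triviaux et le transfert des propriétés de finitude. Vous ne faites qu'expliciter les détails (récurrence, recollement des scindages degré par degré, cas stablement libre) que le texte laisse au lecteur.
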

%
\begin{proof}
Puisque les modules $P_i$ sont \pros, on voit de proche en proche en partant de $u_1$ que la suite est entièrement décomposée.
 On obtient successivement avec  $Q_1=P_0$ 
 
\snic{
P_1\simeq Q_2\oplus Q_1,\;\;P_2\simeq Q_3\oplus Q_2,\;\; \dots,\;\; P_{n-1}\simeq Q_{n}\oplus Q_{n-1}, \;\dots
\,.}

Ceci donne l'\iso avec une somme directe de complexes triviaux. Matriciellement, on peut décrire les \alis $u_j$ comme suit
$$
{u_{k}} =\blocs{.7}{1.1}{1.1}{.9}{$0$}{$\Id_{Q_k}$}{$0$}{$0$}\;, \;\;\;
{u_{1}} =\blocs{1.5}{1}{1}{0}{$0$}{$\Id_{Q_1}$}{}{}\;.
$$
Enfin il est clair que les $P_i$ sont \tf, de rang constant ou \stl \ssi les $Q_i$  le sont. 
\end{proof}
%
\begin{fact} \label{factCharCompEx0} ~
\begin{enumerate}
\item Soit un complexe borné exact de modules libres 
$$
L_{\bullet}:\quad \quad 0\lora  L_n\vvers{u_n} \cdots\cdots\cdots\;L_1 \vvers{u_1} L_0\lora 0.
$$ 

Si $\chi(L_{\bullet})\neq 0$, l'anneau~$\gA$ est trivial.

\item 
Si un complexe exact borné  $L_{\bullet}$ comme ci-dessus est formé de \mptfs, on a:
\begin{enumerate}
\item ${
\sum_{i}(-1)^{i}\rg_\gA(L_i)=0 \hbox{ dans }\HO(\gA),}$  
\item le complexe est entièrement décomposé et il est isomorphe à une somme directe finie de complexes triviaux.    
\end{enumerate}%
\end{enumerate}

 \end{fact}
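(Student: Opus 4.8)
The plan is to derive both statements from the structural decomposition already available in Fact~\ref{factCompExPro}, treating statement~2 first and then obtaining statement~1 as the free-module specialization of~2(a). For~2(b) there is essentially nothing new to prove: the complex $L_\bullet$ is bounded, exact, and its terms are finitely generated projective (a finite-rank free module being the special case at hand), so Fact~\ref{factCompExPro} applies verbatim and gives that $L_\bullet$ is entirely decomposed and isomorphic to a finite direct sum of trivial complexes, the summands $Q_k$ remaining finitely generated projective by the closing sentence of that fact. Concretely I would record, with $Q_k=\Im u_k=\Ker u_{k-1}$, the splittings $L_k\simeq Q_{k+1}\oplus Q_k$, together with the boundary conventions $Q_0=0$ (surjectivity of $u_1$) and $Q_{n+1}=0$ (injectivity of $u_n$).

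For~2(a) I would use only the additivity of the generalized rank, $\rg_\gA(M\oplus N)=\rg_\gA(M)+\rg_\gA(N)$. Substituting $L_k\simeq Q_{k+1}\oplus Q_k$ gives
\[
\sum_{k=0}^{n}(-1)^k\rg_\gA(L_k)=\sum_{k=0}^{n}(-1)^k\rg_\gA(Q_{k+1})+\sum_{k=0}^{n}(-1)^k\rg_\gA(Q_k),
\]
and after re-indexing the first sum by $j=k+1$ the two sums coincide up to sign and cancel, the extreme terms dropping out because $Q_0=Q_{n+1}=0$; this leaves $0$ in $\HO(\gA)$.

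For statement~1 I would specialize~2(a): a finite-rank free module is finitely generated projective, and for $L_k=\gA^{p_k}$ one has $\rg_\gA(L_k)=p_k$, since a free module of rank $p$ has rank polynomial $X^p$ and thus generalized rank the $p$-fold sum of $1$ in $\HOp(\gA)$. Hence the image of the integer $\chi(L_\bullet)=\sum_k(-1)^kp_k$ under the canonical map $\ZZ\to\HO(\gA)$ vanishes. The last step — and the only genuinely delicate one — is to pass from ``$\chi\cdot 1=0$ in $\HO(\gA)$'' to ``$\gA$ is trivial'' when $\chi\neq 0$. For this I would invoke the fact recalled in the preliminaries that over a nontrivial ring the sub-semiring of $\HOp(\gA)$ generated by $1$ is isomorphic to $\NN$, equivalently that $\ZZ\to\HO(\gA)$ is injective; its constructive content is exactly that $m\cdot 1=0$ with $m\neq 0$ (take $m=\chi$ or $m=-\chi$) forces $1=0$ in $\gA$.

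The main difficulty here is conceptual rather than computational and is concentrated in this final passage: one must keep in mind that $\chi(L_\bullet)$ is a bona fide integer read off from the presentation data (the exponents $p_k$), even though the rank of a free module is only a well-defined integer over a nontrivial ring, and then extract the triviality of $\gA$ directly from the vanishing of $\chi\cdot 1$ in $\HO(\gA)$, rather than through a non-constructive contrapositive of ``$\gA$ nontrivial $\Rightarrow\chi=0$''.
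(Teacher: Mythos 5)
Your proof is correct and follows essentially the same route as the paper: point 2(b) by direct application of Fact \ref{factCompExPro}, point 2(a) by the telescoping rank identity $\rg_\gA(L_k)=\rg_\gA(Q_k)+\rg_\gA(Q_{k+1})$ with $Q_0=Q_{n+1}=0$, and point 1 by specializing 2(a) to free modules and reading the vanishing of $\chi\cdot 1$ in $\HO(\gA)$ constructively as forcing $1=0$ in $\gA$ when $\chi\neq 0$. The only difference is one of explicitness: the paper merely says that 2(a) \emph{résulte clairement} de 2(b) and that point 1 reduces to point 2, whereas you spell out the re-indexed telescoping sum and the injectivity of $\ZZ\to\HO(\gA)$ over a nontrivial ring.
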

%
\begin{proof}
Il suffit de montrer le point \emph{2}. Or \emph{2b.} est un cas particulier
du lemme~\ref{factCompExPro}, et \emph{2a.} résulte clairement de \emph{2b.}
\\
Matriciellement, on peut décrire cette fois-ci les \alis $u_j$ comme suit
$$
{u_{n}} =\blocs{0}{1.3}{1.3}{.6}{}{$\Id_{Q_n}$}{}{$0$}\;,\;\;\;
{u_{k}} =\blocs{.7}{1.1}{1.1}{.9}{$0$}{$\Id_{Q_k}$}{$0$}{$0$}\;, \;\;\;
{u_{1}} =\blocs{1.5}{1}{1}{0}{$0$}{$\Id_{Q_1}$}{}{}\;.
$$
Alors avec  $Q_{0}=Q_{n+1}=0$, on a $\rg_\gA(P_k)=\rg_\gA(Q_k)+\rg_\gA(Q_{k+1})$ pour tout $k\in\lrbn$.
\end{proof}

\hum{Un complexe somme directe de triviaux a même type d'homotopie que quoi?
}

\bonbreak
\section{Modules $n$-présentables, \lrsbs} \label{sec-n-Pres}

\subsec{Modules $n$-présentables
}
Une \emph{$n$-\pn (libre finie)
du module $M$}   est donné par une
\sex
\begin{equation} \label {eqnpres}
L_n\vvers{u_n} L_{n-1}\vvers{u_{n-1}} \cdots\cdots \vvers{u_1} L_0\vvers{u_0} M \to 0
\end{equation}
où les $L_i$ sont des \mlrfs.
La \sex \pref{eqnpres} est aussi appelée un \emph{début de \rlf de longueur $n$   du \Amo $M$}. 

\begin{definota} \label{definotaLdimmodules}
Un \Amo qui possède une $n$-\pn est dit \emph{$n$-\pfb}.
On écrit ce renseignement sous la forme condensée suivante: $\Ld_\gA(M)\geq n$, \hbox{ou $\Ld(M)\geq n$}. On écrit $\Ld(M)= \infty$ pour dire que
l'on a $\Ld(M)\geq n$ pour tout~$n$.
\index{n-pres@$n$-\pn!d'un module} 
\index{module!n-pres@$n$-présentable}  
\end{definota}

\entrenous{Dans \cite{Glaz}, il y a la notation $\lambda(M)\geq n$, d'où ici le \textsf{Ld}, mais ce serait peut-être mieux d'utiliser \textsf{Pres}.
Et dans Bourbaki?

Par ailleurs, les \pns projectives non \ncrt \tf semblent pertinentes
pour la définition des Ext, mais en \coma on peut se rabattre sur la \dfn à la Yoneda, qui fonctionne pour toute catégorie abélienne. 

On serait donc tenté d'introduire
une notation $\Ld_\gA(M,\textsf{pro})\geq n$ pour dire l'existence
d'une \sex~\pref{eqnpres} avec des $L_i$ \pros
(en \clama, on a toujours $\Ld_\gA(M,\textsf{pro})=\infty$, un cas possible avec des modules \pros non \tf en \coma est celui des modules discrets
sur un anneau discret).

Si l'on se résout à introduire la notation $\Ld_\gA(M,\mathsf{pro})\geq n$
en vue de la \dfn des Ext, il faudra alors dire que $\Ld_\gA(M)\geq n$
est une abréviation pour $\Ld_\gA(M,\mathsf{fif})\geq n$, où \textsf{fif}
est mis pour \gui{finite free}.
}

Un \Amo est $0$-\pfb  \ssi
il est \tf, il est~\hbox{$1$-\pfb}  \ssi
il est \pf. \\
Une $n$-\pn plus longue permet d'avoir plus de 
renseignements sur le module.

Un anneau est \coh lorsque tout \mpf $M$ satisfait~\hbox{$\Ld(M)= \infty$}. En effet, comme le noyau de toute matrice
est un \mtf, on peut construire de proche en proche des \mlrfs~$L_k$ et des \alis $L_k\to L_{k-1}$ qui rendent la \sex.

Lorsque l'on a une \sex \pref{eqnpres} avec pour $L_i$ 
des modules plats
(resp. des modules \pros, resp. \ptfs), on dit que l'on a une~\emph{$n$-\pn plate}
(resp. une \emph{$n$-\pn projective}, resp. une~\emph{$n$-\pn projective \tf}) du module $M$. 
\\
En fait toute $n$-\pn  projective \tf se transforme facilement en une $n$-\pn libre finie (lemme~\ref{lempnptfpnlibre}).

\subsec{Modules \lrsbs, \lorsbs, dimension projective finitaire}
Une $n$-\pn du module $M$ est appelée une \emph{\rlf (de longueur~$n$)} lorsque l'\ali $u_n:L_n\to L_{n-1}$
est injective, \cad lorsque la suite  
\begin{equation}\label{eqrlf} 
  0\to L_n\vvers{u_n} L_{n-1}\vvers{u_{n-1}} \cdots\cdots \vvers{u_1} L_0\vvers{u_0} M \to 0
\end{equation}
est exacte. 

Lorsque l'on a une \sex \pref{eqrlf} avec pour $L_i$ des modules plats
(resp. des modules \ptfs, des modules \stls), on dit que l'on a une \emph{\rsn plate de longueur finie}
(resp. une \emph{\rsf}, une \emph{\rsn \stl finie}) du module $M$.

\begin{definota} \label{definresol}\label{defiPd}
\begin{enumerate}
\item Si un \Amo $M$ admet une \rsn libre de longueur $n$,  nous dirons qu'il est \emph{\lnrsb}.
\index{resolution libre@\rsn libre!d'un module} 
\item Si $M$ admet \rsf de longueur $n$, nous dirons qu'il est \emph{\lonrsb}. 
On dira aussi qu'il est de 
\emph{\pdi (finitaire)}~$\leq n$. Dans ce cas, on adopte la notation~\hbox{\gui{$\Pd_\gA(M)\leq n$}}, \hbox{ou \gui{$\Pd(M)\leq n$}}.
Ainsi $\Pd_\gA(M)\leq 0$ signifie que $M$ est \ptf.
Dans la suite on omet en \gnl le mot finitaire après \gui{\pdi}.%
\index{dimension projective!(finitaire) d'un module}
\item Si l'entier $n$ n'est pas précisé on dira 
dans le premier cas que le module est \emph{librement \rsb}, et dans le second cas qu'il est \emph{\lot \rsb}.%
\index{module!localement r@\lorsb}%
\index{module!librement r@\lrsb}%
\index{module!librement $n$-\rsb}%
\index{module!localement $n$-\rsb}%
\index{librement res@\lrsb!module ---}%
\index{localement res@\lorsb!module ---}%
\index{resoluble@\rsb!module ---}%
\index{resolution proj@\rsn projective!d'un module} 
\item Enfin $\Pd_\gA(M)= -1$ signifie que $M=0$.
\end{enumerate}
\end{definota}
\comm 
A priori, la \pdi d'un \mlorsb n'est pas un entier
bien défini\footnote{Une discussion analogue a eu lieu  dans \Cref{section XIII-2} concernant la dimension de Krull.}: 
on a seulement défini la phrase 
\gui{le \Amo~$M$ est de \pdi~\hbox{$\leq k$}}.

En \clama, on définit $\Pd_\gA(M)$ pour n'importe quel module comme un \elt de l'ensemble~\hbox{$\so{-1}\cup\NN\cup\so{+\infty}$}: la borne inférieure des $k$ tels que $\Pd_\gA(M)\leq k$.

Si $\gA$ est \fdi et non trivial, lorsqu'un module~$M$ est \lorsb, on peut décider quel est le meilleur entier~$k$ tel que $\Pd(M)\leq k$,
ce qui permet de définir \cot la phrase \gui{\emph{le \Amo~$M$ est de \pdi $k$}}. Ceci résulte du \tho de permanence \ref{corLScha} (qui se ramène pour l'essentiel au lemme de Schanuel, voir \ref{lemSchanuelVariation}) et du fait que dans le cas où $\gA$ est \fdi et non trivial, on est capable de tester si un \mpf est \pro.
On adoptera alors la notation~\hbox{$\Pd(M)= k$}. 
\\
Dans le cas où l'on ne connait pas $\Pd(M)$ de manière certaine,
il faut reformuler \cot certaines inégalités exprimées en \clama, comme on l'a déjà fait pour la \ddk. La formulation \cov s'applique en toute généralité sans recours au principe du tiers exclu et elle est \eqve
en \clama à la formulation classique. 
\\ 
Par exemple l'inégalité \fbox{$\Pd(M)\leq \sup\big(\Pd(N),\Pd(P)\big)$} est une abréviation pour l'implication: 
$$
\fbox{$\forall k\geq -1, \,\big(\Pd(N)\leq k \hbox{ et} \Pd(P)\leq k\big) \;\Longrightarrow\; \Pd(M)\leq k.$}
$$
Cette formulation \cov signifie \prmt ceci: 
\emph{si $N=P=0$, \hbox{alors $M=0$} et, pour $k\geq 0$, si l'on connait des \rsfs
de longueurs $\leq k$ pour $N$ et $P$, on peut construire une \rsf
de longueur $\leq k$ pour $M$}.
\eoe

\hum{Dans la littérature usuelle on n'impose pas aux modules \pros dans la \rsn d'être \tf. Si on se réfère à Glaz p.61, il faudrait sans doute utiliser ``small finitistic projective dimension'', ce qui n'est vraiment pas élégant. En outre Glaz p.244, dont le lien avec Glaz p.61 n'est pas évident, me laisse perplexe.
Il faudra s'expliquer sur de sujet.

On introduira plus tard $\pd_\gA(M)$ pour la \pdi ``usuelle''
dans la littérature, tout en émettant quelques bémols sur sa pertinenece.

En fait dans le chapitre V du volume 1, on aurait sans doute déjà d\^u émettre des bémols concernant l'acceptabilité de la \dfn \gnle de \gui{module \pro} qui nécessite une quantification sur la classe de tous les \Amos, laquelle n'est pas un ensemble.} 

\medskip \goodbreak\exls
\\
1) Le \tho des syzygies de Hilbert affirme que tout
\mgpf  sur~$\kXn$ admet une \rlf graduée de longueur~\hbox{$\leq n$} \ttt{(par des morphismes de degré $0$?)} (ici~$\gk$ est un \cdi).
Ce \tho a été \gne au cas non homogène, puis à d'autres situations qui ont conduit à la notion d'anneau \corg\siBookdeux{ (que l'on étudiera dans le chapitre \ref{chapAnnCohReg})}.

\smallskip 2) Par ailleurs un \id engendré par une \srg est librement résolu par le complexe de Koszul descendant associé à cette 
suite\siBookdeux{ (chapitre~\ref{chapHomalgBasic})}.  

\smallskip  3) Soit $\gk$ un \cdi $\gA=\gk[x,y]=\aqo{\gk[X,Y]}{XY}$ et~$\gB=\gA_{1+\gen{x,y}}$. L'\id $x\gB$ de~$\gB$ admet une \rsn infinie par des modules libres de rang~$1$:
$$ \cdots\cdots\lora\gB\vvers x\gB\vvers y\gB\vvers x\gB\vvers y\gB\vers{x}x\gB\to 0
$$
(le $x$ au dessus de la flèche signifie \gui{multiplication par~$x$}).
Comme les \gui{matrices} dans cette \rsn sont à \coes dans l'\idema de l'\alo
$\gB$, la \rsn est essentiellement unique (cf. le \thref{thBetti})
et donc le module~$x\gB$ n'admet pas de \rlf. 

\smallskip  4) Si $P\oplus Q= L=\Ae n$, le module \pro $P$ admet une \pn de longueur infinie
$$
\cdots\lora L \vvers{\pi_P} L \vvers{\pi_Q} L \vvers{\pi_P} L \vvers{\pi_Q}L\vvers{\pi } P \lora 0,
$$
où $\pi_P:L\to L$ est le \prr d'image $P$ et de noyau $Q$,  $\pi_Q=\Id-\pi_P$ \hbox{et $\pi(x)=\pi_P(x)$} pour $x\in L$.

\smallskip 5)  Par \dfn un module stablement libre admet une \rlf de longueur~$1$,
mais il n'admet une \rlf de longueur~$0$ que s'il est libre.
\eoe

\medskip 
Dans une certaine mesure l'étude des \rsfs se ramène à l'étude
des \rlfs. D'une part, on peut toujours remplacer une \rsf par une \rsn
où tous les modules sont libres finis, à l'exception du premier
module, noyau de la première flèche (proposition \ref{cor2lemModifComplexe}). \sibook{Voir \egmt à ce sujet l'exercice \ref{exoProjResolutionToFreeResolution}.}
D'autre part, le \tho de structure locale des \mptfs implique
qu'après \lon en des \eco une \rsf devient une \rlf comme
expliqué dans la proposition suivante (voir aussi le
lemme \ref{lemlorsbloclresb}). 

\begin{proposition} \label{propRSFRLF}
\'Etant donnée une \rsf d'un \Amo $M$, on peut calculer un \sys d'\eco tels qu'après \lon en chacun de ces \elts, la \rsn devienne une \rlf. 
Réciproquement, si après \lon en des \eco le module $M$ possède
chaque fois une \rlf, alors il possède une \rsf.
\end{proposition}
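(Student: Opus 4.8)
Le plan est de traiter séparément les deux implications. Pour le sens direct, on partirait d'une \rsf $0\to P_n\to\cdots\to P_0\to M\to 0$, les $P_i$ étant des \mptfs, et on invoquerait le \tho de structure locale des \mptfs (point \emph{3} du \thref{prop Fitt ptf 1}): chaque $P_i$ devient libre après \lon en une famille finie d'\eco $(s_{i,j})_{j}$. On localiserait d'abord chaque $P_i$ séparément, puis on combinerait les familles en considérant les produits $s_{0,j_0}s_{1,j_1}\cdots s_{n,j_n}$ pour tous les choix d'indices $(j_0,\dots,j_n)$; ces produits restent \com, car l'\id qu'ils engendrent est le produit des \ids $\gen{s_{i,j}:j}=\gen{1}$. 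En localisant en un tel produit, tous les $P_i$ deviennent simultanément libres (la liberté se conserve par \lon ultérieure), et la \lon étant exacte, le complexe localisé reste exact: on obtient bien une \rlf sur chaque $\gA[1/(s_{0,j_0}\cdots s_{n,j_n})]$. Ce sens est essentiellement routinier.

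Pour la réciproque, on raisonnerait par \recu sur une borne uniforme de longueur. Étant donné des \eco $t_1,\dots,t_m$ tels que $M[1/t_k]$ admette une \rlf de longueur $\leq n_k$, on poserait $n=\max_k n_k$ et on démontrerait l'énoncé plus précis suivant, par \recu sur $n$: \emph{si $\Pd_{\gA[1/t_k]}(M[1/t_k])\leq n$ pour une famille finie d'\eco $(t_k)$, alors $\Pd_\gA(M)\leq n$}. On remarquerait d'abord que $M$ est \pf sur $\gA$: chaque $M[1/t_k]$ l'est, puisqu'il possède une \rlf, d'où la conclusion par le point \emph{2} du \plgc \ref{plcc.ptf}. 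Le cas $n=0$ serait alors exactement le point \emph{4} de ce même \plgc, qui déduit la projectivité de $M$ sur $\gA$ de celle de tous ses localisés.

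Pour le pas de \recu ($n\geq 1$), on choisirait une surjection $L_0\to M$ avec $L_0$ libre fini et l'on poserait $K=\Ker(L_0\to M)$, qui est \tf car $M$ est \pf. Par \lon, chaque suite $0\to K[1/t_k]\to L_0[1/t_k]\to M[1/t_k]\to 0$ a son terme médian libre, et le lemme de Schanuel \ref{FFRlemSchanuel} fournit alors $\Pd_{\gA[1/t_k]}(K[1/t_k])\leq n-1$. L'obstacle principal est de faire passer $K$ du statut \tf au statut \pf sur $\gA$: on l'obtiendrait par le point \emph{2} du \plgc \ref{plcc.ptf}, une fois remarqué que chaque $K[1/t_k]$ est \pf (car il possède une \rlf de longueur $\leq n-1$). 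L'\hdr donnerait ensuite $\Pd_\gA(K)\leq n-1$, et il ne resterait qu'à recoller une \rsf de $K$ avec la suite $0\to K\to L_0\to M\to 0$ pour produire une \rsf de longueur $\leq n$ de $M$. La vraie difficulté tient à cette articulation: en l'absence d'hypothèse de \cohc, le caractère \tf des modules de \syzys ne garantit pas leur présentation finie, de sorte que c'est le \plgc pour la présentation finie qui fait tourner la \recu, le lemme de Schanuel servant à transporter la borne de dimension à travers les \lons.
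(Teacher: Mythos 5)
Votre démonstration est correcte et suit pour l'essentiel la voie que le texte laisse implicite : le sens direct par le \tho de structure locale des \mptfs (\thref{prop Fitt ptf 1}, point \emph{3}) avec recombinaison multiplicative des familles d'\eco, et la réciproque par la \recu combinant le \plgc \ref{plcc.ptf} (points \emph{2} et \emph{4}) et un décalage de dimension à la Schanuel, c'est-à-dire l'argument même que le texte déploie plus loin pour établir le \plgc \ref{plcc.resf}. Signalons une seule imprécision, sans conséquence : le passage de $\Pd_{\gA[1/t_k]}(M[1/t_k])\leq n$ à $\Pd_{\gA[1/t_k]}(K[1/t_k])\leq n-1$ demande un peu plus que le lemme de Schanuel \ref{FFRlemSchanuel} seul, à savoir la \prt de somme directe (proposition \ref{PdimDirectSum}) ou, ce qui revient au même, le \tho de permanence \ref{corLScha} (point \emph{2} avec $r=0$), l'un et l'autre établis dans le texte.
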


Notons que l'on peut aussi passer au localisé de Nagata,
qui est une extension \fpte, et la \rsf devient une \rsn quasi libre finie. 

\subsec{Résolutions  très courtes}

\paragraph{Résolutions libres finies}~

\begin{enumerate}
\item Un module est \lrf \ssi  il  admet une \rlf de longueur~$0$.
\item Un anneau $\gA$ est un \cdi \ssi tout module $\aqo\gA a$ est libre, \ssi tout \mpf est libre.
\item Les  anneaux de Bezout intègres sont \cares par le fait que tout \itf est un module libre de rang~$1$ ou~$0$, ou encore que sur un tel anneau toute matrice admet pour noyau un \mlrf.
\end{enumerate}

%

\paragraph{Résolutions projectives finies}~

\begin{enumerate}
\item Un module est de \pdi $\leq 0$ \ssi il est \ptf.
\item Il est de \pdi $\leq 1$ \ssi il est le quotient d'un \mptf par un sous-module \ptf.
\item Un anneau est un \adp \coh \ssi tout \itf est \pro \ssi tout \mpf $M$ satisfait $\Pd(M)\leq 1$.
\end{enumerate}

%

\section[Idéaux \caras d'un complexe]{Idéaux caractéristiques d'un complexe}\label{secIdesCars}

\subsec{Modifications  \elrs d'un complexe}

 Nous décrivons dans le lemme qui suit des transformations \elrs d'un complexe qui ne changent pas les groupes d'homologie. Il s'agit en fait d'un cas particulier très simple d'homotopie entre complexes.
 
\begin{lemma} \label{lemModifComplexe} \emph{(Modification  \elr  d'un complexe)}%
\index{modification \elr!d'un complexe}\index{elementaire@\elr!modification --- d'un complexe}\\
{1.} Soit $E$ un \Amo. Considérons le complexe $\wi{C_{\bullet}}$  obtenu par une modification  d'un  complexe  $C_{\bullet}$ comme suit 
%
$$
\wi{C_{\bullet}}:\quad \cdots\; C_{k+2}\vvvers{\wi{u_{k+2}}}C_{k+1}\times  E\vvvers{\wi{u_{k+1}}}C_k\times  E\vvvers {\wi{u_{k}}} C_{k-1} \;\cdots
$$
%
où les \alis $\wi{u_{k+2}}$, $\wi{u_{k+1}}$, $\wi{u_{k}}$ sont définies
par
 $$
 \wi{u_{k+2}}(x)=(u_{k+2}(x),0),\quad  \wi{u_{k+1}}(y,z)=(u_{k+1}(y),z),\quad \wi{u_{k}}(t,z)=u_{k}(t),
 $$
matriciellement:
$$\preskip.0em \postskip.2em
\wi{u_{k}} =\blocs{1.1}{.6}{.8}{0}{$u_{k}$}{$0$}{}{}\;,\;\;\;
\wi{u_{k+1}} =\blocs{1.3}{.6}{1.1}{.6}{$u_{k+1}$}{$0$}{$0$}{$\I_E$}\;, \;\;\;
\wi{u_{k+2}} =\blocs{0}{1.5}{1.3}{.6}{}{$u_{k+2}$}{}{$0$}\;.
$$
On a alors les résultats suivants.
\begin{enumerate}
\item [{a.}] \gui{L'homologie ne change pas}, \cad 
que l'on a des
\isos canoniques~$\rH_j(C_{\bullet})\simeq \rH_j(\wi{C_{\bullet}})$
\hbox{pour $k-1\leq j\leq k+2$}.
\item [{b.}] Si les modules considérés sont libres, avec 
$E$ de rang $r$, on obtient pour tout $\ell$

\snic{\cD_{\ell}(u_k)=\cD_{\ell}(\wi{u_k}), \;
\cD_{\ell}(u_{k+1})=\cD_{\ell+r}(\wi{u_{k+1}}), \;
\cD_{\ell}(u_{k+2})=\cD_{\ell}(\wi{u_{k+2}}) \,.
}
\end{enumerate}

\snii  
{2.} Inversement, supposons que l'on a des \dcns

\snic{C_{k+1}=\ov C_{k+1}\oplus E \;\hbox{  et  } \;C_k=\ov C_k\oplus F,}

\snii
avec $\formule{u_{k+1}(\ov C_{k+1})\subseteq \ov C_k,\;\;u_{k+1}(E)\subseteq F, \\[1.5mm]
               u_{k+1} \hbox{ réalise un \iso de } E \hbox{ sur } F .}$

\snii 
Considérons le complexe $\ov{C_{\bullet}}$  obtenu par une telle modification \elr
%

\snic{
\cdots\; C_{k+2}\vvvers{\ov u_{k+2}}\ov C_{k+1}\vvvers{\ov u_{k+1}}\ov C_k\vvvers {\ov u_{k}} C_{k-1} \;\cdots,
}

%
où $\ov u_{k+2}$, $\ov u_{k+1}$ et $\ov u_{k}$ sont obtenues comme induites par $u_{k+2}$, $u_{k+1}$, $u_{k}$. \\
On a alors les résultats suivants.
\begin{enumerate}
\item [{a.}] \gui{L'homologie ne change pas}, \cad 
que l'on a des
\isos canoniques~$\rH_j(C_{\bullet})\simeq \rH_j(\ov{C_{\bullet}})$
\hbox{pour $k-1\leq j\leq k+2$}.
\item [{b.}] Si les modules considérés sont libres, avec 
$E$ de rang $r$, on obtient pour tout $\ell$

\snic{\cD_{\ell}(u_k)=\cD_{\ell}(\ov{u_k}), \;
\cD_{\ell+r}(u_{k+1})=\cD_{\ell}(\ov{u_{k+1}}), \;
\cD_{\ell}(u_{k+2})=\cD_{\ell}(\ov{u_{k+2}}) \,.
}
\end{enumerate}
Matriciellement:
$$\preskip.0em \postskip.2em
{u_{k}} =\blocs{1.1}{.6}{.8}{0}{$\ov{u_{k}}$}{$0$}{}{}\;,\;\;\;
{u_{k+1}} =\blocs{1.3}{.6}{1.1}{.6}{$\ov{u_{k+1}}$}{$0$}{$0$}{iso}\;, \;\;\;
{u_{k+2}} =\blocs{0}{1.5}{1.3}{.6}{}{$\ov{u_{k+2}}$}{}{$0$}\;.
$$
\end{lemma}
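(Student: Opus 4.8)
Le plan est de traiter les deux points en reconnaissant dans chaque modification l'adjonction (point \emph{1}) ou la suppression (point \emph{2}) d'un complexe \emph{trivial acyclique} à deux termes, concentré en degrés $k+1$ et $k$. Pour le point \emph{1}, je vérifierais d'abord que $\wi{C_{\bullet}}$ est bien un complexe: les égalités $\wi{u_{k}}\circ\wi{u_{k+1}}=0$ et $\wi{u_{k+1}}\circ\wi{u_{k+2}}=0$ résultent aussitôt de $u_{k}\circ u_{k+1}=0$ et $u_{k+1}\circ u_{k+2}=0$. Le point clé est ensuite de constater que les formules de l'énoncé identifient $\wi{C_{\bullet}}$ à la somme directe de $C_{\bullet}$ et du complexe
$$
T_{\bullet}:\quad \cdots\to 0\to E\vvers{\Id_E}E\to 0\to\cdots
$$
(les deux copies de $E$ en degrés $k+1$ et $k$): en effet $\wi{u_{k+1}}=u_{k+1}\oplus\Id_E$, $\wi{u_{k+2}}=u_{k+2}\oplus 0$ et $\wi{u_{k}}=u_{k}\oplus 0$. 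Comme $T_{\bullet}$ est acyclique ($\Id_E$ est un \iso) et que l'homologie commute aux sommes directes (noyaux et images se décomposent le long de $C_{\bullet}\oplus T_{\bullet}$), j'obtiendrais des \isos canoniques $\rH_j(\wi{C_{\bullet}})\simeq\rH_j(C_{\bullet})$ pour tout $j$, a fortiori pour $j\in\lrb{k-1..k+2}$, d'où \emph{1a}.

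Pour \emph{1b}, j'utiliserais les écritures matricielles de l'énoncé et deux propriétés standard des \idds. D'une part, $\wi{u_{k}}=\cmatrix{u_k & 0}$ et $\wi{u_{k+2}}=\cmatrix{u_{k+2}\cr 0}$ s'obtiennent en bordant $u_k$ et $u_{k+2}$ de colonnes (resp. de lignes) nulles, ce qui ne modifie aucun \idd; d'où $\cD_\ell(\wi{u_k})=\cD_\ell(u_k)$ et $\cD_\ell(\wi{u_{k+2}})=\cD_\ell(u_{k+2})$. D'autre part, pour $\wi{u_{k+1}}=\Diag(u_{k+1},\I_r)$, j'appliquerais la formule $\cD_n(\Diag(A,B))=\sum_{p+q=n}\cD_p(A)\cD_q(B)$ avec $B=\I_r$: comme $\cD_q(\I_r)=\gen{1}$ pour $q\leq r$ et $\gen{0}$ sinon, et comme les \idds décroissent avec l'ordre, la somme se réduit à son plus grand terme et l'on trouve $\cD_{\ell+r}(\wi{u_{k+1}})=\cD_\ell(u_{k+1})$.

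Pour le point \emph{2}, la première tâche est de déduire des hypothèses la structure diagonale par blocs figurant en fin d'énoncé. Comme $u_{k+1}$ induit un \iso de $E$ sur $F$, la relation $u_k\circ u_{k+1}=0$ donne $u_k(F)=u_k\big(u_{k+1}(E)\big)=0$, donc $u_k=\cmatrix{\ov{u_k} & 0}$ relativement à $C_k=\ov C_k\oplus F$; et en écrivant $u_{k+2}(x)=a+b$ avec $a\in\ov C_{k+1}$, $b\in E$, la relation $u_{k+1}\circ u_{k+2}=0$, jointe aux inclusions supposées et au fait que la décomposition $C_k=\ov C_k\oplus F$ est directe, force $u_{k+1}(b)=0$ puis $b=0$ par injectivité de $u_{k+1}\frt E$; d'où $u_{k+2}=\cmatrix{\ov{u_{k+2}}\cr 0}$. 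Enfin les inclusions supposées donnent $u_{k+1}=\Diag(\ov{u_{k+1}},\,u_{k+1}\frt E)$. On reconnaît alors $C_{\bullet}$ comme somme directe de $\ov{C_{\bullet}}$ et du complexe trivial $E\vvers{\sim}F$ (degrés $k+1$, $k$), acyclique car $u_{k+1}\frt E$ est un \iso. Les mêmes arguments qu'en \emph{1a} et \emph{1b} — lus en sens inverse, l'\iso $u_{k+1}\frt E$ tenant le rôle de $\I_r$ puisque $\cD_q(V)=\gen{1}$ pour $q\leq r$ lorsque $V$ est inversible — fourniraient \emph{2a} et \emph{2b}.

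Le passage le plus délicat me semble être, au point \emph{2}, cette déduction de la forme diagonale par blocs à partir des seules hypothèses et de la relation de complexe; une fois celle-ci établie, tout repose sur les deux faits élémentaires concernant les \idds (invariance par bordage de lignes ou colonnes nulles, et formule du déterminant par blocs avec un bloc inversible), et le reste est routinier.
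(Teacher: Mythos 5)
Votre démonstration est correcte et suit essentiellement la voie prévue par le texte : le papier laisse la preuve au lecteur en signalant précisément les deux faits que vous identifiez comme le passage délicat du point \emph{2}, à savoir $\Im(u_{k+2})\subseteq \ov C_{k+1}$ et $F\subseteq \Ker(u_k)$, conséquences de la relation de complexe. La lecture des deux modifications comme adjonction ou suppression d'un complexe trivial acyclique en degrés $k+1$, $k$, jointe aux deux propriétés élémentaires des idéaux déterminantiels (invariance par bordage de lignes ou colonnes nulles, formule par blocs avec un bloc inversible), est exactement l'argument attendu.
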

\begin{proof}
La \dem est laissée \alec. Notez que dans le point \emph{2}
on a $\Im(u_{k+2})\subseteq \ov{C_{k+1}}$ et $F\subseteq \Ker(u_k)$ parce que $C_{\bullet}$ est un complexe.
\end{proof}

Notons que les \egts en \emph{1b} et \emph{2b} concernant les \idds restent valables dans toutes les situations en utilisant la convention habituelle selon laquelle les \idds
 d'une matrice vide sont égaux à~$\gen{1}$.

Le corolaire suivant sera appliqué de manière systématique dans la suite.

\begin{corollary} \label{corlemModifComplexe}
Soit un complexe (descendant) dans lequel on a
pour un certain indice 
$m$, $C_{m+1}=\Ae r$, $C_m=\Ae s$. 
On suppose que $u_{m+1}$ est représenté par une matrice  $A_{m+1}\in \Ae{s\times r}$ qui possède un mineur
d'ordre $k$ inversible. 
Alors on peut remplacer $C_{m+1}$, $C_m$,
 $u_{m+1}$, $u_{m+2}$ \hbox{et $u_m$}  \hbox{par $C'_{m+1}=\Ae{r-k}$},  $C'_m=\Ae{s-k}$ et 
des \alis convenables~$u'_{m+1}$, $u'_{m+2}$ et $u'_m$ 
 sans changer l'homologie du complexe. 
 \\
 En outre $\cD_\ell(u'_{m+1})=\cD_{\ell+k}(u_{m+1})$ pour tout $\ell$.
 \\
Notons $C_\bullet'$ le complexe modfié. Alors $C_{\bullet}$ est isomorphe à la somme directe du complexe
$C_\bullet'$ et du complexe trivial $\dots\to0\to E \vvers{\Id_E}E\to 0 \dots$ ($E=\Ae k$)
avec les indices $m+1$ et $m$ pour $E$.  
\end{corollary}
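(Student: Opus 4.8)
The plan is to reduce the statement entirely to the two tools already available in the excerpt: the \emph{Lemme du mineur inversible} \ref{FFRlem.min.inv}, used to bring $u_{m+1}$ into a normal form exhibiting an $\I_k$ block, followed by the inverse elementary modification of a complex, namely point \emph{2} of the lemma \ref{lemModifComplexe}, used to suppress that block. The whole content of the corollary is that these two operations can be performed at the single spot $m+1,m$ of the complex without disturbing the homology, and that the bookkeeping on the determinantal ideals produces exactly the shift by $k$.

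First I would turn the matrix equivalence furnished by \ref{FFRlem.min.inv} into an isomorphism of complexes. Since $A_{m+1}\in\Ae{s\times r}$ has an invertible minor of order $k$, there are invertible matrices $U$ (of size $s$) and $V$ (of size $r$) with $U\,A_{m+1}\,V$ equal to a block form carrying $\I_k$ and a block $A_1\in\Ae{(s-k)\times(r-k)}$ such that $\cD_\ell(A_1)=\cD_{\ell+k}(A_{m+1})$ for every $\ell$. Changing the bases of $C_{m+1}=\Ae r$ and $C_m=\Ae s$ by $V^{-1}$ and $U$ is an automorphism of those two modules; to stay within a complex one simultaneously replaces $u_{m+2}$ by $V\circ u_{m+2}$ and $u_m$ by $u_m\circ U^{-1}$, and leaves every other module and map untouched. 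The resulting squares commute by construction, so this is a genuine isomorphism of complexes; it therefore preserves all the homology, and since it multiplies $u_{m+2}$ and $u_m$ by an invertible matrix on a single side, it does not change $\cD_\ell(u_{m+2})$ or $\cD_\ell(u_m)$.

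After this normalization $u_{m+1}$ is block-diagonal: with the decompositions $C_{m+1}=\Ae{r-k}\oplus E$ and $C_m=\Ae{s-k}\oplus F$ where $E=F=\Ae k$, the map $u_{m+1}$ sends $\Ae{r-k}$ into $\Ae{s-k}$ via $A_1$ and realizes an isomorphism $E\to F$ via the $\I_k$ block. These are precisely the hypotheses of point \emph{2} of the lemma \ref{lemModifComplexe}; I would note that the extra conditions $\Im(u_{m+2})\subseteq\Ae{r-k}$ and $F\subseteq\Ker(u_m)$ hold automatically, because $C_{\bullet}$ is a complex and $u_{m+1}\frt E$ is injective (this is exactly the remark closing the proof of that lemma). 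Applying it yields the reduced complex $C'_{\bullet}$ with $C'_{m+1}=\Ae{r-k}$, $C'_m=\Ae{s-k}$ and induced maps $u'_{m+1},u'_{m+2},u'_m$, with unchanged homology; its point \emph{2b} (applied with $E$ of rank $k$) gives $\cD_\ell(u'_{m+1})=\cD_{\ell+k}(u_{m+1})$, in agreement with the relation already read off from $A_1$. Finally the matricial description ending the lemma exhibits the normalized complex as the direct sum of $C'_{\bullet}$ and the trivial complex $E\vvers{\Id_E}E$ placed in degrees $m+1$ and $m$; composing with the basis-change isomorphism of the first step gives the claimed decomposition of the original $C_{\bullet}$.

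The argument is thus mostly bookkeeping once the two lemmas are in hand, and no deep difficulty is expected. The one place to be careful is the passage from a matrix equivalence $U A_{m+1} V$ to an honest isomorphism of complexes: one must correctly propagate the left factor $U$ and the right factor $V$ to the two neighbouring differentials $u_m$ and $u_{m+2}$ so that commutativity with the retained identity maps is preserved. The only other subtlety is keeping track of the index shift by $k$ in the determinantal ideals, which is governed by $E$ having rank $k$ and matches on both computational routes.
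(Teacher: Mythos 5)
Your proof is correct and follows exactly the route the paper takes: its own proof is the one-line instruction \og utiliser le lemme du mineur inversible \ref{FFRlem.min.inv} et le lemme~\ref{lemModifComplexe}, point~\emph{2}\fg, and your write-up supplies precisely the intended bookkeeping (turning the matrix equivalence into an isomorphism of complexes by propagating the invertible factors to $u_{m+2}$ and $u_m$, then suppressing the $\I_k$ block via point~\emph{2}, whose hypotheses on $\Im(u_{m+2})$ and $\Ker(u_m)$ hold automatically as you note). No gap; at most a cosmetic choice of whether the invertible factor carried onto $u_{m+2}$ is written $V$ or $V^{-1}$.
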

%
\begin{proof}
$\!$Utiliser le lemme du mineur inversible \ref{FFRlem.min.inv} 
et le  lemme~\ref{lemModifComplexe}, point~\emph{2.} 
\end{proof}

Deux conséquences importantes du corolaire \ref{corlemModifComplexe} sont données dans les \thos~\ref{cor3lemModifComplexe} et~\ref{thStrLocResFin}.

L'écart entre une \rlf et une \rsf est très faible comme le montre la proposition qui suit. \sibook{\`A  ce sujet voir aussi l'exercice \ref{exoProjResolutionToFreeResolution}.}

D'un certain point de vue, la notion de \rsf est un meilleur concept
car c'est un concept \lgb (voir le principe \ref{plcc.resf}), alors qu'un \mptf qui n'est pas \stl ne peut pas admettre de \rlf.

D'un autre point de vue, la notion de \rlf est plus fondamentale car elle aboutit à des résultats plus précis, et aussi parce qu'une \rsf n'est jamais
qu'une \rsn qui devient libre finie après \lon en des \eco.

\begin{proposition} \label{cor2lemModifComplexe}
Si l'on a une \rsf  

\snic {
0 \to P_n \lora  P_{n-1} \lora \; \cdots\cdots \;  \lora P_0 \lora M
\to 0,
}

\snii
(avec $n\geq 1$) d'un \Amo $M$, on a aussi une \rsf de même longueur

\snic {
0 \to P'_n \lora  L_{n-1} \lora \; \cdots\cdots \;  \lora L_0 \lora M
\to 0,
}

\snii
où $L_{n-1}$, \dots, $L_0$ sont \lrfs. En outre si les $P_i$ pour $i\leq k$ sont déjà libres, il suffit de modifier les $P_j$ pour $j> k$.
\end{proposition}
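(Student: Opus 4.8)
Le plan est de remplacer les modules $P_0,\dots,P_{n-1}$ par des modules \lrf, un par un et de bas en haut, en appliquant de manière répétée la modification \elr d'un complexe décrite au point~\emph{1} du lemme~\ref{lemModifComplexe}. Je me servirais du fait qu'une telle modification en un indice $i$ revient à prendre la somme directe de $C_{\bullet}$ avec le complexe trivial $\cdots 0\to E\vers{\Id_E}E\to 0\cdots$ ayant $E$ aux places $i+1$ et $i$: comme ce complexe trivial est exact et que l'homologie d'une somme directe est la somme directe des homologies, l'homologie — et donc l'exactitude — serait préservée. Le seul ingrédient supplémentaire est que tout \Amo \ptf $C$ possède un supplémentaire \ptf $Q$ tel que $C\oplus Q$ soit \lrf, ce qui est la \dfn même d'un \mptf.

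Concrètement, je procéderais par \recu en maintenant l'invariant suivant: après un certain nombre d'étapes, la \rsf de départ est devenue, sans changement d'homologie ni de longueur, une \sex
$$0\to P_n\lora\cdots\lora P_{i+2}\lora C_{i+1}\lora L_i\lora\cdots\lora L_0\lora M\to 0,$$
où $L_0,\dots,L_i$ sont \lrf, où $C_{i+1}=P_{i+1}\oplus Q_i$ est \ptf, et où les modules $P_{i+2},\dots,P_n$ n'ont pas été modifiés (l'amorce de la \recu correspondant à la \rsf donnée, avec $C_0=P_0$). Pour passer à l'étape suivante, je choisirais un supplémentaire $Q_{i+1}$ de $C_{i+1}$ tel que $L_{i+1}:=C_{i+1}\oplus Q_{i+1}$ soit \lrf, puis j'appliquerais le lemme~\ref{lemModifComplexe} en l'indice $i+1$ avec $E=Q_{i+1}$: la place $i+1$ deviendrait $L_{i+1}$, donc \lrf, la place $i+2$ deviendrait $P_{i+2}\oplus Q_{i+1}$, encore \ptf, et toutes les places d'indice $\leq i$ resteraient intactes.

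Au terme de l'étape d'indice $n-1$, j'obtiendrais la \sex $0\to P'_n\lora L_{n-1}\lora\cdots\lora L_0\lora M\to 0$, de longueur $n$, avec $L_0,\dots,L_{n-1}$ \lrf et $P'_n=P_n\oplus Q_{n-1}$ \ptf, ce qui est exactement la conclusion. Pour l'assertion finale, si $P_0,\dots,P_k$ sont déjà libres, il suffirait de démarrer la \recu à l'indice $k+1$: une modification en un indice $i$ n'affectant que les places $i$ et $i+1$, les places $0,\dots,k$ ne seraient jamais touchées et resteraient libres.

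La difficulté n'est pas conceptuelle mais tient à la tenue des indices: il faut vérifier qu'une modification en l'indice $i+1$ ne défait pas la liberté déjà acquise en dessous. C'est précisément ce que garantit le fait qu'une modification n'affecte que deux modules consécutifs, tous deux d'indice $\geq i+1$; l'invariant de \recu se propage alors sans peine, la seule \prt à invoquer étant la stabilité de la classe des \mptfs par somme directe.
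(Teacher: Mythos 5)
Votre démonstration est correcte et suit essentiellement la même voie que celle du texte: application répétée de la première partie du lemme~\ref{lemModifComplexe}, en complétant de proche en proche, de bas en haut, chaque module projectif en un module libre de rang fini (par somme directe avec un supplémentaire) et en modifiant simultanément le module situé juste au-dessus, ce qui laisse intacts les étages inférieurs et donne la variante finale en démarrant à l'indice $k+1$. L'invariant de récurrence que vous explicitez, ainsi que la justification de la conservation de l'homologie par somme directe avec un complexe trivial exact, ne font que détailler ce que la preuve du texte laisse implicite en citant le lemme.
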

%
\begin{proof} On applique de manière répétée la première partie du lemme \ref{lemModifComplexe}.
On complète $P_0$ en $L_0=P_0\oplus Q_0$ pour en faire un module libre,
simultanément on remplace $P_1$ par $\wi{P_1}=P_1\oplus Q_0$. Ensuite on complète $\wi{P_1}$ pour en faire un module libre $L_1$ et l'on modifie simultanément $P_2$. On continue jusqu'à rendre tous les termes $L_i$ libres sauf éventuellement
le terme $P'_n$. \\
Le même raisonnement s'applique si les  $P_i$ sont libres pour $i\leq k$:
on commence la modification à $P_{k+1}$.
\end{proof}

De la même manière on obtient le résultat suivant.
\begin{lemma} \label{lempnptfpnlibre}
Toute $n$-\pn  projective \tf d'un module donne, par transformations
\elrs du complexe, une $n$-\pn (libre finie) du même module.   
\end{lemma}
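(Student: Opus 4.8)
The plan is to follow the proof of Proposition~\ref{cor2lemModifComplexe} almost verbatim to free up all the modules except the top one, and then to deal with the top module by a separate device that is available precisely because an $n$-\pn (unlike a resolution) carries no exactness condition at its leftmost term. Start from a projective $n$-\pn
$$
P_n \vvers{u_n} P_{n-1} \vvers{u_{n-1}} \cdots \vvers{u_1} P_0 \vvers{u_0} M \lora 0,
$$
where each $P_i$ is finitely generated projective, hence a direct summand of a finite free module, say $P_i \oplus K_i \simeq \Ae{m_i}$.

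First I would free the modules from the bottom up, using the first part of Lemma~\ref{lemModifComplexe} exactly as in \ref{cor2lemModifComplexe}. Applying it at index $k=0$ with $E = K_0$ replaces $P_0$ by the finite free module $L_0 := P_0 \oplus K_0$ and simultaneously $P_1$ by $\wi{P_1} := P_1 \oplus K_0$ (still finitely generated projective), without altering the homology; in particular the cokernel $M$ and all the exactness relations are preserved. Repeating this at $k = 1, \dots, n-1$, each time completing the current projective module at index $k$ to a finite free module $L_k$ while pushing the added summand onto index $k+1$, I arrive at an exact sequence
$$
\wi{P_n} \vvers{\wi{u_n}} L_{n-1} \lora \cdots \lora L_0 \lora M \lora 0
$$
in which $L_0, \dots, L_{n-1}$ are finite free and $\wi{P_n}$ is finitely generated projective.

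It remains to replace $\wi{P_n}$ by a finite free module. Writing $\wi{P_n}$ as the image of an idempotent matrix, we have $\wi{P_n} \oplus K \simeq \Ae{m} =: L_n$; let $\pi : L_n \twoheadrightarrow \wi{P_n}$ be the canonical projection and set $u'_n := \wi{u_n}\circ\pi : L_n \to L_{n-1}$. Because $\pi$ is surjective, $\Im u'_n = \Im \wi{u_n} = \Ker u_{n-1}$, so exactness at $L_{n-1}$ is untouched, and the resulting sequence
$$
L_n \vvers{u'_n} L_{n-1} \lora \cdots \lora L_0 \lora M \lora 0
$$
is a finite free $n$-\pn of $M$, as wanted.

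The only delicate step is this last one. The inductive \guil{complete-and-push} procedure of \ref{cor2lemModifComplexe} stalls at the top module, since there is no term further to the left to absorb the complementary summand and a genuine elementary modification there would lengthen the complex. The remedy of pre-composing $\wi{u_n}$ with a surjection from a finite free module works only because an $n$-\pn requires exactness at $L_{n-1}, \dots, L_0$ and surjectivity onto $M$, but \emph{not} injectivity of $u_n$. This is exactly the feature distinguishing the present statement from Proposition~\ref{cor2lemModifComplexe}, and the reason why a finitely generated projective module that is not stably free admits finite free $n$-\pns yet no finite free resolution.
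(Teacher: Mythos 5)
Votre démonstration est correcte et suit essentiellement la même voie que le texte, qui expédie ce lemme par la remarque \gui{De la même manière} renvoyant à la proposition~\ref{cor2lemModifComplexe}: on rend libres les modules de bas en haut par les modifications \elrs du lemme~\ref{lemModifComplexe}, puis on traite à part le terme de tête. Notez seulement que votre dernier dispositif — précomposer $\wi{u_n}$ avec la projection $L_n \twoheadrightarrow \wi{P_n}$ — est lui-même la modification \elr du lemme~\ref{lemModifComplexe} appliquée à l'indice $n$ (on adjoint $K$ à $C_{n+1}=0$ et à $C_n$, puis on oublie le terme d'indice $n+1$, qu'une $n$-présentation n'enregistre pas), de sorte que votre argument reste bien, comme l'exige l'énoncé, dans le cadre des \gui{transformations \elrs du complexe}.
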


\begin{lemma} \label{lemlorsbloclresb}
Si $E$ est un \mlonrsb, il existe des \eco $(s_i)$ tels qu'après
\lon en chaque $s_i$, le module devient  \lnrsb. 
\end{lemma}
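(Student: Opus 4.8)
Lemma \ref{lemlorsbloclresb} claims: if $E$ is a module of finite projective dimension (i.e. $E$ admits a finite projective resolution, $\Pd_\gA(E)\leq n$ for some $n$), then there exist comaximal elements $(s_i)$ such that after localizing at each $s_i$, the module admits a finite \emph{free} resolution.

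\medskip

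The plan is to start from a finite projective resolution
$$
0\to P_n \vvers{u_n} P_{n-1}\vvers{u_{n-1}}\cdots\vvers{u_1} P_0\vvers{u_0} E\to 0,
$$
where each $P_i$ is finitely generated projective, and to localize so that each $P_i$ becomes free. The key fact already available is the structure theorem for finitely generated projective modules, \thref{prop Fitt ptf 1}, which guarantees (point \emph{3}) that any finitely generated projective module becomes free after localization at a finite family of comaximal elements. First I would apply this to each $P_i$ separately: for each $i\in\lrb{0..n}$ there is a finite family of comaximal elements making $P_i$ free. The main bookkeeping step is to combine these $n+1$ finite comaximal families into a single finite comaximal family that simultaneously trivializes all the $P_i$. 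Concretely, if $(s_{i,1},\dots,s_{i,k_i})$ are comaximal and render $P_i$ free, then the products $s_{0,j_0}s_{1,j_1}\cdots s_{n,j_n}$ (ranging over all choices of indices) form a finite comaximal family, and localizing at any such product makes \emph{every} $P_i$ free at once.

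\medskip

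Once we localize at one such product $s=s_{0,j_0}\cdots s_{n,j_n}$, every module $P_i[1/s]$ is free of finite rank, and localization being exact (it preserves exact sequences), the sequence
$$
0\to P_n[1/s]\lora P_{n-1}[1/s]\lora\cdots\lora P_0[1/s]\lora E[1/s]\to 0
$$
is again exact. Since all the $P_i[1/s]$ are now free finite modules, this is exactly a finite free resolution of $E[1/s]$ over $\gA[1/s]$. Thus $E[1/s]$ is $\ell$-free-resolvable (\lonrsb becomes \lnrsb), with the same length $n$. Applying this to each product $s$ in the finite comaximal family yields the conclusion.

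\medskip

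I expect no serious obstacle here; the statement is essentially an immediate corollary of the structure theorem \thref{prop Fitt ptf 1} together with the exactness of localization. The only point requiring a little care is the combinatorial step of merging several finite comaximal families into one — this is the standard argument that products of elements drawn one from each of several comaximal families are again comaximal, which follows because $\gen{1}=\gen{s_{i,1},\dots,s_{i,k_i}}$ for each $i$ implies $\gen{1}=\prod_i\gen{s_{i,1},\dots,s_{i,k_i}}$, and the latter product ideal is generated by the products. Everything else is routine functoriality of localization. Note also that the proposition is deliberately asymmetric with its converse counterpart in \ref{propRSFRLF}, so I would only prove the stated direction (finite projective resolution $\Rightarrow$ local finite free resolutions).
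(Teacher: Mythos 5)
Votre démonstration est correcte, mais elle ne suit pas la route principale du texte. La preuve du papier commence par invoquer la proposition \ref{cor2lemModifComplexe} pour remplacer la résolution projective finie donnée par une résolution de même longueur dont tous les modules sont libres de rang fini \emph{sauf le premier} (le module de gauche), qui reste projectif de type fini; il ne reste alors qu'un seul module à trivialiser, donc un seul système d'éléments comaximaux suffit, sans aucune étape de fusion de familles. Votre approche, elle, travaille directement avec la résolution projective de départ: vous rendez libre chaque $P_i$ séparément (théorème \ref{prop Fitt ptf 1}, point 3), puis vous fusionnez les $n+1$ familles comaximales en prenant les produits $s_{0,j_0}\cdots s_{n,j_n}$, et vous concluez par exactitude de la localisation. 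C'est précisément le contenu de la remarque finale de la preuve du papier («~en fait, n'importe quelle résolution de $E$ par des modules projectifs de type fini devient une résolution libre finie après localisation en des éléments comaximaux convenables~»), remarque que le papier énonce sans détailler: vous en fournissez la justification, en particulier l'argument de fusion, qui est correct ($\gen{1}=\prod_i\gen{s_{i,1},\dots,s_{i,k_i}}$, et l'idéal produit est engendré par les produits d'un générateur de chaque famille; de plus, inverser un produit inverse chacun de ses facteurs, donc chaque $P_i[1/s]$ est bien libre). Ce que chaque approche apporte: la vôtre est autonome et ne requiert pas la machinerie des modifications élémentaires de complexes, au prix d'une famille comaximale potentiellement beaucoup plus grosse (produit des cardinaux); celle du papier minimise le nombre de localisations et réutilise un outil (prop.~\ref{cor2lemModifComplexe}) déjà disponible et exploité ailleurs dans le chapitre.
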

%
\begin{proof}
On considère une \rsn de $E$ par des \mlrfs, sauf le premier
qui est \ptf (proposition \ref{cor2lemModifComplexe}). Ce module
devient libre après \lon en des \eco.
\\
En fait, n'importe quelle \rsn de $E$ par des \mptfs devient une \rlf après \lon en des \eco convenables. 
\end{proof}
%

\subsec{Complexes \gnqt exacts et autres}

Considérons  un  complexe descendant de \mlrfs, borné, se terminant à l'indice $0$.

\fnic{L_{\bullet}: \quad 0 \to L_m \vvers{A_m}  L_{m-1} \vvvers{A_{m-1}}\;  \cdots \cdots \; \vvers{A_2}  L_1 \vvers{A_1} L_0,\;\; L_k=\gA^{p_k}. 
}

\smallskip
Lorsque l'anneau de base $\gA$ est intègre, on dit que le complexe est \emph{\gnqt exact}
s'il devient exact  quand on étend les scalaires au corps de fractions~$\gK$.\index{complexe!generiquement@\gnqt exact}%
\index{generiquement@\gnqt exact!complexe}
Dans un tel cas,  les modules d'homologie sont des modules de torsion sur~$\gA$, annulés par un même \elt de $\Reg(\gA)$: en effet, ils sont \pf et ils deviennent nuls par \eds à $\gK$.
Et si la matrice~$A_k$ est de rang $r_k$ sur $\gK$,  
on obtient pour tout $k\in\lrbm$

\smallskip \centerline{\hbox{$L_k= \gA^{r_{k+1}+r_k}$}\,,} 

à condition de définir  $r_{m+1}=0$ et $r_0=p_0-r_1$ (cf. supra la discussion après la \dfn \ref{defiCarEuPoComp}).
 Notons  que $r_0=\chi(L_{\bullet})$.

\smallskip 
Dans la suite on ne suppose pas \ncrt l'anneau intègre, 
on considère un complexe borné de \mlrfs,
et l'on garde l'hypothèse que l'on a des entiers $r_k\in\NN$ tels que \fbox{$p_k=r_{k+1}+r_k$} pour tout $k\in\NN$, \hbox{avec  $p_\ell=0$} \hbox{pour $\ell> m$}.\\
Sous ces hypothèses on dira que $r_k$ est le \ix{rang stable attendu}\footnote{Le rang stable d'une matrice sur un anneau intègre est le rang sur le corps de fractions. Plus \gnlt le rang stable d'une matrice est défini en \ref{defiRangStable}, et l'on démontre en \ref{thRangStRLF}
que si $L_{\bullet}$ est exact chaque matrice $A_k$ est de rang stable~$r_k$.} de la matrice~$A_k$.\\
Si l'anneau $\gA$ est non trivial, on a \ncrt $r_\ell+r_{\ell+1}=0$
\hbox{pour $\ell>m$}, donc~$r_{m+1}=0$ et $r_m$ est le rang 
du module $L_{m}$: la suite des~$r_k$ est  définie sans ambig\"uité à partir de la suite
des $p_k$, elle-même bien définie. Il reste cependant une contrainte
à imposer à la suite des $p_k$, c'est que la suite des~$r_k$ qui en résulte soit entièrement dans $\NN$ (en fait seuls importent les
indices~\hbox{$k\in\lrb{0..m}$}). On se simplifie donc un peu la vie
en donnant les~$r_k$ plutôt que les $p_k$.

\smallskip 
On s'intéresse maintenant aux invariants d'un tel complexe lorsqu'on lui fait subir des modifications  \elrs
telles que celles décrites dans le paragraphe précédent (lemme \ref{lemModifComplexe}, avec $E$ libre), ou encore des changements de base pour les modules $L_k$. 

\begin{definition} \label{defiCompeltequiv}
Deux complexes bornés de \mlrfs sont dits \emph{\elrt \eqvs} si l'on 
peut passer de l'un à l'autre (à \iso près) 
par une suite finie de modifications
\elrs comme dans le lemme~\ref{lemModifComplexe}, avec $E$ \lrf, rajouté en somme directe, ou retranché, à deux modules $L_k$ et $L_{k+1}$ pour un $k\geq 0$.%
\index{elementairement equivalents@\elrt \eqvs!complexes de modules libres
de rang fini ---} 
\end{definition}

On note pour commencer que dans une modification  \elr 
opérée dans le lemme \ref{lemModifComplexe}, avec $E$ libre,  le seul rang stable attendu modifié \hbox{est  $r_{k+1}$}. En particulier, la \cEP $r_0$ ne change pas.
On note aussi en posant \fbox{$M=\Coker A_1$}  que si l'on considère une modification  \elr de $L_{\bullet}$ avec $k=1$ ou $0$,
 on obtient des transformations de la matrice $A_1$
qui conservent son conoyau à \iso près (voir \Cref{section IV-1}) et que l'on a des \ids invariants 

\smallskip 
\centerline{\fbox{$\cD_{r_1}(A_1)=\cF_{r_0}(M)$}  et   \fbox{$\cD_{r_1-\ell}(A_1)=\cF_{r_0+\ell}(M)$}.}

Les autres modifications \elrs (avec $k>1$) ne changent pas la matrice $A_1$,
donc le module $M$ reste le même et les $\cD_{r_1-\ell}(A_1)$ sont toujours inchangés.

Plus \gnlt, on constate le fait suivant.

\begin{propdef} \label{propdefidecars} On considère des complexes descendants bornés de \mlrfs $L_k$, limités à droite à l'indice $0$, donnés avec  des entiers $r_k\in\NN$ tels \hbox{que $L_k\simeq \Ae{r_k+r_{k+1}}$} pour tout $k\in\NN$, et $r_\ell=0$ si $\ell$ est assez grand.
Si deux tels complexes de modules libres sont \elrt \eqvs, 
ils ont les mêmes modules d'homologie. En outre,
en modifiant corrélativement les rangs stables attendus, les deux complexes ont  les mêmes \idds\label{idecars} pour les numérotations suivantes:  

\smallskip 
\centerline{\fbox{$\fD_k=\fD_k(L_{\bullet})   \eqdefi\cD_{r_k}(A_k)$}   et  
\fbox{$\fD_{k,\ell}=\fD_{k,\ell}(L_{\bullet})  \eqdefi \cD_{r_k-\ell}(A_k)\;$}.}

Nous appellerons ces \ids les \emph{\idcas du complexe $L_{\bullet}$}.%
\index{ideaux cara@\idcas!d'un complexe de modules libres de rang fini}%
\\
Pour $k$ suffisamment grand, les modules et les \alis sont nulles: on  a alors
  $\fD_k=\gen{1}$, et $\fD_{k,\ell}=\gen{1}$ et  
$\fD_{k,-\ell}=0$ pour $\ell>0$.\\
\sibook {Enfin, on utilisera éventuellement la convention $\fD_0(L_{\bullet})=0$.
\ttt{???}}
\end{propdef}
NB: 
les deux complexes ont aussi même \cEP égale à $r_0$, et
les conoyaux des flèches de droite sont isomorphes. 

Si \ncr, pour lever une ambig\"uité, on notera $\fD_{\gA,\ell}(L_{\bullet})$ et $\fD_{\gA,k,\ell}(L_{\bullet})$
en rappelant l'anneau sur lequel on considère les matrices.

\medskip 
\emph{Note.} L'indice  $0$  utilisé pour limiter le complexe à droite est  une convention naturelle si l'on considère que le complexe résout un module donné.  C'est la raison pour laquelle les modifications \elrs du complexe envisagées ne  rallongent jamais sur la droite la liste des modules non nuls.  En pratique, comme on l'a déjà signalé, lorsque l'anneau est non trivial, les rangs stables attendus sont connus sans ambig\"uité.
En bref, la structure de donnée correspondant à la proposition \ref{propdefidecars} est un \emph{complexe borné de \mlrfs
avec rangs stables attendus}; mais lorsque l'anneau est non trivial\footnote{Si l'on ignore si l'anneau est trivial ou non, par exemple après une \lon mal maitrisée, il pourrait s'avérer utile d'avoir les $r_k$ à sa disposition.}, les~$r_k$ sont connus à partir de $L_{\bullet}$, et le fait de les donner a priori est simplement une manière de se simplifier la vie. 
\eoe 

\medskip 
Le  \thref{thRangStRLF} dit que lorsque le complexe $L_{\bullet}$ est exact, les \ids~$\fD_k$ sont fidèles et $\fD_{k,\ell}=0$ pour $\ell<0$.
Signalons aussi le remarquable \thref{cor3thABH1} qui caractérise de manière précise, par des \prts des \ids~$\fD_{k}$, les complexes $L_{\bullet}$  qui sont exacts, et le non moins remarquable \thref{thdetCay} qui précise la \gui{structure multiplicative} des \idcas lorsque le complexe est un complexe de Cayley, a fortiori s'il est exact.

\begin{lemma} \label{lemfDkchgbase} 
Après une \eds $\rho:\gA\to\gB$ les \ids \caras $\fD_{\gA,k}(L_{\bullet})$ et $\fD_{\gA,k,\ell}(L_{\bullet})$, ainsi que le module conoyau de la dernière flèche, sont remplacés par  leurs images  dans $\gB$.
\end{lemma}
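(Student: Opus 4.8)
The plan is to reduce the assertion to two standard compatibilities with base change: that of determinantal ideals and that of cokernels. First I would record that the base change does not affect the expected stable ranks $r_k$. These are determined solely by the integers $p_k$ (with $L_k\simeq\gA^{p_k}$) through the relations $p_k=r_k+r_{k+1}$ together with $r_\ell=0$ for $\ell$ large; since $\rho\ist(\gA^{p_k})=\gB^{p_k}$, the complex $\rho\ist(L_{\bullet})$ carries the same sequence $(p_k)$, hence the same $(r_k)$. Consequently the numbering used to define the characteristic ideals over $\gB$ coincides with the one over $\gA$, and it suffices to compare $\cD_{\gB,r_k}(\rho(A_k))$ with $\cD_{\gA,r_k}(A_k)$, and likewise for the order $r_k-\ell$.

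Second, the heart of the matter is the elementary fact that determinantal ideals commute with any ring morphism. For a matrix $A$ over $\gA$, writing $\rho(A)$ for the matrix over $\gB$ obtained by applying $\rho$ entrywise, every minor of order $j$ of $\rho(A)$ is the image under $\rho$ of the corresponding minor of $A$, because a determinant is a polynomial with integer coefficients in the entries and $\rho$ is a ring homomorphism. The generators of $\cD_{\gB,j}(\rho(A))$ are thus exactly the $\rho$-images of the generators of $\cD_{\gA,j}(A)$, whence $\cD_{\gB,j}(\rho(A))=\rho\big(\cD_{\gA,j}(A)\big)\gB$, i.e. the extension to $\gB$ of the image ideal. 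Taking $A=A_k$ and $j$ equal to $r_k$ or $r_k-\ell$ gives at once that $\fD_{\gB,k}(\rho\ist(L_{\bullet}))$ and $\fD_{\gB,k,\ell}(\rho\ist(L_{\bullet}))$ are the images in $\gB$ of $\fD_{\gA,k}(L_{\bullet})$ and $\fD_{\gA,k,\ell}(L_{\bullet})$.

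Third, for the cokernel $M=\Coker A_1$ of the last arrow, I would invoke the right exactness of extension of scalars: the functor $\gB\otimes_\gA-$ preserves cokernels, so $\Coker\big(\rho(A_1)\big)\simeq\rho\ist(\Coker A_1)=\rho\ist(M)$, which is precisely the base change of $M$.

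The main point to watch — rather than a genuine obstacle — is to fix unambiguously what \emph{image dans $\gB$} means in each case: the extension $\rho(\fa)\gB$ of an ideal on the side of the characteristic ideals, and the scalar extension $\rho\ist(M)$ on the side of the cokernel. Once the invariance of the $r_k$ is noted, so that one is comparing determinantal ideals of the same order before and after base change, the two compatibilities above finish the proof with no real computation; this is why the lemma can reasonably be labelled \emph{facile}.
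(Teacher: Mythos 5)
Your proof is correct and is precisely the routine verification the paper leaves implicit: the lemma is stated there without any proof, being regarded as immediate. Your three ingredients — invariance of the expected stable ranks $r_k$ under base change, the compatibility $\cD_{\gB,j}(\rho(A))=\rho\big(\cD_{\gA,j}(A)\big)\gB$ because minors are integer polynomials in the entries, and right exactness of $\gB\otimes_\gA-$ for the cokernel — are exactly what a reader is expected to supply, so there is nothing to compare beyond noting that you have written out the omitted details correctly.
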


Les \edss plates se distinguent ici seulement par le fait qu'elles
préservent l'homologie.

\bonbreak
\section{Résolutions libres minimales}\label{secRSNLF}

\subsec{Existence de \rsns minimales}

\begin{thdef} \label{cor3lemModifComplexe} \emph{(Existence de \rsns minimales)}\\
Soit  un \alrd $\gA$. 
\begin{enumerate}
\item Pour tout complexe de $\gA$-\mlrfs
$$
L_{\bullet}:\quad  L_n \vvers{u_n}  \;   \cdots\cdots \; \vvers{u_1} L_0,
$$
on sait calculer un complexe \elrt \eqv  où 
les matrices~$A_i$ des \alis $u_i$ ($i\in\lrbn$) sont à \coes dans~$\Rad(\gA)$.  
\item Si l'on a une $n$-\pn d'un module, le calcul du point  {1} la ramène à une  $n$-\pn \emph{minimale}, \cad où les matrices
de la \pn sont à \coes dans~$\Rad(\gA)$.%
\index{minimale!$n$-\pn --- d'un module} 
\item Si l'on a une \rlf d'un \Amo $M$,  
$$
0 \to L_n \vvers{u_n}  L_{n-1} \vvers{u_{n-1}}\;  \cdots \cdots \; \vvers{u_2}  L_1 \vvers{u_1} L_0 \vers{\pi} M
\to 0,
$$
la \rsn obtenue  au point  {1}  est dite \emph{minimale}.%
\index{minimale!\rsn libre --- d'un module}%
\index{resolution libre@\rsn libre!minimale d'un module} 
\end{enumerate}
Rappelons que dans cette réduction, la \cEP et les \idcas du complexe $L_{\bullet}$ ne   changent pas
 (mais la longueur~$n$ peut diminuer). 

\end{thdef}
%
\begin{proof}
Puisque l'anneau local est \dcd on peut déterminer pour n'importe quelle matrice un entier $k\geq 0$ et  un mineur inversible\footnote{Rappelons qu'un mineur d'ordre $0$ est égal à $1$.} d'ordre $k$ tels que tous les
mineurs d'ordre $k+1$ sont résiduellement nuls. On applique alors le corolaire \ref{corlemModifComplexe}
successivement avec  $u_1$, \dots , $u_n$. 
\end{proof}

Le résultat précédent généralise le point \emph{1} 
du lemme du nombre de \gtrs local \ref{FFRlemnbgtrlo}.

Soit $\gk=\gA\sur{\Rad(\gA)}$
le corps résiduel. Si 
dans une \rsn minimale de~$M$ on a $L_0=\Ae r$, alors 

\snic{\cF_r(M)=\cD_0(u_1)=\gen{1}  \hbox{ et } \cF_{r-1}(M)=\cD_1(u_1)\in\Rad(\gA).}

\snii
Si l'anneau $\gA$ n'est pas trivial, l'entier $r$ est donc bien défini. C'est aussi la dimension du \kev $\,\gk\otimes_\gA M\simeq M\sur{\Rad(\gA)M}$, et c'est le nombre d'\elts de tout \sgr minimal de $M$.\\
Ce résultat va être \gne dans les \thos~\ref{thBetti} et~\ref{thStrLocResFin}.

\subsec{Unicité des \rsns libres minimales}

Le \thref{cor3lemModifComplexe} affirme dans le cas d'un \alo \dcd l'existence de \rlfs 
pour lesquelles toutes les matrices sont à \coes dans $\Rad(\gA)$.
Plus \gnlt sur un anneau arbitraire on donne la \dfn suivante.
\begin{definition} \label{defireslibmin}
Une \rlf d'un \Amo est dite \ixc{minimale}{resolution@\rsn~---} si  les \alis
$u_1$, \dots, $u_n$ de la \rsn sont données par des matrices à \coes dans $\Rad(\gA)$. 
\end{definition}

\begin{lemma} \label{lemPNMini}
Soit $\fa$ un \id de $\gA$, $M$ un \Amo \pf et deux \pns de $M$ données par les suites exactes

\snic {
\xymatrix {
&\Ae m \ar[r]^{u} &\Ae n\ar[r]^{\pi} &M\ar[d]^{\Id_M}\ar[r] & 0
\\
&\Ae {m'} \ar[r]^{u'} &\Ae {n'}\ar[r]^{\pi'} &M\ar[r] & 0
\\
}}

\snii avec les matrices de $u$ et $u'$ à \coes dans $\fa$.
\begin{enumerate}
\item On a $n=n'$ dans $\HO(\gA\sur\fa)$. En particulier 
\begin{enumerate}
\item si $n\neq n'$
alors $1\in\fa$:
\item si $\fa$ est un \id strict, alors $n=n'$ dans $\NN$.
%
%
\end{enumerate}  
\item Si $\fa\subseteq \Rad(\gA)$ alors $\Ae n\simeq \Ae{n'}$ et il existe un \auto $v$ de $\Ae n$
tel que $\pi'\circ v=\pi$ et $v(\Ker\pi)=\Ker\pi'$.
\end{enumerate}
\snic {
\xymatrix {
&\Ae m \ar[r]^{u} &\Ae n\ar[d]^v\ar[r]^{\pi} &M\ar[d]^{\Id_M}\ar[r] & 0
\\
&\Ae {m'} \ar[r]^{u'} &\Ae {n}\ar[r]^{\pi'} &M\ar[r] & 0
\\
}}

\end{lemma}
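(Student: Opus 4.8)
Pour le point \emph{1}, l'idée est de réduire les deux présentations modulo $\fa$. Posons $\gB=\aqo\gA\fa$ et tensorisons la \sex $\Ae m\vers{u}\Ae n\vers{\pi}M\to 0$ par $\gB$. Le produit tensoriel étant exact à droite, on obtient une suite exacte $\gB^{m}\vers{\ov u}\gB^{n}\vers{\ov\pi}M/\fa M\to 0$; comme la matrice de $u$ est à \coes dans $\fa$, on a $\ov u=0$, de sorte que $\ov\pi$ est un \iso $\gB^{n}\simarrow M/\fa M$. Le même raisonnement appliqué à la seconde présentation donne un \iso $\gB^{n'}\simarrow M/\fa M$, d'où $\gB^{n}\simeq\gB^{n'}$ et donc l'\egt des rangs $n=n'$ dans $\HO(\gB)=\HO(\aqo\gA\fa)$. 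Les deux précisions en découlent grâce à la structure de $\HO$ rappelée dans l'avant-propos des rangs: le sous-semi-anneau de $\HOp(\gB)$ engendré par $1$ étant isomorphe à $\NN$ dès que $\gB$ est non trivial, si $n\neq n'$ dans $\NN$ l'\egt $n=n'$ dans $\HO(\gB)$ force $\gB$ à être trivial, \cad $1\in\fa$ (point \emph{1a}); et contraposée, si $\fa$ est strict alors $\gB$ est non trivial et $n=n'$ dans $\NN$ (point \emph{1b}).

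Pour le point \emph{2}, je suppose $\fa\subseteq\Rad(\gA)$, de sorte que le point \emph{1} donne $n=n'$. Comme $\Ae n$ est libre, donc \pro, et que $\pi'$ est surjective, la \prt de relèvement fournit une \ali $v:\Ae n\to\Ae{n'}$ telle que $\pi'\circ v=\pi$; symétriquement, on relève $\pi'$ à travers $\pi$ pour obtenir $w:\Ae{n'}\to\Ae n$ avec $\pi\circ w=\pi'$. On a alors $\pi\circ(w\circ v)=\pi'\circ v=\pi$, donc $w\circ v-\Id$ envoie $\Ae n$ dans $\Ker\pi=\Im u\subseteq\fa\,\Ae n$: la matrice de $w\circ v$ est de la forme $\In+N$ avec $N$ à \coes dans $\fa$. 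De même, $v\circ w$ a pour matrice $\In+N'$ avec $N'$ à \coes dans $\fa$.

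Le principal obstacle sera alors de conclure à l'inversibilité de ces deux composées, ce qui est en fait un \gui{truc du \deter} à la Nakayama. Puisque $\fa\subseteq\Rad(\gA)$, on a $\det(w\circ v)\equiv\det(\In)=1\bmod\Rad(\gA)$, donc $\det(w\circ v)\in 1+\Rad(\gA)\subseteq\Ati$ et $w\circ v$ est inversible; idem pour $v\circ w$. Comme $n=n'$, $v$ est un endomorphisme de $\Ae n$ possédant à la fois un inverse à gauche $(w\circ v)^{-1}\circ w$ et un inverse à droite $w\circ(v\circ w)^{-1}$, lesquels coïncident donc, ce qui fait de $v$ un \auto de $\Ae n$ (et redonne $\Ae n\simeq\Ae{n'}$). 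Enfin, l'\egt $\pi'\circ v=\pi$ donne aussitôt $v(\Ker\pi)\subseteq\Ker\pi'$, et l'inclusion réciproque s'obtient en appliquant $v^{-1}$, d'où $v(\Ker\pi)=\Ker\pi'$. L'essentiel du travail conceptuel est concentré dans le passage au quotient par $\fa$ du point \emph{1}; les vérifications du point \emph{2} sont ensuite de routine une fois la combinaison \gui{relèvement projectif $+$ radical de Jacobson} mise en place.
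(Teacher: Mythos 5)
Votre démonstration est correcte et suit essentiellement la même voie que celle du texte: réduction modulo $\fa$ pour le point \emph{1}, puis relèvement de $\Id_M$ en deux applications $v$, $w$ avec $w\circ v\equiv \Id \bmod \fa$ et le truc du déterminant pour le point \emph{2} (le texte présente ce même relèvement sous forme de matrices $V$, $W$ représentant l'isomorphisme composé $\Ae n/\Ker\pi\simeq M\simeq \Ae n/\Ker\pi'$). Seule petite imprécision: le point \emph{1} ne donne $n=n'$ dans $\NN$ que si $\gA/\fa$ est non trivial; si $n\neq n'$, alors $1\in\fa\subseteq\Rad(\gA)$, donc $\gA$ est trivial et l'énoncé vaut trivialement --- c'est la réduction \gui{sans perte de généralité} que le texte explicite en tête de sa preuve du point \emph{2}.
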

\hum{On dit qu'un \id $\fa$ est propre si $1\in\fa\Rightarrow1=_\gA0$.
Le point \emph{1b} peut alors être énoncé sans négation comme suit: si $\fa$ est un \id propre, alors $n=n'$ dans $\HO\gA$}
\begin{proof}
\emph{1.} En étendant les scalaires à $\gB=\gA\sur\fa$ on obtient deux
suites exactes de \Bmos

\snic{\gB^ m\vers{0}\gB^ n \vers {\ov\pi} M/\fa M\lora 0,\quad$  et $\quad \gB^ {m'}\vers{0}\gB^ {n'} \vers {\ov{\pi'}} M/\fa M\lora 0.}

\snii Donc  $\ov{\pi}$ et $\ov{\pi'}$ sont des \isos et $\gB^n\simeq\gB^{n'}$.

\noindent \emph{2.} Si $n\neq n'$, $1\in\Rad\gA$ donc $\gA$ est trivial. On peut supposer \spdg $n=n'$.
On pose $K=\Im(u)=\Ker(\pi)$ et $K'=\Im(u')=\Ker(\pi')$.

\noindent  
L'\iso composé 

\snic{v_1:\Ae n \sur K\simarrow M\vvvers {\Id_M} M \simarrow \Ae n \sur K'}

\snii est représenté par une matrice $V\in\Mn(\gA)$
et l'\iso réciproque $w_1$ par une matrice $W$. On a $WV-\In=0 \mod K$
. En prenant le \deter on obtient  $\det(WV-\In)\in \fa$, donc $\det(WV)\in1+\fa\subseteq \Ati$. Ainsi~$V$ et~$W$  sont \ivs Notons $v$ l'\auto de matrice $V$.
Puisque $V$ représente $v_1$, on a $\pi'\circ v=\pi$. Et ceci implique $v(K)=K'$ car

\snic{K=\Ker\pi=\Ker(\pi'\circ v)=v^{-1}(\Ker\pi')=v^{-1}(K')}

\snii et $v$ est un \iso.
\end{proof}
%

\begin{theorem} \label{thBetti} \emph{(Unicité des \rlfs minimales)}
\\
Soient deux \rlfs  minimales d'un même module $M$
\[ 
\begin{array}{ccccccccccccccccccccc} 
0 &\to& L_n &\vvers{u_n}&  L_{n-1} &  \cdots \cdots    &\vvers{u_1}& L_0 &\vers{\pi} &M&
\to &0,
 \\[1mm] 
0 &\to& L'_n& \vvers{u'_n} & L'_{n-1} &  \cdots \cdots   & \vvers{u'_1}& L'_0 &\vers{\pi'}& M&
\to& 0.
\end{array}
\]
On a les résultats suivants.
\begin{enumerate}
\item Pour tout $i\in\lrb{0..n}$, $L_i$ et $L'_i$ sont isomorphes 
(donc si l'anneau est non trivial leur rang dans $\NN$ est le même).
\item Les deux complexes sont isomorphes au sens des complexes, i.e. il existe des \isos $v_i:L_i\to L'_i$, $(i\in\lrb{0..n})$
tels que $v_{i-1}u_i=u'_iv_{i}$  (on pose $u_0=\pi$, $u'_0=\pi'$ et $v_{-1}=\Id_M$). \\
En particulier $v_{i-1}(\Im(u_{i}))=\Im(u'_{i})$.
\end{enumerate}
\end{theorem}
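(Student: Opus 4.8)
Le plan est de réaliser l'\iso annoncé en relevant l'identité de $M$ en un morphisme de complexes, puis d'utiliser la minimalité pour transformer une \eqvc d'homotopie en un \iso terme à terme. Je commencerais donc par relever $\Id_M$ en un morphisme $v_{\bullet}:L_{\bullet}\to L'_{\bullet}$. Comme $L_0$ est libre (donc \pro) et $\pi'$ surjective, il existe $v_0:L_0\to L'_0$ avec $\pi'\circ v_0=\pi$. Puis par \recu, supposant $v_{i-1}$ construit avec $u'_{i-1}\circ v_{i-1}=v_{i-2}\circ u_{i-1}$ (conventions $u_0=\pi$, $u'_0=\pi'$, $v_{-1}=\Id_M$), on a $u'_{i-1}\circ(v_{i-1}\circ u_i)=v_{i-2}\circ u_{i-1}\circ u_i=0$, de sorte que $v_{i-1}\circ u_i$ prend ses valeurs dans $\Ker u'_{i-1}=\Im u'_i$ (exactitude de $L'_{\bullet}$); la projectivité de $L_i$ fournit alors $v_i:L_i\to L'_i$ avec $u'_i\circ v_i=v_{i-1}\circ u_i$. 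On construit symétriquement $w_{\bullet}:L'_{\bullet}\to L_{\bullet}$ relevant aussi $\Id_M$.

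Ensuite, les deux endomorphismes de complexes $w_{\bullet}\circ v_{\bullet}$ et $\Id_{L_{\bullet}}$ relèvent tous deux $\Id_M$; le même procédé de relèvement (projectivité des $L_i$, exactitude de $L_{\bullet}$) produit une homotopie $h$, c'est-à-dire des \alis $h_i:L_i\to L_{i+1}$ telles que $(w\circ v)_i-\Id_{L_i}=u_{i+1}\circ h_i+h_{i-1}\circ u_i$ pour tout $i$ (avec $h_{-1}=0$ et $u_{n+1}=0$). C'est ici qu'intervient la \emph{minimalité}: les matrices de $u_{i+1}$ et $u_i$ ont leurs \coes dans $\Rad(\gA)$ et, comme $\Rad(\gA)$ est un \id, la matrice de $(w\circ v)_i$ est de la forme $\Id+N_i$ avec $N_i$ à \coes dans $\Rad(\gA)$. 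Son \deter vaut donc un \elt de $1+\Rad(\gA)\subseteq\Ati$ (prendre $x=1$ dans la \dfn du radical de Jacobson), si bien que $(w\circ v)_i$ est un \auto de $L_i$. De même $(v\circ w)_i$ est un \auto de $L'_i$, et chaque $v_i$ possède alors à la fois un inverse à gauche et un inverse à droite: c'est un \iso.

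La conclusion en découle aussitôt. Les $v_i$ forment un \iso de complexes vérifiant $u'_i\circ v_i=v_{i-1}\circ u_i$ (avec $v_{-1}=\Id_M$), d'où le point \emph{2} et en particulier $v_{i-1}(\Im u_i)=\Im u'_i$. Le point \emph{1} suit: $L_i\simeq L'_i$; et lorsque l'anneau est non trivial, le plongement de $\NN$ dans $\HOp(\gA)$ (rappelé dans les préliminaires) montre que leurs rangs coïncident dans $\NN$.

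La principale difficulté n'est pas le calcul de \deter, qui est \imd, mais de vérifier que toute la machinerie de relèvement — existence des morphismes $v$, $w$, et surtout de l'homotopie $h$ — est bien \cov et ne repose que sur la liberté (donc projectivité) des modules et l'exactitude des deux \rsns; ce sont des constructions explicites par \recu. Le point crucial à bien dégager est que c'est la minimalité de la \rsn \emph{cible} qui contraint les $N_i$ à vivre dans $\Rad(\gA)$: sans cette hypothèse, $(w\circ v)_i$ n'aurait aucune raison d'être inversible et l'on n'obtiendrait qu'une \eqvc d'homotopie entre les deux complexes, et non un \iso.
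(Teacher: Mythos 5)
Votre démonstration est correcte, mais elle emprunte une route réellement différente de celle du texte. La preuve du texte procède par \recu sur $i$ à partir du lemme \ref{lemPNMini}: l'\iso $v_{i-1}$ déjà construit identifie $\Im(u_i)$ et $\Im(u'_i)$, dont $u_{i+1}$ et $u'_{i+1}$ sont des matrices de \pn, et ce lemme — un argument à la Nakayama sur les \pns, où l'on constate que $\det(WV)\in 1+\fa\subseteq\Ati$ — fournit à chaque étage l'\egt des rangs et un \iso $v_i$ qui fait commuter le diagramme et respecte les noyaux. Vous relevez au contraire $\Id_M$ en deux morphismes de complexes $v$ et $w$ (projectivité des modules libres, exactitude), vous montrez que $w\circ v$ et $v\circ w$ sont homotopes aux identités, puis la minimalité place les \coes de $(w\circ v)_i-\Id$ et de $(v\circ w)_i-\Id$ dans $\Rad(\gA)$, d'où l'inversibilité par le truc du \deter. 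C'est précisément la stratégie que le texte déploie plus loin pour le résultat plus général de comparaison des \rsns par modules \pros (lemme \ref{lemCompareRSFS} et \thref{thcomparaisonRSFS}, dont le point \emph{3} traite le cas minimal), et la remarque 4 qui suit le \thref{thBetti} y renvoie d'ailleurs explicitement. Bilan: votre approche se généralise telle quelle aux \rsns projectives et donne au passage, sans aucune hypothèse de minimalité, l'\eqvc d'homotopie des deux complexes; celle du texte reste plus \elr (des matrices de \pn et un \deter, sans homotopie) et son lemme \ref{lemPNMini} livre une information valable même sans l'hypothèse $\fa\subseteq\Rad(\gA)$, à savoir l'\egt des rangs dans $\HO(\gA\sur\fa)$. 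Les deux arguments sont \covs, vos relèvements étant des constructions explicites par \recu. Une seule précision à apporter à votre dernier paragraphe: pour l'endomorphisme $w\circ v$ de $L_{\bullet}$, la formule d'homotopie ne fait intervenir que les différentielles de $L_{\bullet}$, si bien que c'est la minimalité de $L_{\bullet}$ qui rend $(w\circ v)_i$ \iv, et celle de $L'_{\bullet}$ qui rend $(v\circ w)_i$ \iv; les deux hypothèses de minimalité sont donc bien utilisées, chacune pour l'un des deux composés, et non la seule minimalité de la \rsn \gui{cible}.
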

{\small\[ 
\begin{array}{ccccccccccccccccccccc} 
0 &\to& L_n &\vvers{u_n}&  L_{n-1} &  \cdots   \cdots  &\vvers{u_1}& L_0 &\vers{\pi} &M&
\to &0 
 \\[1.5mm] 
 & & 
 \downarrow \!{v_n}&& 
 \downarrow \!v_{n-1}&& 
 &
 \downarrow \!v_0&&
 \downarrow \!\Id\\[.5mm] 
0 &\to& L'_n& \vvers{u'_n} & L'_{n-1} &  \cdots   \cdots & \vvers{u'_1}& L'_0 &\vers{\pi'}& M&
\to& 0 
\end{array}
\]
}
\begin{proof}
On démontre les points \emph{1} et \emph{2}
 par \recu sur $i$. 

\noindent Pour $i=0$, les rangs de $L_0$ et $L'_0$
sont égaux d'après le point \emph{1} du lemme~\ref{lemPNMini},
et  le point \emph{2} donne $\Im(u_1)\simeq \Im(u'_1)$ via un \iso $v_0$ de $L_0$ sur $L'_0$.

\noindent 
Pour $i=1$,  on note  que (la matrice de) $u_2$ (resp. $u'_2$) est une \mpn pour $\Im(u_1)$ (resp. $\Im(u'_1)$).
 D'après le point \emph{1}  du lemme \ref{lemPNMini} on 	a $\rg(L_1)= \rg(L'_1)$, et d'après le point \emph{2} un \iso $v_1:L_1\to L'_1$ qui envoie $\Im(u_2)$ sur $\Im(u'_2)$ et qui fait commuter le diagramme.
 
 \noindent 
On termine par \recu de la même manière.
\end{proof}

\rems 1) Dans le \tho précédent, on peut supprimer la parenthèse dans le point \emph{1} en considérant le rang comme un \elt de $\HO(\gA)$ plutôt que comme un \elt de $\NN$.

\snii 2) On n'a  exigé ni $\rg_\gA(L_n)>0$, ni $\rg_\gA(L'_n)>0$. 
Une conséquence est donc
que si l'on a	deux \rlfs minimales avec des rangs tous $>0$, elles ont même longueur (plus \prmt, si ce n'est pas le cas,  \hbox{alors $1=_\gA0$}). En particulier toute \rlf minimale d'un \mlrf est \gui{de longueur nulle} (plus \prmt, les $L_k$ sont nuls \hbox{pour $k\geq 1$}).

\snii 3) Dans la \dem précédente, si l'on suppose seulement que les matrices des $u_\ell$ sont à \coes dans un \id $\fa$, on peut considérer l'anneau  $\gA'=\gA_{1+\fa}$ pour lequel
$\fa\subseteq \Rad(\gA')$ et l'on voit que les \isos construits utilisent effectivement uniquement des \denos dans $1+\fa$. 
Ceci justifie le point~\emph{2} dans la \dfn  \ref{defiBetti} ci-après.

\snii 4) Pour plus de précisions concernant le \tho précédent, voir
le \thref{thcomparaisonRSFS} dans le cadre des \rsfs 
\eoe

\medskip 
 
Comme conséquence des \thos \ref{cor3lemModifComplexe} et \ref{thBetti}
on obtient le corolaire suivant. 

\begin{corollary} \label{corthBetti}
Sur un \alo \dcd non trivial, une \rsn libre minimale d'un module est de longueur minimum
parmi toutes les \rlfs du module.
\end{corollary}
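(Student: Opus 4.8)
The plan is to deduce this directly from the two results it is attributed to: the existence theorem~\ref{cor3lemModifComplexe} and the uniqueness theorem~\ref{thBetti}. Fix a nontrivial discrete local ring $\gA$ and a module $M$. I will write $L_{\bullet}$ for a minimal finite free resolution of $M$, of length $m$ (so that its top module $L_m$ is nonzero, i.e. $\rg_\gA(L_m)>0$), and $L'_{\bullet}$ for an arbitrary finite free resolution of $M$, of length $n$. The goal is to establish $m\leq n$.

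First I would apply the reduction of Theorem~\ref{cor3lemModifComplexe} to $L'_{\bullet}$. Since the elementary modifications of Lemma~\ref{lemModifComplexe} preserve homology as well as the cokernel of the rightmost arrow, the reduced complex $L''_{\bullet}$ is again a resolution of $M$; it is minimal, and --- as recalled at the end of Theorem~\ref{cor3lemModifComplexe} --- the reduction never lengthens the complex, so its length $n''$ satisfies $n''\leq n$.

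Next I would compare the two minimal resolutions $L_{\bullet}$ and $L''_{\bullet}$ by means of Theorem~\ref{thBetti}. The only mild subtlety is that~\ref{thBetti} is stated for two resolutions written with a common length; so I would pad the shorter of $L_{\bullet}$, $L''_{\bullet}$ with zero modules and zero maps until both have length $\max(m,n'')$, which does not affect minimality. The theorem then furnishes term-by-term isomorphisms $L_i\simeq L''_i$ for every index $i$. If $m\neq n''$, say $m>n''$, then in degree $m$ one would obtain $L_m\simeq 0$ while $\rg_\gA(L_m)>0$; since $\gA$ is nontrivial this is impossible. This is exactly the content of Remark~2 following Theorem~\ref{thBetti}. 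Hence $m=n''$, and therefore $m=n''\leq n$, which is the desired minimality.

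I do not expect any genuine obstacle here, since both ingredients are already in place. The one point demanding care is the bookkeeping on lengths: making the comparison theorem applicable by padding with trivial modules, and invoking the nontriviality of $\gA$ to rule out a nonzero free module becoming isomorphic to $0$. This is precisely the situation that Remark~2 after Theorem~\ref{thBetti} was designed to handle, so the argument closes without further work.
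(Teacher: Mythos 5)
Votre démonstration est correcte et suit exactement la voie que le texte indique sans la détailler: réduction d'une \rlf arbitraire à une \rsn minimale par le \thref{cor3lemModifComplexe} (qui ne peut que raccourcir), puis comparaison des deux \rsns minimales par le \thref{thBetti} (après complétion par des modules nuls), la non-trivialité de l'anneau excluant qu'un module libre de rang $>0$ soit isomorphe à $0$, ce qui est précisément le contenu de la remarque~2 suivant ce \tho. Rien à redire: le soin apporté au décompte des longueurs est exactement ce qu'il fallait expliciter.
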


\hum{J'ai fait appel au \thref{cor3lemModifComplexe} et mis \dcd 
pour que l'on soit certain
d'arrêter au bon endroit une \rlf, mais ce n'est peut-être pas pertinent. \`A voir}

Ainsi dans le cas d'un \alo \dcd, la {\pdi d'un module~$M$ \lrsb}, notée $\Pd_\gA(M)$ (voir la \dfn~\ref{definresol}), est un entier bien défini.\index{dimension projective!d'un module sur un anneau local}

\subsec{Nombres de Betti}

\hum{Peut-être les histoires de \rsns $\fa$-minimales
peuvent attendre ... plus tard}

La \dfn suivante est légitimée par le \thref{thBetti}
\begin{definota} \label{defiBetti} 
\begin{enumerate}
\item Soit $\gA$ un anneau non trivial. Si un \Amo $M$ admet une \rsn libre minimale (\dfn \ref{defireslibmin}) 
$$
0 \,\to\, L_n \,\vvers{u_n}\,  L_{n-1} \,  \cdots\cdots \cdots    \,L_1 \,\vvers{u_1}\, L_0 \,\vers{\pi} \,M\,
\to \,0,
$$
les entiers $\rg_\gA(L_i)$
pour $i\in\lrb{0..n}$ ne dépendent que de $M$, ils sont appelés les \ix{nombres de Betti du \Amo $M$}.
On les note $\beta_{\gA,i}(M)$ \hbox{ou $\beta_{i}(M)$}. 
\item Plus \gnlt, si un \Amo $M$ admet une \rlf avec toutes les matrices à \coes dans un \id $\fa$,  les \elts~{$\rg_{\gA_{1+\fa}}(L_i)$ de $\HO(\gA_{1+\fa})$}  sont appelés les \ix{nombres de Betti du \hbox{\Amo $M$} relativement à l'\id $\fa$}.
On les note $\beta_{\gA,\fa,i}(M)$ ou $\beta_{\fa,i}(M)$. 
\end{enumerate}
\end{definota}

\rem 1) 
Pour un module $M$ qui admet une \rsn libre minimale, les types des matrices des $u_i$ (à \eqvc près)
et leurs \idds sont tous des invariants attachés au module $M$.

2)
Dans le point \emph{2}, si $1\notin\fa$ on peut identifier les 
$\beta_{\fa,i}(M)$ à des entiers naturels ordinaires.
\eoe 

\hum{Il faut donner des exemples de nombres de Betti.
Cela peut venir de la topologie \agq et un commentaire sur des exemples 
d'origine topologique serait bienvenu. On pourrait peut-être en faire une
section à part en début ou en fin du chapitre. 
}

\subsec{\Idcas d'un module}\label{subsecidecaras2}

Dans le cas d'un \alo \dcd $(\gA,\fm)$ les \idcas dans une \rsn libre minimale d'un module $M$ sont les mêmes que ceux de toute \rlf du module,
ceci parce que toute \rlf peut être réduite à une \rsn minimale par des modifications \elrs.
Ainsi ces  \idcas ne dépendent pas de la \rlf considérée.
\\
Notons aussi que si $M$ n'est pas libre, la longueur $n=\Pd_\gA(M)\geq 1$ de la \rsn libre minimale est caractérisée par
$\fD_n\subseteq \fm$ et $\fD_{n+1}=\gen{1}$.

\begin{thdef} \label{thidecarnresol} \emph{(Idéaux \caras d'un \mlrsb)}
Soient $\gA$ un anneau arbitraire, $M$  un \Amo \lrsb, et
 $L_{\bullet}$ un complexe exact de \mlrfs qui résout~$M$.
\begin{itemize}
\item Les \idcas $\fD_{\gA,k}(L_{\bullet})$ et  $\fD_{\gA,k,\ell}(L_{\bullet})$ du complexe ne dépendent que du module $M$.
\item On les note $\fD_{\gA,k}(M)$ et  $\fD_{\gA,k,\ell}(M)$
et on les appelle les \emph{\idcas du \mlrsb~$M$}.%
\index{ideaux cara@\idcas!d'un module librement \rsb}
\item De même la \cEP de $L_{\bullet}$ ne dépend que du module~$M$. 
\end{itemize}
\end{thdef}
%
\begin{proof}
Puisque c'est vrai dans le cas d'un \alo \dcd, il suffit d'appliquer la machinerie \lgbe \gnle à \ideps (\paref{MethodeIdeps}).
\sibook{\ttt{donner quelques précisions.}}
\end{proof}

\rem
Notons $r_0=\chi(L_{\bullet})$, on a alors par \dfn l'\egt
  $$\fD_{1,\ell}(L_{\bullet})=\cF_{r_0+\ell}(M)$$
pour tout $\ell$.
Ceci éclaire le résultat d'invariance précédent sur
un cas particulier. 
On va voir bientôt que les $\fD_{k,\ell}(L_{\bullet})$ sont nuls pour $\ell<0$ et fidèles \hbox{pour $\ell\geq 0$}. 
Il suffirait de le démontrer dans le cas d'un \alo
\dcd, mais la \dem ne semble pas facilitée par une telle hypothèse.
\eoe

\sibook {\ttt { terminologie: \idcas du module ? \\
 \idfs \gnes du module?.}}

\medskip 
On étendra la \dfn des \idcas ainsi que le  résultat d'invariance à tout \mlorsb en \ref{thidecarnrsf}.

%

\section{Rang, rang stable}\label{secRangStable}

Dans cette section nous définissons le rang d'un \mlrsb et le mettons en relation avec le rang stable des matrices 
présentes dans la \rsn.

\subsec{Rang stable de matrices}\label{subsecRangStable}

La notion d'\ali stable (entre \mlrfs) que nous introduisons ici est une \gnn
de la notion d'\ali injective.
Elle est en quelque sorte aux \alis injectives ce que les \alis de rang 
entier\footnote{Dans cet ouvrage, à la suite de [CACM], on dit qu'une \ali $\varphi$ entre modules libres et de rang $k$ si $\cD_k(\varphi)=\gen{1}$ et $\cD_{k+1}(\varphi)=\gen{0}$.}
sont aux \isos.

\hum{J'introduis le rang stable d'un \mpf, cela facilite un peu la vie je trouve.}

\begin{definition} \label{defiRangStable} ~
\begin{enumerate}
\item Une matrice $A$ est dite de \emph{rang stable supérieur ou égal à $ r$} si l'\idd $\cD_r(A)$ est fidèle. On écrit $\rgst_\gA(A)\geq r$ \hbox{ou $\rgst(A)\geq r$}. 
\item La matrice $A$ est dite de \emph{rang stable $r$},  et l'on \hbox{écrit $\rgst_\gA(A)= r$}, si en outre $\cD_{\gA,r+1}(A)=\gen{0}$, \cad si $\rg_\gA(A)\leq r$. On dit dans ce cas que la matrice est \emph{stable}. 
\end{enumerate}%
\index{rang stable!d'une matrice}\index{rang stable!d'une \ali entre modules libres de rang fini}\index{rang stable!d'un \mpf}
Ces \dfns s'étendent naturellement aux \alis entre \mlrfs
\begin{enumerate}\setcounter{enumi}{2}
\item Un \Amo \pf $M$ est dit de \emph{rang stable $r$},  et l'on \hbox{écrit $\rgst_\gA(M)= r$}, si $\cF_{\gA,r}(M)$ est fidèle
et $\cF_{\gA,s}(M)=0$ \hbox{pour $s<r$}. Ainsi, si l'on a une \pn de $M$ sous la forme
d'une \sex
$$
\gA^{m+r}\vers{A}\gA^{r+n}\vers\pi M\to 0
$$
on a $\rgst(M)=r$ \ssi $\rgst(A)=n$.
\end{enumerate}
\end{definition}

Si l'on a une matrice qui est de rang $\leq r-1$ et de rang stable $\geq r$,
alors l'anneau est trivial.
Quand l'anneau est non nul, si une matrice est stable, son rang stable est donc un entier bien défini.

\medskip 
\exls
1) Une matrice injective est toujours stable, de rang stable égal à son nombre de colonnes.
\\
2) 
Un anneau est intègre \ssi toutes les matrices sur cet anneau sont stables.
Et alors le rang stable d'une matrice n'est autre que le rang de la matrice sur le corps des fractions.
\\
3)
Une matrice est de rang stable $0$ \ssi elle est nulle.
\eoe

\medskip 
\rem Posons $\gB=\Frac(\AX)$. Alors, pour toute matrice $A$ on a

\snic{\rgst_\gA(A)\leq k\Leftrightarrow \rg_\gB(A)\leq k,\hbox{ et }\rgst_\gA(A)= k\Leftrightarrow \rg_\gB(A)= k,}

\snii
ce que l'on peut abréger en $\rgst_\gA(A)=\rg_\gB(A)$. Ceci résulte du lemme de McCoy~\ref{lemMcCoy}.
\eoe

\medskip 
On pourra comparer les \idfs d'un module stable
de rang $r$ avec ceux d'un \mrc~$r$.
Dans les deux cas les \ids~\hbox{$\cF_s(\bullet)$} pour $s<r$ sont nuls. 
La  différence est que l'\egt $\cF_r(P)=\gen{1}$, valable pour un module
$P$ \pro de rang constant~$r$, est ici remplacée par la condition \gui{$\cF_r(M)$ est fidèle}
pour un module stable.

\begin{plcc}\label{plcc.RangStable} \emph{(Pour le rang stable)}\\
Soient $a_1$, \dots, $a_n$ des \ecr de $\gA$. \\
Soient $A\in\Ae{m\times \ell}$, $r$ un entier $\geq 0$ et $M$ un \Amo \pf.
\begin{enumerate}
\item  On a $\rgst(A)\geq r$ sur $\gA$ \ssi $\rgst(A)\geq r$ sur chaque 
localisé $\gA[1/a_i]$.
\item  On a $\rg(A)\leq r$ sur $\gA$ \ssi $\rg(A)\leq r$ sur chaque $\gA[1/a_i]$.
\item  On a $\rgst(A)= r$ sur $\gA$ \ssi $\rgst(A)= r$ sur chaque localisé $\gA[1/a_i]$.
\item  On a $\rgst(M)= r$ sur $\gA$ \ssi $\rgst(M[1/a_i])= r$ pour chaque
\lon en $\gA[1/a_i]$..
\end{enumerate}
\end{plcc}
%
\begin{proof}
Conséquence \imde du \plgrf{plcc.regularite}.
\end{proof}
%

\begin{proposition} \label{prop.rgsta.rgconstant} ~
\begin{enumerate}
\item Une matrice $A$ est stable de rang stable $r$ \ssi il existe 
des \ecr tels qu'après \lon en chacun d'eux la matrice soit de rang $r$. 
\item Un \mpf $M$ est stable de rang stable $\ell$ \ssi il existe 
des \ecr tels qu'après \lon en chacun d'eux $M$ soit de libre de rang $\ell$. 
\item Si $\rgst(A)=r$, pour tout $k\in\lrbr$, $\Vi^{k}\!A$ est stable, de rang stable~${r \choose k}$.
\end{enumerate} 
\end{proposition}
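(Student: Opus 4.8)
Il s'agit de caractériser le rang stable via des localisations en éléments corégaliers, et d'en déduire le comportement des puissances extérieures. Les trois points reposent essentiellement sur la traduction des conditions \gui{$\cD_r$ fidèle} et \gui{$\cD_{r+1}=0$} en termes de localisations, en exploitant le \plgref{plcc.regularite} et le \plgref{plcc.RangStable} qui précèdent.

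\medskip
Pour le point \emph{1}, le plan est le suivant. Rappelons qu'une matrice de rang $r$ (au sens de \ref{secRappelDefs}) vérifie $\cD_r(A)=\gen1$ et $\cD_{r+1}(A)=\gen0$. Le sens direct est le plus délicat: je pars d'une matrice $A$ stable de rang stable $r$, donc $\cD_r(A)$ est fidèle et $\cD_{r+1}(A)=0$. Le \thref{FFRpropFactDirRangk} (point \emph{1}, équivalence \emph{(a)}$\iff$\emph{(b)}) ne s'applique pas directement car $A$ n'est pas supposée \lnl; j'utilise plutôt directement la structure des \idds. L'idée est d'extraire une famille finie de mineurs $t_1,\dots,t_N$ d'ordre $r$ engendrant $\cD_r(A)$: comme $\cD_r(A)$ est fidèle, la famille $(t_j)$ est corégulière (c'est exactement la \dfn \ref{defiCoreg} d'un \sys d'\elts coréguliers). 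Après \lon en chaque $t_j$, le mineur $t_j$ devient inversible, donc par le \lemref{FFRlem.min.inv} du mineur inversible (ou le \lemref{FFRlem pf libre} de la liberté, vu que $\cD_{r+1}(A)=0$ reste vrai après \lon), la matrice $A$ devient de rang $r$ sur $\gA[1/t_j]$. Réciproquement, si $A$ est de rang $r$ après \lon en chaque $a_i$, alors $\cD_{r+1}(A)=0$ par le point \emph{2} du \plgref{plcc.RangStable}, et $\cD_r(A)$ est fidèle par le point \emph{1} du même \plgc (ou directement par le \plgref{plcc.regularite}, point \emph{\ref{i3plcc.regularite}}).

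\medskip
Le point \emph{2} s'obtient en transposant le point \emph{1} au langage des \idfs via la \dfn \ref{defiRangStable} (point \emph{3}): un \mpf $M$ de \pn $\gA^{m+r}\vers A\gA^{r+n}\to M\to0$ est stable de rang stable $\ell$ \ssi la matrice $A$ l'est du bon rang, et \gui{$M$ libre de rang $\ell$ après \lon} correspond à \gui{$A$ de rang $r-\ell+\dim$} — plus précisément j'applique le \thref{prop Fitt ptf 1} (structure locale et \idfs d'un \mptf) pour passer de \gui{$\cF_\ell(M)$ fidèle et $\cF_s(M)=0$ pour $s<\ell$} à la liberté locale de rang $\ell$ sur chaque localisé corégulier. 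Le point \emph{1} de la présente proposition fournit les \ecr cherchés.

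\medskip
Pour le point \emph{3}, qui est le cœur technique, le plan est de raisonner localement. Supposons $\rgst(A)=r$. Par le point \emph{1}, il existe des \ecr $a_1,\dots,a_n$ tels qu'après \lon en chaque $a_i$, la matrice $A$ est de rang $r$ au sens usuel. Or sur chaque $\gA[1/a_i]$, le \thref{FFRpropFactDirRangk} donne que $A$ est \lnl de rang $r$, donc localement (en des \eco secondaires) équivalente à une \mprn standard $\I_{r}$; il est alors classique que $\Vi^k A$ est de rang $\binom rk$ (les mineurs d'ordre $k$ de $\Vi^k A$ sont des polynômes en les mineurs d'ordre $k$ de $A$, et sur la forme standard $\Vi^k\I_r$ est une \mprn standard $\I_{\binom rk}$). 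On applique alors le lemme des \lons successives pour recoller: les \ecr de premier niveau composés avec ceux de second niveau restent des \ecr, et par le point \emph{2} du \plgref{plcc.RangStable} appliqué à $\Vi^k A$, on conclut $\rgst(\Vi^k A)=\binom rk$. La principale difficulté sera de vérifier proprement la formule $\cD_j(\Vi^k A)$ en fonction des $\cD_\bullet(A)$ sur la forme normale locale, et de s'assurer que la fidélité de $\cD_{\binom rk}(\Vi^k A)$ se propage correctement par le recollement; mais une fois la réduction locale à $\I_r$ effectuée, tout devient un calcul explicite sur la matrice standard.
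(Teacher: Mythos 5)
Votre démonstration est correcte et suit pour l'essentiel la même voie que celle du texte (qui tient en trois lignes): les mineurs d'ordre $r$ forment une famille corégulière puisque $\cD_r(A)$ est fidèle, l'inversion de l'un d'eux rend $A$ de rang $r$ à conoyau libre par le lemme de la liberté \ref{FFRlem pf libre}, la réciproque est donnée par le \plgref{plcc.RangStable}, et le point \emph{3} s'obtient en appliquant le point \emph{1} et le bon comportement des puissances extérieures par \lon. Seule réserve: au point \emph{2}, l'appel au \thref{prop Fitt ptf 1} est inadéquat (ce théorème suppose les \idfs engendrés par des \idms, \cad $M$ \ptf, ce qui n'est pas le cas ici), mais cela ne crée pas de lacune réelle puisque le lemme de la liberté, que vous invoquez déjà au point \emph{1}, donne directement la liberté locale du conoyau $M$ après inversion d'un mineur d'ordre maximal de la matrice de présentation.
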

%
\begin{proof} \emph{1} et \emph{2.} Si l'on inverse un mineur d'ordre $r$ de $A$, la matrice devient de rang~$r$, et son conoyau est libre
(lemme de la liberté \ref{FFRlem pf libre}).
\\ 
\emph{3.} Appliquer le point \emph{1} en notant que les puissances 
extérieures se comportent bien par \lon.
\end{proof}

\rem On aurait des résultats analogues à celui du point \emph{3} pour toutes les constructions usuelles (produit tensoriel, puissance \smq, modules d'\alis \dots).
\eoe

On a le résultat suivant pour les \secos de modules stables.

\begin{proposition} \label{propSecoStab} \emph{(Rang stable et \seco)}\\
Si $N\subseteq M$, et si les trois modules $M$, $N$ et $M/N$ sont stables, 
 on a 
 
 \snic{\rgst(M)= \rgst(N)+\rgst(M/N)}
 
 dans $\HOp\gA$. 
\end{proposition}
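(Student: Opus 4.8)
Le plan est de se ramener, par \lon en des \elts coréguliers, au cas où les trois modules sont libres, cas dans lequel l'\egt des rangs résulte du scindage de toute \seco à quotient libre. Posons $P=M/N$, $n=\rgst_\gA(N)$, $p=\rgst_\gA(P)$, et notons $0\to N\to M\to P\to 0$ la \seco donnée. D'après le point \emph{2} de la proposition~\ref{prop.rgsta.rgconstant}, il existe des \ecr $a_1,\dots,a_r$ de $\gA$ tels que chaque $P[1/a_i]$ soit libre de rang $p$, et des \ecr $b_1,\dots,b_s$ tels que chaque $N[1/b_j]$ soit libre de rang $n$.

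Je considérerais ensuite les produits $a_ib_j$. Les \ids $\fa=\gen{a_1,\dots,a_r}$ et $\fb=\gen{b_1,\dots,b_s}$ sont fidèles par hypothèse, et comme $\gen{a_ib_j:1\leq i\leq r,\,1\leq j\leq s}=\fa\fb$, le lemme~\ref{lemProdEtProf} (appliqué \hbox{avec $k=1$} et $E=\gA$) montre que $\fa\fb$ est fidèle: les $a_ib_j$ forment donc un \sys d'\elts \cor de $\gA$. Fixons un couple $(i,j)$ et plaçons-nous sur $\gA[1/a_ib_j]=\gA[1/a_i][1/b_j]$. Le module $P$ y est libre de rang $p$ (puisque $P[1/a_i]$ l'est déjà) et $N$ y est libre de rang $n$ (puisque $N[1/b_j]$ l'est déjà). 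La \lon étant exacte, on a encore une \seco $0\to N[1/a_ib_j]\to M[1/a_ib_j]\to P[1/a_ib_j]\to 0$; comme $P[1/a_ib_j]$ est libre, donc projectif, cette suite se scinde, d'où $M[1/a_ib_j]\simeq N[1/a_ib_j]\oplus P[1/a_ib_j]$ est libre de rang $n+p$. En particulier $\rgst\big(M[1/a_ib_j]\big)=n+p$ pour tout $(i,j)$.

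Il resterait à recoller. Comme $M$ est \pf et que les $a_ib_j$ sont \cor, le \plg \ref{plcc.RangStable} (point \emph{4}) donne $\rgst_\gA(M)=n+p=\rgst_\gA(N)+\rgst_\gA(P)$ dans $\HOp\gA$, soit l'\egt voulue. Le point délicat est d'obtenir un \emph{unique} \sys d'\elts \cor trivialisant à la fois $N$ et $P$: c'est le rôle du passage aux produits $a_ib_j$, dont la corégularité repose sur la stabilité de la fidélité par produit d'\itfs (lemme~\ref{lemProdEtProf}). Les autres ingrédients — préservation du rang stable par \lon et scindage de la suite une fois le quotient rendu libre — sont routiniers.
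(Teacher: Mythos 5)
Votre démonstration est correcte et repose sur le même mécanisme que celle du texte: \lon en les produits d'\ecr (la fidélité du produit de deux \itfs fidèles est l'ingrédient clé dans les deux cas), puis scindage de la \seco une fois le quotient devenu libre. La différence est dans la manière de conclure. Le texte applique la proposition \ref{prop.rgsta.rgconstant} simultanément aux trois modules — il utilise donc l'hypothèse de stabilité de $M$ — et obtient sur chaque localisé un \iso entre modules libres de rangs $\rgst(M)$ et $\rgst(N)+\rgst(M/N)$; si ces entiers diffèrent, on a $1=0$ dans chaque localisé, donc dans $\gA$ par corégularité, d'où l'\egt dans $\HOp\gA$. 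Vous, au contraire, ne trivialisez que $N$ et $M/N$, vous déduisez du scindage que $M$ devient libre de rang $\rgst(N)+\rgst(M/N)$ sur chaque localisé, et vous refermez par le point \emph{4} du \plg \ref{plcc.RangStable}. Votre variante n'utilise de l'hypothèse sur $M$ que le fait qu'il est \pf: elle établit donc un énoncé légèrement plus fort (la stabilité de $M$, avec la formule des rangs, découle de celle de $N$ et de $M/N$), au prix d'un appel au \plg pour le rang stable là où le texte se contente d'une comparaison directe de rangs de modules libres.
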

%
\begin{proof} Notons $\rho_1=\rgst(N)$, $\rho_2=\rgst(M)$, $\rho_3=\rgst(M/N)$.
Un produit d'\itfs fidèles est un \itf fidèle. On applique alors
la proposition \ref{prop.rgsta.rgconstant}
simultanément pour les trois modules. Ainsi, en localisant en des \ecr 
$a_i$ on obtient $\rho_{1}$, $\rho_2$ et $\rho_3$ comme rangs de modules 
libres $N_i$, $M_i$ et $M_i/N_i$ sur l'anneau $\gA[1/a_i]$, ce qui donne
un \iso $M_i\simeq N_i\times (M_i/N_i)$. \\
Si $\rho_2\neq \rho_1+\rho_3$
dans $\NN$, alors on \hbox{a $1=0$} dans tous les $\gA[1/a_i]$, 
et puisque les~$a_i$ sont \cor, cela donne~\hbox{$1=0$} dans $\gA$. 
\\
Ainsi, on a toujours
 $\rho_2=\rho_1+\rho_3$ dans $\HO\gA$.
\end{proof}
\rem
L'appel à $\HO\gA$ permet d'éviter de supposer que l'anneau
est non trivial dans l'énoncé précédent. En fait cela suggère que l'on pourrait élargir la \dfn et dire qu'un module est \emph{\lot stable}
s'il est stable après \lon en des \eco. Dans ces conditions le rang stable serait un \elt de $\HOp\gA$ et l'\egt de la proposition \ref{propSecoStab}
serait valable pour toute \seco de modules \lot stables.
\eoe

\medskip  La \dem du lemme crucial suivant est tout à fait analogue à celle de la
proposition~\ref{propInjIdd}.

\CMnewtheorem{lemmrangstab}{Lemme du rang stable}{\itshape}
\begin{lemmrangstab}
\label{lemSuitExStable}~\\
Soit une \sex $\Ae m \vers{u} \Ae n \vers{v} \Ae \ell $ et  $k$, $h\in\NN$
avec $k+h=n$.  Alors $u$
est stable, de rang stable~$k$, \ssi $v$ est stable, de rang stable~\hbox{$h$}.
Et dans ce cas, on a ${\cD_{h}(v)}\subseteq \sqrt[\gA]{\cD_{k}(u)}$. 
\end{lemmrangstab}
NB: ainsi, si $v$ est stable de rang stable $h$ et si $n-h>m$, alors l'anneau est trivial.  
\begin{proof} 
Supposons d'abord $u$ stable de rang stable $k$.
Par hypothèse $\cD_k(u)$ est fidèle. Si $k>m$, $\cD_k(u)=0$ et l'anneau est trivial.
On suppose $k\leq m$. 
Après \lon en un mineur arbitraire $a$ d'ordre $k$ de $u$, on se ramène (lemme du mineur inversible \ref{FFRlem.min.inv}) au cas où~$u$
est simple, de matrice $\I_{k,n,m}$. \\
Comme $v\circ u=0$, $v$ admet une matrice de la forme $\blocs{.6}{.7}{.9}{0}{$0$}{$B$}{}{}$\,, et comme la suite est exacte, $B$ représente une \ali injective de $\gA[\fraC1a]^{h}$ dans~$\gA[\fraC1a]^\ell$, donc est stable de rang $h$. On conclut avec le \plgref{plcc.RangStable} que $v$ est stable de rang $h$.
\\
Supposons ensuite $v$ stable de rang stable $h$. Par hypothèse $\cD_h(v)$ est fidèle. Si $h>\ell$, $\cD_h(v)=0$ et l'anneau est trivial.
On suppose $h\leq \ell$. 
Après \lon en un mineur arbitraire $a$ d'ordre $h$ de $v$, on se ramène (lemme du mineur inversible \ref{FFRlem.min.inv}) au cas où~$v$
est simple, de matrice $\I_{h,\ell,n}$.\\ 
Comme $v\circ u=0$, $u$ admet une matrice de la forme $\blocs{0}{1.2}{.6}{.7}{}{$0$}{}{$C$}$\,, et comme la suite est exacte, $C$ représente une \ali surjective de~$\gA[\fraC1a]^{m}$ dans~$\gA[\fraC1a]^h$, donc est de rang $h$. On conclut avec le \plgref{plcc.RangStable} que $u$ est stable de rang $k$.
En outre pour chaque \gtr $a$ de $\cD_{\gA,h}(v)$, on vient de voir que $1\in\cD_{\gA[\fraC1a],k}(u)$, \cad $a\in\sqrt[\gA]{\cD_{\gA,k}(u)}$. Ceci prouve l'inclusion
${\cD_{\gA,h}(v)}\subseteq \sqrt[\gA]{\cD_{\gA,k}(u)}$.
\end{proof}
%

\subsec{Rang d'un module  \lrsb}

Peut-être l'appellation \gui{\tho de structure locale} pour le \tho qui suit est un bien grand mot
pour un résultat qui ne donne pas beaucoup d'informations nouvelles
concernant les \mpfs lorsque l'anneau est intègre. Néanmoins, dans le cas d'un anneau arbitraire, il mérite le respect, comme on le verra avec les corolaires qui suivront.
\begin{thdef} \label{thStrLocResFin} \label{thRangStRLF} 
\emph{(\Tho de structure \gui{locale}
pour les modules  \lrsbs)}\\
Soit  $M$ un \Amo qui admet une \rlf

\snic
{0 \to L_n \vvers{u_n}  L_{n-1} \vvers{u_{n-1}}\;  \cdots \cdots \; \vvers{u_2}  L_1 \vvers{u_1} L_0 \vers{\pi} M
\to 0.}

\snii
pour laquelle  $L_i=\gA^{p_i}$.  Notons $r_\ell=\som_{k=\ell}^n(-1)^{k-\ell}p_k$ la \cEP du complexe $0\to L_n\lora\; \cdots\cdots\;\lora L_\ell$.
\begin{enumerate}\setcounter{enumi}{-1}
\item Si l'un des $r_\ell$ est $<0$ dans $\ZZ$, l'anneau $\gA$ est trivial. \\
Autrement dit, on a toujours $r_\ell\geq 0$ dans $\HO(\gA)$.
\end{enumerate}
Dans les points qui suivent on suppose \spdg que les~$r_\ell$ sont $\geq 0$
(s'il y en a un $<0$ l'anneau est nul, on peut prendre tous les~$r_\ell$ nuls,
et tout est correct, mais sans intérêt)
\begin{enumerate}
\item Chaque application $u_\ell$ est stable, de rang stable $r_\ell$.
 Autrement dit en termes d'\idcas, $\fD_\ell$ est fidèle et $\fD_{\ell,-1}=0$.
\item Le module $M$ est stable, de rang stable $r_0$ avec $\fD_1=\cF_{r_0}(M)$. L'entier~$r_0$, vu comme \elt de~$\HO(\gA)$, ne dépend pas de la \rlf considérée.  
On dit que~$r_0$ est le \emph{rang du \mlrsb~$M$} 
et on le note~$\rg_\gA(M)$ ou $\rg(M)$.
\item Si $N\subseteq M$, et si $N$ et $M/N$ admettent aussi des \rlfs, 
alors on a $\rg(M)= \rg(N)+\rg(M/N)$ dans $\HO(\gA)$. 
\end{enumerate}\index{rang!d'un module qui admet une \rlf}
\end{thdef}
\emph{Note.} Rappelons que si l'anneau est non trivial, les entiers de $\HO(\gA)$ s'identifient aux entiers  de $\ZZ$. Par ailleurs si $M$ est \ptf, il résulte du point \emph{2} que le rang que l'on vient de définir co\"{\i}ncide bien avec le rang que l'on avait déjà défini pour les \mptfs
\eoe
\begin{proof} \emph{0} et \emph{1.}
Si $n=0$ le \tho est clair. Supposons $n>0$. Puisque $u_n$ est une \ali injective, elle est stable, de rang stable $r_n$.
Par le lemme du rang stable, $u_{n-1}$ est stable, de rang stable $r_{n-1}$, et si $r_{n-1}<0$ l'anneau est trivial.
On continue de la même manière jusqu'à $u_0$.

\emph{2}. Résulte du point \emph{1} et des résultats sur les modules stables.
Notez que l'invariance de $r_0$ a déjà été établie au \thref{thidecarnresol}, par un argument plus mystérieux. La notion de rang stable 
éclaircit la situation.

\emph{3.} On applique la proposition \ref{propSecoStab}.
\end{proof}

\rem Que se passe-t-il lorsque l'anneau $\gA$ est intègre et non trivial?
Soit $\gK$ le corps des fractions de $\gA$. Par \eds de $\gA$ à $\gK$, le \mpf $M$ devient libre de rang $r$ et les \alis $u_\ell$ deviennent simples
de rangs 

\snic{r_n=p_n, \,r_{n-1}=p_{n-1}-p_n,\, r_{n-2}=p_{n-2}-p_{n-1}+p_n,\, \dots \,}

\snii
En fait on voit facilement que cette réduction au cas des corps nécessite un seul dénominateur~$a$, qui remplace la famille d'\ecr évoquée dans le \tho. Ainsi, comme nous le disions avant d'énoncer le \tho, celui-ci 
peut nous donner des informations non triviales, voire surprenantes (cf. les corolaires qui suivent), mais uniquement 
dans le cas d'un anneau non intègre.
\eoe

L'énoncé du \thref{thStrLocResFin} fait un peu mal à la tête
à cause du cas de l'anneau trivial. 
Nous proposons ici l'énoncé alternatif suivant, un peu moins \gnl mais plus agréable, où les
entiers~$r_\ell$, qui doivent être $\geq 0$ si l'anneau n'est pas trivial,
sont donnés en hypothèse comme des \elts de $\NN$.
Par contre nous ne supposons pas que l'anneau est non trivial.

\THO{thStrLocResFin}
{Soit $(r_0,\dots,r_n,r_{n+1})$ dans $\NN$ avec $r_{n+1}=0$, et $M$ un \Amo qui admet une \rlf

\snic
{0 \to L_n \vvers{u_n}  L_{n-1} \vvvers{u_{n-1}}\;  \cdots \cdots \; \vvers{u_2}  L_1 \vvers{u_1} L_0 \vers{\pi} M
\to 0}

avec   $L_k\simeq\gA^{r_{k+1}+r_k}$ 
pour tout $k\in\lrbn$.
\begin{enumerate}
\item Chaque application $u_\ell$ est stable, de rang stable $r_\ell$.
 Autrement dit en termes d'\idcas, $\fD_\ell$ est fidèle et $\fD_{\ell,-1}=0$.
\item Le module $M$ est stable, de rang stable $r_0$ avec $\fD_1=\cF_{r_0}(M)$. L'entier~$r_0$, vu comme \elt de~$\HO\gA$, ne dépend pas de la \rlf considérée.  
On dit que~$r_0$ est le \emph{rang du \mlrsb~$M$} 
et on le note~$\rg(M)$.
\item Si $N\subseteq M$, et si $N$ et $M/N$ admettent aussi des \rlfs, 
alors on a $\rg(M)= \rg(N)+\rg(M/N)$ dans $\HO\gA$. 
\end{enumerate}
}

\medskip
Le \tho suivant, dû à Vasconcelos \cite{Vas1971} est une conséquence \imde du  \thref{thStrLocResFin}.
\begin{theorem} \label{corVasco} \emph{(Annulateur et rang)}\\
Soit $M$ un module qui admet une \rlf

\snic
{0 \to L_n=\gA^{p_n} \lora   \cdots \cdots \;  \lora L_0=\gA^{p_0} \to M
\to 0.}

\snii et $r=\som_{k=0}^n(-1)^kp_k$. 
\begin{enumerate}
\item Si $r=0$, $\cF_0(M)$ est fidèle (i.e., $\Ann(M)$ est fidèle).
\item Si $0<r\leq p_0$, $M$ est fidèle.
\item Si $r<0$ ou $r> p_0$, l'anneau est trivial.
\end{enumerate}
En particulier, si l'anneau est non trivial, le rang de $M$ est toujours~\hbox{$\geq 0$}
dans~$\NN$, il est nul \ssi l'annulateur de $M$ est fidèle, et il est $>0$ \ssi l'annulateur est nul.
\end{theorem}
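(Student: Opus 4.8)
Le plan est de tout déduire du \thref{thStrLocResFin} appliqué à la \rlf donnée, puisque les trois affirmations portent sur l'entier $r=\som_{k=0}^n(-1)^kp_k$, que ce \tho identifie au rang stable $r_0=\rg(M)=\rgst(M)$ de $M$. Je commencerais par la comptabilité : en posant $r_\ell=\som_{k=\ell}^n(-1)^{k-\ell}p_k$ on a $r_0=r$ et $p_0=r_0+r_1$, d'où $r_1=p_0-r$. Les seuls ingrédients attendus sont le point \emph{0} de ce \tho (si l'un des $r_\ell$ est $<0$, l'anneau est trivial) et ses points \emph{1} et \emph{2} ($M$ est stable de rang stable $r_0$, avec $\cF_{r_0}(M)=\fD_1$ fidèle).

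Pour l'affirmation \emph{3}, je traduirais les deux inégalités en conditions de signe sur les $r_\ell$ : $r<0$ signifie $r_0<0$, et $r>p_0$ signifie $r_1=p_0-r<0$ ; dans les deux cas le point \emph{0} du \thref{thStrLocResFin} donne la trivialité de l'anneau. On peut donc supposer $0\leq r\leq p_0$ dans les deux affirmations restantes.

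Pour l'affirmation \emph{1} ($r=0$), le fait que le rang stable de $M$ soit nul signifie exactement, par la \dfn \ref{defiRangStable}, que $\cF_0(M)$ est fidèle ; comme $\cF_0(M)\subseteq\Ann(M)$, si $x\,\Ann(M)=0$ alors $x\,\cF_0(M)=0$, donc $\Ann(M)$ est fidèle lui aussi, ce qui donne l'équivalence entre parenthèses (sa réciproque étant absorbée par la disjonction finale, grâce à l'affirmation \emph{2}). Pour l'affirmation \emph{2} ($0<r\leq p_0$), l'idée est d'exploiter la structure locale : d'après la proposition \ref{prop.rgsta.rgconstant}, la stabilité de rang stable $r$ fournit des \ecr $a_1,\dots,a_s$ tels que chaque $M[1/a_i]$ soit libre de rang $r\geq 1$ ; un \elt $x\in\Ann(M)$ annule alors un module libre de rang $\geq 1$ sur $\gA[1/a_i]$, donc $x=0$ dans chaque $\gA[1/a_i]$, et comme les $a_i$ sont \cor, le point \emph{1} du \plgref{plcc.regularite} (avec $E=\gA$) force $x=0$, soit $\Ann(M)=0$, c'est-à-dire $M$ fidèle.

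Le résumé \gui{en particulier} suivrait alors formellement : dans un anneau non trivial le point \emph{0} interdit $r<0$, donc $r\geq 0$ dans $\NN$ ; l'\egt $r=0$ équivaut à la fidélité de $\Ann(M)$, et $r>0$ équivaut à $\Ann(M)=0$. La seule subtilité, et le principal (léger) obstacle, est le traitement soigneux de la fidélité : dans un anneau non trivial l'\id nul n'est \emph{pas} fidèle, puisque $(0:0)=\gA\neq 0$, et c'est précisément cette remarque qui, jointe à l'affirmation \emph{2}, ferme les deux équivalences (si $r>0$, alors $\Ann(M)=0$ n'est pas fidèle, donc $\Ann(M)$ fidèle impose $r=0$). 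Tout le contenu substantiel étant porté par le \tho de structure, il ne reste que de la vérification de routine.
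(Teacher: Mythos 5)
Votre démonstration est correcte et suit exactement la voie du papier : celui-ci se contente de déclarer le théorème « conséquence immédiate » du \thref{thStrLocResFin}, et votre texte explicite précisément cette déduction (point \emph{0} pour la trivialité, point \emph{2} et la définition \ref{defiRangStable} pour $\cF_0(M)$ fidèle, puis la proposition \ref{prop.rgsta.rgconstant} combinée au \plgref{plcc.regularite} pour la fidélité de $M$ quand $r\geq 1$). Le soin apporté au fait que l'idéal nul n'est pas fidèle dans un anneau non trivial, nécessaire pour fermer les équivalences finales, est exactement la bonne remarque.
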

%

Si $\fa$ est un \id de $\gA$, en appliquant le \tho précédent au module~\hbox{$M=\gA/\fa$}, on obtient le résultat qui suit (penser que $r$ est remplacé  \hbox{par $1-r$} et que la \rsn est raccourcie d'un cran).

\begin{corollary} \label{corVascon} \emph{(Vasconcelos)}\\
Soit $\fa$ un \id qui admet une \rlf

\snic
{0 \to L_n=\gA^{p_n} \lora\;   \cdots \cdots \;  \lora L_0=\gA^{p_0} \to \fa
\to 0.}

\snii et $r=\som_{k=0}^n(-1)^kp_k$. 
\begin{enumerate}
\item Si $r=0$, $\fa=0$.
\item Si $r=1$, $\fa$ est fidèle.
\item Si $r\neq 1,0$, l'anneau est trivial.
\end{enumerate}
\end{corollary}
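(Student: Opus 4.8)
The plan is to deduce this corollary directly from Theorem \ref{corVasco} applied not to $\fa$ but to the cyclic module $M=\gA/\fa$, as the hint following the statement suggests. First I would manufacture a free finite resolution of $\gA/\fa$ out of the given resolution of $\fa$. Writing $\pi:L_0\to\fa$ for the augmentation and $\iota:\fa\hookrightarrow\gA$ for the inclusion, I splice $\iota$ after $\pi$ to form the complex
$$0 \to L_n \to \cdots \to L_1 \vers{u_1} L_0 \vvers{\iota\circ\pi} \gA \to \gA/\fa \to 0.$$
I would then check exactness spot by spot: at $\gA/\fa$ the last map is surjective; at $\gA$ the image of $\iota\circ\pi$ is $\iota(\fa)=\fa=\Ker(\gA\to\gA/\fa)$; at $L_0$ the kernel of $\iota\circ\pi$ equals $\Ker\pi=\Im u_1$ because $\iota$ is injective; and at the higher spots exactness is inherited verbatim from the original resolution of $\fa$. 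Hence $\gA/\fa$ is resolved by a free finite complex of length $n+1$ whose bottom module is $\gA=\gA^1$, so its $0$-th Betti number is $p_0'=1$.

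Next I would track the two numerical invariants that feed Theorem \ref{corVasco}. Since the module in position $k+1$ of the new complex is $L_k=\gA^{p_k}$ and the module in position $0$ is $\gA$, its Euler--Poincaré characteristic is
$$r' \;=\; 1-\sum_{k=0}^{n}(-1)^k p_k \;=\; 1-r,$$
exactly the predicted shift. The only other ingredient is the elementary identity $\Ann_\gA(\gA/\fa)=\fa$, which lets me translate the two faithfulness conclusions of Theorem \ref{corVasco}: the module $\gA/\fa$ is faithful if and only if $\fa=0$, and its annihilator $\Ann(\gA/\fa)=\fa$ is faithful if and only if $\fa$ is faithful.

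Finally I would read off the three cases by plugging $M=\gA/\fa$, with $p_0'=1$ and rank $r'=1-r$, into Theorem \ref{corVasco}. If $r=1$ then $r'=0$, so by its first item $\cF_0(\gA/\fa)=\Ann(\gA/\fa)=\fa$ is faithful, giving item~2. If $r=0$ then $r'=1$, which satisfies $0<r'\leq p_0'$, so by its second item $\gA/\fa$ is faithful, i.e.\ $\fa=0$, giving item~1. If $r\notin\{0,1\}$ then $r'=1-r$ is either $<0$ (when $r\geq 2$) or $>1=p_0'$ (when $r<0$), so by its third item the ring is trivial, giving item~3. There is essentially no obstacle here, since the real content sits in Theorem \ref{corVasco}; the only points deserving a line of care are the exactness of the spliced complex at $\gA$ and at $L_0$, and the observation $\Ann_\gA(\gA/\fa)=\fa$, which is precisely what converts Vasconcelos' faithfulness dichotomy into the clean statements ``$\fa$ faithful'' and ``$\fa=0$''.
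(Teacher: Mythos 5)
Your proposal is correct and follows exactly the paper's own route: the paper derives this corollary by applying Theorem \ref{corVasco} to the module $M=\gA/\fa$, noting that $r$ becomes $1-r$ and the resolution gains one step (with bottom module $\gA=\gA^1$, so $p_0'=1$). Your splicing of $\iota\circ\pi$, the verification of exactness, the identity $\Ann(\gA/\fa)=\fa$, and the case analysis are precisely the details the paper leaves implicit in its one-line remark preceding the statement.
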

%

Comme conséquence \imde on obtient le \tho suivant.

\begin{theorem} \label{corcorVascon} 
Si dans l'anneau $\gA$ tout \idp admet une \rlf, alors $\gA$ est intègre.
\end{theorem}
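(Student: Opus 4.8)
L'énoncé affirme que si tout idéal principal $\gen{a}$ de $\gA$ admet une \rlf, alors $\gA$ est intègre. Puisque \gui{intègre} en \coma signifie que tout \elt est nul ou \ndz (régulier), je dois montrer: pour chaque $a\in\gA$, soit $a=0$, soit $a$ est \ndz. Le plan est d'appliquer le corolaire de Vasconcelos \ref{corVascon} à l'\idp $\fa=\gen{a}$, mais pour cela il faut d'abord examiner le lien entre une \rlf de $\gen a$ et la caractéristique d'Euler-Poincaré $r$ qui intervient dans cet énoncé.

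\textbf{Étape clé : distinguer les deux valeurs possibles de $r$.} D'après le corolaire \ref{corVascon}, si l'\idp $\fa=\gen a$ admet une \rlf
$$
0 \to L_n \lora \cdots \lora L_0 \to \gen a \to 0
$$
avec $r=\sum_{k=0}^n(-1)^kp_k$, alors ou bien $r=0$ (et $\fa=0$, donc $a=0$), ou bien $r=1$ (et $\fa$ est fidèle), ou bien $r\neq 0,1$ et l'anneau est trivial. Dans le cas trivial, $\gA$ est évidemment intègre. Il reste donc à traiter les deux cas $r=0$ et $r=1$. Le cas $r=0$ donne directement $a=0$. Le cas $r=1$ affirme que $\gen a$ est un idéal fidèle; il s'agit donc de traduire cette fidélité en la régularité de $a$.

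\textbf{Traduire \gui{$\gen a$ fidèle} en \gui{$a$ \ndz}.} C'est le cœur de l'argument et je m'attends à ce que ce soit le point le plus délicat à rédiger proprement. L'idéal $\gen a$ est fidèle signifie $\Ann_\gA(\gen a)=0$. Or $\Ann_\gA(\gen a)=\Ann_\gA(a)=(0:a)$ : en effet $x\gen a=0$ équivaut à $xa=0$. Donc $(0:a)=0$, ce qui est exactement la définition de \gui{$a$ est \ndz}. Ainsi, dans le cas $r=1$, l'\elt $a$ est régulier. En rassemblant les trois cas (et le cas trivial), on obtient pour tout $a\in\gA$ l'alternative \cov $a=0$ ou $a\in\Reg(\gA)$, c'est-à-dire que $\gA$ est intègre.

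\textbf{Remarque sur la constructivité.} Il faut veiller à ce que l'alternative soit bien décidable constructivement : le corolaire \ref{corVascon} fournit l'entier $r$ à partir de la \rlf donnée (les rangs $p_k$ étant des \elts bien définis de $\HO(\gA)$ si l'anneau est non trivial, ou bien l'anneau est trivial et tout est vrai). Le calcul de $r=\sum(-1)^kp_k$ décide entre $r=0$, $r=1$ et $r\notin\{0,1\}$, et chacun de ces cas produit la conclusion voulue sans recours au tiers exclu. La démonstration est donc une application quasi immédiate de \ref{corVascon}, l'essentiel du travail ayant été fait dans les \thos de structure \ref{thStrLocResFin} et l'identité $\Ann(\gen a)=(0:a)$.
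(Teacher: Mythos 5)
Votre démonstration est correcte et suit essentiellement la même voie que l'article, qui énonce ce théorème comme une conséquence immédiate du corolaire \ref{corVascon} sans donner de détails. Vous ne faites qu'expliciter les points de routine laissés au lecteur : l'application du corolaire à l'idéal $\gen{a}$, l'identification $\Ann(\gen a)=(0:a)$ qui traduit \gui{fidèle} en \gui{régulier}, et le fait que l'anneau trivial est intègre au sens constructif.
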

\rem La réciproque est évidente
\eoe

\medskip 
Notons que l'on a un résultat encore plus spectaculaire:
\emph{si dans l'anneau~$\gA$ tout \id $\gen{a,b}$ admet une \rlf, alors $\gA$ est un anneau à pgcd intègre.}
Mais nous devrons plus nous fatiguer pour le démontrer
(voir le  \thref{corthdetCay2}).

\subsec{Retour sur les \ids \caras}

\begin{proposition} \label{propfDk=1}
On reprend les hypothèses et notations du \thref{thStrLocResFin}~bis. 
\begin{enumerate}
\item Si $\fD_{k}=\gen{1}$, alors $\Pd_\gA(M)\leq k-1$ et $\fD_{k+s}=\gen{1}$ pour tout $s>0$.
\item Pour tout $\ell\geq 1$, on a $\fD_{\ell}\subseteq \DA(\fD_{\ell+1})$.
\end{enumerate}
\end{proposition}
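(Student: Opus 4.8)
Le plan est de déduire les deux points du \thref{thStrLocResFin}~bis, qui fournit pour chaque indice $\ell$ que $u_\ell$ est stable de rang stable $r_\ell$, c'est-à-dire $\cD_{r_\ell+1}(u_\ell)=0$ avec $\fD_\ell=\cD_{r_\ell}(u_\ell)$ fidèle. L'idée directrice est qu'une hypothèse du type $\fD_k=\gen{1}$ force l'\ali $u_k$ à être de rang \emph{exactement} $r_k$, ce qui rend son image facteur direct et permet de tronquer la \rsn.

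Pour le point \emph{1}, je partirais de $\fD_k=\cD_{r_k}(u_k)=\gen{1}$ et de $\cD_{r_k+1}(u_k)=0$ : cela dit exactement que $u_k$ est de rang $r_k$, donc par le \thref{FFRpropFactDirRangk} que $\Im u_k$ est facteur direct dans $L_{k-1}$, en particulier \ptf. Puisque le complexe est exact, $\Im u_k=\Ker u_{k-1}$, et la troncature $0\to\Im u_k\to L_{k-1}\to\cdots\to L_0\to M\to 0$ est une \rsf de longueur $k-1$, d'où $\Pd_\gA(M)\leq k-1$. Pour les \egts $\fD_{k+s}=\gen{1}$ ($s>0$), je considérerais le complexe exact de \mptfs $0\to L_n\to\cdots\to L_k\to\Im u_k\to 0$ : d'après le lemme \ref{factCompExPro} il est entièrement décomposé en somme de complexes triviaux. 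En lisant cette décomposition, chaque $u_{k+s}$ (pour $s\geq 1$) induit un \iso d'un facteur direct \ptf de rang $r_{k+s}$ sur un facteur direct de même rang du module précédent ; $u_{k+s}$ est donc de rang $r_{k+s}$ et $\fD_{k+s}=\cD_{r_{k+s}}(u_{k+s})=\gen{1}$.

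Le point \emph{2} serait immédiat : pour $\ell\geq 1$ j'appliquerais le lemme du rang stable \ref{lemSuitExStable} à la suite exacte $L_{\ell+1}\to L_\ell\to L_{\ell-1}$, dans laquelle $u_{\ell+1}$ est de rang stable $r_{\ell+1}$, $u_\ell$ de rang stable $r_\ell$, avec $r_{\ell+1}+r_\ell=\rg L_\ell$. Ce lemme donne directement $\cD_{r_\ell}(u_\ell)\subseteq\sqrt{\cD_{r_{\ell+1}}(u_{\ell+1})}$, soit $\fD_\ell\subseteq\DA(\fD_{\ell+1})$ (les indices $\ell\geq n$ étant triviaux, les \idcas y valant $\gen{1}$). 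Le principal obstacle me paraît être la seconde moitié du point \emph{1} : il faut choisir le bon énoncé structurel — la décomposition complète d'un complexe exact de \mptfs — et vérifier proprement que l'identité sur un facteur direct \ptf de rang $r$ a son \idd d'ordre $r$ égal à $\gen{1}$, ce qui se ramène à la caractérisation \gui{image facteur direct de rang $r$} du \thref{FFRpropFactDirRangk}. Les autres étapes (rang exact, troncature, application du lemme du rang stable) sont de simples lectures des résultats déjà établis.
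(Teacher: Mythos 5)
Votre point \emph{1} suit la même stratégie que la démonstration du texte (rang exactement $r_k$, image facteur direct par le \thref{FFRpropFactDirRangk}, troncature, puis décomposition du complexe exact de \mptfs en somme de complexes triviaux), mais sa première moitié contient un décalage d'indice qui fait échouer la conclusion. La troncature que vous écrivez,
$$
0\to\Im u_k\to L_{k-1}\to\cdots\to L_0\to M\to 0,
$$
a ses modules aux positions $k,k-1,\dots,0$: c'est une \rsn de longueur $k$, et elle ne donne donc que $\Pd_\gA(M)\leq k$. Autrement dit, la projectivité du $k$-ième module de \syzys $\Im u_k=\Ker u_{k-1}$ ne fournit à elle seule que $\Pd_\gA(M)\leq k$. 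Pour gagner l'unité manquante, il faut exploiter réellement ce que vous avez établi mais n'utilisez pas: $\Im u_k$ est \emph{facteur direct} de $L_{k-1}$, et pas seulement \ptf. En écrivant $L_{k-1}=\Im(u_k)\oplus P'_{k-1}$, on obtient, par exactitude, $\Im u_{k-1}\simeq L_{k-1}/\Ker u_{k-1}= L_{k-1}/\Im u_k\simeq P'_{k-1}$, donc $\Im u_{k-1}$ est \ptf, et la troncature un cran plus bas,
$$
0\to\Im u_{k-1}\to L_{k-2}\to\cdots\to L_0\to M\to 0,
$$
est une \rsn par des \mptfs de longueur $k-1$: c'est elle qui donne $\Pd_\gA(M)\leq k-1$, et c'est exactement la réduction qu'opère la démonstration du texte. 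La seconde moitié de votre point \emph{1} (le fait \ref{factCompExPro} appliqué au complexe exact $0\to L_n\to\cdots\to L_k\to\Im u_k\to 0$, d'où chaque $u_{k+s}$ \lot simple de rang $r_{k+s}$ et $\fD_{k+s}=\gen{1}$) est correcte et coïncide avec l'argument du texte, qui invoque le point \emph{2} du fait \ref{factCharCompEx0}.

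Votre point \emph{2} est correct et emprunte une route différente de celle du texte: vous appliquez directement le lemme du rang stable \ref{lemSuitExStable} à la suite $L_{\ell+1}\to L_\ell\to L_{\ell-1}$, exacte en $L_\ell$, où $u_{\ell+1}$ et $u_\ell$ sont stables de rangs stables $r_{\ell+1}$ et $r_\ell$ avec $r_{\ell+1}+r_\ell=\rg(L_\ell)$; la conclusion du lemme est littéralement $\fD_\ell\subseteq\DA(\fD_{\ell+1})$. Le texte, lui, localise en un \gtr $\mu$ de $\fD_\ell$ puis applique son point \emph{1} sur $\gA[1/\mu]$ pour conclure qu'une puissance de $\mu$ est dans $\fD_{\ell+1}$. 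Les deux arguments reposent in fine sur le même mécanisme de \lon (c'est ainsi que se démontre le lemme \ref{lemSuitExStable}), mais le vôtre a deux avantages: il est plus court, et il est indépendant du point \emph{1}, donc la lacune signalée ci-dessus ne l'affecte pas. Petite retouche: pour $\ell\geq n$ l'inclusion est triviale parce que $\fD_{\ell+1}=\gen{1}$, mais $\fD_n$ lui-même n'a aucune raison de valoir $\gen{1}$, contrairement à ce que suggère votre parenthèse finale.
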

%
\begin{proof} \emph{1.} Puisque $\rg(u_k)\leq r_k$, si $\cD_{r_k}(u_k)=\gen{1}$, l'\ali est de rang~$r_k$, donc \lot simple (\thref{FFRpropFactDirRangk}). Puisque~$\Im(u_k)$ est facteur direct, on obtient une \rsn de $M$ de 
longueur~$k-1$
$$\preskip.0em \postskip.4em
 0 \to \Im(u_k) \vvers{u'_{k-1}} L_{k-2} \vvers{u_{k-2}}\; \cdots\cdots 
\; L_0 \lora M \to 0 
$$
où $L_{k-1}=\Im(u_k)\oplus P'_{k-1}$ et $u'_{k-1}$ est l'injection canonique.
Les modules de cette \rsn de $M$ sont bien \ptfs
\\
Quant au complexe exact   
$$\preskip.0em \postskip.4em
{0 \to L_n \vvers{u_n}  L_{n-1} \vvers{u_{n-1}}\;  \cdots \cdots \; L_k\vvers{u'_{k}}  \Im(u_k)
\to 0,}
$$
il relève du  fait \ref{factCharCompEx0} (point \emph{2}). 
En conséquence les \alis~$u_{k+s}$ ($s>0$) sont toutes \lot simples. Donc leur rang stable correspond à un \idd égal à $\gen{1}$.
Ceci montre que les $\fD_{k+s}$ sont égaux~à~$\gen{1}$.

\emph{2.} Soit $\mu$ un \gtr de $\fD_\ell$. On passe à l'anneau $\gA[1/\mu]$
et l'on se retrouve dans la situation du point \emph{1.} Donc $\fD_{\ell+1}=\gen{1}$ sur l'anneau $\gA[1/\mu]$. Cela signifie que sur l'anneau $\gA$, une puissance de $\mu$
est dans $\fD_{\ell+1}$. 
\end{proof}

\rem Le résultat $\fD_k\subseteq \sqrt{\fD_{k+1}}$  sera renforcé dans le 
\thref{thResFinIdFac}, mais ici la preuve est vraiment simple.
\eoe

\begin{theorem} \label{corthDimKrullGr} \emph{(Dimension de Krull et \rsfs)}
\\
Si $\Kdim\gA\leq r$, tout \mlrsb possède une
\rsf de longueur~\hbox{$\leq r$}. 
\end{theorem}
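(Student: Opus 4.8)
The plan is to reduce the statement to a single fact about the \emph{characteristic ideals} of $M$ and then feed that into the Krull-dimension theorem \ref{thDimKrullGr}. Since $M$ is \mlrsb, it has well-defined characteristic ideals $\fD_k(M)$ that do not depend on the chosen \rlf (theorem \ref{thidecarnresol}), and proposition \ref{propfDk=1} part~1 tells us that
\[
\fD_{r+1}(M)=\gen1 \;\Longrightarrow\; \Pd_\gA(M)\leq r,
\]
which, by definition \ref{definresol}, is exactly the assertion that $M$ admits an \rsf of length $\leq r$. Hence the whole theorem reduces to proving $\fD_{r+1}(M)=\gen1$. (If $M$ already has an \rlf of length $n\leq r$ this is automatic, since then $L_{r+1}=0$ and $\fD_{r+1}(M)=\gen1$ by the convention of proposition/definition \ref{propdefidecars}.)

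The bridge to the hypothesis $\Kdim\gA\leq r$ is theorem \ref{thDimKrullGr}: in a ring of Krull dimension $\leq r$, any \itf of profondeur $>r$ contains $1$. As $\fD_{r+1}(M)$ is an \itf (a determinantal ideal), it therefore suffices to establish the grade bound
\[
\Gr_\gA\big(\fD_{r+1}(M)\big)\geq r+1 .
\]
Indeed $r+1>r\geq\Kdim\gA$, so theorem \ref{thDimKrullGr} then gives $\fD_{r+1}(M)=\gen1$, and proposition \ref{propfDk=1} closes the argument. Thus the entire difficulty is concentrated in this grade estimate.

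The main obstacle is precisely this grade bound, which is a special case of the \gui{forward} half of the Buchsbaum--Eisenbud acyclicity criterion, namely $\Gr_\gA(\fD_k(M))\geq k$ for every $k$ (the result announced as \ref{cor3thABH1}, and the strengthening of $\fD_k\subseteq\DA(\fD_{k+1})$ promised in \ref{thResFinIdFac}). I would prove it by induction along the syzygy modules of an \rlf of $M$. Writing the exact complex as short exact sequences $0\to Z_j\to L_j\to Z_{j-1}\to 0$, with $Z_{-1}=M$ and $Z_j$ the $j$-th syzygy, the tail of the resolution starting at $L_{j+1}$ is an \rlf of $Z_j$, so an index shift gives $\fD_1(Z_j)=\fD_{j+2}(M)$; in particular $\fD_{r+1}(M)=\fD_1(Z_{r-1})$, and it is enough to climb from $\Gr_\gA(\fD_1(M))\geq 1$ up to $\Gr_\gA(\fD_1(Z_{r-1}))\geq r+1$ through $r$ syzygy steps. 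The base case is the faithfulness of the first characteristic ideal $\fD_1(Z)=\cF_{\rg(Z)}(Z)$ of a stable module (theorem \ref{thStrLocResFin}), which yields profondeur $\geq 1$. Each inductive step raises the grade by one, transporting profondeur through the short exact sequence by the profondeur-and-\seco estimates of theorem \ref{thSESPrf}, using that the middle term $L_j$ is free and the lemme du rang stable \ref{lemSuitExStable} to control the relevant determinantal ideals. This syzygy induction is the delicate point; the reduction in the first two paragraphs is purely formal bookkeeping on top of it.
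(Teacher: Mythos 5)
Your first two paragraphs reproduce the paper's proof exactly: the paper also deduces the theorem from the grade bound $\Gr\big(\fD_{r+1}(M)\big)\geq r+1$, then applies theorem \ref{thDimKrullGr} to obtain $1\in\fD_{r+1}(M)$, and concludes $\Pd_\gA(M)\leq r$ by proposition \ref{propfDk=1}. But the paper does not reprove the grade bound at this point: it invokes it as known (\gui{on sait que}), namely theorem \ref{thRangStProfRLF} --- the forward half of theorem \ref{cor3thABH1} that you correctly identify as the crux --- which is established in the section that follows the present statement. Had you simply cited that result, your argument would be complete and would coincide with the paper's.

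The genuine gap lies in your sketched proof of the grade bound. Theorem \ref{thSESPrf} transports the depth of a \emph{fixed} finitely generated ideal across a short exact sequence, and lemma \ref{lemSuitExStable} only yields $\fD_k\subseteq\DA(\fD_{k+1})$, i.e.\ the monotonicity $\Gr(\fD_k)\leq\Gr(\fD_{k+1})$ (this is proposition \ref{propfDk=1}, point \emph{2}); neither provides a mechanism for raising the grade by one at each syzygy step, since the ideals $\fD_1(Z_j)$ are determinantal ideals of \emph{different} matrices. Concretely, the implication your step requires --- if $N$ is resolved with first syzygy $Z$, then $\Gr\big(\fD_1(N)\big)\geq g$ implies $\Gr\big(\fD_1(Z)\big)\geq g+1$ --- is false: over $\gA=\gk[x_1,\dots,x_4]$ ($\gk$ a nontrivial discrete field), take $N=\gA/\gen{x_1,\dots,x_4}$ with its Koszul resolution. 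Every $\fD_k(N)$ is contained in $\gen{x_1,\dots,x_4}=\fD_1(N)$, so by the monotonicity above all these ideals have the same radical, hence the same grade (lemma \ref{lemProdEtProf}), and this grade is exactly $4$: it is $\geq 4$ because $(x_1,\dots,x_4)$ is a regular sequence, and if it were $\geq 5$ then theorem \ref{thDimKrullGr} (with $\Kdim\gA\leq 4$) would force $1\in\gen{x_1,\dots,x_4}$. Thus $\Gr\big(\fD_1(Z_0)\big)=\Gr\big(\fD_2(N)\big)=4$, not $\geq 5$: the grade does not increase when passing to the syzygy. The \gui{$+1$ per step} cannot be extracted from the grade hypothesis plus the short exact sequence estimates; in the paper's proof of theorem \ref{thRangStProfRLF} it comes from an entirely different mechanism: after a polynomial extension, $\fD_m$ contains a Kronecker regular element $f_m$, the complex is reduced modulo $f_m$ (proposition \ref{propRegSex}), the induction runs on the \emph{length} of the resolution over $\aqo{\gA}{f_m}$ and is closed by the fundamental theorem of depth \ref{thfondprof}, the lower indices being handled by localizing at the maximal minors of $u_m$ together with the local-global principle \ref{plccProfondeur}.
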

%
\begin{proof}
Notons $M$ le module. On sait que $\Gr\big(\fD_{r+1}(M)\big)\geq r+1$. Par le \thref{thDimKrullGr}, $\fD_{r+1}(M)$ contient $1$. Par la proposition \ref{propfDk=1}, $\Pd(M)\leq r$. 
\end{proof}
%

\subsec{Trivialisation de certains complexes}

\begin{lemma} \label{lemSuitExStable2}
Soit un complexe 

\snic{E=\Ae {m+n} \vers{u} F=\Ae {n+r} \vers{v} G=\Ae {r+s}}

\snii avec $\rg(u)\leq n$, et $\rg(v)\leq r$. Soit $\mu$ un mineur d'ordre $n$ de la matrice $A$ de $u$, et $\nu$ un mineur d'ordre $r$ de la matrice $B$ de $v$. 
\\
Alors sur l'anneau $\gA[1/(\mu\nu)]$, le complexe devient \emph{trivialement exact}%
\index{complexe!trivialement exact}%
\index{trivialement exact!complexe ---}: i.e., 
\begin{itemize}
\item $\Ker u$ est libre facteur direct d'un module libre dans $E$,
\item  $\Im u=\Ker v$ est libre facteur direct d'un module libre dans $F$,
\item  $\Im v$
est libre facteur direct d'un module libre dans $G$.
\end{itemize}
Ainsi pour des \suls $E_1$, $F_1$ et~$G_1$ arbitraires, $u$ réalise un \iso de $E_1$ sur son image, $v$ réalise un \iso de $F_1$ sur son image, et pour des bases convenables des modules~$\Ker u$,
$E_1$, $\Im u$, $F_1$, $\Im v$ et~$G_1$, 
 on obtient les nouvelles matrices
$$ B'=\blocs{1}{.8}{.8}{.6}{$0_{r,n}$}{$\I_r$}{$0_{s,n}$}{$0_{s,r}$}\quad\hbox{ et }\quad  
   A'=\blocs{1.2}{1}{1}{.8}{$0_{n,m}$}{$\I_n$}{$0_{r,m}$}{$0_{r,n}$}.
$$  
\end{lemma}
%
\begin{proof}
Par hypothèse $\rg(A)\leq n$, donc si l'on inverse un mineur d'ordre $n$, la matrice
$A$ devient simple de rang $n$, avec son noyau et son image libres en facteur direct de libres (lemme de la liberté \ref{FFRlem pf libre}).
La même chose vaut pour la matrice $B$. 
\\
Notons $K=\Ker v$ et $I=\Im u$.
Il reste à prouver que $K=I$.\\
On sait que $I\subseteq K$ et que $F/K$ et $F/I$ sont isomorphes à $\Ae r$.
Ainsi $F/I$ est un \mtf isomorphe à son quotient $F/K\simeq (F/I)/(K/I)$.
Ceci implique $K/I=0$, \cad $K=I$ (\thref{FFRprop quot non iso}). 
\end{proof}
On notera que l'on n'a pas besoin de savoir si $\gA[1/(\mu\nu)]$ est trivial ou non.
Le lemme reste valable dans tous les cas, avec la \dem inchangée.

\begin{theorem} \label{thRangStRLF2}
Soit $(r_k,\dots,r_{n+1})$ dans $\NN$ et un complexe

\snic
{ L_n \vvers{u_n}  L_{n-1} \vvers{}\;  \cdots \cdots \; \vvers{}  L_{k+1} \vvers{u_{k+1}} L_{k}}

\snii
avec   $L_j\simeq\gA^{r_{j+1}+r_j}$ 
pour tout $j\in\lrb{k..n}$. 
\Propeq
\begin{enumerate}
\item Chaque \ali $u_j$ est stable de rang stable $r_j$.
\item Il existe des \ecr $s_1$, \dots, $s_m$ tels que, après \lon en chaque  
$s_i$, le complexe devient  trivialement exact (comme dans le lemme \ref{lemSuitExStable2}) avec chaque $u_j$ de rang $r_j$.
\end{enumerate}
\end{theorem}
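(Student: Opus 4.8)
Le théorème \ref{thRangStRLF2} est une équivalence entre une propriété de rang stable des applications d'un complexe et une propriété de trivialisation locale après localisation en des éléments corégluliers. Le plan est de démontrer séparément les deux implications, en s'appuyant de manière essentielle sur le lemme du rang stable \ref{lemSuitExStable} et sur le lemme de trivialisation \ref{lemSuitExStable2}, ainsi que sur le \plgref{plcc.RangStable}.

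\textbf{Implication \emph{2} $\Rightarrow$ \emph{1}.} C'est le sens le plus direct. On suppose qu'il existe des \ecr $s_1,\dots,s_m$ tels qu'après \lon en chacun d'eux le complexe devient trivialement exact, avec chaque $u_j$ de rang $r_j$. En particulier, sur chaque $\gA[1/s_i]$, chaque $u_j$ est de rang $r_j$, donc \emph{a fortiori} de rang stable $r_j$. Il suffit alors d'appliquer les points \emph{1} et \emph{3} du \plgref{plcc.RangStable} (pour le rang stable) pour conclure que chaque $u_j$ est stable de rang stable $r_j$ sur $\gA$ lui-même. Cette partie ne présente aucune difficulté.

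\textbf{Implication \emph{1} $\Rightarrow$ \emph{2}.} C'est ici que réside l'essentiel du travail. On suppose que chaque $u_j$ est stable de rang stable $r_j$. L'idée est de localiser en inversant simultanément, pour chaque paire d'applications consécutives, un mineur d'ordre adéquat. Plus précisément, pour chaque $j\in\lrb{k..n}$, l'\idd $\cD_{r_j}(u_j)$ est fidèle (car $u_j$ est stable de rang stable $r_j$); on choisit donc pour chaque $j$ un mineur $\mu_j$ d'ordre $r_j$ de la matrice $A_j$ de $u_j$. La famille des produits des $\mu_j$ (ou plus simplement la famille des $\mu_j$ eux-mêmes, regroupés convenablement) forme une famille d'\ecr: en effet, un produit d'\itfs fidèles est fidèle, de sorte que $\prod_j\cD_{r_j}(u_j)$ est fidèle, ce qui fournit des \ecr $s_i$ (les produits $\prod_j \mu_{j,i}$ pour des choix de mineurs indexés par $i$). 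Après \lon en un tel $s_i$, chaque $u_j$ devient de rang $r_j$ avec un mineur d'ordre $r_j$ inversible, donc \lot simple: son noyau et son image sont libres facteurs directs (lemme de la liberté \ref{FFRlem pf libre}). On applique alors le lemme de trivialisation \ref{lemSuitExStable2} à chaque paire consécutive $(u_{j+1},u_j)$ pour obtenir $\Im u_{j+1}=\Ker u_j$ et la structure trivialement exacte annoncée.

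\textbf{Le point délicat.} La principale subtilité est d'organiser correctement le choix des mineurs de façon à ce qu'une \emph{seule} famille finie d'\ecr trivialise \emph{simultanément} tout le complexe, paire par paire. Le lemme \ref{lemSuitExStable2} ne traite que deux applications consécutives à la fois, et il faut s'assurer que l'inversion d'un mineur d'ordre $r_j$ pour $u_j$ est compatible avec celle d'un mineur d'ordre $r_{j+1}$ pour $u_{j+1}$. La solution consiste à former les \ecr en prenant tous les produits $\mu_{k,i_k}\mu_{k+1,i_{k+1}}\cdots\mu_{n,i_n}$ de mineurs, un par application, lorsque chaque $i_j$ parcourt l'ensemble des mineurs d'ordre $r_j$ de $A_j$; la fidélité de $\prod_j\cD_{r_j}(u_j)$ garantit que cette famille est bien corégulière. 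Après \lon en un tel produit, toutes les applications $u_j$ possèdent simultanément un mineur inversible de l'ordre voulu, et l'égalité $\Im u_{j+1}=\Ker v$ du lemme \ref{lemSuitExStable2} se propage le long du complexe, donnant l'exactitude triviale recherchée. Enfin, la stabilité du rang sous extension plate (ici une \lon) assure que tous les rangs obtenus localement valent bien $r_j$.
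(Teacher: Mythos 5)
Votre démonstration est correcte et suit essentiellement la même voie que celle du texte: pour \emph{2} $\Rightarrow$ \emph{1}, le \plgref{plcc.RangStable}, et pour \emph{1} $\Rightarrow$ \emph{2}, l'inversion des produits de mineurs (un mineur d'ordre $r_j$ par application $u_j$), ces produits étant précisément les générateurs de l'idéal fidèle $\prod_j\cD_{r_j}(u_j)$, donc un système d'\ecr, ce qui permet d'appliquer le lemme~\ref{lemSuitExStable2}. Le \gui{point délicat} que vous soulevez est exactement ce que le texte règle en une phrase par la fidélité du produit des \idds.
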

On notera que le résultat s'applique en particulier (avec $r_{n+1}=0$) si $u_n$ est injective et si le complexe est exact. 
Dans ce cas, cela ressortirait aussi de l'examen détaillé de la \dem du \thref{thRangStRLF}. 
\begin{proof} \emph{1} $\Rightarrow$ \emph{2.}
Notons $A_j$ la matrice de $u_j$. Si l'on inverse un mineur d'ordre $r_j$ 
de~$A_j$ pour chaque $j$, le lemme \ref{lemSuitExStable2} nous dit que sur 
l'anneau localisé le complexe devient complètement trivial.
Comme chaque $\cD_{r_j}(A_j)$ est fidèle par hypothèse, il en va de même
du produit de ces \ids, et les \gtrs de ce produit forment bien un \sys d'\ecr.\\
 \emph{2} $\Rightarrow$ \emph{1.} Appliquer le \plg \ref{plcc.RangStable} pour le rang stable.   
\end{proof}
%

\section
{Profondeur et \pdi}\label{secAusBu}
\sibook{Dans cette section \dots
}
\subsec{Profondeur des \idcas}

On améliore maintenant le \thref{thRangStRLF} en donnant des informations sur la profondeur des \idcas \fbox{$\fD_k=\cD_{r_k}(u_k)$} dans une \rlf
avec  $L_k\simeq\gA^{r_{k+1}+r_k}$ pour tout $k$, 
et $r_{m+1}=0$.

\fnic{\;0 \to L_m \vvers{u_m}  L_{m-1} \vvvers{u_{m-1}}\;  \cdots \cdots \; \vvers{u_2}  L_1 \vvers{u_1} L_0\;}

\sibook{ 
On donnera une réciproque dans le \thref{cor3thABH1}.}

\begin{theorem} \label{thRangStProfRLF} \emph{(Profondeur des \ids \caras dans une \rlf)}
 Dans une \rlf comme ci-dessus, chaque \ali $u_\ell$ est stable de rang stable
$r_\ell$ et $\Gr(\fD_\ell)\geq \ell$.
\end{theorem}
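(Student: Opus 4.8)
Dans une \rlf
$$0 \to L_m \vvers{u_m} L_{m-1} \vvvers{u_{m-1}} \cdots \vvers{u_1} L_0$$
avec $L_k\simeq\gA^{r_{k+1}+r_k}$ et $r_{m+1}=0$, chaque $u_\ell$ est stable de rang stable $r_\ell$ et $\Gr(\fD_\ell)\geq \ell$.

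Le premier énoncé (stabilité et rang stable de chaque $u_\ell$) est exactement le \thref{thRangStRLF} (ou le \thref{thStrLocResFin}), donc il n'y a rien à refaire. L'objet propre du \tho est la minoration de profondeur $\Gr(\fD_\ell)\geq \ell$ sur les \idcas. La stratégie que je suivrais est une \emph{récurrence descendante} sur l'indice $\ell$, en réduisant à chaque étape la longueur de la \rlf à l'aide d'un \elt \Erg judicieusement choisi dans $\fD_\ell$, puis en appliquant le \tho fondamental de la profondeur \ref{thfondprof} (ou sa \gnn \ref{thfondprof2}) pour transférer l'information de profondeur d'un cran à l'autre.

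Voici le plan. Pour $\ell=0$ ou $\ell=1$, l'énoncé est trivial ou résulte du fait déjà établi que $\fD_1=\cF_{r_0}(M)$ est fidèle, donc $\Gr(\fD_1)\geq 1$. Pour l'étape générale, l'idée-clé est la suivante : on veut montrer $\Gr(\fD_\ell)\geq \ell$, \cad qu'après \eds à un anneau de \pols une \sKr de longueur $\ell$ attachée à $\fD_\ell$ est \Erge. Par le \thref{thSRGNQ} et la \dfn de la profondeur \ref{defiProfNor}, il suffit de fabriquer dans $\fD_\ell$ (éventuellement après extension \fpte, par exemple au localisé de Nagata) une \sErg de longueur $\ell$. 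Je procéderais en localisant : pour un \gtr $\mu$ de $\fD_\ell$ (un mineur d'ordre $r_\ell$ de $u_\ell$), sur $\gA[1/\mu]$ le lemme du mineur inversible \ref{FFRlem.min.inv} et les modifications \elrs du \thref{corlemModifComplexe} raccourcissent le complexe, ce qui ramène à une \rlf plus courte où l'indice critique a baissé ; l'\hdr donne alors la profondeur voulue localement, et le \plg \ref{plccProfondeur} pour la profondeur (avec un \sys d'\elts convenablement \cor) recolle l'information. Le point délicat est de vérifier que les \elts en lesquels on localise forment bien un \sys suffisamment régulier pour que le recollement de la profondeur $\geq \ell$ soit licite : c'est précisément là qu'interviennent le fait que $\fD_\ell$ est fidèle (déjà acquis) et l'astuce de passer aux \pKrs.

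Le principal obstacle sera le contrôle simultané de tous les \idcas lors de la réduction du complexe. En effet, quand on inverse un mineur d'ordre $r_\ell$ de $u_\ell$ pour trivialiser $u_\ell$, on modifie les rangs et les matrices des flèches voisines $u_{\ell\pm 1}$, et il faut s'assurer que les \idcas $\fD_k$ pour $k\neq \ell$ sont préservés (à radical près) afin que la récurrence sur les autres indices reste cohérente. Le \thref{corlemModifComplexe} garantit que les modifications \elrs ne changent pas l'homologie et décalent proprement les \idds ($\cD_\ell(u'_{m+1})=\cD_{\ell+k}(u_{m+1})$), mais l'articulation entre ce contrôle local et le recollement global de la profondeur via \ref{plccProfondeur} demande de la précision. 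Une alternative plus directe, que je privilégierais si elle aboutit, consiste à raisonner par \recu sur $\ell$ en utilisant le lemme du rang stable \ref{lemSuitExStable} qui fournit l'inclusion $\cD_{r_\ell}(u_\ell)\subseteq \sqrt{\cD_{r_{\ell-1}}(u_{\ell-1})}$, \cad $\fD_\ell\subseteq\sqrt{\fD_{\ell-1}}$ au niveau des matrices consécutives ; combinée au \thref{thfondprof1} et au fait que passer au quotient par un \elt \Erg de $\fD_{\ell-1}$ produit encore une \rlf (corolaire \ref{lemHilBur2}) dont les \idcas sont les réductions des précédents (lemme \ref{lemfDkchgbase}), cette inclusion devrait permettre de gagner le cran de profondeur voulu sans recours explicite au recollement.
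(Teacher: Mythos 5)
Votre remarque liminaire est exacte: la partie \gui{rang stable} est déjà acquise par le \thref{thRangStRLF}, et tout l'enjeu est la minoration $\Gr(\fD_\ell)\geq\ell$. Mais votre première stratégie bute sur une circularité réelle: vous localisez en les \gtrs $\mu$ de $\fD_\ell$ \emph{lui-même}, puis vous recollez par le \plgref{plccProfondeur}. Or ce principe exige que le \sys d'\elts en lesquels on localise soit $\ell$ fois \ndz, \cad précisément que $\Gr(\fD_\ell)\geq\ell$, qui est l'énoncé à démontrer. La fidélité de $\fD_\ell$ et le passage à un \pKr ne fournissent que $\Gr(\fD_\ell)\geq1$, ce qui ne légitime pas le recollement dès que $\ell\geq2$, contrairement à ce que vous espérez pour votre \gui{point délicat}. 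La \dem du texte évite ce cercle par un découpage en deux temps absent de votre plan: dans la \recu sur $m$, on établit \emph{d'abord} $\Gr(\fD_m)\geq m$ pour l'\id du haut, en réduisant tout le complexe modulo un \elt \ndz $f_m\in\fD_m$ (un \pKr, après extension \polle), ce qui, via la proposition \ref{propRegSex} et l'\hdr appliquée au complexe raccourci sur $\aqo\gA{f_m}$, donne $\Gr_\gA(\fD_m,\aqo\gA{f_m})\geq m-1$, puis le \thref{thfondprof} --- dont l'hypothèse $f_m\in\fD_m$ est essentielle --- remonte à $\Gr(\fD_m)\geq m$. Ensuite seulement, pour $\ell<m$, on localise en les mineurs maximaux de $u_m$ (et non de $u_\ell$), on raccourcit le complexe, et le recollement par le \plgref{plccProfondeur} devient licite parce que ces mineurs engendrent $\fD_m$, dont la profondeur $\geq m>\ell$ vient d'être établie.

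Votre stratégie alternative ne répare pas cette lacune, car elle repose sur deux confusions. D'une part l'inclusion tirée du lemme du rang stable \ref{lemSuitExStable} est écrite à l'envers: pour la suite exacte $L_\ell\vers{u_\ell}L_{\ell-1}\vers{u_{\ell-1}}L_{\ell-2}$, ce lemme donne $\cD_{r_{\ell-1}}(u_{\ell-1})\subseteq\sqrt{\cD_{r_\ell}(u_\ell)}$, \cad $\fD_{\ell-1}\subseteq\sqrt{\fD_\ell}$ (en accord avec la proposition \ref{propfDk=1}, point \emph{2}), et non $\fD_\ell\subseteq\sqrt{\fD_{\ell-1}}$. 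Avec l'inclusion correcte et le lemme \ref{lemProdEtProf}, on n'obtient que $\Gr(\fD_\ell)\geq\Gr(\fD_{\ell-1})\geq\ell-1$: un cran de moins que l'objectif. D'autre part, pour gagner ce cran, il faut la \emph{réciproque} du \tho fondamental, \cad le \thref{thfondprof}, lequel exige un \elt \ndz appartenant à $\fD_\ell$; vous invoquez au contraire le \thref{thfondprof1} (l'implication directe, qui va dans le mauvais sens) avec un \elt de $\fD_{\ell-1}$ --- ce n'est ni le bon énoncé, ni le bon \id. Le mécanisme qui permet effectivement de gagner le cran de profondeur est celui décrit ci-dessus: réduction de la \rlf modulo un \elt \ndz de $\fD_\ell$ lui-même, puis \thref{thfondprof}; c'est exactement ce que fait le texte pour l'indice $m$, et c'est l'idée qui manque à votre proposition.
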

%
\begin{proof} 
 On sait que les \ids~$\fD_k$ sont de profondeur $\geq 1$.
 On fait une \dem par \recu sur $m$, le cas $m=1$ est clair.
Supposons $m\geq 2$ et passons de $m-1$ à~$m$.  
\\
Quitte à passer à une extension \polle de $\gA$,
on peut supposer que~$\fD_m$ contient un \elt~$f_m$ \ndz,
donc $L_k$-\ndz pour tout~$k$. 
La proposition~\ref{propRegSex} nous dit que le complexe
$$\preskip.3em \postskip.4em 
0\to L_m/f_m L_m \vvers{u_m}  L_{m-1}/f_m L_{m-1} \vvers{u_{m-1}}\;  \cdots \cdots \; \vvers{u_2}  L_1/f_m L_1 
$$
est exact. 
On  applique l'\hdr avec l'anneau $\gB=\aqo\gA {f_m }$
et le complexe ci-dessus. On obtient,  que  $\Gr_\gB(\fD_m)\geq m-1$, \cade que $\Gr_\gA(\fD_m,\aqo\gA {f_m})\geq m-1$. 
\\
On conclut avec le \thref{thfondprof} que $\Gr_\gA(\fD_m,\gA)\geq m$.
\\
Il reste à montrer $\Gr_\gA(\fD_\ell,\gA)\geq \ell$ pour $\ell\geq 2$.
Soit $\delta$ un mineur d'ordre maximal $r_m$ de la matrice de $u_m$.
Plaçons nous sur l'anneau $\gC=\gA[1/\delta]$. 
Après  changements de bases, la matrice $U_m$ de $u_m$ est du type 
$\Cmatrix{2pt}{\I_{r_m}\cr 0}$, donc celle de $u_{m-1}$ du type 
$\Cmatrix{2pt}{0& V_{m-1}}$, ce qui fournit sur $\gC$ une \rlf 
$$\preskip.0em \postskip.4em 
0 \to  L'_{m-1} \vvvers{v_{m-1}}L_{m-2} \vvvers{u_{m-2}}\;  \cdots \cdots \; \vvers{u_2}  L_1 \vvers{u_1} L_0 
$$
avec $L'_{m-1}$ libre de rang ${r_{m-1}}$ et $v_{m-1}$ de matrice $V_{m-1}$.
\\
Par \hdr on obtient $\Gr_\gC(\fD_\ell,\gA)\geq \ell$ pour tout $\ell<m$.
On termine en remarquant que les mineurs $\delta$ de $U_m$ forment une suite de profondeur $\geq m>\ell$, et l'on applique le  \plgref{plccProfondeur}.   
\end{proof}
%

\subsec{Le \tho de Peskine et Szpiro}\label{secPesSzpi}

\begin{theorem}\label{thoPesSzpi}
Soit  $C_{\bullet}$ un complexe de \Amos
$$\preskip.4em \postskip.4em
0\lora C_n\vvers{d_n}\cdots\cdots \vvers{d_2} C_1\vvers{d_1} C_0
$$
tel que $\Gr(\fa,C_k)\geq k$ et $\fa\, \rH_k(C_{\bullet}) = 0$ pour $k\in\lrbn$ alors $C_{\bullet}$ est exact.
\end{theorem}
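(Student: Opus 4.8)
Le plan est de procéder par récurrence descendante sur l'indice d'homologie, en exploitant le fait que la profondeur des modules $C_k$ croît avec $k$ alors que l'idéal $\fa$ annihile chaque module d'homologie. L'idée directrice est la suivante: montrer que $\rH_n(C_{\bullet})=0$, puis que $\rH_{n-1}(C_{\bullet})=0$, et ainsi de suite jusqu'à $\rH_1(C_{\bullet})=0$. La clé sera de combiner l'annulation $\fa\,\rH_k(C_{\bullet})=0$ avec une inégalité de profondeur sur $\rH_k(C_{\bullet})$ déduite du \thref{thSESPrf}, l'incompatibilité de ces deux informations forçant $\rH_k(C_{\bullet})=0$.

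D'abord, je traiterais le cran le plus haut. Comme $C_{n+1}=0$, on a $\rH_n(C_{\bullet})=\Ker d_n$, qui est un sous-module de $C_n$. Puisque $\Gr(\fa,C_n)\geq n\geq 1$, l'idéal $\fa$ est $C_n$-\ndz, donc $\fa$ est aussi \ndz sur le sous-module $\Ker d_n$ (tout \elt de $C_n$ annulé par $\fa$ est nul). Mais par hypothèse $\fa\,\rH_n(C_{\bullet})=\fa\,\Ker d_n=0$, ce qui signifie que tout \elt de $\Ker d_n$ est annulé par $\fa$. Ces deux faits ensemble donnent $\Ker d_n=0$, donc $\rH_n(C_{\bullet})=0$, \cad $d_n$ est injective.

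Pour le pas récursif, je décomposerais le complexe en \secos courtes autour des images et noyaux. Posons $Z_k=\Ker d_k$, $B_k=\Im d_{k+1}$ et $H_k=Z_k/B_k=\rH_k(C_{\bullet})$. Supposant déjà établi que $\rH_j(C_{\bullet})=0$ pour $j>k$, le complexe est exact en degrés $>k$, ce qui fournit par le \thref{factCompExPro} (ou directement par découpage) des \sexs reliant les $B_j$, $Z_j$ et $C_j$. En particulier on obtient des majorations de profondeur: en appliquant répétitivement le point \emph{\ref{i1thSESPrf}} du \thref{thSESPrf} aux suites exactes $0\to Z_j\to C_j\to B_{j-1}\to 0$ (exactes car $B_{j-1}=\Im d_j$ et $Z_j=\Ker d_j$), on propage les inégalités de profondeur vers le bas. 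La conclusion visée à chaque étape est que $\Gr(\fa,Z_k)\geq 1$, ce qui signifie que $\fa$ est $Z_k$-\ndz; combiné à $\fa\,H_k=0$ et au fait que $H_k$ est un quotient de $Z_k$, il faudra en déduire $H_k=0$. Le point délicat sera précisément de contrôler comment la profondeur de $Z_k$ reste $\geq 1$ malgré les quotients successifs: c'est là que l'hypothèse $\Gr(\fa,C_k)\geq k$, plus forte quand $k$ est grand, intervient de manière essentielle, en donnant assez de marge dans les inégalités ultramétriques du \thref{thSESPrf} pour absorber la perte d'un cran de profondeur à chaque passage au quotient.

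L'obstacle principal sera donc l'organisation correcte de la récurrence et le suivi précis des profondeurs des modules intermédiaires $Z_k$ et $B_k$. Il faudra vérifier soigneusement que, une fois l'exactitude acquise en degrés $>k$, les \sexs $0\to B_k\to Z_k\to H_k\to 0$ et $0\to Z_k\to C_k\to B_{k-1}\to 0$ permettent bien, via le \thref{thSESPrf}, de conclure $\Gr(\fa,Z_k)\geq 1$: c'est ici que l'hypothèse de profondeur $\geq k$ sur $C_k$ et l'exactitude déjà obtenue en haut se conjuguent pour garantir $\Gr(\fa,B_k)\geq 1$ et donc $\fa$ \ndz sur $Z_k$. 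La dualité entre \gui{$\fa$ annule $H_k$} et \gui{$\fa$ est \ndz sur $Z_k\supseteq H_k$ relevé} fournira alors l'annulation $H_k=0$ recherchée, fermant l'étape récursive.
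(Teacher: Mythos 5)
Votre architecture (récurrence descendante, cas de base en degré $n$, les deux \secos $0\to B_k\to Z_k\to H_k\to 0$ et $0\to Z_k\to C_k\to B_{k-1}\to 0$) est bien celle de la démonstration du texte, et votre cas de base est correct. Mais la déduction qui clôt votre pas récursif est fausse. Vous visez \gui{$\Gr(\fa,Z_k)\geq 1$}, puis vous concluez: $\fa$ est $Z_k$-\ndz, $\fa\,H_k=0$ et $H_k$ est un quotient de $Z_k$, donc $H_k=0$. Or la régularité passe aux sous-modules, pas aux quotients: si $h\in H_k$ se relève en $z\in Z_k$, l'hypothèse $\fa\,h=0$ signifie seulement $\fa\,z\subseteq B_k$, nullement $\fa\,z=0$. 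Exemple concret qui ruine la récurrence telle que vous l'organisez: $\gA=\ZZ$, $\fa=\gen{2}$ et le complexe $0\to\ZZ\vvers{\times 2}\ZZ\vvers{0}\ZZ$. On a $H_2=0$ (cas de base), $B_1=2\ZZ$ et $Z_1=\ZZ$ sont de profondeur $\geq 1$ pour $\fa$, et $\fa\,H_1=0$: toutes les conditions intermédiaires que votre pas récursif se propose d'établir sont satisfaites, et pourtant $H_1=\ZZ/2\ZZ\neq 0$. (Le théorème n'est pas contredit: son hypothèse $\Gr(\fa,C_2)\geq 2$ est en défaut, la profondeur de $\gen{2}$ sur $\ZZ$ valant $1$.)

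La conclusion correcte demande une inégalité de profondeur sur $H_k$ \emph{lui-même}: $\Gr(\fa,H_k)\geq 1$, qui combinée à $\fa\,H_k=0$ donne $H_k=0$. On l'obtient par le point \emph{\iref{i1thSESPrf}} du \thref{thSESPrf} appliqué à $0\to B_k\to Z_k\to H_k\to 0$, ce qui exige $\Gr(\fa,B_k)\geq 2$ --- et non $\geq 1$ comme vous le visez. Pour disposer de ce $\geq 2$ à chaque étage de la descente, il faut propager l'invariant quantitatif $\Gr(\fa,B_{k-1})\geq k$: au départ, $d_n$ injective donne $B_{n-1}\simeq C_n$, donc $\Gr(\fa,B_{n-1})\geq n$; ensuite, une fois $H_k=0$ acquis (donc $Z_k=B_k$), le même point \emph{\iref{i1thSESPrf}} appliqué à $0\to B_k\to C_k\to B_{k-1}\to 0$, avec $\Gr(\fa,B_k)\geq k+1$ et $\Gr(\fa,C_k)\geq k$, fournit $\Gr(\fa,B_{k-1})\geq k$, et la descente peut continuer. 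C'est exactement là que l'hypothèse de croissance $\Gr(\fa,C_k)\geq k$ est consommée; votre évocation d'une \gui{marge} dans les inégalités du \thref{thSESPrf} allait dans ce sens, mais sans cet invariant explicite la récurrence ne se ferme pas. (Accessoirement, l'appel au fait~\ref{factCompExPro} est hors de propos: il concerne les complexes de modules \emph{projectifs}; le découpage direct suffit.)
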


\begin{proof}
On note $B_k = \Im d_{k+1}$ et $Z_k = \Ker~d_k$, de sorte que $\rH_k(C_{\bullet}) = Z_k/B_k$, et l'on écrit $\rH_k$ pour $\rH_k(C_{\bullet})$.
\\
Puisque $\rH_n = Z_n$ est un sous-module de $C_n$ et $\Gr(\fa,C_n)\geq 1$ et $\fa\, \rH_n = 0$ on a
$\rH_n = 0$ et donc $0\rightarrow C_n\rightarrow C_{n-1}$ est exact. En outre, $B_{n-1} = \Im  d_n$
est isomorphe à $C_n$ et donc $\Gr(\fa,B_{n-1})\geq n$.
\\ 
Supposons $n\geq 2.$
Puisque $n-1\geq 1$, l'\id $\fa$ est $C_{n-1}$-\ndz.
Puisque~$Z_{n-1}$ est un sous-module de $C_{n-1}$ il est aussi $Z_{n-1}$-\ndz.
Le point \emph{\iref{i1thSESPrf}} du \thref{thSESPrf} et la suite exacte courte 
$$\preskip.4em \postskip.4em
0\rightarrow B_{n-1}\rightarrow Z_{n-1}\rightarrow \rH_{n-1}\rightarrow 0
$$
montrent alors que $\Gr(\fa,\rH_{n-1})\geq 1$ et donc $\rH_{n-1} = 0$ puisque $\fa \,\rH_{n-1} = 0$. 
En utilisant de nouveau le point \emph{\iref{i1thSESPrf}} du \thref{thSESPrf} avec la \seco 
$$\preskip.2em \postskip.4em
0\rightarrow B_{n-1}\rightarrow C_{n-1}\rightarrow B_{n-2}\rightarrow 0
$$
on obtient $\Gr(\fa,B_{n-2})\geq n-1$. 
\\
Si $n\geq 3$ 
on obtient de même $\rH_{n-2} = 0$ \hbox{et 
$\Gr(\fa,B_{n-3})\geq n-2.$}
\\
 Et ainsi de suite. 
\end{proof}

\subsec{Le \tho d'Auslander-Buchsbaum-Hochster}\label{secAusBu1}

\hum{il semble que  le
lemme qui suit fonctionne sous l'hypothèse $\cD_n(\varphi)\subseteq \fa$}
\begin{lemma} \label{lem1SESPrf} \emph{(Voir [FFR, Chap. 6, Th. 1])}\\
Soit $k\in\NN$,  $\fa$ un \itf, et une \seco 

\snic{0\to \Ae n\vvers{\varphi}\Ae \ell\lora G \to 0.}

\snii
Si $\ell>0$, $n>0$ et si $\cD_1(\varphi)\subseteq \fa$ (autrement dit les \coes de la matrice de $\varphi$ sont dans $\fa$),
alors $\Gr_\gA(\fa,G)\geq k$ \ssi $\Gr_\gA(\fa)\geq k+1$.
\end{lemma}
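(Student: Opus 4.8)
Le plan est de traiter séparément les deux implications de l'\eqvc, en notant que seule la réciproque utilise réellement l'hypothèse $\cD_1(\varphi)\subseteq\fa$. Pour le sens $(\Leftarrow)$, il suffit d'invoquer le \thref{thSESPrf}. Comme $n,\ell\geq 1$, le fait \ref{lemSdirEtProf} donne $\Gr_\gA(\fa,\Ae{n})=\Gr_\gA(\fa,\Ae{\ell})=\Gr_\gA(\fa)$. Si $\Gr_\gA(\fa)\geq k+1$, alors le point \emph{\ref{i1thSESPrf}} du \thref{thSESPrf}, appliqué à la \seco donnée avec $E=\Ae{n}$ et $F=\Ae{\ell}$, fournit immédiatement $\Gr_\gA(\fa,G)\geq k$, sans même avoir besoin de la condition sur $\cD_1(\varphi)$.

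Le sens $(\Rightarrow)$ est le cœur du lemme et je le démontrerais par \recu sur $k$. Le cas $k=0$ revient à montrer que $\fa$ est fidèle, \cad $\Gr_\gA(\fa)\geq 1$: comme $\varphi$ est injective, le \tho de McCoy \ref{propInjIdd} donne la fidélité de $\cD_n(\varphi)$, et l'inclusion $\cD_n(\varphi)\subseteq\cD_1(\varphi)\subseteq\fa$ (les \idds sont décroissants) transmet cette fidélité à $\fa$. Pour le pas de récurrence, je me place en $k\geq 1$ avec $\Gr_\gA(\fa,G)\geq k$, et l'idée est de \emph{réduire la \seco modulo un \pKr}, ce qui abaisse d'un cran les deux profondeurs en jeu et ramène exactement à l'\hdr.

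Concrètement, pour $f=\sum_i a_iT^{i-1}\in\AT$ attaché à $\fa=\gen{a_1,\dots,a_m}$, la fidélité de $\fa$ et l'inégalité $\Gr_\gA(\fa,G)\geq 1$ garantissent, via le lemme de McCoy \ref{lemMcCoy}, que $f$ est \ndz sur $\gA[T]$, $\Ae{n}[T]$, $\Ae{\ell}[T]$ et $G[T]$. Une \eds plate à $\gA[T]$ suivie d'une réduction modulo $f$ (corolaire \ref{lemHilBur2}, légitime car $f$ est $G[T]$-\ndz) produit alors, en posant $\gB=\aqo{\gA[T]}{f}$ et $\ov G=G[T]/fG[T]$, une \seco

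\snic{0\to \gB^n\vvers{\ov\varphi}\gB^\ell\lora \ov G\to 0}

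de mêmes dimensions $n,\ell>0$, avec $\ov\varphi$ injective et $\cD_1(\ov\varphi)\subseteq\fa\gB$: les hypothèses du lemme sont intégralement préservées sur $\gB$.

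Resterait la comptabilité des profondeurs, que j'orchestrerais par un aller-retour entre $\gA$, $\gA[T]$ et $\gB$. À l'aller, la proposition \ref{propProfchgbase} donne $\Gr_{\gA[T]}(\fa,G[T])\geq k$, le \thref{thfondprof1} abaisse à $\Gr_{\gA[T]}(\fa,\ov G)\geq k-1$ (l'énoncé est trivial si $k=1$), et le lemme \ref{lemprofmodulo} relit ceci comme $\Gr_\gB(\fa\gB,\ov G)\geq k-1$; l'\hdr appliquée à la \seco sur $\gB$ donne alors $\Gr_\gB(\fa\gB)\geq k$. Au retour, le lemme \ref{lemprofmodulo} identifie $\Gr_\gB(\fa\gB,\gB)$ à $\Gr_{\gA[T]}(\fa,\gA[T]/f\gA[T])$, le \thref{thfondprof} (avec $f\in\fa\gA[T]$ \ndz) relève à $\Gr_{\gA[T]}(\fa,\gA[T])\geq k+1$, et la proposition \ref{propProfchgbase} conclut $\Gr_\gA(\fa)\geq k+1$. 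Le point véritablement délicat n'est pas l'algèbre homologique, qui se réduit à \ref{thSESPrf} et \ref{lemHilBur2}, mais ce va-et-vient: il faut s'assurer que $f$ est \ndz sur tous les modules concernés (ce qui repose précisément sur la fidélité de $\fa$ issue de McCoy et sur $\Gr_\gA(\fa,G)\geq 1$), et que la réduction modulo $f$ conserve simultanément l'exactitude, les dimensions et la condition $\cD_1\subseteq\fa$, afin que l'\hdr s'applique littéralement sur $\gB$.
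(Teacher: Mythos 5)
Your proof is correct and follows essentially the same route as the paper's: the direct implication via the point \emph{\ref{i1thSESPrf}} of the \thref{thSESPrf}, and the converse by induction on $k$, reducing the exact sequence modulo a Kronecker polynomial attached to $\fa$ (corollary \ref{lemHilBur2}), passing to the quotient ring to apply the induction hypothesis, then lifting back through the \thref{thfondprof} and faithfully flat descent. The only (immaterial) difference is in the base case $k=0$, where you invoke the full McCoy theorem \ref{propInjIdd} to get $\cD_n(\varphi)$ faithful, while the paper merely observes that the coefficients of a single column of the injective map $\varphi$ already form a faithful ideal contained in $\fa$.
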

%
\begin{proof} 
On note $E=\Ae n$ et $F=\Ae \ell$ pour retrouver les notations du \thref{thSESPrf}. On a alors $\Gr_\gA(\fa,E)=\Gr_\gA(\fa,F)=\Gr_\gA(\fa)$.
Le point \emph{1} du \tho donne l'implication 
$$
{\Gr_\gA(\fa)\geq k+1\;\Longrightarrow\;\Gr_\gA(\fa,G)\geq k.}
$$
Puisque $\varphi$ est injective $\cD_1(\varphi)$ est $\gA$-\ndz
(les \coes d'une seule colonne suffisent).
L'hypothèse $\cD_1(\varphi)\subseteq \fa$ implique donc que l'\id $\fa$ \hbox{est $\gA$-\ndz}, \cad $\Gr_\gA(\fa)\geq 1$, ce qui donne l'implication réciproque \hbox{pour $k=0$}. 
\\
Pour $k\geq 1$, la \recu dont on a pris l'habitude fonctionne, mais il faut faire attention. On considère un  \pol de Kronecker $f_1$ attaché à~$\fa$. Puisque $f_1$ est \Grg, on a la \sex

\snic{ 0 \to E/f_1E\lora F/f_1F\lora G/f_1G\to 0,}

\snii avec $\Gr_\gA(\fa,G/f_1G)\geq k-1$. Ici, pour que l'appel à l'\hdr soit correct, nous devons changer d'anneau de base et passer à $\gA_1=\aqo\gA{f_1}$. \\
On a alors $E/f_1E\simeq \gA_1^{n}$,  $F/f_1F\simeq \gA_1^{\ell}$
et $\Gr_{\gA_1}(\fa,G/f_1G)\geq k-1$. On en déduit $\Gr_{\gA_1}(\fa)\geq k$,
puis $\Gr_{\gA}(\fa)\geq k+1$.
\end{proof}

Rappelons que pour un \alo $\gA$, on dit qu'un \Amo $E$ est {de \pdi $\leq m$}, et l'on écrit $\Pd_\gA(E)\leq m$, 
s'il admet une \rlf de longueur $m$.
Lorsque l'\alo est non trivial et \dcd, la \pdi
(d'un \mlrsb) est bien définie en raison de l'existence d'une \rsn libre minimale essentiellement unique.

La  \pdi est définie  en~\ref{defiPd}  pour tout
module  \lorsb, sur un anneau arbitraire, 
et le \tho d'Auslander-Buchsbaum sera \gne à ce cadre 
dans la section  \ref{secAusBuResProj}.

\medskip Le \tho d'Auslander-Buchsbaum a été démontré dans un cadre \noe,
il a été ensuite étendu au cadre \gnl par Hochster.
Son énoncé \gnl en \clama est le suivant (voir [FFR, Chap. 6, \tho~2, page~176]).

\THo{$\!\!\etl$ d'\ABH} {Soit $\gA$ un anneau local, $\fm=\Rad(\gA)$ et $E$ un \Amo non nul qui admet une \rlf. Alors on a l'\egt

\snic{\Gr(\fm,E)+\Pd(E)=\Gr(\fm).}
}

\hum{Si $\fm$ n'est pas \rtf il faut sans doute définir $\Gr(\fm)$
comme le sup des $\Gr(\fa)$ pour $\fa$ \tf contenu dans~$\fm$.
C'est bien cela? 

Mais nous avons refusé de donner cette \dfn à cause
de l'ambig\"uité qu'elle produit concernant la profondeur infinie.
\\
Bref, il me semble qu'il va falloir rajouter un commentaire quelque part
pour indiquer que nos \prcos impliquent bien le \tho ci-dessus même lorsque $\fm$ n'est pas \rtf, vue la \dfn adoptée par Northcott.

Notre profondeur infinie est plus infinie, car elle signifie $1\in\fm$.}

\medskip 
Nous démontrons dans cette section deux \thos de \coma 
(\ref{thABH1} et \ref{thABH2}) qui impliquent le \tho
précédent en \clama. Ces deux \thos peuvent donc être vus comme 
donnant un contenu \algq précis au \tho d'\ABH.
Ces \thos donnent en fait des résultats plus \gnls. Ils concernent un anneau arbitraire, non \ncrt local, et un \id arbitraire de cet anneau. 
En \clama, ils ne semblent pas pouvoir se déduire simplement du \tho d'\ABH ci-dessus.

Ce sont les deux \thos suivants.

\begin{theorem} \label{thABH1} \emph{(\ABH, direct)}\\
Soit $k\in\NN$ et $\fa$ un \itf de $\gA$. Si un \Amo $E$ admet une \rlf de longueur $m$ 
$$ 
    0\to L_m\vvers{u_m} L_{m-1}\llra \cdots\cdots\cdots \vvers{u_1} L_0\vers \pi E\to 0
$$
et si  $\Gr(\fa)\geq k+m$
alors  $\Gr(\fa,E)\geq k$.
\end{theorem}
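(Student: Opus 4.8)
Le plan est de raisonner par récurrence sur la longueur $m$ de la \rlf, en prenant $m=0$ comme cas d'initialisation. Pour $m=0$, le module $E$ est libre (isomorphe à $L_0$): s'il est de rang $\geq 1$, le fait \ref{lemSdirEtProf} donne $\Gr_\gA(\fa,E)=\Gr_\gA(\fa)$, et l'hypothèse $\Gr_\gA(\fa)\geq k+0=k$ fournit directement la conclusion; si $E=0$, la profondeur $\Gr_\gA(\fa,E)$ est infinie et l'énoncé est trivialement vrai. L'idée de la récurrence est de faire gagner une unité de profondeur au noyau de la \rsn chaque fois que l'on raccourcit celle-ci d'un cran.

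Pour le pas de récurrence, je découperais la \rlf de longueur $m$ en posant $N=\Im(u_1)=\Ker(\pi)\subseteq L_0$. On obtient d'une part la \seco
$$
0\to N\lora L_0\vers{\pi} E\to 0,
$$
et d'autre part, par troncature, une \rlf de longueur $m-1$ du module $N$:
$$
0\to L_m\vvers{u_m}\cdots\cdots\vvers{u_2}L_1\lora N\to 0.
$$
Comme $\Gr_\gA(\fa)\geq k+m=(k+1)+(m-1)$, l'\hdr appliquée à $N$ (avec l'entier $k+1$ à la place de $k$) donne $\Gr_\gA(\fa,N)\geq k+1$. Par ailleurs, $L_0$ étant libre de rang $\geq 1$ (le cas de rang nul renvoyant à $E=0$, déjà traité), le fait \ref{lemSdirEtProf} donne encore $\Gr_\gA(\fa,L_0)=\Gr_\gA(\fa)\geq k+m\geq k$.

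Il ne reste plus qu'à conclure grâce au point \emph{\ref{i1thSESPrf}} du \thref{thSESPrf}, appliqué à la \seco ci-dessus: de $\Gr_\gA(\fa,N)\geq k+1$ et $\Gr_\gA(\fa,L_0)\geq k$ on déduit $\Gr_\gA(\fa,E)\geq k$, ce qui termine la récurrence. L'essentiel du travail est donc déjà contenu dans le \thref{thSESPrf}, lui-même obtenu par dévissage au moyen du \corref{lemHilBur2}; le mécanisme généralise d'ailleurs le \thref{lem1SESPrf}, qui en est le cas $m=1$. La principale difficulté n'est pas calculatoire mais conceptuelle: il faut identifier le bon découpage (la profondeur du noyau $N$ monte exactement de l'unité que l'on gagne sur la longueur de la \rsn) et vérifier soigneusement que les conventions sur la profondeur (module nul, modules libres de rang nul, anneau trivial) rendent les cas dégénérés cohérents avec l'énoncé.
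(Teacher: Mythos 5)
Votre démonstration est correcte et suit essentiellement la même voie que le texte: même découpage de la résolution en suites exactes courtes $0\to E_{i+1}\to L_i\to E_i\to 0$ (chez vous $N=E_1$) et même ingrédient clé, le point \emph{\ref{i1thSESPrf}} du \thref{thSESPrf}. Votre récurrence sur $m$, qui détache $L_0$ à droite, n'est que la forme repliée de l'itération explicite du texte, laquelle part de $L_m$ et descend avec la même comptabilité $\Gr(\fa,E_i)\geq k+i$.
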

%
\begin{proof} Pour $k=0$, il n'y a rien à prouver. On suppose $k\geq 1$.\\
On décompose la \sex longue en \secos
\[ 
\begin{array}{ccccccccccccccccccccccccccccccc} 
 0 & \to  & E_1 & \lora	 & L_0 & \lora & E &\to & 0\\
 0 & \to  & E_2 & \lora & L_1 & \lora & E_1 &\to & 0\\
   & \vdots   &   &   &   &  &   &\vdots \\
 0 & \to  & E_{m-1} & \lora & L_{m-2} & \lora & E_{m-2} &\to & 0\\
 0 & \to  & L_m & \lora & L_{m-1} & \lora & E_{m-1} &\to & 0\\
 \end{array}
\]
avec $E_i=\Im u_{i+1}=\Ker u_i\subseteq L_i$.
Le \thref{thSESPrf}, point \emph{\ref{i1thSESPrf}} donne tout d'abord $\Gr(\fa,E_{m-1})\geq k+m-1$, puis $\Gr(\fa,E_{m-2})\geq k+m-2$, jusqu'à~\hbox{$\Gr(\fa,E)\geq k$}.
\end{proof}

\hum{il semble que  le
\tho qui suit fonctionne sous l'hypothèse $\cD_{r_m}(u_m)\subseteq \fa$,
et qu'il puisse arriver que $  \sqrt{\cD_{r_m}(u_m)}\subsetneq \sqrt {\cD_{1}(u_m)}$ . 

\`A vérifier. Si c'est le cas on peut légèrement améliorer le \tho}

\begin{theorem} \label{thABH2}  \emph{(\ABH, réciproque)}\\
Soit $k\in\NN$ et $\fa$ un \itf de $\gA$.  On considère un \Amo $E$ 
qui admet une \rlf de longueur $m\geq 1$
$$ 
    0\to L_m\vvers{u_m} L_{m-1}\lora \cdots\cdots\cdots \vvers{u_1} L_0\vvers \pi E\to 0
$$
avec chaque $L_j$ de rang $>0$. Si  $u_m(L_m)\subseteq \fa L_{m-1}$ 
 (i.e si $\cD_1(u_m)\subseteq \fa$), et si $\Gr(\fa,E)\geq k$, alors
$\Gr(\fa)\geq k+m$. 
\end{theorem}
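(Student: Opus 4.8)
The plan is to prove the converse direction of Auslander-Buchsbaum-Hochster by induction on the length $m$ of the finite free resolution, dualizing the strategy of the direct theorem \ref{thABH1}. There we decomposed the long exact sequence into short exact sequences and propagated the depth inequality \emph{downward} through the syzygy modules $E_i = \Im u_{i+1}$ using point \ref{i1thSESPrf} of \thref{thSESPrf}. Here we want to propagate \emph{upward}: from $\Gr(\fa,E)\geq k$ we climb to $\Gr(\fa,E_1)\geq k+1$, then to $\Gr(\fa,E_2)\geq k+2$, and so on up to $\Gr(\fa,L_m)\geq k+m$; since $L_m\simeq \gA^{r_m}$ with $r_m>0$ and $\fa$ is finitely generated, the fundamental fact \ref{lemSdirEtProf} identifies $\Gr(\fa,L_m)=\Gr(\fa,\gA)=\Gr(\fa)$, which is the desired conclusion.

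\textbf{First} I would set up the induction. For $m=1$ the resolution is the short exact sequence $0\to L_1\to L_0\vers{\pi}E\to 0$, and $u_1=u_m$ has all its coefficients in $\fa$ by hypothesis ($\cD_1(u_m)\subseteq\fa$), with $L_0,L_1$ free of rank $>0$. This is precisely the situation of the crucial Lemma \ref{lem1SESPrf}, which gives the equivalence $\Gr(\fa,E)\geq k \iff \Gr(\fa)\geq k+1$; taking the forward direction yields $\Gr(\fa)\geq k+1 = k+m$, establishing the base case. For the inductive step, assuming the result for resolutions of length $m-1$, I would peel off the bottom short exact sequence
$$
0\to E_1\lora L_0\vvers{\pi} E\to 0,\qquad E_1=\Im u_1=\Ker\pi.
$$
The module $E_1$ then admits the finite free resolution of length $m-1$ obtained from the tail $0\to L_m\to\cdots\to L_1\vers{u_1'}E_1\to 0$, where $u_1'$ is $u_1$ with codomain restricted to $E_1$. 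The key point is that the top map $u_m$ of this shortened resolution is unchanged, so the hypothesis $\cD_1(u_m)\subseteq\fa$ still holds, and the ranks remain $>0$.

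\textbf{The hard part} will be raising the depth across the bottom sequence: I must show $\Gr(\fa,E)\geq k \Longrightarrow \Gr(\fa,E_1)\geq k+1$ so that I can apply the induction hypothesis to $E_1$ and obtain $\Gr(\fa)\geq (k+1)+(m-1)=k+m$. Here point \ref{i2thSESPrf} of \thref{thSESPrf}, applied to $0\to E_1\to L_0\to E\to 0$, gives the implication $(\Gr(\fa,L_0)\geq k+1$ and $\Gr(\fa,E)\geq k)\Rightarrow\Gr(\fa,E_1)\geq k+1$. Since $L_0$ is free of positive rank, $\Gr(\fa,L_0)=\Gr(\fa)$, so I need $\Gr(\fa)\geq k+1$ as an auxiliary input — and this is exactly what the $m=1$ argument already supplies. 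Indeed, truncating the resolution at length $1$ and invoking Lemma \ref{lem1SESPrf} with the syzygy module $E_1$ shows $\Gr(\fa,E)\geq k$ forces $\Gr(\fa)\geq k+1$ (the coefficients of $u_1$ lie in $\fa$ because $\Im u_1\subseteq L_0$ and, more carefully, because whenever $m\geq 1$ the first differential already has its image inside the submodule generated over $\fa$; if this containment is not automatic I would instead extract $\Gr(\fa)\geq 1$ directly from injectivity of $u_m$ as in the proof of \ref{lem1SESPrf} and bootstrap). With both $\Gr(\fa,E)\geq k$ and $\Gr(\fa)\geq k+1=\Gr(\fa,L_0)$ in hand, point \ref{i2thSESPrf} delivers $\Gr(\fa,E_1)\geq k+1$, the induction hypothesis delivers $\Gr(\fa)\geq k+m$, and the proof closes.

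A technical subtlety worth flagging is the management of base-ring extensions: as in the inductive arguments of \thref{thRangStProfRLF} and \ref{lem1SESPrf}, when $k\geq 1$ one typically passes to a polynomial or Nagata extension $\AuX$ to replace the Kronecker polynomial attached to $\fa$ by an actual regular element, then descends using the faithful flatness invariance of profondeur (Proposition \ref{propProfchgbase}). Because finite free resolutions, the ideals $\cD_1(u_m)\subseteq\fa$, and the rank conditions are all preserved under such flat base change (Theorem \ref{propEdsHomologie} and Lemma \ref{lemfDkchgbase}), the inductive bookkeeping goes through; the only care needed is to ensure the extension is performed once at the outset and that $\fa[\uX]$ inherits the finite generation and the coefficient containment.
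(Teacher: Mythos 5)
There is a genuine gap, and it sits at the single step that carries the whole theorem. Your induction on $m$ needs, in order to climb the bottom sequence $0\to E_1\to L_0\to E\to 0$ via point \emph{\ref{i2thSESPrf}} of \thref{thSESPrf}, the auxiliary bound $\Gr(\fa)\geq k+1$; but this bound is a weak form of the very conclusion you are proving, so it must come from somewhere else. Your primary justification — apply Lemma \ref{lem1SESPrf} to $0\to E_1\to L_0\to E\to 0$ because ``the coefficients of $u_1$ lie in $\fa$'' — fails twice over. First, the hypothesis of the theorem constrains only the \emph{last} map $u_m$: nothing forces $\cD_1(u_1)\subseteq\fa$. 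For instance, insert a trivial summand $\gA\vers{\Id}\gA$ in degrees $1,0$ (an \elr modification as in lemme \ref{lemModifComplexe}): the condition $\cD_1(u_m)\subseteq\fa$ is untouched, but the new $u_1$ acquires a unit coefficient. Second, Lemma \ref{lem1SESPrf} requires both the left and the middle modules of the sequence to be free of positive rank, whereas $E_1=\Ker\pi$ is a syzygy module, not free in general. Your fallback — extract $\Gr(\fa)\geq 1$ from injectivity of $u_m$ ``and bootstrap'' — is precisely the missing content: $\Gr(\fa)\geq 1$ is useless when $k\geq 1$, and no bootstrap is described. As written, the argument is circular.

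The paper breaks this circle by inducting on $k$ with $m$ fixed ($m\geq 2$), not on $m$. The base case $k=0$ comes from \thref{thRangStProfRLF}: $\Gr\big(\cD_{r_m}(u_m)\big)\geq m$ and $\cD_{r_m}(u_m)\subseteq\cD_1(u_m)\subseteq\fa$ since $r_m\geq 1$, whence $\Gr(\fa)\geq m$ — a result you never invoke. In the step from $k-1$ to $k$, the inductive hypothesis itself yields $\Gr(\fa)\geq (k-1)+m\geq k+1$, which is exactly the auxiliary input you were missing; one then climbs \emph{all} the short exact sequences to get $\Gr(\fa,E_j)\geq k+1$, applies Lemma \ref{lem1SESPrf} only to the top sequence $0\to L_m\to L_{m-1}\to E_{m-1}\to 0$ (where the two left modules really are free and the coefficients of the map really lie in $\fa$), gains one unit to reach $\Gr(\fa)\geq k+2$, re-climbs, and iterates until $\Gr(\fa)\geq k+m$. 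If you insist on keeping your induction on $m$, you would have to reproduce this bootstrap inside it — start from $\Gr(\fa)\geq m$ via \thref{thRangStProfRLF}, then alternate ``climb one sequence / apply the inductive hypothesis to $E_1$ with the improved depth'' until the bound saturates at $k+m$ — none of which appears in your text.
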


Notez que dans le cas d'un anneau local \dcd, on peut considérer une \rsn libre minimale  (\thrf{cor3lemModifComplexe}), 
et le \thref{thABH2} affirme quelque chose de non trivial pour tout \itf $\fa$ contenant~$\cD_1(u_m)$ et contenu dans $\fm$. 

\begin{proof}
Le  cas $m=1$ est donné par le 
lemme \ref{lem1SESPrf}.
 Supposons $m\geq 2$.  
 \\
 Voyons le cas $k=0$. 
Le \thref{thRangStProfRLF} nous dit que $\Gr\big(\cD_{r_m}(u_m)\big)\geq m$,
\hbox{où $L_m=\gA^{r_m}$},
or $\cD_{r_m}(u_m)\subseteq \cD_1(u_m)\subseteq \fa$ \hbox{car $r_m\geq 1$}, \hbox{donc $\Gr(\fa)\geq m.$} 
\\
Montrons enfin que le \tho passe de $k-1$ à $k$. \\
On va utiliser le lemme \ref{lem1SESPrf} et le \thref{thSESPrf}, point \emph{\ref{i2thSESPrf}.}
On décompose la \sex longue en \secos
\[ 
\begin{array}{ccccccccccccccccccccccccccccccc} 
 0 & \to  & E_1 & \lora	 & L_0 & \lora & E &\to & 0\\
 0 & \to  & E_2 & \lora & L_1 & \lora & E_1 &\to & 0\\
   & \vdots   &   &   &   &  &   &\vdots \\
 0 & \to  & E_{m-1} & \lora & L_{m-2} & \lora & E_{m-2} &\to & 0\\
 0 & \to  & L_m & \lora & L_{m-1} & \lora & E_{m-1} &\to & 0\\
 \end{array}
\]
avec $E_i=\Im u_{i+1}\subseteq L_i$. 

\snii

\snii 
On a $\Gr(\fa)\geq k+m-1\geq k+1$,
donc $\Gr(\fa,L_j)\geq k+1$
pour tous les~$j$.\\
Du \thref{thSESPrf} on déduit que $\Gr(\fa,E_j)\geq k+1$ 
pour $j=1$, \dots, $m-1$.  Le lemme \ref{lem1SESPrf} nous donne $\Gr(\fa)\geq k+2$ avec la dernière \sex. \hbox{Donc $\Gr(\fa,L_j)\geq k+1$}
pour tous les $j$. \\
Du \tho \ref{thSESPrf} on déduit  $\Gr(\fa,E_j)\geq k+2$ 
 pour $j=2,\dots,m-1$. Le lemme \ref{lem1SESPrf} donne  $\Gr(\fa)\geq k+3$.
 \\
Et ainsi de suite jusqu'à $\Gr(\fa)\geq k+m$.
\end{proof}

Voici maintenant un énoncé   \cof non \noe très proche du \tho original,
un peu plus fort que celui-ci, et moins fort cependant que la réunion des deux \thos que nous venons de voir.

\begin{theorem} \label{thABH3} \emph{(\Tho d'\ABH, original, version \cov)}\\
Soit $(\gA,\fm)$ un \alo  non trivial et $E$ un \Amo
qui admet une \rsn libre minimale
$$ 
    0\to L_m\vvers{u_m} L_{m-1}\lora \cdots\cdots\cdots \vvers{u_1} L_0\vvers \pi E\to 0.
$$ 
Soit $\fa$ un \id radicalement \tf contenant $\cD_1(u_m)$ et contenu dans~$\fm$. 
Alors on a l'\egt

\snic{\Gr_\gA(\fa,E)+\Pd_\gA(E)=\Gr_\gA(\fa),}

autrement dit on a l'\eqvc suivante: 

\snic{\fbox{$\forall k\in\NN,\;\;\; \Gr_\gA(\fa,E)\geq k\iff \Gr_\gA(\fa)\geq \Pd_\gA(E)+k.$}}
 
\end{theorem}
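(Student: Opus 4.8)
The plan is to recognize that this statement is essentially the conjunction of the two theorems \thref{thABH1} and \thref{thABH2}, once the projective dimension $\Pd_\gA(E)$ has been identified with the length $m$ of the given minimal resolution. So the first step is to record that over the nontrivial \alo \dcd $\gA$ the minimal free resolution of $E$ realizes the \pdi: by the corolaire \ref{corthBetti} a \rsn libre minimale is of minimal length among all \rlfs of $E$, whence $m=\Pd_\gA(E)$ (an integer well defined here since $E$ is \lrsb). Moreover, minimality forces each $L_j$ to have rank $\geq 1$ for $0\leq j\leq m$ when $E\neq 0$: the nombres de Betti $\beta_i(E)=\rg(L_i)$ are invariants of $E$ (\thref{thBetti}), and a vanishing intermediate $L_j$ would, by Nakayama applied to the surjections forced by exactness, collapse the whole tail $L_j=\cdots=L_m=0$, contradicting $L_m\neq 0$.

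Second, I would reduce the radicalement \tf ideal $\fa$ to an honest \itf without losing the hypothesis bearing on $u_m$. Since $\cD_1(u_m)$ is generated by the finitely many entries of the matrix of $u_m$, it is \tf; choosing a \tf ideal $\fb_0\subseteq\fa$ with $\sqrt{\fb_0}=\sqrt\fa$ and setting $\fb=\fb_0+\cD_1(u_m)$, one obtains a \tf ideal with $\cD_1(u_m)\subseteq\fb\subseteq\fm$ and $\sqrt\fb=\sqrt\fa$ (because $\cD_1(u_m)\subseteq\fa\subseteq\sqrt{\fb_0}$, so $\fb\subseteq\sqrt{\fb_0}$). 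By the lemme \ref{lemProdEtProf} and the \dfn \ref{defiProfNor2} of the profondeur for radicalement \tf ideals, the predicates $\Gr_\gA(\fa,E)\geq k$ and $\Gr_\gA(\fb,E)\geq k$ coincide, and likewise $\Gr_\gA(\fa)\geq k$ and $\Gr_\gA(\fb)\geq k$. It therefore suffices to prove the equivalence for the \itf $\fb$.

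Third comes the combination itself. For a fixed $k\in\NN$, the implication $\Gr_\gA(\fb)\geq m+k\Rightarrow\Gr_\gA(\fb,E)\geq k$ is exactly the \thref{thABH1} applied to the resolution of length $m$. Conversely, $\Gr_\gA(\fb,E)\geq k\Rightarrow\Gr_\gA(\fb)\geq m+k$ is the \thref{thABH2}, whose hypotheses are now all in force: $m\geq 1$, every $L_j$ has rank $>0$ (first step), and $\cD_1(u_m)\subseteq\fb$ (second step). Putting the two implications together yields, for every $k\in\NN$,
$$\Gr_\gA(\fa,E)\geq k\iff \Gr_\gA(\fa)\geq \Pd_\gA(E)+k,$$
which is precisely the asserted equality, read according to the convention that such an equality abbreviates this family of equivalences over all $k$. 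The degenerate case $m=0$ (where the condition on $u_m$ is vacuous) is immediate: then $E\simeq L_0=\Ae{r_0}$ with $r_0\geq 1$, and $\Gr_\gA(\fa,E)=\Gr_\gA(\fa)$ by the fait \ref{lemSdirEtProf}.

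The part requiring the most care is not the final assembly but the bookkeeping of the second step: one must keep the inclusion $\cD_1(u_m)\subseteq\fb$ alive while shrinking $\fa$ to a \tf ideal of the same radical, since \thref{thABH2} genuinely uses this inclusion and not merely its radical. The other delicate point, already settled in the first step, is the positivity of all the ranks $\rg(L_j)$, which is what makes \thref{thABH2} applicable and which rests entirely on the minimality of the resolution.
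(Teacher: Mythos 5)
Your proposal is correct and follows exactly the route the paper intends: the paper states this theorem without a separate proof, presenting it as a consequence of (indeed weaker than the conjunction of) the \thref{thABH1} and the \thref{thABH2}, and your assembly — positivity of all the ranks $\rg(L_j)$ via Nakayama and minimality, reduction of the radically finitely generated ideal $\fa$ to a finitely generated $\fb$ with $\cD_1(u_m)\subseteq\fb\subseteq\fa$ and $\sqrt\fb=\sqrt\fa$, then the two implications for each $k$ — is precisely the bookkeeping the paper leaves implicit. One citation blemish: the corolaire \ref{corthBetti} that you invoke assumes the local ring is \dcd, which is not among the hypotheses here; but the identification $m=\Pd_\gA(E)$ does not require it, since it is exactly what the paper's Note 1 extracts from the \thref{thBetti}, or can be obtained over an arbitrary local ring from the \thref{thcomparaisonRSFS} (any finite projective resolution contains the minimal one as a direct summand), so nothing essential is affected.
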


\emph{Notes.} 1) L'entier $m=\Pd_\gA(E)$ et l'\id $\cD_1(u_m)$  sont bien définis en vertu du \thref{thBetti}.
 
2)
On peut toujours prendre $\fa=\cD_1(u_m)$, ou $\fa=\fm$ si $\fm$ est radicalement \tf, ce qui est le cas lorsque  $\gA$ 
est  \noe \coh \fdi\siBookdeux{\ (\thref{nilreg2})}.
 
3)
Si $\gA$ est \dcd, on peut construire une \rsn minimale à partir d'une \rlf arbitraire (\thref{cor3lemModifComplexe}). 
 
4)
Il est sans doute 
possible d'avoir deux \itfs $\fa_1$ et $\fa_2$ contenus dans $\fm$
et contenant~$\cD_1(u_m)$, mais
qui ne soient pas de même profondeur. Ceci ne contredit pas a priori
le \tho.
\eoe

Voici un corolaire du \thref{thABH1}.

\begin{proposition} \label{corthABH1}\emph{(\Tho de Rees, voir [FFR, Chap. 6, Th. 4]\footnote{Northcott démontre plutôt que si $E\neq 0$ et $\fa\,E=0$, alors $\Gr(\fa)\leq m$. C'est un énoncé avec deux négations puique la signification de \gui{$\Gr(\fa)\leq m$} c'est que \gui{$\Gr(\fa)\geq m+1$
 est absurde}. Ce sont les énoncés $\Gr(\fa)\geq k$ qui sont définis en premier.})}\\
Soit $\fa$ un \itf de $\gA$ et $E$ un \Amo  qui admet une \rlf de longueur $m$
$$ 
    0\to L_m\lora L_{m-1}\lora \cdots\cdots\cdots \lora L_0\vvers \pi E\to 0.
$$
Si $\Gr(\fa)\geq m+1$ et $\fa\,E=0$ alors
$E=0$. 
 
\end{proposition}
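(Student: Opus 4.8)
The plan is to derive this statement directly from the \gui{direct} half of the Auslander--Buchsbaum--Hochster theorem, \thref{thABH1}, which carries all the actual work.

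First I would apply \thref{thABH1} with $k=1$. The hypothesis $\Gr(\fa)\geq m+1$ reads exactly as $\Gr_\gA(\fa)\geq 1+m$, and by assumption $E$ admits a \rlf of length $m$; hence the theorem yields $\Gr_\gA(\fa,E)\geq 1$.

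Next, recall that $\Gr_\gA(\fa,E)\geq 1$ means precisely that the \id $\fa$ is \Erg, as recorded in the \dfnn \ref{defiProfNor} and in the motivating discussion preceding it: for every $x\in E$, one has the implication $\fa\,x=0\Rightarrow x=0$. Now the hypothesis $\fa\,E=0$ says that \emph{every} $x\in E$ satisfies $\fa\,x=0$; combined with the regularity of $\fa$ on $E$, this forces $x=0$ for all $x\in E$, that is, $E=0$, which is the desired conclusion.

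There is no genuine obstacle here: once $\Gr_\gA(\fa,E)\geq 1$ is in hand, the result is immediate. All the difficulty is absorbed into \thref{thABH1}, whose proof splits the length-$m$ resolution into \secos and propagates the profondeur inequality through them by means of point \emph{\ref{i1thSESPrf}} of \thref{thSESPrf}. The only point worth emphasizing is that, from a \cof point of view, one obtains the positive conclusion $E=0$ \emph{directly}, rather than Northcott's classical contrapositive formulation (\gui{$\Gr(\fa)\leq m$ whenever $E\neq0$}) mentioned in the footnote, whose double negation would not by itself produce an \algo.
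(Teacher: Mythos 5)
Votre démonstration est correcte et suit exactement la même voie que celle du texte: le \thref{thABH1} appliqué avec $k=1$ donne $\Gr_\gA(\fa,E)\geq 1$, \cad que $\fa$ est \Erg, et l'hypothèse $\fa\,E=0$ force alors $E=0$. La seule différence est que vous explicitez le passage par la \dfn de \gui{$\Gr_\gA(\fa,E)\geq 1$}, que le texte laisse implicite dans sa preuve d'une ligne.
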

%
\begin{proof}
Le \thref{thABH1} donne $\Gr(\fa,E)\geq 1$, donc $\fa\,E=0$ implique $E=0$.
\end{proof}

Voici une application, dans laquelle on précise la proposition \ref{propfDk=1}.
\begin{proposition} \label{propfDk=1+} 
Soit $(r_0,\dots,r_n,r_{n+1})$ dans $\NN$ avec $r_{n+1}=0$, et $M$ \hbox{un \Amo} qui admet une \rlf

\smallskip 
\centerline{\fbox{$L_{\bullet}:\;\;0 \to L_n \vvers{u_n}  L_{n-1} \vvers{u_{n-1}}\;  \cdots \cdots \; \vvers{u_2}  L_1 \vvers{u_1} L_0 \vers{\pi} M
\to 0$.}}

\snii
avec   $L_k\simeq\gA^{r_{k+1}+r_k}$. 
\begin{enumerate}
\item Si $\fD_{k}(M)=\gen{1}$, alors $\Pd_\gA(M)\leq k-1$ et $\fD_{k+s}=\gen{1}$ pour tout $s>0$. Si en outre $r_0=0$ et $\Gr(\fD_1)\geq k$, alors $M=0$.
\item Pour tous $\ell\geq 1$, on a $\fD_{\ell}\subseteq \DA(\fD_{\ell+1})$.
Si en outre  $r_0=0$ on obtient l'\eqvc suivante pour tout $p\geq 2$

\snic{\Gr(\fD_1)\geq p \iff \DA(\fD_{1})=\dots=\DA(\fD_{p}).}
\end{enumerate}
\end{proposition}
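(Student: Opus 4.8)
The first assertion in each item---the implication $\fD_k(M)=\gen{1}\Rightarrow\Pd_\gA(M)\leq k-1$ together with $\fD_{k+s}=\gen{1}$, and the inclusions $\fD_\ell\subseteq\DA(\fD_{\ell+1})$---is exactly what Proposition \ref{propfDk=1} already provides, so I would simply quote it. Only the two statements carrying the hypothesis $r_0=0$ are genuinely new, and the whole plan hinges on one preliminary remark. When $r_0=0$, the structure theorem \ref{thStrLocResFin} (bis form) gives $\fD_1=\cF_{r_0}(M)=\cF_0(M)$; combining this with the classical Fitting--McCoy inclusions $\Ann(M)^q\subseteq\cF_0(M)\subseteq\Ann(M)$ (where $q$ is the number of generators of $M$) yields both $\fD_1\,M=0$ and $\sqrt{\Ann(M)}=\DA(\fD_1)$. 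This is the bridge between the annihilator of $M$ and its first characteristic ideal.

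For the new part of item 1 I would argue as follows. From $\fD_k(M)=\gen{1}$, Proposition \ref{propfDk=1} furnishes a finite free resolution of $M$ of some length $m\leq k-1$. Since $\fD_1\subseteq\Ann(M)$ we have $\fD_1\,M=0$, and since $m+1\leq k\leq\Gr(\fD_1)$ the ideal $\fD_1$ has depth $\geq m+1$. The vanishing result of Rees (Proposition \ref{corthABH1}), applied with $\fa=\fD_1$ and $E=M$, then forces $M=0$. This short argument is the engine driving everything else.

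For the new part of item 2, the inclusions $\fD_\ell\subseteq\DA(\fD_{\ell+1})$ first assemble into an increasing chain $\DA(\fD_1)\subseteq\DA(\fD_2)\subseteq\cdots\subseteq\DA(\fD_p)$. The implication from right to left is then immediate: if $\DA(\fD_1)=\DA(\fD_p)$, then since $\Gr(\fD_p)\geq p$ by the depth theorem \ref{thRangStProfRLF} and since depth depends only on the radical (Lemma \ref{lemProdEtProf}), one gets $\Gr(\fD_1)=\Gr(\fD_p)\geq p$. For the converse I assume $\Gr(\fD_1)\geq p$; it suffices to prove $\fD_p\subseteq\DA(\fD_1)$, because the chain then collapses to equalities. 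I would take an arbitrary generator $\mu$ of $\fD_p$ and localize at it. Over $\gA'=\gA[1/\mu]$ the characteristic ideals are the images of those over $\gA$ (Lemma \ref{lemfDkchgbase}), so $\fD_p$ becomes $\gen{1}$; the rank is unchanged, still $r_0=0$; and since localization is flat, $\Gr_{\gA'}(\fD_1)\geq\Gr_\gA(\fD_1)\geq p$ by Proposition \ref{propProfchgbase}. The new part of item 1, applied over $\gA'$ with $k=p$, then gives $M[1/\mu]=0$. As $M$ is finitely generated this means $\mu^N M=0$ for some $N$, whence $\mu^N\in\Ann(M)\subseteq\DA(\fD_1)$ and $\mu\in\DA(\fD_1)$. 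Letting $\mu$ range over the generators of $\fD_p$ yields $\fD_p\subseteq\DA(\fD_1)$, and the chain gives $\DA(\fD_1)=\cdots=\DA(\fD_p)$.

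The main obstacle is precisely the inclusion $\fD_p\subseteq\DA(\fD_1)$ in the converse of item 2: it is the only place where a genuine argument---localization followed by the vanishing $M[1/\mu]=0$ supplied by item 1 through Rees---is required. Everything else reduces to bookkeeping with results already at hand: the behaviour of ranks and characteristic ideals under localization (Lemma \ref{lemfDkchgbase}) and the monotonicity of depth under flat base change (Proposition \ref{propProfchgbase}). I would finally note that the restriction $p\geq2$ is purely cosmetic, since for $p=1$ both sides of the equivalence hold trivially, $\Gr(\fD_1)\geq1$ because $\fD_1$ is faithful.
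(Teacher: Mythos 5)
Your proposal is correct and follows the paper's own proof almost step for step: reduction to the $r_0=0$ assertions via \ref{propfDk=1}, the identification $\fD_1=\cF_0(M)$ giving $\fD_1M=0$, Rees' theorem \ref{corthABH1} to force $M=0$ in item \emph{1}, and, for item \emph{2}, localization at a generator $\mu$ of $\fD_p$ followed by an application of item \emph{1} over $\gA[1/\mu]$. The one place where the paper is more careful than you are: \ref{propfDk=1} yields $\Pd_\gA(M)\leq k-1$, i.e.\ a finite \emph{projective} resolution, not the free resolution you claim, whereas Rees' theorem \ref{corthABH1} is stated for free resolutions; the paper patches this explicitly by remarking that Rees extends to finite projective resolutions by passing to suitable comaximal localizations (after which the resolution becomes free and the conclusion $M=0$ can be tested locally), and your write-up should include that remark. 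Finally, your detour at the end of item \emph{2} through $\Ann(M)$ and the Fitting inclusion $\Ann(M)^q\subseteq\cF_0(M)$ is a harmless variant of the paper's shortcut, which uses directly that $M=0$ is equivalent to $1\in\cF_0(M)$ over $\gA[1/\mu]$.
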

%
\begin{proof}
Par rapport à \iref{propfDk=1}, il reste à prouver les assertions concernant
le \hbox{cas $r_0 = 0$}. Dire que $r_0$ est nul, c'est dire que $u_1$
est de rang stable~\hbox{$r_0+r_1$}, auquel cas $\fD_1$ est le $0$-Fitting $\cF_0(M)$ 
de $M$. \\ 
 On a 
$M = 0 \Leftrightarrow 1 \in \fD_1$. 
 En \gnl, 
puisque $\cF_0(M) \subseteq \Ann(M)$,  $\fD_1 M = 0$. 

\emph{1.}
Clair si $k=1$ car $1 \in \fD_k=\fD_1$. Supposons $k\ge 2$.
\\
On a $\Gr(\fD_1) \ge k$, $\Pd(M) \le k-1$ et $\fD_1 M = 0$. Le \tho de Rees~\iref{corthABH1} nous dit que $M=0$.  En fait, le contexte du \tho de Rees est celui des
\rlfs mais il s'applique \egmt aux \rsfs comme on le voit en utilisant
des \lons \come convenables.

\emph{2.}
L'implication $\Leftarrow$ résulte du fait que les \ids $\fD_1$ et $\fD_p$ ont même profondeur (car même radical) et de l'inégalité $\Gr(\fD_p) \ge p$.

\snii
Pour l'implication $\Rightarrow$, il suffit de voir que $\fD_p \subseteq \DA(\fD_1)$. Si $\mu \in \fD_p$, on 
applique le point \emph{1}  sur $\gA[1/\mu]$: on obtient $1 \in \fD_1[1/\mu]$, donc $\mu\in\DA(\fD_1)$.
\end{proof}
%
 
\bonbreak
\section{Ce qui rend un complexe de modules libres exact}\label{secCQRCMLE}

Le \tho suivant reprend le \thref{thRangStProfRLF}, en affirmant la réciproque.

\begin{theorem} \label{cor3thABH1}\emph{(Voir [FFR, Chap. 6, Th. 15])}\\
On considère un complexe de modules libres

\smallskip 
\centerline{\fbox{$L_{\bullet}:\quad 0 \to L_n \vvers{u_n}  L_{n-1} \vvvers{u_{n-1}}\;  \cdots \cdots \; \vvers{u_2}  L_1 \vvers{u_1} L_0\;$}}

\smallskip 
où  $L_k\simeq\gA^{r_{k+1}+r_k}$ pour tout $k$, 
avec $r_{n+1}=0$. On rappelle la notation~\hbox{$\fD_k=\cD_{r_k}(u_k)$}
pour les \idcas du complexe.\\
Le complexe est exact \ssi $\Gr(\fD_\ell)\geq \ell$ pour tout $\ell$.
\end{theorem}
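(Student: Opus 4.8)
The forward implication is nothing but Theorem~\ref{thRangStProfRLF}: if $L_{\bullet}$ is exact then each $u_\ell$ is stable of stable rank $r_\ell$ and $\Gr(\fD_\ell)\geq\ell$. So the whole content is the converse, and that is what the plan addresses; assume $\Gr(\fD_\ell)\geq\ell$ for $1\le\ell\le n$. First I would read off the stable ranks. Since $\Gr(\fD_\ell)\geq\ell\geq 1$, each $\fD_\ell=\cD_{r_\ell}(u_\ell)$ is faithful, so $\rgst(u_\ell)\geq r_\ell$. For the opposite bound $\cD_{r_\ell+1}(u_\ell)=0$ I exploit that $L_{\bullet}$ is a complex: fixing $\theta\in\cD_{r_\ell+1}(u_\ell)$ and a minor $\nu\in\fD_{\ell+1}$ of $u_{\ell+1}$, over $\gA[1/\nu]$ the lemme du mineur inversible~\ref{FFRlem.min.inv} splits off from $L_\ell$ a free rank-$r_{\ell+1}$ summand lying in $\Im u_{\ell+1}\subseteq\Ker u_\ell$, so $u_\ell$ has rank $\leq p_\ell-r_{\ell+1}=r_\ell$ there and $\theta$ dies after inverting $\nu$; as $\fD_{\ell+1}$ is faithful this forces $\theta=0$. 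Hence each $u_\ell$ is stable of stable rank exactly $r_\ell$. The top map needs no neighbour: $u_n$ has $r_n$ columns and $\fD_n$ faithful, so it is injective by McCoy~\ref{propInjIdd}, giving already $\rH_n(L_{\bullet})=0$.

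Next I would localize to annihilate homology. Applying the stable-rank lemma~\ref{lemSuitExStable2} to the three-term piece $L_{k+1}\to L_k\to L_{k-1}$ shows that after inverting a product $\mu\nu$ with $\mu\in\fD_k$ and $\nu\in\fD_{k+1}$ (maximal minors of $u_k$ and $u_{k+1}$) this piece becomes trivially exact; thus $\rH_k(L_{\bullet})[1/\mu\nu]=0$ for all such $\mu,\nu$, so each element of $\rH_k$ is killed by suitable powers of the ideal $\fc_k:=\fD_k\fD_{k+1}$. By Lemma~\ref{lemProdEtProf} one has $\Gr(\fc_k)\geq k$ (both factors have grade $\geq k$). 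So every homology module $\rH_k$ is annihilated by an ideal of grade matching its homological degree — exactly the configuration the acyclicity lemma of Peskine–Szpiro~\ref{thoPesSzpi} is built to exploit. I stress that none of this presupposes exactness; in particular it avoids the nesting $\fD_\ell\subseteq\DA(\fD_{\ell+1})$, which is only available once exactness is known.

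Finally I would run the downward induction $\rH_n=\rH_{n-1}=\cdots=\rH_1=0$ that underlies the proof of~\ref{thoPesSzpi}. Writing $B_k=\Im u_{k+1}\subseteq Z_k=\Ker u_k$, the short exact sequence $0\to B_k\to Z_k\to\rH_k\to 0$ together with point~\emph{\ref{i1thSESPrf}} of Theorem~\ref{thSESPrf} is meant to yield $\Gr(\fc_k,\rH_k)\geq 1$, which combined with the annihilation $\fc_k\rH_k=0$ gives $\rH_k=0$ (a module on which $\fc_k$ is at once faithful and acts by zero must vanish). The grade $\Gr(\fc_k,Z_k)\geq 1$ is free, $Z_k$ being a submodule of the free module $L_k$. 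The genuine obstacle — the reason this is the Buchsbaum–Eisenbud criterion and not a single application of~\ref{thoPesSzpi} with one deep ideal — is producing enough depth on the boundary module, namely $\Gr(\fc_k,B_k)\geq 2$. Here $B_k$ carries the finite free resolution $0\to L_n\to\cdots\to L_{k+1}\to B_k\to 0$ of length $n-k-1$, and the crude estimate from the direct Auslander–Buchsbaum–Hochster theorem~\ref{thABH1} only gives $\Gr(\fc_k,B_k)\geq\Gr(\fc_k)-(n-k-1)$, which is too weak for $k$ in the lower range.

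Closing this gap is where the full depth machinery must be deployed, and I expect it to be the hard part. The plan is to organize the vanishing as an induction on $n$: after passing, via the faithfully flat extension $\gA\to\AT$ (harmless for exactness and for grades by~\ref{propEdsHomologie} and~\ref{propProfchgbase}), to a base ring carrying a nonzerodivisor in the relevant characteristic ideal, one reduces modulo it using the fundamental theorem of depth~\ref{thfondprof} and transports exactness of the shortened complex back to $L_{\bullet}$ by Proposition~\ref{propRegSex}. The delicate final passage — turning "multiplication by the regular element is onto $\rH_k$" into "$\rH_k=0$" with neither a noetherian nor a local hypothesis available — is precisely where the coregular (rather than comaximal) local–global principles~\ref{plcc.regularite} and~\ref{plccProfondeur} must carry the gluing. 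That depth bookkeeping, and not the rank analysis of the first step, is the crux of the theorem.
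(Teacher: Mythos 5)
Your forward direction (Theorem \ref{thRangStProfRLF}), the stable-rank analysis, and the local vanishing of homology via Lemma \ref{lemSuitExStable2} are all correct. But the converse — the actual content of the theorem — is never proved. Your own estimate exposes the failure: with $\fc_k=\fD_k\fD_{k+1}$ the descent only gives $\Gr(\fc_k,B_k)\geq 2k-n+1$, useless when $2k<n+1$, and the closing paragraph that is supposed to repair this is a plan, not an argument. Worse, its pivot points the wrong way: Proposition \ref{propRegSex} passes exactness from a sequence over $\gA$ to its reduction modulo a regular element; it never transports exactness of a reduced or shortened complex back up, so it cannot play the role you assign it, and the \emph{\og delicate final passage\fg} where the coregular local--global principles are \emph{\og meant to carry the gluing\fg} is precisely the missing proof. (A smaller slip: what you actually established is that each element of $\rH_k$ is killed by \emph{some power} of $\fc_k$, not that $\fc_k\,\rH_k=0$; this one is harmless, since $\Gr(\fc_k^N)\geq k$ and Lemma \ref{lemCoreg2} applies, but it should be said.)

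The missing idea — and it is how the paper argues — is to induct on the length $n$ of the complex rather than to annihilate homology degree by degree. The truncated complex $0\to L_n\to\cdots\to L_2\to L_1$ has characteristic ideals $\fD_{j+1}$ with $\Gr(\fD_{j+1})\geq j+1\geq j$, so the induction hypothesis makes it exact: all homology in degrees $\geq 2$ vanishes at once, and $M_1=L_1/\Im(u_2)$ acquires a free resolution of length $n-1$. A \emph{single} application of Theorem \ref{thABH1}, with the top ideal $\fD_n$ (of grade $\geq n$), then yields the only depth statement needed: $\Gr(\fD_n,M_1)\geq 1$. Inverting a maximal minor $\delta$ of $u_n$ shortens the complex without changing its homology (Corollary \ref{corlemModifComplexe}), so the induction hypothesis applies over each $\gA[1/\delta]$ and gives local exactness. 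Finally, for $x\in\Ker u_1$, the class of $x$ in $M_1$ is killed by a power of each $\delta$, and the $\fD_n$-regularity of $M_1$ forces this class to vanish (Lemma \ref{lemCoreg2}, i.e.\ point \emph{1} of the principle \ref{plcc.regularite}). Thus exactness in degree $1$ is glued with a grade-one statement about $\fD_n$, and the grade-two bounds on the boundaries $B_k$ that you correctly identified as unreachable by your route are never needed.
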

%
\begin{proof}  
Vu le \thref{thRangStProfRLF}, il reste à montrer que si $\Gr(\fD_\ell)\geq \ell$ pour tout $\ell$, alors
le complexe est exact. \\
On fait une \dem par \recu sur $n$. Le cas $n=1$ est clair. Passons \hbox{de $n-1$} \hbox{à $n$}.
Par \hdr, le complexe tronqué à~$L_1$ est exact, on
a donc une \rlf de $L_1/\Im(u_2)$:
$$
{0 \to L_n \vvers{u_n}  L_{n-1} \vvvers{u_{n-1}}\;  \cdots \cdots \; L_2 \vvers{u_2}  L_1\vers{\pi}  L_1/\Im(u_2). 
}
$$
Le \thref{thABH1} implique alors que $\Gr\big(\fD_n,L_1/\Im(u_2)\big)\geq 1$.\\
Montrons que $L_{\bullet}$ est exact après \lon en chaque 
\gtr~$\delta_i$ de~$\fD_n$. Or quand on inverse un mineur maximal de $u_n$,
le complexe peut être raccourci de son premier terme $L_n$ sans changer l'homologie (corolaire~\ref{corlemModifComplexe}), et l'\hdr s'applique
sur ce nouveau complexe avec l'anneau~$\gA[1/\delta_i]$.\\
Il reste enfin à voir  que si $L_{\bullet}$ est exact après \lon en chaque \gtr $\delta_i$ de~$\fD_n$,
il est exact. Il faut juste voir l'exactitude en $L_1$. 
\\
Soit $x\in L_1$
tel que $u_1(x)=0$. Après \lon en $\delta_i$, on a $x=0$ modulo~$\Im(u_2)$. Et $\Gr\big(\fD_n,L_1/\Im(u_2)\big)\geq 1$ signifie que $\fD_n$ est \ndz
modulo~$\Im(u_2)$. \hbox{Donc $x=0$}.
\end{proof}
%

\siarticle{ }
\section{Le cas des \algs et modules gradués}\label{secRLFGRAD}

\centerline{\fbox{\Large \`A \'Ecrire}}

\medskip 
\hum{on espère traiter parallèlement le cas gradué pur et le cas gradué saturé et faire comprendre des choses par cette comparaison}

\Exercices

\begin{exercise} \label{exoProjResolutionToFreeResolution}
{(D'une \rsf de $E\subseteq\gA^r$ à une \rlf de $E\oplus\gA^r/E$)}\\
{\rm  
Soit $E$ un sous-module de $\gA^r$ et $E'=\gA^r/E$.

\snii\emph {1.}
On suppose que $E$ admet une \rsf. Montrer que $E\oplus E'$
admet une \rlf et que sa \cara d'Euler-Poincaré est $r$.

\snii\emph {2.}
En déduire que $E$ admet une \rsf \ssi~\hbox{$E\oplus E'$} admet une \rlf.

Nous devons cet exercice à Jean-Pierre Jouanolou.

}

\end{exercise}



\begin{problem} \label{exoF=UX+VY} {(Résolution infinie de l'\id d'un point singulier
d'une courbe plane)}
\entrenous{Provisoirement dans ce chapitre, pour le cas où l'on n'aura pas mis de \rsns dans le chapitre sur les \algs et modules gradués}
{\rm  
Soit $p_0 = (x_0,y_0)$ un point du plan $\AA^2(\gk)$ et $F \in \gk[X,Y]$
vérifiant $F(p_0) = 0$. On note $\gA = \aqo{\gk[X,Y]}{F} = \gk[x,y]$ l'anneau des \coos de la courbe $\{F=0\}$.
Si le point $p_0$ est un point singulier de la courbe,
on veut illustrer le fait que son \id $\gen {x-x_0, y-y_0}$, considéré
comme \Amo, admet des résolutions libres \gui {minimales} de
longueur arbitairement grande:

\snic{
L_m \to L_{m-1} \to \cdots \to L_1 \to L_0 \twoheadrightarrow 
\gen {x-x_0,y-y_0} \to 0,
}

\snii minimale signifiant ici que les matrices de la résolution sont
à \coes dans l'\id $\gen {x-x_0,y-y_0}$. En fait si l'on considère
l'\alg locale de la courbe en~$p_0$, \cad $\gA'=\gA_{1+\gen{x-x_0,y-y_0}}$,
on a $\Rad(\gA')\supseteq \gen{x-x_0,y-y_0}_{\gA'}$, et la \rsn est bien minimale
au sens donné dans le cours.

\snii
Dans la suite, on suppose \spdg que $p_0 = (0,0)$. L'anneau de base $\gk$ est
quelconque et l'on demande que $F$ soit \ndz. 
Puisque $F(p_0) = 0$ on peut écrire $F = UX + VY$.

\snii
\emph {1.}
On note $\gB = \gk[X,Y]$ et l'on considère $\gen {x,y}$ comme \Bmo.
Montrer que:

\snic {
0 \to \gB^2 \vvers {M} \gB^2 \vvers {[x,y]} \gen {x,y} \to 0
\qquad \hbox {avec} \quad M = \cmatrix {Y & U\cr -X & V}
}

\snii est une \rsn. Autrement dit le module
des $\gB$-\syzys pour $(x,y)$ est un \Bmo libre
de rang~2 (les colonnes de $M$ en forment une base).

\snii\emph {2.}
Dans cette question, on considère $\gen {x,y}$ comme $\gA$-module.
On note encore par abus $M=\cmatrix {y & u\cr -x & v}$ la matrice $M$ vue dans $\gA$.
Montrer que l'on a une résolution libre de $\gen{x,y}$ de la forme suivante:

\snic {
\cdots \to 
\gA^2 \vers {\wi M} \gA^2 \vvers {M}  \gA^2 \vers {\wi M} 
\gA^2 \vers {M} \gA^2 \vers {\wi M}  \gA^2 \vers {M} 
\gA^2 \vers {[x,y]} \gen {x,y} \to 0
}

\snii
On notera que les \coes de $M$ et $\wi M$ sont dans $\gen {x,y}$
\ssi la composante \hmg de degré $1$ de $F$ est nulle,
\cade si la multiplicité de $p_0$ dans $\{F=0\}$
est $\ge 2$.

\snii\emph {3.}
Histoires de poids: cette question, indépendante des autres, 
n'est pas essentielle à l'exercice mais elle permet de se
familiariser avec les poids et les décalages. On gradue
$\gk[X,Y]$ en posant $\poids(X) = w_1$, $\poids(Y) = w_2$ et
l'on suppose que~$U$ est \hmg de poids $d-w_1$ et $V$ \hmg
de poids $d - w_2$, et donc~$F$ est \hmg de poids $d$
(avec $w_1$, $w_2$, $d\in\NN\etl$). \\
Dans la \rsn 
$0 \to L_1 \to L_0 \to \gen {x,y} \to 0$ de la première
question, déterminer les décalages de $L_0$ et $L_1$ pour
en faire une \rsn graduée. Déterminer \egmt
les séries d'Hilbert-Poincaré de $\gB$, $\gA$, $L_0$,
$L_1$ et $\gen {x,y}$.
}

\end{problem}


\begin{problem} \label{exoMonomialTaylorResolution} {(La résolution monomiale de Taylor)}
\\
{\rm  
\noindent
Ici, $\gk$ est un anneau quelconque. Soit $\um = (m_1, \ldots, m_r)$
une suite de  monômes dans $\gA = \gk[\Xn]$. 
Pour $J \subseteq \lrbr$, on note
$\um_J$ la restriction de la famille aux indices $j \in J$.
\`A la famille $\um$, on associe un complexe libre:

\snic {
0 \to L_r \vers {d_r} L_{r-1} \to \cdots  \to L_2 \vers {d_2} 
L_1=\gA^r \vers {d_1} L_0=\gA \to \aqo\gA\um \to 0
}

\snii
Pour  $k\in\lrbr$, $L_k$ est un \Amo libre de rang 
${r \choose k}$ dont une base $(e_J)_J$ est indexée par 
$\cP_{k,r}$. 
La \dile $d : L_k \to L_{k-1}$ est fournie par
$$
d(e_J) = \som_{j \in J}\; (-1)^{n_{j,J}} \;
\frac{\ppcm(\um_J)} {\ppcm(\um_{J \setminus \{j\}})}
\, e_{J \setminus \{j\}},
$$
où $n_{j,J}=\#\sotq{j\in J}{j<k}$.  On
remarquera que $d_1(e_j)= m_j$. 
\\
Pour $j$, $k \in J$,  
$m_k \mid m_j\ppcm(\um_{J \setminus \{j\}})$,  donc  
$\ppcm(\um_J) \divi m_j\ppcm(\um_{J \setminus \{j\}})$.
En conséquence, dans
la \dfn de $d$, le quotient des deux ppcm est toujours un diviseur de
$m_j$. Si on remplace ce quotient par $m_j$ (ce qui revient à remplacer le
ppcm par le produit), 
on obtient le complexe de Koszul descendant de $(m_1, \ldots, m_r)$ (en \gnl
non exact).

\snii On va montrer que le complexe $(L_{\bullet}, d)$ est exact; on l'appelle
la \emph{résolution de Taylor de $\aqo\gA\um$ (ou par abus, de $\um$)}.

\snii
 De manière précise, on va construire
une homotopie contractante \hbox{$h : L_{\bullet} \to L_{1+\bullet}$} qui est
$\gk$-\lin (mais pas $\gk[\uX]$-\lin). \\
Comme cas particulier, le complexe de Koszul de la suite~\hbox{$(\Xn)$} sur $\gk[\uX]$ admet une homotopie contractante $\gk$-\lin. Mais le
complexe de Koszul d'une \srg d'un anneau $\gA$ est exact sans admettre pour autant une homotopie contractante $\gA$-\lin.

\snii \emph {1.}
Pour un \mom $p \in \gk[\uX]$ et $J\in\cP_{k,r}$, il
faut définir $h(pe_J) \in L_{k+1}$; $h$ sera ensuite prolongée par
$\gk$-linéarité.  Soit $i$ le plus petit  indice tel
que $m_i \divi \ppcm(\um_J)p$. Si $J = \emptyset$, cet indice $i$ peut ne pas
exister; sinon, il est clair que $i \le J$.  On pose alors:

\snic {
J' = J \cup \{i\}, \qquad\quad
h(pe_J) = \cases {\displaystyle{\ppcm(\um_J)p \over \ppcm(\um_{J'})}\, e_{J'} 
&si $i$ existe et $i \notin J$,\cr 
\noalign {\smallskip}
0 &sinon.}
}

\snii
Vérifier que $d \circ h + h \circ d = \Id$.

\snii \emph {2.}
Comment graduer le complexe de Taylor de fa\c{c}on à ce que
chaque différentielle soit graduée de degré $0$?

\snii \emph {3.} Montrer que la \rsn de Taylor est minimale \ssi 

\snic{\forall J\subseteq \lrbr \;\;\;\forall j\in J, \;\;\;\; m_j \nedivi \ppcm \um_{J\setminus \so j} .}

\snii
Donner un exemple simple pour lequel la \rsn de Taylor n'est pas minimale.

\snii \emph {4.}
Soit $\um = (x^2y, xy^3, x, yz)$. Expliciter la résolution de Taylor de $\um$
et la comparer avec la résolution minimale.

}
\end{problem}

\begin{problem} \label{exoPolynomialSyzygies} {(Quand les \moms dominants sont 
premiers entre eux)}
\\
{\rm  
Dans ce \pb, $\gk$ est anneau quelconque et $\kuX = \gk[\Xn]$ un anneau
de \pols avec un ordre monomial $\preceq$ fixé. \\ 
Si $m=\uX^{\varphi}$ est un \mom et~$f$
un \pol, la notation $f \preceq m$ signifie que $f$ est une
combinaison $\gk$-\lin de \moms $\preceq m$; convention analogue pour $f \prec m$.
Enfin $f \in \kuX$ est dit \emph{$\preceq$-unitaire} si  $f = m + r$
où $m$ est un \mom et $r \prec m$.

\snii
Soient  $f_1$, \ldots, $f_s \in \kuX$. On suppose que chaque $f_i$ est
$\preceq$-unitaire, de \mom dominant $m_i$ et enfin que $\pgcd(m_i,m_j) = 1$
pour $i \ne j$.  \\
On note $S$ le sous-$\gk$-module de $\kuX$ admettant base les \moms
$m$ tels que $m \notin \gen {m_1, \ldots, m_s}$:

\snic {
\displaystyle {S = \bigoplus_{m \notin \gen {m_1, \ldots, m_s}} \gk\,m}
}

\snii
Un des objectifs du \pb est de montrer que 
$$
\kuX = \gen {f_1, \ldots, f_s} \oplus S
\leqno (\star)
$$
La \dem ci-dessous fournit les moyens d'écrire tout \pol $f$ dans la
décomposition $\fa \oplus S$.

\snii
Pour ceux qui connaissent les \bdgs: si $\gk$ est un corps, la
décomposition~$(\star)$ ci-dessus est classique; on dit parfois que les
\moms de $S$ (les \gui {\moms sous l'escalier}) sont standard: tout \pol $f$
est donc \eqv modulo $\fa$ à un et un seul \pol $\gk$-combinaison de \moms
standard, combinaison qualifiée de forme normale de $f$ modulo $\fa$
(relativement à l'ordre monomial~$\preceq$). Dans ce contexte, l'\id momonial
$\gen {m_1, \ldots, m_s}$ est l'\id initial de $\fa$ relativement à l'ordre
monomial~$\preceq$ (\id engendré par les \moms dominants des \pols
de $\fa$).

\snii\emph {1.}
Pour un \mom $m$, on introduit deux $\gk$-modules $\fa_{\prec m}$ et
$\fa_{\preceq m}$ qui sont des sous-$\gk$-modules de l'\id $\fa = \gen {f_1,
\ldots, f_s}$,

\snic {
\fa_{\prec m} = \bigl\{ \sum_i g_i \,|\, g_i \in \gen{f_i}
\hbox { et } g_i \prec m \bigr\}
\ \subset \
\fa_{\preceq m} = \bigl\{ \sum_i g_i \,|\, 
g_i \in \gen{f_i} \hbox { et } g_i \preceq m \bigr\}
}

\snii
Il est clair que $\fa$ est la réunion des $\fa_{\prec m}$ a fortiori
des $\fa_{\preceq m}$.

\begin {itemize}
\item [\emph {a.}]
Il existe des $r_i \prec m_i$ tels que  $m_jf_i
- m_if_j = r_jf_i - r_if_j$.

\item [\emph {b.}]
Soient $m'_i, m'_j$ deux \moms tels que $m'_im_i = m'_jm_j$.\\
En notant $m$
le \mom $m'_im_i = m'_jm_j$, montrer que $m'_if_i - m'_jf_j \in \fa_{\prec m}$.

\item [\emph {c.}]
Plus \gnlt, pour une partie $I \subseteq \{1,\ldots,s\}$,
supposons disposer de \moms $(m'_i)_{i \in I}$ et d'un \mom $m$
tels que $m'_im_i = m$. Alors si
$(a_i)_{i \in I}$ est une famille finie d'\elts de $\gk$ de somme nulle,
on a $\sum_{i \in I} a_im'_if_i \in \fa_{\prec m}$.
\\
On pourra utiliser une \gui {transformation d'Abel}:

\snic {
a_1 b_1 + \cdots + a_kb_k = \sum_{j=1}^{k-1} s_j(b_j-b_{j+1})}

 avec  $s_j = \sum_{i=1}^j a_i$
 dès que  $\sum a_i = 0$.

\item [\emph {d.}]
Soit $g \in \gen {f_i}$ et un \mom $m$ tel que $g \preceq m$.  Selon que $m_i$
divise ou ne divise pas $m$, montrer que, ou bien $g \prec m$ ou bien $g -
am'f_i \in \fa_{\prec m}$ pour un $a \in \gk$ et un \mom $m'$ tel que $m'm_i =
m$.

\end {itemize}

\snii\emph {2.}
Soit $m$ un \mom.
Montrer que si $f \in \fa_{\preceq m}$ et $f \prec m$, alors
$f \in \fa_{\prec m}$. En déduire
que
$S \cap \fa_{\preceq m} \subset \fa_{\prec m}$. 
 Puis  $S \cap \gen {f_1, \ldots, f_s} = 0$.

\snii\emph {3.}
Montrer que $\kuX = \gen {f_1, \ldots, f_s} + S$.  Plus
\prmt: si $f \in \kuX$ \hbox{et $f \preceq m$} pour un \mom $m$, alors $f \in
\fa_{\preceq m} + S$.  Bilan: $\kuX = \gen {f_1, \ldots, f_s} \oplus S$.  
\\
En outre $\gen {m_1, \ldots, m_s}$ est \emph{l'\id initial} de $\gen {f_1, \ldots, f_s}$
au sens suivant: si \hbox{pour $a \in \gk$} et $m$ \mom on a $am + h \in \gen {f_1,
\ldots, f_s}$ avec $h \prec m$, alors $am $ appartient à $\gen {m_1, \ldots, m_s}$.

\snii\emph {4.}
Le module des \syzys pour $(f_1, \ldots, f_s)$ est engendré
par les $f_i\vep_j - f_j\vep_i$ \hbox{où $(\vep_1, \ldots, \vep_s)$} est la base
canonique de $\kuX^s$.

\snii\emph {5.}
La suite $(m_1, \ldots, m_s)$ est \ndze car il en
est de même de $(f_1, \ldots, f_s)$.

\snii\emph {6.}
On suppose $m_i \ne 1$ pour tout $i$.  Soit $M$ l'ensemble des \moms
appartenant à~$S$.  Montrer que $\kuX$ est un $\gk[f_1, \ldots, f_s]$-module
libre de base les $m \in M$ et que~\hbox{$(f_1, \ldots, f_s)$} sont $\gk$-\agqt
indépendants.

\snii\emph {7.}
Soient, dans $\gk[X,Y]$, les deux \pols $f_1 = X^2$ et $f_2 = XY + aY^2$
\hbox{avec $a \in \gk$}, tous les deux \hmgs de degré $2$ et unitaires
pour l'ordre lexicographique~\hbox{$Y \prec X$}. 

\begin {itemize}
\item [\emph {a.}]
On a $a^2 Y^3 \in \gen {f_1,f_2}$, et  $Y^3 \in \gen {f_1,f_2}\iff a\in\gk\eti$.
\item [\emph {b.}]
On suppose $a$ \ndz. Alors l'\id $\gen {f_1,f_2}$ est facteur direct
dans $\gk[X,Y]$ \ssi $a$ est \iv.
\end {itemize}
}
\end{problem}

\begin{problem} \label{exoResolutionPfafienne}
{(Résolution pfaffienne)} 

{\rm  
Soit $X \in \Mn(\gA)$ une matrice antisymétrique avec $n$ impair; pour $i
\in \lrb{1..n}$, la matrice obtenue en supprimant la ligne $i$ et la colonne
$i$ de $X$ est une matrice antisymétrique de taille paire $n-1$ dont on peut
considérer le pfaffien. On note $q_i$ ce pfaffien multiplié par $(-1)^i$
et par $Q$ la matrice ligne $[\, q_1,\, \ldots,\, q_n\,]$. Par exemple,
pour $n = 3$:

\snic {
X = \cmatrix {0 &x_{12}& x_{13}\cr 
-x_{12} &0 &x_{23}\cr  -x_{13} &-x_{23} &0\cr}, \quad
q_1 = -x_{23}, \quad q_2 = x_{13}, \quad q_3 = -x_{12}
}

\snii\emph {1.}
Montrer que $\wi X = \tra{Q} Q$. En déduire que $Q X = 0$
et que $\cD_{n-1}(X) = \cD_1(Q)^2$.

\snii\emph {2.}
On suppose que $\gA$ est un anneau de \pols sur $\gk$ à $n(n-1)/2$
\idtrs $x_{ij}$ avec $1 \le i < j \le n$ et on considère
la  matrice antisymétrique \gnq~\hbox{$X \in \Mn(\gA)$}.
\begin {enumerate}
\item [\emph {a.}]
Montrer que $X$ est une matrice stable de rang $n-1$.
\item [\emph {b.}]
Soient $p_1 = q_1$, $p_2 = q_{n+1 \over 2}$ et $p_3 = q_n$.
Montrer que $(p_1, p_2, p_3)$ est une suite \ndze.

\item [\emph {c.}]
Montrer que   $\Gr_\gA(\cD_{n-1}(X)) \ge 3$.

\item [\emph {d.}]
Montrer que la suite ci-dessous est exacte:

\snic {
0 \to L_3 = \gA \vers {u_3} L_2 = \gA^n \vers {u_2} 
L_1 = \gA^n \vers {u_1} \gA  
\qquad \hbox {avec} \quad
\left\{\begin {array} {c}
u_3 = \tra{Q} \\ 
u_2 = X \\
u_1 = Q \\
\end {array}
\right.
}
\end {enumerate}

}

\end{problem}


\bonbreak
\sol

\exer{exoProjResolutionToFreeResolution} \emph{(D'une \rsf de $E\subseteq\gA^r$ à une \rlf de $E\oplus\gA^r/E$)}

\snii\emph {1.}
On peut choisir une \rsf de $E$ où les $L_i$ sont \lrfs et $P$ \ptf:

\snic {
0 \to P \to L_n \to \cdots \to L_1 \to L_0 \to E \to 0
}

On en déduit une \rsf de $E'=\gA^r/E$ que l'on juxtapose à celle de $E$

\snic {
\setlength{\arraycolsep}{2pt}
\begin {array} {ccccccccccccccccccccc}
  &   &0 &\to&P   &\to&L_n    &\to&L_{n-1} &\to&\cdots &\to&L_2
  &\to&L_1 &\to&L_0   &\to&E  &\to&0 \\
0 &\to&P &\to&L_n &\to&L_{n-1}&\to&L_{n-2} &\to&\cdots &\to&L_1 
  &\to&L_0 &\to&\gA^r &\to&E' &\to&0 \\
\end {array} 
}

On en fait la somme directe, ce qui donne, en posant $L_{-1}=\gA^r$

\snic {
\def\To {\kern-1pt\to\kern-1pt}
0 \To P\To P\oplus L_n\To L_n\oplus L_{n-1}\To\cdots\To
L_2\oplus L_1\To L_1\oplus L_0\To L_0\oplus L_{-1}\To E\oplus E' \To 0
}

Soit $Q$ tel que $L_{n+1} := Q\oplus P$ soit \lrf.
Par ajout de $Q$ aux deux extrémités gauches, il vient une
résolution de $E\oplus E'$:

\snic {
\def\To {\kern-1.5pt\to\kern-1.5pt}
\def\To {\to}
0 \To L_{n+1}\To L_{n+1}\oplus L_n\To L_n\oplus L_{n-1}\To\cdots\To
L_1\oplus L_0\To L_0\oplus L_{-1}\To E\oplus E' \To 0
}

Par télescopie (comme disent les étudiants), on voit que
la \cara d'Euler-Poincaré est $r = \rg(L_{-1})$.

\snii\emph {2.}
Immédiat.



\prob{exoF=UX+VY} \emph{(Résolution  de l'\id d'un point singulier
d'une courbe plane)}

\emph {1.}
On a $\det(M) = F$ donc $M$ est injective. L'\egt $[x,y] M = 0$
est \imde.\\
Montrons que
$\ker [x,y] \subseteq \Im M$. Soient $P$, $Q \in \gB$ tels
que $PX + QY \equiv 0 \bmod F$, disons $PX + QY = (UX + VY)K$.
De $(P-KU)X + (Q-KV)Y = 0$, on tire l'existence de $H \in \gB$
tel que:

\snic {
\cmatrix {P-KU\cr Q-KV\cr} = H \cmatrix {Y\cr -X}, 
\; \hbox {donc} \quad
\cmatrix {P\cr Q\cr} = H \cmatrix {Y\cr -X} + K \cmatrix {U\cr V} \in \Im M.
}

\snii\emph {2.}
Il est clair, sur $\gA$, que $M \circ \wi M = \wi M \circ M = 0$.  \\
Il reste à
voir, sur $\gA$, \hbox{que $\ker M \subseteq \Im \wi M$} et $\ker \wi M \subseteq \Im M$.\\
Montrons la première inclusion. Soit $\ov V \in
\ker M$; donc, sur~$\gB$,  $M \cdot V \in F\gB^2$. \\
On écrit
$F\gB^2 = M\wi M\gB^2$ et l'on simplifie par $M$ (injective):~\hbox{$V \in \wi M\gB^2$}.

\snii\emph {3.}
On a:
$$
S_\gB = {1 \over (1-t^{w_1})(1-t^{w_2})}, \qquad
S_\gA = {1 - t^d \over (1-t^{w_1})(1-t^{w_2})}
$$
\`A droite, il faut prendre $L_0 = \gB(-w_1) \oplus \gB(-w_2)$
pour que $[x,y]$ soit graduée de poids $0$. \`A gauche, il faut
prendre $L_1 = \gB(-w_1-w_2) \oplus \gB(-d)$ pour que la matrice
$M$ soit graduée de poids $0$. \Llec peut vérifier
cela en utilisant le fait qu'une matrice $M :
\bigoplus_j \gB(-m_j) \to \bigoplus_i \gB(-n_i)$ est graduée de degré
$0$ \ssi sa $j$-ième colonne $M_j :  \gB(-m_j) \to 
\bigoplus_i \gB(-n_i)$ est graduée de degré~$0$ ce qui signifie que $M_{ij}$
est \hmg de poids $m_j-n_i$. On trouve alors
$$
S_{L_0} = {t^{w_1} + t^{w_2} \over (1-t^{w_1})(1-t^{w_2})}, \quad
S_{L_1} = {t^d + t^{w_1 + w_2} \over (1-t^{w_1})(1-t^{w_2})} \; \;\hbox{et}\;\;
S_{\gen {x,y}} = S_{L_0} - S_{L_1}. 
$$


\prob{exoMonomialTaylorResolution} \emph{(La résolution monomiale de Taylor)}

\snii\emph {1.}
On montre que $(h\circ d + d\circ h)(pe_J) = pe_J$ dans le cas où l'indice
$i$ intervenant dans la \dfn de $h(pe_J)$ n'appartient pas à $J$; le cas
où $i \in J$ est laissé au lecteur. On note $|J|$ pour $\ppcm(\um_J)$,
$J \setminus j$ au lieu de $J \setminus \{j\}$ et $J \cup i$ au lieu \hbox{de
$J \cup \{i\}$}. 
\\
Calculons d'abord 
{$d\circ h$} sur
$pe_J$; par \dfn, avec $J' = J \cup i$, on a:

\snic {
h(p e_J) = {|J|p \over |J'|}\, e_{J'}.
}

\snii
Comme $i$ est le plus petit \elt de $J'$, on a:

\snic {
d(e_{J'}) = {|J'| \over |J|} e_J - \sum_{j \in J} (-1)^{n_{j,J}}
{|J'| \over |J' \setminus j|} e_{J' \setminus j},
}

\snii ce qui donne

\snic {
(d\circ h)(pe_J) = pe_J - \sum_{j \in J} (-1)^{n_{j,J}}
{|J|p \over |J' \setminus j|} e_{J' \setminus j}.
}

\snii
Déterminons maintenant 
{$h\circ d$}
sur $pe_J$. D'abord:

\snic {
d(pe_J) = \sum_{j \in J} (-1)^{n_{j,J}} q_J e_{J\setminus j} 
\quad \hbox { où $q_J$ est le \mom\ } {|J|p \over |J\setminus j|}.
}

\snii
Pour déterminer $(h \circ d)(pe_J)$, il faut déterminer pour chaque
$j \in J$ le plus petit~$k$ tel que $m_k \divi |J\setminus j|q_J$.
Mais $|J\setminus j|q_J = |J|p$, donc cet indice $k$ c'est $i$
(pour \hbox{tout $j \in J$}). D'où:

\snic {
h(q_J e_{J\setminus j}) = {|J \setminus j| q_J \over 
|J \setminus j \cup i|} e_{J \setminus j \cup i} =
{|J|p \over |J' \setminus j|} e_{J' \setminus j}.
}

\snii En additionnant $(d\circ h)(pe_J)$ et $(h\circ d)(pe_J)$, il reste $pe_J$.

\sni\emph {2.}
Il suffit de donner à $e_J$ le poids $\deg(\ppcm(\um_J))$.

\sni\emph {3.} En effet la \rsn est minimale \ssi aucun des 
${\ppcm(\um_J) \over \ppcm(\um_{J \setminus \{j\}})}$ n'est égal à $1$.

Comme exemple simple de \rsn de Taylor non minimale on peut prendre celle correspondant aux \moms
$(xy,x,yz)$ dans $\gk[x,y,z]$. La différentielle coupable est $d_3$ qui admet pour matrice
$\bordercmatrix[\lbrack\rbrack] {
   & 123 \cr
12 & 1   \cr
13 & -1   \cr
23 & 1   \cr}
$.
La \rsn minimale est donnée par la \sex
%
%
$
0\to\gA^{2}\vvers{d_2}\gA^{3}\vvers{d_1}\gA\lora\aqo\gA{xy,x,yz}\to0
$
%
\hbox{avec $d_2=\cmatrix{z&0\cr-y&y\cr0&-x}$}.

\sni
\goodbreak
\emph {4.}
On a $d_1 = \cmatrix {x^2y & xy^3 &  x &  yz}$ et:

\snic {
d_2 = \bordercmatrix[\lbrack\rbrack] {
    &12_5  &13_4 &14_4  &23_5  &24_5  &34_3 \cr
1_3 &-y^2  &  -z &  -z  &   .  &   .  &   .\cr
2_4 &   x  &   . &   .  &  -z  &  -z  &   .\cr
3_2 &   .  & xy  &   .  & y^3  &   .  &  -y\cr
4_2 &   .  &  .  &  x^2 & .    & xy^2 &   x\cr}
}

\snii
En indice, on a mentionné le poids du vecteur de base. Ainsi, $e_{12}$
est de poids $5$ et son image par $d_2$ est $-y^2 e_1 + x e_2$ (\egmt de poids~5). 
Avec ces notations:

\snic {
d_3 = \bordercmatrix[\lbrack\rbrack] {
    &123_6 &124_6 &134_4 &234_5 \cr
12_5 &   z &   z  &   .  &  .\cr
13_4 &-y^2 &   .  &   1  &  .\cr
14_4 &   . & -y^2 &  -1  &   .\cr
23_5 &   x &   .  &  .   &  1\cr
24_5 &   . &   x  &  .   & -1\cr
34_3 &  .  &   .  &  x   &  y^2\cr}
\qquad
d_4 = \bordercmatrix[\lbrack\rbrack] {
      &1234_6 \cr
123_6 &  -1\cr
124_6 &  1\cr
134_4 & -y^2\cr
234_5 &  x\cr}
}

\snii
On utilise la \rsn de Taylor pour déterminer la \rsn (graduée) minimale de
$\um$.  Toute colonne de $d_3$ est une \syzy entre les colonnes de
$d_2$. Utilisons les deux dernières colonnes de $d_3$ qui contiennent $\pm
1$. En notant $C_1, \ldots, C_6$ les 6 colonnes de $d_2$, on a $C_2 - C_3 +
xC_6 = 0$ (fournie par la colonne 3 de $d_3$) et $C_4 - C_5 + y^2C_6 = 0$ (via
la dernière colonne de $d_3$). Dans $d_2$, on peut donc supprimer par exemple $C_3,
C_5$ pour obtenir $d'_2$ et actualiser en conséquence $d_3$ pour obtenir
$d'_3$. Ce qui donne une \rsn libre minimale (graduée) de $\um$ de longueur
3 ($d_1$ est inchangée):

\snic {
d'_2 = \bordercmatrix[\lbrack\rbrack] {
    &12_5 &13_4  &23_5 &34_3 \cr
1_3 &-y^2 &  -z  &  .  &.\cr
2_4 &   x &   .  & -z  &.\cr
3_2 &   . & xy   &y^3  &-y\cr
4_2 &   . &   .  &  .  &x\cr
}
\quad
d'_3 = \cmatrix {
   z\cr
-y^2\cr
   x\cr
   0\cr
}
}

\prob{exoPolynomialSyzygies} \emph{(Quand les \moms dominants sont 
premiers entre eux)}

\emph {1a.}
Soient $f_i = m_i - r_i$ avec $r_i \prec m_i$; alors

\snic{m_jf_i - m_if_j = (f_j+r_j)f_i - (f_i+r_i)f_j = r_jf_i - r_if_j.}

\snii\emph {1b.}
Par hypothèse $m'_im_i = m'_jm_j$; comme $\pgcd(m_i, m_j) = 1$, on a $m_i
\mid m'_j$.  On dispose donc d'un \mom $q$ défini par $q = m'_j/m_i =
m'_i/m_j$.  On écrit alors

\snic {
m'_if_i - m'_jf_j = q(m_jf_i - m_if_j) = q(r_jf_i - r_if_j) = qr_jf_i - qr_if_j
}

\snii
Et l'on a $qr_jf_i \prec qm_jm_i = m'_im_i = m$; de même $qr_if_j \prec m$.
\\
Bilan: $m'_if_i - m'_jf_j \in \fa_{\prec m}$.

\snii\emph {1c.}
On peut supposer $I = \{1,\ldots,k\}$; on écrit la transformation
d'Abel:

\snic {
\sum_{i \in I} a_im'_if_i = \sum_{j=1}^{k-1} 
s_j(m'_jf'_j - m'_{j+1}f'_{j+1})
}

\snii
D'après la question précédente $m'_jf'_j - m'_{j+1}f'_{j+1}$ appartient
à $\fa_{\prec m}$, et il en est de même de leur somme $\sum_{i \in I}
a_im'_if_i$.

\snii\emph {1d.}
On écrit $g = qf_i \preceq m$; $q$ est donc une $\gk$-combinaison de \moms
$m'$ tels que $m'm_i \preceq m$. Si $m_i \nedivi m$, on a $m'm_i \prec m$ donc
$g \prec m$. Si $m_i \divi m$, on écrit $m = m'm_i$; si $a \in \gk$ est le
\coe de $m'$ dans $q$, on a $q - am' \prec m'$.  Donc $g = qf_i = am'f_i +
(q - am')f_i$ avec $(q - am')f_i \prec m'm_i = m$.

\snii\emph {2.}
Soit $f \in \fa_{\preceq m}$. On écrit $f = \sum_i g_i$ avec
$g_i \in \gen {f_i}$ et $g_i \preceq m$. On coupe cette somme
en deux:

\snic {
f = \sum_{j \notin I} g_j + \sum_{i \in I} g_i 
\quad \hbox {avec} \quad
I = \sotq {i \in \{1,\ldots,s\}} {m_i \hbox { divise } m}
}

\snii
Si $m_j \nedivi m$, on a $g_j \prec m$ donc $\sum_{j \notin I} g_j \in
\fa_{\prec m}$. Pour les $i$ tels que $m_i \divi m$, on écrit $m = m'_im_i$ et
comme dans la question \emph {1d}, il y a un $a_i \in \gk$ tel \hbox{que $g_i -a_im'_if_i \in \fa_ {\prec m}$}.  Bilan:

\snic {
f = \hbox {un \elt de $\fa_{\prec m}$} + \sum_{i \in I} a_im'_i f_i 
}

\snii
On utilise maintenant le fait que $f \prec m$. Ceci implique 
$\sum_{i \in I} a_i = 0$. D'après la question \emph {1c} on
a~$\sum_{i \in I} a_im'_i f_i \in \fa_{\prec m}$ donc 
$f \in \fa_{\prec m}$.

\snii
Soit $f \in S \cap \fa_{\preceq m}$, donc $f = \sum_i g_i$
avec~$g_i \in \gen {f_i}$ et~$g_i \preceq m$, a fortiori~$f \preceq m$. 
Si~$m_i \nedivi m$ pour chaque~$i$, alors~$g_i \prec m$ donc
$f \in \fa_{\prec m}$. Si~$m_i \mid m$ pour un indice~$i$ i.e.
si~$m \in \gen {m_1, \ldots, m_s}$, alors la composante de~$f$
sur~$m$ est nulle car~$f \in S$. Comme~$f \preceq m$, on a
$f \prec m$; d'après le début de cette question, on en
déduit que~$f \in \fa_{\prec m}$.

\snii
On a $S \cap \fa_{\preceq m} \subset \fa_{\prec m}$; et pour tout~$f \in
\fa_{\prec m}$, il existe un \mom $m' \prec m$ tel \hbox{que $f \in \fa_{\preceq
m'}$}.  On en déduit au bout d'un nombre fini d'étapes que~$f = 0$, \hbox{i.e.  $S \cap \fa_{\preceq m} = \{0\}$}. Comme~$\fa$ est la réunion des~$\fa_{\preceq
m}$, on a bien~$S \cap \fa = \{0\}$.

\snii\emph {3.}
Il s'agit d'une technique de division d'un \pol par le \sys $(f_1, \ldots, f_s)$
au sens des \bdgs. On utilise le fait que si~$f \prec m$ pour un \mom $m$,
alors~$f \preceq m'$ pour un \mom~$m' \prec m$.  Supposons le résultat à
montrer vrai pour les \pols $\prec m$ et montrons le pour~$f\preceq m$. On
écrit~$f = am + g$ avec~$a \in \gk$ et~$g \prec m$.  Si~$m \in \gen {m_1,
\ldots, m_s}$, alors~$m = m'm_i$ pour un~$i$ et on remplace~$f$ par~$f -
am'f_i \prec m$.  Si~$m \notin \gen {m_1, \ldots, m_s}$, alors~$m \in S$ a
fortiori~$am \in S$, et on remplace~$f$ par~$f - am \prec m$.

\snii\emph {4.}
Soit $E$ le sous-module de $\kuX^s$ engendré par les~$f_i\vep_j -
f_j\vep_i$. Soit~$m$ un \mom, des \pols $u_i$ tels que
$\sum_i u_i f_i = 0$ et~$u_i f_i \preceq m$. Il suffit de montrer
qu'il existe des \pols $v_i$ tels que

\snic {
\sum_i u_i\vep_i = \hbox {un \elt de $E$} + \sum_i v_i\vep_i
\hbox { avec } v_if_i \prec m
}

\snii
Soit $I = \sotq {i \in \{1,\ldots,s\}} {m_i \hbox { divise } m}$.
Pour~$i \notin I$, on a~$u_if_i \prec m$. Pour~$i \in I$, on
définit le \mom $m'_i = m/m_i$; si~$a_i$ est la composante de~$u_i$
sur~$m'_i$, on a~$(u_i - a_im'_i) \prec m'_i$. Alors
$\sum_{i\in I} a_im_im'_i = 0$ i.e.~$\sum_{i\in I} a_i = 0$.
En suivant la solution de l'exercice \ref {exoMonomialSyzygies},
on pose, pour~$i,j \in I$:

\snic {
q_{ij} = m'_i/m_j = m'_j/m_i = m/(m_im_j)
}

\snii
Il existe une matrice antisymétrique $(a_{ij})_{I \times I}$ à \coes
dans $\gk$ telle que $a_i = \sum_{j \in J} a_{ij}$ de sorte que

\snic {
\sum_{i\in I} a_im'_i\vep_i = \sum_{i \in I} 
\bigl(\sum_{j \in I} a_{ij} q_{ij} \und {m_j}\bigr) \vep_i
\qquad\quad \hbox {(ici $m_{ij} = m_j/(m_i \vi m_j) = m_j$)}
}

\snii 
Pour $i \in I$, on définit $w_i = \sum_{j \in I} a_{ij} q_{ij} \und {f_j}$
de sorte que $\sum_{i\in I} w_i\vep_i \in E$ et $w_if_i \preceq q_{ij}m_jm_i = m$.
Enfin on pose:

\snic {
v_i = \cases {u_i-w_i& \hbox {si $i \in I$} \cr u_i& \hbox {sinon}}
}

\snii On a $v_if_i \prec m$ (car pour $i \in I$, la composante de $v_if_i$ sur
$m$ est $a_i - \sum_{j\in J} a_{ij} = 0$).  Et c'est gagné puisque:

\snic {
\sum_i u_i\vep_i = \sum_{i \in I} w_i\vep_i + \sum_i v_i\vep_i =
\hbox {un \elt de $E$} + \sum_i v_i\vep_i 
}

\snii\emph {5.}
Le fait que la suite de \moms $m_1, \ldots, m_s$ soit \ndze n'est pas
difficile et laissé à la lectrice. Montrons le pour $f_1, \ldots, f_s$. Il
suffit de voir que $uf_s \in \gen {f_1, \ldots, f_{s-1}}
\Rightarrow u \in \gen {f_1, \ldots, f_{s-1}}$.  Encore une fois on raisonne
par \recu sur l'ordre monomial en écrivant $u = am + v$ avec $v \prec m$. En
multipliant par $f_s$, on a donc un $w \prec mm_s$ tel que $amm_s + w \in \gen
{f_1, \ldots, f_{s-1}}$.  Comme $\gen {m_1, \ldots, m_{s-1}}$ est l'\id
initial de $\gen {f_1, \ldots, f_{s-1}}$, on a $amm_s \in \gen {m_1, \ldots,
m_{s-1}}$; la suite $m_1, \ldots, m_s$ étant \ndze, on a $am \in \gen {m_1,
\ldots, m_{s-1}}$, disons $am = qm_i$ avec $i < s$.  Alors $(u - qf_i) f_s
\in \gen {f_1, \ldots, f_{s-1}}$ avec $u - qf_i \prec m$; donc, par \recu,
$u - qf_i \in \gen {f_1, \ldots, f_{s-1}}$ d'où l'on tire $u \in \gen
{f_1, \ldots, f_{s-1}}$.

\snii\emph {6.}
Soit $\gA$ le $\gk[f_1, \ldots, f_s]$-module engendré par les \moms
de~$M$. Il faut d'abord montrer que $\kuX = \gA$. On procède par \recu sur
l'ordre monomial en supposant que tout \pol $g \in \kuX$ tel que $g \prec m$
est dans $\gA$ et en montrant que c'est vrai pour un \pol $f \in \kuX$ tel que
$f \preceq m$. On écrit $f = \sum_i u_if_i + r$ avec $u_if_i \preceq m$ et $r
\in S \subset \gA$. On a $u_i \prec m$ (car $m_i \ne 1$) donc $u_i \in \gA$
par \recu. Bilan: $f \in \gA$.

\snii
Pour $e = (e_1, \ldots, e_s) \in \NN^s$ notons $f^e = f_1^{e_1}\cdots
f_s^{e_s}$. On va montrer simultanément que les $m \in M$ forment une base
de $\kuX$ sur $\gk[f_1,\ldots, f_s]$ et que les $f_i$ sont $\gk$-\agqt
indépendants en prouvant, pour une famille $(a_{e,m})_{e\in\NN^s, m\in M}$
d'\elts de $\gk$ à support fini, l'implication:
$$
\sum_{e,m} a_{e,m} f^e\, m = \sum_m \bigl( \sum_e a_{e,m} f^e\bigr) m = 
\sum_e \bigl( \sum_m a_{e,m} m\bigr) f^e = 0 
\ \Rightarrow\  a_{e,m} = 0
\leqno (\star)
$$
La clef réside dans les différentes fa\c{c}ons de regrouper les sommes;
ceci est lié au fait d'écrire que les $m \in M$ constituent un \sgr de
$\kuX$ sur $\gk[f_1, \ldots, f_s]$:

\snic {
\begin {array} {rcl} 
\kuX &=& \sum_{m \in M} \gk[f_1, \ldots, f_s]m = \sum_{m\in M} 
\bigl(\sum_{e\in \NN^s} \gk f^e\bigr)m 
\\
&=&
\sum_{e\in \NN^s} f^e \sum_{m \in M} \gk m =
\sum_{e\in \NN^s} f^e S
\\
\end {array}
}

\snii En utilisant le fait que la suite $f_1, \ldots, f_s$ est \ndze et le fait
que $\gen {f_1, \ldots, f_s} \cap S = 0$, on se convainc que la dernière
somme est directe. Par exemple, avec $2$ \pols $f_1, f_2$ et des 
$s_{ij} \in S$, on veut:

\snic {
s_{00}\cdot 1 + s_{10}\cdot f_1 + s_{01}\cdot f_2 + 
s_{20}\cdot f_1^2 + s_{11}\cdot f_1f_2 + s_{02}\cdot f_2^2 = 0
\ \Rightarrow\ s_{ij} = 0
}

\snii On a $s_{00} \in S \cap \gen {f_1,f_2}$ donc $s_{00} = 0$. Ensuite,
on raisonne modulo $f_1$, et en utilisant le fait que $f_2$ est
\ndz modulo $f_1$, il vient $s_{01} + s_{02} f_2 \equiv 0 \bmod f_1$ donc
$s_{01} = 0$ puis $s_{02} = 0$. On peut alors simplifier par $f_1$ et ainsi de
suite.

\snii
Bilan: on a bien justifié $(\star)$, ce qui prouve le résultat escompté.

\snii\emph {7.}
On a $a^2Y^3 = Yf_1 + (aY-X)f_2$. Si $Y^3 \in \fa := \gen {f_1,f_2}$, comme
les \pols sont \hmgs, on a $Y^3 = (\varphi X + \beta Y) f_1 + (\gamma X +
\delta Y) f_2$ avec $\varphi, \beta, \gamma, \delta \in \gk$.  L'examen de la
composante sur $Y^3$ donne $1 = \delta a$.

\snii 
Supposons $a$ \ndz. Si $\fa$ est facteur direct, $\gk[X,Y] = \fa \oplus S$,
alors $Y^3 = f + r$ avec $f \in \fa$ et $r \in S$. Comme $a^2Y^3 \in \fa$, on
a $a^2r = 0$ puis $r = 0$; donc $Y^3 \in \fa$ et $a$ \iv. Réciproquement,
si $a$ est \iv, on considère $f_1$, $a^{-1} f_2 = Y^2 + a^{-1} XY$ et
l'ordre lexicographique avec $X \prec Y$: les \moms dominants sont $X^2$,
$Y^2$, premiers entre eux et on peut appliquer l'étude précédente.

\prob{exoResolutionPfafienne} \emph{(Résolution pfaffienne)}

\emph {1.}
On se met en situation \gnq en utilisant le corps des fractions rationnelles
sur~$\QQ$ à~$n(n-1)/2$ \idtrs $x_{ij}$ avec~$1 \le i < j \le n$. On a alors une
matrice \iv $P$ telle~$X = \tra {P} X' P$ où:

\snic {
X' = \cmatrix {
0_{m} & \Id_{m} & 0_{m, 1} \cr
-\Id_{m} & 0_{m} & 0_{m, 1} \cr
0_{1, m}  & 0_{1, m} & 0_{1} \cr}
\hbox{ avec } n = 2m+1
}

\snii
La matrice~$\wi {X'}$ est la matrice~$n \times n$ ayant un seul coefficient non
nul, égal à 1, en position~$(n,n)$; quant à~$Q'$, c'est
$[\,0,\,\ldots,\,0,\,1\,]$ et l'on a bien~$\wi {X'} = \tra{Q'}Q'$.  Une fois
obtenue l'\idt $\wi X = \tra{Q}Q$, on la multiplie par~$X$ à droite, on
obtient les \egts $\tra{Q}(QX) = \wi X X = \det(X)\In = 0$.  Comme chacune des composantes
de la matrice colonne~$\tra{Q}$ est le pfaffien \gnq, c'est un
\pol \ndz; on en déduit~$QX = 0$. 
On a~$\cD_{n-1}(X) = \cD_1(\wi X) = \cD_1(Q)^2$.

\snii\emph {2a.}
On a~$\cD_n(X) = \gen {\det(X)} = 0$ et~$\cD_{n-1}(X) = \cD_1(Q)^2$ est un \id
fidèle.

\snii\emph {2b.}
Pour l'ordre gradué lexicographique inversé, les termes dominants
de $p_1$, $p_2$, $p_3$ sont, avec $n = 2q - 1$:

\snic {
-x_{2,n}x_{3,n-1}\cdots x_{q,q+1}, \qquad
\pm x_{1,n}x_{2,n-1}\cdots x_{q-1,q+1}, \qquad
-x_{1,n-1}x_{2,n-2}\cdots x_{q-1,q}
}

\snii
Ils sont étrangers deux à deux; d'après le problème
\ref {exoPolynomialSyzygies}, la suite $(p_1, p_2, p_3)$
est \ndze (ainsi que toute permutation de $(p_1, p_2, p_3)$).

\snii\emph {2c.}
D'après l'exercice \ref{exoExponentiationRegSequence}, la suite $(p_1^{2},p_2^{2},p_3^{2})$ est \ndze.\\
Et elle est contenue   dans $\cD_1(Q)^2=\cD_{n-1}(X)$.

\snii\emph {2d.}
Les matrices $u_1$, $u_2$, $u_3$ sont stables avec $\rgst(u_3) = \rg(L_3)=1$,
puis

\snic {
\rgst(u_2) + \rgst(u_3) = \rg(L_2) = n, \quad\hbox{et }
\rgst(u_1) + \rgst(u_2) = \rg(L_1) = n.
}

\snii
Notons $r_k = \rgst(u_k)$; alors $\Gr(\cD_{r_k}(u_k)) \ge 3$ a fortiori
$\Gr(\cD_{r_k}(u_k)) \ge k$. On peut alors appliquer le résultat de
Northcott (proposition \ref{cor3thABH1}).



\Biblio

\newpage \thispagestyle{empty}
\incrementeexosetprob

\chapter
{Déterminant de Cayley}\label{chapCayley}
\perso{compilé le \today}

\minitoc

\sibook{
\Intro

Ce chapitre 

\smallskip La section \ref{secCompGenEx}

\smallskip La section \ref{secMacRae}  \dots

\smallskip La section \ref{secCayleyAlgExt}  \dots

\smallskip La section \ref{secDetCayley}  \dots

\smallskip La section \ref{secStrMulRsl}  \dots

\smallskip La section \ref{secHilbBu}  \dots

\smallskip La section \ref{secDetCayModGr}  \dots

\smallskip La section \ref{secCayleyAnnexe}  \dots

\smallskip 
\dots

\fbox{\`A COMPL\'ETER}

\section{Complexes \gnqt exacts}\label{secCompGenEx}

Dans cette section \dots\,\dots\,\dots

Un peu d'histoire \dots

Cayley \dots\dots\dots

}

\section{Modules de MacRae}\label{secMacRae}

Dans cette section, on donne une légère \gnn de la théorie des
modules qui admettent un \emph{invariant de MacRae}.

La théorie développée par Northcott dans [FFR, chapitre III]  
puis étendue aux  \mlrsbs de rang nul peut être en effet encore un peu \gnee.
Ceci n'offre pas de difficulté à partir du moment où la notion
de profondeur des \itfs a été suffisamment développée.

\subsec{Définition et \prt de base}

Les modules de MacRae sont des \mpfs de torsion particuliers.

\begin{definition} \label{defiMacRae} ~
\begin{enumerate}
\item Un \itf $\fa=\gen{\an}$ est appelé un \emph{\iMR} 
s'il existe un \elt~$e\in\Reg\gA$  tel que $\fa \subseteq \gen {e}$ et $\Gr(\fa/e) \ge 2$.
On a alors $\Ann \,\fa = 0$ et $e$ est un pgcd fort de $\fa$ (voir ci-après). En particulier l'\id $\gen{e}$  ne dépend que de $\fa$. 
\item Un \Amo $E$ est appelé un \emph{\mMR} s'il est \pf et si son \idf $\ff_0=\cF_0(E)$ est un \iMR.%
\index{ideal@idéal!de MacRae}\index{MacRae!ideal@idéal de ---}%
\index{module!de MacRae}\index{MacRae!module de ---}%
\index{MacRae!invariant de ---}\index{invariant de MacRae}
Dans ce cas, si $e$ est le pgcd fort de $\ff_0$, l'\id $\gen{e}$ (qui ne dépend que de~$E$) est appelé
\emph{l'invariant de MacRae du module $E$}, et il est noté $\fG(E)$.
\item Un cas particulier simple est un module $E$ isomorphe au conoyau d'une matrice carrée dont le \deter est \ndz. Un tel module est appelé un \emph{module \elr}.
\end{enumerate}
\end{definition}
Justifions le fait que $e$ est pgcd fort de $\fa$ dans le point \emph{1}
On définit $q_i=a_i/e$ pour $i\in\lrbn$.
On doit montrer, pour $x$, $y \in \gA$ et $y$ \ndz que $y \mid xa_i$
pour tout $i$
entra\^ine $y \mid xe$. On a des $b_i$ tels que $xa_i = yb_i$ i.e.
$xq_i e = yb_i$; on en déduit que $yb_iq_j= yb_jq_i\, (= xeq_iq_j)$ et 
comme $y$ est \ndz, $b_iq_j = b_jq_i$. Puisque $\Gr(q_1,\ldots,q_n) \ge 2$,
on a \hbox{un $c \in \gA$} tel que $b_i = cq_i$. On reporte cela dans
l'\egt $xq_ie = yb_i$, ce qui donne~\hbox{$(xe-cy)q_i = 0$}, d'où $xe = cy$.
Et l'on a bien montré que $y \mid xe$.

\medskip 
\rem Lorsque l'anneau est intègre, ces \dfns se simplifient: il suffit pour l'\id ($\fa$ ou $\ff_0$) de demander qu'il admette un pgcd fort. En effet sur un anneau intègre, un \itf qui admet $1$ pour pgcd fort est de \prof $\geq 2$ (lemme \ref{lemGCDFORT}).
\eoe

\hum{En fait vu le lemme \ref{lemGCDFORT}, la \dfn peut être simplifiée
chaque fois que l'anneau $\gA$ satisfait la \prt suivante: tout \id fidèle
contient un \elt \ndz.}  

\begin{theorem} \label{thMacRae} \emph{(Principe \lgb pour les \mMRs)}
\begin{enumerate}
\item 
\begin{enumerate}
\item Un \itf $\fa$ est de MacRae \ssi il existe un \elt \ndz $e$ et une suite $(\sn)$ de profondeur~\hbox{$\geq 2$}, telle qu'après \lon en chacun des $s_i$, l'\id $\fa$ est  égal à~$\gen{e}$.
\item Un module \pf $E$ est de MacRae et d'invariant $\fG(E)=\gen{e}$ \ssi il existe 
une suite $(\sn)$ de profondeur~\hbox{$\geq 2$}, telle qu'après \lon en chacun des $s_i$, le module
devient \elr \hbox{avec $\cF_0(E)=\gen{e}$}.
\end{enumerate}
\item 
\begin{enumerate}
\item Soit $\fa$ un \itf qui admet un 
pgcd fort, et $(\sn)$ 
une suite de profondeur~\hbox{$\geq 2$}. Alors l'\id est de MacRae \ssi il  est de MacRae après \lon en chacun des~$s_i$. 
\item Soit un \mpf $E$ dont le premier \idf admet un 
pgcd fort, et $(\sn)$ 
une suite de profondeur~\hbox{$\geq 2$}. Alors le module est de MacRae \ssi il  est de MacRae après \lon en chacun des $s_i$. 
\end{enumerate}
\end{enumerate}

\end{theorem}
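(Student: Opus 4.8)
The overall strategy is to transfer everything to the Fitting ideal $\cF_0$, reducing the module statements (1b, 2b) to the ideal statements (1a, 2a) through the definition \ref{defiMacRae} of a MacRae module. In all four statements the nontrivial direction is the one producing global MacRae-ness from local data, since the converse directions are immediate from flat base change: if $\fa$ is MacRae, then $e$ stays regular after localisation, $\fa\subseteq\gen e$ localises to $\fa[1/s_i]\subseteq\gen e[1/s_i]$, and $\Gr_{\gA[1/s_i]}(\fa/e)\geq\Gr_\gA(\fa/e)\geq 2$ by \ref{propProfchgbase}, so the localised object is again MacRae. Throughout I use that, since $(s_1,\dots,s_n)$ has profondeur $\geq 2$, the monoids $s_i^\NN$ form a système deux fois régulier (by \ref{corlemProdEtProf}), so the principe local-global \ref{plccProfondeur} (for profondeur) and \ref{propProf2div} (for divisibilité et pgcd fort) apply to $(s_i)$.

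For the point 1(a) I would argue as follows. Write $\fa=\gen{a_1,\dots,a_n}$ and, when $\fa$ is MacRae, $q_j=a_j/e$ (well defined since $e\in\Reg(\gA)$ and $\fa\subseteq\gen e$). For the direct implication I simply take $s_i=q_i$: the sequence $(q_1,\dots,q_n)$ has profondeur $\geq 2$ by the very definition $\Gr(\fa/e)\geq 2$, and after inverting $q_i$ the ideal $\gen{q_1,\dots,q_n}$ becomes $\gen 1$, whence $\fa[1/q_i]=\gen e[1/q_i]$. Conversely, suppose $e\in\Reg(\gA)$ and a sequence $(s_i)$ of profondeur $\geq 2$ are given with $\fa[1/s_i]=\gen e[1/s_i]$. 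First, for each $a\in\fa$ one has $e\mid a$ after localisation at each $s_i$, so $e\mid a$ globally by le principe local-global \ref{propProf2div} (point 1, with $e$ regular), giving $\fa\subseteq\gen e$; set $q_j=a_j/e$. Since $e$ stays regular after localisation, $\fa[1/s_i]=\gen e[1/s_i]$ forces $\gen{q_1,\dots,q_n}[1/s_i]=\gen 1$, hence $\Gr_{\gA[1/s_i]}(\fa/e)=\infty\geq 2$; the principe local-global \ref{plccProfondeur} then yields $\Gr_\gA(\fa/e)\geq 2$, i.e. $\fa$ is MacRae.

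For the point 1(b) I would pass through $\ff_0=\cF_0(E)$: by \ref{defiMacRae}, $E$ is a MacRae module with invariant $\gen e$ exactly when $\ff_0$ is a MacRae ideal with that invariant, and a module is elementary iff it is the cokernel of a square matrix whose determinant is regular, in which case $\cF_0$ is the principal ideal generated by that determinant. Thus statement 1(a) already controls $\cF_0$, and what remains is to match "$E$ becomes elementary" with "$\cF_0(E)$ becomes $\gen e$ principal". After inverting a suitable element, a presentation matrix of $E$ can be brought, via the lemme du mineur inversible \ref{FFRlem.min.inv} and the lemme du nombre de générateurs local \ref{FFRlemnbgtrlo}, to a square matrix whose determinant generates $\cF_0$; applying this over each $\gA[1/s_i]$ and combining the resulting further localisations with the $(s_i)$ through le lemme des localisations successives \ref{lelosuccprof} produces the desired covering of profondeur $\geq 2$, the gluing of the local square presentations being handled by le recollement \ref{plcc.modules1bis}.

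The genuine difficulty is the point 2 (and its module companion 2(b)). Writing again $q_j=a_j/e$ with $e$ the given pgcd fort, the principe local-global \ref{plccProfondeur} reduces the goal $\Gr_\gA(\fa/e)\geq 2$ to $\Gr_{\gA[1/s_i]}(\gen{q_1,\dots,q_n})\geq 2$ for each $i$. Over $\gA[1/s_i]$ the ideal $\fa[1/s_i]$ is MacRae, with a uniquely determined invariant $\gen{e_i}$, and an easy use of the pgcd-fort property of $e_i$ (with $y=e$, $x=1$) shows $e\mid e_i$; writing $e_i=e\,w_i$ one gets $q_j=w_i\,q_j'$ where $q_j'=a_j/e_i$ satisfies $\Gr(\gen{q_j'})\geq 2$. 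The whole point is then to show that $w_i$ is a unit, equivalently that $\gen{e_i}=\gen e$; this is precisely the assertion that the invariant of a MacRae ideal (a pgcd fort) is preserved under the localisations $\gA\to\gA[1/s_i]$. This is the main obstacle, because a pgcd fort does not localise formally, new regular elements appearing in $\gA[1/s_i]$. I expect to resolve it by combining the principe local-global pour le pgcd fort \ref{propProf2div} (point 2) with the proportionality characterisation of profondeur $\geq 2$ in Theorem \ref{lemGRa>1}: given a sequence proportional to $(q_j)$ one produces local solutions over each $\gA[1/s_i]$ using $\Gr(\gen{q_j'})\geq 2$, then glues them into a global solution via le recollement \ref{plcc.modules1bis}, thereby verifying the proportionality criterion for $(q_j)$ globally and concluding $\Gr_\gA(\fa/e)\geq 2$. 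The point 2(b) then follows from 2(a) as 1(b) followed from 1(a), through $\cF_0$.
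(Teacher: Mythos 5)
Your treatment of point \emph{1a} and of the sufficiency half of \emph{1b} is correct and coincides with the paper's own argument: local strong gcd globalised by \ref{propProf2div}, local profondeur globalised by \ref{plccProfondeur}, the choice $s_i=a_i/e$ for the converse, and \emph{1b} (sufficiency) obtained by applying \emph{1a} to $\cF_0(E)$.

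The necessity half of \emph{1b} is a genuine gap. The tools you invoke cannot do the job: after the relevant localisations \emph{no} minor of a presentation matrix of $E$ becomes invertible (every maximal minor is a multiple of $e$, and $e$ is regular but in general not a unit), so the lemme du mineur inversible \ref{FFRlem.min.inv} does not apply, and \ref{FFRlemnbgtrlo} requires $\cF_k(E)=\gen 1$, which is exactly what fails for a MacRae module; moreover in this direction there is nothing to glue, since the sequence of profondeur $\geq 2$ has to be \emph{constructed}. The paper's argument is the missing idea: take a presentation matrix $A\in\MM_{r,m}(\gA)$ and write each maximal minor as $\mu_\alpha=a_\alpha e$. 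The $a_\alpha$ generate $\cF_0(E)/e$, of profondeur $\geq 2$ by hypothesis, and over $\gA[1/a_\alpha]$ the square submatrix $A_\alpha$ still presents $E$: for any other column $C$ of $A$, the Cramer identity for the $r\times(r+1)$ matrix $[A_\alpha\,|\,C]$ has all its coefficients divisible by the regular element $e$, and after cancelling $e$ and inverting $a_\alpha$ it expresses $C$ as a linear combination of the columns of $A_\alpha$; then $\det A_\alpha=a_\alpha e$ is regular there, so $E$ becomes elementary with $\cF_0(E)=\gen e$.

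For point \emph{2} you have correctly isolated the crux, namely that the local MacRae invariant $\gen{e_i}$ must be identified with $\gen e$, i.e. that $w_i:=e_i/e$ is a unit of $\gA[1/s_i]$; but your proposed resolution does not close it. The local solutions produced by Theorem \ref{lemGRa>1} from $\Gr\big(\gen{q^{\prime(i)}_1,\dots,q^{\prime(i)}_n}\big)\geq 2$ satisfy $v_j=q^{\prime(i)}_j v^{(i)}$, where $q^{\prime(i)}_j=a_j/e_i$ \emph{depends on} $i$; they are not solutions of the single global equation $v_j=q_j v$ that the recollement \ref{plcc.modules1bis} could glue. Converting $v^{(i)}$ into a solution of $v_j=q_j v$ over $\gA[1/s_i]$ requires precisely $w_i\mid v^{(i)}$, i.e. the point at issue, and on the overlaps the family $(v^{(i)})$ is only compatible up to the unit cocycle $w_i/w_j$, so the gluing principle yields nothing. (The paper is admittedly laconic here — ``le raisonnement au point \emph{1a} fonctionne ici aussi'' — and making that rigorous amounts to showing that $e$ remains a strong gcd of $\fa$ in each $\gA[1/s_i]$, which is exactly the difficulty you flagged; your sketch leaves it where it stood.)
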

%
\begin{proof} 
\emph{1a.} \emph{La condition est suffisante.} Après \lon en chacun des $s_i$, l'\id~$\fa$ admet le pgcd fort $e$. Par
 la proposition~\ref{propProf2div}, $\fa$ admet le
pgcd fort~$e$. Enfin l'\id $\fa/e$ est de profondeur $\geq 2$ après 
\lon en chacun des~$s_i$ (en fait il devient égal à $\gen{1}$), et l'on conclut par le \plgref{plccProfondeur} pour la profondeur.

\emph{1a.} \emph{La condition est \ncr.} Si $\fa=(\an)$ et si $e$ est un pgcd fort de~$\fa$, on pose $s_i=a_i/e$. Alors la suite $(\sn)$ est de profondeur~$2$, et \hbox{sur $\gA[\fraC1 {s_i}]$}, on a $\fa=e\gen{\sn}=\gen{e}$.

\emph{1b.} \emph{La condition est suffisante.} C'est le point \emph{1a} appliqué à l'\idf~$\ff_0$.

\emph{1b.} \emph{La condition est \ncr.} 
Soit $A\in\MM_{r,m}(\gA)$ une \mpn de $E$ et notons~\hbox{$\mu_\alpha=a_\alpha e$} un mineur d'ordre maximum. Notons $A_\alpha$ la matrice carrée correspondante. Il suffit de montrer que sur l'anneau $\gA[1/a_\alpha]$, la matrice $A_\alpha$ est une \mpn de $E$ (pour le même \sgr). Autrement dit
que chaque colonne de $A$ est une \coli des colonnes de $A_\alpha$. Or,
puisque tous les mineurs maximaux de $A$ sont multiples de $e$ et que $e$ est \ndz, cela résulte de la formule de Cramer usuelle qui donne une \coli \gnq entre les colonnes d'une matrice de format $r\times (r+1)$.

\emph{2a.} Le raisonnement au point \emph{1a} pour montrer que la condition est suffisante fonctionne ici aussi. 

Le reste est clair.  
\end{proof}

Le \plg précédent peut être comparé au \plg pour les \mptfs:
le point \emph{1b} avec les modules \elrs correspond alors pour les \mptfs à la possibilité d'obtenir des \lons toutes libres.

\hum{Peut-être $e$ n'a pas besoin d'être donné dans le point \emph{1a}?

Je n'y crois guère. Un contre-exemple serait bienvenu pour éclairer
la situation!

Le \pb est le suivant. On a un \id $\fa$ de profondeur $\geq 1$ qui devient principal, engendré par $e_i$ après \lon en $s_i$, et la suite des $s_i$
est de profondeur $\geq 2$. Le choix des $e_i$ est un peu arbitraire, car il est à une unité près de $\gA_i=\gA[1/s_i]$.
On aimerait fabriquer un $e\in\gA$ tel que
$\gen{e}=_{\gA_i}\gen{e_i}$ pour chaque $i$.
On a $\gen{e_i}=\gen{e_j}$ sur l'anneau $\gA_{ij}=\gA[1/(s_is_j)]$. On a donc un
\elt $u_{ij}\in\gA_{ij}\eti$ avec $e_i=_{\gA_{ij}}u_{ij}e_j$.
Pour recoller les $e_i$, vue la profondeur $2$ des $s_i$, il semble qu'il suffit 
d'avoir $u_{ij}u_{jk}=u_{ik}$ dans $\gA_{ijk}$ pour tous $i,j,k$.
On peut définir d'abord les $u_{ij}$ pour $i<j$, puis $u_{ji}=u_{ij}^{-1}$.
Mais je ne vois pas comment assurer $u_{ij}u_{jk}=u_{ik}$ si l'anneau n'est pas intègre.
}

\subsec{Suites exactes de \mMRs} 

Voici une petite précision apportée au point \emph{3} de la proposition \ref{FFRpropPfSex}.
\begin{lemma} \label{lemPresfinsex} \emph{(Matrices de \pn: sous-module et quotient)}\\
Soit $E\subseteq  F$ et $ G= F/E$ des \mpfs\\
Soit $(\ux)=(\xm)$ un \sgr de $F$ et $r\in\lrbm$ tel que
la suite $(\xr)$
est un \sgr de $E$. On note $(\uz)=(z_1,\dots,z_{m-r})$ la suite 
$(\ov{x_{r+1}},\dots,\ov{x_m})$ (dans $G$).  
\begin{enumerate}
\item $(\uz)$ est un \sgr de~$G$.
\item Si $
C=\blocs{1.5}{0}{.6}{.7}{$A_1$}{}{$A_2$}{}
\;\in\MM_{m,n}(\gA)
$ est une \mpn de $F$ pour $(\ux)$ avec $A_1\in\MM_{r,n}(\gA)$,
alors $A_2$ est une \mpn de $G$ pour $(\uz)$. 

\item Si $C_E\in\MM_{r,q}(\gA)$ est une \mpn de $E$ pour $(\xr)$
et $C_G\in\MM_{s,q'}(\gA)$ une \mpn de $G$ pour $(\uz)$ alors il
existe une matrice $B$ telle que la matrice 
$C_F=\blocs{1}{.9}{.6}{.8}{$C_E$}{$B$}{$0$}{$C_G$}$
est une \mpn de $F$ pour $(\ux)$. 
\end{enumerate}  
\end{lemma}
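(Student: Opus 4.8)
The plan is to handle the three assertions in order, in each case working with the canonical surjections attached to the prescribed generating systems and exploiting the \seco\ $0\to E\vvers{\iota} F\vvers{\sigma} G\to 0$, where $\iota$ is the inclusion and $\sigma:F\to G$, $y\mapsto\ov y$, the quotient map. The block decomposition $\gA^m=\gA^r\oplus\gA^{m-r}$ will be used constantly, together with the projection $p:\gA^m\to\gA^{m-r}$ onto the last $m-r$ coordinates. Claim \emph{1} is immediate: since $(\ux)$ engendre $F$, the images $(\ov{x_1},\dots,\ov{x_m})$ engendrent $G=F/E$; but $x_i\in E$ for $i\leq r$ forces $\ov{x_i}=0$, so already $(\uz)=(\ov{x_{r+1}},\dots,\ov{x_m})$ engendre $G$.

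For Claim \emph{2} I introduce $\pi_F:\gA^m\to F$ and $\pi_G:\gA^{m-r}\to G$ sending the canonical bases to $(\ux)$ and $(\uz)$, so that $\Ker\pi_F=\Im C$ and $\Ker\pi_G$ is the syzygy module we must identify. A check on basis vectors gives the key identity $\pi_G\circ p=\sigma\circ\pi_F$. Since $A_2=p\circ C$ (the rows of $A_2$ are the last $m-r$ rows of $C$), one has $\Im A_2=p(\Im C)=p(\Ker\pi_F)$, so it suffices to prove $\Ker\pi_G=p(\Ker\pi_F)$. The inclusion $\subseteq$ of the latter is immediate from the key identity; for the reverse, given $w\in\Ker\pi_G$ I lift it to $(0,w)\in\gA^m$, note that $\pi_F(0,w)\in\Ker\sigma=E$, write it as $\iota(\pi_E(v_0))$ for some $v_0\in\gA^r$ \emph{using that $(\xr)$ engendre $E$}, and observe that $(-v_0,w)=(0,w)-(v_0,0)$ then lies in $\Ker\pi_F$ while $p(-v_0,w)=w$. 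Hence $\Ker\pi_G=\Im A_2$, i.e. $A_2$ présente $G$ pour $(\uz)$.

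Claim \emph{3} is the converse construction and is the delicate point; it refines point \emph{3} de la proposition \ref{FFRpropPfSex} by keeping track of the generating system. The columns of the top block $C_E$ (completed by zeros below) are automatically syzygies of $(\ux)$, via $\pi_F(u,0)=\iota(\pi_E(u))$. To handle $C_G$ I lift each of its columns $d$: the vector $(0,d)$ satisfies $\pi_F(0,d)\in E$ (again by $\pi_G\circ p=\sigma\circ\pi_F$ and $\pi_G(d)=0$), so it equals $\iota(\pi_E(b))$ for some $b\in\gA^r$ obtained from a writing of $\pi_F(0,d)$ over the generators $(\xr)$ of $E$; collecting the columns $-b$ into a matrix $B$ makes the columns of the right block $(B$ over $C_G)$ into syzygies of $(\ux)$, whence $\Im C_F\subseteq\Ker\pi_F$ for $C_F$ the block matrix of the statement. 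For $\supseteq$, I take $w=(w_1,w_2)\in\Ker\pi_F$, use $w_2=p(w)\in\Ker\pi_G=\Im C_G$ to subtract $C_F\cdot(0,t)$ and cancel the bottom block, reducing to a vector $(u,0)\in\Ker\pi_F$; here \emph{injectivity of $\iota$} gives $\pi_E(u)=0$, so $u\in\Ker\pi_E=\Im C_E$ and $w\in\Im C_F$.

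The main obstacle is organisational rather than conceptual: one must keep the decomposition $\gA^m=\gA^r\oplus\gA^{m-r}$ and the three surjections straight throughout, and invoke the injectivity of $\iota$ at precisely the right spot in Claim \emph{3}. The only genuine arithmetical input beyond diagram chasing is that $(\xr)$ engendre $E$, which is exactly what permits the lifting of relations, both in the proof of $\Ker\pi_G=\Im A_2$ and in the construction of the matrix $B$.
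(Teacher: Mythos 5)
Your proof is correct and follows essentially the same route as the paper: your point \emph{2} is exactly the paper's lifting argument (a syzygy of $(\uz)$ lifts, because $(\xr)$ engendre $E$, to a syzygy of $(\ux)$ with the same coefficients on the last $m-r$ generators, hence lies in $\Im C$, and one projects back onto the last $m-r$ coordinates). For point \emph{3} the paper simply cites le point \emph{3} de la proposition \ref{FFRpropPfSex}; your direct construction of $B$ by lifting the columns of $C_G$, followed by the two-step reduction using the injectivity of the inclusion $E\to F$, is precisely the standard proof behind that citation, so nothing differs in substance.
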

%
\begin{proof}\emph{1.} Clair.

\emph{2.} Il est clair que chaque colonne de $A_2$ est une \syzy \hbox{pour $(\ov{x_{k+1}},\dots,\ov{x_m})$}. 
Réciproquement on voit que toute \syzy \hbox{pour $(\ov{x_{k+1}},\dots,\ov{x_m})$}
fournit une \syzy \hbox{pour $(\xm)$}  (avec les mêmes \coes  pour $({x_{k+1}},\dots,{x_m})$), et donc est une \coli des colonnes de $C$. 
Ainsi la matrice~$A_2$
est une \mpn \hbox{pour $(\ov{x_{k+1}},\dots,\ov{x_m})$}.

\emph{3.} Voir le point~\emph{3} de la proposition \ref{FFRpropPfSex}.
\end{proof}
%

\begin{theorem} \label{thMacRae2} \emph{(Suites exactes courtes de \mMRs)}\\
Soit $0\to E\lora F\lora G\to 0$ une \seco de \mpfs
\begin{enumerate}
\item  Si $E$ et $G$ sont de MacRae, $F$ est de MacRae.
\item  Dans ce cas, on a
$\;\;\fG(E)\,\fG(G)=\fG(F)\;\;(*)$
\item  Si $F$ et $G$ sont de MacRae, $E$ est de MacRae. 
\end{enumerate}
\end{theorem}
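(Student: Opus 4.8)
Le point de départ naturel est le principe \lgb pour les \mMRs (\thref{thMacRae}, point \emph{1b}): un \mpf est de MacRae d'invariant $\gen e$ \ssi il existe une suite de profondeur $\geq 2$ telle qu'après \lon en chacun de ses \elts le module devient \elr, avec $\cF_0$ engendré par $e$. L'idée directrice est donc de \emph{localiser pour rendre tous les modules \elrs simultanément}, de vérifier l'\egt multiplicative $(*)$ dans ce cas simple, puis de recoller. Pour chacun des trois points, je commencerais par extraire des invariants de MacRae $\fG(E)=\gen{e_E}$, $\fG(G)=\gen{e_G}$ (ou $\fG(F)=\gen{e_F}$) des hypothèses, et par choisir une suite commune $(\sn)$ de profondeur $\geq 2$ qui trivialise en modules \elrs tous les modules de MacRae présents (on peut prendre un produit, ou utiliser le lemme des \lons successives \ref{lelosuccprof} pour fusionner les suites venant de $E$ et de $G$).

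Pour le point \emph{1} (et simultanément \emph{2}), la clé est le lemme \ref{lemPresfinsex}: à partir de \mpns $C_E$ de $E$ et $C_G$ de $G$, on construit une \mpn de $F$ de la forme triangulaire par blocs $\blocs{1}{.9}{.6}{.8}{$C_E$}{$B$}{$0$}{$C_G$}$. Après \lon en un des $s_i$, les modules $E$ et $G$ deviennent \elrs, donc $C_E$ et $C_G$ deviennent carrées de \deters \ndzs $e_E$ et $e_G$ (à unité près); alors $C_F$ est elle aussi carrée de \deter $e_Ee_G$, qui est \ndz, donc $F$ devient \elr avec $\cF_0(F)=\gen{e_Ee_G}$. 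Ceci établit localement la relation $\fG(F)=\fG(E)\fG(G)$. Il reste à invoquer la partie \gui{réciproque} (condition suffisante) du \thref{thMacRae} pour conclure que $F$ est globalement de MacRae, puis à recoller l'\egt des invariants via un \plg pour la \dve: comme $\cF_0(F)$ et $\gen{e_Ee_G}$ coïncident après \lon en une suite de profondeur $\geq 2$, on utilise la proposition \ref{propProf2div} (point \emph{1}, \dve par un \elt \ndz) pour obtenir $\gen{e_F}=\gen{e_Ee_G}$ globalement.

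Pour le point \emph{3}, on suppose $F$ et $G$ de MacRae et l'on veut montrer que $E$ l'est. L'obstacle principal me paraît ici résider dans le fait que $E$ est donné comme \emph{sous-module} et non comme quotient: il faut d'abord s'assurer que $E$ est bien \pf (ce qui découle du point \emph{2} de la proposition \ref{FFRpropPfSex}, puisque $F$ est \tf et $G$ est \pf), puis produire une \mpn exploitable. À nouveau je localiserais en une suite de profondeur $\geq 2$ rendant $F$ et $G$ \elrs simultanément; sur chaque localisé $\gA[1/s_i]$, la \seco de modules \elrs et la multiplicativité du \deter (déjà dégagée au point \emph{2}, sous sa forme $e_F=e_Ee_G$ à unité près) forcent $E$ à être \elr d'invariant $\gen{e_F/e_G}$, lequel est \ndz car quotient d'\elts \ndzs. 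La difficulté technique sera de justifier proprement, sur chaque localisé, que $E$ hérite d'une \mpn carrée de \deter \ndz — je m'appuierais pour cela sur le lemme \ref{lemPresfinsex} en sens inverse, en extrayant de la \mpn de $F$ un bloc présentant $E$. Une fois ce fait local acquis, le \plg du \thref{thMacRae} redonne que $E$ est globalement de MacRae, et le \plg de \dve (proposition \ref{propProf2div}) fixe son invariant.

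La relation $(*)$ n'étant explicitement énoncée que dans le point \emph{2}, je regrouperais la rédaction pour établir une fois pour toutes l'\egt $\fG(E)\fG(G)=\fG(F)$ sur les localisés (où elle se ramène à $\det C_E\cdot\det C_G=\det C_F$ par triangularité), cette \egt servant ensuite aussi bien pour la conclusion du point \emph{1} que pour déterminer l'invariant manquant au point \emph{3}. Le c\oe ur de la preuve est donc la combinaison \gui{lemme \ref{lemPresfinsex} $+$ multiplicativité du \deter par blocs triangulaires} sur les localisés, tout le reste étant du recollement standard via \ref{thMacRae} et \ref{propProf2div}.
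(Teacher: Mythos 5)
Pour les points \emph{1} et \emph{2}, votre plan est essentiellement la démonstration du texte: même localisation en une suite de profondeur $\geq 2$ obtenue en combinant celles de $E$ et de $G$, même appel au lemme \ref{lemPresfinsex} (point \emph{3}) pour obtenir une \mpn de $F$ triangulaire par blocs, même constat local $\det(C_F)=\det(C_E)\det(C_G)$, et même recollement via le \thref{thMacRae} et la proposition \ref{propProf2div}. Rien à redire, si ce n'est que le caractère \pf de $E$ au point \emph{3} fait déjà partie des hypothèses (\seco de \mpfs); la proposition \ref{FFRpropPfSex} (point \emph{2}) que vous invoquez ne donnerait d'ailleurs que \gui{\tf}, pas \gui{\pf}.

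Pour le point \emph{3}, en revanche, il y a une lacune réelle exactement à l'endroit que vous signalez comme \gui{difficulté technique}, et la direction que vous esquissez pour la combler ne suffit pas. Localiser de façon à rendre $F$ \emph{et} $G$ \elrs simultanément ne règle rien: le bloc $A_2$ qui présente $G$ dans la \mpn de $F$ (lemme \ref{lemPresfinsex}, point \emph{2}) n'a aucune raison d'être carré, et le fait que $G$ admette \emph{par ailleurs}, sur ce localisé, une présentation carrée de \deter \ndz ne relie en rien cette présentation-là au bloc $A_2$ dont vous avez besoin. La preuve du texte utilise l'hypothèse \gui{$G$ de MacRae} d'une autre manière: après avoir rendu $F$ \elr, chaque mineur maximal de $A_2$ s'écrit $\mu_\alpha=g\,a_\alpha$ où $g$ engendre $\fG(G)$ et où les $a_\alpha$ forment une famille de profondeur $\geq 2$; c'est en localisant \emph{en chacun de ces} $a_\alpha$ (localisation supplémentaire, qui se combine avec la première en un système encore de profondeur $\geq 2$) que les colonnes de $A_2$ hors de $\alpha$ deviennent des \colis des colonnes indexées par $\alpha$, ce qui permet, par manipulations de colonnes de $C_F$, d'aboutir à la forme triangulaire $\blocs{.6}{.7}{.6}{.7}{$U$}{$B$}{$0$}{$C$}$ avec $C$ carrée présentant $G$.

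Il manque ensuite à votre esquisse l'argument qui fait que le bloc $U$ présente effectivement $E$: il faut vérifier que toute \coli des colonnes de $C_F$ dont les dernières \coos sont nulles ne fait intervenir les dernières colonnes qu'avec des \coes nuls, et c'est précisément la régularité de $\det(C)$ qui le garantit. Sans ces deux ingrédients — la localisation aux $a_\alpha$, légitimée par l'hypothèse sur $G$, et l'argument de régularité de $\det(C)$ — l'affirmation \gui{une \seco de modules \elrs force $E$ à être \elr} reste une pétition de principe, et c'est pourtant elle qui porte tout le point \emph{3}.
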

%
\begin{proof}
\emph{1} et \emph{2.} Puisque le produit de deux \ids de profondeur $\geq 2$
est de profondeur $\geq 2$, après \lon en les \elts d'une suite $(\an)$ de profondeur $\geq 2$, les modules $E$ et $G$ son présentés par des matrices carrées~$C_E$ et $C_G$ avec $\gen{\det(C_E)}=\fG(E)$ et  $\gen{\det(C_G)}=\fG(G)$. Par 
le lemme \ref{lemPresfinsex}, point~\emph{3}, sur chaque anneau $\gA[1/a_i]$, le module  $F$ est \elr, présenté par une matrice~$C_F$ selon le format suivant
$$
C_F=\blocs{1}{.8}{1}{.8}{$C_E$}{$B$}{$0$}{$C_G$}\,,\,\hbox{ avec } \gen{\det(C_F)}=\fG(F)=\fG(E)\,\fG(G).
$$
Ceci montre que $F$ est de MacRae, et comme l'\egt des \idps $(*)$
est vérifiée dans chaque \lon, elle est vraie globalement (proposition~\ref{propProf2div}).

\emph{3.} On identifie $E$ à son image dans $F$ et $G$ à $F/E$.
On note $f$ un \gtr de $\fG(F)$ et  $g$ un \gtr de $\fG(G)$.
On choisit un \sgr~\hbox{$(\xm)$} de $F$ dont les premiers termes $(\xr)$ 
engendrent~$E$. On localise tout d'abord en les \elts d'une suite de profondeur $\geq 2$ de fa{ç}on à ce que
sur chaque anneau localisé, le module $F$ soit \elr. 
Sur un localisé, on écrit
une  \mpn \hbox{pour $(\xm)$} sous la \hbox{forme
$
C_F=\blocs{1.3}{0}{.6}{.7}{$A_1$}{}{$A_2$}{}
\;\in\MM_m(\gA)
$
},
où les $m-r$ dernières lignes forment la matrice~$A_2$.
La matrice~$A_2$
est une \mpn \hbox{pour $(\ov{x_{r+1}},\dots,\ov{x_m})$} (lemme \ref{lemPresfinsex}, point~\emph{2}).

 Soit alors $\alpha\in\cP_{m-r,m}$ et 
$\mu_\alpha=ga_\alpha$ le mineur maximal  de $A_2$ extrait sur les colonnes $\alpha$. Si l'on inverse $a_\alpha$, les colonnes restantes sont \colis des colonnes de la matrice extraite sur les colonnes $\alpha$ (voir la \dem du \thref{thMacRae}). Une manipulation \elr des colonnes de~$C_F$ sur l'anneau localisé ramène donc cette matrice à la forme
$$C'_F=\blocs{.6}{.7}{.6}{.7}{$U$}{$B$}{$0$}{$C$}\,,
$$
avec $C$ \mpn de $G$.
Montrons  que $U$ (sur cet anneau localisé) est bien une \mpn pour $E$, autrement dit que toute \coli de colonnes de~$C_F$ dont les dernières \coos sont nulles est une \coli des colonnes de $\blocs{.6}{0}{.6}{.7}{$U$}{}{$0$}{}$. En effet, dans une telle \coli, les \coes des $m-r$ dernières colonnes
sont \ncrt nuls, parce que $\det(C)$ est \ndz.
Ainsi dans ce localisé, $E$ est \elr et $\fG(E)\fG(G)=\fG(F)$.
\\
On conclut que $g$ divise $f$ par la proposition~\ref{propProf2div}. On note
$e=f/g$. 
\\
En fin de compte, on aura une suite  de profondeur $\geq 2$ telle que dans chaque anneau localisé, le module $E$ sera devenu \elr avec $\fG(E)=\gen{e}$. Ceci prouve que $E$ est un \mMR.     
\end{proof}
%

%
%
%
%
%
%
	
\medskip On obtient alors l'analogue multiplicatif suivant du \tho qui dit que la \cEP
d'un complexe exact de modules libres est nulle.
\begin{theorem} \label{thMacRae3} \emph{(Suites exactes longues de \mMRs)}\\
On considère une \sex  de \mMRs 

\snic{E_{n-1}\vvers{u_{n-1}}E_{n-2} \cdots\cdots \vvers{u_{1}} E_0\lora 0\,.}

\snii
Alors le noyau de chaque~$u_i$ est un \mMR. 
\\
En notant $E_n=\Ker u_{n-1}$, on obtient l'\egt
$$
\prod\nolimits_{i:0\leq 2i\leq n}\fG(E_{2i})=\prod\nolimits_{i:0\leq 2i+1\leq n}\fG(E_{2i+1}). 
$$
\end{theorem}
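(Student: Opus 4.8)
The plan is to proceed by a decreasing induction using the short exact sequence result (Theorem~\ref{thMacRae2}) as the engine, splitting the long exact sequence into short exact sequences built from the images of the differentials. First I would introduce the notation $B_i=\Im u_i$ for $i\in\lrb{1..n-1}$ and set $E_n=\Ker u_{n-1}=B_n$ by convention, so that for each $i$ one has a short exact sequence
$$
0\to B_{i+1}\lora E_i\lora B_i\to 0,
$$
where $B_1=E_0$ (since $u_0$ is the zero map to $0$, $\Im u_1=E_0$ is the whole of $E_0$) and $B_{i+1}=\Ker u_i=\Im u_{i+1}$ by exactness.

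The induction runs from the right end of the complex. The base case handles $i=0$: since $u_1:E_1\to E_0$ is surjective, $B_1=E_0$ is a \mMR\ by hypothesis, and $B_2=\Ker u_1$ fits into the exact sequence $0\to B_2\to E_1\to E_0\to 0$ with $E_1$ and $E_0$ both of MacRae; hence by point~\emph{3} of Theorem~\ref{thMacRae2}, $B_2$ is a \mMR. The inductive step assumes $B_i$ is a \mMR\ and considers the short exact sequence $0\to B_{i+1}\to E_i\to B_i\to 0$: with $E_i$ of MacRae (hypothesis) and $B_i$ of MacRae (induction), point~\emph{3} again gives that $B_{i+1}=\Ker u_i$ is a \mMR. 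Iterating up to $i=n-1$ establishes the first assertion, that each $\Ker u_i$ is a \mMR\ (and in particular $E_n=\Ker u_{n-1}$ is one).

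For the multiplicative identity on MacRae invariants, I would read off from each short exact sequence $0\to B_{i+1}\to E_i\to B_i\to 0$ the relation furnished by point~\emph{2} of Theorem~\ref{thMacRae2}, namely
$$
\fG(B_{i+1})\,\fG(B_i)=\fG(E_i).
$$
Taking the product of these equalities over $i=0,\dots,n-1$ (using $\fG(B_0)=\gen{1}$ as the empty-module convention, since the complex terminates at $0$), the intermediate invariants $\fG(B_i)$ for $1\le i\le n-1$ appear exactly twice on the left, once as $\fG(B_{i+1})$ from the sequence indexed $i$ and once as $\fG(B_i)$ from the sequence indexed $i-1$, and they cancel in the group of \idps\ generated by \ndz\ principal ideals. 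What survives on the right is $\prod_{i=0}^{n-1}\fG(E_i)$, and what survives on the left, after cancellation, is $\fG(B_n)=\fG(E_n)$ paired against the alternating telescoping pattern. Rearranging, one lands on
$$
\prod\nolimits_{i:0\leq 2i\leq n}\fG(E_{2i})=\prod\nolimits_{i:0\leq 2i+1\leq n}\fG(E_{2i+1}),
$$
the two sides being the even- and odd-indexed invariants respectively, with $E_n$ placed on the appropriate side according to the parity of $n$.

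The main obstacle I anticipate is bookkeeping the telescoping cancellation cleanly and matching it to the stated even/odd partition of the indices. The invariants live in the monoid of principal ideals generated by regular elements, where cancellation is legitimate (regular elements are cancellable), so there is no algebraic subtlety in cancelling; the care required is purely combinatorial, namely verifying that moving every $\fG(B_i)$ term with the correct sign across the equation reproduces precisely the alternating product, and that the boundary term $\fG(E_n)=\fG(B_n)$ ends up on the even side when $n$ is even and the odd side when $n$ is odd. A short induction on $n$, or a direct expansion of the telescoping product written out as $\fG(E_0)\fG(E_1)^{-1}\fG(E_2)\cdots$, will settle this; I would present it as the explicit alternating identity $\prod_{i=0}^{n-1}\fG(E_i)^{(-1)^i}=\fG(E_n)^{(-1)^{n-1}}$ and then reorganize into the symmetric form stated in the theorem.
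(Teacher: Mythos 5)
Your proposal is correct and follows essentially the same route as the paper: decompose the long exact sequence into short exact sequences $0\to \Ker u_i\to E_i\to \Im u_i\to 0$ starting from the right end, apply point \emph{3} of Theorem~\ref{thMacRae2} inductively to get that each kernel is a \mMR, then apply point \emph{2} to each short exact sequence and telescope the resulting relations $\fG(B_{i+1})\,\fG(B_i)=\fG(E_i)$. The only difference is that you make explicit the combinatorial cancellation (legitimate, since the invariants are principal ideals generated by regular elements) that the paper's proof leaves implicit in its final sentence.
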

%
\begin{proof}
Tout d'abord le module $\Ker u_1=\Im u_2$ est un \mMR par le point \emph{3} du \tho précédent en raison de la \seco

\snic{0\lora \Ker u_1\vvers{j_1} E_1\vvers{u_1} E_0\lora 0,}

où $j_1$ est l'injection canonique.\\ 
Ensuite  $\Ker u_2=\Im u_3$ est un \mMR en raison de la \seco

\snic{0\lora \Ker u_2\vvers{j_2} E_2\vvers{u_2} \Im u_2\lora 0.}

On peut décomposer ainsi de proche en proche la \sex en \secos de \mMRs. 
\\
En notant $\;u_n:E_n=\Ker u_{n-1}\to E_{n-1}\;$ l'injection canonique, on obtient une
\sex longue de \mMRs

\smallskip 
\centerline{\fbox{$0\lora E_n\vvers{u_n}E_{n-1}\vvers{u_{n-1}} \cdots\cdots \vvers{u_{1}} E_0\lora 0$}.}

\smallskip 
 On conclut en utilisant le \tho précédent pour ces \secos.
\end{proof}


\bonbreak
\section{Algèbre extérieure d'un module libre}\label{secCayleyAlgExt}

\subsec{Dualité canonique}

L'\emph{algèbre extérieure} d'un \Amo $M$ est une \Alg associative,
que l'on \hbox{note  $\Vi M$}, équipée d'une \Ali $M\vers\varphi\Vi M$,
qui résout le \pb 
\uvl correspondant au diagramme suivant

\vspace{-1em}
\pnv{M}{\varphi}{\psi}{\Vi M}{\theta}{\gB}{}{\Alis avec $\psi(x)^2=0$ pour tout $x$}{morphismes d'\Algs associatives}

L'\alg $\Vi M$ peut être réalisée sous la forme
de la somme directe des modules puissances extérieures de $M$: 

\snic{\Vi M=\bigoplus_{k\in\NN}\Al k M,\quad \hbox{ avec }\Al0 M =\gA\hbox{ et }\Al1M=M.}

\snii
Dans le cas où $M$ est de type fini, cette somme directe est finie car $\Al r M=0$ dès que $r$ est strictement plus grand que le cardinal d'un \sgr
(en fait, dès que $\cF_r(M)=0$).

\hum{Il serait peut-être intéressant de donner aussi $\Vi M$ comme solution d'un \pb \uvl concernant certaines \algs graduées (associatives alternées? quelle est la terminologie?).

Dans ce cas on précise seulement que le morphisme $\psi$ est de degré $0$
avec $M$ de degré $1$. Il n'y a plus besoin de préciser que $\psi(x)^{2}=0$ pour tout~$x$. 

Sans doute on peut \gnr en rempla{ç}ant $M$ par un \Amo gradué en degrés $>0$ arbitraire?}

La dualité canonique entre $M$ et  $M\sta$ (le dual de $M$), donnée par

\snic{(\alpha,u)\longmapsto \scp{\alpha} {u}=\alpha(u),}

\snii
se prolonge en une dualité naturelle entre $\Al k M$ et $\Al k M\sta$ 
définie par l'\egt suivante
$$
\scp{ \alpha_1\vi \cdots \vi \alpha_k} {u_1\vi \cdots \vi u_k}=\det\big((\alpha_i(u_j))_{i,j\in\lrbk}\big).
$$ 
Cette dualité s'étend à son tour en une dualité naturelle entre
$\Vi M$ \hbox{et $\Vi M\sta$} pour laquelle \hbox{on a $\scP{\,\Al k
M\sta}{\Al j M}=0$\,} lorsque $k\neq j$.

\subsubsec{Cas où $M=\Ae n$}

Dans le cas où $M=\Ae n$, notons $(e_1,\dots,e_n)$ la base canonique. On peut décrire directement l'\alg extérieure $\Vi M$ comme suit: $\Vi M$ est un module libre ayant pour base les  $e_I$, où $I$ décrit l'ensemble $\cP_n$ des suites finies strictement croissantes dans $\lrbn$. On identifie $e_\emptyset$ avec $1_\gA$, $e_{\so i}$
avec~$e_i$ (de sorte que $\gA$ et $ M$ sont des sous-modules de $\Vi M$), et le produit~\hbox{$e_I\vi e_J$} est défini par\label{casouM=An}

\snic{e_I\vi e_J=\formule{ 0 \hbox{ si } I\cap J\neq \emptyset\\[1mm]
\vep_{I,J} \,e_K\hbox{ sinon }}}

\snii où $K$ est la liste qui énumère en ordre croissant $I\cup J$,
et $\vep_{I,J}=(-1)^r$ où $r$ est le nombre de couples 
$(i,j)\in I\times J$
tels que $i>j$.

Dans le cas où $M=\Ae n$ on identifie une fois pour toutes $M$ et $M\sta$
en identifiant la base duale de la base canonique $(e_1,\dots,e_n)$ à la base canonique elle-même. 

La dualité naturelle entre $\Vi M$ et $\Vi M\sta$ définie précédemment devient alors un \emph{produit scalaire sur  $\Vi M$} qui admet la base des $e_I$
comme base orthonormée. 
Ce produit scalaire établit donc un \iso entre 
 $\Vi M$ et $\Vi M\sta$ (\iso naturel dans la mesure où la base naturelle de $M$ est fixée), avec en particulier un \iso naturel entre
$\Al k M$ et $\Al k M\sta$ pour chaque entier $k$.

\rdb
Nous introduisons la notation suivante: pour une matrice $U\in \Ae{n\times k}$, 
dont les vecteurs colonnes sont $u_1,$ \dots, $u_k\in \Ae n$  on note

\fnic{\pex U \eqdefi u_1\vi\dots\vi u_k\in \Vi^{k}\Ae n,}\label{NOTA[[U]]}

\snii et l'on a alors la formule compacte suivante pour le produit
scalaire:

\snic{\scP{\,\pex U} {\pex V\,}\, =\, \det(\tra U\,V) \;\;\hbox{ pour } U \hbox{ et } V\in \Ae{n\times k}.}

\subsec{Un \iso de Hodge}\label{IsoHodge}

Pour $\bmx\in \Vi^{n}\Ae n$  on définit
\gui{le \deter} $\dt\bmx\in\gA$ par la formule
 
\snic{\bmx\,=\, \dt\bmx \;e_1\vi\dots\vi e_n.}

\snii
On définit l'\emph{\iso de Hodge à droite} 
comme suit:%
\index{isomorphisme de Hodge}
$$
\formule{\Vi^{p}\Ae n\vvers\star \Vi^{q}\Ae n,\\[1mm]
\bmx\longmapsto\bmx\sta=\som_{J\in\cP_{q,n}} \dt{\bmx\vi e_J}\, e_J} 
\quad \fbox{pour  $p+q=n$}. 
$$
On utilisera \egmt, au lieu de $\bmx\sta$, la notation $\Hd(\bmx)$ plus
précise car elle mentionne le côté droit (l'isomorphisme de Hodge droit
est lié au produit intérieur droit $\intd$, cf. plus loin \paref{prodintdroit}). Cette notation
$\Hd(\bmx)$ peut aussi être utilisée comme notation de secours
typographique.

Par \dfn, on a donc
$$
\framebox [1.1\width][c]{$\scp {\bmx\sta}{e_I} = \dt{\bmx \vi e_I}$}
$$
En particulier pour $\bmx = e_J$ ($J$ de cardinal $p$), en notant 
$\ov J=\lrbn\setminus J$, on voit que $e_J\sta = \pm e_{\ov J}$
et comme $\scp {e_J\sta}{e_{\ov J}} = \dt{e_J \vi e_{\ov J}}$, on a
$$
e_J\sta = \dt{e_J \vi e_{\ov J}}\, e_{\ov J} = \vep_{J,\ov J}\,e_{\ov J}
\quad\hbox {et} \quad
e_J\vi e_J\sta = e_{\lrbn}.
$$
On retiendra donc que l'on peut compléter $e_J$ à droite avec $e_J\sta$
pour obtenir le $n$-vecteur $e_{\lrbn}$:

\snic {
\framebox [1.1\width][c]{$e_J\vi e_J\sta = e_{\lrbn}$}.
}

\snii 
Cette dernière formule, qui, avec la formule $e_I\vi e_J\sta = 0$ si $I\neq
J$, aurait pu servir aussi de \dfn pour l'\iso de Hodge, permet de comprendre
pourquoi nous l'avons qualifié d'\iso \gui{à droite} (une version \gui{à
  gauche} aurait utilisé la formule \smq obtenue en permutant~$e_J$ et
${e_J}\sta$). On voit aussi que cet \iso n'est pas spécifiquement attaché
à la base naturelle de $\Ae n$, mais seulement à une base \gui{naturelle}
de $\Vi^{n}\Ae n$.

\begin{proposition} \label{propDualiteHodge}
Avec $p+q=n$,   $\bmu_1$, $\bmu_2\in\Vi^{p}\Ae n$ et $\bmv\in\Vi^{q}\Ae n$, on a les formules de dualité suivantes:
\begin{enumerate}
\item \qquad \qquad $\scp {{\bmu_1}\sta} \bmv    =     \dt{\bmu_1\vi\bmv},$
\item \qquad \qquad $\scp {\bmu_1} {\bmu_2}    =   \scp {{\bmu_1}\sta}{{\bmu_2}\sta},$
\item \qquad \qquad $  \scp {\bmu_1} {\bmu_2}    = \dt{\bmu_1\vi{\bmu_2}\sta}.$
\end{enumerate}
En outre les formules 1. et 3. caractérisent l'\iso de Hodge.
\end{proposition}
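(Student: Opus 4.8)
Avec $p+q=n$, $\bmu_1,\bmu_2\in\Vi^p\Ae n$ et $\bmv\in\Vi^q\Ae n$, on a les trois formules de dualité
$$
\scp{{\bmu_1}\sta}{\bmv}=\dt{\bmu_1\vi\bmv},\qquad
\scp{\bmu_1}{\bmu_2}=\scp{{\bmu_1}\sta}{{\bmu_2}\sta},\qquad
\scp{\bmu_1}{\bmu_2}=\dt{\bmu_1\vi{\bmu_2}\sta},
$$
les formules 1 et 3 caractérisant l'\iso de Hodge.

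L'idée directrice est d'exploiter la \emph{linéarité} de chacune des expressions en chaque argument, ce qui ramène tout à une vérification sur la base des $e_I$. Comme le produit scalaire sur $\Vi\Ae n$ admet la famille $(e_I)_{I\in\cP_n}$ pour base orthonormée, il suffit d'établir les identités lorsque $\bmu_1,\bmu_2,\bmv$ parcourent cette base; les deux membres de chaque formule étant $\gA$-bilinéaires (ou multilinéaires), l'égalité générale s'en déduit par combinaison linéaire. D'abord je traiterais la formule 1, qui n'est autre que la définition déroulée: par définition $\scp{\bmx\sta}{e_I}=\dt{\bmx\vi e_I}$, donc pour $\bmu_1=\bmx\in\Vi^p\Ae n$ et $\bmv=e_I$ avec $\#I=q$ l'égalité est immédiate, et le cas général suit par linéarité en $\bmv$.

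Pour la formule 2, je choisirais $\bmu_1=e_I$ et $\bmu_2=e_K$ avec $\#I=\#K=p$. À gauche, $\scp{e_I}{e_K}=\delta_{I,K}$ par orthonormalité. À droite, on a vu que $e_I\sta=\vep_{I,\ov I}\,e_{\ov I}$ et $e_K\sta=\vep_{K,\ov K}\,e_{\ov K}$, d'où $\scp{e_I\sta}{e_K\sta}=\vep_{I,\ov I}\vep_{K,\ov K}\,\delta_{\ov I,\ov K}=\vep_{I,\ov I}^2\,\delta_{I,K}=\delta_{I,K}$, puisque $\ov I=\ov K$ équivaut à $I=K$ et que $\vep_{I,\ov I}=\pm1$. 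Les deux membres coïncident, donc la formule 2 vaut sur la base puis partout par bilinéarité. Pour la formule 3, je combinerais les deux précédentes: en appliquant la formule 1 à ${\bmu_2}\sta\in\Vi^q\Ae n$ à la place de $\bmv$, on obtient $\scp{{\bmu_1}\sta}{{\bmu_2}\sta}=\dt{\bmu_1\vi{\bmu_2}\sta}$, et le membre de gauche vaut $\scp{\bmu_1}{\bmu_2}$ par la formule 2; d'où l'identité voulue.

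Le point le plus délicat est le contrôle des signes $\vep_{I,J}$, seul endroit où un calcul explicite intervient vraiment. Le fait crucial est $\vep_{I,\ov I}^2=1$, trivial puisque $\vep_{I,\ov I}\in\{-1,1\}$, ce qui fait disparaître toute ambiguïté dans la formule 2; il n'y a donc pas d'obstacle sérieux, l'argument reposant entièrement sur la structure de base orthonormée et sur la définition de $\bmx\sta$. Enfin, pour la caractérisation: si une application $\sigma:\Vi^p\Ae n\to\Vi^q\Ae n$ vérifie $\scp{\sigma(\bmu)}{\bmv}=\dt{\bmu\vi\bmv}$ pour tous $\bmu,\bmv$, alors pour chaque $I$ le vecteur $\sigma(\bmu)$ a mêmes coordonnées que $\bmu\sta$ dans la base orthonormée $(e_J)_{\#J=q}$, à savoir $\scp{\sigma(\bmu)}{e_J}=\dt{\bmu\vi e_J}=\scp{\bmu\sta}{e_J}$; donc $\sigma=\star$. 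L'argument pour la formule 3 est analogue, la non-dégénérescence du produit scalaire (base orthonormée) assurant l'unicité. Je conclurais en remarquant que ces deux caractérisations justifient \emph{a posteriori} que l'\iso de Hodge ne dépend que de la base naturelle de $\Vi^n\Ae n$, conformément à la remarque précédant l'énoncé.
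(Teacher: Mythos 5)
Your proposal is correct and follows essentially the same route as the paper: formula 1 is the definition of the Hodge isomorphism unfolded plus linearity, formula 2 is an immediate verification on the basis elements $e_I$ (your sign computation $\vep_{I,\ov I}^{2}=1$ being exactly the point behind the paper's laconic \og vérification immédiate sur les bases \fg), and formula 3 follows by combining 1 and 2. The only difference is that you also spell out the uniqueness argument for the final characterization claim, which the paper's proof leaves implicit; that addition is sound (non-degeneracy of the pairing on the orthonormal basis) and harmless.
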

%
\begin{proof}\emph{1.} Par \dfn, $\scp{\bmu\sta}{e_I}=\dt{\bmu\vi e_I}$
lorsque $I\in\cP_{q,n}$.
\\
\emph{2.} Vérification \imde sur les bases.
\\
\emph{3.} Résulte des points \emph{1} et~\emph{2.}
\end{proof}
%

\begin{corollary} \label{corpropDualiteHodge}
Avec $p+q=n$ et $A\in\Ae{q\times p}$, on a la formule de dualité
$$
\pexmat {\I_p\cr A}\sta = \pexmat{-\tra A\cr \I_q}.
$$
Plus \gnlt, avec $U_1\in\gA^{p\times p}$,  $U_2\in\gA^{q\times p}$ et $d=\det(U_1)$
$$
d^{q-1}\pexmat {U_1\cr U_2}\sta = \pexmat{-\tra{\big(U_2\wi{U_1}\big)}  \cr d\I_q} .
$$
 
\end{corollary}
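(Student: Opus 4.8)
The plan is to prove the special case $U_1=\I_p$, $U_2=A$ first, and then deduce the general formula by the multilinearity of the exterior product together with a prolongation-of-identities argument.

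For the special case I would use the characterisation of the Hodge isomorphism from Proposition~\ref{propDualiteHodge}: by formula~1, combined with the nondegeneracy of the scalar product (the $e_J$, $J\in\cP_{q,n}$, form an orthonormal basis of $\Vi^{q}\Ae n$), a $q$-vector $\bmw$ equals $\pexmat{\I_p\cr A}\sta$ as soon as $\scp{\bmw}{\bmv}=\dt{\,\pexmat{\I_p\cr A}\vi\bmv\,}$ for every $\bmv\in\Vi^{q}\Ae n$. I take $\bmw=\pexmat{-\tra A\cr \I_q}$ and, by bilinearity, $\bmv=\pex V$ with $V=\cmatrix{V_1\cr V_2}\in\Ae{n\times q}$ split into blocks of sizes $p$ and $q$. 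Using $\scp{\pex U}{\pex V}=\det(\tra U\,V)$, the left-hand side becomes $\det(V_2-AV_1)$, while the right-hand side is $\dt{\pex M}=\det M$ with $M=\cmatrix{\I_p & V_1\cr A & V_2}$. The block row operation $\cmatrix{\I_p & 0\cr -A & \I_q}M=\cmatrix{\I_p & V_1\cr 0 & V_2-AV_1}$ (a Schur complement whose multiplier has determinant $1$) gives $\det M=\det(V_2-AV_1)$, so the two sides agree and the special case is done.

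For the general formula I first assume $d=\det(U_1)\in\Ati$. Factoring $\cmatrix{U_1\cr U_2}=\cmatrix{\I_p\cr U_2U_1^{-1}}U_1$ and invoking the elementary identity $\pex{(MC)}=\det(C)\,\pex M$ (alternating multilinearity in the columns) yields $\pexmat{U_1\cr U_2}=d\,\pexmat{\I_p\cr U_2U_1^{-1}}$. Applying the ($\gA$-linear) Hodge star and the special case with $A=U_2U_1^{-1}=U_2\wi U_1/d$ gives $\pexmat{U_1\cr U_2}\sta=d\,\pexmat{-\tra{(U_2\wi U_1)}/d\cr \I_q}$; a second column rescaling by $\tfrac1d\I_q$ turns the right factor into $d^{-q}\pexmat{-\tra{(U_2\wi U_1)}\cr d\I_q}$. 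Collecting the two factors and then multiplying by $d^{q-1}$ makes the powers of $d$ cancel, producing exactly $d^{q-1}\pexmat{U_1\cr U_2}\sta=\pexmat{-\tra{(U_2\wi U_1)}\cr d\I_q}$.

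It remains to remove the hypothesis $d\in\Ati$. Read off in the orthonormal basis $(e_J)$, both sides of the claimed equality are finite families of polynomials with integer coefficients in the entries of $U_1$ and $U_2$, so it suffices to prove the identity over the ring $\gA_0=\ZZ[(u_{ij}),(u'_{kl})]$ of polynomials in independent indeterminates standing for those entries. In the fraction field $\gK_0=\Frac(\gA_0)$ the generic determinant $d$ is $\neq 0$, hence invertible, so the invertible case applies; since $\gA_0$ embeds in $\gK_0$ the polynomial identities hold in $\gA_0$, and they specialise to arbitrary $U_1$, $U_2$ over any commutative ring. The only genuinely delicate point is precisely this last passage: the manipulations with $U_1^{-1}$ are illegitimate when $d$ is a zerodivisor, and the generic-ring/specialisation device is exactly what repairs them; checking that the exponent $q-1$ emerges correctly from the two rescaling steps is the remaining (routine) bookkeeping.
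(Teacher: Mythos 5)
Your proof is correct and follows essentially the same route as the paper: the special case via the characterising property $\scp{\bmu\sta}{\bmv}=\dt{\bmu\vi\bmv}$ checked on decomposable $q$-vectors with a block-determinant (Schur complement) computation, then the general formula by multiplying $\cmatrix{U_1\cr U_2}$ on the right by $U_1^{-1}$ and passing to a generic matrix to remove the invertibility of $d$. You have merely written out in full the bookkeeping (the two column rescalings and the power $d^{q-1}$, plus the specialisation argument) that the paper compresses into its final sentence.
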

%
Notez dans la première formule que les colonnes de la deuxième matrice forment une base de l'\ort de 
l'espace des colonnes de la première.
\begin{proof} Voyons la première formule. Notons  $\bmu\sta$ et $\bmv$ les $q$-vecteurs du premier et du second membre.
On doit démontrer pour un $q$-vecteur arbitraire $\bmy$ que 
$\dt{\bmu\vi\bmy}=\scp{\bmv}{\bmy} $.  
On considère une matrice arbitraire $Y\in\Ae{n\times q}$ telle que
$\pex Y=\bmy$. On écrit $Y=\cmatrix{Y_1\cr Y_2}$ avec $Y_1\in\Ae{p\times q}$
et $Y_2\in\Ae{q\times q}$. Alors on obtient
$$
\dt{\bmu\vi\bmy}=\dmatrix{\I_p&Y_1\cr A& Y_2},\;\; \hbox{ et } \scp{\bmv}{\bmy}=
\abs{\big[\,{-A\;\I_p}\,\big]\,\cmatrix {Y_1\cr Y_2}}=\abs{Y_2-A\, Y_1}.
$$
Le premier \deter se ramène au dernier par une \mlr 
par blocs de la matrice correspondante.\\
Enfin  la formule \gnle s'obtient (avec une matrice générique)
en se ramenant au cas particulier en multipliant $\cmatrix{U_1\cr U_2}$ par $U_1^{-1}$.  
\end{proof}
%
\subsec{\Tho de proportionnalité}

On garde la notation \fbox{$p+q=n$}.
On rappelle que deux vecteurs $\bmx$ et $\bmz$ dans \hbox{un \Amo} libre  sont dits \emph{proportionnels} lorsque $\bmx\vi\bmz=0$.

\begin{theorem} \label{thOrtho} \emph{(\Tho de proportionnalité)}\\
Soit $(u_1,\dots,u_p)$ et $(v_1,\dots,v_q)$ deux suites dans $\Ae n$,
$U$ et $V$ les matrices correspondantes dans $\Ae{n\times p}$ et $\Ae{n\times q}$, 
$\bmu=\pex U$ et $\bmv=\pex V$.
\begin{enumerate}
\item Les $q$-vecteurs $\bmu\sta$ et $\bmv$ sont proportionnels modulo
$\cD_1(\!{\tra U}\,V)$.  
\item Si $\scp{u_i}{v_j}=0$ pour tous $i$, $j$, alors les vecteurs $\bmu\sta$
et $\bmv$ sont proportionnels.
\end{enumerate}
\end{theorem}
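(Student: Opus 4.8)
The plan is to strip the statement down to a coordinate assertion, use the Hodge duality already in place to isolate the real content, and then fight the combinatorics.

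First I would record two reductions. Write $\fc=\cD_1(\tra U\,V)=\gen{\scp{u_i}{v_j}:i\in\lrb{1..p},\,j\in\lrb{1..q}}$. Point \emph{2} is exactly point \emph{1} applied with $\fc=0$, so only point \emph{1} needs a proof; conversely point \emph{1} follows from point \emph{2}, because the Hodge star and the exterior product commute with the base change $\gA\to\gA/\fc$, and over $\gA/\fc$ one has $\tra{\ov U}\,\ov V=0$. Hence if $\ov\bmu\sta\vi\ov\bmv=0$ there, every coordinate of $\bmu\sta\vi\bmv$ maps to $0$ in $\gA/\fc$, i.e. lies in $\fc$, which is exactly proportionality modulo $\fc$. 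So it suffices to prove: \emph{if $\tra U\,V=0$ then $\bmu\sta\vi\bmv=0$}. In the orthonormal basis $(e_J)_{J\in\cP_{q,n}}$ of $\Vi^q\Ae n$ I would write $\bmu\sta=\som_J a_J\,e_J$ with $a_J=\dt{\bmu\vi e_J}=\pm\det(U_{\ov J})$ and $\bmv=\som_K b_K\,e_K$ with $b_K=\scp{\bmv}{e_K}=\det(V_K)$; the statement to prove becomes $a_Jb_K-a_Kb_J=0$ for all $J$, $K$.

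Second, the conceptual heart is a Gram-determinant computation. By the duality formulas of Proposition \ref{propDualiteHodge} one has $\scp{\bmu\sta}{\bmu\sta}=\scp\bmu\bmu=\det(\tra U\,U)$, $\scp\bmv\bmv=\det(\tra V\,V)$ and $\scp{\bmu\sta}\bmv=\dt{\bmu\vi\bmv}=\det[\,U\ V\,]$. The hypothesis $\tra U\,V=0$ gives the block identity $\tra{[\,U\ V\,]}\,[\,U\ V\,]=\Diag(\tra U\,U,\,\tra V\,V)$, whence $\det[\,U\ V\,]^2=\det(\tra U\,U)\,\det(\tra V\,V)$. Therefore the Gram determinant of the pair $(\bmu\sta,\bmv)$ vanishes; since this Gram determinant equals $\scp{\bmu\sta\vi\bmv}{\bmu\sta\vi\bmv}=\som_{J<K}(a_Jb_K-a_Kb_J)^2$, one already gets proportionality in the quadratic sense, and in particular $\bmu\sta\vi\bmv=0$ over any ring in which a sum of squares vanishes only if each term does.

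Third — and this is the step I expect to be the main obstacle — one must upgrade this to the \emph{exact} vanishing of every $a_Jb_K-a_Kb_J$ over an arbitrary ring. A pure localization argument does not suffice: inverting a maximal minor of $U$ (resp. of $V$) puts one in the situation of Corollary \ref{corpropDualiteHodge}, where $\Im U$ (resp. $\Im V$) is a free direct summand whose orthogonal is free and spanned by the Hodge representative, so that $\bmu\sta$ and $\bmv$ become proportional after each such localization; but the minors of $U$ and of $V$ are not comaximal (e.g. $U=V=0$), and the local-global principle then yields membership only up to radical, i.e. nilpotence of $a_Jb_K-a_Kb_J$ in $\gA/\fc$, not its vanishing. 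To reach exact membership I would instead establish the generalized Lagrange identity expressing each $a_Jb_K-a_Kb_J$ directly as an explicit $\gA$-linear combination of the $\scp{u_i}{v_j}$, obtained by peeling the factors $v_1,\dots,v_q$ of $\bmv$ one at a time and using the contraction formula relating the Hodge star to the right interior product $\intd$ (each contraction against a $v_j$ forcing out a factor $\scp{u_i}{v_j}$). The base cases $p=1$ and $q=1$ are the classical Cramer fact that the maximal minors of an $(n-1)\times n$ matrix generate its kernel exactly, so that $\tra U\,v=0$ forces $a_iv_j=a_jv_i$, and the general case follows by induction on $q$. The genuine difficulty is purely combinatorial: controlling the signs $\vep_{I,J}$ and organizing the expansion so that every surviving term visibly carries one of the inner products $\scp{u_i}{v_j}$.
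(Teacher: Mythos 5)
Your reductions are correct: deducing point \emph{1} from point \emph{2} by base change to $\gA/\fc$ is valid (the Hodge star is defined coordinatewise by determinants, so it commutes with any extension of scalars), the Gram-determinant identity $\det(\tra U U)\det(\tra V V)=\det[\,U\ V\,]^2$ is right but, as you admit, only settles rings where a vanishing sum of squares forces each term to vanish, and your localization analysis reproduces exactly the paper's own partial proof: the paper proves point \emph{2} under the extra hypothesis $\Gr\big(\cD_1(\bmu)\big)\geq 1$ by inverting maximal minors of $U$ (via Corollary \ref{corpropDualiteHodge}) and descending with the local-global principle \ref{plcc.regularite}, and it defers the general case to an appendix for precisely the reason you identify.

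The gap is in your third step, which is where the theorem actually lives. The proposed induction --- peeling the factors of $\bmv$, ``each contraction against a $v_j$ forcing out a factor $\scp{u_i}{v_j}$'' --- does not close as stated. Contracting gives $\bmu\sta\intd v_q=(\bmu\vi v_q)\sta$ (by Lemma \ref{lemDualIntd} and Proposition \ref{propDualiteHodge}), so the inductive instance you are led to compares $(\bmu\vi v_q)\sta$ with $v_1\vi\dots\vi v_{q-1}$; but the hypothesis this instance requires is orthogonality of $v_q$ to $v_1,\dots,v_{q-1}$, and the $v_j$ are in no way assumed orthogonal among themselves. The paper closes exactly this hole with a device absent from your sketch: the Sylvester--Pl\"ucker expansion (Lemma \ref{lemthOrtho1}), which writes $\dt{\bmu\vi e_I}\,e_J$ as a sum of decomposable $q$-vectors obtained by substituting the vectors of $e_J$ into the list $(u_1,\dots,u_p,e_{i_1},\dots,e_{i_q})$; the unique term not containing some $u_i$ as a factor is $\dt{\bmu\vi e_J}\,e_I$, so $\bmw=\dt{\bmu\vi e_I}\,e_J-\dt{\bmu\vi e_J}\,e_I$ is a sum of decomposables each carrying a factor $u_i$. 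Only then does the contraction induction enter (Lemma \ref{lemthOrtho2}), and crucially it contracts $\bmv$ against a \emph{single fixed} $u_i$, so the hypothesis $\scp{v_j}{u_i}=0$ propagates through the induction and yields $\scp{\bmw}{\bmv}=0$, i.e. the exact Lagrange-type identity you are after. Without the Sylvester--Pl\"ucker step (or an equivalent substitute) the combinatorics you defer cannot be organized so that every surviving term visibly carries an inner product. A secondary slip: your base case invokes ``the classical Cramer fact that the maximal minors of an $(n-1)\times n$ matrix generate its kernel exactly'', which is false over a general ring (take $U=0$); only the weaker proportionality statement holds, and it is itself an instance of the identity to be proved.
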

%
\begin{proof} Notons que le point \emph{2} résulte du \emph{1}, et que le \emph{1} résulte du \emph{2} si ce dernier est démontré dans le cas \gnq où les \coes des matrices sont des \idtrs soumises aux seules contraintes $\tra U \,V =0$

\emph{\Demo du point 2. dans le cas où $\Gr\big(\cD_1(\bmu)\big)\geq 1$.}  (Mais on ne sait pas si cette hypothèse est vérifiée dans le cas \gnq évoqué ci-dessus). On considère un mineur maximal $\mu_1$ extrait de $U$, par exemple le premier,
et on se situe sur l'anneau $\gA[1/\mu_1]$.
Soit $U_1$ la matrice extraite de~$U$ sur les $p$ premières lignes,
de sorte que $\mu_1=\det(U_1)$.
En notant $U'=UU_1^{-1}=\cmatrix {\I_p\cr A}$, on obtient $\pex {U'}=\fraC 1{\mu_1}\pex {U}$. 
\'Ecrivons $V=\cmatrix{V_2\cr V_1}$ avec $V_1\in\gA^{q\times q}$.
\\
On a
$0_{p,q}=\tra U \,V=\tra {U_1}\,\tra {U'}\,V$, \hbox{donc $0_{p,q}=\tra {U'} V=\tra A V_1+V_2$}. \\
Compte tenu
du corolaire \ref{corpropDualiteHodge},
on doit montrer que les $q$-vecteurs $\pexmat{-\tra A\cr \I_q}$ et \hbox{$\pex V$} sont proportionnels. Or $\cmatrix{-\tra A\cr \I_q}V_1 =\cmatrix{-{\tra A}\, V_1\cr V_1}=\cmatrix{V_2\cr V_1}$.\\
Donc $ 
\det(V_1)\pexmat{-\tra A\cr \I_q}=\pex V.$\\
On a obtenu $\det(V_1)\bmu\sta=\det(U_1)\bmv$ sur l'anneau $\gA[1/\mu_1]$.
Si l'on note $\mu_{\beta}$ \hbox{et $\nu_\beta$} les \coos d'indice $\beta$ de $\bmu\sta$
et $\bmv$  on a donc pour tout $\beta$ l'\egt~\hbox{$\mu_1\nu_\beta=\mu_\beta \nu_1$}, 
et donc aussi 
$\mu_1 \mu_\beta \nu_\gamma=\nu_1\mu_\beta \mu_\gamma=\mu_1 \mu_\gamma \nu_\beta$, \hbox{puis $\bmu\sta \vi \bmv=0$} (toujours sur  $\gA[1/\mu_1]$). 
\\
Enfin on a le même résultat sur chaque anneau $\gA[1/\mu_\alpha]$
pour chaque \coo $\mu_\alpha$ de $\bmu\sta$, qui est de profondeur $\geq 1$. Donc $\bmu\sta \vi \bmv=0$ sur
$\gA$.

\smallskip \emph{Nous n'utiliserons le \tho de proportionnalité que dans le cas simple envisagé précédemment. La \dem dans le cas \gnl 
est cependant intéressante. Elle nécessite l'introduction de nouvelles notions
en algèbre extérieure et nous la renvoyons en annexe à la fin du chapitre.}
\end{proof}

\rem 
D'un point de vue \gmq le \tho de proportionnalité est intuitivement
évident. On se place dans un espace euclidien orienté. Les colonnes des
matrices $U$ et $V$ engendrent deux sous-espaces \orts de dimensions attendues
$p$ et $q$.  Le $p$-vecteur représentant l'espace engendré par $\Im(U)$
doit donc être proportionnel au $q$-vecteur représentant l'espace
engendré par $\Im(V)$ une fois qu'on les a ramenés dans un même espace
au moyen d'une dualité canonique basée sur le \deter et le produit
scalaire euclidien.\\
Notons aussi que pour établir le \tho de proportionnalité dans le cas \gnl, on pourrait partir du cas des corps et utiliser le \nst formel,  il suffirait alors de savoir que l'anneau \gnq
défini par l'\egt~\hbox{$\tra UV=0$} est un anneau réduit. 
\eoe

\section[Déterminant de Cayley pour certains complexes]{Déterminant de Cayley pour certains complexes}\label{secDetCayley}

\hum{si les entiers $r_1$, \dots, $r_m$ ne sont pas tous $>0$, cela semble sans intérêt réel.

si l'on prend l'option inverse (autoriser des $r_k$ nuls) 
les premiers \ids \caras sont tous égaux à $\gen{1}$ jusqu'au dernier
indice tel que $r_\ell=0$.

Seul le dernier complexe, des indices $\ell-1$ à $0$ compte.}
Dans cette section on considère,
une suite  $(r_0,r_1,\dots,r_m,r_{m+1})$ dans $\NN$, avec \fbox{$m\geq 2$ et  $r_{m+1}=0$}, et un complexe descendant de modules libres

\smallskip 
\centerline{\fbox{$L\ibu:\quad \quad 0 \to L_m \vvers{A_m}  L_{m-1} \vvvers{A_{m-1}}\;  \cdots \cdots \; \vvers{A_2}  L_1 \vvers{A_1} L_0$}, 
}

\smallskip 
avec \fbox{$L_k= \gA^{r_{k+1}+r_k}$},  
$k\in\lrb{0..m}$,  
(en particulier $L_m= \gA^{r_m}$ et~\hbox{$\chi(L\ibu)=r_0$}).
\\
Dans la suite, on considère les \idcas \fbox{$\fD_k=\cD_{r_k}(A_k)$}
et l'on fait les hypothèses suivantes.
\Grandcadre{$\Gr(\fD_1)\geq 1$, et $\Gr(\fD_k)\geq 2$ pour $k\in\lrb{2..m}$.}

\begin{definition} \label{defiCompCay}
Sous ces hypothèses, on dira que le complexe est un \emph{complexe de Cayley}.%
\index{Cayley!complexe de ---}\index{complexe!de Cayley}
 \end{definition}

Dans le \thref{thdetCay} on va définir les \ids de \fcn du complexe, qui permettent de donner une description plus précise des \ids $\fD_\ell$. 

Si en outre $\chi(L\ibu)=0$, on
définira le \deter de Cayley de $L\ibu$, \hbox{noté $\ffg(L\ibu)$}.
Dans ce cas le module $M=\Coker(A_1)$ est un \mMR et $\fG(M)=\gen{\ffg(L\ibu)}$.

\begin{fact} \label{factCay1}
Dans un complexe de Cayley, chaque matrice~$A_k$ est de rang stable $r_k$ (même si le complexe n'est pas exact). 
\end{fact}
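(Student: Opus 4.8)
Le but est de montrer que dans un complexe de Cayley, chaque matrice $A_k$ a pour rang stable $r_k$, c'est-à-dire $\cD_{r_k}(A_k)=\fD_k$ est fidèle et $\cD_{r_k+1}(A_k)=\gen 0$ (\rg $\leq r_k$).

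D'abord j'observe que la fidélité de $\fD_k$ est déjà garantie par les hypothèses de Cayley: pour $k=1$ on a $\Gr(\fD_1)\geq 1$, et pour $k\geq 2$ on a $\Gr(\fD_k)\geq 2\geq 1$; or $\Gr(\fD_k)\geq 1$ signifie précisément que $\fD_k$ est fidèle. Il reste donc à établir que $\rg(A_k)\leq r_k$, autrement dit que $\cD_{r_k+1}(A_k)=\gen 0$. Comme $L_m=\gA^{r_m}$ et $r_{m+1}=0$, la matrice $A_m$ a exactement $r_m$ colonnes, donc trivialement $\cD_{r_m+1}(A_m)=\gen 0$ et $A_m$ est de rang $\leq r_m$ sans hypothèse supplémentaire.

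Le c\oe ur de l'argument repose sur le fait que $L\ibu$ est un \emph{complexe}, c'est-à-dire $A_k\circ A_{k+1}=0$. Le plan est de raisonner de la droite vers la gauche, par récurrence descendante sur $k$ de $m$ à $1$, en utilisant le lemme du rang stable \ref{lemSuitExStable} dans sa version \gui{sans exactitude}. Pour chaque $k$, on dispose de la relation de complexe
$$
L_{k+1}\vvers{A_{k+1}} L_k\vvers{A_k} L_{k-1},\qquad A_k A_{k+1}=0,
$$
avec $L_k=\gA^{r_{k+1}+r_k}$ et $r_{k+1}$ déjà identifié comme rang stable de $A_{k+1}$. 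La difficulté est que le lemme \ref{lemSuitExStable} est énoncé pour une \emph{suite exacte}, donc je ne peux pas l'invoquer tel quel. À la place, je localise: pour un mineur $\mu$ d'ordre $r_{k+1}$ de $A_{k+1}$, sur l'anneau $\gA[1/\mu]$ le lemme du mineur inversible \ref{FFRlem.min.inv} rend $A_{k+1}$ \eqve à $\Cmatrix{2pt}{\I_{r_{k+1}}\cr 0}$. La relation $A_kA_{k+1}=0$ force alors les colonnes correspondantes de $A_k$ à être nulles, de sorte que $A_k$ devient \eqve à une matrice de la forme $\big[\,0\ \ B_k\,\big]$ où $B_k$ n'a que $r_k$ colonnes; donc $\rg(A_k)\leq r_k$ sur $\gA[1/\mu]$, c'est-à-dire $\cD_{r_k+1}(A_k)=\gen 0$ sur $\gA[1/\mu]$.

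\textbf{L'étape délicate} est alors le recollement: il faut passer de \gui{$\cD_{r_k+1}(A_k)$ nul après \lon en chaque mineur $\mu$ d'ordre $r_{k+1}$ de $A_{k+1}$} à \gui{$\cD_{r_k+1}(A_k)$ nul sur $\gA$}. Ici intervient de fa\c con essentielle l'hypothèse de profondeur $\Gr(\fD_{k+1})\geq 1$ (fidélité de $\fD_{k+1}=\cD_{r_{k+1}}(A_{k+1})$): les mineurs $\mu$ d'ordre $r_{k+1}$ de $A_{k+1}$ engendrent l'\id fidèle $\fD_{k+1}$, donc forment un \sys d'\elts \cor (un \sys une fois \Erg). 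Comme l'annulation d'un \itf (ici $\cD_{r_k+1}(A_k)$, qui est \tf) est stable par le \plg relatif à la régularité \ref{plcc.regularite} (point \emph{\ref{i1plcc.regularite}}, appliqué à chaque \gtr de $\cD_{r_k+1}(A_k)$), l'annulation locale en chaque $\mu$ entra\^ine l'annulation globale. On conclut ainsi $\cD_{r_k+1}(A_k)=\gen 0$, donc $\rg(A_k)\leq r_k$, ce qui joint à la fidélité déjà acquise de $\fD_k$ donne $\rgst(A_k)=r_k$. L'initialisation de la récurrence est le cas $k=m$ traité plus haut, et le passage de $k+1$ à $k$ est exactement l'argument qu'on vient de décrire, valable tant que $k\geq 1$.
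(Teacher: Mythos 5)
Votre démonstration est correcte et suit essentiellement la même démarche que celle du texte : la fidélité de $\fD_k$ est l'hypothèse même, et la majoration $\rg(A_k)\leq r_k$ s'obtient en localisant en les mineurs d'ordre $r_{k+1}$ de $A_{k+1}$ (qui engendrent un idéal fidèle, donc forment un système corégulier), puis en recollant l'annulation de $\cD_{r_k+1}(A_k)$ par le principe local-global \ref{plcc.regularite}. La seule différence est de présentation : là où vous déroulez à la main le lemme du mineur inversible et la relation $A_kA_{k+1}=0$ pour faire appara\^{\i}tre la forme $[\,0\ \ B_k\,]$, le texte invoque le corolaire \ref{corlemModifComplexe} (raccourcissement du complexe par modification élémentaire, sans changement des idéaux caractéristiques) — c'est exactement le même calcul, simplement encapsulé.
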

%
\begin{proof}
L'\id $\cD_{r_k}(A_k)$ est fidèle par hypothèse. Montrons
\hbox{que $\rg(A_k)\leq r_k$}. La \dem fonctionne 
à peu près comme pour le cas d'un complexe exact. \\
Tout d'abord
$\rg(A_m) \leq  r_m$ est évident.
On remarque que
si l'on inverse un mineur maximal de $A_m$
on peut faire une modification \elr du complexe
qui le raccourcit d'un cran
et ne change pas les \ids \caras.
Donc $\rg(A_{m-1}) \leq  r_{m-1}$ après inversion d'\ecr
(les mineurs maximaux de $A_m$),
\hbox{donc $\rg(A_{m-1}) \leq  r_{m-1}$}.
Et ainsi de suite.
\end{proof}

Rappelons à titre de motivation que selon le \thref{thRangStProfRLF}, si le complexe est exact, chaque~$A_k$ est stable de rang stable $r_k$
et l'on \hbox{a $\Gr(\fD_k)\geq k$} pour tout $k\in\lrbm$. Donc la \rsn libre est un complexe de Cayley.
\\
Dans le cas où $\gA$ est un domaine de Bezout, on n'apprend
rien de nouveau concernant les modules de rang nul, mais c'est assez naturel puisque tout \mpf admet une \rsn libre de longueur $1$, et tout \itf \ndz est engendré par son pgcd fort. 
\\
Par contre, pour un anneau arbitraire, même les \rsns libres
de longueur $2$ des modules monogènes présentent quelques mystères, faisant l'objet du \tho de Hilbert-Burch, que nous présentons plus loin comme un cas particulier des
résultats établis dans cette section.  
 
\smallskip 
Dans la suite pour un vecteur $x$ d'un module libre (par exemple une matrice)
on note $\cD_1(x)$ l'\id engendré par ses \coos sur une base, et $\Gr(x)$
pour $\Gr(\cD_1(x))$.
\\
Dans cette section tous les modules et leurs puissances extérieures sont munis de leurs bases naturelles.
\\
Par exemple dans le lemme qui suit, à rapprocher du lemme \ref{lemSuitExStable}, il faut bien prêter attention aux notations. L'\ali $\Vi^{q}U$ est vue comme une matrice sur les bases naturelles de
$\Vi^{q} F$ et $\Vi^{q} E$, et les $q$-vecteurs~$\bmz$ 
et~$\bmx$ sont vus comme 
des vecteurs colonnes sur les bases naturelles. Ainsi l'hypothèse affirme
que la matrice~$\Vi^{q}U$ est égale au produit du vecteur colonne $\bmz$ par le vecteur ligne~$\tra \bmx$. 

\begin{lemma} \label{lempqrs}
On considère un complexe 
${\gA^{p+q}\vvers U  \gA^{q+r}\vvers V \gA^{r+s}.}$

On note $E=\gA^{p+q}$, $F=\gA^{q+r}$, $G=\gA^{r+s}$ et l'on suppose que  
\begin{enumerate}
%
%
\item  $\Vi^{q}U =\bmz \, \tra \bmx $, où $\,\bmz \in\Vi^{q} F$ et $\,\bmx \in\Vi^{q}E$,
\item  $\Gr (\bmz ) \geq 2$ et $\Gr (\bmx ) \geq 1$.
\end{enumerate}
On a alors les résultats suivants.
\begin{enumerate}\setcounter{enumi}{2}
\item On peut écrire $\Vi^{r}V=\bmy\, \tra {\,\bmz \sta}$, où $\bmy\in\Vi^{r} G$.
\item En outre $\Gr (\bmy) \geq\Gr\big(\cD_r(V)\big)$.
\end{enumerate}  
\end{lemma}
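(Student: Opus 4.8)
Looking at this lemma, I need to understand the setup carefully before sketching a proof.

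We have a complex $\gA^{p+q} \vers{U} \gA^{q+r} \vers{V} \gA^{r+s}$ with $V \circ U = 0$. The hypotheses give a factorization $\Vi^q U = \bmz\, \tra\bmx$ (rank-one as a matrix on exterior powers), with $\Gr(\bmz) \geq 2$ and $\Gr(\bmx) \geq 1$. I must produce an analogous factorization $\Vi^r V = \bmy\, \tra{\bmz\sta}$ and bound $\Gr(\bmy)$ from below by $\Gr(\cD_r(V))$.

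\medskip

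\textbf{The approach.} The natural strategy is to exploit the relation $V \circ U = 0$ together with the proportionality theorem \ref{thOrtho}. First I would unpack what the factorization $\Vi^q U = \bmz\,\tra\bmx$ means: for each pair of a $q$-subset of the target basis and a $q$-subset of the source basis, the corresponding maximal minor of $U$ (of order $q$) equals a product of a coordinate of $\bmz$ and a coordinate of $\bmx$. Since $\Gr(\bmz)\geq 2$ (in particular $\bmz$ has a regular coordinate) and $\Gr(\bmx)\geq 1$, these data behave rigidly. The key geometric idea, as the remark after \ref{thOrtho} suggests, is that $\Im(U)$ (locally of rank $q$) is orthogonal to $\ker V$, and $\bmz\sta$ should represent the complementary $r$-dimensional space inside $F = \gA^{q+r}$.

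\medskip

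\textbf{Key steps.} I would argue by a local-global passage, using the profondeur hypotheses to glue. The plan is to localize at each generator of an appropriate characteristic ideal so that the matrices become simple (via the lemme du mineur inversible \ref{FFRlem.min.inv} and the lemme de la liberté \ref{FFRlem pf libre}). After localizing at a regular coordinate of $\bmz$, the $q$-vector $\bmz$ becomes (up to a unit) a basis $q$-vector, and $\bmz\sta$ is its Hodge dual, spanning the orthogonal complement in $F$. Because $V \circ U = 0$, every column of $V$ is orthogonal (in the pairing of \ref{thOrtho}) to the columns of $U$, whose exterior product is $\bmz$; hence the proportionality theorem \ref{thOrtho} forces $\Vi^r V$ to be proportional to $\tra{\bmz\sta}$, giving the factorization $\Vi^r V = \bmy\,\tra{\bmz\sta}$ for a suitable $\bmy$. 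The coordinates of $\bmy$ are then (up to the fixed regular data from $\bmz$) the maximal minors of $V$, which is precisely what yields $\cD_1(\bmy)$ comparable to $\cD_r(V)$, and hence the bound $\Gr(\bmy) \geq \Gr(\cD_r(V))$.

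\medskip

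\textbf{The main obstacle.} The delicate point will be establishing the factorization $\Vi^r V = \bmy\,\tra{\bmz\sta}$ \emph{globally} rather than merely up to localization, and verifying that the globally reconstructed $\bmy$ genuinely carries the ideal $\cD_r(V)$ in its content. The local computation is routine once $U$ is reduced to $\I_{q,\cdot,\cdot}$ and $V$ to a compatible simple form, but patching requires the profondeur-$2$ hypothesis on $\bmz$ (through a result like the \plg \ref{plccProfondeur} or the proportionality/divisibility mechanism of \ref{propProf2div}): one shows the candidate $\bmy$ defined locally is consistent across localizations precisely because $\Gr(\bmz)\geq 2$ lets one divide and recollect. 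I expect the bookkeeping of Hodge duals and signs, and the precise identification of $\cD_1(\bmy)$ with (a saturation or radical-equivalent of) $\cD_r(V)$, to be where the real work lies; the inequality $\Gr(\bmy)\geq \Gr(\cD_r(V))$ should then follow since the two ideals share the same radical after the regular scalars from $\bmz$ are inverted, invoking \ref{lemProdEtProf} to compare profondeurs.
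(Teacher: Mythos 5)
Your toolkit is the right one (orthogonality of the columns of $U$ with the rows of $V$ coming from $VU=0$, the proportionality theorem \ref{thOrtho}, division by $\bmz\sta$ via the depth-$2$ characterization \ref{lemGRa>1}), but the architecture you build around it --- trivialize the matrices by localization, define $\bmy$ locally, then glue --- has a genuine gap, exactly at the point you flag as delicate. The localizations that put $U$ in standard form are localizations at the maximal minors of $U$, i.e.\ at the generators of $\cD_q(U)=\cD_1(\bmz)\,\cD_1(\bmx)$; since $\Gr(\bmx)$ is only assumed $\geq 1$, this family generates an ideal of depth $\geq 1$ only, not a twice-regular system. Hence neither \ref{propProf2div} nor the recollement principles \ref{plccProfondeur} and \ref{plcc.modules1bis} apply to it: the depth-$2$ hypothesis bears on $\bmz$, not on the family at which you localize, so ``dividing and recollecting'' the locally defined $y_J$ is not justified by the results you cite. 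Two further slips in the local picture: $\Gr(\bmz)\geq 2$ does \emph{not} provide a regular coordinate of $\bmz$ (constructively, all coordinates may be zero divisors; only the ideal they generate is deep), and inverting a coordinate $z_I$ does not make $\bmz$ a unit multiple of a basis $q$-vector (to decompose $\bmz$ you must invert a coordinate $x_{I'}$, since $x_{I'}\bmz=\pex{U_{I'}}$, and to reach a standard form you must invert a full minor $z_Ix_{I'}$ and justify $\cD_{q+1}(U)=0$ locally, which is not among the hypotheses).

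The paper's proof needs no localization and no gluing, which is how it escapes this trap. It applies \ref{thOrtho} (point \emph{2}) directly over $\gA$ to every pair of extracted matrices $U_I$ ($q$ columns of $U$) and $V_J$ ($r$ columns of $\tra V$): since $\pex{U_I}=x_I\,\bmz$, this gives $x_I\bigl(\bmz\sta\vi\pex{V_J}\bigr)=0$ for all $I$; faithfulness of $\cD_1(\bmx)$ --- used globally, not as a localizing family --- kills the coefficient, so $\bmz\sta\vi\pex{V_J}=0$ holds globally; only then does depth $2$ enter, again globally, via \ref{lemGRa>1} (point \emph{3}) which produces a unique $y_J$ with $\pex{V_J}=y_J\,\bmz\sta$, and these assemble into $\bmy$. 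If you want to keep a local-global flavour, the correct order is: prove the \emph{equation} $\bmz\sta\vi\pex{V_J}=0$ after localization (a vanishing statement, for which a depth-$1$ family suffices by \ref{plcc.regularite}), then divide globally by depth $2$ --- not define $\bmy$ locally and glue it. Finally, point \emph{4} requires none of the radical or localization considerations you sketch: the factorization shows every $r$-minor of $V$ is a multiple of a coordinate of $\bmy$, so $\cD_r(V)\subseteq\cD_1(\bmy)$, and one concludes by monotonicity of depth under inclusion (\ref{factdefProf}).
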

%
\begin{proof} \emph{4.} \'Evident puisque l'\egt du \emph{3} montre que $ \cD_r(V) \subseteq  \cD_1(\bmy)$.

\emph{3.} On regarde $U $ comme une liste de $p+q$ vecteurs $u_i$ de $F$
et $\tra V$ comme une liste de $r+s$ vecteurs $v_j$ de $F$. \\
Puisque $VU =0$,
les produits scalaires $\scp{u_i}{v_j}$ sont nuls. 
\\
Soient $I\in \cP_{q,p+q}$ et  $J\in \cP_{r,r+s}$. Si $U_I$ est la matrice
extraite de $U$ sur les colonnes indexées dans~$I$, et si 
$V_J$ est la matrice
extraite de $\tra V$ sur les colonnes indexées dans~$J$,
le \thref{thOrtho} s'applique et dit que ${\pex {U_I}}\sta$ et $\pex {V_J}$ sont proportionnels. Or $\pex {U_I}=x_I \bmz $, où $x_I$ est la \coo du vecteur ligne $\tra \bmx $ correspondant à la colonne de $\Vi^{q}U$ pour le vecteur 
de base~\hbox{$e_I\in \Vi^{q} E$}.
\\
Ainsi tous les $x_I {\,\bmz \sta}$ sont proportionnels à tous les $\pex {V_J}$. Comme  $\bmx $ est \ndz,~$ {\,\bmz \sta}$ est proportionnel à tous les $\pex {V_J}$. Comme $\Gr(\bmz )=\Gr( {\,\bmz \sta})\geq 2$, cela implique
pour chaque $J$ que l'on peut écrire $\pex {V_J}=y_J{\,\bmz \sta}$, ce qui donne l'écriture voulue de $\Vi^{r}V$.
\end{proof}

Comme corolaire on obtient la \dfn du \deter de Cayley 
du complexe $L\ibu$ (mais à l'époque de Cayley,
tous les anneaux étaient intègres et les \pbs de proportionnalité pouvaient sembler plus simples).

\begin{thdef} \label{thdetCay} \emph{(Complexe de Cayley: \ids de \fcn, \deter de Cayley)}~
On considère un complexe de Cayley $L\ibu$ (\dfn \ref{defiCompCay}). On note $M=\Coker(A_1)$ et l'on pose $\bmu_m=[\,1\,]\in\Ae{1\times 1}$.
 On a les résultats suivants.
\begin{enumerate}
\item Il existe un unique \sys de vecteurs colonnes%
\footnote{Comme expliqué avant le lemme \ref{lempqrs}, on regarde les $\Vi^{\ell}L_k$ comme munis de leurs bases naturelles et les  $\Vi^{r_{k}}A_k$ comme
des matrices sur les bases naturelles, et donc $\bmu_{k-1}$ est un vecteur colonne de taille $r_{k}+r_{k-1}\choose r_{k}$, tandis que $\tra {\,({\bmu_{k}}\sta})$ est un vecteur ligne de taille $r_{k+1}+r_{k}\choose r_{k}$.
La matrice $\Vi^{r_{k}}A_k$ est de rang stable $1$.}

\snic{\bmu_k\in\Vi^{r_{k+1}}L_{k},\;k\in\lrb{0..m-1}} 

tel que  pour tout $k\in\lrbm$

\centerline{\quad \fbox{$\Vi^{r_k}A_k=\bmu_{k-1} \tra {\,({\bmu_{k}}\sta})$}\,.}

On note \fbox{$\fB_k=\cD_1(\bmu_k)$} pour $k\in\lrb{0..m}$. \\ 
On obtient 
\fbox{$\fD_k=\fB_k\fB_{k-1}$} pour $k\in\lrbm$. Ces \ids $\fB_k$ sont appelés les \emph{\ids de \fcn du complexe $L\ibu$}.\index{complexe de Cayley!ideal@\ids de \fcn d'un ---}\index{ideaux de fac@\ids de \fcn!d'un complexe de Cayley} 
\item On obtient en particulier $\Gr(\fB_0)\geq 1$,  $\Gr(\fB_k)\geq 2$ pour $k\geq 1$, et 
$$\preskip.2em \postskip-.2em
\prod_{ k\in\lrb{0.. m}}\fB_k\;=\;\;{\fB_0\prod_{k:1<2k\leq m}\fD_{2k}\;=\prod_{k:1\leq 2k+1\leq m}\fD_{2k+1}}
$$

\item Lorsque $\chi(L\ibu)=0$, \cad lorsque $L_0=\Ae{r_1}$, on a ${\bmu_0}\in\Ae{1\times 1}$, son \coe, noté $\ffg(L\ibu)$ ou $\ffg$, est appelé le \emph{\deter de Cayley du complexe~$L\ibu$}. On notera $\fG$ ou $\fG(L\ibu)$
l'\id $\fB_0=\gen{\ffg}$.\index{Cayley!determin@\deter de ---}%
\index{determin@\deter!de Cayley} 
%
\begin{enumerate}
\item On a  $\Vi^{r_1}A_1=\bmu_0\, \tra {\,({\bmu_{1}}\sta})=\fG\, \tra {\,({\bmu_{1}}\sta})$.
\item Le \deter de Cayley $\ffg$ est un \elt \ndz. 
C'est le pgcd fort de l'\id $\fD_1=\cF_0(M)$, et l'on a $\Gr(\fD_1/\ffg)\geq 2$. 
\\
Autrement dit,  $M$   est un module de MacRae et son invariant de MacRae~$\fG(M)$
est égal à $\fG(L\ibu)$.
\end{enumerate}
\item Lorsque $\chi(L\ibu)>0$, si $\fB_0=\gen{g}$, le \gtr $g$ est \ndz, il est pgcd fort de l'\id $\fD_1=\cF_{r_0}(M)$, et l'on a $\Gr(\fD_1/g)\geq 2$.
\item Si l'on opère des changements de base sur les modules $L_k$ du complexe, et si les matrices sont transformées corrélativement, 
les \ids~$\fD_k$ et~$\fB_k$ ne changent pas, les vecteurs $\bmu_k$ sont prémultipliés par des matrices inversibles (dans le cas où $\chi(L\ibu)=0$,  $\ffg$ est donc multiplié par une unité).
\item Si l'on fait subir au complexe des modifications \elrs pour les~$A_k$ (voir le lemme
\ref{lemModifComplexe}), les \ids $\fD_k$ et~$\fB_k$ ne changent pas non plus.
\end{enumerate}
\end{thdef}
%
\begin{proof}
\emph{1} et \emph{2.} Application directe récurrente du lemme \ref{lempqrs}.

\emph{3a.} Cas particulier de $\emph{1.}$ 

\emph{3b.}  Résulte du point $\emph{2}$:  $\ffg$ est \ndz car $\Gr(\bmu_0)\geq 1$, 
par ailleurs $\Gr(\bmu_1)\geq 2$ implique que $1$ est pgcd fort de $\bmu_1$ 
(point \emph{4} du lemme \ref{lemGCDFORT}), ce qui implique que $\ffg$
est pgcd fort \hbox{de $\ffg\,\bmu_1$} (point \emph{2} du même lemme).

\emph{4.} Ce point peut être vu comme une \gnn du point \emph{3} Cela résulte de l'\egt $\fD_1=\fB_1\fB_0$ avec $\Gr(\fB_1)\geq 2$ et $\Gr(\fB_0)\geq 1$.

\emph{5.} Cela résulte du fait qu'un changement de bases avec matrices $P$ et $Q$ (à la source et au but) pour des modules libres  remplace la matrice $A$ d'une \ali $\varphi$
 par la matrice $Q^{-1}AP$ et la matrice $\Vi^{r}A$ de l'\ali $\Vi^{r}\varphi$ par  $(\Vi^{r}Q)^{-1}(\Vi^{r}A)(\Vi^{r}P)$. 
 Si les changements de base étaient tous de \deter $1$, dans le cas $r_0=0$
 le \deter de Cayley serait inchangé (pas seulement l'\id qu'il engendre).

\emph{6.} On sait déjà qu'une modification \elr ne change pas les \ids 
\caras $\fD_k$ (proposition \ref{propdefidecars}).\\
Il suffit de traiter la question dans la modification \elr décrite au point \emph{1}
du lemme \ref{lemModifComplexe}. Matriciellement c'est visualisé
comme suit:
$$
\wi{u_{k}} =\blocs{1.1}{.6}{.8}{0}{$u_{k}$}{$0$}{}{}\;,\;\;\;
\wi{u_{k+1}} =\blocs{1.3}{.6}{1.1}{.6}{$u_{k+1}$}{$0$}{$0$}{$\I_E$}\;, \;\;\;
\wi{u_{k+2}} =\blocs{0}{1.5}{1.3}{.6}{}{$u_{k+2}$}{}{$0$}\;.
$$
Si $E$ est libre de rang $s$, nous avons $\wi{r_{k+1}}=r_{k+1}+s$ et les autres $r_\ell$ sont inchangés.\\
Dans la nouvelle \rsn les $\wi{\bmu_\ell}$ sont inchangés pour $\ell>k+2$.
On constate ensuite que l'on obtient $\wi{\bmu_{k+2}}$ à partir de $\bmu_{k+2}$ en rajoutant des coordonnées toutes nulles sur les nouveaux vecteurs de base, car il en va de même pour la matrice $\Vi^{r_{k+2}}\wi{A_{k+2}}$.
La même constatation s'applique pour $\wi{\bmu_{k+1}}$. En effet, dans la matrice $\Vi^{r_{k+1}+s}\wi{A_{k+1}}$ toutes les nouvelles coordonnées sont nulles
(il faut ici tenir compte du fait \ref{factCay1}).  
Enfin, on peut prendre $\wi{\bmu_{k}}=\bmu_{k}$, car dans la matrice
$\Vi^{r_{k}}\wi{A_{k}}$ toutes les nouvelles coordonnées sont nulles.
Ensuite, les $\bmu_\ell$ pour $\ell<k$ sont manifestement inchangés.\\
NB: les adaptations \ncrs aux cas où $k=m$, ou $k=1$ sont faciles.
\end{proof}
\rems ~\\ 
1) On peut comprendre la convention $\bmu_{m}=[\,1\,]\in\Ae{1\times 1}$  en référence  au  \deter de la matrice vide, qui est égal à $1$, avec $\Ae{1\times 1}=\Vi^{0}L_m$ (ici, un \elt de $\Vi^{0}L_m$ doit être exprimé sous la forme d'un vecteur colonne).\\
2) Si l'on autorise  $m=1$, le \tho reste juste mais sans intérêt.\\ 
Par exemple dans le  cas où~$M$ est le quotient d'un module libre par un sous-module libre, on a pour la \rsn libre correspondante
$$\preskip.4em \postskip.4em 
\bmu_1=1,\; \bmu_0=\Vi^{r_1}\!A_1 \,\hbox{ et  }\,\fD_1=\cF_{r_0}(M)=\fB_0,
$$
mais aucun résultat distinct de l'hypothèse.
\eoe

\sibook{\exls
\hum{à remplir ...

ne pas donner uniquement des \rlfs de modules de rang $0$}
\eoe
}

\begin{proposition} \label{propidefacteds} \emph{(Idéaux de \fcn et \eds)}
\\
On se place dans le cadre du \thref{thdetCay} et l'on considère un changement d'anneau de base $\rho:\gA\to\gB$ où $\gB$ est plat sur $\gA$. Alors le complexe reste un complexe de Cayley, et les vecteurs $\bmu_\ell$
et \ids $\fB_\ell$
sont transformés en leurs images par $\rho$.
\end{proposition}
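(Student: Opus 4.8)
The plan is to reduce the whole statement to the uniqueness assertion of \thref{thdetCay}, once it is known that the base-changed complex $\rho\ist(L\ibu)$ is again a complex of Cayley over $\gB$. First I would dispose of the easy structural points: $\rho\ist(L\ibu)$ is obviously a descending complex (base change preserves $A_kA_{k+1}=0$) of free $\gB$-modules with $\rho\ist(L_k)\simeq\gB^{r_{k+1}+r_k}$, so the only thing to verify is that it satisfies the depth inequalities of \ref{defiCompCay}. Since minors are polynomial in the matrix entries, the characteristic ideals commute with base change, $\fD_{\gB,k}=\rho(\fD_{\gA,k})\gB$, which is exactly \ref{lemfDkchgbase} (valid for any $\rho$). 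Flatness of $\gB$ then lets me invoke the first part of \ref{propProfchgbase}: from $\Gr_\gA(\fD_{\gA,1})\geq 1$ and $\Gr_\gA(\fD_{\gA,k})\geq 2$ for $k\geq 2$ I obtain $\Gr_\gB(\fD_{\gB,1})\geq 1$ and $\Gr_\gB(\fD_{\gB,k})\geq 2$. Hence $\rho\ist(L\ibu)$ is a Cayley complex and \thref{thdetCay} applies to it, producing its own factorization vectors, which I denote $\bmu'_k$.

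Next I would check that both operations occurring in the defining relation $\Vi^{r_k}A_k=\bmu_{k-1}\,\tra{(\bmu_k\sta)}$ commute with $\rho$. On one hand, read on the natural bases, the matrix of $\Vi^{r_k}A_k$ has for entries the $r_k\times r_k$ minors of $A_k$, so its image under $\rho$ is precisely the matrix of $\Vi^{r_k}(\rho\ist(A_k))$. On the other hand, the Hodge isomorphism $\bmx\mapsto\bmx\sta=\som_J\dt{\bmx\vi e_J}\,e_J$ of \ref{IsoHodge} is given by determinants in the coordinates of $\bmx$, whence $(\rho(\bmx))\sta=\rho(\bmx\sta)$. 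Applying $\rho$ to the $m$ defining relations of the original complex (together with the convention $\rho(\bmu_m)=[\,1\,]$) therefore yields, for every $k\in\lrbm$,
\[
\Vi^{r_k}\bigl(\rho\ist(A_k)\bigr)=\rho(\bmu_{k-1})\,\tra{\bigl((\rho(\bmu_k))\sta\bigr)} .
\]

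Finally I would apply the uniqueness half of \thref{thdetCay} to the Cayley complex $\rho\ist(L\ibu)$: the system $\bigl(\rho(\bmu_k)\bigr)_{0\le k\le m-1}$ satisfies exactly the relations that characterize its factorization vectors, so $\bmu'_k=\rho(\bmu_k)$ for all $k$. Taking $\cD_1$ gives $\fB_{\gB,k}=\cD_1(\rho(\bmu_k))=\rho(\fB_{\gA,k})\gB$, which is the asserted transformation of the $\bmu_\ell$ and the $\fB_\ell$ into their images; the equality $\fD_{\gB,k}=\fB_{\gB,k}\fB_{\gB,k-1}$ and (when $\chi(L\ibu)=0$) the multiplication of $\ffg$ by the image determinant follow at once. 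I do not expect a genuine obstacle here: flatness is used only in the first paragraph, to transport the depth conditions through \ref{propProfchgbase}, and the identification of the factorization data with its $\rho$-images is then a formal consequence of the naturality of exterior powers and of the Hodge star combined with uniqueness. The only point requiring care is the bookkeeping of the natural bases, so that "image of the matrix $\Vi^{r_k}A_k$" and "matrix of $\Vi^{r_k}\rho\ist(A_k)$" are literally the same object.
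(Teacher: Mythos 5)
Your proof is correct. The paper states this proposition without any proof, so there is no argument of the authors to compare against; yours supplies precisely the verification they treat as routine. Its three ingredients are the right ones: the characteristic ideals commute with any base change (\ref{lemfDkchgbase}) and flatness transports the depth hypotheses via \ref{propProfchgbase}, so that $\rho\ist(L\ibu)$ is again a complexe de Cayley; the entries of $\Vi^{r_k}A_k$ and the Hodge star are given by minors and signs, hence commute with $\rho$; and the uniqueness clause of \thref{thdetCay}, anchored at $\bmu_m=[\,1\,]$, identifies the factorization vectors of $\rho\ist(L\ibu)$ with the $\rho(\bmu_k)$, whence $\fB_{\gB,k}=\rho(\fB_{\gA,k})\gB$.
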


\section{Structure multiplicative des \rlfs}\label{secStrMulRsl}

\begin{thdef} \label{thdetCay2}~
\begin{enumerate}
\item Si  un \Amo  $M$ est \lrsb, le \thref{thdetCay} s'applique
à tout complexe de modules libres qui le résout.
En outre les \ids de \fcn $\fB_k$ ne dépendent que du module $M$.
On les appellera les \emph{\ids de \fcn du module $M$}, et on les notera
$\fB_k(M)$.

\end{enumerate}
En particulier, on obtient les résultats suivants. 
\begin{enumerate} \setcounter{enumi}{1}
\item Un module $M$  \lrsb de rang $0$ est un \mMR. Son invariant
de MacRae $\fG(M)$ est égal à $\fB_{0}(M)$.  
\item Si un \itf $\fa$ est \lrsb et  de rang $1$
(rappelons qu'il est nul ou de rang $1$ par le \tho de Vasconcelos~\ref{corVascon}), 
alors il admet un pgcd fort $g$ qui est \ndz, \hbox{et $\Gr_\gA(\fa/g)\geq 2$}.
\end{enumerate}
\end{thdef}
%
\begin{proof} Il suffit de montrer le point \emph{1.}\\
La première affirmation résulte clairement du \thref{thRangStProfRLF} (qui minore la \prof des \ids $\fD_k$) et du \thref{thdetCay}. \\
Pour la deuxième affirmation, on commence par le cas où l'anneau est local \dcd. Alors on peut ramener toute \rlf à une \rsn minimale par
des modifications \elrs du complexe, lesquelles ne changent pas les \ids de \fcn. On conclut en notant que la \rsn minimale est unique à \iso près.\\
On traite ensuite le cas \gnl. On veut montrer que deux \itfs sont égaux.
On peut utiliser le \plg de base. Et l'on applique la machinerie \lgbe à \ideps.  Ceci est légitime car les \ids de \fcn se comportent bien par \lon.
\end{proof}
Comme corolaire on obtient le \tho important qui suit.
\begin{theorem} \label{corthdetCay2} \emph{(Idéaux à deux \gtrs \lrsbs)}
\begin{enumerate}
\item Soit $\fa=\gen{a_1,a_2}$ non nul dans $\gA$. \Propeq
\begin{enumerate}
\item L'\id $\fa$ admet un pgcd $g$ \ndz et $\Gr(\fa/g)\geq 2$.
\item L'\id $\fa$ est \lrsb.
\item  L'\id $\fa$  est fidèle et l'on a une \sex

\snic{0\lora\gA\vvvvers{\tra[\,b_1\;b_2\,]} \Ae2\vvvvers{[\,a_1\;a_2\,]~}
\gen{a_1,a_2}\lora 0.} 
\end{enumerate}

\item Pour un anneau $\gA$ \propeq
\begin{enumerate}
\item Tout \id à deux \gtrs   est \lrsb.
\item L'anneau est un anneau à pgcd intègre.
\end{enumerate}
\end{enumerate}  
\end{theorem}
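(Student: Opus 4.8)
Mon plan est de démontrer séparément les deux points, le point \emph{1} étant le cœur de l'affaire, le point \emph{2} s'en déduisant par un argument de recollement local-global.

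\textbf{Point \emph{1}.} Les trois implications à organiser en cycle sont naturelles. Pour \emph{(a)}$\Rightarrow$\emph{(c)}, je pars d'un pgcd fort $g$ \ndz\ divisant $a_1,a_2$ avec $\Gr_\gA(\fa/g)\geq 2$. J'écris $a_i=g\,s_i$ : la suite $(s_1,s_2)$ est alors de profondeur $\geq 2$, donc $1$ est pgcd fort de $(s_1,s_2)$ (c'est le contenu du point \emph{4} du lemme \ref{lemGCDFORT}). Je veux produire la \syzy génératrice $\tra{[\,b_1\ b_2\,]}$ avec $(b_1,b_2)=(s_2,-s_1)$ à signe près. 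Soit $[x_1,x_2]$ une \syzy, c'est-à-dire $a_1x_1+a_2x_2=0$, donc $g(s_1x_1+s_2x_2)=0$, et comme $g$ est \ndz, $s_1x_1=-s_2x_2$. La caractérisation de la profondeur $\geq 2$ (le \thref{lemGRa>1}, appliqué avec $E=\gA$) dit précisément que toute suite proportionnelle à $(s_1,s_2)$ est multiple de $(s_1,s_2)$ : ici $(x_1,x_2)$ — ou plutôt $(-x_2,x_1)$ après avoir vérifié la proportionnalité $s_1x_1=-s_2x_2$ — s'écrit $\lambda\,(s_2,-s_1)$, ce qui donne l'exactitude voulue au milieu. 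L'injectivité de la flèche de gauche $\tra{[\,s_2\ -s_1\,]}$ vient de ce que $(s_1,s_2)$ est \cor, donc fidèle, donc contient un \elt\ \ndz. La fidélité de $\fa$ résulte de $g\in\Reg(\gA)$.

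\textbf{\emph{(c)}$\Rightarrow$\emph{(b)} et \emph{(b)}$\Rightarrow$\emph{(a)}.} La première est immédiate : la \sex\ de \emph{(c)} \emph{est} une \rlf\ de $\fa$ de longueur $1$. Pour \emph{(b)}$\Rightarrow$\emph{(a)}, j'invoque directement le point \emph{3} du \thref{thdetCay2} : un \itf\ \lrsb\ est nul ou de rang $1$ (par le \tho\ de Vasconcelos \ref{corVascon}, version idéaux), et dans le cas de rang $1$ il admet un pgcd fort $g$ \ndz\ avec $\Gr_\gA(\fa/g)\geq 2$. Comme $\fa=\gen{a_1,a_2}$ est supposé non nul, on est bien dans le cas de rang $1$, et c'est exactement l'énoncé \emph{(a)}.

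\textbf{Point \emph{2}.} L'implication \emph{(b)}$\Rightarrow$\emph{(a)} est évidente par définition. Pour \emph{(a)}$\Rightarrow$\emph{(b)}, je veux montrer que $\gA$ est un anneau à pgcd intègre, ce qui en \coma\ signifie que $\gA$ est intègre et que tout couple $(a,b)$ admet un pgcd. L'intégrité est déjà acquise : puisque tout \idp\ $\gen{a}$ admet une \rlf\ (cas particulier de \emph{(a)}), le \thref{corcorVascon} donne que $\gA$ est intègre. Il reste le pgcd. Pour $(a,b)$ avec au moins l'un non nul, $\gen{a,b}$ est non nul et \lrsb\ par hypothèse, donc le point \emph{1} s'applique et fournit un pgcd fort $g$, a fortiori un pgcd ; le cas $a=b=0$ est trivial. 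La principale subtilité, et sans doute l'obstacle le plus délicat à formuler proprement de manière \cov, est de s'assurer que l'existence systématique du pgcd fort sur \emph{tous} les idéaux à deux générateurs équivaut bien à la structure d'anneau à pgcd intègre au sens constructif retenu dans [CACM] ; mais une fois l'intégrité établie via Vasconcelos, tout pgcd d'\elts\ non nuls est automatiquement un pgcd fort dans un anneau intègre (comme rappelé dans l'exemple suivant le \thref{lemGRa>1}), ce qui boucle l'argument sans recours à un principe \lgb\ supplémentaire.
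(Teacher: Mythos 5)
Your proof is correct and follows essentially the route the paper intends: the paper states this theorem as a direct corollary of Theorem \ref{thdetCay2} (whose point \emph{3} supplies the implication (b)$\Rightarrow$(a) exactly as you invoke it), with Vasconcelos (\ref{corVascon}, \ref{corcorVascon}) handling rank and integrality, and the explicit syzygy $(b_1,b_2)=(-a_2/g,\,a_1/g)$ that you construct via Theorem \ref{lemGRa>1} is precisely what the paper records in its remark following the statement. One blemish: your parenthetical \og $(s_1,s_2)$ est corégulier, donc fidèle, donc contient un élément régulier \fg{} is false in this non-noetherian setting (a faithful finitely generated ideal need not contain a regular element --- this is the very reason the paper works with Kronecker sequences and polynomial extensions); fortunately that step is unnecessary, since faithfulness of $\gen{s_1,s_2}$ alone already gives the injectivity of $\tra{[\,s_2\;\;-s_1\,]}$.
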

\rems 1)
Quand le point \emph{1} est satisfait, 
\begin{itemize}
\item l'\elt $g$ est un pgcd fort de $\fa$,
\item on peut prendre $(b_1,b_2)=(-a_2/g,a_1/g)$, 
\item  $\Gr(\fa)\geq 2$ \ssi $g\in\Ati$.
\end{itemize}

\smallskip 
2) Dans le point \emph{1}, l'hypothèse que $\fa$ est non nul est légèrement désagréable.
Si on la supprime, il faut reformuler le point \emph{b} en rajoutant
que si $\fa=0$ alors l'anneau est trivial.

3) Le point \emph{2} n'exclut pas le cas de l'anneau trivial. 
\eoe
\hum{Il n'y a pas de résultat du même style  pour les \ids à $3$ \gtrs. En particulier il y a des \ids à $3$ \gtrs qui admettent des \rlfs de longueur arbitrairment grande ne pouvant pas être raccourcies. On devrait en parler. Cf l'email de Vasconcelos citant un \tho de Bruns}

\begin{theorem} \label{thMcRaeMultSexa} \emph{(Voir [FFR, \tho 12 page 226])}\\
Soit une \sex $0\to E\lora F\lora G\to0$, où
$E$, $F$, $G$ sont des modules  \lrsbs de rang $0$. 
Alors $ \fG(G)\,\fG(E) = \fG(F)$.  
\end{theorem}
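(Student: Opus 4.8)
Le théorème \ref{thMcRaeMultSexa} affirme que pour une suite exacte courte $0\to E\to F\to G\to 0$ de modules librement résolubles de rang $0$, on a la multiplicativité de l'invariant de MacRae: $\fG(E)\,\fG(G)=\fG(F)$. Un module librement résoluble de rang $0$ est un \mMR d'après le point \emph{2} du \thref{thdetCay2}, et son invariant de MacRae au sens de la définition \ref{defiMacRae} coïncide avec son idéal de facteur $\fB_0$ au sens de la théorie du déterminant de Cayley. Je dispose donc déjà d'un résultat de multiplicativité, à savoir le \thref{thMacRae2} (suites exactes courtes de \mMRs), dont le point \emph{2} donne précisément $\fG(E)\,\fG(G)=\fG(F)$ pour trois \mMRs en suite exacte courte.

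Le plan est donc simple dans son principe: il s'agit de vérifier que les trois hypothèses du \thref{thMacRae2} sont satisfaites. D'abord, chacun des modules $E$, $F$, $G$ est \pf (puisque librement résoluble, donc admettant une résolution libre finie, a fortiori une présentation finie). Ensuite, chacun est un \mMR: c'est le contenu du point \emph{2} du \thref{thdetCay2}, qui s'applique car chacun est de rang $0$. Enfin, l'invariant de MacRae au sens homologique $\fG$ de la définition \ref{defiMacRae} est bien le même objet que celui fourni par le déterminant de Cayley, puisque le \thref{thdetCay2} affirme explicitement que pour un module \lrsb de rang $0$, l'invariant $\fG(M)$ est égal à $\fB_0(M)$. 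Une fois ces identifications faites, il suffit d'invoquer la conclusion \emph{2} du \thref{thMacRae2}.

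En pratique, la rédaction consisterait à écrire: comme $E$, $F$ et $G$ sont \lrsbs de rang $0$, ce sont des \mMRs (\thref{thdetCay2}, point \emph{2}), et l'on applique le point \emph{2} du \thref{thMacRae2} à la suite exacte courte $0\to E\to F\to G\to 0$, ce qui fournit directement l'égalité $\fG(E)\,\fG(G)=\fG(F)$, c'est-à-dire $\fG(G)\,\fG(E)=\fG(F)$ puisque le produit d'idéaux est commutatif.

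Le principal obstacle potentiel — mais qui me semble déjà levé par les résultats antérieurs — réside dans la compatibilité des deux définitions de l'invariant de MacRae: celle de la définition \ref{defiMacRae}, purement en termes de profondeur et de pgcd fort du premier idéal de Fitting, et celle issue du déterminant de Cayley via $\fB_0$. C'est exactement cette identification que garantit le \thref{thdetCay2}. Si cette coïncidence n'était pas déjà acquise, il faudrait revenir à une démonstration directe à la manière du \thref{thMacRae2}: localiser en une suite d'éléments de profondeur $\geq 2$ pour rendre les trois modules élémentaires simultanément (présentés par des matrices carrées à déterminant \ndz), utiliser le lemme \ref{lemPresfinsex} pour obtenir une matrice de présentation de $F$ en blocs triangulaires, et conclure par multiplicativité du déterminant puis recollement via la proposition \ref{propProf2div}. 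Mais compte tenu des théorèmes déjà établis, cette répétition est inutile: le résultat est un corollaire immédiat de \ref{thMacRae2} et \ref{thdetCay2}.
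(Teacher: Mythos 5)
Your proposal is correct and follows essentially the same route as the paper: the paper's own proof is precisely "the modules are MacRae modules (by point \emph{2} of Theorem \ref{thdetCay2}), then apply Theorem \ref{thMacRae2}". Your additional remarks on the compatibility of the two definitions of $\fG$ and on the fallback direct argument are sound but not needed, since Theorem \ref{thdetCay2} already identifies the MacRae invariant of a rank-$0$ freely resolvable module with $\fB_0(M)$.
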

%
\begin{proof} 
Les modules sont de MacRae. On applique le \thref{thMacRae2}.
\end{proof}

Le  lemme suivant donne des éclaircissements concernant
ce  que signifie la factorisation d'une matrice $\Vi^{r}A$ lorsque $A$
est de rang $\leq r$. L'explication est que l'on peut unifier toutes les informations 
données par les \idts de Cramer \gnees
concernant les \rdls entre~\hbox{$r+1$} colonnes de la matrice.

\begin{lemma} \label{lemFacMat} \emph{(Ce que signifie la factorisation de la matrice $\Vi^{r}A$)}\\
Soit $A\in\MM_{m,n}(\gA)$ une matrice de rang  $\leq r<n$. On
sait que $\Vi^{r}A$ est de rang $\leq 1$.
Supposons
que $\Vi^{r}A=\bmu\bmv$ où $\bmu$ est un vecteur colonne et $\bmv$ est un vecteur ligne. Considérons pour simplifier les $r+1$ 
premières colonnes de~$A$, que nous notons $C_1$, \dots, 
$C_{r+1}$. Pour $k\in\lrb{1..r+1}$, le vecteur colonne\footnote{Le chapeau signifie que l'on omet le vecteur.} 

\snic{C_1\vi\cdots\vi\wh {C_k}\vi\cdots\vi C_{r+1}}

 correspond à une colonne $J_k$ de $\Vi^{r}A$  
et est donc égal à $\gamma_k\bmu$, où $\gamma_k$ est la \coo $J_k$ de $\bmv$. Supposons aussi que $\bmu$ est de profondeur $\geq 1$. Alors on a la \rdl
$$
\som_{k=1}^{r+1}(-1)^{k}\gamma_k C_k=0.
$$
On aura naturellement des \egts analogues pour tout autre choix de $r+1$ colonnes de $A$.
\end{lemma}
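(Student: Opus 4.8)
<br>

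The statement to prove (Lemme \ref{lemFacMat}) asserts a concrete Cramer-type syzygy relation $\sum_{k=1}^{r+1}(-1)^k\gamma_k C_k = 0$ among $r+1$ columns of a rank-$\leq r$ matrix $A$, given a rank-1 factorization $\Vi^r A = \bmu\bmv$ with $\bmu$ of depth $\geq 1$.

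\medskip

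The plan is to reduce everything to the classical generalized Cramer identity. First I would recall the standard fact: for any matrix with $m$ rows and any choice of $r+1$ columns $C_1,\dots,C_{r+1}$, if we write $\delta_k = C_1\vi\cdots\vi\wh{C_k}\vi\cdots\vi C_{r+1}$ for the $r$-vector obtained by omitting the $k$-th column, then the alternating combination $\sum_{k=1}^{r+1}(-1)^k \delta_k\, C_k$ is the zero vector in $\Vi^{r+1}$ of the column space — more precisely, expanding $C_1\vi\cdots\vi C_{r+1}$ along each column and using that $A$ has rank $\leq r$ (so this $(r+1)$-fold wedge vanishes by $\cD_{r+1}(A)=0$) yields exactly a vanishing relation. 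Concretely, each $\delta_k$ is, up to sign, one of the coordinates of $\Vi^r A$, namely the column of $\Vi^r A$ indexed by the set $J_k = \{1,\dots,r+1\}\setminus\{k\}$ of the remaining columns.

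\medskip

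The next step is to translate the factorization $\Vi^r A = \bmu\bmv$ into information about these $\delta_k$. Since the column $J_k$ of $\Vi^r A$ equals $\gamma_k\bmu$ where $\gamma_k$ is the coordinate of $\bmv$ indexed by $J_k$, I would identify $\delta_k = \varepsilon_k\,\gamma_k\,\bmu$ for an appropriate sign $\varepsilon_k$ bookkeeping the ordering conventions of the Hodge/wedge basis. Substituting into the classical relation gives $\bmu\cdot\big(\sum_{k=1}^{r+1}(-1)^k\,\varepsilon_k'\,\gamma_k\,C_k\big)=0$ where, after absorbing signs consistently (the signs $\varepsilon_k$ should combine with the $(-1)^k$ to reproduce precisely the alternating signs in the statement — this sign reconciliation is the one genuinely fiddly point).

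\medskip

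The final step uses the depth hypothesis $\Gr(\bmu)\geq 1$, i.e. $\cD_1(\bmu)$ is $\gA$-\Erg, equivalently $\bmu$ is a nonzerodivisor vector in the sense that its coordinates generate a faithful ideal. From an equation of the form $\bmu\otimes w = 0$ in $\Vi^{r+1}$-land — really, each coordinate of $\bmu$ times the vector $w=\sum(-1)^k\gamma_k C_k$ vanishes — faithfulness of $\cD_1(\bmu)$ forces $w=0$ coordinate by coordinate (a coordinate of $w$ is killed by every generator of $\cD_1(\bmu)$, hence is zero since the ideal is faithful). This is the same mechanism used throughout the chapter, e.g. in Théorème de McCoy \ref{propInjIdd} and the proof of the proportionality theorem \ref{thOrtho}. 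The main obstacle I anticipate is purely the sign reconciliation in the wedge-product expansion: making sure the signs $\varepsilon_{I,J}$ from \paref{casouM=An} combine correctly with the column-omission signs so that the stated $(-1)^k$ emerges exactly; everything else is a direct application of the generalized Cramer identity followed by cancellation of the faithful factor $\bmu$.
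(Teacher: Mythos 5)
Votre preuve est correcte et suit essentiellement la même route que celle du texte: l'identité de Cramer généralisée pour une matrice de rang $\leq r$ (appliquée jeu de $r$ lignes par jeu de $r$ lignes, c'est-à-dire coordonnée par coordonnée de $\bmu$), l'identification via la factorisation du mineur sur les lignes $I$ et les colonnes $J_k$ avec $u_I\gamma_k$, puis la simplification par le facteur fidèle grâce à $\Gr(\bmu)\geq 1$. Le point de signes qui vous inquiète n'en est pas un: la colonne $J_k$ de $\Vi^{r}A$ est par définition le produit extérieur $C_1\vi\cdots\vi\wh{C_k}\vi\cdots\vi C_{r+1}$ pris en ordre croissant, donc $\delta_k=\gamma_k\bmu$ sans signe supplémentaire, et les $(-1)^{k}$ proviennent uniquement du développement de Laplace dans la relation de Cramer.
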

%
\begin{proof}
Considérons $r$ lignes arbitraires de  $A$, correspondant
à une \coo~$u_I$ de $\bmu$ (avec $I\in\cP_{r,m}$).
Comme $A$ est de rang $\leq r$, on a une relation de Cramer \gnee (voir 
  \cref{section II-5, formule (9)}).
$$
\som_{k=1}^{r+1}(-1)^{k}\,u_I\gamma_k\, C_k=0.
$$
Puisque les $u_I$ sont \cor, on obtient $\som_{k=1}^{r+1}(-1)^{k}\gamma_k C_k=0$.
\end{proof}

%
\begin{lemma} \label{lemResFinIdFac}
On considère une suite exacte 
$$
L_2\vvers{A_2}L_1\vvers{A_1}L_0,
$$
avec $L_2=\gA^{r_3+r_2}$, $L_1=\gA^{r_2+r_1}$,  $L_0=\gA^{r_1+r_0}$ et $A_1$ stable de rang stable~$r_1$. On suppose que $\Vi^{r_1}A_1=\bmu \bmv$ pour un vecteur colonne $\bmu$
et un vecteur ligne~$\bmv$. Si une \coo de $\bmv$ est \iv,
 $\Ker A_1$ est libre de rang $r_2$, $A_2$ est \lot simple
de rang $r_2$ et $\cD_{r_2}(A_2)=\gen{1}$. 
\end{lemma}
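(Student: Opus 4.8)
The plan is to reduce the whole lemma to the single assertion $\cD_{r_2}(A_2)=\gen{1}$, after which the conclusions are automatic. First I would record the rank facts that hold \emph{without} any invertibility hypothesis. Since the sequence $L_2\vvers{A_2}L_1\vvers{A_1}L_0$ is exact, $A_1$ is stable of stable rank $r_1$, and $\dim L_1=r_2+r_1$, the lemme du rang stable \ref{lemSuitExStable} (applied with $k=r_2$, $h=r_1$) shows that $A_2$ is stable of rank $r_2$; in particular $\cD_{r_2+1}(A_2)=\gen{0}$. Consequently, once I have produced $\cD_{r_2}(A_2)=\gen{1}$, the matrix $A_2$ is of rank exactly $r_2$ and possesses an invertible minor of order $r_2$, so the lemme de la liberté \ref{FFRlem pf libre} makes $A_2$ \'equivalente à une matrice simple standard. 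This gives at once that $A_2$ is (localement) simple de rang $r_2$ and that $\Im A_2$, hence $\Ker A_1=\Im A_2$ by exactness at $L_1$, is libre de rang $r_2$. So all the work lies in exhibiting a unit order-$r_2$ minor of $A_2$ (I assume $r_2\geq 1$, the case $r_2=0$ being degenerate).

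The key construction uses \ref{lemFacMat}. I would first check its hypothesis. Each order-$r_1$ minor of $A_1$ is a coordinate of $\Vi^{r_1}A_1=\bmu\,\bmv$, hence lies in $\cD_1(\bmu)$; therefore $\cD_1(\bmu)\supseteq\cD_{r_1}(A_1)=\fD_1$, which is fidèle because $A_1$ is stable de rang $r_1$, so $\Gr(\bmu)\geq 1$. Now let $I_0\in\cP_{r_1,r_2+r_1}$ index a coordinate $\gamma_{I_0}$ of $\bmv$ that is invertible. For each of the $r_2$ indices $j\in\lrb{1..r_2+r_1}\setminus I_0$, I apply \ref{lemFacMat} to the $r_1+1$ columns indexed by $I_0\cup\{j\}$ of $A_1$ (legitimate since $A_1$ is of rank $\leq r_1<r_2+r_1$ and $\Gr(\bmu)\geq 1$). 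This yields a Cramer syzygy $s_j=\sum_{k\in I_0\cup\{j\}}\pm\,\gamma_{(I_0\cup\{j\})\setminus\{k\}}\,e_k\in\Ker A_1$, whose $e_j$-coordinate is $\pm\gamma_{I_0}$, a unit, and whose $e_{j'}$-coordinate vanishes for every $j'\notin I_0\cup\{j\}$.

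Assembling the $s_j$ into a matrix $S\in\Ae{(r_2+r_1)\times r_2}$, its square submatrix on the $r_2$ rows indexed by $\lrb{1..r_2+r_1}\setminus I_0$ is diagonal, with the unit $\pm\gamma_{I_0}$ along the diagonal; hence $\cD_{r_2}(S)$ contains the unit $\pm\gamma_{I_0}^{r_2}$, so $\cD_{r_2}(S)=\gen{1}$. By exactness at $L_1$ each column $s_j$ lies in $\Im A_2$, so $S=A_2T$ for a suitable $T$, and Cauchy--Binet gives $\cD_{r_2}(S)\subseteq\cD_{r_2}(A_2)$. Therefore $\cD_{r_2}(A_2)=\gen{1}$, which closes the argument by the reduction of the first paragraph.

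The main obstacle is the index-and-sign bookkeeping of the middle paragraph: one must verify, with the signs produced by \ref{lemFacMat}, that the $r_2$ syzygies built from the \emph{fixed} invertible coordinate $\gamma_{I_0}$ really assemble into a matrix carrying a unit-diagonal square block, i.e.\ that the only coordinate of $s_j$ surviving outside $I_0$ is the one at position $j$. This is exactly the support statement of the Cramer relation (only columns in $I_0\cup\{j\}$ occur), but it is where attention is required. Everything else is an invocation of results already in hand: the rank transfer \ref{lemSuitExStable}, the factorisation-of-$\Vi^{r}A$ formula \ref{lemFacMat}, and the splitting \ref{FFRlem pf libre}.
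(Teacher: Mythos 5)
Your syzygy construction and the Cauchy--Binet step are sound: the matrix $S$ built from the Cramer relations of \ref{lemFacMat} really does carry a unit minor of order $r_2$, its columns lie in $\Ker A_1=\Im A_2$ by exactness, and this correctly yields $\cD_{r_2}(A_2)=\gen{1}$; combined with the stable rank lemma \ref{lemSuitExStable} this makes $A_2$ of rank $r_2$, hence locally simple of rank $r_2$ by Theorem \ref{FFRpropFactDirRangk}. The genuine gap sits in your first-paragraph reduction, at the words \emph{possesses an invertible minor of order $r_2$}. The equality $\cD_{r_2}(A_2)=\gen{1}$ says the order-$r_2$ minors of $A_2$ are comaximal, not that any one of them is a unit (think of $A_2=\Diag(a,b)$ with $\gen{a,b}=\gen{1}$ and $a$, $b$ non-units), so the lemme de la liberté \ref{FFRlem pf libre} cannot be applied to $A_2$. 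What your two determinantal facts actually give is that $\Im A_2=\Ker A_1$ is a direct summand of $L_1$, projective of constant rank $r_2$ --- locally free, not free. Freeness is precisely the conclusion that determinantal ideals alone cannot deliver, and it is why the paper runs the argument in the opposite order: it first proves $\Ker A_1$ free, by factoring $A_1=A'_1D$ through the column submatrix $A'_1$ on $I_0$, which is injective by McCoy \ref{propInjIdd} since $\cD_{r_1}(A'_1)=\gamma_{I_0}\cD_1(\bmu)\supseteq\cD_{r_1}(A_1)$ is faithful, so that $\Ker A_1=\Ker D$ is the image of a matrix with an $\I_{r_2}$ block; only then does it deduce $\cD_{r_2}(A_2)=\gen{1}$.

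The repair is within reach of your own construction, because it is $S$, not $A_2$, that has an invertible minor of order $r_2$. Applying the lemme de la liberté to $S$ shows that $\Im S$ is free of rank $r_2$ and admits a free complement $C$ in $L_1$. Since $\Im S\subseteq\Ker A_1$, the modular law gives $\Ker A_1=\Im S\oplus Q$ with $Q=\Ker A_1\cap C$; the module $Q$ is then a direct summand of $L_1$, hence projective, and of rank $\rg(\Ker A_1)-\rg(\Im S)=r_2-r_2=0$, so $Q=0$ and $\Ker A_1=\Im S$ is free. (Alternatively, run the paper's McCoy argument: the column submatrix of $A_1$ on $I_0$ is injective, so $\Ker A_1$ meets the submodule of vectors supported on $I_0$ trivially; since $L_1=\Im S\oplus\Ker\pi$, where $\pi$ is the projection onto the coordinates outside $I_0$, this forces $\Ker A_1=\Im S$.) As written, your proof establishes the two determinantal conclusions of the lemma but not the freeness of $\Ker A_1$.
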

%
\begin{proof}
Supposons que la \coo \iv de $\bmv$ corresponde aux $r_1$ 
dernières colonnes de $A_1$. Le lemme \ref{lemFacMat} 
dit alors que chacune des $r_2$ premières colonnes est \coli
des $r_1$ dernières. 
En notant $A'_1$  la sous-matrice
extraite sur les $r_1$ dernières colonnes, on obtient
une \egt matricielle~\hbox{$A_1=A'_1\,D$} avec des matrices

\snic{A_1=\blocs{.9}{.7}{1.8}{0}{$F$}{$A'_1$}{}{}\,
\;\hbox{  et  }\;
D=\blocs{.9}{.7}{.7}{0}{$G$}{$\I_{r_1}$}{}{}\,.}

Comme $A'_1$ est injective, on a $\Ker A_1=\Ker D$
et l'on voit facilement \hbox{que  $\Ker D= \Im H$} avec
$H=\blocs{0}{.9}{.9}{.7}{}{$\I_{r_2}$}{}{$-G$}\,$. 
\\
Ainsi,
$\Ker A_1$ est libre de rang $r_2$, en somme directe avec le sous-module 
engendré par les $r_1$ derniers vecteurs de base.
Comme $\Ker A_1=\Im A_2$ on en déduit que $A_2$ est \lot simple,
de rang~$r_2$ avec $\cD_{r_2}(A_2)=\gen{1}$.
\end{proof}
%

\begin{theorem} \label{thResFinIdFac}  \emph{(Voir [FFR, Chap. 7, Th. 8]}\\
Soit  $M$ un  \Amo \lrsb.
On reprend les notations du \thref{thdetCay2} et l'on rappelle que $\fD_{k+1}=\fB_k\fB_{k+1}$.
\begin{enumerate}
\item Si $\fB_k=\gen{1}$ avec $k\geq 1$, alors  $\fB_{k+r}=\gen{1}$ pour tout $r>0$.
\item De manière \gnle, pour $k\geq 1$ 

\snic{\fB_k\subseteq \DA(\fB_{k+1})\;\hbox{ et }\;\DA(\fB_k)= \DA(\fD_{k+1}).}
\end{enumerate}
\end{theorem}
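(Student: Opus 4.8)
Le théorème à démontrer (thResFinIdFac) concerne les idéaux de facteur $\fB_k(M)$ d'un module librement résoluble $M$, et affirme d'une part que $\fB_k=\gen{1}$ (pour $k\geq 1$) force tous les $\fB_{k+r}=\gen{1}$, d'autre part l'inclusion $\fB_k\subseteq\DA(\fB_{k+1})$ et l'égalité des radicaux $\DA(\fB_k)=\DA(\fD_{k+1})$. Puisque $\fD_{k+1}=\fB_k\fB_{k+1}$ par le théorème thdetCay, l'égalité $\DA(\fB_k)=\DA(\fD_{k+1})=\DA(\fB_k\fB_{k+1})=\DA(\fB_k)\cap\DA(\fB_{k+1})$ est une simple reformulation de l'inclusion $\DA(\fB_k)\subseteq\DA(\fB_{k+1})$, c'est-à-dire de $\fB_k\subseteq\DA(\fB_{k+1})$. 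Tout se ramène donc aux deux points, et le point \emph{2} est le cœur du problème.

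Le plan est de procéder par localisation sur les générateurs de $\fB_k$. Pour le point \emph{1}, si $\fB_k=\gen{1}$, je raisonnerais comme dans la proposition propfDk=1: l'égalité $\Vi^{r_k}A_k=\bmu_{k-1}\,\tra{({\bmu_k}\sta)}$ montre que si $\fB_k=\cD_1(\bmu_k)=\gen{1}$, une coordonnée de $\bmu_k$ est inversible, donc $\bmv={\bmu_k}\sta$ possède une coordonnée inversible. Le lemme lemResFinIdFac s'applique alors à la portion exacte $L_{k+1}\vvers{A_{k+1}}L_k\vvers{A_k}L_{k-1}$ et donne $\cD_{r_{k+1}}(A_{k+1})=\gen{1}$ : le complexe se trivialise en $L_k$, $A_{k+1}$ devient localement simple, et toutes les applications suivantes $u_{k+1+r}$ sont localement simples. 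Cela force $\fD_{k+1+r}=\gen{1}$, puis via $\fD_{k+1+r}=\fB_{k+r}\fB_{k+1+r}$ et la profondeur $\geq 1$ de ces idéaux, on conclut $\fB_{k+r}=\gen{1}$ pour tout $r>0$ par récurrence.

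Pour le point \emph{2}, l'inclusion $\fB_k\subseteq\DA(\fB_{k+1})$ s'obtient par l'argument de localisation standard déjà utilisé dans propfDk=1 : soit $\mu$ un générateur de $\fB_k$, je passerais à l'anneau $\gA[1/\mu]$. Sur cet anneau $\fB_k[1/\mu]=\gen{1}$, donc le point \emph{1} (appliqué sur $\gA[1/\mu]$) donne $\fB_{k+1}[1/\mu]=\gen{1}$, ce qui signifie qu'une puissance de $\mu$ appartient à $\fB_{k+1}$ dans $\gA$, soit $\mu\in\DA(\fB_{k+1})$. Comme ceci vaut pour chaque générateur, on obtient $\fB_k\subseteq\DA(\fB_{k+1})$. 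Il faut ici vérifier que les idéaux de facteur se comportent bien par localisation, ce qui découle de la proposition propidefacteds (pour les extensions plates, dont les localisations) et du fait que $\gA[1/\mu]$ est plat sur $\gA$. La principale subtilité sera de justifier proprement que le point \emph{1}, établi globalement, se relit tel quel sur le localisé : cela tient à ce que la résolution de $M$ reste une résolution après localisation (extension plate préservant l'exactitude, théorème propEdsHomologie) et que $M[1/\mu]$ reste librement résoluble, de sorte que les $\fB_\ell$ calculés sur $\gA[1/\mu]$ sont bien les images des $\fB_\ell(M)$.

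L'obstacle principal que j'anticipe n'est donc pas conceptuel mais de cohérence entre les énoncés : il faut s'assurer que l'hypothèse $\fB_k=\gen{1}$ active réellement le lemme lemResFinIdFac avec la bonne numérotation (le lemme est formulé pour un bout de complexe $L_2\to L_1\to L_0$ avec coordonnée inversible de $\bmv$, qu'il faut transposer à la position $k$ de la résolution générale), et que la trivialisation locale se propage correctement à travers tous les $\fD_\ell$ supérieurs. Une fois ce raccord établi, le point \emph{2} découle mécaniquement du point \emph{1} par localisation, et l'égalité des radicaux annoncée est une conséquence immédiate de l'inclusion jointe à la relation $\fD_{k+1}=\fB_k\fB_{k+1}$.
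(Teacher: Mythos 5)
Votre démarche est essentiellement celle du texte: tout repose sur le lemme \ref{lemResFinIdFac} appliqué après localisation en des coordonnées de $\bmu_k$, combiné à la relation $\fD_{k+1}=\fB_k\fB_{k+1}$ et à un calcul de radicaux. La seule différence est d'organisation: vous prouvez le point \emph{1} puis en déduisez le point \emph{2} par localisation, tandis que le texte établit d'abord l'inclusion $\fB_k\subseteq\DA(\fD_{k+1})$ en localisant en chaque générateur de $\fB_k$ (après s'être ramené au cas $k=1$ par troncature de la résolution), puis en tire les deux points d'un seul coup via $\DA(\fD_{k+1})=\DA(\fB_k)\cap\DA(\fB_{k+1})$.

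Un pas de votre preuve du point \emph{1} est toutefois à corriger: de $\fB_k=\cD_1(\bmu_k)=\gen{1}$ vous déduisez qu'\emph{une} coordonnée de $\bmu_k$ est inversible. C'est inexact: l'hypothèse dit seulement que les coordonnées sont comaximales (dans $\ZZ$ on a $\gen{2,3}=\gen{1}$ sans qu'aucun des deux éléments ne soit inversible), distinction qui compte dans le cadre constructif de ce texte. Le remède est la man\oe uvre même du texte: on localise en \emph{chaque} coordonnée $\mu$ de $\bmu_k$ (sur $\gA[1/\mu]$ cette coordonnée devient inversible, le lemme \ref{lemResFinIdFac} s'applique et donne $1\in\fD_{k+1}$), puis on recolle, les puissances d'éléments comaximaux étant comaximales. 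Notez que votre preuve du point \emph{2} n'utilise en réalité que cette version \gui{une coordonnée inversible} du point \emph{1} (sur $\gA[1/\mu]$, la coordonnée $\mu$ est bien inversible), donc elle reste correcte telle quelle; et une fois le point \emph{2} acquis, le point \emph{1} en général s'en déduit sans travail supplémentaire ($\fB_k=\gen{1}$ donne $1\in\DA(\fB_k)\subseteq\DA(\fB_{k+1})$, donc $\fB_{k+1}=\gen{1}$, puis récurrence): c'est exactement l'économie que réalise le texte. Signalons enfin que l'appel à la profondeur pour passer de $\fD_{k+r+1}=\gen{1}$ à $\fB_{k+r}=\gen{1}$ est superflu: un produit d'idéaux égal à $\gen{1}$ force chaque facteur à contenir $1$.
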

%
\begin{proof} Montrons l'inclusion $\fB_k\subseteq \DA(\fD_{k+1})$.
Il suffit de traiter le cas $k=1$, car un début de \rsn libre résout
le module conoyau de la dernière matrice (qui remplace alors le module $M$).
\\
Soit $\mu$ un \gtr de
$\fB_1$ (une \coo de $\bmu_1$). Pour montrer qu'une puissance de $\mu$
est dans $\fD_2$, il suffit
de montrer que sur l'anneau $\gA[1/\mu]$, l'\id $\fD_2$ contient $1$. 
C'est donné par le lemme \ref{lemResFinIdFac}.\\
Ainsi $\DA(\fB_k)\subseteq \DA(\fD_{k+1})=\DA(\fB_k\fB_{k+1})= \DA(\fB_{k})
\cap \DA(\fB_{k+1})$,
 ce qui implique les points~\emph{1} et \emph{2.}
\end{proof}
%

\section{Le \tho de Hilbert-Burch}\label{secHilbBu}

Nous donnons dans cette section le \tho de Hilbert-Burch sous sa forme \gnle (épurée de toute hypothèse \noee notamment) donnée par Northcott.

Le \tho de Hilbert-Burch 
examine sous quelles conditions le module des \syzys pour un \sys
de $n$ \gtrs d'un \id $\fa$ est un module libre de rang $n-1$.
On a dans ce cas une \sex
$$
    0\lora \Ae {n-1} \vvers \varphi  \Ae n \vvers \alpha \gA\vvers \pi \gA\sur\fa\lora 0 
$$
ce qui est un exemple simple de \rlf (ici, \rsn
du module $\gA/\fa$).

Pour une matrice
$A = (a_{ij}) \in \MM_{n,n-1}(\gA)$, on définit 
\begin{equation}\label{eqn1nxn-1}
\dmatrix {
X_1    & a_{11} & \cdots & a_{1\,n-1} \cr
X_2    & a_{21} & \cdots & a_{2\,n-1} \cr
\vdots & \vdots &        & \vdots \cr
X_n    & a_{n1} & \cdots & a_{n\,n-1} \cr} =
\Delta_1 X_1 + \cdots + \Delta_n X_n,\;\;\Delta=[\,\Delta_1\;\dots\;\Delta_n\,].
\end{equation}
Ici les $X_i$ sont des \idtrs et les $\Delta_i$ sont les mineurs d'ordre $n-1$, convenablement signés, de
la matrice $A$.

\begin{theorem} \label{thHilBur1}\emph{(Matrice $n\times (n-1)$)}.\\
Soit $A  \in \MM_{n,n-1}(\gA)$, on note $\Delta=[\,\Delta_1\;\dots\;\Delta_n\,]$  comme dans~\pref{eqn1nxn-1}. On remarque que $\cD_{n-1}(A)=\gen{\Delta_1,\dots,\Delta_n}$. 
\begin{enumerate}
\item On a  un complexe
\begin{equation}\label{eqnHilBur}
0 \lora \Ae {n-1} \vvers {A} \Ae n \vvers {\Delta} \gA, \end{equation}

\item Ce complexe 
est exact \ssi   
$\Gr({\Delta_1,\dots,\Delta_n})\geq 2$.
\item Si $\Gr({\Delta_1,\dots,\Delta_n})\geq 3$ alors l'\id $\gen{\Delta_1,\dots,\Delta_n}$ contient $1$.
\end{enumerate}
\end{theorem}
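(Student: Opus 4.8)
Given $A \in \MM_{n,n-1}(\gA)$ with signed maximal minors $\Delta = [\,\Delta_1\;\cdots\;\Delta_n\,]$, so that $\cD_{n-1}(A) = \gen{\Delta_1,\dots,\Delta_n}$:

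\emph{The plan.} The statement bundles three assertions of increasing depth, and I would treat them in order, reusing the machinery of complexes of Cayley type developed in this chapter rather than reproving everything by hand. The first point is purely computational: the composite $\Delta \circ A$ must vanish. This is the classical fact that expanding the determinant in~\pref{eqn1nxn-1} along the first column and then substituting for $X_i$ the $j$-th column entries $a_{ij}$ produces a determinant with two equal columns, hence zero. Concretely, $\sum_i \Delta_i a_{ij} = 0$ for each $j \in \lrb{1..n-1}$, which is exactly the statement $\Delta A = 0$. So~\pref{eqnHilBur} is indeed a complex, and this step costs nothing.

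\emph{The heart of the matter.} The second point is the equivalence between exactness and $\Gr(\gen{\Delta_1,\dots,\Delta_n}) \geq 2$. This is where I would invoke the exactness criterion of \thref{cor3thABH1} (Northcott's theorem on what makes a complex of free modules exact). The complex~\pref{eqnHilBur} has the shape $0 \to L_2 \to L_1 \to L_0$ with $L_2 = \Ae{n-1}$, $L_1 = \Ae n$, $L_0 = \gA$, and expected stable ranks read off from $r_3 = 0$, giving $r_2 = n-1$, $r_1 = 1$, $r_0 = 0$. The two characteristic ideals are then $\fD_1 = \cD_{r_1}(\Delta) = \cD_1(\Delta) = \gen{\Delta_1,\dots,\Delta_n}$ and $\fD_2 = \cD_{r_2}(A) = \cD_{n-1}(A) = \gen{\Delta_1,\dots,\Delta_n}$ as well. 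By \thref{cor3thABH1} the complex is exact iff $\Gr(\fD_1) \geq 1$ and $\Gr(\fD_2) \geq 2$; since $\fD_1 = \fD_2 = \gen{\Delta_1,\dots,\Delta_n}$, both conditions collapse into the single requirement $\Gr(\gen{\Delta_1,\dots,\Delta_n}) \geq 2$. The delicate point, and the one I expect to be the main obstacle, is checking carefully that the stable ranks are the expected ones (i.e.\ that $A$ is genuinely of expected stable rank $n-1$ and $\Delta$ of stable rank $1$) so that the hypotheses of \thref{cor3thABH1} apply verbatim; here I would lean on the fact that the $\Delta_i$ are precisely the order-$(n-1)$ minors of $A$, so that a fidelity/profondeur hypothesis on $\gen{\Delta_1,\dots,\Delta_n}$ controls the stable rank of $A$ directly, and $\cD_2(\Delta) = 0$ is automatic since $\Delta$ is a single row.

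\emph{The last point.} The third assertion, that $\Gr(\gen{\Delta_1,\dots,\Delta_n}) \geq 3$ forces $1 \in \gen{\Delta_1,\dots,\Delta_n}$, I would derive as a clean application of the Theorem of Rees, \thref{corthABH1}. Indeed, once the complex~\pref{eqnHilBur} is exact under the hypothesis $\Gr \geq 2$, it is a finite free resolution of the module $M = \Coker(\Delta) = \gA/\fa$ where $\fa = \gen{\Delta_1,\dots,\Delta_n}$, of length $m = 2$. We have $\fa \cdot M = \fa \cdot (\gA/\fa) = 0$. If moreover $\Gr(\fa) \geq 3 = m+1$, then \thref{corthABH1} yields $M = 0$, that is $\gA/\fa = 0$, which is exactly $1 \in \fa$. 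Thus the whole theorem reduces to the standard determinantal identity for point~1, the exactness criterion \thref{cor3thABH1} for point~2, and the Rees vanishing theorem \thref{corthABH1} for point~3, with the only real care needed in matching the stable-rank bookkeeping of point~2 to the hypotheses of the cited theorems.
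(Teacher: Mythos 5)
Your proposal is correct and follows exactly the paper's own route: point \emph{1} by the Cramer/determinant identity, point \emph{2} as a special case of the exactness criterion (\thref{cor3thABH1}) after noting that both characteristic ideals $\fD_1=\cD_1(\Delta)$ and $\fD_2=\cD_{n-1}(A)$ equal $\gen{\Delta_1,\dots,\Delta_n}$, and point \emph{3} by applying the Rees theorem (\thref{corthABH1}) to the length-$2$ free resolution of $\gA/\cD_{n-1}(A)$. The only superfluous element is your worry about verifying actual stable ranks in point \emph{2}: the hypotheses of \thref{cor3thABH1} concern only the shape of the complex (the ranks $p_k=r_{k+1}+r_k$), which hold here by inspection, so the criterion applies verbatim.
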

%
\begin{proof}
\emph{1.} Le fait que $\Delta\,A=[\,0\;\dots\;0\,]$ est une formule de Cramer.
Cela résulte de ce que dans \pref{eqn1nxn-1}, le \deter est nul si l'on remplace 
les $X_i$ par une colonne quelconque de la matrice $A$.

\emph{2.} Vu le point \emph{1}, c'est  un cas particulier du \thref{cor3thABH1}.

\emph{3.}
On a une \sex 

\snic{0\to\Ae {n-1} \vvers{A} \Ae{n}\vvers\Delta \gA \lora \gA/{\cD_{n-1}(A)}\to 0,}

\snii et l'on obtient le résultat en appliquant la proposition~\ref{corthABH1}.
\end{proof}

\begin{theorem} \label{thHilBur2}\emph{(\Tho de Hilbert-Burch)}\\
Soit $A  \in \MM_{n,n-1}(\gA)$, on note $\Delta=[\,\Delta_1\;\dots\;\Delta_n\,]$  comme dans~\pref{eqn1nxn-1}. On remarque que $\cD_{n-1}(A)=\gen{\Delta_1,\dots,\Delta_n}$.
 Si l'on a une \sex
\begin{equation}\label{eqnHilBur1}
0 \lora \Ae {n-1} \vvers {A} \Ae n \vvers {\alpha} \gA, \qquad
\alpha = [\,a_1\, \cdots\, a_n\,],
\end{equation}
on obtient les résultats suivants.
\begin{enumerate}
\item  Il existe un (unique) \elt \ndz $a \in \gA$ tel que
$\alpha = a\,\Delta$, et $a$ est un pgcd fort de l'\id $\fa=\gen{\an}$.
\item   $\Gr({\Delta_1,\dots,\Delta_n})\geq 2$.
\item Si $\Gr({\Delta_1,\dots,\Delta_n})\geq 3$ alors l'\id $\gen{\Delta_1,\dots,\Delta_n}$ contient $1$ et $\fa$ est un \idp.
\end{enumerate}
\end{theorem}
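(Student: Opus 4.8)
The plan is to read the given datum as (a truncation of) two finite free resolutions and then feed them into the profondeur machinery of the preceding sections. Completing the sequence on the right,
\[
0 \lora \gA^{n-1}\vers{A}\gA^n\vers{\alpha}\gA\lora \gA/\fa\lora 0
\]
is a finite free resolution of $\gA/\fa$ of length $2$, while its truncation $0 \to \gA^{n-1}\vers{A}\gA^n\vers{\ov\alpha}\fa\to 0$ is a finite free resolution of $\fa$ of length $1$. For the length-$2$ resolution the expected stable ranks are $r_0=0$, $r_1=1$, $r_2=n-1$ (so $L_0=\gA$, $L_1=\gA^n$, $L_2=\gA^{n-1}$ fit the pattern $L_k\simeq\gA^{r_{k+1}+r_k}$), and the characteristic ideals are $\fD_1=\cD_1(\alpha)=\fa$ and $\fD_2=\cD_{n-1}(A)=\gen{\Delta_1,\dots,\Delta_n}$. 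Although the statement lists the claims $1$--$3$ in that order, it is natural to settle point~$2$ first, since it will be needed for point~$1$: applying the estimate on the profondeur of the characteristic ideals of a resolution (Théorème~\ref{thRangStProfRLF}) to this length-$2$ resolution gives at once $\Gr(\fD_2)\geq 2$, i.e. $\Gr(\gen{\Delta_1,\dots,\Delta_n})\geq 2$.

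For point~$1$ I would first record two faithfulness facts. Since $A$ is injective, McCoy's theorem (\ref{propInjIdd}) shows that $\cD_{n-1}(A)=\gen{\Delta_1,\dots,\Delta_n}$ is faithful; and since the length-$1$ resolution of $\fa$ has Euler characteristic $n-(n-1)=1$, Vasconcelos' corollary (\ref{corVascon}, point~$2$) shows that $\fa$ is faithful. Next I would derive the proportionality of the two $n$-tuples. Writing $A_1,\dots,A_{n-1}$ for the columns of $A$, the relation $\alpha A=0$ says that the column vector $\tra\alpha=(a_1,\dots,a_n)$ is orthogonal to each $A_j$; the proportionality theorem (\ref{thOrtho}, point~$2$), applied with $U=A$ ($p=n-1$) and $V=\tra\alpha$ ($q=1$), then asserts that $\Hd(\pex A)$ and $\tra\alpha$ are proportional in $\Vi^{1}\gA^{n}=\gA^{n}$. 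Since $\Hd(\pex A)$ is, up to a single global sign, the vector $(\Delta_1,\dots,\Delta_n)$, this is exactly the relation $a_i\Delta_j=a_j\Delta_i$ for all $i,j$.

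With proportionality in hand and $\Gr(\gen{\Delta_1,\dots,\Delta_n})\geq 2$ already proved, the characterization of profondeur $\geq 2$ (Théorème~\ref{lemGRa>1}, point~$3$) furnishes a \emph{unique} $a\in\gA$ with $(a_1,\dots,a_n)=a\,(\Delta_1,\dots,\Delta_n)$, that is $\alpha=a\Delta$. This $a$ is a non-diviseur de zéro: if $ab=0$ then $a_ib=\Delta_i(ab)=0$ for every $i$, so $b\fa=0$, whence $b=0$ by faithfulness of $\fa$. It is moreover a pgcd fort of $\fa$: profondeur $\geq 2$ makes $1$ a pgcd fort of $(\Delta_1,\dots,\Delta_n)$ (Lemme~\ref{lemGCDFORT}, point~$4$), and multiplication by $a$ (Lemme~\ref{lemGCDFORT}, point~$2$) makes $a=a\cdot 1$ a pgcd fort of $a(\Delta_1,\dots,\Delta_n)=(a_1,\dots,a_n)$, hence of $\fa$. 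This settles point~$1$, and point~$3$ follows immediately: if $\Gr(\gen{\Delta_1,\dots,\Delta_n})\geq 3$, then $1\in\gen{\Delta_1,\dots,\Delta_n}$ by Théorème~\ref{thHilBur1}, point~$3$, so $\fa=\gen{a_1,\dots,a_n}=a\gen{\Delta_1,\dots,\Delta_n}=\gen{a}$ is principal.

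The one genuinely delicate step is the proportionality argument: one must correctly identify the combinatorial vector of signed minors $(\Delta_1,\dots,\Delta_n)$ with the Hodge dual $\Hd(\pex A)$ of the $(n-1)$-vector $\pex A=A_1\wedge\cdots\wedge A_{n-1}$, keeping track of the uniform sign so that the conclusion of \ref{thOrtho} reads precisely $a_i\Delta_j=a_j\Delta_i$. Everything else is an assembly of results proved earlier in the chapter: point~$2$ is a direct consequence of \ref{thRangStProfRLF} applied to the resolution of $\gA/\fa$, and the passage from proportionality to the factorization $\alpha=a\Delta$ is exactly the content of the profondeur-$\geq 2$ criterion \ref{lemGRa>1}.
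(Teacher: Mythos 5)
Votre démonstration est correcte, mais elle ne suit pas le même chemin que celle du texte. Pour le point \emph{2} (et la minoration $\Gr(\fa)\geq 1$), vous faites exactement comme le texte: appliquer le \thref{thRangStProfRLF} à la résolution de longueur $2$ de $\gA/\fa$. La divergence est au point \emph{1}: le texte invoque d'un seul coup le \thref{thdetCay} (complexe de Cayley avec $m=2$, $r_2=n-1$, $r_1=1$, $r_0=0$, $A_2=A$, $A_1=\alpha$), dont la conclusion donne directement $\alpha=\bmu_1\Delta$ avec $\bmu_1$ \ndz et pgcd fort des $a_i$; vous, au contraire, n'utilisez jamais \ref{thdetCay} et refaites à la main le cas particulier: \tho de proportionnalité \ref{thOrtho} appliqué à $U=A$, $V=\tra\alpha$, identification (au signe global $(-1)^{n+1}$ près) de $\Hd(\pex A)$ avec $\Delta$, puis le \thref{lemGRa>1} (point \emph{3}) pour l'existence et l'unicité de $a$, un argument direct pour la régularité de $a$, et le lemme \ref{lemGCDFORT} (points \emph{2} et \emph{4}) pour le pgcd fort. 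C'est précisément le contenu de la \dem du \thref{thdetCay} (via le lemme \ref{lempqrs}), déplié dans le cas $m=2$: les deux preuves reposent donc sur les mêmes ingrédients, mais la vôtre est plus autonome et rend explicite la comptabilité des signes de Hodge, tandis que celle du texte est plus courte et met en évidence que Hilbert-Burch n'est qu'un cas particulier de la structure multiplicative générale des \rlfs. Signalons deux redondances inoffensives chez vous: les appels à McCoy (\ref{propInjIdd}) et à Vasconcelos (\ref{corVascon}) sont inutiles, puisque $\Gr(\gen{\Delta_1,\dots,\Delta_n})\geq 2$ et $\Gr(\fa)\geq 1$, déjà acquis par le \thref{thRangStProfRLF}, signifient par définition que ces \ids sont fidèles.
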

%
\begin{proof}
\emph{1} et \emph{2.} Le \thref{thRangStProfRLF} nous dit que

\snic{\Gr(\an)\geq 1\hbox{ et }\Gr({\Delta_1,\dots,\Delta_n})\geq 2.}

\snii
On peut alors appliquer
le \thref{thdetCay} avec 

\snic{m=2$, $r_2=n-1$, $r_1=1$, $r_0=0$, $A_2=A$, $A_1=\alpha.}

\snii
Le vecteur
$\tra {\,\,{\bmu_{2}}\sta}$ est le vecteur $\Delta$ et le \tho affirme que $\Vi^{1}A_1$ (\cad $\alpha$)
s'écrit sous la forme $\bmu_1 \Delta$ (avec ici $\bmu_1=\dt{\bmu_1}\in\gA$)
où $\bmu_1$ est un \elt \ndz, pgcd fort des $a_i$.

\emph{3.} Voir le \tho précédent.  
\end{proof}


\sibook{ 
\section{Le cas des modules gradués}\label{secDetCayModGr}

\vspace{3cm}

}


\section{Annexe: \dem du \tho de proportionnalité}\label{secCayleyAnnexe} 

On définit un \ix{produit intérieur droit} (qui sera \gne plus tard) comme suit:
$$\label{prodintdroit}
\formule{\big(\Vi\bul \Ae n\big) \times \Ae n \;\lora\; \Vi^{\bullet-1} \Ae n, \\[1mm]
\big((v_1\vi\dots\vi v_p),u\big)\;\longmapsto\; 
\formule{(v_1\vi\dots\vi v_p)\intd u=\\[1mm]
\som_{i=1}^{p}(-1)^{{i-1}}\scp {v_i}u \;v_1\vi\dots\vi \wh{v_i} \vi\dots\vi v_p.}}
$$
(la notation $\wh{v_i}$ signifie que l'on a supprimé le terme $v_i$ dans le produit extérieur). On laisse le soin \alec de vérifier que cette \dfn est cohérente.
\\
Comme cas particuliers on obtient 

\snic{a\intd u=0\hbox{ pour }a\in\gA,\;\; \hbox{ et }v\intd u= \scp v u\hbox{ pour }v\in\Ae n.}

\snii
En fait, le 
produit intérieur $\bullet \intd u$ est l'unique \ix{anti\dvn à gauche}
 de $\Vi \Ae n$ qui prolonge la forme \lin $\scp{\,\bullet} u$. 
\\
Par \dfn, une anti\dvn à gauche de l'\alg graduée $\Vi \Ae n$ est une
\Ali~$\partial$ de $\Vi \Ae n$ dans elle-même, de degré $-1$, qui satisfait l'axiome suivant:

\snic{\fbox{$\partial(\bmx\vi\bmz)=\partial(\bmx)\vi\bmz+(-1)^{k}\bmx
\vi \partial(\bmz)$}
\quad \hbox{ si } \bmx\in\Vi^{k}\Ae n \hbox{ et } \bmz \in \Vi^{\ell}  \Ae n.}

\snii La \vfn est laissée \alec.

\medskip 
\rem On s'y perd un peu, le produit intérieur est droit et l'anti\dvn est à gauche: la tradition a fixé les choses ainsi.
\eoe

\begin{lemma} \label{lemDualIntd}
Pour $\bmx\in\Vi^{p}\Ae n$ et $\bmz\in\Vi^{p-1}\Ae n$, on a la jolie formule
\begin{equation} \label {eqlemDualIntd}
\scp{\,\bmx\intd u}{\bmz\,}=\scp{\,\bmx}{u\vi \bmz\,}
\end{equation}
\end{lemma}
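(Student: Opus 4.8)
The plan is to reduce to decomposable multivectors and then to recognize both members of the identity \pref{eqlemDualIntd} as one and the same Laplace expansion of a determinant. First I would note that both sides are bilinear in the pair $(\bmx,\bmz)$: on the left, $\bmx\mapsto\bmx\intd u$ is $\gA$-linear (it is the degree $-1$ anti\dvn attached to the form $\scp{\,\bullet}{u}$) and $\scp{\,\bullet}{\bmz}$ is linear; on the right, $\bmz\mapsto u\vi\bmz$ is linear, as is $\scp{\bmx}{\,\bullet}$, and $\scp{\,\bullet}{u\vi\bmz}$ is linear in $\bmx$. Since $\Vi^{p}\Ae n$ and $\Vi^{p-1}\Ae n$ are generated by decomposable elements, it suffices to establish the formula when $\bmx=v_1\vi\cdots\vi v_p$ and $\bmz=w_1\vi\cdots\vi w_{p-1}$, with $v_i,w_\ell\in\Ae n$.

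For such decomposable arguments I would expand the left-hand side directly from the definition of the interior product, obtaining
$$\scp{\bmx\intd u}{\bmz}=\sum_{i=1}^{p}(-1)^{i-1}\scp{v_i}{u}\;\scp{v_1\vi\cdots\vi\wh{v_i}\vi\cdots\vi v_p}{\,w_1\vi\cdots\vi w_{p-1}}.$$
By the definition of the scalar product on $\Vi^{p-1}\Ae n$ as a Gram determinant, each factor $\scp{v_1\vi\cdots\vi\wh{v_i}\vi\cdots\vi v_p}{w_1\vi\cdots\vi w_{p-1}}$ is the determinant of the $(p-1)\times(p-1)$ matrix with entries $\scp{v_j}{w_\ell}$, where $j$ runs over $\lrb{1..p}\setminus\so i$ (rows, in increasing order) and $\ell$ over $\lrb{1..p-1}$ (columns).

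On the other side, again by the definition of the scalar product, $\scp{\bmx}{u\vi\bmz}$ equals $\det(N)$, where $N$ is the $p\times p$ matrix whose $(i,1)$ entry is $\scp{v_i}{u}$ and whose $(i,\ell+1)$ entry is $\scp{v_i}{w_\ell}$ for $\ell\in\lrb{1..p-1}$. The key step is then to observe that the displayed sum is \emph{exactly} the cofactor (Laplace) expansion of $\det(N)$ along its first column: the $i$-th cofactor sign is $(-1)^{i+1}=(-1)^{i-1}$, the $(i,1)$ entry is $\scp{v_i}{u}$, and the associated minor, obtained by deleting row $i$ and column $1$, is precisely the Gram determinant occurring in the $i$-th summand. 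Matching the two expressions term by term yields the identity.

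The computation is entirely routine; the only point requiring care is the bookkeeping of signs and minor indices, namely verifying that the anti\dvn sign $(-1)^{i-1}$ built into the definition of $\intd$ coincides with the first-column cofactor sign $(-1)^{i+1}$, and that the row-deleted minor of $N$ is indeed the Gram determinant of the complementary family $(v_j)_{j\neq i}$ against $(w_\ell)_\ell$. I expect this sign-and-indexing match to be the sole (and mild) obstacle; once it is settled, the formula follows at once.
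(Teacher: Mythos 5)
Your proof is correct, but it takes a genuinely different route from the paper's. The paper verifies the identity directly on basis elements: it sets $\bmx = e_I$, $u = e_i$, $\bmz = e_J$ with $I\in\cP_{p,n}$ and $J\in\cP_{p-1,n}$, notes that both sides vanish unless $i\in I$, $i\notin J$ and $J\subset I$, and in the remaining case computes the common sign $(-1)^{k-1}$, where $i$ is the $k$-th element of $I$. You instead reduce by bilinearity to decomposable multivectors $\bmx = v_1\vi\cdots\vi v_p$, $\bmz = w_1\vi\cdots\vi w_{p-1}$ and identify both sides with the Laplace expansion, along the first column, of the Gram determinant of $(v_1,\dots,v_p)$ against $(u,w_1,\dots,w_{p-1})$; your sign-and-indexing check does go through ($(-1)^{i-1}=(-1)^{i+1}$, and the row-deleted minor is exactly the Gram determinant appearing in the $i$-th summand), so the argument is complete. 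What your approach buys: it handles an arbitrary vector $u$ and arbitrary decomposables in one stroke, and it explains the formula conceptually, the adjunction between $\bullet\intd u$ and $u\vi\bullet$ being nothing but cofactor expansion. What the paper's approach buys: a shorter, purely combinatorial verification with no determinant bookkeeping, at the price of a small case analysis on multi-indices. Both proofs rest on the same prerequisite, namely the coherence of the definition of $\intd$ (left to the reader in the paper), which you invoke when you use linearity of $\bmx\mapsto\bmx\intd u$.
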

%
\begin{proof}
Notez que $\bmx$, $u$ et $\bmz$ ne changent pas de place, on remplace seulement~\hbox{\gui{$\intd \bullet \mid$}} par \gui{$\mid\bullet \,\vi$}.
On vérifie la \prt sur les bases, \cad  
$$
{\scp{\,e_I\intd e_i}{e_J\,}=\scp{\,e_I}{e_i\vi e_J\,} \hbox{ avec } I\in\cP_{p,n} \hbox{ et }J\in\cP_{p-1,n}.}
$$
On obtient $0$ si $i\notin I$, si $i\in J$ ou si $J\not\subset I $. 
Sinon, avec $I=\so{i_1,\dots,i_p}$ \hbox{et $i=i_k$}, on peut supposer $J=\so{{i_1,\dots,i_{k-1},i_{k+1},\dots,i_p}}$ et l'on obtient dans les deux membres $(-1)^{k-1}$. 
\end{proof}
%

\begin{proposition} \label{propDualiteHodge2}
Avec $p+q=n$,  $x\in\Ae n$, $\bmu$, $\bmw\in\Vi^{p}\Ae n$ et $\bmv\in\Vi^{q}\Ae n$, on a les formules de dualité suivantes:
\begin{enumerate}
\item \qquad \qquad $\scp {\bmu\sta} \bmv    =     \dt{\bmu\vi\bmv},$
\item \qquad \qquad $\scp {\bmu\sta}{\bmw\sta}    =   \scp \bmu \bmw,$
\item \qquad \qquad $\dt{\bmu\vi\bmw\sta}    =   \scp \bmu \bmw,$
\item \qquad \qquad $x\sta    =    e_{\lrbn} \intd x.$
\end{enumerate}
\end{proposition}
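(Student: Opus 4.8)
The final
\begin{proposition}\label{propDualiteHodge2}
records four duality formulas for the Hodge star on $\Vi\Ae n$ (with $p+q=n$, $x\in\Ae n$, $\bmu,\bmw\in\Vi^{p}\Ae n$ and $\bmv\in\Vi^{q}\Ae n$):
\begin{enumerate}
\item $\scp{\bmu\sta}{\bmv}=\dt{\bmu\vi\bmv}$,
\item $\scp{\bmu\sta}{\bmw\sta}=\scp{\bmu}{\bmw}$,
\item $\dt{\bmu\vi\bmw\sta}=\scp{\bmu}{\bmw}$,
\item $x\sta=e_{\lrbn}\intd x$.
\end{enumerate}

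\emph{Approach.} The first three formulas are exactly \ref{propDualiteHodge} with the labels relabelled ($\bmu_1,\bmu_2\rightsquigarrow\bmu,\bmw$), so I would not reprove them from scratch; the plan is to \emph{cite} Proposition~\ref{propDualiteHodge} for items 1--3 and spend the real work on item 4, for which the newly introduced interior product $\intd$ and the pairing formula \pref{eqlemDualIntd} of Lemma~\ref{lemDualIntd} are the right tools. (If one prefers a self-contained argument, items 1 and 2 are immediate: item 1 is the very definition $\scp{\bmu\sta}{e_I}=\dt{\bmu\vi e_I}$ extended by linearity, and item 2 is a check on the orthonormal basis $e_J\sta=\vep_{J,\ov J}e_{\ov J}$, whence $\scp{e_J\sta}{e_{J'}\sta}=\delta_{JJ'}=\scp{e_J}{e_{J'}}$; item 3 then follows by combining 1 and 2 as in the proof of \ref{propDualiteHodge}.)

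\emph{The main step, item 4.} Both sides of $x\sta=e_{\lrbn}\intd x$ live in $\Vi^{n-1}\Ae n$, so it suffices to show they have the same pairing against every basis vector $e_J$ with $J\in\cP_{n-1,n}$. For the left side, the defining property of Hodge star gives
$$
\scp{x\sta}{e_J}=\dt{x\vi e_J}.
$$
For the right side I would apply Lemma~\ref{lemDualIntd}, formula \pref{eqlemDualIntd}, with $\bmx=e_{\lrbn}\in\Vi^{n}\Ae n$, $u=x$ and $\bmz=e_J\in\Vi^{n-1}\Ae n$:
$$
\scp{e_{\lrbn}\intd x}{e_J}=\scp{e_{\lrbn}}{x\vi e_J}.
$$
So the whole identity reduces to $\dt{x\vi e_J}=\scp{e_{\lrbn}}{x\vi e_J}$ for all such $J$. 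But $x\vi e_J\in\Vi^{n}\Ae n=\gA\,e_{\lrbn}$, and by definition $\dt{\cdot}$ is precisely the coordinate of an $n$-vector on the basis $e_{\lrbn}$; meanwhile $\scp{e_{\lrbn}}{e_{\lrbn}}=1$ since the $e_I$ form an orthonormal basis, so $\scp{e_{\lrbn}}{x\vi e_J}$ is also that coordinate. Both expressions are therefore equal, which closes the argument.

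\emph{Where the difficulty lies.} There is no deep obstacle here; the content is entirely bookkeeping with signs and the orthonormality of the $e_I$. The one place to be careful is the invocation of \pref{eqlemDualIntd} in the correct degrees: Lemma~\ref{lemDualIntd} is stated for $\bmx\in\Vi^{p}\Ae n$ and $\bmz\in\Vi^{p-1}\Ae n$, and here I use it with $p=n$, so $\bmx=e_{\lrbn}$ and $\bmz=e_J$ with $\#J=n-1$, which fits exactly. Since $\dt{\cdot}$ and $\scp{e_{\lrbn}}{\cdot}$ are literally the same $\gA$-linear functional on $\Vi^{n}\Ae n$, it would also be legitimate to dispatch item 4 in one line by that observation together with \pref{eqlemDualIntd}, and I would present it in that compact form.
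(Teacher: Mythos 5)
Your proof is correct and follows essentially the same route as the paper: items 1--3 are quoted from Proposition~\ref{propDualiteHodge}, and item 4 is settled by pairing both sides against test elements (the paper uses an arbitrary $\bmz$, you use the basis vectors $e_J$, which is equivalent), invoking the definition of the Hodge star on one side and formula~\pref{eqlemDualIntd} of Lemma~\ref{lemDualIntd} on the other, then identifying $\scp{e_{\lrbn}}{\cdot}$ with $\dt{\cdot}$ on $\Vi^{n}\Ae n$. Nothing to add.
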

%
\begin{proof}\emph{1},
\emph{2} et
\emph{3.} Rappel de la proposition \ref{propDualiteHodge}.
\\
\emph{4.}  Pour un $\bmz$ arbitraire, on a d'une part $\scp{\bmx\sta}{\bmz}= \dt{\bmx\vi\bmz}$, et d'autre \hbox{part $\scp{e_{\lrbn} \intd x}{\bmz}=\scp{e_{\lrbn}}{\bmx\vi\bmz}$}, qui est égal 
à $\dt{\bmx\vi\bmz}$.
\end{proof}


\begin{lemma} \label{lemthOrtho1}  \emph{(Formule Sylvester-Pl\"ucker)}\\
Soit $\bmx=(\xn)$ et $\bmz=(z_1,\dots, z_p)$ deux suites dans $\Ae n$.  
On a alors:
\begin{equation} \label {eqlemthOrtho1}\preskip.4em \postskip.4em
\dt{x_1\vi\dots\vi x_n}\,z_1\vi\dots\vi z_p
=\som_{K\in\cP_{p,n}}\,\dt{ \bmx \MA{\leftarrow}\limits_K \bmz}
\,\Vi_{k\in K}x_k
\end{equation}
où $\bmx \MA{\leftarrow}\limits_K \bmz$ désigne, pour
$K = \{k_1 < \ldots < k_p\}$, 
la suite de $n$ vecteurs de $\Ae n$ obtenue en rempla\c{c}ant dans
$(x_1, \ldots, x_n)$, chaque vecteur $x_{k_i}$ par le vecteur~$z_i$. Par exemple, le membre droit de la formule pour
$\dt{x_1\vi x_2\vi x_3}\ z_1 \vi z_2$ est:

\snac{\dt{z_1\vi z_2\vi x_3}\ x_1 \vi x_2 + \dt{z_1\vi x_2\vi z_3}\ x_1 \vi x_3 +
\dt{x_1\vi z_2\vi z_3}\ x_2 \vi x_3.}
\end{lemma}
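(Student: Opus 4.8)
The statement to prove is the Sylvester-Plücker formula:
\[
\dt{x_1\vi\dots\vi x_n}\,z_1\vi\dots\vi z_p
=\som_{K\in\cP_{p,n}}\,\dt{ \bmx \MA{\leftarrow}\limits_K \bmz}
\,\Vi_{k\in K}x_k .
\]
My plan is to exploit the tools of exterior algebra already set up in the annexe, especially the Hodge star, the right interior product $\intd$, and the duality formulas of Propositions \ref{propDualiteHodge} and \ref{propDualiteHodge2}. The key observation is that both sides are $\gA$-multilinear and alternating in $(z_1,\dots,z_p)$ and in $(x_1,\dots,x_n)$, so it suffices to verify the identity when all vectors are taken from the canonical basis $(e_1,\dots,e_n)$. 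This reduces an a priori complicated polynomial identity to a purely combinatorial check on basis elements, which I expect to be the cleanest route.

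First I would reduce to the generic case, where the $x_i$ and $z_j$ have independent indeterminate coordinates over a base ring $\gk$; by specialization it then holds over any $\gA$. In the generic situation the determinant $\dt{x_1\vi\dots\vi x_n}$ is a nonzerodivisor, which will let me divide if needed, though the multilinearity argument should avoid that necessity. The heart of the matter is to rewrite the left-hand side using the interior product: since $\dt{x_1\vi\dots\vi x_n}$ is the coefficient of the top wedge $x_1\vi\dots\vi x_n = \dt{x_1\vi\dots\vi x_n}\,e_{\lrbn}$, I would express $z_1\vi\dots\vi z_p$ in terms of contractions of $x_1\vi\dots\vi x_n$ against the $z_j$, using formula \eqref{eqlemDualIntd}, namely $\scp{\bmx\intd u}{\bmz}=\scp{\bmx}{u\vi\bmz}$, to transfer wedges into the pairing.

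The concrete mechanism I would use is the Laplace-type expansion: pairing both sides of the claimed identity against an arbitrary $(n-p)$-vector $e_L$ with $L\in\cP_{n-p,n}$, and checking equality of scalars. On the right-hand side, $\scp{\Vi_{k\in K}x_k}{e_L}$ is a minor of the matrix of the $x_k$, while the coefficient $\dt{\bmx\MA{\leftarrow}_K\bmz}$ is the complementary determinant with the $z_j$ inserted in the positions $K$; their product summed over $K$ is exactly the Laplace expansion of the determinant of the $n\times n$ matrix whose columns are obtained by overwriting the appropriate block. On the left-hand side, the scalar $\dt{x_1\vi\dots\vi x_n}\,\scp{z_1\vi\dots\vi z_p}{e_L}$ factors as a full determinant times a $p\times p$ minor. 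The identity to verify thus becomes a classical Laplace/Cauchy-Binet determinantal identity, and I would invoke the expansion of a determinant along a set of columns.

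The main obstacle I anticipate is bookkeeping of signs: the insertion operator $\bmx\MA{\leftarrow}_K\bmz$ reshuffles columns, and matching the sign $\vep$ conventions coming from the Hodge star and from the ordering of $K=\{k_1<\dots<k_p\}$ against the signs produced by the Laplace expansion requires care. I would handle this by first checking the smallest nontrivial case $n=3,\ p=2$ displayed in the statement to fix the sign conventions, then arguing the general signs are forced by alternation: since both sides change sign identically under transposition of two $x_i$ or two $z_j$, and agree on one standard basis configuration, they agree everywhere. This alternation-plus-normalization strategy should let me bypass the most tedious sign chase and conclude.
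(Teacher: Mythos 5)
Your overall skeleton — reduce by multilinearity to the case where every $x_i$ and every $z_j$ is a canonical basis vector — is valid over any commutative ring and is genuinely different from the paper's proof; but as written it has a circularity at its crux. The right-hand side of \pref{eqlemthOrtho1} is \emph{not} term-by-term antisymmetric in the $x_i$: under a transposition $x_a\leftrightarrow x_b$, a term indexed by $K$ with $a\in K$, $b\notin K$ is not sent to $\pm$ itself but must be matched against the term indexed by $(K\setminus\{a\})\cup\{b\}$, keeping track both of which $z_i$ lands in which position (the sorted order of $K$ changes) and of the sign of the wedge $\Vi_{k\in K}x_k$. So the assertion "both sides change sign identically under transposition of two $x_i$" is precisely the tedious sign chase you propose to bypass by invoking it. The same difficulty appears in your basis-vector check itself: if two $x_i$ coincide, the left side is $0$, but on the right side the terms with exactly one of the two equal positions in $K$ do not vanish individually and must cancel in pairs. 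Secondarily, "agree on one standard basis configuration" is not enough: for fixed $x_i=e_i$ both sides are alternating multilinear in the $z_j$, hence determined by their values on \emph{all} increasing tuples $(e_{j_1},\dots,e_{j_p})$; that whole (easy) family of checks is needed.

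The gap is fixable: prove that the right-hand side vanishes whenever $x_a=x_{a+1}$, by pairing $K\ni a,\ a+1\notin K$ with $K'=(K\setminus\{a\})\cup\{a+1\}$ — adjacency guarantees the same $z_i$ is substituted at position $a$, resp.\ $a+1$, so the two determinants are opposite while the two wedges coincide; the terms with both or neither of $a,a+1$ in $K$ vanish individually. This gives alternation in the $x_i$, alternation in the $z_j$ being immediate (every determinant then has two equal columns), and your reduction terminates on $x_i=e_i$, $(z_j)=(e_{j_1},\dots,e_{j_p})$, where only $K=\{j_1,\dots,j_p\}$ survives. It is instructive to compare with the paper's route, which avoids this cancellation entirely: there one uses linearity only in the $z_j$, written in the basis $(\xn)$, so that every unwanted term on the right dies \emph{individually} (a repeated column in each determinant); the general case is then reached by the perturbation $y_i=Te_i+x_i$ over the Nagata localization $\gA(T)$, specializing $T=0$ and using injectivity of $\gA\to\gA(T)$. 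Your approach, once the alternation lemma is supplied, is more elementary in that it needs no localization; conversely, the Hodge/interior-product machinery and the Laplace-expansion detour of your middle paragraphs are not needed for either route (and note that pairing a $p$-vector against an $(n-p)$-vector $e_L$ should be done via $\dt{\,\cdot\,\vi e_L}$, not via $\scp{\cdot}{\cdot}$, which pairs equal degrees).
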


\medskip \rem Pour $p=1$, le premier membre  est
$\dt{x_1\vi \dots\vi x_n}\,z$ et le second membre s'écrit

\snac {
\dt{z\vi x_2\vi\dots\vi x_n}\, x_1 \ +\ \dt{x_1\vi z\vi\dots\vi x_n}\, x_2 
\ +\ \cdots \ +\ \dt{x_1\vi x_2\vi\dots\vi z}\, x_n.
}

L'\egt n'est rien d'autre qu'une formule habituelle de Cramer.

Notons aussi que lorsque les $x_i$ forment une base, le second membre donne les \coos du premier membre sur la base correspondante de  $\Al p \!{\Ae n}$ 
\eoe

\begin{proof}
L'\egt à démontrer est \lin en chacun des $x_i$ et $z_j$. 

\emph{Premier cas}.
Supposons que $(\xn)$ est une  base.  Par linéarité il suffit de traiter le cas où $(z_1,\dots,z_p) = (x_{k_1},\dots,x_{k_p})$. 
\\
Si deux des $k_i$ sont égaux, les deux membres sont nuls: en effet, dans le second membre chaque scalaire \halfsmashbot{$[\bmx \MA{\leftarrow}\limits_K \bmz]$} est nul. 
\\ 
Si les $k_i$ sont tous distincts,  posons $K' =\so{k_1, \ldots, k_p}$. 
Alors, les termes de la somme intervenant dans le second membre
 sont tous nuls, sauf celui correspondant à $K = K'$, qui vaut
$$\preskip.3em \postskip.2em 
\dt{ \bmx \MA{\leftarrow}\limits_{K'} \bmz}\, x_{k_1}\vi\dots\vi x_{k_p}, 
$$
\cad le premier membre: notez que \smash{$(\bmx \MA{\leftarrow}\limits_K \bmz)=(\xn)$} quel que soit l'ordre des $k_i$.

\emph{Cas \gnl.}
Le cas \gnl se ramène au cas précédent comme suit. On considère le localisé de Nagata $\gB=\gA(T)$. On note $(e_1,\dots,e_n)$ la base canonique de $\gB^n$. On considère les vecteurs $y_i=Te_i+x_i$. Leur \deter est un \pol en $T$, unitaire de degré $n$, donc \iv, et les $y_i$ forment une base. On applique le cas précédent et l'on obtient l'\egt voulue avec les $y_i$ à la place des $x_i$. On fait $T=0$ et l'on obtient l'\egt pour les $x_i$ et les $z_j$ vus dans $\gB^n$. On conclut en rappelant que le morphisme $\gA\to\gB$ est injectif.  
\end{proof}
%

\begin{lemma} \label{lemthOrtho2}
Soient $u$, $v_1$, \ldots, $v_q \in \Ae n$, avec chaque $v_j$ \ort  à
$u$. Alors $v_1 \vi\dots\vi v_q$ est \ort au sous-module
$u \vi\, \Vi^{q-1}(\Ae n)$.
\end{lemma}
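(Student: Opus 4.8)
Le lemme à démontrer affirme que si $u, v_1, \ldots, v_q \in \Ae n$ avec chaque $v_j$ orthogonal à $u$ (c'est-à-dire $\scp{v_j}{u}=0$), alors le $q$-vecteur $v_1 \vi \cdots \vi v_q$ est orthogonal au sous-module $u \vi \Vi^{q-1}(\Ae n)$.

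Le plan est d'utiliser la formule de dualité \eqref{eqlemDualIntd} du lemme \ref{lemDualIntd}, qui relie le produit scalaire et le produit intérieur droit. D'abord, je remarquerais qu'il suffit de tester l'orthogonalité sur les générateurs du sous-module $u \vi \Vi^{q-1}(\Ae n)$, c'est-à-dire sur les éléments de la forme $u \vi \bmz$ où $\bmz \in \Vi^{q-1}(\Ae n)$. Posons $\bmv = v_1 \vi \cdots \vi v_q$. Il faut donc montrer que $\scp{\bmv}{u \vi \bmz} = 0$ pour tout $\bmz \in \Vi^{q-1}(\Ae n)$.

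Le point clé est d'appliquer la formule \eqref{eqlemDualIntd} \gui{à l'envers}: pour $\bmv \in \Vi^{q}\Ae n$ et $\bmz \in \Vi^{q-1}\Ae n$, on a
$$
\scp{\bmv}{u \vi \bmz} = \scp{\bmv \intd u}{\bmz}.
$$
Il suffit alors de montrer que $\bmv \intd u = 0$. Or par définition du produit intérieur droit,
$$
\bmv \intd u = (v_1 \vi \cdots \vi v_q) \intd u = \som_{i=1}^{q}(-1)^{i-1}\scp{v_i}{u}\; v_1\vi\cdots\vi \wh{v_i}\vi\cdots\vi v_q.
$$
Chaque terme de cette somme contient le facteur $\scp{v_i}{u}$, qui est nul par hypothèse d'orthogonalité de chaque $v_i$ avec $u$. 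Donc $\bmv \intd u = 0$, ce qui donne immédiatement $\scp{\bmv}{u \vi \bmz}=0$ pour tout $\bmz$, et par linéarité l'orthogonalité de $\bmv$ à tout le sous-module $u \vi \Vi^{q-1}(\Ae n)$.

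Il n'y a pas véritablement d'obstacle dans cette preuve: l'essentiel du travail a déjà été accompli en établissant la belle formule d'adjonction du lemme \ref{lemDualIntd} entre $\intd u$ et $u \vi \bullet$. Le seul point à soigner est l'argument de réduction aux générateurs (tout élément de $u \vi \Vi^{q-1}(\Ae n)$ est combinaison linéaire d'éléments $u \vi \bmz$), qui est immédiat car $u \vi \bullet$ est une application linéaire. Ce lemme servira ensuite à établir la proportionnalité des $q$-vecteurs dans la preuve générale du théorème de proportionnalité \ref{thOrtho}, en montrant que l'orthogonalité des colonnes force ${\bmu}\sta$ et $\bmv$ à vivre dans des positions duales compatibles.
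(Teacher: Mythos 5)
Votre démonstration est correcte et suit essentiellement la même démarche que celle du texte~: on établit d'abord que $(v_1\vi\dots\vi v_q)\intd u=0$, puis on conclut par la formule de dualité du lemme \ref{lemDualIntd}. La seule différence, mineure, est que vous obtenez l'annulation du produit intérieur directement à partir de la formule qui le définit sur les éléments décomposables (chaque terme contient un facteur $\scp{v_i}{u}=0$), alors que le texte procède par récurrence sur $q$ en utilisant la propriété d'antidérivation à gauche de $\bullet\intd u$ — les deux arguments sont également valables.
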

%
\begin{proof}
On prouve d'abord que $(v_1\vi\dots\vi v_q) \intd u = 0$ par \recu sur $q$.
\\
Pour $q=1$, c'est immédiat car $v_1\intd u = \scp{v_1}{u}$.  
\\
Pour $q \ge 2$,
posons $\bmv = v_2\vi\dots\vi v_q$. Puisque $\bullet\intd u$ est une
anti-dérivation à gauche, on a 

\snic {
(v_1\vi \bmv) \intd u = \scp{v_1}{u}\,\bmv - v_1\vi(\bmv\intd u) 
\buildrel {\rm ici} \over = v_1\vi(\bmv\intd u)
}

Comme $\bmv\intd u = 0$ (\recu sur $q$), on a le résultat annoncé.

Ensuite, soit $\bmw \in \Vi^{q-1}(\Ae n)$. La formule de
dualité \pref{eqlemDualIntd} (lemme~\ref{lemDualIntd}) donne 

\snic {
\scp{v_1\vi\dots\vi v_q}{u\vi\bmw} = 
\scp{(v_1\vi\dots\vi v_q) \intd u}{\bmw},  
}

\snii
d'où le résultat d'orthogonalité de l'énoncé.
\end{proof}
%


%
\begin{Proof}{Démonstration du \tho de proportionnalité \ref{thOrtho}.}\\
Il s'agit de démontrer que les deux $q$-vecteurs $\bmu\sta$ et $\bmv$ sont
proportionnels, i.e. que $\scp{\bmu\sta}{e_I}\,\scp{\bmv}{e_J} =
\scp{\bmu\sta}{e_J}\,\scp{\bmv}{e_I}$ pour tous $I$, $J\in\cP_{q,n}$. Ou
encore, d'après la \dfn de l'isomorphisme de Hodge choisi:

\snic {
\dt{\bmu\vi e_I}\,\scp{\bmv}{e_J} = \dt{\bmu\vi e_J}\,\scp{\bmv}{e_I}.
}

\snii
En posant 

\snic {
\bmw_1 = \dt{\bmu\vi e_I}\,e_J,\quad \bmw_2 = \dt{\bmu\vi e_J}\,e_I \quad\hbox{et}\quad 
\bmw = \bmw_1 - \bmw_2,
}

il s'agit de prouver que le $q$-vecteur $\bmw$ est \ort à
$\bmv$. 
\\ 
D'après la formule de Sylvester-Pl\"ucker \pref{eqlemthOrtho1}, le $q$-vecteur $\bmw_1 = \dt{\bmu\vi e_I}\,e_J$ est une
somme de termes de la forme $a\, y_1 \vi\dots\vi y_q$ avec $a\in \gA$, les
$y_k$ étant pris dans $ [u_1, \ldots, u_p]  \cup  [e_i,\ i\in I] $.

Dans chacun des termes $a\, y_1 \vi\dots\vi y_q$, il y a un $y_k$ égal à
l'un des $u_j$, sauf pour celui correspondant dans la formule \pref{eqlemthOrtho1} à $K = I$, ce terme valant $\dt{\bmu\vi e_J}\, e_I =
\bmw_2$. Ainsi, $\bmw$ est une somme de $q$-vecteurs de la forme $a\, y_1
\vi\dots\vi y_q$ et dans chacun d'entre eux, l'un des $y_k$ appartient 
\hbox{à $[u_1,\ldots, u_p]$}. D'après le lemme \ref{lemthOrtho2}, un tel $q$-vecteur est
orthogonal à $\bmv$. Donc~$\bmw$ est \ort à $\bmv$.
\end{Proof}

\Exercices

\begin{exercise} \label{exoidMacRae}
{(Idéal déterminantiel de MacRae, comparer avec le \thref{thMacRae})}\\
{\rm  
Soit $M$ un module avec $\cF_0(A)=\gen{e}$ et $e$ \ndz.
Alors il existe des \eco $(\sn)$ tels que la module $M$ est \elr après \lon en chacun des  $s_i$.
}

\end{exercise}

\begin{exercise} \label{exoHodgeIsos}
{(Isomorphismes de Hodge)}\\
{\rm  
On a défini dans le cours un \iso de Hodge droit $\Hd$ lié au produit
intérieur droit ($\Hd(x)=e_{\lrbn} \intd x$); on étudie ici une version
gauche, appelée \iso de Hodge gauche.

\snii\emph {1.}
Pour $p+q = n$, on note $\Hd_{pq} : \Vi^p\Ae n \to \Vi^q\Ae n$
la restriction de $\Hd$. Que vaut $\Hd_{pq} \circ \Hd_{qp}$?
Que vaut $\det(\Hd_{pq})$ dans les bases canoniques?

\snii\emph {2.}
Définir un \iso de Hodge gauche $\Hg : \Vi\Ae n \to \Vi\Ae n$
analogue à celui de droite et en donner quelques
\prts.

\snii\emph {3.}
Montrer que $\Hd$ et $\Hg$ sont inverses l'un de l'autre.

}

\end{exercise}

\begin{exercise} \label{exoResultant2VarsAsCayleyDet}
{(Le résultant de deux \pols \hmgs en deux variables comme \deter de
Cayley d'un complexe)}\\
{\rm  
Soient $P, Q \in \gA[X,Y]$ deux \pols \hmgs de degré $p$ et $q$:

\snic {
P = \sum_{i=0}^p a_i X^i Y^{p-i}, \qquad
Q = \sum_{j=0}^q b_j X^j Y^{q-j}
}

\snii
On fixe $d \ge p+q-1$, et on pose $a = d+1-(p+q)$, $b=d+1$. 
Puis on considère les \Amos:

\snic {
\begin {array}{l}
E = \gA[X,Y]_{d-(p+q)} \simeq \gA^a \\
F = \gA[X,Y]_{d-p} \times \gA[X,Y]_{d-q}\simeq \gA^{a+b} \\
G = \gA[X,Y]_{d} \simeq \gA^b \\
\end {array}
}

\snii
et les applications \lins:

\snic {
E \vers {\varphi} F,\quad  W \mapsto (WQ, -WP), \qquad
F \vers {\psi} G,\quad  (U,V) \mapsto UP + VQ
}

\snii
Il est clair que $0\to E\vers {\varphi} F\vers {\psi} G\to 0$ est un complexe.
On veut fournir une condition suffisante pour que ce complexe soit 
de Cayley et montrer, dans ce cas, que son \deter est $\pm \Res(P,Q)$.
On notera le cas particulier de la plus petite valeur possible $d_0$ de
$d$ i.e. $d_0 = p+q-1$ (qui fournit $a = 0$) pour laquelle 
$\psi$ est l'application de Sylvester habituelle. La quantité $a = d-d_0$
doit être vue comme la distance de $d$ à $d_0$.

\snii\emph {1.}
De manière \gnle, comment calculer le \deter d'un complexe de Cayley $0\to\gA^a
\vers {A}\gA^{a+b}\vers{B}\gA^b \to 0$ (avec $a = 0$ comme cas particulier).

\snii\emph {2.}
Le résultat de cette question pourra être utilisé dans la question
suivante. Soit $(\ux) = (\xn)$ une suite d'\elts de $\gA$ que l'on voit comme
un \gui {motif}; on définit, pour tout $r \ge 0$, la matrice $M_r(\ux) \in
\MM_{r,n+r-1}(\gA)$ en faisant glisser le motif comme ci-dessous:

\snic {
M_3(\ux) = \cmatrix {
x_1 & x_2 & \cdots & x_n    & 0      & 0 \cr
0   & x_1 & x_2    & \cdots & x_n    & 0\cr
0   &   0 & x_1    & x_2    & \cdots & x_n \cr}
}

Quel est l'\idd $\cD_r(M_r(\ux))$? 

Question non utilisée dans la suite: si $x_1, \ldots, x_n$ sont
$n$ \idtrs sur $\gA$, que peut-on penser des ${r+n-1 \choose n-1}$
mineurs de $M_r(\ux)$ vis-à-vis de la composante \hmg
$\gA[\ux]_r$?

\snii\emph {3.}
Identifier la matrice $K$ de $\varphi$ dans des bases adéquates 
et déterminer quelques mineurs maximaux (d'ordre $a$) de $K$
simples à calculer. Quel est l'\idd $\cD_a(K)$?

\snii\emph {4.}
Identifier la matrice $S$ de $\psi$ dans des bases adéquates 
(matrice de Sylvester \gui {généralisée}) et déterminer
un mineur maximal (d'ordre $b$) simple à calculer.
Montrer que $\cD_b(S) = \Res(P,Q)(\rc(P)+\rc(Q))^a$.

\snii\emph {5.}
En déduire que le complexe est de Cayley \ssi $\Gr(\rc(P) + \rc(Q)) \ge 2$
et $\Res(P,Q)$ est \ndz; dans ce cas, son \deter de Cayley est $\pm\Res(P,Q)$.

\snii\emph {6.}
On a $X^iY^j \Res(P,Q) \in \gen {P,Q}$ pour $i+j \ge p+q-1$. Montrer une
réciproque en montrant l'inclusion d'\ids de $\gA$:

\snic {
\gA \cap \bigl(\gen{P,Q} : \gen{X,Y}^\infty\bigr) \subseteq 
\sqrt {\gen{\Res(P,Q)}}
}

\sni
De manière précise, en notant $\fa_d=\gA\cap\bigl(\gen{P,Q}:\gen{X,Y}^d\bigr)$,
on a l'inclusion:

\snic {
\fa_d^{d+1} \subseteq \Res(P,Q)(\rc(P)+\rc(Q))^a \subseteq \gen{\Res(P,Q)}
}

En particulier, les deux \ids de $\gA$, 
$\gA \cap \bigl(\gen{P,Q} : \gen{X,Y}^\infty\bigr)$ et $\gen{\Res(P,Q)}$
ont même racine.

}

\entrenous {
Si $P,Q$ sont des \pols \gnqs, on a en fait l'\egt:

\snic {
\gA \cap \bigl(\gen{P,Q} : \gen{X,Y}^\infty\bigr) = \gen{\Res(P,Q)}
}

Confirmé par Laurent Busé. C'est ce que l'on appelle le
\tho principal de l'élimination, cf Jouanolou, in ``Le
formalisme du résultant''.
}

\end{exercise}

\begin{exercise} \label{exoHilbertBurchXY}
{(Hilbert-Burch et trois \moms en $x,y$)}\\
{\rm  
Soient $\gA = \gk[x,y]$ un anneau de \pols, 
quatre entiers $a$, $b$, $c$, $d \in \NN$ et les trois \moms $x^a$, $x^b y^c$, $y^d$.
On définit la forme \lin $\mu : \gA^3 \to \gA$, $\mu = [\,x^a \;
x^by^c \; y^d\,]$. Montrer que $\Ker\mu$ est un \Amo libre de rang $2$.
}
\end{exercise}

\begin{exercise} \label{exoHilbertBurchGeneric} {(Hilbert-Burch générique)}
\\
{\rm  
\noindent
Soit, sur un anneau quelconque $\gk$, un anneau de \pols $\gA$ à
$n(n-1)$ \idtrs $x_{ij}$, $1 \le i \le n$, $1 \le j \le n-1$ et $A$ la matrice
$(x_{ij})$ dont on note les colonnes $A_1, \ldots, A_{n-1}$.  Comme dans le
cours, on pose pour $v \in \gA^n$:

\snic {
\Delta(v) = \det(v, A_1, \ldots, A_{n-1}),  \qquad
\Delta_i = \Delta(e_i)  \hbox { pour $1 \le i \le n$}
}

\snii
En examinant, pour un certain ordre monomial, les monômes dominants des
$\Delta_i$, montrer que la suite ci-dessous est exacte

\snic {
0 \to \gA^{n-1} \vers {A}\gA^n \vers {\Delta} \gA 
\to \gA/\Im\Delta \to 0
}

}
\end{exercise}



\sol

\exer{exoidMacRae} C'est une légère variation sur la \dem du point \emph{1b} du \thref{thMacRae}. 
Soient $(\mu_i)_{i\in\lrbn}$ la famille des mineurs d'ordre $r$
d'une \mpn $A\in\MM_{r,m}$ du module $M$. On a des $s_i$ tels que $\mu_i=s_ie$ et des $x_i$ tels que $e=\sum_ix_i\mu_i=\sum_ix_is_ie$.
Puisque $e$ est \ndz, les $s_i$ sont \com.

\exer{exoHodgeIsos} 

\snii\emph {1.}
On a $\Hd_{pq} \circ \Hd_{qp} = (-1)^{pq}\I_{\wedge^q \Ae n}$. 
En ce qui concerne le \deter, l'auteur de l'exercice
ignore sa valeur.

\snii\emph {2.}
On définit l'\iso de Hodge à gauche comme suit:
$$
\formule{\Vi^{p}\Ae n\vvers\Hg \Vi^{q}\Ae n,\\[1mm]
\bmx\longmapsto\som_{J\in\cP_{q,n}} \dt{e_J \vi\bmx}\, e_J} 
\quad \hbox{pour } p+q=n. 
$$

Par \dfn, on a donc $\scp {e_J}{\Hg(\bmx)} = \dt{e_J \vi\bmx}$.
On laisse le soin au lecteur de vérifier que:

\snic {
\Hg(e_J) = \vep_{\ov J,J}\,e_{\ov J},
\quad\qquad \hbox {et} \qquad\quad
\framebox [1.1\width][c]{$\Hg(e_J) \vi e_J = e_{\lrbn}$}
}

L'\egt encadrée mérite d'être retenue:
on peut compléter $e_J$ à gauche avec $\Hg(e_J)$
pour obtenir le $n$-vecteur $e_{\lrbn}$.

\snii\emph {3.}
On a $\Hd(e_I) = \vep_{I,\ov I}e_{\ov I}$ et $\Hg(e_J) = \vep_{\ov J,J}e_{\ov J}$.
D'où $(\Hg \circ \Hd)(e_I) = \vep_{I,\ov I} \vep_{I,\ov I} e_I = e_I$.
Idem dans l'autre sens.


\exer{exoResultant2VarsAsCayleyDet} 

\snii\emph {1.}
On note $H = \Hd$ l'isomorphisme de Hodge droit. On écrit symboliquement
le déterminant de Cayley $\Delta$ sous la forme:
$$
\Delta =
{\Vi^b(B) \over \bigl( \Vi^a(\tra{A}) \bigr)\sta} =
{\sum_{\#J = a} \delta_{J}(B)\, e_J \over \sum_{\#I=a} \delta_I(\tra{A})\, H(e_I)}
$$
Explication: $\delta_J(B)$ désigne le mineur maximal de $B$ 
sur les colonnes d'indices $J$. Idem pour $\delta_I(\tra{A})$
mineur maximal de $A$ sur ses lignes d'indices $I$.
Si les conditions $\Gr(\cD_a(A)) \ge 2$ et $\Gr(\cD_b(B)) \ge 1$ sont 
satisfaites, alors il y a un scalaire $\Delta$ bien déterminé
tel que:

\snic {
\sum_{\#J = a} \delta_{J}(B)\, e_J = \Delta \cdot \sum_{\#I=a} \delta_I(\tra{A})\, H(e_I)
}

Vu notre choix de l'isomorphisme de Hodge (droit), $H(e_I) = \varepsilon_{I,\ov I}
e_{\ov I}$, on pourra également écrire symboliquement
(avec $\ov I \leftrightarrow J$):
$$
\Delta = {\sum_{\#J=a} \delta_{J}(B)\, e_J \over 
\sum_{\#J=a} \delta_{\ov J}(\tra{A})\, \varepsilon_{\ov J,J} e_J}
$$
Prenons l'exemple simple de $\gA \vvers {\bigl[{a\atop b}\bigr]} \gA^2
\vvers {[c\ d]} \gA$. Puisque $e_1 \vi e_1\sta = e_2\vi e_2\sta = e_{12}$, on a 
$e_1\sta = e_2$, $e_2\sta = -e_1$, donc:

\snic {
\Delta = \frac{ce_1 + de_2}{ae_1\sta + be_2\sta} =
\frac{ce_1 + de_2}{ae_2 - be_1} = \frac{ce_1 + de_2}{-be_1 + ae_2}
}

A noter que le \tho de proportionnalité \ref{thOrtho} donne $c/(-b) = d/a$,
ce qui immédiat ici puisque $ac + bd=0$.

\snii\emph {2.}
Il est facile de voir que $\cD_r(M_r(\ux)) = \gen {\ux}^r$. En ce qui concerne
la question bonus, il est commode de poser $x_i = 0$ pour $i<0$ ou $i>n$.
Pour fixer les idées, je suppose $r = 3$. Le mineur extrait sur les colonnes
$i < j < k$ est

\snic {
\dmatrix {
x_i      & x_j     & x_k \cr
x_{i-1}  & x_{j-1} & x_{k-1} \cr
x_{i-2}  & x_{j-2} & x_{k-2} \cr}
\qquad \hbox {de transposé} \qquad
\dmatrix {
x_i  & x_{i-1} & x_{i-2} \cr
x_j  & x_{j-1} & x_{j-2} \cr
x_k  & x_{k-1} & x_{k-2} \cr}
}

Pour ceux qui connaissent: on reconna\^it à droite un \deter dit de
Dedekind, soit noté $D_{ijk}$ (indexation le long de la première colonne
avec $i<j<k$) ou bien $\Delta_{i,j-1,k-2}$ (indexation le long de la diagonale
avec $i \le j-1 \le k-2$). La famille des ${n+r-1 \choose n-1}$ mineurs
maximaux de $M_r(\ux)$ est ainsi indexée soit par les suites croissantes de
$\lrb{1..n}$ de longueur $r$ soit par les suites strictement croissantes de
$\lrb{1..n+r-1}$ de longueur $r$. Si les $x_i$ sont des \idtrs, un résultat
d\^u à Dedekind dit que ces ${n+r-1 \choose n-1}$ \deters forment une
$\gA$-base de la composante \hmg $\gA[\ux]_r$.

\smallskip
Dans la suite, on fait le choix suivant des bases. \\
Pour $E$: les $X^k
Y^{d-(p+q)-k}$ ($k$ décroissants), pour $F$ les $(X^iY^{d-p-i},0)$, ($i$ décroissants), et les $(0,X^jY^{d-q-j})$ ($j$ décroissants). Et enfin, pour $G$,
les $X^kY^{d-k}$ ($k$ décroissants).

\snii\emph {3.}
La matrice $K$ possède $a$ colonnes et $(d+1-p) + (d+1-q)$ lignes.
Si $a=0$, ${K}$ est une matrice ayant $0$ colonne. Si $a>0$ les
$d+1-p$ premières lignes sont pour les \coes de $Q$ et les suivantes pour 
ceux de $-P$.  Par exemple, pour $p=1$, $q=2$, $d=4$, on a $a=2$, $d+1-p=4$,
$d+1-q=3$ et la matrice transposée $K$ est:

\snic {\tra{K} =
\cmatrix {
b_2  & b_1 & b_0 &  .&-a_1 &-a_0&.\cr
.& b_2  & b_1 & b_0 &  .&-a_1 &-a_0}}

En \gnl,  
$\tra{K} = M_a(\ux)$ où $\ux = (b_q,\ldots, b_0,0,\ldots,0, -a_p, \ldots,-a_0)$ 
avec $a-1$ zéros au milieu.

 On a donc $\cD_a(K)
=(\rc(P) + \rc(Q))^a$.

Mineurs maximaux de $K$ faciles à calculer:
ceux qui sont égaux, au signe près, aux \gtrs naturels de $(\rc(P) + \rc(Q))^a$.

\snii\emph {4.}
La matrice $S$ a $d+1$ lignes; elle possède $d+1-p$ colonnes de
$P$ puis $d+1-q$ colonnes de $Q$.  La voici (à gauche) pour $p=1$, $q=2$,
$d=4$, à comparer avec la matrice de Sylvester habituelle (à droite):

\snic {
\cmatrix {
a_1&   .&   .&   .& b_2&   .&  .\cr
a_0& a_1&   .&   .& b_1& b_2&  .\cr
  .& a_0& a_1&   .& b_0& b_1& b_2\cr
  .&   .& a_0& a_1&   .& b_0& b_1\cr
  .&   .&  .&  a_0&   .&   .& b_0\cr}
\qquad\qquad
\cmatrix {
a_1&   .& b_2\cr
a_0& a_1& b_1\cr
  .& a_0& b_0\cr}
}

Pour $J = \lrb{a+1..a+b}$ de cardinal $b$, montrons que $\delta_J(S) =
(-1)^{aq}b_q^a \Res(P,Q)$. On procède par \recu sur $d$ en notant ($S_d,
J_d$) les objets correspondants ($a$, $b$ dépendent de $d$). On vérifie
que $\delta_{J_{d}}(S_{d}) = (-1)^q b_q \delta_{J_{d-1}}(S_{d-1})$: dans
l'exemple ci-dessus ($d = 4$), il faut calculer le mineur maximal obtenu en
supprimant les $a=2$ premières colonnes, et le développer par rapport à
la première ligne pour faire appara\^itre $(-1)^2 b_2$ et la matrice $S_3$.

Pour $d = d_0 = p+q-1$, on a $\delta_{J_{d_0}}(S_{d_0}) = \Res(P,Q)$ et donc
pour tout $k \ge 0$, $\delta_{J_{d_0+k}}(S_{d_0+k}) = (-1)^{kq}\Res(P,Q)$,
d'où le résultat avec $k=a$ puisque $d_0+a = d$.

Pour $J = \lrb{a+1..a+b}$, on a $\ov J = \lrb{1..a}$, $\varepsilon_{\ov J,J}= 1$
et $\delta_{\ov J}(\tra{K}) = b_q^a$. Avec quelques abus de notations (divisions), le
\tho de proportionnalité \ref{thOrtho} s'écrit:

\snic {
\frac{(-1)^{aq}b_q^a \Res(P,Q)}{b_q^a} =
\frac{\delta_J(S)} {\vep_{\ov J,J}\,\delta_{\ov J}(\tra{K})}
\qquad \forall\, J \hbox { $\#J=b$}
}

En supposant $b_q$ \ndz et en posant $\Delta = (-1)^{aq}\Res(P,Q)$, on
a donc:

\snic {
\delta_J(S) = \Delta \cdot \vep_{\ov J,J}\,\delta_{\ov J}(\tra{K})
\qquad \forall\, J \hbox { $\#J=b$}
}

Mais comme il s'agit d'une \idt \agq, elle est toujours vraie que $b_q$ soit
\ndz ou pas. On en déduit que 

\centerline{\fbox{$\cD_b(S) = \Res(P,Q)\cD_a(K)= \Res(P,Q)(\rc(P) + \rc(Q))^a$}}

\emph {5.}
Si $a>0$, pour que $\Gr(\cD_a(K)) \ge 2$,
il faut et il suffit que $\Gr(\rc(P) + \rc(Q)) \ge 2$. C'est le cas
si $P$ et $Q$ sont des \pols \gnqs.

On a $\cD_b(S) = \Res(P,Q)\cD_a(K)$ et si l'on suppose de plus que $\Res(P,Q)$
est \ndz, alors $\Gr(\cD_b(S)) \ge 1$. Dans ce cas, le complexe est de
Cayley, $0\to E\to F\to G$ est exact  et le \deter de Cayley est 
$\Delta = (-1)^{aq}\Res(P,Q)$.

\snii\emph {6.}
Soient $u_0, u_1, \ldots, u_d \in \fa_d$.
Si $d < p+q-1$, on a $u_k = 0$. On peut donc supposer $d \ge p+q-1$;
les appartenances $u_kX^iY^j \in \gen{P,Q}$ pour $i+j=d$ s'écrivent
dans la base $(X^d, X^{d-1}Y, \ldots, Y^d)$:

\snic {
\Diag(u_0, u_1, \ldots, u_d) = S U  \qquad \hbox {avec $U \in \MM_{a+b,d+1}(\gA)$}
}

En prenant le \deter et en appliquant Binet-Cauchy, on obtient
$u_0u_1\ldots u_d \in \cD_{d+1}(S)$, ce qui donne le résultat si
l'on se souvient que $d+1$, c'est $b$.


\exer{exoHilbertBurchXY} 
On considère la matrice  $\varphi$ ci-dessous dont les colonnes sont
des \syzys pour $(x^a, x^b y^c, y^d)$:

\snic {
\begin {array} {ccc}
\cmatrix {-x^{b-a}y^c & -y^d\cr 1 &0\cr 0 &x^a\cr},\quad &
\cmatrix {0 & y^d\cr -1 &0\cr x^by^{c-d} & -x^a\cr},\quad &
\cmatrix {-y^c & 0\cr x^{a-b} &-y^{d-c}\cr 0 &x^b\cr}. 
\\
\hbox{si }b \ge a &\hbox{si } c \ge d &\hbox{si } a \ge b \hbox { et } d \ge c
\end {array}
}

\snii
Notons $\Delta_1$, $\Delta_2$, $\Delta_3$ les mineurs (signés) d'ordre $2$
de $\varphi$. Dans les trois cas on obtient

\snic {
\Delta_1 = x^a, \quad \Delta_2 = x^by^c,\;\; \hbox{ et}\quad \Delta_3 = y^d.
}

\snii 
De plus, $\Gr_\gA(x^a, x^by^c, y^d) \ge 2$ car $x^a, y^d$
est une suite \ndze. D'après \gui{la réciproque du \tho d'Hilbert-Burch}
(i.e. le point \emph{2} du \thref{thHilBur1}),
la suite $0 \to \gA^2 \vvers {\varphi} \gA^3 \vvers {\mu} \gA$ est
exacte, et les deux colonnes de $\varphi$ forment une base de $\Ker\mu$.

\exer{exoHilbertBurchGeneric} \\
On considère l'ordre lexicographique ligne à ligne par exemple.
Alors les monômes dominants de~$\Delta_1$ et~$\Delta_n$ sont
le produit sur la \gui {sous-diagonale} et celui sur la \gui {diagonale}
i.e.

\snic {
\prod_{i=1}^{n-1} x_{i+1,i}, \qquad 
\prod_{i=1}^{n-1} x_{i,i}
}

\snii
Ils sont premiers entre eux. D'après  le \pb \ref{exoPolynomialSyzygies}
la suite~$(\Delta_1, \Delta_n)$ est \ndze. D'après le point \emph{2} du \thref{thHilBur1}, la suite
donnée est exacte.

\snii
Note: pour~$i < j$ et~$(i,j) \ne (1,n)$, les monômes dominants de
$\Delta_i$,~$\Delta_j$ ne sont pas premiers entre eux.
Mais il est fort probable que la suite~$(\Delta_i, \Delta_j)$
est tout de même \ndze.



\Biblio

\newpage \thispagestyle{empty}
\incrementeexosetprob


\chapter
{Résolutions projectives finies}\label{chapRProjF}
\perso{compilé le \today}

\minitoc

\sibook{

\Intro

Ce chapitre

\smallskip La section \ref{secPrelimRPF}  donne 

\smallskip La section \ref{secRSFSECO}  donne

\smallskip La section \ref{secCompareRSF}  donne 

\smallskip La section \ref{secCPROexact}  donne 

\smallskip La section \ref{secRSFRLF}  donne 

\smallskip La section \ref{secStrMult}  donne 

}


\section{Préliminaires} \label{secPrelimRPF}

\subsec{Dimension projective d'un produit fini}
Nous démontrons d'abord le résultat \gui{naturel} suivant de manière aussi directe que possible.

\begin{proposition} \label{PdimDirectSum} \label{corthcorlemGlaz1}
\label{factEE+P}
Pour $E$, $E'$ \mtfs, on a pour tout $n\geq -1$ les \eqvcs
\[ 
\begin{array}{ccc} 
\Pd(E) \le n  \hbox { et }\Pd(E') \le n  & \iff  &   
\Pd(E\oplus E') \le n\\[1mm] 
\Ld(E)\geq n\hbox{  et  }\Ld(E')\geq n  & \iff  &  
\Ld(E\oplus E')\geq n
\end{array}
\]
En abrégé:

\snic{\Pd(E\oplus E')=\sup\big(\Pd(E),\Pd(E')\big)$ et $\Ld(E\oplus G)= \inf\big(\Ld(E),\Ld(G)\big).}

\begin{enumerate}
\item Plus \prmt pour l'in\egt $\Pd(E)\leq \Pd(E\oplus E')$: si l'on a une \rsf de longueur $n \ge 1$:

\snic {
0\to P_n\vvers{u_n} P_{n-1}\vvvers{u_{n-1}} \cdots\cdots \vvers{u_1} P_0\vvers{u_0} 
E\oplus E' \to 0,
}

alors les modules $u_0^{-1}(E)$ et $u_0^{-1}(E')$ ont des \rsfs de longueur $n-1$, donc $E$ et $E'$ des \rsns de longueur $\leq n$.\\
En outre si les $P_i$ sont \stls, les \rsns obtenues le sont avec des
modules \stls.

\item Pour l'in\egt $\Ld(E)\geq \Ld(E\oplus E')$: si l'on a une $n$-\pn (avec \hbox{les $P_i$} libres finis)

\snic {
 P_n\vvers{u_n} P_{n-1}\vvvers{u_{n-1}} \cdots\cdots \vvers{u_1} P_0\vvers{u_0} 
E\oplus E' \to 0,
}

alors les modules $u_0^{-1}(E)$ et $u_0^{-1}(E')$ ont une $(n-1)$-\pn, 
donc~$E$ et~$E'$ des $n$-\pns.

\end{enumerate}
\end{proposition}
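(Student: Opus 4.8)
### Plan de démonstration

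Le cœur de la proposition est l'énoncé \emph{plus précis} des points \emph{1} et \emph{2}, qui décrivent explicitement comment découper une résolution de $E\oplus E'$ en résolutions de $E$ et $E'$. Les équivalences abrégées s'en déduiront ensuite facilement dans les deux sens. Le plan est donc de traiter d'abord les implications non triviales ($\Pd(E)\leq\Pd(E\oplus E')$ et $\Ld(E)\geq\Ld(E\oplus E')$), puis de mentionner les réciproques qui sont immédiates par somme directe terme à terme des deux résolutions.

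Commençons par le point \emph{1}. L'idée clé est que la projection $u_0:P_0\to E\oplus E'$ se compose avec les deux projections canoniques $\pi_E$ et $\pi_{E'}$ de $E\oplus E'$. Je poserais $F=u_0^{-1}(E)=\Ker(\pi_{E'}\circ u_0)$ et $F'=u_0^{-1}(E')$. Le fait géométrique essentiel est que $P_0=F+F'$ et $F\cap F'=\Ker u_0=\Im u_1$. En effet, puisque $u_0$ est surjective et $E\oplus E'$ est une somme directe, tout $x\in P_0$ se décompose via $u_0(x)=(y,y')$ et l'on relève $y$ et $y'$ séparément ; l'intersection $F\cap F'$ est exactement l'ensemble des éléments envoyés sur $(0,0)$, c'est-à-dire $\Ker u_0$. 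On obtient alors une décomposition $P_0/\Ker u_0\simeq E\oplus E'$ où $F/\Ker u_0\simeq E$ et $F'/\Ker u_0\simeq E'$. La résolution tronquée $0\to P_n\to\cdots\to P_1\vvers{u_1}\Ker u_0\to 0$ fournit alors une résolution de longueur $n-1$ du module $\Im u_1=\Ker u_0$, et il reste à recoller ce noyau commun avec $F$ (resp. $F'$) pour reconstituer une résolution de $E$ (resp. $E'$) de longueur $\leq n$. Concrètement, on utilise les suites exactes courtes $0\to\Ker u_0\to F\to E\to 0$ et la proposition \ref{FFRpropPfSex} (point \emph{3}) pour assembler les présentations.

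Pour le point \emph{2}, le raisonnement est identique mais on travaille avec une $n$-présentation au lieu d'une résolution (on n'exige pas l'injectivité de $u_n$) ; la même décomposition $P_0=F+F'$ avec $F\cap F'=\Im u_1$ s'applique mot pour mot, car elle n'utilise que la surjectivité de $u_0$ et la structure de somme directe au but. La seule difficulté est de vérifier que les modules $F$ et $F'$ admettent bien une $(n-1)$-présentation libre finie : cela découle du fait que le noyau commun $\Im u_1$ admet une $(n-1)$-présentation (par troncature), et que $F$ (resp. $F'$) est extension de $E$ (resp. $E'$) par ce noyau, en invoquant à nouveau la proposition \ref{FFRpropPfSex}. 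L'affirmation sur les modules stablement libres dans le point \emph{1} se traite en observant que les constructions d'assemblage préservent la propriété \gui{stablement libre}, puisqu'elles ne font intervenir que des sommes directes avec des modules libres finis.

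Le principal obstacle me semble être la \emph{rigueur du découpage} $P_0=F+F'$, $F\cap F'=\Ker u_0$, c'est-à-dire la vérification soigneuse que les deux sous-modules relevés recouvrent bien $P_0$ et s'intersectent exactement selon le noyau. C'est élémentaire mais c'est le point où toute la mécanique repose : une fois cette décomposition établie proprement, le reste est une application routinière des suites exactes courtes et de la proposition \ref{FFRpropPfSex} sur les présentations d'extensions. Je veillerais en particulier à ce que les isomorphismes $F/\Ker u_0\simeq E$ et $F'/\Ker u_0\simeq E'$ soient induits par $u_0$ de manière canonique, afin que le recollement des présentations soit compatible avec les différentielles $u_i$ de la résolution initiale.
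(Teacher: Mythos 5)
Votre décomposition $P_0 = F + F'$ avec $F = u_0^{-1}(E)$, $F' = u_0^{-1}(E')$ et $F \cap F' = \Ker u_0 = \Im u_1$ est correcte, et ce sont bien les sous-modules de l'énoncé; mais l'étape de \gui{recollement} qui suit comporte une lacune réelle: elle est circulaire. La proposition \ref{FFRpropPfSex} (point \emph{3}) fabrique une présentation du terme \emph{médian} d'une suite exacte courte à partir de présentations des deux termes \emph{extrêmes}; appliquée à $0 \to \Ker u_0 \to F \to E \to 0$, elle suppose donc déjà connue une présentation (resp. une résolution) de $E$, qui est précisément la conclusion cherchée. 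L'autre lecture possible de votre texte, la concaténation de la résolution tronquée de $\Ker u_0$ avec cette suite exacte, donne bien une suite exacte $0 \to P_n \to \cdots \to P_1 \to F \to E \to 0$, mais ce n'est pas une résolution projective de $E$: le module $F = u_0^{-1}(E)$ est un simple sous-module de $P_0$ et rien ne garantit qu'il soit projectif. Votre plan n'établit donc ni que $F$ admet une résolution projective finie de longueur $n-1$ (ce qu'affirme le point \emph{1}), ni que $E$ en admet une de longueur $\leq n$; la même circularité affecte le point \emph{2} sur les $n$-présentations, et la remarque finale sur les modules stablement libres repose sur la construction non réparée.

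Il manque deux idées, toutes deux présentes dans la preuve du texte: ramener le problème au module \emph{somme directe} $F \oplus F'$, puis raisonner \emph{par récurrence sur $n$} (le mot récurrence n'apparaît nulle part dans votre proposition, alors qu'elle est indispensable: c'est elle qui permet de séparer les deux facteurs d'une somme directe de dimension projective $\leq n-1$). À partir de votre propre décomposition, on peut procéder ainsi: la suite exacte $0 \to F \cap F' \to F \oplus F' \to F + F' \to 0$ (avec $x \mapsto (x,x)$ et $(a,b) \mapsto a-b$) s'écrit $0 \to \Ker u_0 \to F \oplus F' \to P_0 \to 0$; elle est scindée puisque $P_0$ est projectif, d'où $F \oplus F' \simeq \Ker u_0 \oplus P_0$, module qui admet une résolution projective de longueur $n-1$ (troncature de la résolution donnée, plus un facteur projectif). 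L'hypothèse de récurrence, c'est-à-dire la proposition au rang $n-1$ appliquée à $F \oplus F'$, donne alors $\Pd(F) \leq n-1$ et $\Pd(F') \leq n-1$ séparément, et les suites exactes $0 \to F' \to P_0 \to E \to 0$ et $0 \to F \to P_0 \to E' \to 0$ concluent. C'est en substance la démarche du texte, qui, au lieu de cette suite de Mayer-Vietoris, utilise l'application $v : P_0 \oplus P_0 \to E \oplus E'$, $x \oplus y \mapsto \pi(u_0(x)) \oplus \pi'(u_0(y))$, de noyau $F' \oplus F$, et le lemme \ref{lemRaccourcirRsf} pour voir que ce noyau est de dimension projective $\leq n-1$ (avec la variante stablement libre incorporée); la récurrence fait le reste.
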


\rem Les cas $n=-1$ et $n=0$ sont clairs.\\
Les deux in\egts suivantes sont \imdes:

\snic{\Pd(E\oplus E')\leq \sup\big(\Pd(E),\Pd(E')\big)$
et $\Ld(E\oplus G)\geq  \inf\big(\Ld(E),\Ld(G)\big).}

Pour les in\egts opposées, nous aurons besoin du lemme suivant.
\eoe

\begin{lemma} \label{lemRaccourcirRsf}
On considère un module  $M$ avec une \rsf $(n\geq 1)$

\snic{0 \;\to\; P_{n}\; \lora \; \cdots \; \vvers{u_2} \; P_1 \; \vvers{u_1} \; P_0
\; \vvers\pi \; M\; \to\; 0
}

 et une suite exacte
$\;{ Q \vers v M \to 0}\;$
avec~$Q$ \ptf. 
\\
Alors $\Pd(\Ker v)\leq n-1$. Plus \prmt, le module $\Ker v$ admet une \rsf du type

\snic{0 \;\to\; P_{n}\; \lora \; \cdots \;\lora P_2\; \lora \; Q_1 \; \lora \;  \; \Ker v\; \to\; 0
}

En outre, si les $P_i$ et $Q$ sont \stls, $Q_1$
est \stl.
  
\end{lemma}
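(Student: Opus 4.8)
### Plan de démonstration

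Le cœur du lemme est une variante du lemme de Schanuel (\thref{FFRlemSchanuel}), appliquée aux deux surjections $\pi : P_0 \to M$ et $v : Q \to M$, les deux modules $P_0$ et $Q$ étant projectifs. La stratégie consiste à comparer les noyaux $\Ker\pi$ et $\Ker v$ grâce à ce lemme, puis à \gui{tirer} la résolution finie de $M$ (donnée via $\Ker\pi$) vers une résolution finie de $\Ker v$.

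D'abord, je poserais $K_\pi = \Ker\pi = \Im u_1$ et $K_v = \Ker v$. Le lemme de Schanuel (point \emph{1b} du \thref{FFRlemSchanuel}), appliqué aux deux \Alis surjectives $\pi$ et $v$ de projectifs $P_0$ et $Q$ sur le même module $M$, fournit un \iso
$$
K_\pi \oplus Q \;\simeq\; K_v \oplus P_0.
$$
Par ailleurs, puisque la résolution donnée de $M$ a longueur $n$, le module $K_\pi$ admet, en tronquant, la résolution finie
$$
0 \to P_n \lora \cdots \vvers{u_3} P_2 \vvers{u_2} P_1 \vvers{u_1} K_\pi \to 0,
$$
qui est de longueur $n-1$ (avec $u_1$ vue comme surjection $P_1 \to K_\pi$ et $u_2$ injective si $n\geq 2$; pour $n=1$ on a $K_\pi \simeq P_1$). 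En ajoutant $Q$ en somme directe à $K_\pi$ au dernier cran (modification \elr du type décrit au lemme \ref{lemModifComplexe}, point \emph{1}), on obtient une résolution finie de $K_\pi \oplus Q$ où l'on a remplacé le dernier terme libre $P_1$ par $P_1 \oplus Q$. Via l'\iso de Schanuel, ceci fournit une résolution finie de $K_v \oplus P_0$, que l'on peut écrire
$$
0 \to P_n \lora \cdots \lora P_2 \lora P_1 \oplus Q \lora K_v \oplus P_0 \to 0.
$$

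Il reste à \gui{éliminer} le facteur $P_0$ à droite. C'est ici l'étape la plus délicate : je voudrais appliquer le point \emph{1} de la \prref{PdimDirectSum} (ou plutôt son mécanisme de démonstration) pour séparer la résolution de $K_v \oplus P_0$ en deux résolutions, l'une de $K_v$ et l'autre de $P_0$. Comme $P_0$ est projectif, il admet la résolution triviale de longueur $0$, et la proposition garantit alors que $K_v$ admet une résolution finie de longueur $\leq n-1$, avec un dernier module $Q_1$ obtenu comme somme directe convenable de $P_1 \oplus Q$ et d'un supplémentaire de $P_0$. Plus concrètement, en posant $L_0 = P_0$ et en appliquant la scission de \prref{PdimDirectSum}, le premier module de la résolution de $K_v$ devient $Q_1 := (P_1 \oplus Q) \ominus (\text{partie absorbée par } P_0)$, c'est-à-dire un facteur direct d'un module \ptf ; explicitement on obtient
$$
0 \to P_n \lora \cdots \lora P_2 \lora Q_1 \lora K_v \to 0.
$$

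Le principal obstacle sera de mener proprement cette dernière scission sans invoquer circulairement la \prref{PdimDirectSum} elle-même : il faudra sans doute refaire \gui{à la main}, pour ce cas particulier où l'un des deux facteurs ($P_0$) est déjà \ptf, l'argument de tirage-arrière utilisé dans la preuve de cette proposition, en s'appuyant sur le fait que $K_v \oplus P_0 \twoheadleftarrow P_1 \oplus Q$ se projette sur $P_0$ via une application scindée (car $P_0$ est projectif). L'assertion de stabilité (\gui{si les $P_i$ et $Q$ sont \stls, $Q_1$ est \stl}) suit alors formellement : l'\iso de Schanuel et les sommes directes avec des modules libres préservent la propriété d'être \stl, et $Q_1$ est construit par de telles opérations à partir de $P_1$, $Q$ et $P_0$, tous \stls par hypothèse.
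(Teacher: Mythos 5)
Votre plan suit une route réellement différente de celle du texte, et elle est viable, mais tel qu'il est rédigé il comporte une lacune au point décisif, que vous signalez vous-même sans la combler. D'une part, le recours à la proposition \ref{PdimDirectSum} (même à son \gui{mécanisme de démonstration}) est circulaire dans l'ordre logique du texte: cette proposition est démontrée \emph{à partir} du lemme \ref{lemRaccourcirRsf}, dont sa preuve est une application directe; il n'y a donc aucun argument indépendant à lui emprunter. D'autre part, la scission directe que vous esquissez en dernier recours n'est pas exécutée: le module $Q_1$ n'est jamais défini précisément, et l'exactitude de la suite $0\to P_n\to\cdots\to P_2\to Q_1\to \Ker v\to 0$ n'est pas vérifiée. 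Pour combler la lacune, il suffit de pousser votre propre indication: si $w: P_1\oplus Q\to \Ker v\oplus P_0$ est la surjection obtenue en composant $u_1\oplus \Id_Q$ avec l'\iso de Schanuel, et si $p$ est la projection sur $P_0$, alors $p\circ w$ est une surjection sur le module \ptf $P_0$, donc scindée; on pose $Q_1=\Ker(p\circ w)=w^{-1}(\Ker v)$, facteur direct de $P_1\oplus Q$, donc \ptf, avec $Q_1\oplus P_0\simeq P_1\oplus Q$ (ce qui donne aussi la stabilité). On vérifie ensuite que $\Im(P_2\to P_1\oplus Q)=\Ker w\subseteq Q_1$, que la restriction de $w$ à $Q_1$ est surjective sur $\Ker v$, et que son noyau est exactement $\Ker w$: la suite annoncée est alors exacte.

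Une fois ce point réglé, votre preuve est complète et distincte de celle du texte, qui ne passe pas par Schanuel: le texte relève $v$ en $v_0:Q\to P_0$ (projectivité de $Q$), montre que $[\,v_0\;\,u_1\,]:Q\oplus P_1\to P_0$ est surjective (car $\Im v_0+\Im u_1=P_0$), pose directement $Q_1=\Ker[\,v_0\;\,u_1\,]$ --- la projectivité de $P_0$ donnant la scission --- et vérifie l'exactitude de $P_2\to Q_1\to Q\to M\to 0$. C'est essentiellement votre scission finale, mais effectuée \gui{à la source}, en une seule étape, sans le détour par $K_\pi\oplus Q\simeq \Ker v\oplus P_0$; les deux constructions produisent d'ailleurs le même $Q_1$ à \iso près, puisque toutes deux satisfont $Q_1\oplus P_0\simeq P_1\oplus Q$. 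Ce que votre route achète: la réutilisation du lemme de Schanuel \ref{FFRlemSchanuel} comme boîte noire; ce qu'elle coûte: le transport et la scission supplémentaires, précisément là où se situait votre lacune.
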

%
\begin{proof} Puisque $Q$ est projectif et $\pi$ surjective, on relève
$v$ en $v_0$ selon le schéma 
{\small $$\preskip.4em \postskip-.4em 
\xymatrix {
                                         & P_0\ar@{>>}[d]^{\pi} \\
Q\ar@{-->}[ur]^{v_0} \ar[r]_{v} & M \\
} 
$$
}
 
On considère alors le module $Q_1=\Ker(Q\times  P_1 
\vvvvers{\lst{v_0 \;u_1}~} P_0)$ et la suite
 
 \snic{
P_2 \; \vvers{\alpha} \; Q_1 \; \vvers{\pi_Q} \; Q
\; \vers {v} \; M\; \to\; 0\quad \quad (*)
}

où $\alpha(x)=\big(0,u_2(x)\big)\in Q_1$ (on a bien $\alpha(x)\in \Ker\lst{v_0 \;u_1}$), et $\pi_Q(y,z)=y$.

On note que $\Ker \alpha=\Ker u_2$.
\\
Montrons que $\lst{v_0 \;u_1}$ est surjective. Pour $y\in P_0$ on a un $x\in Q$ tel \hbox{que $v(x)=\pi(y)$}, d'où $u_0(y)=u_0\big(v_0(x)\big)$.
Donc $y-v_0(x)\in\Ker u_0=\Im u_1$ \hbox{et $y\in \Im v_0+\Im u_1$}.
\\
Comme $Q\times  P_1$ et $P_0$ sont \ptfs, et 
$\lst{v_0 \;u_1}$ est surjective,  le module $Q_1$ est \ptf.
\\
La variante \stl est \egmt claire.
\\
Il suffit donc de montrer que la suite $(*)$ est exacte.
\\
Un $(y,z)\in Q_1$ est dans
$\Ker\pi_Q$ \ssi $y=0$. 
\\
Dans ce cas $u_1(z)=0$, donc $z\in\Im u_2$
et si $z=u_2(x)$, on a $\alpha(x)=(y,z)$.
\\
Inversement tout $\alpha(x)$ est bien dans $\Ker\pi_Q$.
\\ 
Enfin, les \elts de $\Im\pi_Q$ sont les $y\in Q$ tels qu'il existe un
$z\in P_1$ vérifiant~\hbox{$v_0(y)=u_1(z)$}, \cad les $y\in Q$ tels que $v_0(y)\in\Ker\pi$, \cad les $y\in \Ker (\pi\circ v_0)$: on a bien $\Im\pi_Q=\Ker v$.  
\end{proof}
%

\begin{Proof}{\Demo de la proposition \ref{PdimDirectSum}. }
Les implications directes sont \imdes. 
On montre les implications réciproques
par \recu sur $n$. 
\\
On commence avec les dimensions projectives. Le cas $n = 0$ est immédiat. Supposons $n \ge 1$
et notons $\pi$, $\pi'$ les projections de $E\oplus E'$ sur $E$ et $E'$.
On considère l'\ali 

\snic {
v : \formule{P_0 \oplus P_0 &\lora& E\oplus E'\\[1mm]
  x \oplus y &\longmapsto& \pi\big(u_0(x)\big) \oplus \pi'\big(u_0(y)\big).}
}

On a $\Ker v=u_0^{-1}(E') \oplus u_0^{-1}(E)$.
%
%
Donc $u_0^{-1}(E')\oplus u_0^{-1}(E)$ admet une \rsf de longueur $n-1$ (lemme \ref{lemRaccourcirRsf}). 
Elle est formée de modules \stls si la \rsn de $E\oplus E'$ est formée de modules \stls. 
\\
Par \hdr,
$u_0^{-1}(E')$ et $u_0^{-1}(E)$ ont une \rsf de longueur $n-1$, et donc $E$ et
$E'$ ont une \rsf de longueur $n$.
\\
L'argument fonctionne à l'identique pour l'existence de
présentations de longueur $\geq n$ formées avec des \mptfs
plutôt que des modules libres finis. 
On conclut avec le lemme \ref{lempnptfpnlibre} qui nous dit que  $\Ld(E)\geq n$  \ssi le module $E$
admet une $n$-\pn projective \tf.
\end{Proof}

NB: on peut traiter tous les résultats concernant les $n$-\pns parallèlement aux résultats concernant les \rsfs de longueur $n$ en utilisant
l'argument qui vient d'être utilisé.
%

\subsec{Une \gnn du lemme de Schanuel}

Cette \gnn était déjà l'objet de l'exercice \Cref{V-4}.
Notez que, comme dans le lemme de Schanuel, les modules \pros $P_i$
et~$P'_i$ ne sont pas \ncrt \tf.

\CMnewtheorem{lemSchaGen}{Lemme de Schanuel \gne}{\itshape}
\begin{lemSchaGen}
\label{lemSchanuelVariation}
 On considère deux \sexs:%
\index{lemme de Schanuel!\gnn du ---}

\vspace{-1mm}
\[
\begin{array}{ccccccccccccccc}
0 &\to& K& \to & P_{n-1}& \to & \cdots & \to & P_1 & \vers{u} & P_0
& \to & M& \to& 0
\\
0 &\to& K'& \to & P'_{n-1}& \to & \cdots & \to & P'_1 & \vers{u'} & P'_0
& \to & M&  \to& 0
\end {array}
\]
avec les modules $P_i$ et $P'_i$ \pros. Alors on obtient un \iso:
\[
K \;\oplus \bigoplus_{k\in\lrb{0..n-1}\atop k \equiv n-1 \mod 2} P'_k 
\;\oplus
\bigoplus_{\ell\in\lrb{0..n-1}\atop \ell \equiv n \mod 2} P_\ell \;\simeq\;
K'\; \oplus \bigoplus_{k\in\lrb{0..n-1}\atop  k \equiv n-1 \mod 2} P_k 
\;\oplus
\bigoplus_{\ell\in\lrb{0..n-1}\atop \ell \equiv n \mod 2} P'_\ell
\]

\end{lemSchaGen}
%
\begin{proof}
Par \recu sur $n$, le cas $n=1$ étant exactement le lemme de 
Schanuel~\ref{FFRlemSchanuel}. A partir de chaque \sex, on en
construit une autre strictement plus courte

\vspace{-.3em}
\snac {\!\!
\begin{array}{ccccccccccccc}
0 &\rightarrow& K& \to & P_{n-1}& \to & \cdots & \to &
P_1 \oplus P'_0 &   \vvvvers{u \oplus \Id_{{P_0}'}}
& \Im u \oplus P'_0 & \to & 0
\\
0 &\rightarrow& K'& \to & P'_{n-1}& \to & \cdots & \to &
P_1' \oplus P_0 &  \vvvvers{u' \oplus \Id_{P_0}}
& \Im u' \oplus P_0 & \to & 0
\end {array}
}

\snii
Mais on a $\Im u \oplus P'_0 \simeq \Im u' \oplus P_0$ d'après le lemme de
Schanuel appliqué aux deux \secos:

\snic {
\begin{array}{ccccccccc}
0 &\to& \Im u & \to & P_0 & \to & M & \to & 0, \\[1mm]
0 &\to& \Im u' & \to & P'_0 & \to & M & \to & 0.
\end {array}
}

\snii
On peut donc appliquer la \recu (aux deux  \sexs de
longueur un de moins), ce qui fournit le résultat demandé.
\end{proof}
%

\subsec{Permanence}

\entrenous{{ ce qui suit semble surpassé par la proposition \ref{PdimDirectSum}. \`A moins qu'on ait besoin du point \emph{1}? }

\rm On sait que si $P$ est \ptf alors $E$ est \ptf \ssi $E\oplus P$
est \ptf. Une première \gnn est donnée dans le point \emph{2} du lemme suivant.

%
{\bf Lemme.} {\it 
\begin{enumerate}
\item On considère une suite exacte

\snic{P_1\vers{u_1} P_0\vers{\pi} E\oplus P\to 0}

 avec $P$ \pro. 

On peut écrire $P_0=Q_0\oplus P' $ où   $Q_0=\pi^{-1}(E)$ et $\pi\frt{P'}$ réalise un \iso de $P'$ sur $P$. En outre, puisque $\Im u_1\subseteq Q_0$,
en notant $v_1$ la restriction de~$u_1$  
à  $P_1$ et $Q_0$, et $\pi_0$ la restriction de $\pi$ à $Q_0$ et $E$, la suite

\snic{P_1\vers{v_1} Q_0\vers{\pi_0} E\to 0}

est exacte, avec $\Ker v_1=\Ker u_1$.
\item 
Si $P$ est un \mptf et $k\geq 0$,
on a $\Pd(E\oplus P)\leq k$
\ssi $\Pd(E)\leq k$. En abrégé: $\Pd(E\oplus P)=\Pd(E)$ pour tout module $E\neq 0$.

\item Si $P$ est un module libre fini et $k\geq 0$,
le module~\hbox{$E\oplus P$} admet une \rsn \stl de longueur $k$
\ssi $E$ admet une \rsn \stl de longueur $k$.
\end{enumerate}
}
%
\begin{proof}
\emph{1.} Puisque $P$ est \pro, l'application surjective composée 

\snic{P_0\vers{\pi} E\oplus P\to P}

\snii
admet une section $\sigma$. On prend $P'=\sigma(P)$. Les \vfns sont laissées \alec.

\emph{2.} Le \gui{si} est clair. Pour le \gui{seulement si}, à partir d'une \rsf de longueur $k$ de $E\oplus P$ par des \mptfs $P_i$, on obtient une \rsn de $E$ en modifiant seulement $u_1$, $P_0$ et $\pi$ comme indiqué dans le point~\emph{1}. Et $Q_0$ est \ptf car facteur direct dans $P_0$.

\emph{3.} On raisonne comme au point \emph{2} Cette fois-ci le module
$Q_0$ qui remplace le module libre fini $P_0$ est \sul dans $P_0$ de $\sigma(P)$, qui est libre fini. Donc $Q_0$ est \stl. 
\end{proof}
}

\begin{corollary} \label{corlemSchanuelVariation} \emph{(\Tho de permanence, 1)}\\
On reprend les hypothèses du lemme de Schanuel \gne \ref{lemSchanuelVariation}.
\begin{enumerate}
\item Supposons que les $P_i$ et $P'_i$ sont \ptfs.
\begin{enumerate}
\item  $K$ est \tf, \pf ou plat \ssi $K'$  a la même \prt.
\item  $\Pd(K)\leq n$  \ssi $\Pd(K')\leq n$.
\end{enumerate}
\item Si les $P_i$ et $P'_i$ sont libres finis, $K$ est \stl \ssi $K'$ est \stl.
\end{enumerate}
 
\end{corollary}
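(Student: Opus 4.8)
L'idée est d'exploiter l'\iso fourni par le lemme de Schanuel \gne \ref{lemSchanuelVariation}, que j'écris sous la forme compacte $K\oplus Q\simeq K'\oplus Q'$, où $Q$ et $Q'$ désignent les sommes directes finies de certains des $P_k$ et $P'_k$ apparaissant de part et d'autre. Sous les hypothèses du point \emph{1} (resp. du point \emph{2}), chaque $P_i$ et $P'_i$ est \ptf (resp. libre de rang fini); comme une somme directe finie de tels modules est encore du même type, $Q$ et $Q'$ sont \ptfs (resp. libres de rang fini). Tout revient alors à transférer une \prt de $K$ à $K'$ en la faisant transiter par cet \iso.

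Pour le point \emph{1a}, l'idée est que chacune des trois \prts considérées (être \tf, \pf, plat) est à la fois \emph{stable par somme directe finie}, \emph{stable par facteur direct}, et \emph{vérifiée par tout module \ptf}. Ces trois stabilités étant acquises, on conclut pour chaque \prt $\mathcal P$ par la chaîne: $K$ vérifie $\mathcal P$ \ssi $K\oplus Q$ la vérifie (car $Q$ est \ptf, donc vérifie $\mathcal P$) \ssi $K'\oplus Q'$ la vérifie (par l'\iso) \ssi $K'$ la vérifie. Pour \gui{\tf} et \gui{plat} ces stabilités sont classiques (un facteur direct est en particulier un quotient, et le produit tensoriel commute aux sommes directes). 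Le seul point demandant un argument est la stabilité de \gui{\pf} par facteur direct: si $L$ est \pf avec $L=A\oplus B$ et si $e$ est le projecteur d'image $A$, alors $A\simeq L/B=L/\Im(\Id_L-e)$, où $\Im(\Id_L-e)$ est \tf comme image du module \tf $L$; la proposition \ref{FFRpropPfSex} (point \emph{1}) donne alors que $A$ est \pf. La stabilité de \gui{\pf} par somme directe résulte de son point \emph{3}.

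Pour le point \emph{1b}, je m'appuierais sur la proposition \ref{PdimDirectSum}, qui donne $\Pd(E\oplus E')=\sup\big(\Pd(E),\Pd(E')\big)$ pour des modules \tf. Comme $Q$ et $Q'$ sont \ptfs, on a $\Pd(Q)\leq 0\leq n$ et $\Pd(Q')\leq 0\leq n$, d'où la chaîne $\Pd(K)\leq n\iff\Pd(K\oplus Q)\leq n\iff\Pd(K'\oplus Q')\leq n\iff\Pd(K')\leq n$ (l'\eqvc centrale venant de l'\iso). L'unique précaution concerne l'hypothèse de finitude de \ref{PdimDirectSum}: il faut savoir que les modules sont \tf avant de l'invoquer. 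Mais c'est automatique, car \gui{$\Pd\leq n$} entraîne \pf donc \tf, et le point \emph{1a} (cas \gui{\tf}) assure que $K$ est \tf \ssi $K\oplus Q$ l'est; dès qu'un des côtés d'une \eqvc est supposé vrai, les modules concernés sont \tf et la proposition s'applique.

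Le point \emph{2} se traite par un calcul direct. Si $K$ est \stl, soit $K\oplus\Ae a\simeq\Ae b$; en ajoutant $Q$ et $\Ae a$ de part et d'autre de $K\oplus Q\simeq K'\oplus Q'$, on obtient $K'\oplus Q'\oplus\Ae a\simeq(K\oplus\Ae a)\oplus Q\simeq\Ae b\oplus Q$, module libre de rang fini puisque $Q$ l'est; comme $Q'\oplus\Ae a$ est \egmt libre de rang fini, $K'$ est \stl. La réciproque s'obtient par symétrie. Au total, la seule étape non mécanique est la stabilité de \gui{\pf} par facteur direct du point \emph{1a}; c'est là le principal obstacle, vite levé par la proposition \ref{FFRpropPfSex}. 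Tout le reste se ramène à l'\iso de Schanuel \gne combiné aux propositions \ref{PdimDirectSum} et \ref{FFRpropPfSex}.
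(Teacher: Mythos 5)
Votre démonstration est correcte et suit exactement la même voie que celle du texte: on extrait du lemme de Schanuel \gne l'\iso $K\oplus Q\simeq K'\oplus Q'$ avec $Q$, $Q'$ \ptfs (resp. libres finis), puis on transfère chaque \prt à travers cet \iso, le point \emph{1b} reposant sur la proposition \ref{factEE+P}. Vous ne faites qu'expliciter les détails (stabilité de \gui{\pf} par facteur direct via \ref{FFRpropPfSex}, gestion de l'hypothèse \tf dans \ref{PdimDirectSum}, calcul pour le cas \stl) que le texte laisse au lecteur.
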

%
\begin{proof}
Dans le premier cas,
on a $K\oplus P\simeq K'\oplus P'$ avec $P$ et $P'$ \ptfs, dans le second
cas, on a un \iso similaire avec $P$ et $P'$ libres.
Pour \emph{1b}, on applique  la proposition~\ref{factEE+P}.
\\
NB: il faut noter que dans  le point \emph{2}, si $K$ est libre fini,
on obtient seulement que $K'$ est \stl, donc qu'il admet une \rlf
de longueur~$\leq 1$. 
\end{proof}

On sait que si un \Amo $M$ est \pf, les \syzys pour n'importe quel \sgr de $M$ forment un \mtf (\Cref{section IV-1}).
On retrouve ce résultat pour les $n$-\pns.
C'est le point \emph{1} du \tho suivant. Les points \emph{2} et \emph{3}
concernent des résultats analogues 
pour les \rsfs et pour les \rlfs  

\begin{theorem} \label{corLScha} \emph{(\Tho de permanence, 2)}\\
Soient $r$, $s\in\NN$, $m=r+s+1$   et $M$ un \Amo. 
\begin{enumerate}
\item Si $\Ld_\gA(M)\geq m$,  toute $r$-\pn
de~$M$ 

\snic{ L_{r} \vers{u_r}  L_{r-1} \lora  \cdots\cdots \lora    L_0
 \lora  M \to 0}

peut être prolongée en une  $m$-\pn. 
\\
Autrement dit, en notant~$K=\Ker(u_r)$, on \hbox{a $\Ld_\gA(K)\geq s$}.
\item Si $\Pd_\gA(M)\leq m$, 
tout début de \rsp de $M$ de longueur~$r$

\snic{ P_{r} \vers{v_r}  P_{r-1} \lora  \cdots\cdots \lora    P_0
 \lora  M \to 0}

avec les $P_i$ \tf peut être prolongée en une  \rsf de longueur $m$. \\
Autrement dit, en notant~$K=\Ker(v_r)$, on \hbox{a $\Pd_\gA(K)\leq s$}.
\item Si $M$ admet une \rsn \stl de longueur $m$, tout début de \rsn \stl de $M$ de longueur~$r$ peut être prolongé en une  \rsn \stl de longueur $m$.\\
En particulier, si $M$ admet une \rlf  de longueur $m$, toute $r$-\pn de $M$ 
peut être prolongée en une  \rlf de longueur $m$ ou $m+1$. 
\end{enumerate}
 
\end{theorem}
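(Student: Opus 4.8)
The plan is to reduce all three items to a single comparison. In each case let $K$ denote the kernel of the leftmost map of the given partial data ($K=\Ker u_r$ in item~1, $K=\Ker v_r$ in item~2, and the analogous kernel in item~3), so that the data is an exact sequence $0\to K\to L_r\to\cdots\to L_0\to M\to 0$ of length $r+1$ whose modules lie in the prescribed class (libres finis, \ptf, or \stl). The aim is to show that $K$ admits a presentation, resp. a resolution, of length $s$ in that same class; splicing such a presentation of $K$ in front of the inclusion $K\hookrightarrow L_r$ then produces the desired extension of length $r+1+s=m$ for $M$. I would treat the three items in parallel, the only differences being the module class and the homological invariant ($\Ld$, $\Pd$, or \rsn \stl) that we track.

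First I would truncate a known object of length $m$. From the hypothesis choose a presentation/resolution $L'_m\to\cdots\to L'_0\to M\to 0$ of length $m$. Because $s\geq 0$ we have $r\leq m-1$, so this sequence is exact at $L'_r$; put $K'=\Ker(L'_r\to L'_{r-1})$, and $K'=\Ker(L'_0\to M)$ when $r=0$. The truncated sequence $L'_m\to\cdots\to L'_{r+1}\to K'\to 0$ consists of $m-r=s+1$ modules of the prescribed class and is exact at every term except possibly the leftmost, so it is an $s$-\pn of $K'$ in item~1 and, since $L'_m\to L'_{m-1}$ is injective, a length-$s$ resolution in items~2 and~3. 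In particular $\Ld_\gA(K')\geq s$, resp. $\Pd_\gA(K')\leq s$, resp. $K'$ admits a \rsn \stl of length $s$; when $s=0$ this degenerates to $K'$ being itself projective or stably free, which is consistent.

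Next I would compare the two sequences $0\to K\to L_r\to\cdots\to L_0\to M\to 0$ and $0\to K'\to L'_r\to\cdots\to L'_0\to M\to 0$. They end at the same module $M$, have the same length $r+1$, and are built from projective modules, so the generalized Schanuel lemma~\ref{lemSchanuelVariation} gives an isomorphism $K\oplus Q\simeq K'\oplus Q'$, where $Q$ and $Q'$ are finite direct sums of the $L_i$ and the $L'_i$, hence libres finis (item~1), \ptf (item~2), or \stl (item~3). The finiteness property of $K'$ is then transferred to $K$ through proposition~\ref{PdimDirectSum}. In item~1 the libres finis $Q,Q'$ satisfy $\Ld=\infty$, so $\Ld(K)=\Ld(K\oplus Q)=\Ld(K'\oplus Q')=\Ld(K')\geq s$, and $K$ has an $s$-\pn. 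In item~2, $Q,Q'$ are \ptf so $\Pd(Q),\Pd(Q')\leq 0\leq s$; from $\Pd(K')\leq s$ one gets $\Pd(K\oplus Q)=\Pd(K'\oplus Q')\leq s$, whence $\Pd(K)\leq s$. In item~3 one uses the \stl refinement of~\ref{PdimDirectSum}: the direct sum of a length-$s$ \rsn \stl of $K'$ with one of $Q'$, padded by zeros, is a length-$s$ \rsn \stl of $K\oplus Q$, from which one extracts one of $K$. In every case splicing the length-$s$ object of $K$ onto $K\hookrightarrow L_r$ gives the extension of length $m$.

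It remains to settle the \emph{en particulier} of item~3, which is where the main obstacle lies. There the given data is an $r$-\pn by libres finis and $M$ has a \rlf of length $m$, a fortiori a \rsn \stl of length $m$; the argument above yields a \rsn \stl of $K$ of length $s$. The difficulty is that peeling the summand $K$ off a length-$s$ free-finite resolution of $K\oplus Q$ leaves the leading module only \stl, not libre fini: stable freeness does not descend to freeness. Since a \stl module $P$ with $P\oplus\gA^a\simeq\gA^b$ has the \rlf $0\to\gA^a\to\gA^b\to P\to 0$ of length $\leq 1$, replacing that leading module by such a resolution yields a \rlf of $K$ of length $s$ or $s+1$, and splicing gives a \rlf of $M$ of length $m$ or $m+1$, as claimed. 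Beyond this shift-by-one phenomenon, the only care needed is the index bookkeeping (truncation length $s$, Schanuel length $r+1$, splicing length $r+1+s=m$) and the degenerate cases $r=0$ and $s=0$, all of which are routine.
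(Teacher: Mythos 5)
Your proposal is correct and follows essentially the same route as the paper's proof: truncate the given length-$m$ data to obtain a kernel resolved in length $s$, compare the two kernels via the generalized Schanuel lemma \ref{lemSchanuelVariation}, and transfer the finiteness property across the resulting isomorphism $K\oplus Q\simeq K'\oplus Q'$ using proposition \ref{PdimDirectSum} (the paper's \ref{factEE+P}), then splice. Your handling of the \emph{en particulier} of item 3, replacing the stably free leading module by $0\to\gA^{a}\to\gA^{b}\to P\to 0$, is the same length-$m$-or-$m+1$ mechanism that the paper obtains by invoking proposition \ref{cor2lemModifComplexe}.
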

%
\begin{proof}
\emph{2.} On considère les deux débuts de \rsns projectives de longueur $r$. Le lemme de Schanuel \gne
\ref{lemSchanuelVariation} s'applique et l'on obtient un \iso $K'\oplus P\simeq K\oplus P'$ avec $P$ et $P'$ \ptfs Si $K$ admet une \rsp
 de longueur $s$, il en va de même pour $K\oplus P'$, isomorphe à
 $K'\oplus P$,  donc aussi pour $K'$
par la proposition~\ref{factEE+P}.

\emph{1.} Même raisonnement.

\emph{3.}  Même chose. 
Pour la dernière remarque, on fait appel à 
la proposition~\ref{cor2lemModifComplexe} 
pour transformer une \rsn  \stl finie en une \rlf.\\
NB. Normalement on obtient une \rlf de longueur $m+1$. Dans certains cas,
par exemple si tous les modules \stls sont libres, on se ramène à une \rlf de longueur $\leq m$.
\end{proof}
%

%
%
%
%
%
%
%
%

\subsect{Changement d'anneau de base par extensions plates et \fptes}{Extensions plates et \fptes}

\begin{theorem} \label{propRLFRLF} Soit $E$ un \Amo et $\rho:\gA\to\gB$ une \Alg plate.  
\begin{enumerate}
\item     
\begin{enumerate}
\item Si  $E$ admet une \rsf, elle donne par \eds à $\gB$ une \rsf
du \Bmo $\rho\ist(E)$. En particulier, $\Pd_\gB(\rho\ist(E))\leq \Pd_\gA(E)$.
\item Si  $E$ admet une $n$-\pn, elle donne par \eds à $\gB$ une \rsf
du \Bmo $\rho\ist(E)$.  En particulier, $\Ld_\gB(\rho\ist(E))\geq \Ld_\gA(E)$.
\end{enumerate}
\item Supposons $\gB$ \fpte. 
\begin{enumerate}
\item Si  $E$ admet une \rsf après \eds à $\gB$, alors il
admet  une \rsf de même longueur comme \Amo. 
En particulier, $\Pd_\gB(\rho\ist(E))= \Pd_\gA(E)$.
\item Si  $E$ admet une $n$-\pn après \eds à~$\gB$, alors il
admet  une $n$-\pn comme \Amo. \\
En particulier,  $\Ld_\gB(\rho\ist(E))= \Ld_\gA(E)$.
\end{enumerate} 
\end{enumerate}
\end{theorem}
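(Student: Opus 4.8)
Le plan est de séparer le cas plat (point \emph{1}), qui est un simple transport de la \rsn par \eds, du cas \fpt (point \emph{2}), qui relève d'un argument de descente par \recu sur la longueur. Pour le point \emph{1}, je partirais d'une \rsf (resp.\ d'une $n$-\pn) de $E$ et j'appliquerais terme à terme le foncteur d'\eds $\rho\ist$. Ce foncteur envoie $\gA^k$ sur $\gB^k$ et un facteur direct de $\gA^k$ sur un facteur direct de $\gB^k$ (car il commute aux sommes directes finies); il préserve donc les \mlrfs et les \mptfs. Comme $\gB$ est plate, le \thref{propEdsHomologie} (point \emph{1}) assure que le complexe obtenu par \eds reste exact (son homologie est l'\eds de l'homologie nulle de départ). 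Ainsi $\rho\ist(E)$ hérite d'une \rsf (resp.\ d'une $n$-\pn) de même longueur, d'où $\Pd_\gB(\rho\ist(E))\leq \Pd_\gA(E)$ (resp.\ $\Ld_\gB(\rho\ist(E))\geq \Ld_\gA(E)$). On rappelle que, selon la convention introduite avec les \dfns de $\Pd$ et $\Ld$, ces in\egts sont des abréviations pour des implications valables pour tout entier $k$.

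Pour le point \emph{2}, je procéderais par \recu sur $n$ en établissant, pour tout \Amo $M$ (et non seulement pour $E$), les deux énoncés: \emph{(a)} si $\rho\ist(M)$ admet une \rsf de longueur $\leq n$ sur $\gB$, alors $M$ en admet une sur $\gA$; \emph{(b)} l'énoncé analogue avec \gui{$n$-\pn} en place de \gui{\rsf de longueur $\leq n$}. Les cas de base reposent sur la descente \fpte des \prts de finitude (telle qu'établie dans [CACM]): d'une part $\rho\ist(M)=0\Rightarrow M=0$ (fidèle platitude), ce qui règle \emph{(a)} pour $\Pd\leq -1$; d'autre part $\rho\ist(M)$ \tf $\Rightarrow M$ \tf, ce qui règle \emph{(b)} pour $n=0$; et enfin $\rho\ist(M)$ \ptf $\Rightarrow M$ \ptf, ce qui règle \emph{(a)} pour $\Pd\leq 0$.

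Pour le pas de \recu (de $n$ à $n+1$, avec $n\geq 0$), supposons $\rho\ist(M)$ muni d'une telle \rsn (resp.\ \pn) de longueur $\leq n+1$. Alors $\rho\ist(M)$ est \tf, donc $M$ est \tf; on choisit une surjection $\gA^p\twoheadrightarrow M$, de noyau $N$, d'où une \sex $0\to N\to\gA^p\to M\to 0$. Comme $\gB$ est plate, l'\eds conserve l'exactitude, de sorte que $\rho\ist(N)$ s'identifie au noyau de $\gB^p\twoheadrightarrow\rho\ist(M)$. Le lemme \ref{lemRaccourcirRsf} (cas projectif) et le \thref{corLScha} (cas des \pns, appliqué avec $r=0$), utilisés sur $\gB$, montrent alors que $\rho\ist(N)$ admet une \rsf (resp.\ une $n$-\pn) de longueur $\leq n$. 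L'\hdr fournit la même chose pour $N$ sur $\gA$, et en recollant la \sex $0\to N\to\gA^p\to M\to 0$ avec une \rsn (resp.\ \pn) de $N$, on en obtient une de $M$ de longueur $\leq n+1$. En prenant $M=E$ on obtient le point \emph{2}, et joint au point \emph{1} cela donne les \egts $\Pd_\gB(\rho\ist(E))=\Pd_\gA(E)$ et $\Ld_\gB(\rho\ist(E))=\Ld_\gA(E)$.

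Le principal obstacle réside dans ces cas de base de la descente, à savoir la descente \fpte du type fini et de la projectivité (\ptf): ce sont les seuls ingrédients vraiment non formels, tout le reste n'étant que l'exactitude de l'\eds plate combinée à la machinerie de raccourcissement des \syzys et de permanence déjà disponible (\ref{lemRaccourcirRsf} et \thref{corLScha}). Il faudra aussi veiller, dans la lecture \cov, à interpréter chaque in\egt entre dimensions (éventuellement mal définies) comme la famille d'implications correspondante portant sur tous les $k$, de sorte que la \recu est en réalité une \recu sur ces implications.
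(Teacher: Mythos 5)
Votre démonstration est correcte et suit pour l'essentiel la même voie que celle du texte: transport terme à terme de la \rsn par l'\eds plate pour le point \emph{1}, puis, pour le point \emph{2}, une \recu sur la longueur combinant le \tho de permanence (\ref{corLScha}, ou le lemme \ref{lemRaccourcirRsf}) appliqué sur $\gB$, l'identification $\rho\ist\big(\Ker u\big)\simeq\Ker\big(\rho\ist(u)\big)$ donnée par la platitude, et la descente \fpte des \prts \gui{\tf} et \gui{\ptf}. Seule l'organisation diffère légèrement: vous menez la \recu par décalage, en passant au premier module de \syzys puis en recollant, tandis que le texte fait croître directement la \pn de $E$ (point \emph{2b}) et en déduit le point \emph{2a} par une seule application de la permanence et de la descente, sans \recu supplémentaire.
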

%
\begin{proof} \emph{1.} Le changement d'anneau de base conserve les suites exactes, il transforme un \mptf (resp. un \mlrf) en un \mptf  (resp. un \mlrf).
 
%
%

\emph{2b.} On connait déjà le résultat pour $n=0$ et $n=1$,
et l'on fait une \recu sur $n$. Supposons 
que $\rho\ist(E)$ est $(n+1)$-\pfb avec $n\geq 1$. Par \hdr, $E$ est
$n$-\pfb. Considérons dans la suite exacte correspondante l'\Ali $L_n\vers{u_n}L_{n-1}$. D'après le \tho de permanence, on obtient pour cette $n$-\pn, après \eds  à $\gB$, le fait que $\Ker\big(\rho\ist(v_n)\big)$
est un \mtf. Or $\Ker\big(\rho\ist(v_n)\big)\simeq\rho\ist\big(\Ker(v_n)\big)$ parce que $\gB$ est plate,  et puisque $\gB$ est \fpte, $\Ker(v_n)$ est aussi \tf. 

\emph{2a.} D'après \emph{2b}, on sait que $E$ est $(n-1)$-\pfb.
On considère le noyau~$K$ de la première \ali de la suite exacte correspondante. Par le \tho de permanence $\rho\ist(K)$ est \ptf.
Puisque $\gB$ est \fpte, $K$ est aussi \ptf.        
\end{proof}
%

\subsec{Principe \lgb}

\begin{plcc} \label{plcc.resf}  
\emph{(Modules \lonrsbs, modules $n$-\pfbs)}
Soient $E$ un \Amo  et $S_1$, \dots, $S_n$ des \moco.
On note $\gA_i=\gA_{S_i}$ et $M_i=M_{S_i}$.
\begin{enumerate}
\item Le module $E$ est \lonrsb \ssi il est \lonrsb  après \eds à chacun des
$\gA_i$. 
\\
En notation abrégée: $\Pd_\gA(E)=\sup_{i\in\lrbn}\big(\Pd_{\gA_i}(E_i)\big)$.
\item  Le module $E$ est $n$-\pfb \ssi il est $n$-\pfb  après \eds à chacun des $\gA_i$. 
\\
En notation abrégée: $\Ld_\gA(E)=\inf_{i\in\lrbn}\big(\Ld_{\gA_i}(E_i)\big)$.
\end{enumerate}
\end{plcc}
%
\begin{proof}
Le point \emph{1} du \thref{propRLFRLF} donne comme cas particulier:
si $E$ est \lonrsb (resp. $n$-\pfb), $E_i$ est \lonrsb (resp. $n$-\pfb) comme $\gA_i$-module.

Pour les réciproques, on ne peut pas invoquer directement le point \emph{2}
 du \thref{propRLFRLF}, mais on peut \gui{recopier} les \dems.
 Voici par exemple ce que donne la réécriture du \emph{2b} pour le cas
 des $n$-\pns. \\
On connait déjà le résultat pour $n=0$ et $n=1$,
et l'on fait une \recu sur $n$. Supposons 
que chaque $E_i$ est $(n+1)$-\pfb avec $n\geq 1$. Par \hdr, $E$ est
$n$-\pfb. Considérons dans la suite exacte correspondante l'\Ali $L_n\vers{u_n}L_{n-1}$. D'après le \tho de permanence, on obtient pour cette $n$-\pn, après \eds  à $\gA_i$, le fait que $\Ker\big({\rho_i}\ist(v_n)\big)$
est un \mtf. Or $\Ker\big({\rho_i}\ist(v_n)\big)\simeq{\rho_i}\ist\big(\Ker(v_n)\big)$ parce que $\gA_i$ est plate.  Et puisque chaque ${\rho_i}\ist\big(\Ker(v_n)\big)$ est \tf, $\Ker(v_n)$ est aussi \tf (\plg pour les \mtfs). 
\end{proof}

On rappelle par ailleurs que le lemme \ref{lemlorsbloclresb} 
offre \gui{réciproque forte} pour le \plg pour les modules \lorsbs.

\subsec{Classe dans $\KO\gA$  d'un \Amo \lorsb}

Rappelons qu'un \elt $\rho$ de $\HO\gA$ s'écrit $\sum_{j=1}^{m}\rho_j[e_j]$
avec $\rho_1<\dots<\rho_m$ dans $\ZZ\setminus\so0$ et des \idms $e_j$ deux à deux \orts. Et un tel \gui{rang \gne} $\rho$ est dit $\geq 0$ lorsque les $\rho_j$ sont $>0$,
ce qui signifie que~$\rho$ est le rang du module quasi libre $\bigoplus_{j=1}^{m} (e_j\gA)^{\rho_j}$.

Si un module \pro $M$ est de rang $\rho$, on a alors $M=\bigoplus_{j=1}^{m}e_jM$
et chaque $e_jM$ est de rang constant $\rho_j$ sur $\gA[1/e_j]$.
En outre $e_0=1-\sum_{j=1}^{m}e_j$ est l'\idm annulateur de $M$.
 Si $F$ est une \mprn dont l'image est isomorphe à $M$,
on a l'\egt $\det(\rI+XF)=\sum_{j=0}^{m}e_j (1+X)^{\rho_j}$ (en posant\hbox{ $\rho_0=0$}).

Comme conséquence  du lemme de Schanuel \gne \ref{lemSchanuelVariation}, on obtient le \tho suivant.

\begin{thdef} \label{thSchanuelVariation} 
On considère une \rsf de longueur $n$ pour un \Amo \lorsb $M$ 
$$
0 \to P_n  \lora  \cdots \cdots  \lora  P_1 \lora P_0 \lora M
\to 0.
$$
\begin{enumerate}
\item Pour toute suite exacte où les $P'_i$ sont \ptfs

\snic {
0 \to Q \lora  P'_{n-1} \lora  \cdots \cdots  \lora  P'_1  \lora  P'_0 \lora M
\to 0,
}

\snii
le module $Q$ est \egmt \ptf. 

\item Pour toute  autre \rsf de $M$

\snic {0   \to   P'_{m}  \vvers{u'_m}    \cdots   \cdots  
\lora   P'_1   \vvers{u'_1}   P'_0   \vvers{u'_0}   M   \to 0,}

\snii on obtient  

$$\!\!\!{
 \bigoplus_{k\in\lrb{0..m}\atop k \equiv 0 \mod 2} P'_k 
\;\oplus
\bigoplus_{\ell\in\lrb{0..n}\atop \ell \equiv 1 \mod 2} P_\ell \;\simeq\;
 \bigoplus_{k\in\lrb{0..n}\atop k \equiv 0 \mod 2} P_k 
\;\oplus
\bigoplus_{\ell\in\lrb{0..m}\atop \ell \equiv 1 \mod 2} P'_\ell
}.
$$
On peut alors définir sans ambig\"uité la \emph{classe $[M]$ dans $\KO(\gA)$ du module $M$} en posant 
  $$
  {[M]=[P_0]-[P_1]+[P_2]-\cdots}
  $$
ainsi que le \emph{rang de $M$} en posant $\rg(M)\eqdefi \rg([M])\in\HO(\gA)$, ce qui donne
$$\rg(M)=\rg(P_0)-\rg(P_1)+\rg(P_2)-\dots.$$
\item Par une \eds plate, le module $M$ reste \lorsb. En outre la classe
$[M]$ et le rang $\rg(M)$ sont transformés par la même \eds.
\item Si $\rg(M)=\rho$, en reprenant les notations avant le \tho,
on obtient 

\snic{M=\bigoplus_{j=1}^{m}e_jM}

\snii
et chaque $e_jM$ est de rang $\rho_j\in\NN$ sur l'anneau 
$\gA[1/e_j]$\footnote{Cela généralise donc
ce qui se passe avec un \mptf, mais avec la notion de rang étendue aux modules \lorsbs.}.
\end{enumerate}
\end{thdef}
NB. Lorsque $M$ est \lrsb on retrouve le rang de $M$ défini en~\ref{thStrLocResFin},
et  $[M]$ est la classe du module libre de rang $\rg(M)$.\\
De même si $M$ est \ptf, les \dfns de $[M]$  et  $\rg(M)$ sont non ambig\"ues. 
\eoe

\hum{Est-ce que $\Ann(M)=e_0\gA$?}
\begin{proof}
\emph{1.} Cas particulier du corolaire \ref{corlemSchanuelVariation}.

\emph{2.} Si $n<m$ on prolonge le complexe $P\ibu$ 
 par des $0$ à gauche. 
 On peut donc supposer $m=n$. On applique alors le lemme \ref{lemSchanuelVariation}.
 
 \emph{3.} Par \eds plate la \rsf reste une \rsf.

  \emph{4.} Vu le point \emph{3}, cela résulte du cas où le module est \lrsb, car après \lon en des \eco convenables, la \rsn est formée de \mlrfs.
\end{proof}

 Un module $M=P_1/ P_2$, avec $P_1$ et $P_2$ \pros de même rang,  est de rang nul.

Concernant les classes $[M]$, notez que l'on a  $[M\oplus M']\simeq[M]+ [M']$ dans~$\KO(\gA)$ et \hbox{ $\rg(M\oplus M')=\rg(M)+\rg(M')$} dans~$\HO(\gA)$
lorsque $[M]$ et~$[M']$ sont définis. Plus \gnlt on  le résultat suivant.

\begin{theorem} \label{lemPMP/M0}
Soit $E$ un sous-\Amo du module $F$. 
\\
Si deux des modules $F$, $E$ et $F/E$ admettent une \rsf, il en va de même pour le troisième.
En outre  $[F]=[E]+[F/E]$ dans $\KO(\gA)$    et  
$\rg(F)=\rg(E)+\rg(F/E)$ dans $\HO(\gA)$.
\end{theorem}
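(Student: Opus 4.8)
*Soit $E$ un sous-\Amo du module $F$. Si deux des modules $F$, $E$ et $F/E$ admettent une \rsf, il en va de même pour le troisième. En outre $[F]=[E]+[F/E]$ dans $\KO(\gA)$ et $\rg(F)=\rg(E)+\rg(F/E)$ dans $\HO(\gA)$.*

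Le plan est de poser $G=F/E$, de sorte que l'on dispose de la \seco $0\to E\to F\to G\to 0$, et de tout ramener au cas où les deux modules \emph{extrêmes} $E$ et $G$ admettent une \rsf. Ce cas privilégié se traite par un lemme du fer à cheval, dont le pas \elr est exactement le point \emph{3} de la proposition~\ref{FFRpropPfSex}: si $A$ et $B$ présentent respectivement $E$ et $G$, alors $\blocs{1}{.8}{.7}{.6}{$A$}{$C$}{$0$}{$B$}$ présente $F$. En itérant cette construction (les premiers modules de \syzys s'insérant à leur tour dans une \seco par le lemme du serpent), j'obtiendrais une \rsf de $F$ dont le terme en degré $i$ est $P_i\oplus Q_i$, où $(P_\bullet)$ et $(Q_\bullet)$ sont des \rsfs de $E$ et $G$. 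La classe se calcule alors \imdt: par le \thref{thSchanuelVariation} la classe $[F]$ ne dépend pas de la \rsn choisie et vaut $\sum_i(-1)^i[P_i\oplus Q_i]=\sum_i(-1)^i[P_i]+\sum_i(-1)^i[Q_i]=[E]+[G]$ dans $\KO(\gA)$.

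Pour les deux cas restants, je réduirais au cas précédent à l'aide d'une \seco auxiliaire construite par un relèvement projectif. Dans le cas où $E$ et $F$ admettent une \rsf (et l'on veut résoudre $G$), je choisirais une surjection $p:L_0\to F$ avec $L_0$ libre fini; en notant $\pi:F\to G$ et $K=\Ker(\pi\circ p)$, on a les deux \secos $0\to\Ker p\to K\to E\to 0$ et $0\to K\to L_0\to G\to 0$. Le module $\Ker p$ est un module de \syzys de $F$, donc admet une \rsf (lemme~\ref{lemRaccourcirRsf}); comme $E$ en admet une, le cas du fer à cheval fournit une \rsf de $K$, puis la seconde \seco en fournit une de $G$. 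Dans le cas symétrique où $F$ et $G$ admettent une \rsf (et l'on veut résoudre $E$), je partirais d'une surjection $L_0'\to G$ avec $L_0'$ libre fini et de noyau $K'$ (qui admet une \rsf), que je relèverais en $\beta:L_0'\to F$; l'application $(e,x)\mapsto e+\beta(x)$ donne une \seco $0\to K'\to E\oplus L_0'\to F\to 0$. Le cas du fer à cheval fournit une \rsf de $E\oplus L_0'$, donc de $E$ par la proposition~\ref{PdimDirectSum}. Dans chacun de ces deux cas, l'\egt $[F]=[E]+[G]$ s'obtient en chaînant l'additivité déjà acquise le long des \secos auxiliaires, en utilisant que $[\Ker p]=[L_0]-[F]$ et $[K']=[L_0']-[G]$ pour les modules de \syzys.

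Enfin, l'additivité des rangs $\rg(F)=\rg(E)+\rg(G)$ dans $\HO(\gA)$ résulte de l'\egt précédente dans $\KO(\gA)$ en appliquant l'\homo d'anneaux $\rg:\KO\gA\to\HO\gA$.

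Le point le plus délicat sera la mise en forme rigoureuse et \cov de l'itération du lemme du fer à cheval: il faut s'assurer qu'à chaque étage les modules obtenus restent \ptfs et de type fini, et que la longueur de la \rsn construite reste finie, égale au $\max$ des deux longueurs de départ. Le pas \elr étant déjà codé dans la proposition~\ref{FFRpropPfSex}, la difficulté est surtout de décrire proprement le passage aux \syzys simultané des deux suites, ce qui se fait sans recours au tiers exclu; le calcul des classes, lui, ne pose pas de problème une fois admise l'invariance de $[\,\cdot\,]$ donnée par le \thref{thSchanuelVariation}.
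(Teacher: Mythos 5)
Votre démonstration est correcte, mais elle ne suit pas la route du papier: celui-ci se contente d'invoquer le \thref{lemPMP/M} (suites exactes courtes de modules \lorsbs), dont la preuve passe par les \pns de longueur infinie (corolaire \ref{cor0thcorlemGlaz1}) puis par un argument de transfert de finitude. En comparant les ingrédients: votre cas \gui{fer à cheval} (où $E$ et $G$ sont résolus) est exactement le \thref{corlemGlaz1} du papier; votre cas \gui{$E$ et $F$ résolus, on veut $G$} co\"{\i}ncide avec la construction du papier --- votre $K=p^{-1}(E)$ est le $\wi E$ des suites $(\beta)$ et $(\gamma)$ de la preuve du \thref{lemPMP/M}; en revanche, pour le cas \gui{$F$ et $G$ résolus, on veut $E$}, vous employez un dispositif différent: la \seco $0\to K'\to E\oplus L_0'\to F\to 0$ obtenue en relevant $L_0'\twoheadrightarrow G$ en $\beta:L_0'\to F$, suivie de la simplification par facteur direct (proposition \ref{PdimDirectSum}), là où le papier applique le lemme \ref{lemEFG} (donc la permanence à la Schanuel, \thref{corLScha}) à $(\gamma)$ puis réutilise le cas déjà établi sur $(\beta)$. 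Les deux variantes sont \covs; la vôtre évite le \tho de permanence dans ce cas au prix d'une somme directe auxiliaire, et elle a le mérite d'être autonome vis-à-vis du \thref{lemPMP/M}. Pour l'additivité des classes, le papier la lit directement sur le point \emph{a} du \thref{lemPMP/M} (la \rsn de $F$ a pour termes $E_i\oplus G_i$); votre cha\^{\i}nage le long des \secos auxiliaires est valable, mais notez qu'une fois acquis que les trois modules admettent une \rsf, il suffit d'appliquer une seule fois le fer à cheval avec les \rsns de $E$ et $G$ imposées pour conclure $[F]=[E]+[F/E]$, ce qui raccourcit cette partie.
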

\begin{proof}
La première affirmation est donnée par le \thref{lemPMP/M}, la deuxième résulte facilement du point \emph{a} du même \tho.
\end{proof}

\section{Résolutions et suites exactes courtes}\label{secRSFSECO}

\subsec{Deux lemmes utiles}

\begin{lemma} \label{lemGlaz1}
On considère cinq \Amos $E$, $F$, $G$,  $E_0$ et  $G_0$ qui s'insèrent dans le diagramme ci-dessous, où la suite horizontale est exacte 
$$
\xymatrix@C=3em@R=1.6em {
&E_0\ar[d]_{e_0}  
& &G_0\ar[d]_{g_0}
\\
&E \ar[r]^{\iota} &F\ar[r]^{\pi } &G\ar[r]&0
\\
}
$$
Si le module $G_0$ est \pro le diagramme précédent peut être complété
en un diagramme commutatif (pour les flèches en traits pleins) ci-dessous où la nouvelle suite horizontale est exacte: 
$$
\xymatrix@C=3em@R=1.6em {
0\ar[r]&E_0\ar[d]_{e_0} \ar[r]^(.4){\iota_0} 
&E_0\times G_0\ar[d]_{f_0}\ar[r]^(.6){\pi_0} &
G_0\ar@{..>}[ld]_{\sigma_0}\ar[d]_{g_0}\ar[r]&0
\\
&E \ar[r]^{\iota} &F\ar[r]^{\pi } &G\ar[r]&0
\\
}
$$
Ici $\iota_0$ et $\pi_0$ sont les applications canoniques,
et 
${f_0(x,y)=\iota\big(e_0(x)\big)+\sigma_0(y) ,}$
où $\sigma_0:G_0\to F$
est une \ali qui satisfait $\pi \circ \sigma_0=g_0$
(elle existe parce que $G_0$ est \pro et $\pi $ est surjective,
mais elle n'est pas déterminée de manière unique).\\ 
Enfin, si $e_0$ et $g_0$ sont surjectives, il en va de même pour $f_0$.
\end{lemma}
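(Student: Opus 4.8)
Le plan est de produire d'abord le relèvement $\sigma_0$, puis d'en déduire les \Alis manquantes, et de vérifier ensuite successivement l'exactitude de la ligne supérieure, la commutativité des deux carrés, et la surjectivité de $f_0$. Je commencerais par construire $\sigma_0$: comme $G_0$ est \pro et $\pi:F\to G$ surjective, la \dfn même d'un module \pro, appliquée à $g_0:G_0\to G$ et à $\pi$, fournit une \Ali $\sigma_0:G_0\to F$ telle que $\pi\circ\sigma_0=g_0$. C'est le seul endroit où intervient l'hypothèse de projectivité, et $\sigma_0$ n'est en effet pas déterminée de manière unique. Je poserais alors $\iota_0(x)=(x,0)$, $\pi_0(x,y)=y$ et $f_0(x,y)=\iota\big(e_0(x)\big)+\sigma_0(y)$, qui sont clairement des \Alis.

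La deuxième étape serait purement formelle. La ligne $0\to E_0\vers{\iota_0}E_0\times G_0\vers{\pi_0}G_0\to 0$ n'est autre que la \seco canonique scindée, donc exacte. Pour le carré de gauche, je calculerais $f_0\big(\iota_0(x)\big)=f_0(x,0)=\iota\big(e_0(x)\big)$, d'où $f_0\circ\iota_0=\iota\circ e_0$. Pour le carré de droite, l'exactitude de la ligne inférieure donne $\pi\circ\iota=0$ (puisque $\Im\iota=\Ker\pi$), de sorte que $\pi\big(f_0(x,y)\big)=\pi\big(\sigma_0(y)\big)=g_0(y)=g_0\big(\pi_0(x,y)\big)$, \cad $\pi\circ f_0=g_0\circ\pi_0$.

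Pour la surjectivité de $f_0$ lorsque $e_0$ et $g_0$ sont surjectives, je ferais la chasse au diagramme suivante. Étant donné $z\in F$, la surjectivité de $g_0$ permet de choisir $y\in G_0$ avec $g_0(y)=\pi(z)$; alors $\pi\big(z-\sigma_0(y)\big)=\pi(z)-g_0(y)=0$, donc $z-\sigma_0(y)\in\Ker\pi=\Im\iota$, disons $z-\sigma_0(y)=\iota(w)$ avec $w\in E$. La surjectivité de $e_0$ fournit alors $x\in E_0$ tel que $e_0(x)=w$, et l'on obtient $f_0(x,y)=\iota(w)+\sigma_0(y)=z$. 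Je ne vois pas ici d'obstacle réel: l'énoncé est élémentaire, et le seul point à surveiller est de ne jamais recourir à une injectivité de $\iota$, qui n'est pas supposée et dont la preuve n'a aucun besoin.
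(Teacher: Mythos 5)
Votre démonstration est correcte et suit exactement la démarche que l'énoncé décrit lui-même et que la preuve du texte (qui se borne à dire que tout est expliqué dans l'énoncé et que les vérifications sont laissées au lecteur) sous-entend : construction de $\sigma_0$ par projectivité de $G_0$, vérification directe de l'exactitude de la ligne scindée, de la commutativité des deux carrés, puis chasse au diagramme standard pour la surjectivité de $f_0$. Rien à signaler : votre rédaction comble simplement, et fidèlement, les détails que le texte omet.
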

%
\begin{proof}
Tout est expliqué dans l'énoncé. Les \vfns concernant la commutativité et les \sexs sont laissées \alec.
\end{proof}
%

\begin{lemma} \label{lemGlaz2} 
On considère six \Amos $E$, $F$, $G$,  $E_0$,  $F_0$ et  $G_0$ qui s'insèrent dans le diagramme ci-dessous, où les suites horizontales et les suites verticales sont exactes 
$$
\xymatrix@C=3em@R=1.6em {
0\ar[r]&E_0\ar[d]_{\pi_E} \ar[r]^(.4){\alpha_0} 
&F_0\ar[d]_{\pi_F}\ar[r]^(.6){\beta_0} &
G_0\ar[d]_{\pi_G}\ar[r]&0
\\
0\ar[r]&E\ar[d] \ar[r]^{\alpha} &F\ar[d]\ar[r]^{\beta } &G\ar[d]\ar[r]&0
\\
&0  &  0  & 0
\\
}
$$
Le diagramme précédent peut être complété
en un diagramme commutatif  où les suites horizontales et  verticales sont exactes 
$$
\xymatrix@C=3em@R=1.6em {
&0\ar[d]  &  0\ar[d]  & 0\ar[d]
\\
0\ar[r]&E'_0\ar[d]_{\iota_E} \ar[r]^{\alpha'_0} 
&F'_0\ar[d]_{\iota_F}\ar[r]^{\beta'_0} &
G'_0\ar[d]_{\iota_G}\ar[r]&0
\\
0\ar[r]&E_0\ar[d]_{\pi_E} \ar[r]^{\alpha_0} 
&F_0\ar[d]_{\pi_F}\ar[r]^{\beta_0} &
G_0\ar[d]_{\pi_G}\ar[r]&0
\\
0\ar[r]&E\ar[d] \ar[r]^{\alpha} &F\ar[d]\ar[r]^{\beta } &G\ar[d]\ar[r]&0
\\
&0  &  0  & 0
\\
}
$$
\end{lemma}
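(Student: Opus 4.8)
The plan is to realise the new top row as the sequence of kernels of the three given surjections $\pi_E$, $\pi_F$, $\pi_G$, and then to verify that this sequence is exact by a diagram chase. This is exactly the content of the snake lemma in the special case where the three cokernels vanish (because $\pi_E$, $\pi_F$, $\pi_G$ are surjective); since the snake lemma is not recorded earlier, I would carry out the relevant chase directly, which has the advantage of being manifestly constructive.

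First I would set $E'_0=\Ker \pi_E$, $F'_0=\Ker \pi_F$, $G'_0=\Ker \pi_G$, and take $\iota_E$, $\iota_F$, $\iota_G$ to be the canonical inclusions. Since $\pi_E$, $\pi_F$, $\pi_G$ are surjective, each vertical sequence $0\to E'_0\to E_0\to E\to 0$ (and likewise for $F$ and $G$) is exact by construction. Next I would define $\alpha'_0$ and $\beta'_0$ as the corestrictions of $\alpha_0$ and $\beta_0$: the commutativity relations $\pi_F\circ\alpha_0=\alpha\circ\pi_E$ and $\pi_G\circ\beta_0=\beta\circ\pi_F$ show that $\alpha_0(E'_0)\subseteq F'_0$ and $\beta_0(F'_0)\subseteq G'_0$, so these restrictions are well defined and the two upper squares commute automatically. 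It then remains only to establish exactness of the top row.

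Three of the four exactness conditions are immediate or are short chases. Injectivity of $\alpha'_0$ follows from injectivity of $\alpha_0$, and $\beta'_0\circ\alpha'_0=0$ follows from $\beta_0\circ\alpha_0=0$. For the inclusion $\Ker\beta'_0\subseteq\Im\alpha'_0$: given $y\in F'_0$ with $\beta_0(y)=0$, exactness of the middle row yields $x\in E_0$ with $\alpha_0(x)=y$; then $\alpha(\pi_E(x))=\pi_F(\alpha_0(x))=\pi_F(y)=0$, and injectivity of $\alpha$ forces $\pi_E(x)=0$, so $x\in E'_0$ and $y=\alpha'_0(x)$.

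The step I expect to be the main obstacle — the genuine ``snake'' — is the surjectivity of $\beta'_0$. Given $z\in G'_0$, surjectivity of $\beta_0$ produces $y_1\in F_0$ with $\beta_0(y_1)=z$; then $\beta(\pi_F(y_1))=\pi_G(\beta_0(y_1))=\pi_G(z)=0$, so by exactness of the bottom row at $F$ there is $w\in E$ with $\alpha(w)=\pi_F(y_1)$, and surjectivity of $\pi_E$ gives $x\in E_0$ with $\pi_E(x)=w$. Setting $y=y_1-\alpha_0(x)$, one checks that $\beta_0(y)=z$ and that $\pi_F(y)=\pi_F(y_1)-\alpha(\pi_E(x))=0$, whence $y\in F'_0$ and $\beta'_0(y)=z$. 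Every existential step in this argument is discharged by an explicit hypothesis of surjectivity or of exactness, so the whole construction is effective and requires no appeal to the excluded middle, in keeping with the constructive discipline of the surrounding chapter.
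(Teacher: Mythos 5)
Your proof is correct and follows exactly the construction the paper has in mind: the paper marks this lemma as \emph{facile} and merely notes that one can take $E'_0\subseteq E_0$, $F'_0\subseteq F_0$, $G'_0\subseteq G_0$ (i.e.\ the kernels of $\pi_E$, $\pi_F$, $\pi_G$) with $\alpha'_0$, $\beta'_0$ uniquely determined as restrictions, which is precisely your setup. Your explicit, constructive diagram chase (in particular the surjectivity of $\beta'_0$) supplies exactly the details the paper leaves to the reader.
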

\facile
NB: on peut prendre $E'_0\subseteq E_0$, $F'_0\subseteq F_0$ et $G'_0\subseteq G_0$, et les flèches~$\alpha'_0$ et~$\beta'_0$ sont déterminées de manière unique. 

\bonbreak
\subsec{Suites exactes courtes de modules $n$-présentables}

\begin{theorem} \label{corlemGlaz1}\label{thSecoRlf0}
On considère une \seco de \Amos 
$${0\lora E\vvers{\iota}F\vvers{\pi}G\lora 0.}
$$
\begin{enumerate}
\item 
Si $E$ et $G$ sont $n$-\pfbs, il en va de même de $F$.
En outre on peut s'arranger pour que les trois~$n$-\pns s'insèrent dans un diagramme commutatif avec une \seco
de complexes comme dans le dessin ci-après.\\
Ici, les \sexs $0\to E_k\to F_k\to G_k\to 0$ sont scindées\footnote{Plus précisément, on peut prendre $F_k=E_k\oplus G_k$ avec les applications canoniques \hbox{pour $E_k\to E_k\oplus G_k$} et $E_k\oplus G_k\to G_k$.}, les modules $E_k$ et $G_k$ sont \lrfs, et les $n$-\pns de~$E$ et $G$ sont celles données au départ. 
\\
Notons $E'_k$, $F'_k$ et $G'_k$ les noyaux des flèches verticales 
de sources~\hbox{$E_k$, $F_k$} et $G_k$. Alors chaque suite
$0\to E'_k\to F'_k\to G'_k\to 0$ est exacte.
 
$$
\xymatrix@C=3em@R=1.6em {
0\ar[r]&E_n\ar[d] \ar[r]^{\iota_n} 
&F_n\ar[d]\ar[r]^{\pi_n} &
G_n
\ar[d]
\ar[r]&0
\\
0\ar[r]&E_{n-1} \ar[r]^{\iota_{n-1}}\ar@{--}[d] 
&F_{n-1}\ar[r]^{\pi_{n-1}}\ar@{--}[d] &
G_{n-1}\ar[r]\ar@{--}[d]&0
\\
0\ar[r]&E_1\ar[d] \ar[r]^{\iota_1} 
&F_1\ar[d]\ar[r]^{\pi_1} &
G_1
\ar[d]
\ar[r]&0
\\
0\ar[r]&E_0\ar[d] \ar[r]^{\iota_0} 
&F_0\ar[d]\ar[r]^{\pi_0} &
G_0
\ar[d]
\ar[r]&0
\\
0\ar[r]&E\ar[d] \ar[r]^{\iota} &F \ar[d]\ar[r]^{\pi} &G\ar[d]\ar[r]&0
\\
&0  &  0  & 0
\\
}
$$
\item En particulier,  si $E$ et $G$ sont 
\lnrsbs, il en va de même pour $F$,
et les trois \rsns peuvent être insérées dans un diagramme commutatif où les \rsns de $E$ et $G$ sont données.
\item Les résultats analogues s'appliquent avec des \gui{$n$-\pns} de $E$ et $G$ formées par des \mptfs
En particulier,  si~$E$ et~$G$ sont \lot $n$-\rsbs, il en va de même pour $F$, et les trois \rsns peuvent être insérées dans un diagramme commutatif  où les \rsns de $E$ et $G$ sont données.

\end{enumerate} 
\end{theorem}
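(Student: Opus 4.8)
Le plan est de construire simultanément les trois $n$-\pns, sous la forme d'une \seco de complexes, par \recu ascendante à partir de la \seco de départ $0\to E\to F\to G\to 0$. À chaque étage le module central sera la somme directe des deux autres, l'ingrédient clé étant que les modules de \pn $G_k$ du quotient sont libres, donc \pro, ce qui autorise les relèvements \ncrs. Le cas $n=0$ se réduit à la seule construction de l'étage du bas.

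Je traite d'abord cet étage du bas. Étant données les surjections $u^E_0\colon E_0\twoheadrightarrow E$ et $u^G_0\colon G_0\twoheadrightarrow G$ avec $E_0,G_0$ \lrfs, j'applique le lemme \ref{lemGlaz1} à la ligne exacte $E\to F\to G\to 0$: puisque $G_0$ est \pro et $\pi$ surjective, l'application $u^G_0$ se relève en un $\sigma_0\colon G_0\to F$, et en posant $F_0=E_0\oplus G_0$ avec $f_0(x,y)=\iota(u^E_0(x))+\sigma_0(y)$ on obtient une surjection $f_0\colon F_0\twoheadrightarrow F$ insérée dans un diagramme commutatif dont la ligne $0\to E_0\to F_0\to G_0\to 0$ est scindée. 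Le lemme \ref{lemGlaz2} montre alors que les noyaux des trois flèches verticales forment une \sex
$$0\to E'_0\to F'_0\to G'_0\to 0,$$
avec $E'_0=\Ker u^E_0$, $G'_0=\Ker u^G_0$ et $F'_0=\Ker f_0$, et que les deux flèches horizontales sont les restrictions de l'injection et de la projection canoniques de $F_0=E_0\oplus G_0$.

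L'étape de \recu est ensuite une répétition à l'identique. L'exactitude de la $n$-\pn donnée de $E$ en $E_0$ signifie que $\Im u^E_1=\Ker u^E_0=E'_0$, donc $E_1$ se surjecte sur $E'_0$; de même $G_1$ se surjecte sur $G'_0$. En appliquant le lemme \ref{lemGlaz1} à la ligne exacte $0\to E'_0\to F'_0\to G'_0\to 0$ (en relevant $G_1\to G'_0$ à travers $F'_0\twoheadrightarrow G'_0$, ce qui est possible car $G_1$ est \pro) on obtient $F_1=E_1\oplus G_1$ muni d'une surjection $b_1\colon F_1\twoheadrightarrow F'_0$; je prends pour \dile $\phi^F_1\colon F_1\to F_0$ l'application $b_1$ suivie de l'inclusion $F'_0\hookrightarrow F_0$, de sorte que $\Im\phi^F_1=F'_0=\Ker f_0$, \cad l'exactitude en $F_0$. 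Le lemme \ref{lemGlaz2} fournit à nouveau la \sex des noyaux verticaux $0\to E'_1\to F'_1\to G'_1\to 0$ avec $E'_1=\Ker u^E_1$ et $F'_1=\Ker\phi^F_1$, qui alimente l'étape suivante. En itérant jusqu'à l'étage $n$ (aucune exactitude n'étant requise au sommet, ce qui explique qu'une simple $n$-\pn, et non l'injectivité, suffise pour $E_n$ et $G_n$) on assemble le diagramme commutatif complet; l'exactitude de la colonne de $F$ en chaque $F_k$ résulte de $\Im\phi^F_{k+1}=F'_k=\Ker\phi^F_k$, et la commutativité de chaque carré avec les \alis verticales est précisément l'affirmation du lemme \ref{lemGlaz2} selon laquelle les flèches horizontales entre noyaux sont les restrictions de $\iota_k$ et $\pi_k$.

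Les points restants sont des conséquences immédiates. Pour le point \emph{2}, si de plus $E$ et $G$ sont \lnrsbs, alors $u^E_n$ et $u^G_n$ sont injectives, donc $E'_n=\Ker u^E_n=0$ et $G'_n=0$; la \sex $0\to E'_n\to F'_n\to G'_n\to 0$ force alors $F'_n=\Ker\phi^F_n=0$, d'où $F$ est \lnrsb et sa \rsn figure dans le même diagramme. Pour le point \emph{3} on remplace \gui{\lrf} par \gui{\ptf} partout: la seule \prt de $E_k$ et $G_k$ réellement utilisée est que les $G_k$ sont \pro (pour produire les relèvements $\sigma_k$), donc la construction passe sans changement, et la conclusion pour les modules \lot $n$-\rsbs s'ensuit. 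L'unique hypothèse véritable est ainsi le fait que les modules de \pn du quotient $G$ soient \pro; je m'attends à ce que le travail principal, quoique routinier, soit la comptabilité qui maintient fixées les \pns données de $E$ et $G$ tout en vérifiant la commutativité du diagramme tout entier.
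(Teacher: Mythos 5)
Votre démonstration est correcte et suit essentiellement la même voie que celle du texte : le lemme \ref{lemGlaz1} construit l'étage du bas, le lemme \ref{lemGlaz2} fournit la suite exacte des noyaux verticaux, et l'on monte les étages par récurrence en remarquant que les présentations données de $E$ et $G$ se surjectent sur ces noyaux. La seule variante, purement cosmétique, concerne le point 2 : vous vérifiez directement que $F'_n=0$ à partir de la suite exacte des noyaux au sommet, là où le texte applique formellement le point 1 avec l'entier $n+1$ et $E_{n+1}=G_{n+1}=0$ --- c'est le même argument.
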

%
\begin{proof} \emph{1.}
Le lemme \ref{lemGlaz1} traite le cas $n=0$.\\
Le lemme \ref{lemGlaz2}  dit que la suite $0\to E'_0\to F'_0\to G'_0\to 0$ est exacte.
 On se retrouve alors dans la situation du lemme \ref{lemGlaz1} avec le diagramme suivant
$$
\xymatrix@C=3em@R=1.6em {
&E_1\ar[d]  
& &G_1\ar[d]
\\
0\ar[r]&E'_0 \ar[r] &F'_0\ar[r] &G'_0\ar[r]&0,
\\
}
$$
où les flèches verticales sont surjectives,
ce qui permet de monter l'étage 1.\\
On continue de la même manière jusqu'à l'étage $n$.

\emph{2.} 
On applique le point \emph{1} du \tho mais avec
l'entier $n+1$
et les modules $E_{n+1}=G_{n+1}=0$.

\emph{3.} On remplace partout \gui{\lrf} par \gui{\ptf}. 
\end{proof}

On en déduit le \tho suivant.

\begin{theorem} \label{thcorlemGlaz1} 
Soit $n\in\NN$. On considère une \seco de \Amos 
$$
0\lora E\lora F\lora G\lora 0.
$$
Alors on a les implications suivantes pour tout $n\in\NN$.
\begin{enumerate}
\item $\big(\Ld(F)\geq n$ et $\Ld(G)\geq n+1\big)$ $\Longrightarrow$ $\Ld(E)\geq n$.
\item $\big(\Ld(E)\geq n$ et $\Ld(G)\geq n\big)$ $\Longrightarrow$ $\Ld(F)\geq n$.
\item $\big(\Ld(E)\geq n$ et $\Ld(F)\geq n+1\big)$ $\Longrightarrow$ $\Ld(G)\geq n+1$.
\end{enumerate}
En notation abrégée avec $\lambda_E=\Ld(E)$,  $\lambda_F=\Ld(F)$,  $\lambda_G=\Ld(G)$:

\fnic{\lambda_E\geq \inf (\lambda_F,\lambda_G-1), \;
\lambda_F\geq \inf (\lambda_E,\lambda_G), \;
\lambda_G\geq \inf (\lambda_E+1,\lambda_F)\,.}

\end{theorem}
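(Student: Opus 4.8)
Le théorème~\ref{thcorlemGlaz1} est l'analogue, pour la propriété de $n$-présentabilité $\Ld$, du théorème~\ref{thSESPrf} sur la profondeur et du théorème~\ref{thcorlemGlaz1} pour la dimension projective. L'idée directrice est de tout déduire du théorème~\ref{thSecoRlf0}, qui fournit, à partir d'une \seco, un diagramme commutatif à trois colonnes de $n$-présentations s'emboîtant avec des \sexs scindées $0\to E_k\to F_k\to G_k\to 0$, pourvu que deux des trois modules soient $n$-\pfbs. La stratégie est de ramener chacune des trois implications à une application de ce théorème, en introduisant si nécessaire des modules nuls aux extrémités des résolutions pour ajuster les longueurs.

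\emph{Première implication.} Je supposerais $\Ld(F)\geq n$ et $\Ld(G)\geq n+1$. Comme $\Ld(G)\geq n+1\geq n$, les modules $F$ et $G$ sont tous deux $n$-\pfbs, et le théorème~\ref{thSecoRlf0} leur associe, ensemble avec $E$, le diagramme commutatif. La clé est d'examiner les noyaux: en notant $K_F=\Ker(F_n\to F_{n-1})$ et $K_G=\Ker(G_n\to G_{n-1})$, le théorème de permanence~\ref{corLScha} (point~1) dit que $\Ld(F)\geq n$ n'apporte rien de plus, mais que $\Ld(G)\geq n+1$ entraîne que $K_G$ est \tf (plus précisément $\Ld(G)\geq n+1$ prolonge la $n$-présentation de $G$ d'un cran). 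Comme les colonnes du diagramme sont des \sexs scindées au niveau des modules $E_k,F_k,G_k$, et que les suites de noyaux $0\to E'_k\to F'_k\to G'_k\to 0$ restent exactes, je considérerais la \seco $0\to E_n\to F_n\to G_n\to 0$ au sommet et travaillerais avec les noyaux des flèches verticales descendantes. Le point à vérifier est que le noyau $E'$ de la flèche $E_n\to E_{n-1}$ est \tf, ce qui donnera $\Ld(E)\geq n$; cela résulte de l'exactitude de $0\to E'\to F'\to G'\to 0$ jointe au fait que $F'$ et $G'$ sont \tf (via $\Ld(F)\geq n$ et $\Ld(G)\geq n+1$) et que $G'$ \tf, $F'$ \tf impliquent $E'$ \tf dans une telle suite exacte.

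\emph{Deuxième et troisième implications.} La deuxième, $\big(\Ld(E)\geq n$ et $\Ld(G)\geq n\big)\Rightarrow\Ld(F)\geq n$, est la plus directe: elle \emph{est} exactement le point~1 (ou point~2) du théorème~\ref{thSecoRlf0} énoncé en termes de l'inégalité $\Ld$, puisque ce théorème affirme que si $E$ et $G$ sont $n$-\pfbs alors $F$ l'est aussi. La troisième implication, $\big(\Ld(E)\geq n$ et $\Ld(F)\geq n+1\big)\Rightarrow\Ld(G)\geq n+1$, se traite symétriquement à la première en passant au module quotient: ici $E$ et $F$ sont $n$-\pfbs, donc $G$ l'est, et il reste à gagner un cran en utilisant $\Ld(F)\geq n+1$ via le théorème de permanence. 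Le principal obstacle technique sera de manipuler proprement les trois \sexs de noyaux sans perdre la trace des scindages au niveau des termes libres, et surtout d'appliquer correctement le théorème de permanence~\ref{corLScha} pour convertir les hypothèses $\Ld(\cdot)\geq n+1$ en finitude du noyau approprié; une fois ce maniement établi, les trois implications se rédigent de façon parallèle et l'écriture abrégée ultramétrique $\lambda_E\geq\inf(\lambda_F,\lambda_G-1)$, etc., en découle immédiatement.
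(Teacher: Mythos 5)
Votre plan repose sur les bons ingrédients (théorème \ref{thSecoRlf0}, théorème de permanence \ref{corLScha}, cas de base tirés de la proposition \ref{FFRpropPfSex}), et votre point \emph{2} est correct: c'est exactement le théorème \ref{corlemGlaz1}, comme dans le texte. Mais pour les points \emph{1} et \emph{3}, il y a une lacune réelle: vous appliquez le théorème \ref{thSecoRlf0} au mauvais couple de modules. Ce théorème prend en entrée des $n$-présentations de $E$ (le sous-module) et de $G$ (le quotient) et construit celle de $F$ (le terme médian); il ne s'applique pas \gui{à $F$ et $G$, ensemble avec $E$}, car c'est précisément $E$ qui est l'inconnue du point \emph{1}: on ne peut pas monter le diagramme à trois colonnes sans disposer déjà d'une présentation de $E$. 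C'est pour cela que la démonstration du texte procède par récurrence sur $n$, récurrence absente de votre proposition: l'hypothèse de récurrence (le point \emph{1} au rang $n-1$) fournit d'abord $\Ld(E)\geq n-1$, ce qui autorise l'application de \ref{corlemGlaz1} au couple $(E,G)$ et donne un diagramme de $(n-1)$-présentations; on considère ensuite la suite exacte courte des noyaux à l'étage $n-1$, où la permanence donne $F'_{n-1}$ \tf (car $\Ld(F)\geq n$) et $G'_{n-1}$ \pf (car $\Ld(G)\geq n+1$), et le cas de base conclut que $E'_{n-1}$ est \tf, d'où $\Ld(E)\geq n$. Le même défaut affecte votre point \emph{3}: l'affirmation \gui{$E$ et $F$ sont $n$-\pfbs, donc $G$ l'est} est une instance de ce que l'on cherche à démontrer et ne peut venir que de l'hypothèse de récurrence.

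Seconde lacune, plus ponctuelle mais fatale à votre rédaction du point \emph{1}: l'énoncé \gui{dans une suite exacte courte $0\to E'\to F'\to G'\to 0$, si $F'$ et $G'$ sont \tf alors $E'$ est \tf} est faux. Prenez $F'=\gA$ et $G'=\gA/\fa$ avec $\fa$ un idéal qui n'est pas \tf: alors $E'=\fa$ n'est pas \tf. Le cas de base correct (proposition \ref{FFRpropPfSex}, point \emph{2}) exige que le quotient $G'$ soit \pf et le terme médian $F'$ \tf pour conclure que le sous-module est \tf. C'est exactement ce que fournit la comptabilité du texte: en prenant le diagramme à l'étage $n-1$ (et non $n$ comme vous le faites), l'hypothèse $\Ld(G)\geq n+1$ donne par permanence que $G'_{n-1}$ est \pf, et non pas seulement \tf, ce qui est indispensable pour faire fonctionner l'argument.
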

%
\begin{proof}
Le point \emph{2} est donné par le \thref{corlemGlaz1}.
Les points \emph{1}  et  \emph{3} se montrent par \recu sur $n$ comme suit.

\emph{3.} Pour $n=0$: le quotient d'un \mpf par un sous-\mtf est \pf
(\ref{FFRpropPfSex} point~\emph{1}). Supposons l'implication vraie pour $n-1\geq 0$,
et supposons que $\Ld(E)\geq n$ et $\Ld(F)\geq n+1$.
Par \hdr on  sait déjà que $\Ld(G)\geq n$. Le \thref{corlemGlaz1}
nous donne trois $(n-1)$-\pns qui forment une \seco de complexes.
On obtient à l'étage $n-1$ une \sex avec les noyaux des flèches 
verticales qui forment une \seco  

\snic{0\to E'_{n-1}\lora F'_{n-1}\lora G'_{n-1}\to 0.}

\snii
Par le  \thref{corLScha}, $E'_{n-1}$ est \tf et $F'_{n-1}$
est \pf. Le cas $n=0$ nous donne que $G'_{n-1}$
est \pf.

\emph{1.} Pour $n=0$: c'est la proposition \ref{FFRpropPfSex}, point~\emph{2}.
La suite du raisonnement est identique à celle du point \emph{3}. 
\end{proof}
%

\begin{corollary} \label{cor0thcorlemGlaz1}  On considère une \seco de \Amos 

\snic{0\lora E\lora F\lora G\lora 0.}

\snii
Si deux des trois modules admettent une \pn de longueur infinie,
il en va de même pour le troisième, et l'on peut organiser les trois \pns en une \seco de complexes comme dans le  \thref{corlemGlaz1}.
\end{corollary}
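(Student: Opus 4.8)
Le plan est de ramener cet énoncé sur les $\Ld(\cdot)=\infty$ à une application du \thref{thcorlemGlaz1}, en traduisant \gui{dimension infinie} en \gui{$\Ld(\cdot)\geq n$ pour tout $n$}, comme cela a été convenu par la notation $\Ld(M)=\infty$ dans la \dfn \ref{definotaLdimmodules}. La situation est celle d'une \seco $0\to E\to F\to G\to 0$ où deux des trois modules sont $\infty$-\pfbs. La stratégie naturelle est de traiter les trois cas (selon le couple de modules supposé de \pn infinie) en invoquant chacune des trois implications du \thref{thcorlemGlaz1}, appliquée avec une valeur de $n$ arbitraire.

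Plus \prmt, je fixerais un entier $n\in\NN$ quelconque et je montrerais que le troisième module est $n$-\pfb dans chaque cas. Si $E$ et $F$ sont $\infty$-\pfbs, alors $\Ld(E)\geq n$ et $\Ld(F)\geq n+1$, donc le point \emph{3} du \thref{thcorlemGlaz1} donne $\Ld(G)\geq n+1\geq n$. Si $E$ et $G$ sont $\infty$-\pfbs, alors $\Ld(E)\geq n$ et $\Ld(G)\geq n$, et le point \emph{2} donne $\Ld(F)\geq n$. Si $F$ et $G$ sont $\infty$-\pfbs, alors $\Ld(F)\geq n$ et $\Ld(G)\geq n+1$, et le point \emph{1} donne $\Ld(E)\geq n$. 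Comme $n$ était arbitraire, le troisième module est $\infty$-\pfb dans tous les cas.

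Pour la deuxième affirmation, concernant l'organisation des trois \pns en une \seco de complexes comme dans le \thref{corlemGlaz1}, je considérerais les deux modules dont on sait dès le départ qu'ils admettent une $n$-\pn, pour chaque $n$, et j'appliquerais la construction explicite du point \emph{1} du \thref{corlemGlaz1} (qui fournit un diagramme commutatif à lignes exactes, les \sexs horizontales intermédiaires étant scindées). Le cas le plus \gui{naturel} est celui où $E$ et $G$ sont les modules donnés, puisque le \thref{corlemGlaz1} fournit directement la $n$-\pn de $F$ à partir de celles de $E$ et $G$; les deux autres configurations se ramènent à celle-là en constatant que le troisième module, désormais connu comme $\infty$-\pfb par la première partie, fournit les données manquantes.

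L'obstacle principal, s'il y en a un, est purement organisationnel: il faut vérifier que les trois cas se traitent bien par les trois implications distinctes du \thref{thcorlemGlaz1} et qu'aucune asymétrie dans les décalages d'indices (les \gui{$n$} versus \gui{$n+1$}) ne crée de difficulté lorsqu'on fait tendre implicitement $n$ vers l'infini. Or précisément, comme l'hypothèse \gui{$\infty$-\pfb} signifie \gui{$\Ld\geq n$ pour tout $n$}, les décalages d'une unité sont absorbés sans effort, et la \dem se réduit à l'application répétée du \tho précédent. Il n'y a donc pas de véritable difficulté \algq nouvelle; le travail a été fait dans le \thref{thcorlemGlaz1} et dans la construction diagrammatique du \thref{corlemGlaz1}.
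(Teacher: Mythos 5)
Your proof is correct and follows exactly the route the paper intends: the paper states this corollary without a written proof, as an immediate consequence of the three implications of Theorem \ref{thcorlemGlaz1} (applied for every $n$, the shifts by one unit being absorbed by the hypothesis of infinite presentability, exactly as you argue) together with the floor-by-floor construction of Theorem \ref{corlemGlaz1} for assembling the three presentations into a short exact sequence of complexes. Your case analysis matches the three implications correctly, so there is nothing to add.
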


%
%
%
%

\subsec{Suites exactes courtes de modules \lorsbs}

On commence par un lemme avec des \pdis petites.
Il est de toute manière utile pour comprendre la situation \gnle.

\begin{lemma} \label{lemEFG} On considère une \seco $0\to E\to F \to G\to 0$, et $r\in\NN$.
\begin{enumerate}
\item Si $\Pd(F)\leq 0$, alors $\Pd(G)\leq r+1 \iff \Pd(E)\leq r$.\\
Cas particuliers avec précisions.
\begin{enumerate}
\item Si $F$ est libre et $E$ admet une \rlf de longueur~$n$, alors 
 $G$ admet une \rlf de longueur $n+1$.
\item Si $F$ est libre et $G$ admet une \rsn \stl de longueur $n+1$, alors 
 $F$ admet une \rsn \stl de longueur $n$. 
\end{enumerate}

\item Si $\Pd(E)\leq r$ et $\Pd(G)\leq r$,  alors $\Pd(F)\leq r$.\\
Cas particulier avec précisions.
Si $E$ et $G$ admettent une \rlf (resp. une \rsn \stl finie) de longueur $n$, il en va de même pour $F$.
\item Si $\Pd(E)\leq 0$ et $\Pd(F)\leq 1$, alors $\Pd(G)\leq 1$.\\
Cas particulier avec précisions.
 Si $E$ est libre et $F$ admet une \rsn \stl de longueur $1$, alors 
 $G$ admet une \rsn \stl de longueur $1$.

\end{enumerate} 
\end{lemma}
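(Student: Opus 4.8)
`\documentclass[10pt,CMpaper]{CMbook}`The plan is to treat the three parts separately, leaning on the horseshoe-type splicing of \ref{corlemGlaz1} and on the shortening lemma \ref{lemRaccourcirRsf}; the only genuinely new input is a small splitting argument needed for part \emph{3}. Throughout I identify $E$ with its image in $F$ under $\iota$, so that $E=\Ker(F\to G)$.

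For part \emph{2}, I would simply invoke \ref{corlemGlaz1}. Given finite projective resolutions of $E$ and $G$, that theorem builds three resolutions fitting into a \seco of complexes with $F_k\simeq E_k\oplus G_k$ at each level. If the resolutions of $E$ and $G$ have length $n$, so does the resolution of $F$, which is precisely $\Pd(F)\leq r$ when the $E_k,G_k$ are taken \ptf: this is point \emph{3} of \ref{corlemGlaz1}. The \rlf special case is point \emph{2} of \ref{corlemGlaz1} verbatim, and the stably free case follows at once, since a direct sum $E_k\oplus G_k$ of \stls modules is again \stls.

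For part \emph{1}, both implications are short. For $\Pd(E)\leq r\Rightarrow\Pd(G)\leq r+1$ I would take a projective resolution $0\to P_r\to\cdots\to P_0\to E\to 0$ and splice it onto the given \seco, the map $P_0\to F$ being the composite $P_0\twoheadrightarrow E\hookrightarrow F$; since $E\hookrightarrow F$ is injective with image exactly $\Ker(F\to G)$, the spliced sequence $0\to P_r\to\cdots\to P_0\to F\to G\to 0$ is a projective resolution of $G$ of length $r+1$. Taking $F$ and the $P_i$ free gives the \rlf of $G$ of length $n+1$ of \emph{(a)}. For the converse $\Pd(G)\leq r+1\Rightarrow\Pd(E)\leq r$ I would apply \ref{lemRaccourcirRsf} to the surjection $F\to G$ (with $F$ projective and kernel $E$), which yields $\Pd(E)\leq(r+1)-1=r$; its stably free refinement, with $F$ free hence \stls, gives \emph{(b)}.

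The heart of the lemma, and the step I expect to require the only real idea, is part \emph{3}. Here I would fix a projective presentation $0\to P_1\to P_0\to F\to 0$ witnessing $\Pd(F)\leq 1$ and consider the composite surjection $q\colon P_0\twoheadrightarrow F\twoheadrightarrow G$. Writing $p\colon P_0\to F$ for the first map, its kernel is $K=\Ker q=p^{-1}(E)$, and restricting $p$ to $K$ produces a \seco $0\to P_1\to K\to E\to 0$. The key observation is that $E$ is projective (as $\Pd(E)\leq 0$), so this sequence splits and $K\simeq P_1\oplus E$ is projective; hence $0\to K\to P_0\to G\to 0$ exhibits $\Pd(G)\leq 1$. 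The stably free case is identical: with $E$ free and $P_1$ \stls, the module $K\simeq P_1\oplus E$ is \stls, so $G$ inherits a stably free resolution of length $1$.
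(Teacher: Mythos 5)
Your proof is correct and follows essentially the same route as the paper: part \emph{2} by \ref{corlemGlaz1}, part \emph{1} by splicing in one direction and a resolution-shortening result in the other, and part \emph{3} by pulling $E$ back along $P_0\twoheadrightarrow F$ (your $K$ is exactly the paper's $E_1$). The only cosmetic differences are that for the converse in part \emph{1} you invoke \ref{lemRaccourcirRsf} directly where the paper cites the permanence theorem \ref{corLScha}, and in part \emph{3} you make explicit the splitting of $0\to P_1\to K\to E\to 0$ that the paper obtains by appealing to point \emph{2} of the lemma itself.
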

%
\begin{proof}
\emph{1.} Comme $F$ est \ptf,  une \rsf de $E$  de longueur $r$
fournit  une \rsf de $G$  de longueur~\hbox{$r+1$}. Dans l'autre sens, on applique le point \emph{2} ou \emph{3} du \thref{corLScha}.

\emph{2.} 
Voir le \thref{corlemGlaz1}.

\emph{3.} Tout d'abord le cas \gnl avec des \mptfs.
Par hypothèse $F\simeq F_0/F_1$ avec $F_0$ et $F_1$ \ptfs. 
Le sous-module~$E$ de $F$ s'écrit $E_1/F_1$ où $E_1$ est un sous-module de $F_0$ qui contient~$F_1$.
La \seco $0\to F_1\to E_1\to E\to 0$ implique d'après le point~\emph{2} 
que~$E_1$ est \ptf. 
Donc $\Pd(G)\leq 1$ puisque $G\simeq F_0/E_1$.\\
Voyons ensuite le cas particulier. Ici $E$ et $F_0$ sont libres, et $F_1$
\stl. On en déduit que $E_1$ est \stl. Donc  $G$ admet une \rsn \stl de longueur $1$ puisque $G\simeq F_0/E_1$.
\end{proof}
%


Voici un analogue du \thref{corlemGlaz1}.
\begin{theorem} \label{lemPMP/M} \emph{(Suite exacte courte de modules \lorsbs)}\\
Soit $E$ un sous-\Amo d'un module $F$. Si deux parmi les trois modules~\hbox{$F$, $E$} \hbox{et $G=F/E$} sont \lrsbs (resp. \lorsbs), il en va de même pour le troisième et l'on a les résultats suivants.
\begin{enumerate}
\item [a.]  Pour un entier $m$ convenable, on a une suite exacte courte de complexes acycliques, avec des modules libres finis (resp. des \mptfs) $E_i$, $F_i$, $G_i$, du type suivant: 

\vspace{1.5mm}

\snac{ \!\!\!\!
\begin{array}{ccccccccccccccccccccc} 
 && 0 &&0 && 0 &&0 && 0 \\[1mm] 
 && \downarrow && \downarrow &&\downarrow && \downarrow&& \downarrow \\[1mm] 
0 &\to& E_ m &\lora& E_{ m-1} &  \cdots   \cdots & E_ 1 &\lora& E_ 0 &\lora&E& \to &0
\\[1.5mm] 
 && \downarrow && \downarrow &&\downarrow && \downarrow&& \downarrow \\[1.5mm] 
0 &\to& F_ m&\lora& F_{ m-1} &  \cdots   \cdots & F_ 1 &\lora& F_ 0 &\lora &F& \to &0
\\[1.5mm] 
 && \downarrow&& \downarrow && \downarrow &&\downarrow &&
 \downarrow \\[.5mm] 
0 &\to& G_ m&\lora& G_{ m-1}&    \cdots   \cdots & G_ 1& \lora& G_ 0 &\lora& G&
\to& 0\\[1mm] 
 & & \downarrow && \downarrow &&\downarrow && \downarrow&& \downarrow \\[1mm] 
 & & 0 && 0 &&0 && 0&& 0   
\end{array} 
}

\sni Ici, les  \rsns libres (resp. projectives) de $E$ et $G$ peuvent être imposées.
Mais bien que chaque suite exacte
$0\to E_i\lora F_i\lora G_i\to 0$ soit scindée\footnote{Plus précisément, on peut prendre $F_i=E_i\oplus G_i$ avec les applications canoniques \hbox{pour $E_i\to E_i\oplus G_i$} et $E_i\oplus G_i\to G_i$.},
le complexe  du milieu n'est pas en \gnl la simple somme directe
des deux autres complexes.
\hum{Cette remarque devrait être sortie du \tho et faire l'objet d'un exemple, spontanément elle a l'air fausse: à cause de la commutativité des diagrammes et du scindage, on dirait qu'il n'y a pas le choix pour le complexe du milieu???}

\item  [b.] \emph{(Dimensions projectives)} \\
En notant $p_E=\Pd_\gA(E)$, $p_F=\Pd_\gA(F)$, $p_G=\Pd_\gA(F/E)$, on obtient
les inégalités suivantes:\label{ineqpEFG} 

\fnic{p_E\leq \sup(p_F,p_G-1), \quad p_F\leq \sup(p_E,p_G) ,\quad 
p_G\leq \sup(1+p_E,p_F).}

\snii Autrement dit on a les implications suivantes pour $n\in\NN$:
\begin{itemize}
\item $\big(\Pd_\gA(F)\leq n$ et $\Pd_\gA(G)\leq n+1\big) \;\Longrightarrow\;\Pd_\gA(E)\leq n$,
\item $\big(\Pd_\gA(E)\leq n$ et $\Pd_\gA(G)\leq n\big) \;\Longrightarrow\;\Pd_\gA(F)\leq n$,
\item  $\big(\Pd_\gA(E)\leq n$ et $\Pd_\gA(F)\leq n+1\big) \;\Longrightarrow\;\Pd_\gA(G)\leq n+1$.
\end{itemize}
\end{enumerate}
\end{theorem}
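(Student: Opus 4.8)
Le plan est de démontrer d'abord les trois implications du point \emph{b} concernant $\Pd$, puis d'en déduire à la fois l'énoncé d'existence (deux modules résolubles entraînent le troisième) et, via \thref{corlemGlaz1}, le diagramme du point \emph{a}. Je traiterais en parallèle le cas libre et le cas projectif, en prenant comme invariant \gui{$M$ admet une \rsn \stl de longueur $\leq n$} dans le cas libre et \gui{$\Pd_\gA(M)\leq n$} dans le cas projectif; pour le cas libre je rappellerais qu'une \rsn \stl finie se transforme en une \rlf (en épaississant chaque terme \stl grâce à \thref{cor2lemModifComplexe}), de sorte que cet invariant caractérise bien les modules \lrsbs.

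L'implication médiane $(p_E\leq n$ et $p_G\leq n)\Rightarrow p_F\leq n$ ne se démontre pas par \recu: c'est exactement le point \emph{2} de \thref{lemEFG} (avec sa version \stl), qui construit une \rsn de $F$ par le procédé du fer à cheval. Pour les deux autres implications je raisonnerais par \recu sur $n$. Les cas de base $n=0$ sont fournis directement par \thref{lemEFG}: le point \emph{1} avec $r=0$ donne $(p_F\leq 0$ et $p_G\leq 1)\Rightarrow p_E\leq 0$, et le point \emph{3} donne $(p_E\leq 0$ et $p_F\leq 1)\Rightarrow p_G\leq 1$. Pour le pas de \recu ($n\geq 1$), je passerais aux premières \syzys: les hypothèses rendant les trois modules \tf (proposition~\ref{FFRpropPfSex} pour le sous-module $E$ et le quotient $G$, complétée par la permanence), je choisis des surjections depuis des modules libres (resp. projectifs) et j'applique les lemmes du fer à cheval~\ref{lemGlaz1} et~\ref{lemGlaz2} pour obtenir une \seco de premières \syzys $0\to E'\to F'\to G'\to 0$, avec $E'$, $F'$, $G'$ de nouveau \pf par \thref{corLScha}. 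Le point \emph{1} de \thref{lemEFG} donne le décalage $p_M\leq k\iff \Pd(M')\leq k-1$ pour $k\geq 1$ et chacun \hbox{de $M=E,F,G$}; en abaissant les hypothèses d'un cran, en appliquant l'\hdr à la suite des \syzys puis en remontant, on obtient l'implication au rang $n$. (Pour l'implication \emph{1}: $p_F\leq n$ et $p_G\leq n+1$ donnent $\Pd(F')\leq n-1$ et $\Pd(G')\leq n$, d'où $\Pd(E')\leq n-1$ par la version de l'implication \emph{1} au rang $n-1$, \cad $p_E\leq n$; l'implication \emph{3} est symétrique.)

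Des trois implications découle aussitôt l'énoncé d'existence: si deux des modules $E$, $F$, $G$ sont résolubles, en choisissant $n$ plus grand que les deux dimensions connues on satisfait les hypothèses de l'implication adéquate, et le troisième est donc de $\Pd$ fini (resp. possède une \rsn \stl finie, puis une \rlf). Enfin, une fois les trois modules connus \lorsbs (resp. \lrsbs), le point \emph{a} s'obtient en appliquant \thref{corlemGlaz1} (point \emph{3}, resp. point \emph{2}) au couple $(E,G)$: cela produit la \seco de complexes acycliques de \mptfs (resp. de \mlrfs) $E_i$, $F_i$, $G_i$, avec les \rsns imposées de $E$ et $G$ dans les lignes extrêmes, chaque tranche $0\to E_i\to F_i\to G_i\to 0$ scindée, et les inégalités de dimensions ne sont qu'une reformulation des trois implications.

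Le principal obstacle que je prévois est la comptabilité du cas libre: des modules \stls apparaissent inévitablement parmi les \syzys intermédiaires (les surjections utilisées ne sont pas celles attachées aux \rsns données), de sorte que l'invariant robuste est bien \gui{\rsn \stl}; il faut prendre garde d'invoquer \thref{corLScha} et le lemme de Schanuel \gne~\ref{lemSchanuelVariation} pour maintenir les \syzys \stl-résolubles, en ne convertissant en \rlfs véritables qu'à la toute fin. Le reste n'est que de la manipulation de diagrammes, déjà encapsulée dans les lemmes~\ref{lemGlaz1},~\ref{lemGlaz2} et dans \thref{lemEFG}.
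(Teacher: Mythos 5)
Votre proposition est correcte et repose sur le même cercle d'idées que le texte (lemmes du fer à cheval \ref{lemGlaz1} et \ref{lemGlaz2}, lemme \ref{lemEFG}, permanence \ref{corLScha}), mais son architecture diffère réellement. Vous démontrez d'abord les trois implications du point \emph{b} par récurrence sur $n$, en reconstruisant à chaque pas la suite exacte courte des premières syzygies, puis vous en déduisez l'énoncé d'existence (deux modules résolubles entraînent le troisième) et enfin le point \emph{a} via le \thref{corlemGlaz1}. Le texte procède en sens inverse: il construit une seule fois, par le corolaire \ref{cor0thcorlemGlaz1}, une suite exacte courte de présentations infinies --- ce qui revient à votre récurrence, mais encapsulée --- et obtient alors le point \emph{b} sans aucune récurrence, en appliquant la permanence puis le lemme \ref{lemEFG} directement aux syzygies de niveau $n$. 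Surtout, le transfert de finitude dans les deux cas non triviaux ($F$ et $G$ résolubles $\Rightarrow$ $E$ résoluble; $E$ et $F$ résolubles $\Rightarrow$ $G$ résoluble) est établi dans le texte par un argument spécifique que votre plan évite entièrement: on écrit $0\to K\to L\to F\to 0$ avec $L$ libre fini et $\Pd(K)$ fini, on pose $E=\wi{E}/K$ avec $K\subseteq\wi{E}\subseteq L$, et les deux suites exactes courtes $0\to K\to\wi{E}\to E\to 0$ et $0\to\wi{E}\to L\to G\to 0$ ramènent tout, via le fer à cheval, au cas où le module du milieu est projectif. Votre déduction de l'existence comme corollaire immédiat des implications quantitatives est donc plus économique et unifie l'exposé; en revanche, l'organisation du texte découple un argument qualitatif court (qui suffit pour le point \emph{a}) des inégalités du point \emph{b}, lesquelles deviennent une application directe en deux lignes de la permanence et du lemme \ref{lemEFG}. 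Les deux traitements du cas libre coïncident: comptabilité par résolutions stablement libres, et conversion en résolutions libres finies à la toute fin.
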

\rem
 On notera que    $p_E$, $p_F$ et $p_G$ sont des entiers bien définis 
 si~$\gA$ est \fdi et non trivial et si les trois modules admettent une \rsf.
Dans ce cas, on peut formuler les trois in\egts précédentes
sous l'une des trois formes \gui{ultramétriques} \eqves suivantes:
\begin{itemize}
\item  $p_E\leq \sup(p_F,p_G-1)$  avec \egt si $p_F\neq p_G$,
\item $p_F\leq \sup(p_E,p_G)$ avec \egt si $1+p_E\neq p_G$, 
\item $p_G\leq \sup(1+p_E,p_F)$ avec \egt si $p_E\neq p_F$. 
\end{itemize}
Voici par exemple le tableau des valeurs autorisées pour $p_F$ 
et~$p_G$ lorsque l'on fixe $p_E$, par exemple $p_E=2$, en faisant cro\^{\i}tre~$p_G$:
\[ 
\begin{array}{cccccccccccccccccccccccccccccccccccccccc} 
p_E & | &  2 & 2   & 2  & |&  2  &  2 &  2  &  2 & |&  2  & 2  & 2   & 2& \dots  \\[1mm] 
p_F & | &  2 & 2   & 2  & |&  0  & 1  & 2   & 3  & |&  4  &  5 &  6  & 7&  \dots    \\[1mm] 
p_G & | &  0 & 1   & 2  & |& 3   & 3  & 3   & 3  & |&  4  & 5  & 6   & 7&  \dots    \\[1mm] 
& | &  p_G & \leq    & p_E  & |& p_G & =    & p_E   & +\;1  & |&  p_G & >    & p_E   &  +\;1 &\dots    \\[1mm] 
 \end{array}
\]
Ce tableau aidera \llec à se convaincre que les différentes
formes sont bien \eqves  lorsque les \pdis sont des entiers bien définis.
\eoe 

\begin{proof}  Pour le cas des \rsfs on note d'abord que la proposition \ref{cor2lemModifComplexe} implique qu'une \rsf donne une \pn de longueur 
infinie (parce qu'un \mptf en a une). Pour  le  cas des \rlfs, on note 
que toute \rsn \stl finie donne lieu à une \rlf (dont la longueur a augmenté au plus d'un unité). 
\\
Le corolaire \ref{cor0thcorlemGlaz1}
nous dit que les trois modules admettent des \pns infinies qui 
peuvent être organisées en une \seco de complexes.
\\
En outre, comme dans le \thref{corlemGlaz1}, si nous notons $E'_k$, $F'_k$ et $G'_k$ les noyaux des flèches 
$E_k\to E_{k-1}$, $F_k\to F_{k-1}$ et $G_k\to G_{k-1}$,  on obtient une \seco
$0\to E'_k\lora F'_k\lora G'_k\to 0$.

Le point \emph{a} et l'in\egt $p_F\leq \sup(p_E,p_G)$  résultent du \thref{thSecoRlf0} dès que l'on sait que $\Pd(E)<\infty$ et $\Pd(G)<\infty$.

Pour terminer la \dem du point \emph{a}, il suffit de voir que l'on a

\snic{\Pd(F)\leq m \,\Longrightarrow\, \big( \Pd(E)<\infty\iff \Pd(G)<\infty\big).}

\snii
Si $m=0$, $F$ est \ptf et le lemme \ref{lemEFG} donne le résultat.
 \\
Si $m\geq 1$    
on écrit une \seco 

\snic{0\to K\lora L\lora F\to 0 \qquad(\alpha)}

\snii
avec $L$ \lrf, $K\subseteq L$
et $\Pd(K)\leq m-1$. On considère \hbox{que $F=L/K$}, puis que $E$ est un sous-module de $F$, et l'on écrit~\hbox{$E=\wi{E}/K$} avec $K\subseteq \wi{E}\subseteq L$ ce qui donne deux \secos 

\snic{0\to K\lora \wi{E}\lora E\to 0 \qquad(\beta), }

\snic{ 0\to \wi{E}\lora L\lora G\to 0 \qquad(\gamma).}

($G\simeq F/E\simeq(L/K)/(\wi{E}/K)\simeq L/\wi{E} $). 
\\
Supposons tout d'abord $\Pd(E)<\infty$. Alors $(\beta)$ donne $\Pd(\wi{E})<\infty$
puis on obtient $\Pd(G)<\infty$ par $(\gamma)$ en utilisant le cas $m=0$.
\\
Supposons ensuite $\Pd(G)<\infty$, alors $\Pd(\wi{E})<\infty$ par $(\gamma)$ en utilisant le \hbox{cas $m=0$}. Et $\Pd(E)<\infty$ par $(\beta)$ en utilisant le cas précédent. 

\emph{b.}
Il reste à vérifier les implications dans le premier et le troisième cas.
\\
Supposons 
que  $\Pd_\gA(F)\leq n$ et $\Pd_\gA(G)\leq n+1$.
On a donc $\Pd(F'_n)\leq 0$ \hbox{et $\Pd(G'_n)\leq 1$}. Le lemme \ref{lemEFG}
nous dit que $\Pd(E'_n)\leq 0$.
\\
Supposons 
que  $\Pd_\gA(E)\leq n$  et $\Pd_\gA(F)\leq n+1$.
On a donc \hbox{$\Pd(E'_n)\leq 0$} \hbox{et $\Pd(F'_n)\leq 1$}. Le lemme \ref{lemEFG}
nous dit que $\Pd(G'_n)\leq 1$.
\end{proof}
%
%

\penalty-2500 
Il est instructif de comparer les trois types d'in\egts obtenues pour les
entiers $\Ld(\bullet)$ (\thref{thcorlemGlaz1}), $\Pd(\bullet)$ (\thref{ineqpEFG}), et $\Gr(\bullet)$ (\thref{thSESPrf}) avec une \seco $0\to E\to F\to G\to0$.

\medskip\centerline{ 
{\setlength{\unitlength}{.0833\textwidth}
\tabcolsep0pt\renewcommand{\arraystretch}{0}%
\begin{tabular}{|c|c|c|}
\hline
\Boite{.8}{3.5}{$\lambda_E\geq \inf (\lambda_F,\lambda_G-1)$}&
\Boite{.8}{3.2}{$\lambda_F\geq \inf (\lambda_E,\lambda_G)$}&
\Boite{.8}{3.5}{$\lambda_G\geq \inf (\lambda_E+1,\lambda_F)$}\\
\hline
\Boite{.8}{3.5}{$p_E\leq \sup(p_F,p_G-1)$}&
\Boite{.8}{3.2}{$p_F\leq \sup(p_E,p_G)$}&
\Boite{.8}{3.5}{$p_G\leq \sup(p_E+1,p_F)$}\\
\hline
\Boite{.8}{3.5}{$g_E\geq \inf(g_F,g_G+1) $}&
\Boite{.8}{3.2}{$g_F\geq \inf(g_E,g_G) $}&
\Boite{.8}{3.5}{$g_G\geq \inf(g_E-1,g_F)$}\\
\hline 
\end{tabular}
}
}

\section[Comparaison de \rsfs]{Comparaison de  deux \rsfs d'un même module}
\label{secCompareRSF}

\begin {lemma}\label{lemCompareRSFS}
Soit $u_{-1} : E \to E'$ un morphisme de \Amos, $(P\ibu,\partial)$ un
complexe descendant se terminant par $E$ avec les $P_i$ \pros, et
$(P'_\bullet,\partial')$ une \rsn de $E'$:

\snic {
\xymatrix {
\cdots\ \ar[r] &P_2\ar@{-->}[d]_{u_2}\ar[r]^{\partial_2} 
       &P_1\ar@{-->}[d]_{u_1}\ar[r]^{\partial_1} 
&P_0\ar@{-->}[d]_{u_0}\ar[r]^{\partial_0}   & E\ar[d]_{u_{-1}}
\\
\cdots\ \ar[r] &P'_2\ar[r]_{\partial'_2} &P'_1\ar[r]_{\partial'_1} 
&P'_0\ar[r]_{\partial'_0} & E'\ar[r] & 0 
}
}

\snii 
Alors, on peut prolonger $u_{-1}$ en un morphisme de $P\ibu$ dans $P_\bullet'$.
\\
De plus si~$u$ et~$v : P\ibu \to P_\bullet'$ sont deux prolongements de $u_{-1}$,
alors $u$ et~$v$ sont reliés par une homotopie \lin.
\end {lemma}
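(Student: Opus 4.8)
This is the standard "comparison theorem" of homological algebra (the comparison of projective resolutions), here stated with a possibly non-exact complex on the left (only projectivity of the $P_i$ is required) and an honest resolution on the right.

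\medskip

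The plan is to construct the lifts $u_k$ by descending induction (really ascending in the homological index), exploiting at each stage the projectivity of $P_k$ together with the exactness of the target complex $P'_\bullet$. First I would build $u_0$. Since $P_0$ is projective and the composite $u_{-1}\circ \partial_0 : P_0 \to E'$ lands in $\Im\partial'_0$ (because $\partial'_0$ is surjective, $E'$ being the cokernel of $\partial'_1$), the surjection $P'_0 \twoheadrightarrow E'$ lets me lift $u_{-1}\circ\partial_0$ through $\partial'_0$ to obtain $u_0 : P_0 \to P'_0$ with $\partial'_0\circ u_0 = u_{-1}\circ\partial_0$. Then I would proceed inductively: assuming $u_{k-1}$ is built with $\partial'_{k-1}\circ u_{k-1} = u_{k-2}\circ\partial_{k-1}$, I consider the map $u_{k-1}\circ\partial_k : P_k \to P'_{k-1}$. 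The key computation is that its image is killed by $\partial'_{k-1}$, namely $\partial'_{k-1}\circ u_{k-1}\circ\partial_k = u_{k-2}\circ\partial_{k-1}\circ\partial_k = 0$ since $P\ibu$ is a complex. Hence $u_{k-1}\circ\partial_k$ takes values in $\Ker\partial'_{k-1} = \Im\partial'_k$, using exactness of $P'_\bullet$ at $P'_{k-1}$; projectivity of $P_k$ and surjectivity of $P'_k \twoheadrightarrow \Im\partial'_k$ then furnish $u_k : P_k \to P'_k$ with $\partial'_k\circ u_k = u_{k-1}\circ\partial_k$. This yields a morphism of complexes prolonging $u_{-1}$.

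\medskip

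For the uniqueness-up-to-homotopy statement, I would set $w = u - v$, a morphism of complexes prolonging $u_{-1}-u_{-1} = 0$, and construct an homotopy $h$ with $\partial'\circ h + h\circ\partial = w$ by the same inductive mechanism. Concretely I seek $\gA$-linear maps $h_k : P_k \to P'_{k+1}$ satisfying $w_k = \partial'_{k+1}\circ h_k + h_{k-1}\circ\partial_k$. I start at the bottom: since $\partial'_0\circ w_0 = (u_{-1}-u_{-1})\circ\partial_0 = 0$, the map $w_0$ lands in $\Ker\partial'_0 = \Im\partial'_1$, so projectivity of $P_0$ gives $h_0 : P_0 \to P'_1$ with $\partial'_1\circ h_0 = w_0$ (taking $h_{-1}=0$). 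Inductively, having $h_{k-1}$, I examine $w_k - h_{k-1}\circ\partial_k$ and check that applying $\partial'_k$ yields $\partial'_k\circ w_k - \partial'_k\circ h_{k-1}\circ\partial_k$; rewriting $\partial'_k\circ h_{k-1} = w_{k-1} - h_{k-2}\circ\partial_{k-1}$ and using that $w$ is a chain map gives $0$. Thus $w_k - h_{k-1}\circ\partial_k$ factors through $\Im\partial'_{k+1}$, and projectivity of $P_k$ produces $h_k$ as desired.

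\medskip

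The main obstacle, such as it is, is purely bookkeeping: one must be careful that exactness of the \emph{target} is what is needed (the source $P\ibu$ need only be a complex with projective terms), and that every diagram-chase invokes the correct one of the two hypotheses. The lifting steps themselves are immediate applications of the defining property of projective modules, exactly as recalled in the excerpt's diagram for \ixc{projectif}{module ---}. I would note in passing that the homotopy $h$ is genuinely $\gA$-linear here (unlike the set-theoretic contractions appearing elsewhere in the text), since it is assembled entirely from $\gA$-linear lifts.
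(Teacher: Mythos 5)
Votre démonstration est correcte et suit essentiellement la même démarche que celle du texte : relèvement inductif de $u_{-1}$ en utilisant la projectivité des $P_k$ et l'exactitude de la résolution cible, puis réduction de l'unicité à homotopie près au cas d'un morphisme prolongeant $0$ (en posant $w=u-v$) et construction inductive d'une homotopie $\gA$-linéaire par le même mécanisme de relèvement. Les vérifications clés ($\partial'_{k-1}\circ u_{k-1}\circ\partial_k=0$, puis l'annulation de $\partial'_k\circ(w_k-h_{k-1}\circ\partial_k)$ via la propriété de morphisme de complexes) sont exactement celles du texte.
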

NB. On ne fait aucune hypothèse concernant les modules $P'_i$.
Et les $P_i$ ne sont pas \ncrt \tf.

\begin{proof}
L'\ali $u_0$ s'obtient en relevant $u_{-1} \circ \partial_0 : P_0
\to E'$ à $P'_0$. On a utilisé le fait que $P_0$ est projectif et que
$\partial'_0$ est surjectif. 
\\
Ensuite, remarquons que $\Im(u_0 \circ
\partial_1) \subseteq \Im\partial'_1=\Ker \partial'_0$. \\
En effet, $\partial'_0 \circ u_0
\circ \partial_1 = u \circ \partial_0 \circ \partial_1 = 0$, donc
$\partial'_0 \circ (u_0 \circ \partial_1) = 0$. 
\\
Comme $u_0 \circ\partial_1$ est à valeurs dans $\Im\partial'_1$, on
peut  relever $u_0 \circ \partial_1$ en $u_1 : P_1 \to P'_1$ (on 
utilise le fait que $P_1$ est projectif et l'exactitude de la ligne du bas en $P'_0$).  On continue ainsi de proche en proche: on relève $u_1 \circ
\partial_2$ en $u_2 : P_2 \to P'_2$ et ainsi de suite.

Soient $u$, $v : P \to P'$ deux prolongements de $u_{-1}$.
On veut montrer qu'ils sont homotopes.  En rempla\c{c}ant $u$
par $u-v$, on se ramène à montrer qu'un morphisme $u : P \to P'$
qui prolonge $0$ ($u_{-1} = 0$) est homotope à zéro.

\snic {
\xymatrix {
\cdots\ \ar[r] &P_2\ar[d]_{u_2}\ar[r]^{\partial_2} 
       &P_1\ar@{-->}[dl]|-{h_1}\ar[d]_{u_1}\ar[r]^{\partial_1} 
&P_0\ar@{-->}[dl]|-{h_0}\ar[d]_{u_0}\ar[r]^{\partial_0}   & E\ar[d]_{0}\ar[r] & 0
\\
\cdots\ \ar[r] &P'_2\ar[r]_{\partial'_2} &P'_1\ar[r]_{\partial'_1} 
&P'_0\ar[r]_{\partial'_0} & E'\ar[r] & 0 
}
}

Comme $u_0$ induit $0 : E\to E'$, on a $\Im u_0 \subseteq \Ker\partial'_0 =
\Im\partial'_1$ et comme $P_0$ est projectif, on peut relever $u_0$
en $h_0 : P_0 \to P'_1$, i.e. $u_0 = \partial'_1 \circ h_0$. 
\\
Ensuite on considère l'\egt
$$
\partial'_1 \circ (u_1 - h_0\circ\partial_1) =
\partial'_1\circ u_1 - u_0\circ\partial_1= 0,
$$
donc $\Im(u_1 - h_0\circ\partial_1) \subseteq \Ker\partial'_1 =
\Im\partial'_2$, et comme $P_1$ est projectif, on peut relever
$u_1 - h_0\circ\partial_1$ en $h_1 : P_1 \to P'_2$, i.e.

\snic{u_1 - h_0\circ\partial_1 = \partial'_2\circ h_1.}

\snii
De proche en proche,
on construit des \alis $h_i : P_i \to P'_{i+1}$ vérifiant

\snic{u_i = h_{i-1}\circ\partial_i + \partial'_{i+1} \circ h_i.}

\snii
Autrement dit, $u$ est homotope à $0$ via $h$.
\end{proof}

Les points \emph{2} et \emph{3}  du \tho qui suit  constituent une \gnn du \thref{thBetti} (consacré au \rsns libres minimales).
Le point \emph{1} est un corolaire immédiat du lemme précédent.

\begin {theorem} \label{thcomparaisonRSFS}
Soient  $P_\bullet$ et $P_\bullet'$ deux  \rsns d'un même \Amo~$E$
par des \mptfs:
\vspace{-2pt}
$$
\begin{array}{cccccccccccccc}
 \cdots  &  P_n& \vers{\partial_n} & P_{n-1}&  \cdots \cdots  & P_1 
  &\vers{\partial_1} & P_0 & \vers\pi & E& \to& 0
\\[1mm]
 \cdots  & P'_n& \vers{\partial'_n} & P'_{n-1}&  \cdots \cdots & P'_1 
  & \vers{\partial'_1} & P'_0
& \vers{\pi'} & E&  \to& 0
\end {array}
$$
\begin{enumerate}
\item Les deux complexes sont homotopiquement \eqvs
\item On suppose que $\Im \partial \subseteq \Rad(\gA)P$ (on dit dans ce cas que la \rsn est minimale).  Alors on a un \iso de complexes $P_\bullet' \simeq P_\bullet \oplus K_\bullet$ où $K_\bullet$ est une \rsn du module nul
par des \mptfs

\snic{\cdots   \;K_n\; \lora \;\cdots\; \lora\; K_1 \;\lora\; K_0\; \lora \;0.}

\snii
En fait, $K_\bullet$ est isomorphe
à une somme directe de complexes triviaux 

\snic{\cdots \;\lora\; 0\; \vvvvers{v_{k+1}}\; Q_k \;\vvvvers {v_k=\Id_{Q_k}}\;
Q_k  \;\vvvvers{v_{k-1}}\; 0 \;\lora\; \cdots}

\snii
avec des  \mptfs $Q_k$ ($k\geq 1$). \\
Enfin si les $P_i$ et $P'_i$ sont \stls ou de rang constant, il en va de même pour les $Q_k$.
\item Si les deux \rsns sont minimales, les deux complexes sont isomorphes (avec $\Id$ comme flèche de $E$ dans $E$).
\end{enumerate}
\end{theorem}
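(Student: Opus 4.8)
The three assertions all descend from the comparison Lemma \ref{lemCompareRSFS} combined with the structure of acyclic complexes of projectives (Fact \ref{factCompExPro}). For point \emph{1}, the plan is to apply Lemma \ref{lemCompareRSFS} twice with $u_{-1}=\Id_E$: once to lift $\Id_E$ to a chain map $u:P_\bullet\to P'_\bullet$, once to lift it to $v:P'_\bullet\to P_\bullet$. Then $v\circ u$ and $\Id_{P_\bullet}$ are two lifts of $\Id_E$, so the uniqueness-up-to-homotopy clause of the lemma produces an $\gA$-linear homotopy $v u\sim\Id_{P_\bullet}$, and symmetrically $u v\sim\Id_{P'_\bullet}$. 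This is precisely the data of a homotopy equivalence in the sense of \ref{propEquivHomotop}, which settles point \emph{1}.

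For point \emph{2}, assume $P_\bullet$ minimal. The key step is to promote $v u\sim\Id_{P_\bullet}$ to a genuine isomorphism of complexes. Writing $(vu)_n-\Id_{P_n}=h_{n-1}\partial_n+\partial_{n+1}h_n$ and using that the homotopy $h$ is $\gA$-linear while $\Im\partial\subseteq\Rad(\gA)P$, each term sends $P_n$ into $\Rad(\gA)P_n$; hence $(vu)_n$ reduces to $\Id$ modulo $\Rad(\gA)P_n$, so it is onto by Nakayama (\ref{FFRlemNaka}) and therefore an automorphism by \ref{FFRprop quot non iso} (surjectif implique bijectif). Thus $vu$ is a chain isomorphism, its inverse is again a chain map, and $w:=(vu)^{-1}v$ is a chain retraction of $u$. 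Consequently $u$ splits in the category of complexes, giving $P'_\bullet\simeq P_\bullet\oplus K_\bullet$ with $K_\bullet=\Ker w$; each $K_n$, being a direct summand of the finitely generated projective $P'_n$, is finitely generated projective. Since $u$ is a homotopy equivalence it induces isomorphisms on homology, so the complementary summand is acyclic: $K_\bullet$ is an acyclic complex $\cdots\to K_1\to K_0\to 0$ of finitely generated projectives. Fact \ref{factCompExPro} then decomposes $K_\bullet$ into trivial complexes $0\to Q_k\vvers{\Id}Q_k\to 0$ with $Q_k$ finitely generated projective. For the final clause I would note $K_n\oplus P_n\simeq P'_n$: if $P_n$ and $P'_n$ are stably free then so is $K_n$ (adjoin a free complement), and if they have constant rank then $\rg(K_n)=\rg(P'_n)-\rg(P_n)$ is constant; the corresponding clause of Fact \ref{factCompExPro} then transfers the property to the $Q_k$.

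For point \emph{3}, suppose moreover $P'_\bullet$ minimal, and transport the decomposition of point \emph{2} under the isomorphism. The differential of $P'_\bullet$ becomes $\partial^{P}\oplus\partial^{K}$, where the trivial summands contribute components $\Id_{Q_k}$ landing in the degree-$(k-1)$ copy of $Q_k$. Minimality of $P'_\bullet$ forces $\Im\partial'\subseteq\Rad(\gA)P'$; projecting onto the summand $Q_k$ gives $Q_k\subseteq\Rad(\gA)Q_k$, whence $Q_k=0$ by Nakayama (\ref{lemSimplifidele}). Therefore $K_\bullet=0$ and $P'_\bullet\simeq P_\bullet$.

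The main obstacle is the minimality argument inside point \emph{2}: turning the null-homotopy of $vu-\Id$ into termwise invertibility. Everything hinges on the fact that a composite $h\partial+\partial h$ maps $P_n$ into $\Rad(\gA)P_n$ exactly because $P_\bullet$ is minimal \emph{and} the homotopy supplied by Lemma \ref{lemCompareRSFS} is $\gA$-linear; once this is in hand, the passage to split complexes and the appeal to the already-available structure theorem for acyclic projective complexes are routine.
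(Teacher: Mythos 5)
Your proof is correct, and its overall architecture is the same as the paper's: Lemma \ref{lemCompareRSFS} applied twice settles point \emph{1}, the splitting $P'_\bullet=\Im u\oplus\Ker v$ (your $\Ker w$ coincides with $\Ker v$, since $(vu)^{-1}$ is an isomorphism) settles point \emph{2}, and Fact \ref{factCompExPro} decomposes the acyclic complement into trivial complexes. You diverge from the paper in two local steps, both legitimately. First, to see that each $(vu)_n$ is an automorphism, the paper computes $\det\big((vu)_n\big)\in 1+\Rad(\gA)$, using the determinant of an endomorphism of a finitely generated projective module; you instead note that $(vu)_n$ is the identity modulo $\Rad(\gA)P_n$ (this is where the $\gA$-linearity of the homotopy and the minimality $\Im\partial\subseteq\Rad(\gA)P$ enter), hence surjective by Nakayama (\ref{FFRlemNaka}) and bijective by \ref{FFRprop quot non iso}. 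Your route is slightly more elementary, as it bypasses the determinant theory over projective modules; the paper's is a one-liner once that theory is granted. Second, for point \emph{3} the paper argues directly: when both resolutions are minimal, both composites $vu$ and $uv$ are automorphisms, so $u$ itself is an isomorphism of complexes prolonging $\Id_E$; you instead reuse the point-\emph{2} decomposition and kill each trivial summand $Q_k$ by transporting the minimality of $P'_\bullet$ and applying Nakayama (\ref{lemSimplifidele}). Both arguments are valid; the paper's is shorter, while yours makes explicit the pleasant fact that minimality of $P'_\bullet$ by itself forces the trivial summands to vanish. One cosmetic point: in your point \emph{3}, it is worth saying that once $K_\bullet=0$ the isomorphism $P_\bullet\simeq P'_\bullet$ is realized by $u$ itself, which prolongs $\Id_E$, so the compatibility with the augmentations required by the statement is automatic.
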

NB: on n'a pas supposé que les \rsns sont de longueur finie.
\begin{proof} 
D'après le lemme \ref{lemCompareRSFS}, il existe un morphisme $u_\bullet : {P_\bullet} \to P_\bullet'$ qui
prolonge~$\Id_E$ et un morphisme $u'_\bullet : P_\bullet' \to P_\bullet$ qui prolonge~$\Id_E$. 
\\
Par
conséquent, $u_\bullet \circ u_\bullet' : P'_\bullet \to P'_\bullet$ est un morphisme qui prolonge $\Id_E$ et est donc
homotope à $\Id_{P'_\bullet}$.
De même  $u_\bullet' \circ u_\bullet : P_\bullet \to P_\bullet$ est un morphisme 
homotope à $\Id_{P_\bullet}$.
\\
Ceci démontre le point \emph{1} sans supposer que $P\ibu$ est une \rsn minimale.

Poursuivons.
Il existe une morphisme $h_\bullet : P_\bullet \to P_\bullet$ de degré
$+1$ tel que (on supprime ci-après les $\bullet$ en indice) 

\snic{\Id_{P} - u' \circ u = \partial \circ h + h \circ \partial,}

\snii
d'où
$\Im(\Id_{P} - u' \circ u) \subseteq \Rad(\gA)P$. On en déduit que pour chaque $n\geq 0$, on a $\det(u'_n \circ
u_n) \in 1 + \Rad(\gA)$, donc $\det(u'_n \circ u_n)$ est \iv et $u' \circ u$ est
un \auto de $P$.

On a ainsi démontré les points \emph{1} et \emph{3.}  

Voyons le point \emph{2.}
Tout d'abord, 

\snic{P' = \Im u \oplus \Ker u'\simeq P\oplus \Ker u'}

\snii parce que
$u' \circ u$ est
un \auto de $P$(\footnote{Si $w=(u' \circ u)^{-1}$, $w\circ u'$ est une surjection scindée, de section $u$.}).
\\
Ainsi $P' \simeq P \oplus K$ avec $K = \Ker u'\simeq \Coker u$.

Examinons enfin le complexe exact

\snic{K_\bullet : \cdots \cdots\; \lora\; K_2 \;\vers {\delta_2}\; K_1 \;\vers {\delta_1}\; 
 K_0\; \to 0.}

\snii
Puisque les $K_i$ sont \ptfs,
$K_\bullet$ est une
somme directe de complexes triviaux 
(fait~\ref{factCompExPro}).
\end{proof}
%

\section[Idéaux \caras]{Idéaux \caras}\label{secCPROexact}

Un grand nombre de \prts concernant les \rsfs se déduisent 
de celles concernant les \rlfs en appliquant le lemme \ref{lemlorsbloclresb} 
  et le \plgref{plcc.resf}.
La fin de ce chapitre consiste essentiellement 
à passer en revue un certain nombre de ces \prts.

\medskip 
Pour que les énoncés soient plus agréables on a intérêt à étendre
la plupart des \dfns où interviennent des entiers dans le cas libre
(rang, \pdi, profondeur),
en remplaçant ces entiers par des \elts arbitraires de $\HO\gA$.

Rappelons que pour une \ali $\varphi:P\to Q$ entre deux \Amos \ptfs et pour $r\in\HO(\gA)$
on sait définir \emph{l'\idd d'ordre~$r$ de~$\varphi$}, noté $\cD_r(\varphi)$ (voir les exercices \Cref{X-21 et X-22}).
La \prt \cara la plus importante est que ces \idds se comportent bien par \eds
et qu'ils co\"{\i}ncident avec les \idds usuels lorsqu'une
\lon rend les modules libres (après \lon en le \mo $S$, l'\elt~$r$ devient alors égal à un entier
de $\HO\gA_S$).

\subsec{Rang stable}
La \dfn suivante généralise la \dfn \ref{defiRangStable}.
\begin{definition} \label{defiRangstableHO} \emph{(Rang stable)} \, 
Soit $r \in\HO\gA$.
\begin{enumerate}
\item Une \Ali $\varphi$ entre \mptfs est dite de \emph{rang stable supérieur ou égal à $ r $} si l'\idd $\cD_r (\varphi)$ est fidèle. On écrit $\rgst_\gA(\varphi)\geq r $ ou $\rgst(\varphi)\geq r $. 
\item L'\ali $\varphi$ est dite de \emph{rang stable $r $},  ce que l'on écrit $\rgst_\gA(\varphi)= r $, si en outre $\cD_{\gA,r +1}(\varphi)=\gen{0}$, \cad si $\rg_\gA(\varphi)\leq r $. On dit dans ce cas que l'\ali $\varphi$ est \emph{stable}. 
\item Un \Amo \pf $M$ est dit de \emph{rang stable $r $},  et l'on \hbox{écrit $\rgst_\gA(M)= r $}, si $\cF_{\gA,r }(M)$ est fidèle
et $\cF_{\gA,r -1}(M)=0$. Ainsi, lorsque l'on a une \pn de $M$ sous la forme
$$
\gA^{m +r }\vers{A}\gA^{n+r }\vers\pi M\to 0  \;\;\hbox{où }\;
m ,\,r  \hbox{ et } n \in\NN
$$
on a $\rgst(M)=r $ \ssi $\rgst(A)=n $.
\end{enumerate}%
\index{rang stable!d'une \ali entre \mptfs}\index{rang stable!d'une \ali entre \mptfs}\index{rang stable!d'un \mpf}
\end{definition}

Le \plgref{plcc.RangStable}, les propositions \ref{prop.rgsta.rgconstant} et \ref{propSecoStab}
et le lemme du rang stable \ref{lemSuitExStable} restent
valables en prenant les rangs dans $\HOp\gA$: il faut éventuellement remplacer les matrices
par des \alis entre \mptfs. 

\subsec{Complexes de \mptfs \gnqt exacts}

\begin{definition} \label{defiidcars} \emph{(Caractéristique d'Euler-Poincaré, \idcas)}\\
On considère un complexe borné de \mptfs 

\snic{P\ibu:\quad 0 \lora P_n \vvers{u_n}  P_{n-1} \vvvers{u_{n-1}}\;  \cdots \cdots \; \vvers{u_2}  P_1 \vvers{u_1} P_0 \;}

On note pour $k\in\lrbn$, $p_k=\rg(P_k)\in\HO(\gA)$, et l'on définit
\begin{itemize}
\item $r_{n+j}=0$ pour $j>0$,
\item  $r_n=p_n$,   puis de proche en proche,
\item   $r_k\in\HO(\gA)$ 
vérifiant $r_k=p_k-r_{k+1}$ pour $k=n-1$, \dots, $0$.
\end{itemize}
\begin{enumerate}
\item Le rang $r_0=p_0-p_{1}+p_{2}-\dots$ s'appelle la \emph{\cEP du complexe~$P\ibu$}, on le note $\chi(P\ibu)$.%
\index{caracteristique@caractéristique d'Euler-Poincaré!d'un complexe borné de modules projectifs de type fini}
\item Pour $k\geq 1$, le rang $r_k$ s'appelle le \emph{rang stable attendu de 
l'\ali~$u_k$}.%
\index{rang stable attendu!d'une \ali dans un complexe de \mptfs}
\item On suppose maintenant que $r_k\in\HOp(\gA)$ pour tout $k$.
\\
On note alors \fbox{$\fD_k=\cD_{r_k}(u_k)$} et \fbox{$\fD_{k,\ell}=\cD_{r_k-\ell}(u_k)$} pour $k\in\NN$ \hbox{et $\ell\in\ZZ$}. On appelle ces \ids les \emph{\idcas
du complexe~$P\ibu$}.%
\index{ideaux cara@\idcas!d'un complexe de modules projectifs de type fini.}%
\end{enumerate}

\end{definition}

\begin{definition} \label{defignqexact}
Un complexe est dit \emph{\gnqt exact} s'il devient 
exact   après 
\lon en des \ecr.\index{complexe!generiquement@\gnqt exact}%
\index{generiquement@\gnqt exact!complexe} 
\end{definition}

\begin{theorem} \label{lemcEPcompPTF} \emph{(Structure locale d'un complexe \gnqt exact)}\\
On considère un complexe borné de \mptfs 

\snic{P\ibu:\quad 0 \lora P_n \vvers{u_n}  P_{n-1} \vvvers{u_{n-1}}\;  \cdots \cdots \; \vvers{u_2}  P_1 \vvers{u_1} P_0 \;}

On suppose que ce complexe est \gnqt exact. 
\begin{enumerate}
\item Les \elts $r_k$ (définis en \ref{defiidcars}) sont tous $\geq 0$ dans $\HO(\gA)$.
En particulier si l'un des~$r_k$ est un entier $<0$, l'anneau est trivial.
\item Chaque \ali $u_k$ est stable ($k\geq 1$), de rang stable $r_k$,
autrement dit $\fD_k(P\ibu)$ est fidèle et $\fD_{k,-1}(P\ibu)=0$.
\\
En particulier le module $M=\Coker(u_1)$  est stable, de rang stable $r_0$, 
\hbox{i.e. l'\id $\cF_{r_0}(M)=\fD_1(P\ibu)$} est fidèle et $\cF_{r_0-1}(M)=
0$. Notons \hbox{que $r_0$} est aussi le rang de $M$ défini en \ref{thSchanuelVariation}.
\item Il existe des \ecr $s_1$, \dots, $s_m$ tels que, après \lon en un  
$s_i$ arbitraire, $P\ibu$ devient un complexe complètement trivial
de modules libres (comme dans le lemme \ref{lemSuitExStable2}) avec chaque $u_j$ de rang $r_j$ (qui est
un entier dans $\HOp(\gA[1/s_i])$).
\end{enumerate} 
\end{theorem}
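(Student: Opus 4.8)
Le plan est de me ramener au cas d'un complexe \emph{exact} de \mlrfs par \lon en un \sys d'\ecr convenable, d'invoquer les résultats déjà établis pour les complexes exacts de modules libres, puis de globaliser à l'aide des \plgcs valables pour les \syss d'\elts \cor. Je commencerais par observer que, par \gnqt exactitude, $P\ibu$ devient exact après \lon en des \ecr $t_1,\dots,t_p$. Chaque $P_k$ étant \ptf devient libre après \lon en des \eco (\tho de structure locale des \mptfs, \ref{prop Fitt ptf 1}); comme les \eco sont en particulier \cor et que l'exactitude est conservée par toute \eds plate (\ref{propEdsHomologie}), le lemme des \lons \core successives \ref{factLocCasreg} fournira un \sys d'\ecr $s_1,\dots,s_m$ tel que, sur chaque $\gA_i=\gA[1/s_i]$, tous les $P_k$ sont libres et le complexe $P\ibu$ est exact: c'est donc sur chaque $\gA_i$ une \rlf du module $M_i$, image de $M=\Coker(u_1)$.

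Sur chaque $\gA_i$, le \tho de structure \ref{thRangStRLF} pour les \rlfs s'applique directement: chaque $u_k$ y est stable de rang stable $r_k$ (alors entier $\geq 0$ de $\HO(\gA_i)$), \cade $\fD_k$ y est fidèle et $\fD_{k,-1}=0$; et le \tho \ref{thRangStRLF2} (ou, plus directement, le lemme \ref{lemSuitExStable2} appliqué après inversion des mineurs maximaux des $u_k$) montre qu'après \lon en des \ecr de $\gA_i$ le complexe devient complètement trivial, chaque $u_k$ étant de rang $r_k$. Ceci établira les versions \gui{locales} des trois points. Il restera à globaliser. Pour le point \emph{2}, l'\id $\fD_k$ sera fidèle sur $\gA$ puisqu'il l'est sur chaque $\gA_i$ (\plgc de régularité \ref{plcc.regularite}, la fidélité étant la régularité de l'\id), et l'\egt $\cD_{r_k+1}(u_k)=0$ vaudra sur $\gA$ puisque $\rg(u_k)\leq r_k$ sur chaque $\gA_i$ (\plgc pour le rang stable \ref{plcc.RangStable}); donc $\rgst_\gA(u_k)=r_k$. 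En prenant $k=1$ on obtiendra $\cF_{r_0}(M)=\fD_1$ fidèle et $\cF_{r_0-1}(M)=\fD_{1,-1}=0$, \cade $\rgst(M)=r_0$; l'identification de $r_0$ au rang de $M$ au sens de \ref{thSchanuelVariation} viendra de ce que les deux se calculent comme la somme alternée $p_0-p_1+p_2-\cdots$ et sont préservés par les \edss plates $\gA\to\gA_i$ (où $M_i$ est \lrsb). Pour le point \emph{3}, je recollerais les \ecr trivialisant le complexe sur chaque $\gA_i$ avec la suite $(s_i)$ via \ref{factLocCasreg}, d'où un \sys d'\ecr global convenable.

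Pour le point \emph{1}, chaque $r_k$ devient un \elt $\geq 0$ de $\HO(\gA_i)$; il faudra en déduire que $r_k\geq 0$ dans $\HO(\gA)$. J'utiliserais un \plgc pour $\HO$ relatif aux \syss d'\elts \cor, que l'on peut démontrer ainsi: on décompose $r_k$ sur un \sfio; si une composante portée par un \idm $e\neq 0$ était de rang $<0$, alors sur $\gA[1/(s_ie)]$ l'\elt $r_k$ se réduirait à cet entier $<0$ tout en restant $\geq 0$, forçant $\gA[1/(s_ie)]$ à être trivial, \cade $s_i^Ne=0$ pour un $N$ et tout $i$; l'\idm $e$ annulerait alors l'\id fidèle $\gen{s_1^N,\dots,s_m^N}$ (une puissance d'un \sys d'\elts \cor reste \cor, \ref{lemCoreg2}), d'où $e=0$, contradiction. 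Donc $r_k\geq 0$; en particulier un $r_k$ entier $<0$ forcera $\gA$ trivial.

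Le principal obstacle sera précisément ce passage du local au global \emph{le long d'un \sys d'\elts \cor}, et non \com: les \lons en jeu ne recouvrent pas $\gA$, et l'on ne peut donc conclure que parce que les \prts de régularité et de rang stable obéissent aux \plgcs \ref{plcc.regularite} et \ref{plcc.RangStable} dans ce cadre affaibli. Le point culminant de cette difficulté est la positivité de $r_k$ dans $\HO(\gA)$, qui, faute de recouvrement, doit être traitée par l'argument d'idempotents ci-dessus; le soin à apporter à l'assemblage du \sys d'\ecr unique (libérant simultanément la liberté des $P_k$ et l'exactitude) par \ref{factLocCasreg} en est le second ingrédient technique délicat.
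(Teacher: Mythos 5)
Votre démonstration est correcte et suit essentiellement la même voie que celle du texte: réduction au cas d'un complexe exact de \mlrfs (\thos \ref{thStrLocResFin} et \ref{thRangStRLF2}) par \lon en un \sys d'\ecr fourni par le lemme \ref{factLocCasreg}, puis globalisation au moyen des \plgcs valables pour les \syss d'\elts \cor (\ref{plcc.regularite} et \ref{plcc.RangStable}). Vous explicitez seulement des détails que le texte laisse implicites, en particulier l'argument d'idempotents établissant la positivité des $r_k$ dans $\HO(\gA)$.
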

%
\begin{proof} 
Cela résulte du cas où les $P_i$ sont \lrfs
et où le complexe est exact
(cas traité dans les \thos~\ref{thStrLocResFin} et~\ref{thRangStRLF2}). 
En effet, après \lon en des \ecr, le complexe devient un complexe de \mlrfs. Et les résultats sont assurés dès qu'ils le sont après \lon en des \ecr.  
\end{proof}
%
 
\begin{thdef} \label{thidecarnrsf}
Soit $M$  un \Amo \lorsb, et
 $P\ibu$ un complexe exact de \mptfs qui résout~$M$.
\\
 Les \idcas du complexe, $\fD_{k}(P\ibu)$ et  $\fD_{k,\ell}(P\ibu)$,
 définis en~\ref{lemcEPcompPTF}, ne dépendent que du module $M$.
 On les note $\fD_{\gA,k}(M)$ et  $\fD_{\gA,k,\ell}(M)$
et on les appelle les \emph{\idcas du \mlorsb~$M$}.%
\index{ideaux cara@\idcas!d'un module localement \rsb}%
\end{thdef}
%
\begin{proof}
Cela résulte du cas où le module est \lrsb, car après \lon en des \eco, les \rsns sont formées de \mlrfs: on applique alors le \thref{thidecarnresol}.
\end{proof}

\subsec{\Thos de Vasconcelos}
Les \thos de Vasconcelos \ref{corVascon} et \ref{corcorVascon} donnent  les résultats suivants.

\begin{theorem} \label{thVascorsf} \emph{(Annulateur et rang)}\\
Soit $M$ un \mlorsb avec $\rg(M)=r\in\HOp\gA$. 
\begin{enumerate}
\item $M$ est stable, de rang stable $r$.
\item Si $r=[e]$ pour un \idm $e$ (i.e. si $0\leq r\leq 1$),  $\Ann(M)=\gen{1-e}$.
\item Si $r\geq 1$, $M$ est fidèle.
\end{enumerate}
En particulier si l'anneau est connexe,  $\rg(M)=0\Longleftrightarrow \Ann(M)$ est fidèle, \hbox{et  $\rg(M)>0 \Longleftrightarrow M$} est fidèle.
\end{theorem}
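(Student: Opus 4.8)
Le plan est de réduire intégralement le résultat au cas déjà traité du module librement résoluble (Théorème \ref{thVascorsf}) via la localisation en éléments comaximaux réguliers, puis de recoller au moyen des principes local-globaux établis.

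Tout d'abord, je noterais que $M$ étant \lorsb, il existe des \ecr $(s_i)$ tels qu'après \lon en chacun d'eux le module devienne \lnrsb\ (lemme \ref{lemlorsbloclresb}), et donc, quitte à multiplier encore par des mineurs maximaux convenables, le module devienne libre finiment résoluble sur chaque $\gA[1/t_j]$ pour une famille $(t_j)$ d'\ecr\ (j'inscris ici que le produit d'\ids fidèles reste fidèle, de sorte que les générateurs concaténés forment bien un \sys d'\ecr). Sur chacun de ces anneaux, le théorème de Vasconcelos pour les \rlfs\ (corolaire \ref{corVascon}, ou sa version \ref{corVasco}) s'applique, et le rang $r$ devient un entier de $\HOp(\gA[1/t_j])$. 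Le point \emph{1} (stabilité, rang stable $r$) découle alors directement du \thref{lemcEPcompPTF}, point \emph{2}, qui donne déjà que $M$ est stable de rang stable $\rg(M)=r$ : c'est en fait le cœur structurel, et il ne reste qu'à examiner annulateur et fidélité.

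Pour le point \emph{2}, supposons $r=[e]$ avec $e$ \idm. En localisant en les $t_j$, sur chaque $\gA[1/t_j]$ le rang devient un entier égal à $0$ là où $e$ s'annule et à $1$ là où $e$ vaut $1$. Le théorème de Vasconcelos libre donne localement $\Ann(M)$ fidèle quand le rang local est $0$, et $\gen 0$ quand il vaut $1$. L'idée est de recoller ces informations : je montrerais que $e\,M$ est fidèle (sur $\gA[1/e]$ le rang vaut $1$ donc $M$ y est fidèle) et que $(1-e)M=0$ (sur $\gA[1/(1-e)]$ le rang vaut $0$, ce qui donne, par le théorème libre, que l'annulateur est fidèle, et comme ici le rang est exactement nul on obtient en fait l'annulation via le point sur $r=0$). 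Concrètement je m'appuierais sur le \thref{thSchanuelVariation}, point \emph{4}, qui fournit la \dcn $M=\bigoplus_j e_jM$ avec rangs locaux entiers, et sur le fait que $\Ann(M)$ se teste localement ; le \plgref{plcc.regularite} pour la régularité sert à recoller l'assertion \gui{$\Ann(M)$ est fidèle}. Le point \emph{3} ($r\geq 1 \Rightarrow M$ fidèle) s'obtient de même : $M$ devient fidèle après \lon\ en les \ecr\ $t_j$, et la fidélité d'un module se recolle par le même \plg.

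La principale difficulté que j'anticipe n'est pas la localisation elle-même mais le \textbf{recollement des énoncés à négation ou à quantificateur existentiel} : \gui{$\Ann(M)$ est fidèle} et \gui{$M$ est fidèle} sont des propriétés dont la stabilité par recollement doit être justifiée proprement. La fidélité d'un \itf\ se recolle bien par le \plgref{plccProfondeur} (cas $k=1$, qui est le \plgref{plcc.regularite}), mais $\Ann(M)$ et la condition \gui{$M$ fidèle} portent sur un module, non sur un \itf\ donné ; il faudra donc reformuler soigneusement, par exemple en exhibant un élément régulier témoin ou en travaillant élément par élément. La dernière affirmation (cas connexe) est alors immédiate : $\HO\gA=\ZZ$, donc $r=0$ correspond exactement à $e=0$ et $r>0$ à $r\geq 1$, et les points \emph{2} et \emph{3} fournissent les deux équivalences.
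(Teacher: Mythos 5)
Votre stratégie d'ensemble --- réduire au cas librement résoluble par \lon en des \eco (lemme \ref{lemlorsbloclresb}), appliquer les théorèmes de Vasconcelos libres (\ref{corVasco} et \ref{corVascon}), puis recoller par les principes \lgbs --- est exactement celle du texte, dont la démonstration se réduit à la remarque de tête de la section \ref{secCPROexact} et au renvoi à ces théorèmes. Vos points \emph{1} et \emph{3} sont corrects: le point \emph{1} est bien un cas particulier du \thref{lemcEPcompPTF} (une \rsn exacte est a fortiori \gnqt exacte), et la fidélité du point \emph{3} se recolle sans difficulté par le \plgref{plcc.regularite}. Notez seulement que votre étape préliminaire \gui{quitte à multiplier encore par des mineurs maximaux convenables} est superflue: le lemme \ref{lemlorsbloclresb} fournit directement des \eco tels qu'après \lon en chacun d'eux le module devient \lrsb.

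En revanche, votre point \emph{2} contient une faille réelle, localisée dans l'affirmation $(1-e)M=0$ obtenue \gui{via le point sur $r=0$}. Le théorème libre pour les \emph{modules} (\ref{corVasco}, point \emph{1}) donne seulement, lorsque $r=0$, que $\Ann(M)$ est fidèle; la conclusion \gui{$r=0$ implique que l'objet est nul} est le point \emph{1} du corolaire \ref{corVascon}, valable pour les \emph{idéaux} seulement (la raison étant qu'un idéal annule son propre conoyau: $\fa\subseteq\Ann(\gA/\fa)$). Pour un module général cette étape échoue, et elle ne peut pas être réparée: avec $\gA=\ZZ$ et $M=\ZZ/2\ZZ$, qui admet la \rlf $0\to\ZZ\vvers{\times 2}\ZZ\to M\to 0$, on a $\rg(M)=0=[e]$ avec $e=0$, mais $\Ann(M)=2\ZZ\neq\gen{1}$. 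Autrement dit, le point \emph{2} tel qu'il est imprimé (qui transpose aux modules l'énoncé \ref{corVasconrsf} relatif aux idéaux) ne peut pas être démontré; ce que votre argument de \lon établit réellement, c'est $e\,\Ann(M)=0$ (i.e. $\Ann(M)\subseteq\gen{1-e}$) et $\Ann(M)+\gen{e}$ fidèle --- exactement ce qu'il faut pour le \gui{En particulier} du cas connexe. La bonne réaction est donc de signaler cet écart et de démontrer l'énoncé corrigé, plutôt que de forcer la conclusion en invoquant un résultat qui ne s'applique qu'aux idéaux.
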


\begin{corollary} \label{corVasconrsf} \emph{(Vasconcelos)}\\
Soit $\fa$ un \id \lorsb et $r=\rg(\fa)$.
On a $ r=[e]$ pour un \idm $e$, et $\Ann(\fa)=\gen{1-e}$.%
\begin{enumerate}
\item Si $r=0$, $\fa=0$.
\item Si $r=1$, $\fa$ est fidèle.
\item Un \idp \lorsb est \pro.
\end{enumerate}
\end{corollary}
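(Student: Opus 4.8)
Il s'agit du corollaire \ref{corVasconrsf}, conséquence directe du théorème \ref{thVascorsf} appliqué au cas particulier d'un idéal $\fa$ vu comme sous-module de $\gA$. L'idée centrale est que le théorème \ref{thVascorsf} fournit déjà presque tout le travail : il suffit de spécialiser et de remarquer que pour un idéal, le rang est contraint à valoir $[e]$ pour un idempotent $e$.

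\emph{Première étape : justifier que $r=[e]$ pour un idempotent $e$.} Le plan est d'abord d'observer qu'un idéal $\fa$ est un sous-module de $\gA$, lequel est libre de rang $1$. On dispose donc d'une suite exacte $0\to\fa\to\gA\to\gA/\fa\to 0$. Si $\fa$ est localement librement résoluble, alors d'après le théorème \ref{lemPMP/M0} (suite exacte courte de modules localement résolubles), le module $\gA/\fa$ l'est aussi, et l'on a l'égalité $\rg(\gA)=\rg(\fa)+\rg(\gA/\fa)$ dans $\HO(\gA)$, c'est-à-dire $1=r+\rg(\gA/\fa)$. Comme $\rg(\gA/\fa)\geq 0$ dans $\HO(\gA)$ (cela résulte du théorème \ref{thSchanuelVariation}, point 4, ou déjà du fait que $\gA/\fa$ est stable de rang stable $\geq 0$), on obtient $0\leq r\leq 1$ dans $\HO(\gA)$, ce qui force $r=[e]$ pour un idempotent $e$ convenable.

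\emph{Deuxième étape : identifier l'annulateur et prouver les points 1 et 2.} Une fois $r=[e]$ acquis, on applique directement le point 2 du théorème \ref{thVascorsf} au module $M=\fa$ : on obtient $\Ann(\fa)=\gen{1-e}$. Pour le point 1, lorsque $r=0$ on a $e=0$, donc $\Ann(\fa)=\gen{1}=\gA$, ce qui signifie exactement $\fa=0$. Pour le point 2 du corollaire, lorsque $r=1$ on a $e=1$, donc $\Ann(\fa)=\gen{0}$, c'est-à-dire que $\fa$ est fidèle (ce qui est aussi donné par le point 3 du théorème \ref{thVascorsf} puisque $r\geq 1$).

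\emph{Troisième étape : le point 3 (idéal principal localement résoluble est projectif).} Pour un idéal principal $\fa=\gen{a}$ localement librement résoluble, le plan est de considérer la surjection $\gA\to\fa$, $x\mapsto xa$, dont le noyau est $\Ann(a)=\gen{1-e}$ d'après les étapes précédentes. On obtient donc $\fa\simeq\gA/\gen{1-e}\simeq e\gA$, qui est facteur direct de $\gA$, donc projectif de type fini. La seule subtilité, que je traiterais avec soin, est de vérifier que l'idempotent $1-e$ engendre bien l'annulateur de l'élément $a$ lui-même : cela découle du fait que $\cF_0(\fa)=\Ann(\fa)$ pour un idéal monogène et de l'application du théorème \ref{thVascorsf}. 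L'obstacle principal n'est donc pas dans les calculs mais dans le soin à apporter au passage au rang dans $\HO(\gA)$ pour un anneau éventuellement non connexe : tout l'intérêt de la généralisation réside dans le fait que $r$ n'est pas un entier mais une classe d'idempotent, et c'est précisément le théorème \ref{thVascorsf} qui encode cette information de façon à rendre l'argument uniforme sans disjonction de cas sur les composantes connexes.
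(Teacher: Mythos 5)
Your steps 1 and 3 are fine: the rank additivity of Theorem \ref{lemPMP/M0} applied to $0\to\fa\to\gA\to\gA/\fa\to 0$, together with the non-negativity of ranks of modules admitting a finite projective resolution, does force $0\leq r\leq 1$, hence $r=[e]$; and once $\Ann(\fa)=\Ann(a)=\gen{1-e}$ is acquired, $\gen{a}\simeq\gA/\gen{1-e}\simeq e\gA$ is projective. The gap is in your step 2, which outsources the whole of points \emph{1} and \emph{2} to point \emph{2} of Theorem \ref{thVascorsf} applied to $M=\fa$. That statement, as formulated for an arbitrary module, cannot be right: over $\gA=\ZZ$, the module $M=\ZZ/2\ZZ$ has the free resolution $0\to\ZZ\to\ZZ\to M\to 0$, so it is resolvable of rank $0=[0]$, yet $\Ann(M)=2\ZZ\neq\gen{1}$. (The closing remark of Theorem \ref{thVascorsf} itself says, for a connected ring, that rank $0$ is equivalent to $\Ann(M)$ being \emph{faithful}, not to $M=0$, in agreement with Theorem \ref{corVasco}.) The equality $\Ann(M)=\gen{1-e}$ is a phenomenon specific to ideals; it is exactly the content of the corollary you are proving, so invoking the (misstated) module-level version supplies no argument, it only displaces the burden onto a statement that is false in the generality in which you use it.

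The missing idea is the passage to the quotient, which is precisely how the paper derives the free-resolution Corollary \ref{corVascon} from Theorem \ref{corVasco} (by applying that theorem to $M=\gA/\fa$, with $r$ replaced by $1-r$), the present corollary being its transport to the projective setting via localization at comaximal elements (Lemma \ref{lemlorsbloclresb} and the local-global principles of the section). Concretely, keep your step 1, then use only the sound point \emph{3} of Theorem \ref{thVascorsf} (rank $\geq 1$ implies faithful), twice. Over $\gA[1/e]$ the ideal $\fa$ has rank $1$, hence is faithful; since annihilators of finitely generated modules commute with localization, $e\,\Ann(\fa)=0$, i.e. $\Ann(\fa)\subseteq\gen{1-e}$. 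Over $\gA[1/(1-e)]$ the module $\gA/\fa$ has rank $1-r=[1-e]$, i.e. rank $1$, hence is faithful; since $\Ann(\gA/\fa)=\fa$ (this is where the hypothesis that $\fa$ is an ideal enters), one gets $(1-e)\fa=0$, i.e. $\gen{1-e}\subseteq\Ann(\fa)$. This yields $\Ann(\fa)=\gen{1-e}$; points \emph{1} and \emph{2} follow by taking $e=0$ and $e=1$, and your step 3 then goes through unchanged.
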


\begin{theorem} \label{corcorVasconrsf} 
Si dans l'anneau $\gA$ tout \idp admet une \rsf, alors $\gA$ est \qi.
\end{theorem}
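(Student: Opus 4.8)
Le plan est de déduire ce \tho directement du corolaire de Vasconcelos projectif \ref{corVasconrsf}, exactement comme le \tho du cas libre \ref{corcorVascon} se déduisait du corolaire \ref{corVascon}. Tout le travail \agq a en effet déjà été accompli dans la théorie du rang d'un \mlorsb (\thref{thSchanuelVariation} et \thref{thVascorsf}), et il ne reste qu'une \dem formelle en quelques lignes.

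Concrètement, je fixerais un \elt $a\in\gA$ et je considérerais l'\idp $\fa=a\gA$. Par hypothèse cet \id admet une \rsf, donc c'est un \Amo \lorsb au sens de la \dfn \ref{definresol}. Le corolaire \ref{corVasconrsf} s'applique alors et fournit un \idm $e$ tel que $\rg(\fa)=[e]$ et, surtout, $\Ann(\fa)=\gen{1-e}$.

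L'observation qui termine la preuve — et le seul point réellement à vérifier — est l'\egt $\Ann(\fa)=\Ann(a)$. Elle est \imde: pour $x\in\gA$, on a $x\in\Ann(a\gA)$ \ssi $xa=0$ \ssi $x\in\Ann(a)$. On obtient donc $\Ann(a)=\gen{1-e}$, engendré par l'\idm $1-e$. Comme $a$ était arbitraire, l'annulateur de tout \elt de $\gA$ est engendré par un \idm, \cad que $\gA$ est \qi, ce qui est la conclusion voulue.

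Il n'y a ainsi pas d'obstacle véritable à cette étape: la difficulté a été entièrement absorbée par le corolaire \ref{corVasconrsf}, lui-même conséquence du fait que le rang d'un \mlorsb est un \elt bien défini de $\HO\gA$ et du comportement des \idfs sous \lon. On pourra éventuellement ajouter une remarque sur l'absence de réciproque (un anneau \qi n'a aucune raison d'avoir tout \idp \lorsb), par analogie avec la remarque \gui{La réciproque est évidente} du cas libre, qui, elle, concernait la \rlf et l'intégrité.
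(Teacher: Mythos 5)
Votre démonstration est correcte et suit exactement la voie que le texte laisse implicite: le théorème est un corolaire immédiat du corolaire de Vasconcelos \ref{corVasconrsf}, l'unique point de recollement étant l'égalité $\Ann(a\gA)=\Ann(a)$, qui donne bien $\Ann(a)=\gen{1-e}$ pour un \idm $e$, \cad que $\gA$ est \qi.

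Par contre, votre remarque finale sur \gui{l'absence de réciproque} est erronée, et elle contredit le texte, qui affirme juste après l'énoncé que la réciproque est évidente. En effet, si $\gA$ est \qi et $a\in\gA$, on a $\Ann(a)=\gen{1-e}$ avec $e$ \idm, donc $a\gA\simeq \gA\sur{\gen{1-e}}\simeq e\gA$ est un module \ptf: tout \idp admet alors une \rsf de longueur $0$. La situation est d'ailleurs la même dans le cas libre (pour $\gA$ intègre, tout \idp est nul ou isomorphe à $\gA$, donc libre), ce qui explique la remarque analogue qui suit le théorème \ref{corcorVascon}.
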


La réciproque est évidente.

\subsec{Ce qui rend exact un complexe de \mptfs}
\begin{theorem} \label{thCPROexact} \\
On considère un complexe de \mptfs

\smallskip 
\centerline{\fbox{$P\ibu:\quad 0 \to P_n \vvers{u_n}  P_{n-1} \vvvers{u_{n-1}}\;  \cdots \cdots \; \vvers{u_2}  P_1 \vvers{u_1} P_0\;$}}

\smallskip 
 Le complexe est exact \ssi $\Gr(\fD_\ell(P\ibu))\geq \ell$ pour tout $\ell$.
\end{theorem}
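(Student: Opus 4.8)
Le plan est de ramener l'énoncé au cas déjà établi des modules libres, à savoir le théorème \ref{cor3thABH1}, au moyen d'une \lon en des \eco. L'idée directrice est que les deux propriétés en présence — l'exactitude du complexe $P\ibu$ d'une part, et les minorations de profondeur $\Gr(\fD_\ell)\geq \ell$ d'autre part — obéissent chacune à un principe local-global pour la \lon en une famille d'\eco. Or tout \mptf devient libre après \lon en un nombre fini d'\eco (théorème \ref{FFRprop Fitt ptf 2}, point \emph{3}); on pourra donc se placer sur des localisés où $P\ibu$ devient un complexe de \mlrfs, y invoquer le résultat libre, puis recoller.

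Concrètement, je commencerais par fabriquer des \eco $s_1,\dots,s_m$ tels qu'après \lon en chaque $s_i$ tous les modules $P_k$ soient libres \emph{de rang constant}. C'est possible en raffinant la structure locale des \mptfs : on sépare d'abord les composantes de rang constant à l'aide d'un \sfio, puis on trivialise chaque morceau (proposition \ref{prop.rgsta.rgconstant}), en rassemblant finalement ces \lons successives en une seule famille d'\eco. Sur chaque anneau $\gA[1/s_i]$ les rangs généralisés $r_\ell\in\HO(\gA)$ deviennent alors des entiers ordinaires, et les \idds généralisés $\fD_\ell=\cD_{r_\ell}(u_\ell)$ se spécialisent en les \idds usuels du complexe libre obtenu : c'est la propriété de base de ces \idds vis-à-vis de l'\eds (voir \ref{defiidcars} et \ref{lemfDkchgbase}). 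Le complexe localisé est donc un complexe de \mlrfs auquel s'applique le théorème \ref{cor3thABH1}.

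Il reste à articuler les deux principes local-global. Pour l'exactitude : une \eds plate préserve l'homologie et un complexe est exact \ssi il l'est après \lon en des \eco (théorèmes \ref{propEdsHomologie} et \ref{propRecolHomologie}). Pour la profondeur : puisque les $s_i$ sont \com, on a $\Gr_\gA(\fD_\ell)\geq \ell$ \ssi $\Gr_{\gA[1/s_i]}(\fD_\ell)\geq \ell$ pour chaque $i$, l'implication directe résultant de la conservation de la profondeur par \eds plate (proposition \ref{propProfchgbase}) et la réciproque du cas particulier du principe local-global \ref{plccProfondeur} (avec $E=\gA$, où la condition $\gen{s_1,\dots,s_m}=\gA$ est exactement la comaximalité). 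En assemblant ces équivalences : $P\ibu$ est exact \ssi il l'est sur chaque $\gA[1/s_i]$, \ssi (cas libre) $\Gr_{\gA[1/s_i]}(\fD_\ell)\geq \ell$ pour tous $i$ et $\ell$, \ssi $\Gr_\gA(\fD_\ell)\geq \ell$ pour tout $\ell$.

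Le point de vigilance principal — davantage qu'un véritable obstacle — est le bon comportement des \idds généralisés $\cD_{r_\ell}(u_\ell)$ sous la \lon : il faut garantir que, sur chaque $\gA[1/s_i]$, ils coïncident avec les \idds du complexe libre localisé, ce qui impose précisément d'avoir rendu les rangs constants (afin que $r_\ell$ y soit un entier) avant de trivialiser. Une fois cette compatibilité acquise, aucun calcul supplémentaire n'est requis : l'équivalence cherchée se déduit terme à terme du théorème \ref{cor3thABH1}. On pourra d'ailleurs noter que l'implication directe (exactitude $\Rightarrow$ minorations de profondeur) est, par ce même procédé de \lon, le pendant projectif du théorème \ref{thRangStProfRLF}.
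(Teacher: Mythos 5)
Votre démonstration est correcte et suit essentiellement la même voie que la preuve du texte : localisation en des éléments comaximaux rendant les modules libres de rang fini, application du théorème \ref{cor3thABH1}, puis recollement en invoquant le caractère local-global de l'exactitude d'un complexe et des inégalités de profondeur. Votre rédaction ne fait qu'expliciter les détails (passage aux rangs constants, compatibilité des idéaux caractéristiques avec la localisation) que la preuve du texte, très condensée, laisse implicites.
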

%
%
\begin{proof}
Cela résulte du cas où le module est \lrsb (cf. le \thref{cor3thABH1}), car après \lon en des \eco, la \rsn est formée de \mlrfs.
Il faut noter que l'exactitude d'un complexe et les in\egts 
$\Gr(\fa)\geq  r$ sont des \prts \lgbes. 
\end{proof}
%

\bonbreak
\section{Profondeur et \pdi}
\label{secRSFRLF} 
\label{secAusBuResProj}

\subsec{Dimension projective et \ddk}

La proposition \ref{propfDk=1} est inchangée, à ceci près qu'elle
s'applique maintenant aux modules \lorsbs, et que l'on peut mettre une \eqvc
dans le point \emph{1.}

\begin{proposition} \label{propfDk=1rsf} Soit $k\geq 1$ et $M$
un \Amo \lorsb.
\begin{enumerate}
\item On a $\fD_{k}(M)=\gen{1}$ \ssi $\Pd_\gA(M)\leq k-1$. \\
Dans ce cas $\fD_{k+s}(M)=\gen{1}$ pour tout $s\geq 1$.
\item Pour tous $\ell\geq 1$, on a $\fD_{\ell}(M)\subseteq \DA\big(\fD_{\ell+1}(M)\big)$.
\end{enumerate}
\end{proposition}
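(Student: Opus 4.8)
The goal is Proposition~\ref{propfDk=1rsf}, which upgrades Proposition~\ref{propfDk=1} from the case of a free resolution to the case of a module $M$ that is merely \emph{\lorsb}. The plan is to reduce everything to the free case already established and to exploit the good behaviour of the characteristic ideals $\fD_k(M)$ under localisation. Recall that $\fD_k(M)$ is well defined for a \lorsb module by Theorem~\ref{thidecarnrsf}, and that by Lemma~\ref{lemfDkchgbase} these ideals commute with base change; in particular they commute with localisation at a \come family, which is what makes the \lgb machinery applicable.

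For the reverse implication in part~\emph{1} (namely $\Pd_\gA(M)\leq k-1 \Rightarrow \fD_k(M)=\gen1$), I would argue directly: if $M$ admits a finite projective resolution of length $\leq k-1$, then in the complex $P\ibu$ resolving $M$ the module $P_k$ is zero, hence $u_k=0$ is the zero map and its expected stable rank $r_k$ is $0$, so $\fD_k(M)=\cD_{0}(u_k)=\gen1$ by the convention on determinantal ideals of order $\leq 0$. The forward implication ($\fD_k(M)=\gen1 \Rightarrow \Pd_\gA(M)\leq k-1$) is the substantive one. Here I would invoke the localisation principle for projective dimension, the \plgref{plcc.resf}: it suffices to show $\Pd\leq k-1$ after localising at a suitable \come family. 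Since $M$ is \lorsb, Lemma~\ref{lemlorsbloclresb} produces \eco $(s_i)$ after inversion of which $M$ becomes \lnrsb, i.e.\ acquires a genuine \rlf. On each $\gA[1/s_i]$ the equality $\fD_k(M)=\gen1$ persists (characteristic ideals localise well), and now Proposition~\ref{propfDk=1}, point~\emph{1}, applies to the free resolution to give $\Pd_{\gA[1/s_i]}(M)\leq k-1$. Gluing back through \plcc{plcc.resf} yields $\Pd_\gA(M)\leq k-1$. The stability statement $\fD_{k+s}(M)=\gen1$ for $s\geq1$ then follows from the free case on each localised piece, or directly from part~\emph{2} by induction once that is proved.

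For part~\emph{2}, the inclusion $\fD_\ell(M)\subseteq \DA\big(\fD_{\ell+1}(M)\big)$, I would again reduce to the free case. The cleanest route mirrors the proof of point~\emph{2} of Proposition~\ref{propfDk=1}: let $\mu$ be a generator of $\fD_\ell(M)$, pass to the ring $\gA[1/\mu]$, and show that $\fD_{\ell+1}(M)=\gen1$ there, which is exactly the assertion that a power of $\mu$ lies in $\fD_{\ell+1}(M)$, i.e.\ $\mu\in\DA\big(\fD_{\ell+1}(M)\big)$. Over $\gA[1/\mu]$ the ideal $\fD_\ell(M)$ contains the invertible element $\mu/1$, so $\fD_\ell(M)=\gen1$; by part~\emph{1} applied on $\gA[1/\mu]$ this gives $\Pd\leq \ell-1$, whence $\fD_{\ell+1}(M)=\gen1$ by part~\emph{1} again. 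Thus a power of every generator of $\fD_\ell(M)$ lies in $\fD_{\ell+1}(M)$, giving the claimed radical inclusion.

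The main obstacle I anticipate is purely bookkeeping rather than conceptual: one must be careful that $\fD_k(M)$ is genuinely intrinsic to $M$ (Theorem~\ref{thidecarnrsf}) and that its formation commutes with the localisations used, so that statements proved over the localised rings can be glued. The subtle point is that in the \lorsb setting the ranks $r_k$ live a priori in $\HO(\gA)$ rather than in $\NN$, so one should check that after the localisations of Lemma~\ref{lemlorsbloclresb} these ranks become honest integers and the free-resolution arguments of Proposition~\ref{propfDk=1} apply verbatim. Once this compatibility is in place, the whole proof is a routine transfer of the free case via \plcc{plcc.resf} and the localisation behaviour of characteristic ideals, with no new homological input required.
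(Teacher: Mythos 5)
Your proof is correct and follows exactly the route the paper intends: the paper gives no separate argument for this proposition, simply declaring it the transfer of Proposition~\ref{propfDk=1} to the \lorsb setting, and your write-up (localise via the lemme \ref{lemlorsbloclresb} to reach the free case, apply \ref{propfDk=1}, glue back with the \plgref{plcc.resf} and the good localisation behaviour of the $\fD_k$ guaranteed by le \thref{thidecarnrsf}) is precisely that transfer made explicit. Your treatment of the new converse direction (computing $\fD_k(M)$ from a length-$(k-1)$ resolution padded with zeros, so $r_k=0$ and $\fD_k=\cD_0(u_k)=\gen{1}$) and of point~\emph{2} (inverting a generator $\mu$ of $\fD_\ell$ and applying point~\emph{1} over $\gA[1/\mu]$) matches the paper's own argument pattern for \ref{propfDk=1}.
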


On en déduit une version (à peine) \gnee du 
corolaire~\ref{corthDimKrullGr}\sibook{ et de la proposition \ref{propDimAri}}. 

\siarticle{
\begin{theorem} \label{corthDimKrullGrrsf}
Soit $\gA$ un anneau de \ddk  $\leq n$. 
Si un \Amo $M$ est \lorsb, il  est \lonrsb, \cad
$\Pd(M)\leq n$. 
\end{theorem}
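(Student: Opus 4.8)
The plan is to reproduce, almost verbatim, the argument used for the \mlrsb analogue \thref{corthDimKrullGr}, the single adaptation being that the depth estimate on the characteristic ideals must be drawn from the general exactness criterion for complexes of \mptfs rather than from the free-resolution computation. First I would note that, $M$ being \lorsb, \thref{thidecarnrsf} makes the characteristic ideals $\fD_{\gA,\ell}(M)$ well defined, \cad independent of the particular finite projective resolution chosen to compute them.

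Next, fix a finite projective resolution $P_{\bullet}$ of $M$. This is an exact complex of \mptfs, so \thref{thCPROexact}, read in the direction where exactness yields the inequalities, gives $\Gr\big(\fD_{\ell}(M)\big) \geq \ell$ for every $\ell$; in particular $\Gr\big(\fD_{n+1}(M)\big) \geq n+1$. Since each $\fD_{\ell}(M)$ is a determinantal ideal it is an \itf, so $\fD_{n+1}(M)$ is a finitely generated ideal of profondeur $> n$ sitting in a ring of Krull dimension $\leq n$; \thref{thDimKrullGr}, applied with $r = n$, then forces $1 \in \fD_{n+1}(M)$, i.e. $\fD_{n+1}(M) = \gen{1}$. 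It remains only to invoke \thref{propfDk=1rsf}, whose first item asserts the equivalence of $\fD_{n+1}(M) = \gen{1}$ with $\Pd_\gA(M) \leq n$, and the conclusion follows.

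I do not expect a genuine obstacle here: the substantial work is already encapsulated in the earlier statements, and the present theorem is a three-line deduction. The one place to be attentive is that \thref{thCPROexact} and the invariance \thref{thidecarnrsf} must be used for arbitrary finite projective resolutions, not merely free ones, so that $\fD_{n+1}(M)$ is simultaneously well defined and of depth at least $n+1$. If one preferred to avoid the projective exactness criterion altogether, an alternative would be to localize: by \thref{lemlorsbloclresb} there exist comaximal $s_i$ for which each $M[1/s_i]$ is \mlrsb, each $\gA[1/s_i]$ still has Krull dimension $\leq n$, the free version \thref{corthDimKrullGr} yields $\Pd \leq n$ over every $\gA[1/s_i]$, and the local-global principle \thref{plcc.resf} reassembles these into $\Pd_\gA(M) \leq n$; but the direct route above is the shorter one.
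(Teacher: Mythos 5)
Votre démonstration est correcte et suit pour l'essentiel la voie du papier : celui-ci laisse le théorème comme déduction immédiate (« On en déduit ») de la proposition \ref{propfDk=1rsf}, l'argument sous-entendu étant exactement celui que vous écrivez, à savoir la transposition de la preuve du théorème \ref{corthDimKrullGr} — $\Gr\big(\fD_{n+1}(M)\big)\geq n+1$ par exactitude de la résolution projective (théorèmes \ref{thidecarnrsf} et \ref{thCPROexact}), puis $1\in\fD_{n+1}(M)$ par le théorème \ref{thDimKrullGr}, et conclusion par \ref{propfDk=1rsf}. Votre variante par localisation est également cohérente avec la manière dont le papier établit lui-même les énoncés projectifs de cette section (réduction au cas libre par localisation en des éléments comaximaux).
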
}
\sibook{
\begin{theorem} \label{corthDimKrullGrrsf}
Soit $\gA$ un anneau de dimension \ari  $\leq n$ (a fortiori s'il est de \ddk $\leq n$). 
Si un \Amo $M$ est \lorsb, il  est \lonrsb, \cad
$\Pd(M)\leq n$. 
\end{theorem}}

\subsec{Dimensions et profondeur vues dans $\HO\gA$}

Soit $E$ un \Amo \lorsb et $\fa$ un \itf.

Soit  $\rho$ un \elt arbitraire de $\HOp(\gA)$, 
 $\rho=\sum_{j=0}^{m}\rho_j[e_j]$
avec  un \sfio $(e_j)_{j\in\lrb{0..m}}$  et $\rho_0<\rho_1<\dots<\rho_m$ dans $\NN$. 
\\
Notons $\gA_j=\gA[1/e_j]$, et rappelons que $E[1/e_j]$ s'identifie à $e_jE$.
\\
On rappelle que  $\Pd_{\gA}(E)=-1$ signifie $E=0$.

\begin{definition} \label{defiPrGrHO} On utilise les notations ci-dessus.
\begin{enumerate}
\item On dit que $\Kdim {\gA}\leq \rho-1$ si pour $j\in\lrb{0..m}$,  $\Kdim {\gA_j}\leq \rho_j-1$(\footnote{Cette \dfn généralise le \plg \ref{FFRthDdkLoc}.}). 
\item On dit que $\Pd_{\gA}(E)\leq \rho-1$ si pour $j\in\lrb{0..m}$,  $\Pd_{\gA_j}(E)\leq \rho_j-1$(\footnote{Cette \dfn à mettre en rapport avec le \plgref{plcc.resf}, point~\emph{1}}).
\item On dit que $\Gr_{\gA}(\fa,E)\geq \rho$  si pour $j\in\lrb{1..m}$, $\Gr_{\gA_j}(\fa,E)\geq \rho_j$(\footnote{Cette \dfn à mettre en rapport avec le \plgref{plccProfondeur}.}).
\item On définit  $\fD_{\gA,\rho}(E)=\sum_{j=1}^{m}\fD_{\gA_j,\rho_j}(E)$. 
\end{enumerate}
\end{definition}

Avec ces \dfns on peut remplacer dans la proposition \ref{propfDk=1rsf}
les entiers $k$ et $\ell$,  et dans le \thref{corthDimKrullGrrsf}
l'entier $n$, par des \elts  de $\HOp(\gA)$.

\subsec{Le \tho d'\ABH}

Le \thref{thABH1} donne le \tho suivant.

\begin{theorem} \label{thABH1rsf} \emph{(\ABH, direct, bis)}\\
Soient $\kappa$ et $\mu\in\HOp\gA$, $\fa$ un \itf de $\gA$, et $E$ un \Amo \lorsb.
Si $\Pd(E)\leq \mu$ et $\Gr(\fa)\geq \kappa+\mu$
alors  $\Gr(\fa,E)\geq \kappa$.
\end{theorem}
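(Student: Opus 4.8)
The statement to prove is Theorem~\ref{thABH1rsf}, the \emph{generalized direct Auslander--Buchsbaum--Hochster inequality}, where the integers of Theorem~\ref{thABH1} are replaced by elements $\kappa,\mu\in\HOp\gA$. The plan is to reduce the generalized ($\HOp\gA$-valued) statement back to the integer-valued Theorem~\ref{thABH1} by exploiting the fundamental system of orthogonal idempotents that any element of $\HOp\gA$ carries, and by localizing at each of these idempotents so that the generalized ranks become ordinary integers.

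First I would write $\mu=\sum_{j=0}^{m}\mu_j[e_j]$ for a fundamental system of orthogonal idempotents $(e_0,\dots,e_m)$ with $\mu_0<\mu_1<\dots<\mu_m$ in $\NN$, and set $\gA_j=\gA[1/e_j]$. By Definition~\ref{defiPrGrHO}, the hypothesis $\Pd_\gA(E)\leq\mu$ means precisely $\Pd_{\gA_j}(E)\leq\mu_j$ for each $j$, and I must establish $\Gr_\gA(\fa,E)\geq\kappa$, which (again by Definition~\ref{defiPrGrHO}, now applied to $\kappa=\sum_j\kappa_j[e_j']$ over its own fundamental system) is itself a statement about each idempotent component. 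The natural move is therefore to pass to each ring $\gA_j$ and argue componentwise. Here I would use that profondeur behaves well under localization at comaximal (indeed under flat) extensions: since the $e_j$ are complementary idempotents, $\gA\simeq\prod_j\gA_j$, and both $\Gr$ and $\Pd$ split across this finite product (the product decomposition is covered by Fact~\ref{lemSdirEtProf} on the module side and by the behaviour of \rsfs under the flat base change $\gA\to\gA_j$, Theorem~\ref{propRLFRLF}).

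The core of the argument is then, on each factor $\gA_j$, to apply the already-proven integer version Theorem~\ref{thABH1}: there $E$ admits a \rlf of length $m_j=\mu_j$ (because $\Pd_{\gA_j}(E)\leq\mu_j$ means $E$ is $\mu_j$-\lorsb, and after localizing at comaximal elements one reduces to a genuine \rlf via Lemma~\ref{lemlorsbloclresb} and the local-global principle \ref{plcc.resf}), and from $\Gr_{\gA_j}(\fa)\geq\kappa_j+\mu_j$ one concludes $\Gr_{\gA_j}(\fa,E)\geq\kappa_j$. One subtlety I would handle carefully is that Theorem~\ref{thABH1} is stated for a \emph{free} finite resolution, whereas after base change $E$ is only known to have a \emph{finite projective} resolution; this is exactly why Lemma~\ref{lemlorsbloclresb} is invoked, so that after a further localization at comaximal elements the resolution becomes libre, the integer theorem applies, and the resulting profondeur inequality is glued back by the local-global principle \ref{plccProfondeur} for profondeur.

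The main obstacle I anticipate is bookkeeping the two independent fundamental systems of idempotents attached to $\kappa$ and to $\mu$: the componentwise hypothesis $\Gr_\gA(\fa)\geq\kappa+\mu$ must be matched against the integer inequality $\kappa_j+\mu_j$ on the correct common refinement of the two idempotent decompositions. The clean way to manage this is to first pass to the common refinement $(f_i)$ of the two systems, on each $\gA[1/f_i]$ replace $\kappa$ and $\mu$ by the ordinary integers they restrict to, and then apply Theorem~\ref{thABH1} verbatim; the reassembly is then routine because all three notions $\Pd$, $\Gr$, and $\fD_k$ are compatible with localization at comaximal elements. Once the componentwise inequalities $\Gr_{\gA[1/f_i]}(\fa,E)\geq\kappa_i$ are in hand, Definition~\ref{defiPrGrHO}(3) is exactly the statement $\Gr_\gA(\fa,E)\geq\kappa$, completing the proof.
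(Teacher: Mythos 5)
Your proposal is correct and follows exactly the route the paper intends: the paper's entire ``proof'' is the sentence that Theorem~\ref{thABH1} gives the result, relying on the strategy announced at the start of the section (reduce statements about \rsfs to the free case via le lemme \ref{lemlorsbloclresb} et le \plgref{plcc.resf}) together with the componentwise Definition~\ref{defiPrGrHO}. Your treatment of the common refinement of the two idempotent systems and the final gluing via le \plgref{plccProfondeur} simply makes explicit the bookkeeping the paper leaves implicit.
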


Voici une \gnn du \tho réciproque \ref{thABH2}.

%
\begin{theorem} \label{thABH2rsf}  \emph{(\ABH, réciproque, bis)}\\
Soit $\kappa\in\HOp\gA$ et $\fa$ un \itf de $\gA$.  On considère un \Amo $E$ 
qui admet une \rsf de longueur $m\geq 1$
$$ 
    0\to P_m\vvers{u_m} P_{m-1}\lora \cdots\cdots\cdots \vvers{u_1} P_0\vvers \pi E\to 0
$$
avec  $\rg(P_j)\geq 1$ dans $\HO\gA$ pour chaque~$j$. \\
\hbox{Si  $u_m(P_m)\subseteq \fa P_{m-1}$}, et \hbox{si $\Gr(\fa,E)\geq \kappa$}, \hbox{alors
$\Gr(\fa)\geq \kappa+m$}. 
\end{theorem}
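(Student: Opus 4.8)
Le théorème \ref{thABH2rsf} est la généralisation au cas des modules localement résolubles (avec rangs dans $\HOp\gA$) du théorème \ref{thABH2}, qui concernait les résolutions libres finies et des entiers ordinaires. Le plan est donc de \emph{ramener l'énoncé au cas déjà démontré} en localisant en une famille d'éléments comaximaux réguliers bien choisis, puis de recoller l'information sur la profondeur grâce au principe local-global \ref{plccProfondeur}. La difficulté principale sera de gérer proprement le passage des rangs généralisés $\kappa\in\HOp\gA$ à des entiers après localisation, et de s'assurer que l'hypothèse $u_m(P_m)\subseteq\fa P_{m-1}$ (c'est-à-dire $\cD_1(u_m)\subseteq\fa$) se transporte correctement.

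Voici les étapes que je suivrais. D'abord, j'écrirais $\kappa=\sum_{j=0}^{s}\kappa_j[e_j]$ pour un système fondamental d'idempotents orthogonaux $(e_j)$, et je me placerais sur chacun des anneaux $\gA_j=\gA[1/e_j]$. L'énoncé $\Gr_\gA(\fa,E)\geq\kappa$ signifie par définition \ref{defiPrGrHO} que $\Gr_{\gA_j}(\fa,E)\geq\kappa_j$ pour chaque $j$, et de même la conclusion voulue $\Gr_\gA(\fa)\geq\kappa+m$ se lit comme $\Gr_{\gA_j}(\fa)\geq\kappa_j+m$ pour chaque $j$. On peut donc raisonner séparément sur chaque $\gA_j$, ce qui permet de supposer $\kappa$ \emph{entier} (dans $\HOp\gA$, donc un entier usuel si l'anneau est non trivial, sinon tout est trivialement vrai). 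Ensuite, puisque les modules $P_j$ sont projectifs de type fini de rang $\geq 1$ mais pas nécessairement libres, j'utiliserais le théorème de structure locale des modules projectifs de type fini \ref{prop Fitt ptf 1} (point 3): après localisation en une famille d'éléments comaximaux, tous les $P_j$ deviennent libres finis, et la résolution projective finie devient une résolution libre finie. Il faut vérifier que ces éléments comaximaux peuvent être pris réguliers ou, à défaut, appliquer le principe local-global pour la profondeur \ref{plccProfondeur} qui autorise des localisations en des systèmes d'éléments $k$ fois corégulièrs plutôt qu'en des éléments comaximaux ordinaires.

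Une fois sur un localisé où la résolution est libre finie, les hypothèses du théorème \ref{thABH2} sont vérifiées: les $L_j$ (images des $P_j$) sont libres de rang $>0$, l'inclusion $u_m(L_m)\subseteq\fa L_{m-1}$ persiste puisque $\cD_1(u_m)\subseteq\fa$ est stable par extension des scalaires (lemme \ref{lemfDkchgbase}), et $\Gr(\fa,E)\geq\kappa$ reste vrai après localisation plate (proposition \ref{propProfchgbase}, point 1). Le théorème \ref{thABH2} donne alors $\Gr(\fa)\geq\kappa+m$ sur ce localisé. Comme cette conclusion vaut après localisation en chacun des éléments d'une famille convenable (comaximaux, ou au moins formant un système suffisamment régulier), le principe local-global \ref{plccProfondeur} pour la profondeur d'une suite permet de remonter à $\Gr_{\gA_j}(\fa)\geq\kappa_j+m$ sur $\gA_j$, et donc globalement à $\Gr_\gA(\fa)\geq\kappa+m$ dans $\HOp\gA$.

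L'obstacle principal que j'anticipe est précisément la \emph{nature des localisations} dans l'étape de trivialisation: le théorème \ref{prop Fitt ptf 1} fournit des éléments comaximaux $s_{h,j}$ qui rendent $P$ libre, mais pour appliquer le principe local-global \ref{plccProfondeur} sur la profondeur il faut une famille qui soit non seulement comaximale mais compatible avec le recollement des inégalités de profondeur — et ici le principe \ref{plccProfondeur} fonctionne bien avec des systèmes $k$ fois régulièrs, ce qui est plus souple. Il faudra donc articuler soigneusement le passage \gui{localisation pour trivialiser les projectifs} (comaximaux) avec le recollement \gui{profondeur} (systèmes corégulièrs), en vérifiant que les éléments comaximaux sont a fortiori $k$ fois corégulièrs pour le $k$ pertinent. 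L'alternative, plus directe, serait d'invoquer la machinerie locale-globale à idéaux premiers (\paref{MethodeIdeps}) pour réduire au cas local décomposé où la résolution libre minimale existe, mais cela demande de vérifier que les énoncés de profondeur relèvent de cette machinerie, ce qui est plus délicat pour une propriété du type $\Gr(\fa)\geq k+m$.
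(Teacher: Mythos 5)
Votre démonstration est correcte et suit exactement la voie que le papier sous-entend: le théorème \ref{thABH2rsf} y est énoncé sans preuve explicite, comme conséquence du théorème \ref{thABH2} obtenue en décomposant $\kappa$ selon un système fondamental d'idempotents orthogonaux, en localisant en des éléments comaximaux qui rendent les $P_j$ libres, puis en recollant, selon le schéma général annoncé au début de la section \ref{secCPROexact}. Signalons seulement que l'obstacle que vous anticipez n'en est pas un: des éléments comaximaux forment un système $k$ fois corégulier pour tout $k$ (c'est le cas $k=\infty$ de la définition \ref{defiProfMon}), et le principe local-global \ref{plccProfondeur} couvre explicitement ce cas particulier, de sorte qu'aucune articulation délicate entre la localisation qui trivialise les projectifs et le recollement de la profondeur n'est nécessaire.
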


\sibook{Concernant cette réciproque, la version la plus 
\gnle avec  $m$ dans $\HOp\gA$ est un peu rébarbative. Voir l'exercice~\ref{exoPdimHO}.}


\entrenous{
Nombres de Betti locaux?

En \clama on doit avoir pour un anneau arbitraire que les nombres de Betti de $\gA_\fp$
sont des fonctions localement constantes sur le spectre de Zariski
(muni de la topologie constructible?).

Une version \cov de ce résultat (éventuellement modifié convenablement)
doit être la suivante:

\emph{Soit $M$ un \Amo qui admet une \rsf. Il existe une famille finie $(a_i)_{i\in\lrbm}$
dans $\gA$ telle que, en notant, pour $J\in\cP_m$,  $J'=\lrbm\setminus J$, après \lon en chacun des $\cS(J;J')$ le module admet une \rsn libre minimale.}

La question est de savoir 

-- primo, si c'est bien vrai, cela en a bien l'air, il suffit de suivre la \dem
que sur un \alo \dcd toute \rlf peut être raccourcie en une \rsn minimale

-- secundo, si c'est intéressant, \cad quels résultats plus concrets on peut
en déduire.

De toute manière il me semble plus pertinent de se demander ce qu'il
reste dans le cas d'un anneau arbitraire du \tho qui dit que
dans le cas local, pour deux \rsns
libres minimales d'un même module, les matrices sont deux à deux \eqves.
Cela a l'air nettement plus fort que l'invariance des \idcas.
}

\section{Structures multiplicatives}\label{secStrMult}

Dans cette section on donne la version \gui{locale} 
de \dfns et résultats obtenus dans le chapitre \ref{chapCayley}.

\subsec{Modules localement de MacRay}

\begin{lemma} \label{lem0MacRaersf} 
Si un \id s'écrit sous la forme $\fa \ffg$ avec $\fa$ \tf, de profondeur $\geq 2$
 et $\ffg$ est \pro de rang $1$, cette écriture est unique. 
\end{lemma}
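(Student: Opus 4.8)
If an ideal $\fb$ can be written $\fb = \fa\,\ffg$ where $\fa$ is a finitely generated ideal of profondeur $\geq 2$ and $\ffg$ is a projective ideal of rank $1$, then both factors are uniquely determined by $\fb$. The plan is to show that $\ffg$ is recoverable from $\fb$ alone, after which $\fa$ is forced by the cancellation $\fa = (\fb : \ffg)$ or directly by the fact that $\ffg$ is invertible in an appropriate sense.

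First I would reduce to a local situation. A projective ideal of rank $1$ is locally principal: by the structure theorem for \mptfs (\thref{prop Fitt ptf 1}, point \emph{3}), there are \eco $s_1,\dots,s_n$ such that after \lon in each $s_i$ the ideal $\ffg$ becomes free of rank $1$, hence principal, say $\ffg_{s_i} = \gen{g_i}$ with $g_i$ \ndz (a rank-$1$ projective ideal is fidèle and its generator is \ndz, cf. the Vasconcelos corollary \ref{corVascon}). On each $\gA[1/s_i]$ we then have $\fb_{s_i} = \fa_{s_i}\,\gen{g_i}$ with $\fa_{s_i}$ still of profondeur $\geq 2$ (profondeur is stable under \lon when the ideal is \tf, by the \plgref{plccProfondeur}), so locally we are in the situation of a single invertible-up-to-unit factor $g_i$. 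Here $g_i$ is, up to a unit, the \emph{pgcd fort} of $\fb_{s_i}$: indeed $\fb_{s_i}/g_i = \fa_{s_i}$ has profondeur $\geq 2$, so by \propdefref{defiGCDFORT} and the characterization via profondeur $\geq 2$, $g_i$ is a strong gcd of $\fb_{s_i}$.

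The key step is then the \emph{uniqueness of the strong gcd} and its compatibility with \lon. By point \emph{1} of lemma \ref{lemGCDFORT}, two strong gcds of the same finite system are associated, so $\ffg_{s_i} = \gen{g_i}$ is determined by $\fb_{s_i}$ as the ideal generated by any strong gcd. What makes the global statement work is the \plg for divisibility and strong gcd, \propref{propProf2div}: the system $(s_1^\NN,\dots,s_n^\NN)$ is $\infty$-fois \ndz, a fortiori two-fold \ndz, so being a strong gcd is a local property that glues. Concretely, I would argue that the ideal $\ffg$, being projective of rank $1$ and agreeing locally with $\gen{\text{strong gcd of }\fb}$, is determined by $\fb$; since $\fb$ determines its strong-gcd ideal locally and these glue by \propref{propProf2div}, $\ffg$ is unique. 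Then $\fa$ is unique because $\ffg$ is invertible (cancellable): from $\fa\ffg = \fa'\ffg$ one recovers $\fa = \fa'$ by localizing where $\ffg$ is principal generated by a \ndz element and cancelling.

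The main obstacle I anticipate is the gluing of the factor $\ffg$ itself — not merely showing that each local factor is a strong gcd, but checking that the candidate global ideal $\ffg$ (given in the hypothesis) is forced to coincide with the locally reconstructed one. The subtlety is exactly the one flagged in the \textbf{hum} remark after \thref{thMacRae}: one must patch the local generators $g_i$, which agree only up to units $u_{ij} \in \gA_{ij}^\times$ on the overlaps, and a naive patching would require a cocycle condition $u_{ij}u_{jk}=u_{ik}$ that need not hold without integrality. The way around this is to avoid reconstructing $\ffg$ by hand and instead invoke the profondeur-$2$ \plg \propref{propProf2div} directly on the \emph{ideals} $\fb$ and $\ffg$: divisibility relations among ideals, rather than chosen generators, glue cleanly because the $s_i$ form a two-fold \ndz system. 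I expect the cleanest route is therefore to phrase everything in terms of the equality of \ids $\ffg = (\fb:\fa)$ and $\fa=(\fb:\ffg)$ and verify these colon-ideal identities locally, where they become routine, then glue via \propref{propProf2div}.
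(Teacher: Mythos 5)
Your core argument (paragraphs two and three) is correct and is essentially the paper's own proof: localize at comaximal elements so that $\ffg$ becomes principal with a regular generator $g_i$; since $\fa$ has depth $\geq 2$ it admits $1$ as strong gcd, so $g_i$ is a strong gcd of the product ideal $\fb_{s_i}$; strong gcds are unique up to units, hence both factors are determined in each localization, and therefore globally.

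The only thing to correct is your final paragraph: the obstacle you anticipate there does not arise for a \emph{uniqueness} statement. You are not asked to reconstruct a global $\ffg$ by patching local generators (that is the \emph{existence} problem, where the cocycle condition $u_{ij}u_{jk}=u_{ik}$ of the paper's remark after the MacRae local-global theorem genuinely bites); you are given two global ideals with $\fa\,\ffg=\fa'\,\ffg'$, and it suffices to check that $\ffg$ and $\ffg'$ (then $\fa$ and $\fa'$, by cancelling the regular generator) coincide after localization at a common comaximal family (take the products $s_is'_j$ of the two families making $\ffg$ and $\ffg'$ principal), since two ideals with equal localizations at comaximal elements are equal. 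Note also that your colon-ideal fallback $\ffg=(\fb:\fa)$ and $\fa=(\fb:\ffg)$ is circular as a standalone uniqueness argument, since each identity presupposes the other factor; what makes the proof work is precisely the characterization of $\ffg$ locally in terms of $\fb$ \emph{alone} — as the strong-gcd ideal of $\fb_{s_i}$ — which your earlier paragraphs already establish, so the detour is unnecessary.
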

%
\begin{proof}
Après \lon en des \eco $(s_1,\dots,s_n)$ l'\id $\ffg$ devient libre de rang $1$, \cad principal engendré par un \elt \ndz~$g_i$. Puisque $\fa$ est de profondeur
$\geq 2$, il admet $1$ pour pgcd fort, et \hbox{l'\id $g_i\fa$} admet $g_i$ comme pgcd fort,
ce qui implique que $g_i$ et $\fa$ sont déterminés par
l'\id $\fa$ vu dans $\gA[1/s_i]$. Puisque $\ffg$  et $\fa$ sont  connus localement, il sont connus globalement. 
\end{proof}
%
\begin{defi} \label{defiloMacRae} 
 Un \Amo $E$ est appelé un \emph{\mlMR} s'il est \pf et si l'\id $\ff_0=\cF_0(E)$ peut s'écrire sous la forme $\fa \ffg$ avec $\fa$ \tf,  $\Gr(\fa)\geq 2$
 et $\ffg$  \pro de rang $1$.
\index{module!de MacRae}\index{MacRae!module localement de ---}\index{MacRae!invariant de ---}\index{invariant de MacRae}
\\
Dans ce cas l'\id $\ffg$ (qui ne dépend que de $E$) est appelé
\emph{l'invariant de MacRae du module $E$}, et il est noté $\fG(E)$.
\end{defi}

Le rapport avec les \mMRs définis en section  \ref{secMacRae} est donné par le lemme et le \plg qui suivent.

\begin{lemma} \label{lemMacRaersfloc} \emph{(Le cas local)}
Sur un \alo un module est \lMR \ssi il est \MR.  
\end{lemma}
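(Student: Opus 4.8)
The statement to prove is Lemma~\ref{lemMacRaersfloc}: over a local ring, a module is \lMR (locally of MacRae) if and only if it is \MR (of MacRae). The plan is to compare the two definitions directly, exploiting the fact that a local ring has no nontrivial idempotents, so every finitely generated projective module of rank~$1$ is free of rank~$1$, hence principal generated by a regular element.

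First I would recall both definitions explicitly. A module $E$ is \MR (Definition~\ref{defiMacRae}) when it is \pf and its zeroth Fitting ideal $\ff_0=\cF_0(E)$ admits a regular element $e$ with $\ff_0\subseteq\gen{e}$ and $\Gr(\ff_0/e)\geq 2$; equivalently $\ff_0=\gen{e}\cdot\fb$ where $\fb=\ff_0/e$ is a \tf ideal of profondeur $\geq 2$. A module $E$ is \lMR (Definition~\ref{defiloMacRae}) when it is \pf and $\ff_0$ factors as $\fa\,\ffg$ with $\fa$ \tf of profondeur $\geq 2$ and $\ffg$ projective of rank~$1$. So the heart of the matter is the comparison between \gui{$\ff_0=\gen{e}\fb$ with $e$ regular and $\Gr(\fb)\geq 2$} and \gui{$\ff_0=\ffg\,\fa$ with $\ffg$ projective of rank~$1$ and $\Gr(\fa)\geq 2$}.

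The key step is to observe that over a local ring any \mptf of rank~$1$ is free of rank~$1$. This is a direct consequence of the structure theorem for \mptfs (Theorem~\ref{prop Fitt ptf 1}, point~\emph{3}): a \mptf becomes free after \lon at finitely many comaximal elements, and over a local ring comaximal elements cannot all lie in the maximal ideal, so one of them is invertible and the module is already free. Thus $\ffg\simeq\gA$ as an ideal, i.e. $\ffg=\gen{g}$ with $g$ a \emph{regular} element (regularity follows because $\ffg$, being invertible of rank~$1$, is faithful and hence contains a \ndz generator; alternatively the structure forces $g\in\Reg(\gA)$). Conversely, the principal ideal $\gen{e}$ generated by a regular element $e$ is itself a \mptf of rank~$1$ (it is free of rank~$1$ via $x\mapsto ex$, an injection since $e$ is \ndz). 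Once this identification is in place, the two factorizations coincide: given the \MR data $(e,\fb)$, set $\ffg=\gen{e}$ and $\fa=\fb$ to obtain \lMR data; given the \lMR data $(\ffg,\fa)$, use $\ffg=\gen{g}$ with $g$ regular and set $\fa$ as is to recover \MR data. Uniqueness of the factorization (Lemma~\ref{lem0MacRaersf}) guarantees that the invariant $\fG(E)$ is the same object computed either way.

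I expect the main obstacle to be purely bookkeeping rather than conceptual: one must check carefully that the regularity conditions match up, in particular that in the \lMR definition the rank~$1$ projective generator $g$ is indeed a \ndz element of $\gA$ (so that $\gen{g}$ plays the role of $\gen{e}$), and that the profondeur-$\geq 2$ hypothesis on $\fa$ transfers correctly. Since profondeur is insensitive to multiplication by a regular principal factor in the precise sense used throughout Section~\ref{secProfondeur}, and since $\Gr(\fb)\geq 2$ is exactly the condition making $\fb$ have strong gcd~$1$, both directions reduce to the observation that over a local ring \gui{projective of rank~$1$} and \gui{principal generated by a regular element} are synonymous. No genuine computation is required beyond invoking the structure theorem for \mptfs and the uniqueness lemma already established.
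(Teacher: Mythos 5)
Your proposal is correct and is exactly the argument the paper intends: the lemma is stated there without proof, being considered immediate from the fact that over a local ring every rank-one projective module is free, so that the rank-one projective factor $\ffg$ of Definition~\ref{defiloMacRae} is principal, generated by a regular element (a basis element), and conversely $\gen{e}$ with $e$ regular is free of rank one — which identifies the two factorizations term by term. The only slight imprecision is your first justification of the regularity of $g$ (\og fidèle donc contient un générateur régulier \fg{} is not a valid inference for a general ideal); it does work here because the ideal is principal, and your alternative justification (a basis element of a free rank-one ideal is regular, by Théorème~\ref{FFRprop quot non iso} if one starts from an arbitrary generator) is the clean one.
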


\begin{plcc}\label{plcc.MRmsf} \emph{(Pour les \mlMRs)}
Soient $(S_1,\dots,S_n)$ des \moco et $E$ un \Amo.
\begin{enumerate}
\item \label{i1plcc.MRmsf} Le module $E$ est \lMR \ssi
chacun des modules obtenus après \lon en $S_i$ est \lMR.
\item \label{i2plcc.MRmsf} Si $E$ est \lMR, il existe des \eco tels qu'après \lon en chacun
de ces \elts, le module est \MR.
\end{enumerate}
\end{plcc}

On en déduit que les \thos \ref{thMacRae2} et \ref{thMacRae3}
concernant le comportement des invariants de MacRae dans les suites exactes
de \mMRs peuvent se recopier sans modification pour les  suites exactes
de \mlMRs.

\subsec{Complexes localement de Cayley}
Soient  $(r_0,r_1,\dots,r_m,r_{m+1})$ dans $\HOp\gA$, avec \fbox{$m\geq 2$ et  $r_{m+1}=0$}, et un complexe descendant de \mptfs

\smallskip 
\centerline{\fbox{$P\ibu:\quad \quad 0 \to P_m \vvers{u_m}  P_{m-1} \vvvers{u_{m-1}}\;  \cdots \cdots \; \vvers{u_2}  P_1 \vvers{u_1} P_0$}, 
}

\smallskip 
avec \fbox{$\rg(P_k)= {r_{k+1}+r_k}$}, $k\in\lrb{0..m}$,  
(donc $\rg(P_m)={r_m}$ et~\hbox{$\chi(P\ibu)=r_0$}).
\\
Dans la suite, on considère les \idcas \fbox{$\fD_k(P\ibu)=\cD_{r_k}(u_k)$}
et l'on fait les hypothèses suivantes.
\Grandcadre{$\Gr\big(\fD_1(P\ibu)\big)\geq 1$, et $\Gr\big(\fD_k(P\ibu)\big)\geq 2$ pour $k\in\lrb{2..m}$.}

\begin{definition} \label{defiCompCayrsf}
Sous ces hypothèses, on dira que le complexe est un \emph{complexe localement de Cayley}.%
\index{Cayley!complexe localement de ---}\index{complexe!localement de Cayley}
 \end{definition}

Le rapport avec les complexes de Cayley définis en section  \ref{secDetCayley} est donné par le lemme et le \plg qui suivent.

\begin{lemma} \label{lemCayleyrsfloc} \emph{(Le cas local)}
Un complexe borné de \mlrfs est \lot de Cayley
\ssi il est de Cayley.  
\end{lemma}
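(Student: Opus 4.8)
<br>

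Le lemme à démontrer affirme qu'un complexe borné de modules libres de rang fini est \emph{localement de Cayley} (\dfn \ref{defiCompCayrsf}, mais restreinte ici aux modules libres) si et seulement s'il est de Cayley au sens de la \dfn \ref{defiCompCay}. La clé est de comparer les deux définitions terme à terme. Un complexe de Cayley (\dfn \ref{defiCompCay}) est un complexe de modules libres de rang fini $L_k=\gA^{r_{k+1}+r_k}$ tel que $\Gr(\fD_1)\geq 1$ et $\Gr(\fD_k)\geq 2$ pour $k\in\lrb{2..m}$, où $\fD_k=\cD_{r_k}(A_k)$. Un complexe \emph{localement de Cayley} (\dfn \ref{defiCompCayrsf}) est un complexe de modules \emph{projectifs de type fini} $P_k$ de rang $\rg(P_k)=r_{k+1}+r_k$ satisfaisant exactement les mêmes conditions de profondeur sur les mêmes \idcas $\fD_k(P\ibu)=\cD_{r_k}(u_k)$.

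L'approche que je propose est donc essentiellement une vérification de compatibilité des données. D'abord, je noterais que lorsque tous les modules du complexe sont libres de rang fini, les rangs $\rg(P_k)$ dans $\HOp\gA$ coïncident avec les entiers naturels ordinaires (en supposant l'anneau non trivial, comme d'habitude), et les \ids déterminantiels d'ordre $r_k$ d'une \ali entre modules libres, définis en \ref{defiRangstableHO}, coïncident avec les \idds usuels $\cD_{r_k}(A_k)$ de la matrice $A_k$ de $u_k$ sur les bases canoniques. Ceci est garanti par la \prt \cara des \idds généralisés rappelée au début de la section \ref{secCPROexact}: ils se comportent bien par \eds et coïncident avec les \idds usuels lorsqu'une \lon rend les modules libres. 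Ici aucune \lon n'est même nécessaire puisque les modules sont déjà libres. Par conséquent les \idcas $\fD_k(P\ibu)$ de la \dfn \ref{defiCompCayrsf} sont littéralement les mêmes \ids que les $\fD_k$ de la \dfn \ref{defiCompCay}.

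La démonstration consiste alors à observer que les conditions numériques (les rangs $r_k$ formant une suite dans $\NN$ avec $r_{m+1}=0$ et $m\geq 2$) et les conditions de profondeur ($\Gr(\fD_1)\geq 1$, $\Gr(\fD_k)\geq 2$ pour $k\geq 2$) sont \emph{identiques} dans les deux définitions dès que les modules sont libres. Il n'y a donc rien à prouver au-delà de cette identification: un complexe de modules libres satisfait la \dfn \ref{defiCompCay} exactement quand il satisfait la \dfn \ref{defiCompCayrsf}. Le seul point demandant un soupçon d'attention est de vérifier que les notions de \emph{rang} $r_k\in\HOp\gA$ apparaissant dans \ref{defiCompCayrsf} se spécialisent bien aux entiers de \ref{defiCompCay}; c'est assuré par le rappel après la \dfn \ref{thSchanuelVariation} selon lequel, pour un module libre de rang fini, le rang généralisé est le rang usuel, et par le fait que dans un anneau non trivial les entiers de $\HO\gA$ s'identifient aux entiers de $\ZZ$.

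L'obstacle principal, s'il y en a un, n'est pas mathématique mais tient à la cohérence des conventions: il faut s'assurer que la définition \ref{defiCompCayrsf} avec des rangs dans $\HOp\gA$ se réduit sans ambiguïté au cas libre, ce qui requiert implicitement que l'anneau soit supposé non trivial (sinon les rangs ne sont pas des entiers bien définis). Comme dans tout le mémoire cette hypothèse est traitée avec soin, et comme le cas de l'anneau trivial rend toutes les \prts vraies simultanément, ce point ne pose pas de réelle difficulté. La démonstration est donc essentiellement immédiate une fois l'identification des \idcas établie, ce qui justifie que le lemme soit énoncé comme simple pont entre les deux sections.
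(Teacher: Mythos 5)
Votre démonstration est correcte et correspond exactement au traitement du mémoire : le lemme y est énoncé \emph{sans démonstration}, comme une pure vérification de compatibilité entre les définitions \ref{defiCompCay} et \ref{defiCompCayrsf} dans le cas des modules libres de rang fini. C'est précisément ce que vous explicitez (coïncidence des idéaux déterminantiels généralisés avec les idéaux usuels, spécialisation des rangs de $\HOp\gA$ aux entiers, identité des conditions de profondeur), avec le même soin pour le cas de l'anneau trivial.
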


\begin{plcc}\label{plcc.Cayleyrsf} \emph{(Pour les complexes \lot de~Cayley)}
Soient $(S_1,\dots,S_n)$ des \moco et $P\ibu$ un complexe borné de \mptfs
\begin{enumerate}
\item \label{i1plcc.Cayleyrsf} Le complexe $P\ibu$ est \lot de Cayley \ssi
chacun des complexes obtenus après \lon en $S_i$ est \lot de Cayley.
\item \label{i2plcc.Cayleyrsf} Si $P\ibu$ est \lot de Cayley, il existe des \eco tels qu'après \lon en chacun
de ces \elts, le complexe est de Cayley.
%
%
\end{enumerate}
\end{plcc}

Notez que dans le point \emph{\ref{i2plcc.Cayleyrsf}}, il faut et suffit que chaque $P_k$ devienne libre après chaque \lon.

On obtiendra donc les résultats qui suivent par application
de \plgcs divers et variés. 

\begin{fact} \label{factCay1rsf}
Dans un complexe de Cayley, chaque \ali~$u_k$ est de rang stable $r_k$. 
\end{fact}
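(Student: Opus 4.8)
Le plan est de déduire ce résultat du cas déjà traité des complexes de Cayley ordinaires (fait~\ref{factCay1}), en utilisant la stratégie générale annoncée avant l'énoncé : \emph{obtenir les propriétés globales par application de \plgcs}. La propriété à démontrer, \gui{$u_k$ est de rang stable $r_k$}, est une conjonction de deux assertions : d'une part $\fD_k(P\ibu)=\cD_{r_k}(u_k)$ est fidèle (rang stable $\geq r_k$), d'autre part $\rg(u_k)\leq r_k$, \cad $\cD_{r_k+1}(u_k)=0$. L'idée est que ces deux assertions se vérifient toutes deux après \lon en une famille d'\eco convenable, et que l'on conclut globalement.

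Concrètement, je procéderais ainsi. Puisque le complexe $P\ibu$ est \lot de Cayley, le point~\emph{\ref{i2plcc.Cayleyrsf}} du \plgref{plcc.Cayleyrsf} fournit des \eco $s_1,\dots,s_n$ tels qu'après \lon en chaque $s_i$ le complexe devient un vrai complexe de Cayley (en particulier chaque $P_k$ devient libre, de sorte que les $r_k$ deviennent des entiers dans $\HOp(\gA[1/s_i])$). Par le fait~\ref{factCay1} appliqué sur chacun des anneaux $\gA[1/s_i]$, chaque matrice $u_k$ y est de rang stable $r_k$. Il reste alors à remonter cette information au niveau de $\gA$, et c'est là qu'interviennent les \plgcs : le \plgref{plcc.RangStable} (pour le rang stable, étendu au cadre $\HOp\gA$ comme indiqué en section~\ref{secStrMult}) affirme précisément que $\rgst(u_k)=r_k$ sur $\gA$ \ssi $\rgst(u_k)=r_k$ sur chaque $\gA[1/s_i]$. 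Comme la fidélité d'un \itf et l'inégalité $\rg(u_k)\leq r_k$ sont les deux composantes du rang stable, toutes deux stables par ce \plgc, on obtient le résultat voulu.

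Le point délicat n'est pas l'argument de recollement lui-même, qui est mécanique, mais la vérification que le cadre $\HOp\gA$ est compatible avec le \plgc utilisé : les rangs $r_k$ sont ici des \elts de $\HOp\gA$ et non de simples entiers, et il faut s'assurer que la \dfn~\ref{defiRangstableHO} du rang stable (avec les \idds $\cD_r(\varphi)$ d'ordre $r\in\HO\gA$) se comporte bien par \lon. C'est justement la \prt \cara rappelée en section~\ref{secCPROexact} : ces \idds se comportent bien par \eds et coïncident avec les \idds usuels une fois les modules rendus libres. Ainsi, après \lon en $s_i$, l'\elt $r_k$ devient un entier de $\HO(\gA[1/s_i])$ et l'on retombe sur la notion usuelle de rang stable entier, ce qui légitime l'appel au fait~\ref{factCay1}. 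Le recollement final est garanti par le fait que les $s_i$ sont \cor (en réalité \com, puisqu'il s'agit d'\eco), ce qui donne la fidélité globale de $\fD_k(P\ibu)$ et l'annulation globale de $\cD_{r_k+1}(u_k)$.
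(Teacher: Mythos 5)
Votre démonstration est correcte et suit essentiellement la démarche du texte : celui-ci ne rédige pas de preuve pour ce fait et annonce seulement que les résultats de cette section s'obtiennent \gui{par application de \plgcs divers et variés}. Vous explicitez précisément l'argument attendu — \lon en des \eco rendant le complexe de Cayley au sens des modules libres (point~\emph{\ref{i2plcc.Cayleyrsf}} du principe~\ref{plcc.Cayleyrsf}), application du fait~\ref{factCay1} sur chaque localisé, puis recollement via le principe~\ref{plcc.RangStable} étendu aux rangs dans $\HO(\gA)$, en notant que les \idds généralisés se spécialisent bien par \lon — sans rien omettre.
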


Pour le \tho fondamental \ref{thdetCay}, voici la version \gnee que nous proposons faute de mieux.

\begin{thdef} \label{thdetCayrsf} \emph{(Complexe \lot de Cayley: \ids de \fcn, \deter de Cayley)}\\
On considère un complexe \lot de Cayley $P\ibu$ (\dfn \ref{defiCompCayrsf}). 
 On a les résultats suivants.
\begin{enumerate}
\item Il existe un unique \sys d'\ids $\fB_k$  pour $k\in\lrbm$,
avec $1\in\fB_m$, satisfaisant la \prt suivante:
\begin{enumerate}
\item [] Si le complexe devient un complexe de modules libres après \lon en 
un \elt $s$, alors l'\id $\fB_k$ est égal à 
$\fB_{\gA[1/s],k}(P\ibu)$ dans~$\gA[1/s]$.
\end{enumerate}
Ces \ids $\fB_k$ sont appelés les \emph{\ids de \fcn du complexe $P\ibu$}.
On les note plus \prmt $\fB_{\gA,k}(P\ibu)$.%
\index{complexe localement de Cayley!ideaux de fac@\ids de \fcn d'un ---}\index{ideaux de fac@\ids de \fcn!d'un complexe localement de Cayley} 
\item On a  \fbox{$\fD_k(P\ibu)=\fB_k(P\ibu)\fB_{k-1}(P\ibu)$} pour $k\in\lrbm$.
\item On a donc $\Gr(\fB_0)\geq 1$,  $\Gr(\fB_k)\geq 2$ pour $k\geq 1$, et 
$$
\prod_{ k\in\lrb{0.. m}}\fB_k\;=\;\;{\fB_0\prod_{k:1<2k\leq m}\fD_{2k}\;=\prod_{k:1\leq 2k+1\leq m}\fD_{2k+1}}
$$
\item Lorsque $\chi(P\ibu)=0$, \cad lorsque $r_0=0$ et $\rg(P_0)={r_1}$, le module~\hbox{$M=\Coker u_1$} est un \mlMR, autrement dit  $\Gr(\fB_1)\geq 2$ 
et $\fB_0$
est un \id \pro de rang $1$ (c'est l'invariant \MR de $M$). 
On note $\fG$ ou $\fG(P\ibu)$
cet \id $\fG(M)$.\index{Cayley!determin@\deter de ---}%
\index{determin@\deter!de Cayley} 
\item Considérons un changement d'anneau de base $\rho:\gA\to\gB$ 
où $\gB$ est plat sur $\gA$. 
Alors le complexe reste un complexe de Cayley, et les  \ids $\fB_\ell$
sont transformés en leurs images par $\rho$.

%
\end{enumerate}
\end{thdef}

\subsec{Structure multiplicative des \rsfs}

Les \thos \ref{thdetCay2} et \ref{thResFinIdFac} donnent le \tho suivant.

\begin{thdef} \label{thdetCay2rsf}~
\begin{enumerate}
\item Si  un \Amo  $M$ est \lorsb, le \thref{thdetCayrsf} s'applique
à tout complexe de \mptfs qui le résout.
En outre les \ids de \fcn $\fB_k$ ne dépendent que du module $M$.
\\
On les appellera les \emph{\ids de \fcn du module $M$}, et on les notera
$\fB_k(M)$.

\end{enumerate}
En particulier, on obtient les résultats suivants. 
\begin{enumerate} \setcounter{enumi}{1}
\item Un module $M$  \lorsb de rang $0$ est un \mlMR. Son invariant
de MacRae $\fG(M)$ est égal à $\fB_{0}(M)$.  
\item Si un \itf $\fa$ est \lorsb et  de rang $1$, 
alors il s'écrit de manière unique sous forme $\fa\ffg$ avec \hbox{$\Gr_\gA(\fa)\geq 2$} et $\ffg$ \pro de rang $1$.
\item Pour $k\geq 1$ on a 
$
{\fB_k\subseteq \DA(\fB_{k+1})\;\hbox{ et }\;\DA(\fB_k)= \DA(\fD_{k+1}).}
$
\\
Enfin: $\;\;\fB_k=\gen{1}\;\Longleftrightarrow\;\fD_k=\gen{1}\;\Longleftrightarrow\;
\Pd(M)<k$.
\end{enumerate}
\end{thdef}

Le \thref{corthdetCay2} donne le résultat suivant. 

\begin{theorem} \label{corthdetCay2rsf} \emph{(Idéaux à deux \gtrs \lorsbs)}
\begin{enumerate}
\item On considère un \id $\fa=\gen{a_1,a_2}$ fidèle dans $\gA$. \Propeq
\begin{enumerate}
\item  Après \lon en des \eco $s_i$ on a: l'\id $\fa$ admet un pgcd $g_i$ \ndz et $\Gr(\fa/g_i)\geq 2$.
\item L'\id $\fa$ est \lorsb.
\item  Après \lon en des \eco $s_i$ on a des \sexs
$$\preskip-.4em \postskip.2em 
0\lora\gA_i\vvvvers{[\,b_{i,1}\;b_{i,2}\,]} \gA_i^2\vvvvers{\tra[\,a_1\;a_2\,]~}
\gen{a_1,a_2}\lora 0. 
$$
\item Le module $\gA/\fa$ est \lMR.
\end{enumerate}

\item Pour un anneau $\gA$ \propeq
\begin{enumerate}
\item Tout \id à deux \gtrs   est \lorsb.
\item L'anneau est \qi et l'intersection de deux \idps
arbitraires est un \id \lop.
\item L'anneau est \qi et l'intersection de deux \id \lops
arbitraires est un \id \lop.
\end{enumerate}
\end{enumerate}  
\end{theorem}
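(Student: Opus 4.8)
Il s'agit du Théorème \ref{corthdetCay2rsf}, qui caractérise les idéaux à deux générateurs localement résolubles et donne la caractérisation des anneaux où tout idéal à deux générateurs possède une résolution projective finie.

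Mon plan est de ramener systématiquement chaque assertion au cas libre traité dans le Théorème \ref{corthdetCay2}, en utilisant les principes local-global et la machinerie de localisation déjà établis. Pour le point \emph{1}, je procéderais par équivalences circulaires. L'implication (a) $\Rightarrow$ (b) est directe : si après localisation en des éléments comaximaux $s_i$ l'idéal admet un pgcd fort $g_i$ non diviseur de zéro avec $\Gr(\fa/g_i)\geq 2$, alors par le point \emph{1} du Théorème \ref{corthdetCay2}, chaque $\fa_{s_i}$ est librement résoluble dans $\gA[1/s_i]$, donc $\fa$ est localement résoluble par le \plgref{plcc.resf}. Pour (b) $\Rightarrow$ (c), j'utiliserais le \thref{thdetCay2rsf} (point \emph{3}) : comme $\fa$ est localement résoluble de rang $1$ (rang forcé par le théorème de Vasconcelos \ref{corVasconrsf}, puisque $\fa$ est fidèle donc de rang $\geq 1$ et tout idéal est de rang $\leq 1$), on écrit $\fa = \fa' \ffg$ avec $\Gr(\fa')\geq 2$ et $\ffg$ projectif de rang $1$ ; après localisation en des \eco rendant $\ffg$ libre, on obtient les suites exactes annoncées via le lemme \ref{lemFacMat} et le lemme \ref{lemResFinIdFac}. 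L'implication (c) $\Rightarrow$ (b) est immédiate (une telle suite exacte \emph{est} une résolution libre localement), et (b) $\Leftrightarrow$ (d) résulte du \plgrf{plcc.MRmsf} reliant modules localement de MacRae et résolutions, en remarquant que $\cF_0(\gA/\fa) = \fa$.

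Pour le point \emph{2}, l'équivalence (a) $\Leftrightarrow$ (b) $\Leftrightarrow$ (c) se déduit du point \emph{1} en faisant varier les générateurs. Si tout idéal à deux générateurs est localement résoluble, en particulier tout idéal monogène l'est, donc par le corolaire \ref{corVasconrsf} l'anneau est quasi intègre ; et la structure multiplicative des \ids de MacRae (Théorème \ref{thMacRaeMultSexa} et son analogue local) donne que l'intersection de deux idéaux localement principaux est localement principale. Réciproquement, sous l'hypothèse (c) (ou (b)), je montrerais qu'un idéal $\gen{a_1,a_2}$ fidèle se factorise comme voulu en exploitant que $\gen{a_1}\cap\gen{a_2}$ est localement principal : après localisation convenable, on récupère un pgcd fort et la profondeur $\geq 2$ du quotient, ce qui ramène au point \emph{1}. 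Le passage de (b) à (c) (idéaux localement principaux au lieu de principaux pour l'intersection) utilise que dans un anneau quasi intègre un idéal localement principal devient principal après localisation en des \eco.

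\textbf{Le principal obstacle.} La difficulté centrale n'est pas conceptuelle mais réside dans la gestion soigneuse du rang et de la fidélité : il faut constamment s'assurer que les hypothèses de fidélité (pour appliquer McCoy et Vasconcelos) se propagent correctement par localisation, et que la factorisation $\fa = \fa'\ffg$ est \emph{globalement} bien définie et unique — c'est précisément le rôle du lemme \ref{lem0MacRaersf}, qu'il faudra invoquer au bon moment. Le point délicat sera de vérifier que l'hypothèse \gui{$\Gr(\fa/g_i)\geq 2$} localement se recolle en la condition globale \gui{$\Gr(\fa')\geq 2$} : cela repose sur le \plgref{plccProfondeur} pour la profondeur, mais demande de bien distinguer ce qui est un énoncé local de ce qui est global, car (comme le souligne le commentaire \hum{} après le Théorème \ref{thMacRae}) le recollement des générateurs $g_i$ peut être subtil en l'absence d'intégrité. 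Je m'appuierais sur la remarque que $\ffg$, étant projectif de rang $1$, se recolle sans ambiguïté via le \plgc pour les modules projectifs, ce qui contourne la difficulté du choix arbitraire des $g_i$.
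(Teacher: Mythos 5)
Your proposal is correct and follows essentially the paper's own route: the paper states this theorem with no separate proof ("Le théorème \ref{corthdetCay2} donne le résultat suivant"), deriving it from the free case (théorème \ref{corthdetCay2}) via the local--global machinery of the section, which is exactly your reduction (localize at comaximal elements, apply the free-case equivalences, then recollect via the local--global principles for resolutions, depth, and locally MacRae modules). The supporting ingredients you name --- Vasconcelos (\ref{corVasconrsf}) for quasi-integrity, the strong-gcd/lcm correspondence to pass between resolubility and locally principal intersections, and lemma \ref{lem0MacRaersf} for the unambiguous global factorization --- are precisely what the paper's framework intends.
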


\newpage	
\Exercices

\begin{exercise}
\label{exoPdimHO} (Autres résulats concernant la \pdi vue dans $\HO(\gA)$)
{\rm  Soit $M$ un \Amo \lorsb.
\begin{enumerate}
\item \ABH réciproque.
\item Cas où $\Pd(M)$ est \gui{bien défini}.
\item Dimensions projectives dans une suite exacte courte. Les in\egts
dans le point \emph{2} du \thref{lemPMP/M} restent valables
avec les dimensions prises dans~$\HO(\gA)$.   
\end{enumerate}

}
\end{exercise}

\begin{exercise} \label{exoPdimQuotient}
{(Dimension projective d'un quotient)}\\
{\rm  
Soient $n$, $d\in\NN$ avec $d \le n+1$.
Donner un exemple de deux modules $F \subseteq E$
tels \hbox{que $\Pdim E = \Pdim F = n$} et $\Pdim E/F = d$.
}

\end{exercise}


\sol
\exer{exoPdimQuotient}
On sait que $\Pdim E/F \leq 1+\max(\Pdim E,\Pdim F)$.
L'idée est d'avoir~$E/F \simeq E'/F'$ avec~$\Pdim E' = \Pdim F' = d-1$,
en espérant~\hbox{$\Pdim E'/F' = d$}.

On note~$\gA = \gk[x_1, \ldots, x_{n+1}]$ avec~$\gk$ un \cdi. 
\\
Considérons
les \ids $F = x_{n+1} \gen {x_1, \ldots, x_{n+1}}$ et

\snic {
E = \gen {x_1, \ldots, x_d} + F =
\gen {x_1, \ldots, x_d,\ x_{n+1}x_{d+1}, \ldots, x_{n+1}x_{n+1}}
}

\snii
On a~$F \simeq \gen {x_1, \ldots, x_{n+1}}$ et comme~$(x_1, \ldots, x_{n+1})$
est une \srg, son complexe de Koszul descendant fournit une
résolution libre graduée minimale de longueur~$n$ de l'\id $\gen {x_1,
  \ldots, x_{n+1}}$. On a donc~$\Pdim F = n$.

Montrons que l'on a \egmt $\Pdim E = n$.
\entrenous{TROUVER l'ARGUMENT ET L'\'ECRIRE} 

Passons maintenant au quotient. On a:

\snic {
E/F \simeq \gen {x_1, \ldots, x_d}/(\gen {x_1, \ldots, x_d} \cap F) =
\gen {x_1, \ldots, x_d}/x_{n+1}\gen {x_1, \ldots, x_d}
}

On voit donc que~$E/F \simeq M/aM$, avec~$M = \gen {x_1, \ldots, x_d}$, et
l'\elt~$a =
x_{n+1}$ est $M$-\ndz. De manière \gnle, considérons un \Amo $M$, une
\rsn \emph {quelconque} de~$M$ de longueur~$m$

\snic {
0\to L_m\vvers{u_m} L_{m-1}\vvers{u_{m-1}} \cdots\cdots \vvers{u_1} L_0\vvers{u_0} 
M \to 0,
}

et un \elt $a\in \gA$ $M$-\ndz. La \rsn de~$M/aM$ de longueur
$m+1$ obtenue par la construction mapping-cône de la multiplication
par~$a$ est la suivante: 
$$
0\to L_m\vers{v_{m+1}} L_{m-1}\oplus L_m \vers{v_m} 
\cdots   L_1\oplus L_2 \vers{v_2}  L_0\oplus L_1 \vers{v_1} L_0
\vers {v_0} M/aM \to 0.
$$
On a~$v_0 : x_0 \mapsto u_0(x_0) \bmod aM$, 
$v_1 : x_0 \oplus x_1 \mapsto u_1(x_1) + ax_0$ et
pour~$i \ge 2$ (en convenant de~$L_i = 0$ pour~$i > m$), on a:

\snic {
v_i = \cmatrix {-u_{i-1} & 0\cr a & u_i}, \qquad
x_{i-1} \oplus x_i \mapsto -u_{i-1}(x_{i-1}) \oplus (u_i(x_i) + ax_{i-1}).
}

Le lecteur peut d'ailleurs vérifier dans ce cas particulier qu'il s'agit
bien d'une résolution de~$M/aM$ (il est impératif d'utiliser le fait que
$a$ est~$M$-\ndz).

Dans un contexte gradué, si~$M$ est gradué,~$a$ \hmg et la résolution
$(u_i)$ de~$M$ est graduée, alors la résolution~$(v_j)$ de~$M/aM$
est graduée.

Ici, on a~$M = \gen {x_1, \ldots, x_d}$ et le complexe de Koszul descendant de
$x_1, \ldots, x_d$ fournit une résolution libre graduée minimale de
longueur~$d-1$ de l'\id $\gen {x_1, \ldots, x_d}$.  On obtient alors une
résolution libre graduée minimale de longueur~$d$ du quotient~$\gen {x_1,
  \ldots, x_d}/x_{n+1}\gen {x_1, \ldots, x_d}$ et on en déduit que~$\Pdim
E/F = d$.




\Biblio

\newpage \thispagestyle{empty}
\incrementeexosetprob

\junk{

\begin{proof}
On sait déjà qu'après \lon en des \eco on a une \rlf pour $M$ 
(proposition \ref{propRSFRLF}). On peut donc supposer que
$M$ possède une \rlf et que son rang est un entier $r$.
On considère une \rlf  
$$
0 \to L_n \vvers{u_n}  L_{n-1} \vvers{u_{n-1}}\;  \cdots \cdots \; \vvers{u_2}  L_1 \vvers{u_1} L_0 \vers{\pi} M
\to 0,
$$
On note $p_i$ le rang de $L_i$. Si $n=0$ le résultat est clair. Supposons $n>0$. Puisque $u_n$ est une matrice injective,
les mineurs d'ordre  $p_n$ de $u_n$ sont \cor. Si on localise en un de ces \elts, d'après la proposition \ref{corlemModifComplexe}
on peut remplacer $p_n$ et $p_{n-1}$ par $0$ et $p_{n-1}-p_n$,
sauf si $p_{n-1}<p_n$, auquel cas $\gA=0$. 
On a raccourci la \rsn de au moins une unité.

\noindent En répétant le processus on obtient qu'après des \lons en des \ecr on a une \rsn de longueur $0$ (et le module est libre), 
ou on a découvert que $\gA=0$ (ce qui se produit si $p_{n-1}<p_n$, ou $p_{n-2}<p_{n-1}-p_n$ etc.)

\noindent 
Les affirmations du point \emph{2} résultent alors de ce que les \idfs
se comportent bien par \lon et du \plgref{plcc.regularite} (appliqué avec $E=\gA$).

\noindent 
Concernant le point \emph{3} on a des \lons en des \eco qui
rendent les \rsfs de $M$, $N$ et $N/M$ libres.
Il suffit donc de montrer le résultat lorsque les 3 \rsns sont libres. Les
rangs entiers, disons $r_1=\rg(N)$, $r_2=\rg(M)$, $r_3=\rg(N/M)$,  se lisent alors après avoir localisé en des \ecr $(a_i)_{i\in I}$ comme rangs de modules libres
avec $M\subseteq N$. Si $r_1\neq r_2+r_3$ alors on a $1=0$ dans tous les $\gA[1/a_i]$, et puisque les $a_i$ sont \cor cela donne $1=0$ dans $\gA$
et les trois rangs sont égaux dans $\HO(\gA)$.
\end{proof}

Comme conséquence \imde on obtient le \tho suivant.
}


\let\showchapter\oldshowchapter
\let\showsection\oldshowsection

\pagestyle{CMpreheadings}
\pagestyle{CMExercicesheadings}

\small 
\bibliographystyle{plain}
\bibliography{BibACMC.bib} 


\cleardoublepage \thispagestyle{CMcadreseul}

\thispagestyle{CMcadreseul}
\cleardoublepage 
\rdb

\renewcommand\indexname{Index des termes}

\addtocontents{toc}{\vskip-0.8em}
\addcontentsline{toc}{chapterbis}{Index des termes}
\markboth{Index des termes}{Index des termes}

\printindex


\end{document}